\numberwithin{equation}{section}
\numberwithin{figure}{section}
\newtheorem{theorem}{Theorem}[section]
\newtheorem{lemma}[theorem]{Lemma}
\newtheorem{proposition}[theorem]{Proposition}
\newtheorem{corollary}[theorem]{Corollary}
\newtheorem{fact}[theorem]{Fact}
\newtheorem{example}[theorem]{Example}
\theoremstyle{definition}
\newtheorem{definition}[theorem]{Definition}
\theoremstyle{remark}
\newtheorem{remark}[theorem]{Remark}
\renewcommand{\setminus}{-}
\newcommand{\ntAln}[2]{{\mathbb{A}}_{{#1},{#2}}}
\newcommand{\A}{\widetilde{\mathbb{A}}}
\newcommand{\tA}[2]{\widetilde{\mathbb{A}}_{{#1},{#2}}}
\newcommand{\AAt}{\Tilde{\Tilde{\mathbb{A}}}}
\newcommand{\B}{\widetilde{\mathbb{B}}}
\newcommand{\tB}[2]{\widetilde{\mathbb{B}}_{{#1},{#2}}}
\newcommand{\BB}{\Tilde{\Tilde{\mathbb{B}}}}
\newcommand{\C}{\widetilde{\mathbb{C}}}
\newcommand{\CC}[2]{\widetilde{\mathbb{C}}_{{#1},{#2}}}
\newcommand{\tC}[2]{\widetilde{\mathbb{C}}_{{#1},{#2}}}
\newcommand{\ska}[2]{{k}_{{#1},{#2}}^{\mathbb{A}}}
\newcommand{\tska}[2]{{\tilde{k}}_{{#1},{#2}}^{\mathbb{A}}}
\newcommand{\ttska}[2]{\tilde{\tilde{k}}_{{#1},{#2}}^{\mathbb{A}}}
\newcommand{\ka}[2]{{K}_{{#1},{#2}}^{\mathbb{A}}}
\newcommand{\KA}[2]{\widetilde{K}_{{#1},{#2}}^{\mathbb{A}}}
\newcommand{\KAA}[2]{\tilde{\tilde{K}}^{\mathbb{A}}_{{#1},{#2}}}
\newcommand{\skb}[2]{{k}_{{#1},{#2}}^{\mathbb{B}}}
\newcommand{\KB}[2]{\widetilde{K}_{{#1},{#2}}^{\mathbb{B}}}
\newcommand{\KBB}[2]
{\Tilde{\Tilde{K}}_{{#1},{#2}}^{\mathbb{B}}}
\newcommand{\KC}[2]{\widetilde{K}_{{#1},{#2}}^{\mathbb{C}}}
\newcommand{\ntT}[2]{{\mathbb{T}}_{{#1}}}
\newcommand{\T}[2]{\widetilde{\mathbb{T}}_{{#1}}}
\newcommand{\nulambda}{(\lambda,\nu)}
\newcommand{\Nbar}{N_-}
\newcommand{\nbar}{{\mathfrak{n}}_-}
\newcommand{\Pbar}{P_-}
\begin{document}
\title
{Symmetry breaking for representations of rank one orthogonal 
groups}
\author{
Toshiyuki Kobayashi
\vspace{0.3mm}
 and Birgit Speh
}

\maketitle%
\begin{abstract}
We give a complete classification
 of intertwining operators
 ({\it{symmetry breaking operators}})
 between spherical  principal series representations
 of $G=O(n+1,1)$ and $G'=O(n,1)$.  
We construct three meromorphic families
 of the symmetry breaking operators,
 and find their distribution kernels
 and their residues
 at all poles explicitly.  
Symmetry breaking operators
 at exceptional discrete parameters
 are thoroughly studied.

We obtain closed formulae for the functional equations which the composition of the 
  the symmetry breaking operators
with the Knapp--Stein intertwining operators
 of $G$ and $G'$ satisfy, 
 and use them to determine
 the symmetry breaking operators 
 between irreducible composition factors of the spherical principal series representations
 of $G$ and $G'$. 
Some applications
 are included.  
\end{abstract}

\noindent
\textit{Keywords and phrases:}
branching law, reductive Lie group,
symmetry breaking,
Lorentz group,
conformal geometry,
Verma module, 
complementary series.

\medskip
\noindent
\textit{2010 MSC:}
Primary
          22E46; 
Secondary 
33C45, 53C35.  

\tableofcontents

\section{Introduction} 
\label{sec:intro}
A representation $\pi$ of a group $G$
 defines a representation
 of a subgroup $G'$
 by restriction.  
In general irreducibility 
 is not preserved by the restriction.  
If $G$ is compact
 then the restriction $\pi|_{G'}$ is isomorphic
 to a direct sum of irreducible representations $\pi'$ of $G'$ with multiplicities  $m(\pi,\pi')$. 
These multiplicities are studied by using combinatorial techniques.  
If $G'$ is not compact
 and the representation $\pi$ is infinite-dimensional, 
 then generically the restriction $\pi|_{G'}$
 is not a direct sum of irreducible representations
 \cite{inv98}
 and we have to consider another notion of multiplicity.

For a continuous representation $\pi$ of $G$
 on a complete, 
 locally convex topological vector space $H_\pi$,
 the space $H_\pi^\infty $ of $C^\infty$-vectors of $H_\pi$
 is naturally endowed with a Fr{\'e}chet topology,
 and $(\pi,H_\pi)$ gives rise
 to a continuous representation $\pi^{\infty}$ of $G$
 on $H_\pi^\infty$.  
If ${\mathcal{H}}_{\pi}$ is a Banach space, 
 then the Fr{\'e}chet representation 
 $(\pi^{\infty}, {\mathcal{H}}_{\pi}^{\infty})$
 depends only 
 on the underlying $({\mathfrak {g}}, K)$-module
 $({\mathcal{H}}_{\pi})_K$.  
Given another continuous representation
 $\pi'$ of the subgroup $G'$, 
 we consider the space of continuous $G'$-intertwining operators 
 ({\it{symmetry breaking operators}})
\[ \operatorname{Hom}_{G'} ({\pi}^\infty|_{G'}, ({\pi'})^\infty) .\] 
The dimension $m(\pi,\pi') $ of this space 
 yields important information of the restriction of $\pi $ to $G'$
 and is called the {\it{multiplicity}} of $\pi'$
 occurring in the restriction $\pi|_{G'}$.  
Notice
 that the multiplicity $m(\pi, \pi')$ makes sense
 for non-unitary representations $\pi$ and $\pi'$, 
 too.  
In general,
 $m(\pi,\pi')$ may be infinite.  
For detailed analysis 
 on symmetry breaking operators,
 we are interested in the case
 where $m(\pi,\pi')$ is finite.  
The criterion in \cite{K-O} asserts that the
 multiplicity $m(\pi,\pi')$ is finite
 for all irreducible representations $\pi$ of $G$
 and all irreducible representations $\pi'$ of $G'$
 if and only if the minimal parabolic subgroup $P'$
 of $G'$ has an open orbit 
 on the real flag variety $G/P$, 
 and that the multiplicity is uniformly bounded
 with respect to $\pi$ and $\pi'$
 if and only if a Borel subgroup of $G_{\mathbb{C}}'$
 has an open orbit
 on the complex flag variety of $G_{\mathbb{C}}$.

The classification
 of reductive symmetric pairs
 $({\mathfrak {g}}, {\mathfrak {g}}')$
 satisfying the former condition was recently accomplished
 in \cite{xKMt}.  
On the other hand, 
 the latter condition
 depends only
 on the complexified pairs
 $({\mathfrak {g}}_{\mathbb{C}}, {\mathfrak {g}}_{\mathbb{C}}')$, 
 for which the classification 
 is much simpler
 and was already known in 1970s
 by Kr{\"a}mer 
 \cite{Kr1}.  
In particular,
 the multiplicity $m(\pi,\pi')$ is uniformly bounded
 if the Lie algebras $({\mathfrak {g}}, {\mathfrak {g}}')$ of $(G,G')$ 
 are real forms
 of $({\mathfrak {sl}}(N+1,{\mathbb{C}}),
 {\mathfrak {gl}}(N,{\mathbb{C}}))$
 or $({\mathfrak {o}}(N+1,{\mathbb{C}}),
 {\mathfrak {o}}(N,{\mathbb{C}}))$.

\vskip 1pc
In this article
 we confine ourselves
 to the case 
\begin{equation}
\label{eqn:GG}
     (G,G')=(O(n+1,1),O(n,1)), 
\end{equation}
 and study thoroughly symmetry breaking operators
 between spherical principal series representations
 for the groups $G$
 and $G'$.  
In particular,
 we determine
 the multiplicities for their composition factors.  
Furthermore,
 we give a classification of symmetry breaking operators 
 $I(\lambda)^{\infty} \to J(\nu)^{\infty}$
 for any spherical principal series
 representations $I(\lambda)$ and $J(\nu)$, 
 and find explicit formulae
 of distribution kernels
 of its basis
 for every $(\lambda, \nu)\in {\mathbb{C}}^2$.

\vskip 1pc
The techniques
 of this article
 are actually directed 
 at the more general problems
 of determining symmetry breaking operators
 between (degenerate) principal series representations
 induced from parabolic subgroups $P$ of $G$
 and $P'$ of $G'$
 under the geometric assumption
 that the double coset $P' \backslash G/P$
 is a finite set.  
In the setting \eqref{eqn:GG},
 there are three (nonempty) closed $P'$-invariant 
 subsets in $G/P$.  
Correspondingly,
 we construct a family
 of (generically) {\it{regular}} symmetry
 breaking operators $\A_{\lambda,\nu}$
 and two families of {\it{singular}} symmetry
 ones $\B_{\lambda,\nu}$ and $\C_{\lambda,\nu}$.

The classification of symmetry breaking operators $T$
 is carried out through
 an analysis of their distribution kernels
 $K_T$.  
We consider the system of partial differential equations
 that $K_T$ satisfies,
 and determine 
 when an (obvious) local solution along a $P'$-orbit
 extends to a global solution
 on the whole real flag variety $G/P$.  
The important properties of these symmetry breaking operators 
 are the existence
 of the meromorphic continuation, 
 and the functional equations
 that they satisfy with the Knapp--Stein intertwining operators
 of $G$ and $G'$.  
The residue calculus of $\tA{\lambda}{\nu}$
 provides a third method to obtain
 Juhl's conformally covariant operators $\C_{\lambda,\nu}$
 for the embedding $S^{n-1} \hookrightarrow S^n$
 (see \cite{Juhl}, \cite{K13} for the two earlier proofs, 
 and \cite{xkeastwood60}
 for a heuristic argument
 for the method of this article).

\bigskip

To state our results more precisely,
 we realize $G=O(n+1,1)$ as the automorphism group
 of a quadratic form 
\[
   x^2_0+x^2_1 +\cdots +x^2_{n} -x^2_{n+1} 
\] 
and the subgroup $G'=O(n,1)$ as the stabilizer of the basis vector $e_{n}={}^{t\!}(0,\cdots,0,1,0)$. 

A spherical principal series representation
\index{sbon}{Ilmd@${I(\lambda)}$|textbf}
 $I(\lambda)$ of $G$
 is an (unnormalized) induced representation from a character $\chi_\lambda$
 of a minimal parabolic subgroup $P$
 for $\lambda \in {\mathbb{C}}$. 
In what follows,
 we take the representation space
 of $I(\lambda)$ 
 to be the space
 of $C^{\infty}$-sections
 of the $G$-equivariant line bundle
 $G \times_P (\chi_{\lambda}, {\mathbb{C}}) \to G/P$, 
so that 
 $I(\lambda)^{\infty} \simeq I(\lambda)$
 is the Fr{\'e}chet globalization
 having moderate growth
 in the sense 
 of Casselman--Wallach \cite{W}.  
See Section \ref{subsec:CW}.  
The parametrization is chosen
 so that $I(\lambda)$ is reducible
 if and only if $-\lambda \in {\mathbb{N}}$
 or $\lambda-n \in {\mathbb{N}}$, 
 and that $I(-i)$ ($i \in {\mathbb{N}}$)
 contains a finite-dimensional representation 
 $
\index{sbon}{Fi@$F(i)$|textbf}
F(i)$ as the unique subrepresentation,
 which is isomorphic to the representation
 on the space ${\mathcal{H}}^i({\mathbb{R}}^{n+1,1})$
 of spherical harmonics of degree $i$
 as a representation
 of the identity component $G_0$
 of $G$, 
 see \eqref{eqn:FHi}.  
The irreducible Fr{\'e}chet representation $I(-i)/F(i)$
 of $G$ is denoted by 
$
\index{sbon}{Ti@$T(i)$|textbf}
T(i)$.  
The underlying $({\mathfrak {g}},K)$-module
 $T(i)_K$ is isomorphic to
 a Zuckerman $A_{\mathfrak {q}}(\lambda)$-modules
 where ${\mathfrak {q}}$ is a certain maximal $\theta$-stable
 parabolic subalgebra of ${\mathfrak {g}}_{\mathbb{C}}$
 (see Section \ref{subsec:16.3}).  
In this parametrization,
 $I(\lambda)$ is unitarizable
 if $\lambda \in \frac n 2 + \sqrt{-1}{\mathbb{R}}$
 ({\it{unitary principal series representation}})
 or $0 < \lambda < n$
 ({\it{complementary series representations}}, 
 see Chapter \ref{sec:applbl}).  

Similarly,
 spherical  principal series representations of the subgroup $G'$
 are denoted by 
\index{sbon}{Jnu@${J(\nu)}$|textbf}
$J(\nu)$ and are parametrized
 so that the finite-dimensional representations 
$
\index{sbon}{Fj@$F(j)$|textbf}
F(j)$
 is a subrepresentation of $J(-j)$. 
The irreducible Fr{\'e}chet representation $J(-j)/F(j)$
 of $G'$ is denoted
 by 
$
\index{sbon}{Tj@$T(j)$|textbf}
T(j)$.

\medskip

Consider pairs of nonpositive
 integers  and define 
\index{sbon}{Leven@$L_{\operatorname{even}}$|textbf}
\begin{align*}
L_{\operatorname{even}}:=&\left \{ (-i,-j):
j\leq i \mbox{ and } i\equiv j \mod 2 \right \},
\\
\index{sbon}{Lodd@$L_{\operatorname{odd}}$|textbf}
L_{\operatorname{odd}}:=&\left \{ (-i,-j)
: j\leq i \mbox{ and } i \equiv j +1 \mod 2 \right \}
.
\end{align*}

\medskip
The discrete set $L_{\operatorname{even}}$
 in ${\mathbb{C}}^2$ plays
 a special role
 throughout the article.  
We prove:
\begin{theorem}
[multiplicities for spherical principal series, 
Theorem \ref{thm:dimHom}]
\label{thm:I.1}
We have
\[
m (I(\lambda), J(\nu)) 
= 
\begin{cases}
1
\qquad
&\text{if $(\lambda,\nu) \in {\mathbb{C}}^2 \setminus L_{\operatorname{even}}$, }
\\
2
\qquad
&\text{if $(\lambda,\nu) \in L_{\operatorname{even}}$.}
\end{cases}
\]
\end{theorem}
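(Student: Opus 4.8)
\noindent\textit{Proof strategy.} The plan is to pass to distribution kernels and then count solutions stratum by stratum along the $P'$-orbits in $G/P$. By the Schwartz kernel theorem, in the form valid for the Casselman--Wallach globalizations used here, $\operatorname{Hom}_{G'}(I(\lambda)^{\infty}, J(\nu)^{\infty})$ is identified with the space $\operatorname{Sol}(\lambda,\nu)$ of distribution sections $K$ of a line bundle over $G/P$ that transform under the minimal parabolic $P'$ of $G'$ by the character attached to $(\lambda,\nu)$. Pulling back to the open Bruhat cell $\Nbar\cong{\mathbb{R}}^{n}$ --- in which the subflag variety $G'/P'$ appears as the hyperplane $\{x_n=0\}$ and in which we write $\infty$ for the single point of $G/P\setminus\Nbar$ --- such a $K$ becomes a distribution on ${\mathbb{R}}^{n}$ that is $O(n-1)$-invariant in $(x_1,\dots,x_{n-1})$, homogeneous of a prescribed degree, and annihilated, up to the character twist, by the first-order operators generating the action of the unipotent radical $N'$. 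Using the invariance and the homogeneity, that last system reduces to a single ordinary differential equation in one variable whose solution on each $P'$-orbit of ${\mathbb{R}}^{n}\setminus\{x_n=0\}$ is unique up to a scalar. So the theorem amounts to the computation of $\dim\operatorname{Sol}(\lambda,\nu)$.

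For the lower bound I would invoke the three families constructed earlier, in their holomorphic renormalizations $\tA{\lambda}{\nu}$, $\tB{\lambda}{\nu}$, $\tC{\lambda}{\nu}$ on ${\mathbb{C}}^{2}$. It has already been shown that $\tA{\lambda}{\nu}\neq 0$ for every $(\lambda,\nu)$, which gives $m(I(\lambda),J(\nu))\geq 1$ throughout. When $(\lambda,\nu)\in L_{\operatorname{even}}$ the operator $\tC{\lambda}{\nu}$ --- Juhl's conformally covariant differential symmetry breaking operator --- is nonzero, and since it is a differential operator its distribution kernel is supported at a single point of $G/P$, whereas the kernel of $\tA{\lambda}{\nu}$ is not supported at a point on $L_{\operatorname{even}}$; hence the two are linearly independent and $m(I(\lambda),J(\nu))\geq 2$ there.

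The matching upper bound is the real content. Since $P'\backslash G/P$ is finite, the nonempty closed $P'$-invariant subsets of $G/P$ form the chain $\{o\}\subset G'/P'\subset G/P$ with $o$ the $P'$-fixed point, and filtering $\operatorname{Sol}(\lambda,\nu)$ by the support of the kernel yields three graded pieces to estimate. The top piece --- solutions modulo those supported in $G'/P'$ --- restricts to the open part of $G/P$, where it lands in the solution space of the ordinary differential equation above; since the open part consists of finitely many $P'$-orbits and any element of $\operatorname{Sol}$ must extend across $\{x_n=0\}$ as a distribution, one checks that this image is at most one-dimensional for every $(\lambda,\nu)$, realized by $\tA{\lambda}{\nu}$. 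Each of the two lower pieces is handled by writing the kernel along $G'/P'$, resp.\ at $o$, in its finite transverse expansion $\sum_{k}\bigl(\partial_{x_n}^{\,k}\delta(x_n)\bigr)\otimes T_k(x')$, resp.\ as a polynomial in $\partial$ applied to $\delta_{o}$, substituting into the differential system, and using the invariance and homogeneity: the coefficients then satisfy an explicit two-step recursion, and a nonzero solution all of whose terms are legitimate distributions on ${\mathbb{R}}^{n-1}$ and which terminates exists precisely when $(\lambda,\nu)=(-i,-j)$ with $i,j\in{\mathbb{N}}$, $i\equiv j\bmod 2$ and $j\leq i$ --- the parity forced by the two-step structure, the inequality by the termination requirement --- and the solution space is then one-dimensional. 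Adding the three bounds gives $\dim\operatorname{Sol}(\lambda,\nu)\leq 1$ off $L_{\operatorname{even}}$ and $\leq 2$ on $L_{\operatorname{even}}$, matching the lower bound.

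The step I expect to be the main obstacle is making this stratified count tight and coherent: one must not only run the lower-stratum recursions and see that they produce a solution exactly on $L_{\operatorname{even}}$, but also control the gluing of solutions across $\{x_n=0\}$ in the top piece --- where a second, $\operatorname{sgn}(x_n)$-type local solution can appear at exceptional parameters --- and decide in each case whether it extends to $G/P$ or is instead absorbed into a correction supported on the lower strata, so that the graded pieces add up to $2$ and not to $1$ or $3$. Doing so requires the precise asymptotics of the kernel of $\tA{\lambda}{\nu}$ along $\{x_n=0\}$ (equivalently, the zeros and the change of support of the renormalized regular family) together with the description of the obstruction to extending a homogeneous distribution across $\infty$; this is exactly the place where the residue formulae for $\tA{\lambda}{\nu}$ are used and where $L_{\operatorname{even}}$ enters.
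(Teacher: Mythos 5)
Your overall architecture — identify $\operatorname{Hom}_{G'}(I(\lambda),J(\nu))$ with a space of $P'$-covariant distribution kernels, restrict to the open Bruhat cell using $P'\Nbar P=G$, filter by the support along the three closed $P'$-invariant sets $\{[p_+]\}\subset S^{n-1}\subset G/P$, and get lower bounds from the explicit families — is exactly the paper's strategy (Chapters \ref{sec:3} and \ref{sec:dim}). But two of your key assertions are false, and they sit precisely at the points that carry the proof. First, it is not true that $\A_{\lambda,\nu}\neq 0$ for every $(\lambda,\nu)$: the normalized family vanishes exactly on $L_{\operatorname{even}}$ (Theorem \ref{thm:poleA}), and since this zero set has complex codimension two, no scalar (re)normalization can remove it; the two independent operators on $L_{\operatorname{even}}$ are $\AAt_{\lambda,\nu}$ (equivalently $\BB_{\lambda,\nu}$ when $n$ is odd) together with $\C_{\lambda,\nu}$, so your lower bound $m\ge 2$ on $L_{\operatorname{even}}$, as written, rests on an operator that is identically zero there. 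Second, your stratified count is wrong: kernels supported on the lower strata do \emph{not} exist "precisely on $L_{\operatorname{even}}$". Point-supported solutions exist (and are unique up to scalar) for \emph{every} $(\lambda,\nu)\in/\!/$ — these are Juhl's operators $\C_{\lambda,\nu}$, which you yourself invoke, and the condition is only $\nu-\lambda\in2{\mathbb{N}}$, not integrality of $\lambda,\nu$ — while solutions supported on $S^{n-1}$ exist on $\backslash\!\backslash\setminus{\mathbb{X}}$ (the family $\B_{\lambda,\nu}$) and, for $n$ odd, on $L_{\operatorname{even}}$ ($\BB_{\lambda,\nu}$). Correspondingly the top graded piece is not "always realized by $\A_{\lambda,\nu}$": on $\backslash\!\backslash\cup(/\!/\setminus L_{\operatorname{even}})$ every symmetry breaking operator is singular and that piece is zero (Proposition \ref{prop:regSupp}); that your totals still come out to $1$ off $L_{\operatorname{even}}$ is an accident of attributing the unique operator to the wrong stratum.

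The idea that is genuinely missing, and that your recursion dichotomy cannot replace, is the non-extension theorem at the $P'$-fixed point (not at $\infty$: extension across $G/P\setminus\Nbar$ is automatic from the covariance, Theorem \ref{thm:KT}): for $(\lambda,\nu)\in/\!/\setminus L_{\operatorname{even}}$ the unique solution on ${\mathbb{R}}^n\setminus\{0\}$ given by Lemma \ref{lem:Sol} — note the $m_-$-parity condition already kills your feared $\operatorname{sgn}(x_n)$ branch for \emph{all} parameters — admits no extension to $\mathcal{S}ol({\mathbb{R}}^n;\lambda,\nu)$, even after adding corrections supported on $S^{n-1}$ or at the origin (Proposition \ref{prop:surj}). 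This is what forces the total to be $1$ on $/\!/\setminus L_{\operatorname{even}}$ (where otherwise top $+$ bottom would give $2$) and $2$ rather than $3$ on $L_{\operatorname{even}}$, and it is parity-of-$n$ sensitive (for $n$ even the second operator on $L_{\operatorname{even}}$ is regular, for $n$ odd it is singular). The paper proves it by expanding the meromorphic kernel $\ka{\lambda}{\nu}$ (resp. $\KBB{\lambda}{\nu}$) in a Laurent series in one parameter, then playing the leading coefficients against the Euler homogeneity equation and the fact that $(E+A)^2h=0$ forces $(E+A)h=0$ for a point-supported distribution $h$ (Lemmas \ref{lem:DEmero} and \ref{lem:deltass}); the nonvanishing of the residue term is exactly where $L_{\operatorname{even}}$ enters. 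Nothing in your sketch supplies an argument of this type, and the conclusion you assign to the transverse-expansion analysis (lower-strata solutions only on $L_{\operatorname{even}}$) is inconsistent with the existence of $\C_{\lambda,\nu}$ on all of $/\!/$ (Fact \ref{fact:5.2}), so the gap cannot be closed along the lines you state.
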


\medskip

This theorem is new even
 for $G=O(3,1) \approx PGL(2,\mathbb{C})$
 and $G'=O(2,1) \approx PGL(2,\mathbb{R})$.

{}From the viewpoint of differential geometry,
 $G$ is the conformal group of the standard sphere
 $S^n$, 
 and conformally equivariant line bundles
 ${\mathcal{L}}_{\lambda}$ 
 over $S^n$
 are parametrized by $\lambda \in {\mathbb{C}}$
 (we normalize ${\mathcal{L}}_{\lambda}$ 
 such that ${\mathcal{L}}_{0}$ is the trivial line bundle
 and ${\mathcal{L}}_n$ is the bundle 
 of volume densities).  
The subgroup $G'$ is the conformal group 
 of the \lq{great circle}\rq\
 $S^{n-1}$ in $S^n$, 
 and conformally equivariant line bundles
 ${\mathcal{L}}_{\nu}$
 over $S^{n-1}$
 are parametrized by $\nu \in {\mathbb{C}}$.  
Then Theorem \ref{thm:I.1} determines
 the dimension
 of conformally covariant linear maps
 ({\it{i.e.}},  $G'$-equivariant operators)
{} from $C^{\infty}(S^n,{\mathcal{L}}_{\lambda})$
 to $C^{\infty}(S^{n-1},{\mathcal{L}}_{\nu})$.

{}From the representation theoretic viewpoint,
 it was proved recently in Sun and Zhu \cite{S-Z}
 that $m(\pi ,\pi' ) \leq 1$
 for all irreducible admissible representations
 $\pi$ of $G$
 and $\pi'$ of $G'$. 
However,
 it is much more involved 
 to tell whether 
$m (\pi,\pi')=0$
 or $1$
 for given irreducible representations $\pi$ and $\pi'$.

\vskip 1pc
The following theorem determines
 $m(\pi,\pi')$
 for irreducible subquotients
 at reducible points.

\medskip
\begin{theorem} 
[multiplicities for composition factors, 
Theorem \ref{thm:compo}]
\label{thm:compointro}
Let $i,j \in {\mathbb{N}}$.  
\begin{enumerate}
\item[{\rm{(1)}}]
Suppose that $i \ge j$.  

\par\noindent
{\rm{(1-a)}} \enspace
Assume $i \equiv j \mod 2$, 
namely,
$(-i,-j)  \in L_{\operatorname{even}}$. 
Then 
\[  
   m (T(i) ,T(j) ) = 1, 
   \qquad
   m(T(i), F(j) )= 0, 
   \qquad
   m (F(i) ,F(j) )  = 1.  
\]
\par\noindent
{\rm{(1-b)}} \enspace
Assume $i \equiv j+1 \mod 2$, 
namely,
$(-i,-j) \in L_{\operatorname{odd}}$.   
Then
\[  m(T(i) ,T(j) ) = 0, 
\qquad
    m(T(i) ,F(j) ) = 1, 
\qquad
    m (F(i) ,F(j) )  = 0.\]
\item[{\rm{(2)}}]
 Suppose that $i<j$.  
Then 
\[
m(T(i),T(j))=0, 
\quad
m(T(i),F(j))=1, 
\quad
m(F(i),F(j))=0.  
\]
\end{enumerate}
\end{theorem}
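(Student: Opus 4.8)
The plan is to reduce all six multiplicities to the already-established classification of $\operatorname{Hom}_{G'}(I(\lambda)^{\infty},J(\nu)^{\infty})$ for every $(\lambda,\nu)\in{\mathbb{C}}^2$, to Theorem \ref{thm:I.1}, and to the functional equations relating the symmetry breaking operators $\tA{\lambda}{\nu}$, $\tB{\lambda}{\nu}$, $\tC{\lambda}{\nu}$ with the Knapp--Stein intertwining operators of $G$ and $G'$. Fix $i,j\in{\mathbb{N}}$ and use the exact sequences
\[
0\to F(i)\to I(-i)^{\infty}\to T(i)^{\infty}\to 0,
\qquad
0\to F(j)\to J(-j)^{\infty}\to T(j)^{\infty}\to 0 .
\]
The following are elementary: since $F(j)$ is the unique proper $G'$-submodule of $J(-j)^{\infty}$, a continuous $G'$-map into $J(-j)^{\infty}$ with finite-dimensional image factors through $F(j)$, whence $\operatorname{Hom}_{G'}(F(i),T(j)^{\infty})=0$ and $\operatorname{Hom}_{G'}(F(i),J(-j)^{\infty})\cong\operatorname{Hom}_{G'}(F(i),F(j))$; and since $I(-i)^{\infty}$ has no finite-dimensional quotient, a nonzero symmetry breaking operator $I(-i)^{\infty}\to J(-j)^{\infty}$ that annihilates $F(i)$ descends to a nonzero map on $T(i)^{\infty}$ whose image is either $F(j)$ or all of $J(-j)^{\infty}$. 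Finally, the finite-dimensional branching law for $O(n+1,1)\downarrow O(n,1)$, obtained by expanding ${\mathcal{H}}^{i}({\mathbb{R}}^{n+1,1})$ in powers of the coordinate fixed by $G'$, gives $F(i)|_{G'}\cong\bigoplus_{\ell=0}^{i}F(\ell)\otimes\chi^{\,i-\ell}$, where $\chi$ is the order-two character of $G'$ distinguishing the two extensions of ${\mathcal{H}}^{\ell}$; as $F(\ell)\otimes\chi\not\cong F(\ell)$, this yields $m(F(i),F(j))=1$ when $0\le j\le i$ and $i\equiv j\bmod 2$ and $m(F(i),F(j))=0$ otherwise, matching the asserted values of $m(F(i),F(j))$ in (1-a), (1-b) and (2).

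The heart of the argument is the fine structure of the one- or two-dimensional space $\operatorname{Hom}_{G'}(I(-i)^{\infty},J(-j)^{\infty})$, whose dimension is $2$ exactly when $(-i,-j)\in L_{\operatorname{even}}$ and $1$ otherwise (Theorem \ref{thm:I.1}). Within it, an operator $\phi$ annihilates $F(i)$ if and only if $\phi\circ R=0$, where $R\colon I(n+i)^{\infty}\to I(-i)^{\infty}$ is the Knapp--Stein operator of $G$ with image $F(i)$, and $\phi$ has image inside $F(j)$ if and only if $S\circ\phi=0$, where $S\colon J(-j)^{\infty}\to J(n-1+j)^{\infty}$ is the Knapp--Stein operator of $G'$ with kernel $F(j)$. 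By the functional equations, each composite $\phi\circ R$, resp.\ $S\circ\phi$, is an explicit scalar multiple of a standard operator $I(n+i)^{\infty}\to J(-j)^{\infty}$, resp.\ $I(-i)^{\infty}\to J(n-1+j)^{\infty}$, so each of the two conditions amounts to the vanishing of an explicit product of Gamma factors at integer arguments. Evaluating on the basis --- the renormalized regular operator $\tA{-i}{-j}$, the singular operator $\tB{-i}{-j}$, and Juhl's differential operator $\tC{-i}{-j}$ --- one finds that when $(-i,-j)\in L_{\operatorname{even}}$ the two conditions cut out two \emph{distinct} lines, whereas when $(-i,-j)\notin L_{\operatorname{even}}$ (with $j\le i$ or $i<j$) the unique basis operator annihilates $F(i)$ and has image exactly $F(j)$.

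Assembling the cases: in case (1-a) the basis operator annihilating $F(i)$ has image not contained in $F(j)$, hence is surjective and descends to a nonzero $T(i)^{\infty}\to T(j)^{\infty}$, so $m(T(i),T(j))\ge 1$; conversely $\operatorname{Hom}_{G'}(T(i)^{\infty},T(j)^{\infty})$ embeds, via $I(-i)^{\infty}\twoheadrightarrow T(i)^{\infty}$ and the inclusion $T(j)^{\infty}\hookrightarrow J(n-1+j)^{\infty}$, into the one-dimensional space $\operatorname{Hom}_{G'}(I(-i)^{\infty},J(n-1+j)^{\infty})$ (Theorem \ref{thm:I.1}; here $(-i,n-1+j)\notin L_{\operatorname{even}}$ since $n-1+j>0$), so $m(T(i),T(j))=1$, while no basis operator both annihilates $F(i)$ and has image in $F(j)$, giving $m(T(i),F(j))=0$. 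In cases (1-b) and (2) the unique basis operator annihilates $F(i)$ and has image $F(j)$, so it descends to a surjection $T(i)^{\infty}\twoheadrightarrow F(j)$ and $m(T(i),F(j))=1$; that $m(T(i),T(j))=0$ follows by running the same analysis on the unique operator $I(-i)^{\infty}\to J(n-1+j)^{\infty}$, the functional equations showing it does not simultaneously annihilate $F(i)$ and have image inside the submodule $T(j)\subset J(n-1+j)^{\infty}$. The remaining values $m(F(i),F(j))$ were obtained in the first paragraph.

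The step I expect to be the main obstacle is the middle one: determining, for each basis symmetry breaking operator, whether it annihilates the finite-dimensional submodule $F(i)$ and whether its image is contained in $F(j)$. This is not detected by the total multiplicity $m(I(-i),J(-j))$ alone and must be read off from the explicit distribution kernels together with the closed-form functional equations, where it is encoded in the vanishing loci of products of Gamma functions; granted that analysis, the rest is formal homological bookkeeping with the two exact sequences and Theorem \ref{thm:I.1}.
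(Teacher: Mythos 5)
Your plan is correct and it runs on the same engine as the paper's proof: the classification of $H(\lambda,\nu)$ (Theorems \ref{thm:I.1}, \ref{thm:II.5}), the functional equations with the Knapp--Stein operators, the explicit kernel/image and spherical-vector data for $\A_{\lambda,\nu}$, $\AAt_{\lambda,\nu}$, $\C_{\lambda,\nu}$, and the finite-dimensional branching law (Proposition \ref{prop:Hbranch}) for $m(F(i),F(j))$. The genuine differences are organizational. You get every upper bound by embedding $\operatorname{Hom}_{G'}(T(i),T(j))$, resp.\ $\operatorname{Hom}_{G'}(T(i),F(j))$, into $H(-i,n-1+j)$, resp.\ $H(-i,-j)$, and quoting Theorem \ref{thm:I.1}; the paper instead invokes the Sun--Zhu multiplicity-one theorem \cite{S-Z} for the value $1$ and uses Theorem \ref{thm:II.5} only for the vanishing statements, so your route is more self-contained. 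You also concentrate the analysis at the single point $(-i,-j)$, intersecting the two linear conditions (kills $F(i)$; image in $F(j)$) inside $H(-i,-j)$, whereas the paper spreads it over the Weyl-translated parameters in the octant analysis of Section \ref{subsec:2.2}; e.g.\ its nonzero map $T(i)\to T(j)$ in case (1-a) comes from $\A_{-i,n-1+j}$, not from $(-i,-j)$. Two small corrections to your middle step: at $(-i,-j)\in L_{\operatorname{even}}$ the basis is $\AAt_{-i,-j}$ and $\C_{-i,-j}$ (there $\A_{-i,-j}=0$ and $\B$ is zero or undefined), neither of which kills $F(i)$, so the line of operators killing $F(i)$ is spanned by a genuine linear combination (its existence uses that both restrictions to $F(i)$ lie in the one-dimensional $\operatorname{Hom}_{G'}(F(i),F(j))$); and at these degenerate parameters the functional-equation constants are of the form $0\cdot\infty$, so the conditions are not read off from Gamma factors alone but require the spherical-vector and image theorems (Theorems \ref{thm:I.4}, \ref{thm:image}, \ref{thm:kernel}) --- which is exactly the input the paper supplies and the obstacle you correctly flagged.
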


Similar results were obtained by  Loke \cite{Lo} for the $(\mathfrak{g},K)$-modules of representations of  $G=GL(2,{\mathbb{C}})$
 and $G'=GL(2,{\mathbb{R}})$.
 
 \medskip
 
We also determine the multiplicity of (possibly, reducible)
 spherical principal series representations
 $I(\lambda)$ of $G$
 and irreducible finite-dimensional representations
 $F(j)$, respectively infinite-dimensional ones $T(j)$
 of the subgroup $G'$ 
 in Theorem \ref{thm:mIF}:
\begin{theorem}
\label{thm:mIFintro}
Suppose $j \in {\mathbb{N}}$. 
\begin{enumerate}
\item[{\rm{1)}}]
$
m(I(\lambda),F(j))
=1
\quad\quad
\text{for all }\,\, \lambda \in {\mathbb{C}}.  
$
\item[{\rm{2)}}]
$m(I(\lambda),T(j))
=
\begin{cases}
1
\quad&\text{if }\,\, \lambda+j \in -2{\mathbb{N}},
\\
0
\quad&\text{if }\,\, \lambda+j \not\in -2{\mathbb{N}}.
\end{cases}
$
\end{enumerate}
\end{theorem}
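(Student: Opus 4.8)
The plan is to deduce both parts from Theorem~\ref{thm:I.1}, applied to the two reducible principal series $J(-j)$ and $J((n-1)+j)$ of $G'$, together with their submodule structure and the functional equation relating $\tA{\lambda}{\nu}$ to the Knapp--Stein intertwining operator of $G'$. Recall that $F(j)$ is the unique irreducible submodule of $J(-j)$ with $J(-j)/F(j)\cong T(j)$, and dually that $T(j)$ is the unique irreducible submodule of $J((n-1)+j)$ with $J((n-1)+j)/T(j)\cong F(j)$, and that the normalized Knapp--Stein operator $J((n-1)+j)\to J(-j)$ is nonzero with image $F(j)$ and kernel the socle $T(j)$. Composing with the inclusions $F(j)\hookrightarrow J(-j)^\infty$ and $T(j)\hookrightarrow J((n-1)+j)^\infty$ then gives injections $\operatorname{Hom}_{G'}(I(\lambda)^\infty|_{G'},F(j))\hookrightarrow\operatorname{Hom}_{G'}(I(\lambda)^\infty|_{G'},J(-j)^\infty)$ and $\operatorname{Hom}_{G'}(I(\lambda)^\infty|_{G'},T(j))\hookrightarrow\operatorname{Hom}_{G'}(I(\lambda)^\infty|_{G'},J((n-1)+j)^\infty)$. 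Since $(n-1)+j\ge 0$, the pair $(\lambda,(n-1)+j)$ never lies in $L_{\operatorname{even}}$ (I assume $n\ge 2$; $n=1$ requires only minor changes), so Theorem~\ref{thm:I.1} gives $m(I(\lambda),J((n-1)+j))=1$, hence $m(I(\lambda),T(j))\le 1$ for all $\lambda$; and since $(\lambda,-j)\in L_{\operatorname{even}}$ if and only if $\lambda+j\in-2{\mathbb N}$, the dimension $m(I(\lambda),J(-j))$ is $1$ for $\lambda+j\notin-2{\mathbb N}$ and $2$ for $\lambda+j\in-2{\mathbb N}$.

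The key point I would establish next is that $m(I(\lambda),T(j))\ge 1$ if and only if $\lambda+j\in-2{\mathbb N}$. The space $\operatorname{Hom}_{G'}(I(\lambda)^\infty|_{G'},J((n-1)+j)^\infty)$ is spanned by the renormalized regular operator $\tA{\lambda}{(n-1)+j}$, which is nonzero, so its image is either the socle $T(j)$ or all of $J((n-1)+j)$; thus $m(I(\lambda),T(j))\ge 1$ exactly when the composition of $\tA{\lambda}{(n-1)+j}$ with the Knapp--Stein operator $J((n-1)+j)\to J(-j)$ vanishes. By the functional equation this composition equals an explicit meromorphic scalar times the renormalized operator $\tA{\lambda}{-j}$, and since the latter is nowhere zero the composition vanishes exactly on the zero set of that scalar; the content is that this zero set is $\{\lambda+j\in-2{\mathbb N}\}$. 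Granting this, part~2 follows from $m(I(\lambda),T(j))\le 1$, and it also yields $m(I(\lambda),F(j))\ge 1$ for all $\lambda$: if $\lambda+j\notin-2{\mathbb N}$ then $m(I(\lambda),T(j))=0$, so the unique operator $I(\lambda)\to J(-j)$ must have image inside $F(j)$; if $\lambda+j\in-2{\mathbb N}$ then the composition map $\operatorname{Hom}_{G'}(I(\lambda)^\infty|_{G'},J(-j)^\infty)\to\operatorname{Hom}_{G'}(I(\lambda)^\infty|_{G'},T(j))$ has $2$-dimensional source and at most $1$-dimensional image, so any nonzero element of its kernel (concretely a residue of the meromorphic family $\tA{\lambda}{\nu}$, or a suitable singular operator) is a nonzero operator $I(\lambda)\to F(j)$.

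For the upper bound $m(I(\lambda),F(j))\le 1$: when $I(\lambda)$ is irreducible this is the multiplicity-one theorem of Sun--Zhu \cite{S-Z}; when $I(\lambda)$ is reducible, write $\lambda=-i$ or $\lambda=n+i$ with $i\in{\mathbb N}$ and apply $\operatorname{Hom}_{G'}(\,\cdot\,,F(j))$ to the exact sequences $0\to F(i)\to I(-i)\to T(i)\to 0$, respectively $0\to T(i)\to I(n+i)\to F(i)\to 0$, obtaining $m(I(\lambda),F(j))\le m(F(i),F(j))+m(T(i),F(j))$, which equals $1$ in every case by Theorem~\ref{thm:compointro}. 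Combined with the lower bound this gives part~1, and then part~2 follows completely from the dichotomy of the previous paragraph.

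The main obstacle is the computation underlying the dichotomy: identifying the zero set of the gamma factor in the functional equation $(\text{Knapp--Stein of }G')\circ\tA{\lambda}{(n-1)+j}=(\ast)\cdot\tA{\lambda}{-j}$ with $\{\lambda+j\in-2{\mathbb N}\}$, equivalently determining precisely for which parameters the renormalized regular operator maps onto $J((n-1)+j)$ rather than into its socle. This rests on the explicit formulas for $\tA{\lambda}{\nu}$, its normalization and its residues established earlier in the paper. A secondary subtlety is the bookkeeping with the disconnectedness of $G$ and $G'$: this is what forces the sum $m(F(i),F(j))+m(T(i),F(j))$ to equal $1$ — with the two summands split according to the parity conditions in Theorem~\ref{thm:compointro} — rather than $2$.
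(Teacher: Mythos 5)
Your proposal is correct; part 2 follows essentially the paper's route, while part 1 is argued differently. For part 2 the paper likewise bounds $m(I(\lambda),T(j))$ by $m(I(\lambda),J(n-1+j))=1$ and decides when the unique operator lands in the socle via the functional equation $\T{n-1+j}{-j}\circ\widetilde{\mathbb A}_{\lambda,n-1+j}=\frac{\pi^{(n-1)/2}}{\Gamma(n-1+j)}\,\widetilde{\mathbb A}_{\lambda,-j}$ together with the vanishing criterion of Theorem \ref{thm:II.1}; this is exactly the content of Theorem \ref{thm:ImageT}(1), so the ``main obstacle'' you flag is already settled by Theorems \ref{thm:II.1} and \ref{thm:II.2} (one wording slip: with the normalization of Theorem \ref{thm:II.2} the scalar is constant in $\lambda$ and it is $\widetilde{\mathbb A}_{\lambda,-j}$ that vanishes on $L_{\operatorname{even}}$; it is the renormalized operator $\AAt_{\lambda,-j}$ of Proposition \ref{prop:nuneg} that is nowhere zero, with the $\lambda$-dependence then sitting in the factor $1/\Gamma(\frac{\lambda+j}{2})$ — your intended reading is the correct one). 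For part 1 the paper is more concrete: the lower bound comes from the explicit image identity $\operatorname{Image}\AAt_{\lambda,-j}=F(j)$ (proved by noting the kernel $\KAA{\lambda}{-j}$ is a polynomial in $x$ of degree at most $j$), and the upper bound at $L_{\operatorname{even}}$ from $\operatorname{Image}\C_{\lambda,-j}=J(-j)_{K'}\supsetneq F(j)$, so that only a line in the two-dimensional space $H(\lambda,-j)$ maps into $F(j)$. You instead obtain the lower bound softly from the part-2 dichotomy plus a kernel dimension count for post-composition $H(\lambda,-j)\to\operatorname{Hom}_{G'}(I(\lambda),T(j))$, and the upper bound from Sun--Zhu \cite{S-Z} in the irreducible case and from left exactness of $\operatorname{Hom}_{G'}(-,F(j))$ on the composition series together with Theorem \ref{thm:compointro} (the sum $m(F(i),F(j))+m(T(i),F(j))$ equals $1$ in every case) in the reducible case. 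This is legitimate — Theorem \ref{thm:compointro} is proved in the paper without reference to the present statement, so there is no circularity — and it buys independence from the image computations for $\AAt_{\lambda,\nu}$ and $\C_{\lambda,\nu}$; the cost is that part 1 now rests on the full strength of Theorem \ref{thm:compointro}, hence on the classification Theorem \ref{thm:II.5} and the octant analysis, where the paper needs only the two elementary image facts above. Your $n\ge 2$ caveat (so that $(\lambda,n-1+j)\notin L_{\operatorname{even}}$) matches an implicit assumption in the paper's own argument and is not a defect.
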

\vskip 2pc
In the special case $\nu=0$, 
 our results 
 on symmetry breaking operators are closely related to the analysis on
the indefinite hyperbolic space
\[
X(n+1,1) :=
\{ \xi \in \mathbb{R}^{n+2}: 
   \xi_0^2 + \dots + \xi_{n}^2 - \xi_{n+1}^2 = 1 \}
 \simeq
G / G'.
\]
As a hypersurface of the Minkowski space
$$\mathbb{R}^{n+1,1} \equiv
(\mathbb{R}^{n+2}, d\xi_0^2 + \dots + d \xi_{n}^2 - d \xi_{n+1}^2),
$$
$X(n+1,1)$ carries a Lorentz metric for which the sectional curvature 
is constant $-1$,
and thus is a model space of anti-de Sitter manifolds.
The Laplacian $\Delta$ of the Lorentz manifold
 $X(n+1,1)$
 is a hyperbolic operator,
 and for $\lambda \in \mathbb{C}$,
we consider its eigenspace:
\[
\mathcal{S}ol (G/G'; \lambda)
:= \{ f \in \mathcal{C}^\infty (G/G') : \Delta f = -\lambda(\lambda-n) f \}.
\]
The underlying $(\mathfrak{g},K)$-module
 $\mathcal{S}ol (G/G'; \lambda)_K$
 is isomorphic to the underlying $(\mathfrak{g},K)$-module
 of a principal series representation
 \cite{Sch}.  
For $\lambda= -i \in -\mathbb{N}$
 there are two inequivalent reducible principal series representations
 $I(-i)_K$ and $I(n+i)_K$, 
 and our results on the symmetry breaking operators
 $\A_{\lambda,0}$
 for $(\lambda,0) \in L_{\operatorname{even}}$
 give another proof
 of the following $(\mathfrak{g},K)$-isomorphism:
\[
     \mathcal{S}ol (G/G'; \lambda)_K
     \simeq
     \begin{cases}
     I(-i)_K \qquad &\text{if } \lambda =-i \in -2 {\mathbb{N}}, 
     \\
     I(n+i)_K \qquad &\text{if } \lambda =-i \in -2 {\mathbb{N}}-1.  
     \end{cases}
\]

More generally,
 we apply our results
 on symmetry breaking operators
 for $\nu \in -{\mathbb{N}}$
 to the analysis 
 on vector bundles.  
We note that harmonic analysis
 on (general) semisimple symmetric spaces
 has been studied actively by many people
 during the last fifty years,
however,
 not much has been known for vector bundle sections.  
We construct
 in Theorem \ref{thm:lise}
 some irreducible subrepresentations
 in the space of sections
 of the $G$-equivariant vector bundles
 $G \times_{G'} F(j) \to G/G'$
 by using symmetry breaking operators
 $\A_{\lambda,\nu}$.

\bigskip
We also obtain branching laws for unitary complementary series representations  of $I(\lambda)$ $(0< \lambda < n)$, which by abuse of notation we also denote by $I(\lambda)$.
For $\lambda \in {\mathbb{R}}$, 
 we set 
\index{sbon}{Dlmd@$D(\lambda)$}
\[
D(\lambda)
:=
\{\nu\in \lambda-1 + 2 {\mathbb{Z}}:
\frac {n-1}{2}< \nu \le \lambda-1\}.  
\]
Then $D(\lambda)$
 is a finite set,
 and $D(\lambda)$ is non-empty
 if and only if $\lambda> \frac{n+1}{2}$.  
As an application
of differential symmetry breaking operators
 $\C_{\lambda,\nu}$, 
 we have
\begin{theorem}
[branching law of complementary series,
Theorem \ref{thm:rest}]
\label{thm:branch}
Suppose that $\frac {n+1} 2 < \lambda < n$.  
Then $J(\nu)$ is a complementary series representation
 of the subgroup $G'$
 for any $\nu \in D(\lambda)$.  
Moreover,
 the restriction of $I(\lambda)$ to $G'$
 contains the finite sum $\bigoplus_{\nu \in D(\lambda)} J(\nu)$
 as discrete summands.  
\end{theorem}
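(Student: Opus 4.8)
\emph{Proof proposal.}
The plan is to treat the two assertions separately. The first -- that $J(\nu)$ is a complementary series representation of $G'=O(n,1)$ for every $\nu\in D(\lambda)$ -- is immediate from the definition of $D(\lambda)$: since $\tfrac{n-1}{2}<\nu\le\lambda-1<n-1$ we have $0<\nu<n-1$, which places $J(\nu)$ in the complementary range of $G'$ (the unitarizability criterion recalled in the introduction for $O(n+1,1)$, with $n$ replaced by $n-1$). Likewise $\tfrac{n+1}{2}<\lambda<n$ forces $0<\lambda<n$, so $I(\lambda)$ is itself a complementary series representation of $G$, and $I(\lambda)|_{G'}$ is a genuine restriction problem for unitary representations. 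It then suffices to produce, for each $\nu\in D(\lambda)$, a nonzero continuous $G'$-homomorphism from $J(\nu)$ into the unitary restriction $I(\lambda)|_{G'}$, and to check that for distinct $\nu$ these have orthogonal images.

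For this I would use the differential symmetry breaking operators $\C_{\lambda,\nu}$. The condition defining $D(\lambda)$, namely $\nu\in\lambda-1+2\mathbb{Z}$ with $\nu\le\lambda-1$, says precisely that $\lambda-\nu$ is a positive odd integer, so by the construction and classification of symmetry breaking operators established earlier in the paper, $\C_{\lambda,\nu}\colon I(\lambda)^\infty\to J(\nu)^\infty$ is a well-defined \emph{nonzero} differential -- hence continuous -- $G'$-intertwining operator. The key claim is that $\C_{\lambda,\nu}$ extends to a \emph{bounded} $G'$-operator between the Hilbert completions carrying the complementary series inner products, still denoted $\C_{\lambda,\nu}$. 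Granting this, the Hilbert adjoint $\C_{\lambda,\nu}^{*}$ is a nonzero continuous $G'$-homomorphism from $J(\nu)$ to $I(\lambda)|_{G'}$; since $J(\nu)$ is irreducible and unitary, $\C_{\lambda,\nu}\C_{\lambda,\nu}^{*}$ is a positive scalar, so after normalization $\C_{\lambda,\nu}^{*}$ is an isometric $G'$-embedding of $J(\nu)$ into $I(\lambda)|_{G'}$. For distinct $\nu,\nu'\in D(\lambda)$ the representations $J(\nu)$ and $J(\nu')$ have distinct infinitesimal characters $\{\nu,n-1-\nu\}\ne\{\nu',n-1-\nu'\}$ -- here the lower bound $\nu>\tfrac{n-1}{2}$ built into $D(\lambda)$ guarantees $n-1-\nu\notin D(\lambda)$ -- hence are inequivalent, and the corresponding embeddings have orthogonal images. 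As $D(\lambda)$ is finite, this realizes $\bigoplus_{\nu\in D(\lambda)}J(\nu)$ as a closed $G'$-invariant subspace of $I(\lambda)|_{G'}$, i.e. as discrete summands.

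The hard part is the boundedness of $\C_{\lambda,\nu}$ for the complementary series norms: a differential operator is far from bounded on the $L^2$-models, so one must use the precise Sobolev-type unitary structures. I would pass to the noncompact (flat) pictures, realizing $I(\lambda)$ and $J(\nu)$ on the nilradicals $\mathbb{R}^{n}\supset\mathbb{R}^{n-1}$, in which the complementary series inner products are equivalent to homogeneous Sobolev norms $\dot{H}^{s}(\mathbb{R}^n)$ and $\dot{H}^{s'}(\mathbb{R}^{n-1})$ with $s,s'$ explicit in $\lambda,\nu$, and in which $\C_{\lambda,\nu}$ becomes a constant-coefficient differential operator -- a finite sum of compositions of powers of $\Delta_{\mathbb{R}^{n-1}}$ with transverse derivatives $\partial_t$, evaluated at $t=0$. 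The estimate then reduces, term by term, to the trace (restriction-to-hyperplane) Sobolev inequality, and the inequalities $\tfrac{n-1}{2}<\nu\le\lambda-1$ are exactly what make the Sobolev exponent of each term equal to $s'$ while leaving enough transverse regularity for the restriction to $\mathbb{R}^{n-1}$ to be valid. An alternative, closer to the spirit of the rest of the paper, is to use the functional equations relating $\C_{\lambda,\nu}$ to the Knapp--Stein intertwining operators of $G$ and $G'$ in order to rewrite $\C_{\lambda,\nu}^{*}\C_{\lambda,\nu}$ through those operators, reducing the norm comparison to the positivity of the Knapp--Stein operators in the relevant parameter range. Either way, this single inequality -- together with the (already available) fact that $\C_{\lambda,\nu}\ne0$ throughout $\nu\in D(\lambda)$ -- is the crux; everything else is the formal Schur-type argument above.
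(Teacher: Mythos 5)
There is a genuine gap, and it sits exactly at the operator you build everything on. For $\nu\in D(\lambda)$ one has $\lambda-\nu\in\{1,3,5,\dots\}$, and the paper's classification (Fact \ref{fact:5.2}, Theorem \ref{thm:dimHom}(3)) says that a nonzero \emph{differential} symmetry breaking operator $I(\lambda)\to J(\nu)$ exists if and only if $\nu-\lambda\in 2\mathbb{N}$, i.e.\ $(\lambda,\nu)\in /\!/$. So the operator $\widetilde{\mathbb{C}}_{\lambda,\nu}\colon I(\lambda)^\infty\to J(\nu)^\infty$ that you declare ``well-defined and nonzero'' because $\lambda-\nu$ is a positive odd integer does not exist; for these parameters $H(\lambda,\nu)_{\operatorname{diff}}=0$ and the one-dimensional space $\operatorname{Hom}_{G'}(I(\lambda),J(\nu))$ is spanned by the regular (integral) operator $\widetilde{\mathbb{A}}_{\lambda,\nu}$. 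What the condition $\lambda-\nu-1\in 2\mathbb{N}$ actually buys is that the \emph{reflected} pair $(n-\lambda,\,n-1-\nu)$ lies in $/\!/$, and this is the operator the paper uses: one takes $\widetilde{\mathbb{C}}_{n-\lambda,n-1-\nu}\colon I(n-\lambda)\to J(n-1-\nu)$ and its dual $\widetilde{\mathbb{C}}_{n-\lambda,n-1-\nu}^{\vee}\colon J(\nu)^{-\infty}\to I(\lambda)^{-\infty}$, which produces the embedding directly (Theorem \ref{thm:rest}, Proposition \ref{prop:Cdual}). The parameter slip also undermines your analytic step as written: for $\tfrac{n+1}{2}<\lambda<n$ the complementary series norm on $I(\lambda)$ corresponds in the Fourier picture to the weight $|\xi|^{\,n-2\lambda}$ with $n-2\lambda<-1$, i.e.\ a negative-order Sobolev norm, on which no restriction-to-hyperplane estimate is available, so ``boundedness of $\widetilde{\mathbb{C}}_{\lambda,\nu}$'' cannot even be formulated along the lines you suggest.

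Once the parameters are reflected, your outline essentially becomes the paper's proof, with one methodological difference worth noting. Your Sobolev bookkeeping then does come out right: the source norm is $\dot H^{\lambda-\frac n2}$ with $\lambda-\tfrac n2>\tfrac12$, the operator has order $\lambda-\nu-1$, and the half-derivative loss from the trace leaves exactly the exponent $\nu-\tfrac{n-1}{2}$ of $J(\nu)$. The paper avoids trace theorems altogether: in the $L^2$-model (Proposition \ref{prop:L2model}) the dual operator $\widetilde{\mathbb{C}}_{n-\lambda,n-1-\nu}^{\vee}$ becomes multiplication by the polynomial $\widetilde{C}_{\lambda-\nu-1}^{\frac{n+1}{2}-\lambda}(|\xi|^2,\xi_n)$ applied to functions pulled back from $\mathbb{R}^{n-1}$, and the required bound $L^2(\mathbb{R}^{n-1})_{n-1-2\nu}\to L^2(\mathbb{R}^{n})_{n-2\lambda}$ is an explicit Beta-function integral (Lemma \ref{lem:Bfunct}), valid precisely when $\nu>\tfrac{n-1}{2}$; irreducibility of $J(\nu)$ then upgrades continuity to isometry up to scalar, as in your Schur-type argument. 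Your remarks on the unitarizability of $J(\nu)$ for $\nu\in D(\lambda)$ and on the pairwise inequivalence of the $J(\nu)$'s are fine, but the proposal as it stands rests on a nonexistent operator and so does not constitute a proof.
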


\bigskip

About 20 years ago Gross and Prasad \cite{GP}
 formulated a conjecture about  the restriction of an irreducible admissible tempered representation
  of an inner form $G$ of the group $O(n)$
 over a local field to a subgroup  $G''$ which is an inner form $G'=O(n-1)$. 
The conjecture in \cite{GP}
 relates the existence of nontrivial homomorphisms
 to the value of an L-function at 1/2 and the value of the epsilon factor. We expect to come back to this in a later paper.

\bigskip
Let us enter the proof
 of Theorem \ref{thm:I.1}
 and its refinement 
(Theorem \ref{thm:II.5} below)
 in a little more details.  
We first construct an analytic family
 of (generically) regular symmetry breaking operators
 and show

\begin{theorem} 
[regular symmetry breaking operators, 
Theorem \ref{thm:poleA}]
\label{thm:II.1}
There exists a family 
 of symmetry breaking operators
$
\index{sbon}{Alnt@$\A_{\lambda,\nu}$|textbf}
   \A_{\lambda,\nu} 
  \in  \operatorname{Hom}_{G'} (I(\lambda ), J(\nu))
$ 
 that depends holomorphically
 for entire $(\lambda, \nu) \in {\mathbb{C}}^2$
 with the distribution kernel
\[
\index{sbon}{Kxt@$\KA{\lambda}{\nu} (x, x_n)$|textbf}
\KA{\lambda}{\nu} (x,x_n)
:= \frac{1}{\Gamma(\frac{\lambda+\nu-n+1}{2})\Gamma(\frac{\lambda-\nu}{2})}
   |x_n|^{\lambda+\nu-n} (|x|^2 + x_n^2)^{-\nu}.
\]
Further,
 $\widetilde{\mathbb{A}}_{\lambda,\nu}$ is nonzero
 if and only if $(\lambda, \nu) \in {\mathbb{C}}^2 \setminus L_{\operatorname{even}}$.  
\end{theorem}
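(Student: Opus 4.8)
The plan is to prove Theorem~\ref{thm:II.1} by studying the system of partial differential equations satisfied by the distribution kernel of a symmetry breaking operator, following the general strategy outlined in the introduction. Recall that a symmetry breaking operator $T\in\operatorname{Hom}_{G'}(I(\lambda),J(\nu))$ is, by the Schwartz kernel theorem, given by a distribution $K_T$ on $G/P$ (equivalently, on the open Bruhat cell $\mathbb{R}^n$ with coordinates $(x,x_n)$, $x\in\mathbb{R}^{n-1}$) that transforms in a prescribed way under the left action of $P'$. The first step is to write out these equations explicitly: the invariance under the unipotent radical $N'_-$ of $P'$ gives a holonomic system of second-order differential equations, while the invariance under the Levi part $M'A'$ prescribes the homogeneity degree and the parity in $x_n$. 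One then checks directly that $|x_n|^{\lambda+\nu-n}(|x|^2+x_n^2)^{-\nu}$ is a solution of this system away from the singular support, so the real content is the meromorphic continuation in $(\lambda,\nu)$ and the normalization by the two Gamma factors.

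First I would establish the meromorphic continuation of the (unnormalized) distribution $|x_n|^{\lambda+\nu-n}(|x|^2+x_n^2)^{-\nu}$ in $(\lambda,\nu)\in\mathbb{C}^2$ as a tempered-distribution-valued holomorphic function, except for possible simple poles along the hyperplanes $\lambda+\nu-n\in-2\mathbb{N}-1$ (coming from $|x_n|^s$, whose poles sit at $s\in-2\mathbb{N}-1$ since the even part of $|x_n|^s$ extends entire) and $\lambda-\nu\in-2\mathbb{N}$ (coming from the Riesz-type factor $(|x|^2+x_n^2)^{-\nu}$ restricted along $x_n=0$, i.e.\ the residues supported on the smaller flag variety). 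Dividing by $\Gamma(\tfrac{\lambda+\nu-n+1}{2})$ and $\Gamma(\tfrac{\lambda-\nu}{2})$ exactly cancels these two families of poles, because $\Gamma(\tfrac{s+1}{2})$ has poles precisely at $s\in-2\mathbb{N}-1$ and $\Gamma(\tfrac{t}{2})$ has poles precisely at $t\in-2\mathbb{N}$. Hence $\KA{\lambda}{\nu}$ is a holomorphic family of distributions on all of $\mathbb{C}^2$, and by continuity it still satisfies the $N'_-$-equations and the $M'A'$-equivariance everywhere; this produces the operator $\A_{\lambda,\nu}$ and the holomorphy assertion.

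The nonvanishing statement is the heart of the matter and I expect it to be the main obstacle. One direction is to show $\A_{\lambda,\nu}\neq 0$ off $L_{\operatorname{even}}$: here I would evaluate the operator (or its kernel paired against a suitable test function, e.g.\ a Gaussian or the $K$-invariant vector) and read off a nonzero answer generically, then use that the set where a holomorphic family vanishes is a union of hypersurfaces, and pin down by an explicit computation on one or two well-chosen lines (say $\nu=0$, or $\lambda+\nu=n$, where the kernel degenerates to a pure power of $|x_n|$ or a pure Riesz kernel whose action is classically known) that the only bad locus is $L_{\operatorname{even}}$. The subtle point is the double cancellation: at a point of $L_{\operatorname{even}}$, \emph{both} Gamma factors in the denominator blow up, so one must check that the numerator distribution $|x_n|^{\lambda+\nu-n}(|x|^2+x_n^2)^{-\nu}$ really vanishes to \emph{second} order there (i.e.\ lies in the square of the ideal of that point in the ring of holomorphic functions), making $\KA{\lambda}{\nu}$ genuinely zero rather than merely finite; away from $L_{\operatorname{even}}$ at most one Gamma factor is singular, matched by a first-order zero of the numerator, so the kernel is a nonzero residue distribution. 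Carrying out this order-of-vanishing bookkeeping — comparing the order of the pole of the numerator distribution along each hyperplane with the order of the pole of the Gamma factors, and in particular confirming the second-order vanishing exactly on $L_{\operatorname{even}}=\{(-i,-j):j\le i,\ i\equiv j\bmod 2\}$ — is the delicate computation on which the clean dichotomy rests.

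Finally I would record that the three pieces fit together: holomorphy on $\mathbb{C}^2$ from the Gamma normalization, membership in $\operatorname{Hom}_{G'}(I(\lambda),J(\nu))$ from the PDE/equivariance that persists under holomorphic continuation, and the nonvanishing exactly on $\mathbb{C}^2\setminus L_{\operatorname{even}}$ from the residue analysis above. This gives Theorem~\ref{thm:II.1}, and in particular the lower bound $m(I(\lambda),J(\nu))\ge 1$ off $L_{\operatorname{even}}$ and $\ge 2$ on $L_{\operatorname{even}}$ (the second basis vector on $L_{\operatorname{even}}$ coming from the singular families $\B_{\lambda,\nu}$ or $\C_{\lambda,\nu}$), which is the part of Theorem~\ref{thm:I.1} that this construction is responsible for.
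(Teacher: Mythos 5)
Your skeleton — characterize $K_T$ by the $P'$-equivariance system on the open Bruhat cell, normalize by the two Gamma factors to get holomorphy, and locate the zeros by explicit evaluation — is indeed the route the paper takes (Chapters \ref{sec:3}, \ref{sec:kfini}, \ref{sec:8}), but the two steps you yourself call ``the real content'' are not supplied, and the bookkeeping you propose for the second one is incorrect. On the continuation: you assert that $|x_n|^{\lambda+\nu-n}(|x|^2+x_n^2)^{-\nu}$ extends with at most simple poles along $\lambda+\nu-n\in-2\mathbb{N}-1$ and $\lambda-\nu\in-2\mathbb{N}$, ``coming from'' the two factors separately. A factor-by-factor argument does not work here: the singular supports $\{x_n=0\}$ and $\{0\}$ (i.e.\ $S^{n-1}$ and $[p_+]$ in $G/P$) are not transversal, which is exactly the difficulty the paper flags; incidentally your geometric attribution is reversed, since the residues along $\lambda-\nu\in-2\mathbb{N}$ are supported at the single point $[p_+]$ (they produce $\C_{\lambda,\nu}$), while the residues along $\backslash\!\backslash$ live on $S^{n-1}$. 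The paper's continuation is a genuine two-stage bootstrap: the recurrence relations \eqref{eqn:mero1}--\eqref{eqn:mero2} extend the kernel from $\Omega_0$ to $\{\operatorname{Re}(\lambda-\nu)>0\}$, and the functional equations with the Knapp--Stein operators (Theorem \ref{thm:TAAT}), themselves first proved on subdomains using the multiplicity bound of Proposition \ref{prop:upperdim} and spherical-vector evaluations, then cover all of $\mathbb{C}^2$; holomorphy at the operator level further requires Proposition \ref{prop:meroKK} combined with the $K$-finite holomorphy of Proposition \ref{prop:4.2}. Your proposal contains no substitute for this machinery; what you write is the statement to be proved, not an argument.

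On the vanishing locus, your order-of-vanishing criterion fails on both sides. For $n$ even one has $L_{\operatorname{even}}\cap\backslash\!\backslash=\emptyset$ by \eqref{eqn:LX}, so at points of $L_{\operatorname{even}}$ only $\Gamma(\frac{\lambda-\nu}{2})$ is singular, contradicting your premise that both Gamma factors blow up there; conversely, at every point of $\mathbb{X}\setminus L_{\operatorname{even}}$ both Gamma factors are singular and yet $\A_{\lambda,\nu}\neq 0$ (it is a nonzero multiple of $\C_{\lambda,\nu}$ by Theorem \ref{thm:reduction}), so your dichotomy would misclassify these points. Moreover, ``the zero set of a holomorphic family is a union of hypersurfaces'' is a fact about scalar holomorphic functions and is precisely what fails for an operator-valued family: the true vanishing set $L_{\operatorname{even}}$ has codimension two. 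For the same reason, testing against a Gaussian, the spherical vector, or a couple of well-chosen lines cannot isolate it — e.g.\ $\A_{\lambda,\nu}(\mathbf{1}_\lambda)=\frac{\pi^{\frac{n-1}{2}}}{\Gamma(\lambda)}\mathbf{1}_\nu$ vanishes on the whole set $\lambda\in-\mathbb{N}$, far larger than $L_{\operatorname{even}}$. What is actually needed is an exhaustive evaluation on a spanning family of $K$-finite vectors: the paper pairs $\KA{\lambda}{\nu}$ with the Gegenbauer-built vectors $F_\lambda[\psi,h]$, obtains a closed product formula (Lemmas \ref{lem:Beta}, \ref{lem:Bnint}, \ref{lem:KtildeFC}), and computes the intersection of the zero sets over all $K$-types to be exactly $L_{\operatorname{even}}$ (Proposition \ref{prop:4.2}). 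Without some equivalent of this computation, your plan establishes neither the non-vanishing on $\mathbb{X}\setminus L_{\operatorname{even}}$ nor the vanishing on $L_{\operatorname{even}}$ when $n$ is even.
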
 

\medskip

We recall that there exist nonzero Knapp--Stein intertwining operators 
\index{sbon}{ttt4@$\T{\nu}{m-\nu}$}
\begin{center}
$\T{\nu}{n-1-\nu}:J(\nu) \rightarrow J(n-1-\nu)  $
 \  \  and  \ \ 
\index{sbon}{ttt3@$\T{\lambda}{n-\lambda}$|textbf}
$
\T{\lambda}{n-\lambda}: I(\lambda) \rightarrow I(n-\lambda), 
$
\end{center}
with holomorphic parameters $\nu \in {\mathbb C}$
and $\lambda \in \mathbb C$, respectively.
In our normalization 
\index{sbon}{Jnu@${J(\nu)}$}
\begin{alignat*}{2}
\T{n-1-\nu}{\nu} \circ \T{\nu}{n-1-\nu}
=& \frac{\pi^{n-1}}{\Gamma(n-1-\nu)\Gamma(\nu)} \operatorname{id}
\quad
&&\text{on } J(\nu),
\\
\intertext{and }
\T{n-\lambda}{\lambda}
 \circ \T\lambda{n-\lambda}
=& \frac{\pi^{n}}{\Gamma(n-\lambda)\Gamma(\lambda)} \operatorname{id}
\quad
&&\text{on }I(\lambda).  
\end{alignat*}

The following functional identities are crucial
 in the proof of Theorems \ref{thm:II.1}
 and \ref{thm:image}.

\medskip
\begin{theorem} 
[functional identities, 
Theorem \ref{thm:TAAT}]
\label{thm:II.2}
For all $(\lambda, \nu) \in {\mathbb{C}}^2$,  
\begin{align}
 \T{n-1 -\nu}{\nu} 
 \circ 
 \widetilde{\mathbb A}_{\lambda,n-1-\nu}
 =&
 \frac{\pi^{\frac{n-1}{2}}}{\Gamma(n-1-\nu)}
   \widetilde{\mathbb A}_{\lambda,\nu}, 
\label{eqn:TAA0}
\\
 \widetilde{\mathbb A}_{n-\lambda,\nu}
 \circ \T{\lambda}{n-\lambda} 
 =& \frac{\pi^{\frac{n}{2}}}{\Gamma(n-\lambda)} \widetilde{\mathbb A}_{\lambda,\nu}.
\label{eqn:ATA0}
\end{align}

Here $\T{n-1-\nu}{\nu}$
 and $\T{\lambda}{n-\lambda}$
 are the Knapp--Stein intertwining operators
 of $G'$ and $G$, 
 respectively.  
If $\nu -n+1 \in {\mathbb{N}}$
 or $\lambda-n \in {\mathbb{N}}$, 
 then the left-hand side
 of \eqref{eqn:TAA0} or \eqref{eqn:ATA0}
 is zero,  
respectively.  
\end{theorem}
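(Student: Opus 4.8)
The plan is to verify the functional identities \eqref{eqn:TAA0} and \eqref{eqn:ATA0} by passing to distribution kernels and exploiting the explicit formula for $\KA{\lambda}{\nu}$ from Theorem \ref{thm:II.1}. Both the symmetry breaking operator $\A_{\lambda,\nu}$ and the Knapp--Stein operators $\T{\lambda}{n-\lambda}$, $\T{n-1-\nu}{\nu}$ are given by convolution-type integral operators against explicit (meromorphically normalized) distribution kernels; the Knapp--Stein kernel on $G/P$, in the noncompact picture $\mathbb{R}^n$, is the Riesz-type distribution $|x|^{-2(n-\lambda)}$ suitably normalized, and likewise $|y|^{-2(n-1-\nu)}$ on $\mathbb{R}^{n-1}$ for $G'$. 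Composing $T$ with $\A$ amounts to an integral transform of the kernel $\KA{\lambda}{\nu}$, which by homogeneity and the semigroup property of Riesz distributions should again be a kernel of the same shape $|x_n|^{\ast}(|x|^2+x_n^2)^{\ast}$ with shifted exponents, and the point is to pin down the scalar.

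First I would set up the precise integral expression for the left-hand side of \eqref{eqn:TAA0}: apply $\A_{\lambda,n-1-\nu}$ to $I(\lambda)$, landing in $J(n-1-\nu)$, then apply $\T{n-1-\nu}{\nu}$. In the noncompact model this is a two-step convolution whose composite kernel is $\int_{\mathbb{R}^{n-1}} |y|^{-2(n-1-\nu)} \, |x_n|^{(\lambda+(n-1-\nu)-n)} (|x-y'|^2 + x_n^2)^{-(n-1-\nu)} \, dy$ up to the Gamma-factor normalizations, where $x = (x', x_n)$ and $y' \in \mathbb{R}^{n-1}$. The inner integral is a classical beta-type integral: convolution of two radial homogeneous distributions on $\mathbb{R}^{n-1}$, which evaluates (via the formula for $\int |y|^{-2a}(|z-y|^2+t^2)^{-b}dy$) to a constant times $|x_n|^{?}(|x'|^2+x_n^2)^{?}$ with the exponents matching those of $\KA{\lambda}{\nu}$. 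Tracking the Gamma functions produced by this beta integral against the normalizing $\Gamma$'s in $\KA{\lambda}{\nu}$ and in the Knapp--Stein normalization should collapse to the advertised factor $\pi^{(n-1)/2}/\Gamma(n-1-\nu)$. The identity \eqref{eqn:ATA0} is entirely analogous, with the convolution now taking place on $\mathbb{R}^n$ against $|x|^{-2(n-\lambda)}$ before restricting the last coordinate; the relevant beta integral is the $n$-dimensional one, producing the factor $\pi^{n/2}/\Gamma(n-\lambda)$.

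For the vanishing assertions: when $\lambda - n \in \mathbb{N}$ the Knapp--Stein operator $\T{\lambda}{n-\lambda}$ has a pole cancelled by a zero (equivalently, in our normalization it vanishes at those points because of the $1/\Gamma(n-\lambda)$ factor implicit in its normalization), so the left-hand side of \eqref{eqn:ATA0} is zero; likewise $1/\Gamma(n-1-\nu) = 0$ forces the left-hand side of \eqref{eqn:TAA0} to vanish when $\nu - n + 1 \in \mathbb{N}$. One subtlety is that $\A_{\lambda,\nu}$ is genuinely holomorphic (no poles), so these vanishing statements are clean once the scalar identities are established on the open dense set where all integrals converge absolutely, and then propagated everywhere by holomorphy in $(\lambda,\nu)$.

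The main obstacle I expect is the careful bookkeeping of the beta-type integral and the normalization constants of the Knapp--Stein operators: the integrals defining the convolutions converge only on a region of $(\lambda,\nu)$, so one must first establish the identities there by an honest computation with the classical formula $\int_{\mathbb{R}^{m}} |y|^{2\alpha - m} (|y - z|^2 + t^2)^{\beta - m} \, dy = \frac{\pi^{m/2}\Gamma(-\alpha-\beta+m/2)\Gamma(\alpha)\Gamma(\beta)}{\Gamma(\alpha+\beta)\Gamma(m/2-\alpha)\Gamma(m/2-\beta)} \cdot (\text{power of } |z|^2 + t^2)$ (with the appropriate exponent), and only afterwards invoke meromorphic/holomorphic continuation in $(\lambda,\nu) \in \mathbb{C}^2$ to remove the convergence restriction. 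Getting every $\Gamma$ and every power of $\pi$ to land exactly as stated — including reconciling our normalization of $\T{}{}$ (fixed by the composition formulas $\T{}{}\circ\T{}{} = \pi^{n}/(\Gamma(n-\lambda)\Gamma(\lambda))\,\mathrm{id}$ etc.) with the Riesz-distribution normalization used in the kernel computation — is the delicate part; the structural shape of the answer, by contrast, is forced by homogeneity and $P'$-equivariance.
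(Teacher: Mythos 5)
Your strategy — compute the composite distribution kernels directly on a region of absolute convergence, identify the shape of the result by homogeneity, pin the scalar from a beta-type integral, and then continue holomorphically using Theorem \ref{thm:II.1} — is genuinely different from the paper's. The paper never evaluates a composite kernel: it first shows $\dim\operatorname{Hom}_{G'}(I(\lambda),J(\nu))\le 1$ off $/\!/$ (Proposition \ref{prop:upperdim}), so on a suitable open set the composition must be a scalar multiple of $\A_{\lambda,\nu}$, and it pins that scalar by evaluating both sides on the spherical vector $\mathbf{1}_\lambda$ via Propositions \ref{prop:AminK} and \ref{prop:T1}; the identities on the subdomains $\Omega_0$ and $\Omega_2$ (Lemma \ref{lem:TAAT}) are then themselves an ingredient in the proof that $\A_{\lambda,\nu}$ is holomorphic on all of $\mathbb{C}^2$ (Theorem \ref{thm:poleA}), after which the identities extend. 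As a standalone proof of the stated theorem it is legitimate for you to quote Theorem \ref{thm:II.1}, but be aware that inside the paper's development the continuation and the functional equations are interleaved, so your argument could not simply replace the paper's without reorganizing that logic.

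The concrete gap is in \eqref{eqn:ATA0}, which is not ``entirely analogous'' to \eqref{eqn:TAA0}. For \eqref{eqn:TAA0} the inner integral really is a convolution on $\mathbb{R}^{n-1}$ of the radial kernel $|y|^{-2\nu}$ with $(|y|^2+x_n^2)^{\nu-(n-1)}$ (the factor $|x_n|^{\lambda-\nu-1}$ passes through), and it closes up into a single power of $|x'-z|^2+x_n^2$ only because the exponents are exactly dual; the ``classical formula'' you quote, with a pure power on the right-hand side for general $\alpha,\beta$, is false in that generality, though harmless here. For \eqref{eqn:ATA0}, by contrast, the kernel $\KA{n-\lambda}{\nu}(w'-z,w_n)=|w_n|^{-\lambda+\nu}(|w'-z|^2+w_n^2)^{-\nu}$ is \emph{not} a radial function of $w=(w',w_n)\in\mathbb{R}^n$, so its convolution with the Riesz kernel $|w-x|^{2(\lambda-n)}$ is not an $n$-dimensional beta integral at all; it is a two-parameter integral of hypergeometric type, and your proposal supplies no way to evaluate it. It can be rescued — e.g.\ by working with the Fourier transform of $\KA{\lambda}{\nu}$ (Proposition \ref{prop:4.3}) together with an Euler transformation of ${}_2F_1$, or by importing the paper's multiplicity-one plus spherical-vector argument to fix the constant — but as written the scalar $\pi^{n/2}/\Gamma(n-\lambda)$ is not derived. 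Finally, your mechanism for the vanishing at $\lambda-n\in\mathbb{N}$ is incorrect: the normalized Knapp--Stein operator $\T{\lambda}{n-\lambda}$ does not vanish there (at $\lambda=n+i$ it is convolution with a polynomial and has image $F(i)$); the left-hand side of \eqref{eqn:ATA0} vanishes simply because, once the identity is established, it equals $\pi^{n/2}\Gamma(n-\lambda)^{-1}\A_{\lambda,\nu}$ with $1/\Gamma(n-\lambda)=0$ and $\A_{\lambda,\nu}$ holomorphic, and similarly for \eqref{eqn:TAA0}.
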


The functional identities 
 in Theorem \ref{thm:II.2}
 are extended
 to other families of singular breaking symmetry operators
 (see Theorem \ref{thm:TABC}, 
 Corollary \ref{cor:DCC}, 
 and Corollary \ref{cor:10.6}).  
\medskip

We construct other families of symmetry breaking operators
 as follows:
We define 
\index{sbon}{XX@${{\mathbb{X}}}$|textbf}
\index{sbon}{Xr@${/\!/}$|textbf}
\index{sbon}{Xl@${\backslash\!\backslash}$|textbf}
\begin{align*}
// :=& \{ (\lambda,\nu)\in {\mathbb{C}}^2 : \lambda -\nu = 0,-2,-4,\dots  \}, 
\\
\backslash \backslash :=&\{ (\lambda,\nu) \in {\mathbb{C}}^2: \lambda +\nu = n-1,n-3, n-5 \dots\}, 
\\
\mathbb{X} :=& \backslash \backslash \cap //.  
\end{align*}
We note
\begin{equation}
\label{eqn:LX}
L_{\operatorname{even}}
\subset 
\begin{cases}
{\mathbb{X}}
&\text{if $n$ is odd,}
\\
/\!/ \setminus \backslash\!\backslash
&\text{if $n$ is even.}
\end{cases}
\end{equation}

\medskip

For $\nu \in -{\mathbb{N}}$, 
 the renormalized operator 
\index{sbon}{Att@$\AAt_{\lambda,\nu}$|textbf}
 $\AAt_{\lambda,\nu}
 :=\Gamma(\frac{\lambda-\nu}2) {\widetilde{\mathbb A}}_{\lambda,\nu}$
 extends to a non-zero $G'$-intertwining map, 
 that depends holomorphically on $\lambda \in {\mathbb{C}}$.

For $(\lambda,\nu) \in \backslash\!\backslash$,
we define a family
 of {\it{singular}} $G'$-intertwining operators
\index{sbon}{Bt@$\B_{\lambda,\nu}$|textbf}
$
\widetilde{{\mathbb{B}}}_{\lambda,\nu} : I(\lambda) \to J(\nu)
$
that depends holomorphically on
$\lambda \in \mathbb{C}$ (or on $\nu \in \mathbb{C}$)
by the distribution kernel
\index{sbon}{KxtB@$\KB{\lambda}{\nu}(x,x_n)$|textbf}
\[
\KB{\lambda}{\nu} (x,x_n)
:= \frac{1}{\Gamma(\frac{\lambda-\nu}{2})}
   (|x|^2 + x_n^2)^{-\nu} \delta^{(2k)} (x_n).
\]

\medskip

For $(\lambda, \nu) \in //$, 
we set $l:=\frac 1 2 (\nu-\lambda)$
 and define a differential operator
\index{sbon}{Ct@$\C_{\lambda,\nu}$|textbf}
\[
\widetilde{\mathbb{C}}_{\lambda,\nu}
= 
\operatorname{rest}\circ\sum_{j=0}^l \frac{2^{2l-2j}}{j!(2l-2j)!}
   \prod_{i=1}^{l-j} \Bigl(\frac{\lambda+\nu-n-1}{2}+i\Bigr)
   \Delta_{\mathbb{R}^{n-1}}^j \Bigl(\frac{\partial}{\partial x_n}\Bigr)^{2l-2j}.
\]
Here $\operatorname{rest}$ denotes the restriction
to the hyperplane $x_n = 0$. 
It gives a differential symmetry breaking operator
$
   \widetilde{{\mathbb{C}}}_{\lambda, \nu} : I(\lambda) \rightarrow J(\nu)
$
of order $2l$, 
and coincides with the conformally covariant differential operator
 for the embedding $S^{n-1} \hookrightarrow S^n$, 
 which was discovered recently by A.~Juhl in \cite{Juhl}.

\medskip 
Using the support of the operators, 
 we prove the following refinement of Theorem \ref{thm:I.1}.  
We show:

\medskip
\begin{proposition}
\label{prop:II.3}
Every operator in $\operatorname{Hom}_{G'} (I(\lambda), J(\nu))$
 is in the ${\mathbb{C}}$-span of the operators $\widetilde{\mathbb A}_{\lambda,\nu}$,  $\widetilde{\widetilde{\mathbb A}}_{\lambda,\nu}$, $\widetilde{{\mathbb{B}}}_{\lambda,\nu} $ and 
 $ \widetilde{{\mathbb{C}}}_{\lambda, \nu}$.
\end{proposition}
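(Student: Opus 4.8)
The plan is to analyze an arbitrary symmetry breaking operator $T \in \operatorname{Hom}_{G'}(I(\lambda), J(\nu))$ through its distribution kernel $K_T$ and to stratify the problem according to the support of $K_T$ inside the real flag variety $G/P \simeq S^n$. First I would recall (from the general setup referenced earlier in the article, cf.\ the discussion of the finiteness of $P'\backslash G/P$ and the three closed $P'$-invariant subsets of $G/P$) that by duality $T$ corresponds to a $P'$-equivariant distribution section $K_T$ on $G/P$, which by the orbit decomposition is supported either on all of $G/P$, or on the closed $P'$-orbit $S^{n-1} \hookrightarrow S^n$, or on a single point $\{o\}$ (the fixed point of $P'$ acting on $G/P$). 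Correspondingly I would split into three mutually exclusive cases by looking at $\operatorname{Supp} K_T$.

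Next I would handle the three cases. In the \emph{open-support case} (where $K_T$ does not vanish on the open $P'$-orbit), the operator $T$ is determined up to scalar by its restriction to that open dense orbit, on which the equivariance forces a unique solution of the relevant ODE/character equation; comparing with the explicit kernel $\KA{\lambda}{\nu}(x,x_n)$ from Theorem~\ref{thm:II.1} (or its renormalization $\KAA{\lambda}{\nu}$ on the locus $\nu\in-\mathbb{N}$ where the normalizing $\Gamma$-factor forces a degeneration), I conclude $T \in \mathbb{C}\A_{\lambda,\nu} + \mathbb{C}\AAt_{\lambda,\nu}$ modulo operators supported on the smaller strata. In the case where $\operatorname{Supp} K_T \subseteq S^{n-1}$ but $K_T$ is not a single delta along the normal direction of higher codimension, I expand $K_T$ transversally as a finite sum $\sum_k g_k(x)\,\delta^{(k)}(x_n)$; the $P'$-equivariance (in particular equivariance under the dilation and the nilpotent part transverse to $S^{n-1}$) pins down the transverse order and the coefficient functions, matching the kernel $\KB{\lambda}{\nu}$ of $\B_{\lambda,\nu}$ on $\backslash\!\backslash$, so $T \in \mathbb{C}\B_{\lambda,\nu}$ modulo point-supported operators. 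Finally, in the case $\operatorname{Supp} K_T = \{o\}$, $K_T$ is a finite linear combination of derivatives of the Dirac delta at $o$, hence $T$ is a differential operator; the $G'$-intertwining property then forces it (by the classification of conformally covariant differential operators for $S^{n-1}\hookrightarrow S^n$, i.e.\ Juhl's operators, recovered as $\C_{\lambda,\nu}$ on $/\!/$) into $\mathbb{C}\C_{\lambda,\nu}$.

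Assembling these, every $T$ lies in the span of $\A_{\lambda,\nu}$, $\AAt_{\lambda,\nu}$, $\B_{\lambda,\nu}$, $\C_{\lambda,\nu}$: one subtracts the appropriate multiple of $\A$ (or $\AAt$) to kill the open part, then subtracts a multiple of $\B$ to kill the part supported on $S^{n-1}$ of top transverse order, and is left with a point-supported, hence differential, operator which must be a multiple of $\C$. The dependence on $(\lambda,\nu)$ only enters through which of these operators are nonzero and which normalization is the live one, and I would organize the write-up so that the inclusions \eqref{eqn:LX} relating $L_{\operatorname{even}}$, $/\!/$, $\backslash\!\backslash$, and $\mathbb{X}$ control the overlaps.

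The main obstacle I expect is the middle case: showing that a distribution supported on the codimension-one submanifold $S^{n-1}$ whose transverse Taylor expansion could a priori have arbitrarily high order is actually forced by $P'$-equivariance to have \emph{bounded} transverse order and to have its coefficient functions uniquely determined — equivalently, that the only solutions of the relevant system of partial differential equations with support in $S^{n-1}$ are the (renormalized) $\B_{\lambda,\nu}$ together with the point-supported pieces. This requires carefully extracting the indicial equation for the transverse variable $x_n$ from the $P'$-action and checking that it has the unique admissible root; the parameter values on $\backslash\!\backslash$ are exactly where a nontrivial such solution exists, and on $\mathbb{X}$ one must be careful that the would-be $\B$-solution does not secretly coincide (up to the point-supported correction) with a derivative of $\A$ or with $\C$, which is precisely what \eqref{eqn:LX} is there to keep track of. Everything else — the open-orbit uniqueness and the point-support/differential-operator classification — is comparatively routine given the explicit kernels and the known Juhl operators.
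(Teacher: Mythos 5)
Your skeleton (kernels, stratification of $\operatorname{Supp}K_T$ into the open $P'$-orbit, $S^{n-1}$, and the point $[p_+]$, then ``subtract known operators stratum by stratum'') is the same as the paper's filtration $H(\lambda,\nu)\supset H(\lambda,\nu)_{\operatorname{sing}}\supset H(\lambda,\nu)_{\operatorname{diff}}$, and your treatment of the punctured space is essentially Lemma \ref{lem:Sol} (homogeneity plus $O(n-1)$- and parity-invariance pin down $K_T|_{\mathbb{R}^n\setminus\{0\}}$ up to scalar, with bounded transverse order coming for free from the Euler equation \eqref{eqn:Fa}; this is not the hard part). But there is a genuine gap in the assembly step. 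Your argument presumes that whenever $K_T$ has open support (resp.\ support $S^{n-1}$), one of the explicit kernels has the \emph{same} nonzero restriction to $\mathbb{R}^n\setminus\{0\}$, so that a multiple can be subtracted. This fails exactly where it matters: for $(\lambda,\nu)\in/\!/\setminus L_{\operatorname{even}}$ the kernel $\KA{\lambda}{\nu}$ is supported at $\{[p_+]\}$ (Proposition \ref{prop:Ksupp}), $\AAt_{\lambda,\nu}$ is not available (on $/\!/$ the condition $\nu\in-\mathbb{N}$ forces $(\lambda,\nu)\in L_{\operatorname{even}}$), and similarly for $(\lambda,\nu)\in\mathbb{X}\setminus L_{\operatorname{even}}$ the kernel $\KB{\lambda}{\nu}$ is also point-supported (Proposition \ref{prop:Bsupp}). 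At such parameters a hypothetical $T$ whose kernel restricts nontrivially to $\mathbb{R}^n\setminus\{0\}$ could not be reduced by subtracting $\A_{\lambda,\nu}$, $\AAt_{\lambda,\nu}$, $\B_{\lambda,\nu}$ or $\C_{\lambda,\nu}$, and your proposal gives no reason why such a $T$ cannot exist. The worry you do flag (unbounded transverse order on $S^{n-1}$; a ``secret coincidence'' on $\mathbb{X}$) is not the real obstruction.

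What is actually needed, and what the paper proves, is a \emph{nonextension} statement at the origin: for $(\lambda,\nu)\in/\!/\setminus L_{\operatorname{even}}$ the restriction map $\mathcal{S}ol(\mathbb{R}^n;\lambda,\nu)\to\mathcal{S}ol(\mathbb{R}^n\setminus\{0\};\lambda,\nu)$ is identically zero, i.e.\ every invariant solution is supported at $\{0\}$ and hence differential (Proposition \ref{prop:surj}, split into Lemmas \ref{lem:restA0} and \ref{lem:restB0}). The proof is not formal: one embeds the would-be solution in the meromorphic families $\KA{\lambda}{\nu}$ (resp.\ $\KBB{\lambda}{\nu}$), takes the Laurent expansion $\frac1\mu F_{-1}+F_0+\cdots$ in a transversal parameter $\mu$, extracts from the Euler equation the relations $(E+n+2l)F_{-1}=0$ and $(E+n+2l)F_0=F_{-1}$ (Lemma \ref{lem:DEmero}), and uses that a point-supported $h$ with $(E+A)^2h=0$ satisfies $(E+A)h=0$ (Lemma \ref{lem:deltass}) to force the coefficient of the local solution to vanish, the nonvanishing of the residue $F_{-1}$ off $L_{\operatorname{even}}$ being supplied by Theorems \ref{thm:poleA} and \ref{thm:BK}. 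Together with Fact \ref{fact:5.2} (one-dimensionality of the differential stratum, which you do invoke correctly via Juhl's operators) and the nonvanishing/support statements for $\A_{\lambda,\nu}$, $\AAt_{\lambda,\nu}$, $\B_{\lambda,\nu}$, $\BB_{\lambda,\nu}$ (Proposition \ref{prop:ABC}), this yields the graded dimension bounds and hence the spanning statement; without some substitute for this extension-obstruction argument your proof does not close.
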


\medskip

Examining the linear independence
 of symmetry breaking operators constructed above 
we prove 

\medskip

\begin{theorem} 
[residue formulae, 
Theorem \ref{thm:reduction}]
\label{thm:II.4}
\begin{enumerate}
\renewcommand{\theenumi}{\arabic{enumi}}
\renewcommand{\labelenumi}{\upshape (\theenumi)}
 \  \\
\item
For $(\lambda,\nu) \in \backslash\!\backslash \setminus \mathbb{X}$,
we define $k:=\frac 1 2(n-1-\lambda-\nu) \in \mathbb{N}$.  
Then
\[
\widetilde{{\mathbb{A}}}_{\lambda,\nu}
= \frac{(-1)^k}{2^k(2k-1)!!}
   \widetilde{{\mathbb{B}}}_{\lambda,\nu}.
\]
\item
For
$(\nu,\lambda) \in /\!/$, 
 we define $l:=\frac 1 2(\nu-\lambda)$.  
Then 
\[
\widetilde{{\mathbb{A}}}_{\lambda,\nu}
= \frac{(-1)^{l} l! \pi^{\frac{n-1}{2}}}
          {\Gamma(\nu)2^{2l}}
   \widetilde{{\mathbb{C}}}_{\lambda,\nu}.
\]

\item
Suppose $(\lambda,\nu)\in\mathbb{X}$.
We define
$k,l \in \mathbb{N}$ as above.  
Then 
\[
\widetilde{{\mathbb{B}}}_{\lambda,\nu}
= \frac{(-1)^{l-k} 2^{k-2l}\pi^{\frac{n-1}{2}} l! (2k-1)!!}
          {\Gamma(\nu)}
   \widetilde{{\mathbb{C}}}_{\lambda,\nu}.
\]
\end{enumerate}
\end{theorem}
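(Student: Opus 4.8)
\emph{Overall strategy.}
The three operators are presented by explicit distribution kernels on the open cell $\mathbb{R}^{n}=\mathbb{R}^{n-1}_{x}\times\mathbb{R}_{x_{n}}$ of $G/P$, and a symmetry breaking operator is determined by its kernel; so the statement is a comparison of kernels. Everything rests on two standard analytic inputs. First, the normalized one--variable distribution $|t|^{\mu}/\Gamma(\tfrac{\mu+1}{2})$ on $\mathbb{R}$ is entire in $\mu$, and
\[
\left.\frac{|t|^{\mu}}{\Gamma\!\left(\tfrac{\mu+1}{2}\right)}\right|_{\mu=-2k-1}
=\frac{(-1)^{k}}{2^{k}(2k-1)!!}\,\delta^{(2k)}(t)\qquad(k\in\mathbb{N}).
\]
Second (Riesz), $(|x|^{2})^{\alpha}$ on $\mathbb{R}^{n-1}$ is meromorphic with simple poles at $\alpha=-\tfrac{n-1}{2}-q$ and
\[
\operatorname*{Res}_{\alpha=-\frac{n-1}{2}-q}(|x|^{2})^{\alpha}
=\frac{\pi^{\frac{n-1}{2}}}{2^{2q}\,q!\,\Gamma\!\left(\tfrac{n-1}{2}+q\right)}\,\Delta_{\mathbb{R}^{n-1}}^{\,q}\,\delta_{0}\qquad(q\in\mathbb{N}).
\]
Part (1) is then immediate: on $\backslash\!\backslash\setminus\mathbb{X}$ we have $\lambda+\nu-n=-2k-1$, hence $1/\Gamma(\tfrac{\lambda+\nu-n+1}{2})=1/\Gamma(-k)=0$, while off $\mathbb{X}$ the distribution $\tfrac{1}{\Gamma((\lambda-\nu)/2)}(|x|^{2}+x_{n}^{2})^{-\nu}$ is well defined and smooth in $x_{n}$ near $\{x_{n}=0\}\setminus\{0\}$; writing $\KA{\lambda}{\nu}$ as the product of this distribution with $|x_{n}|^{\lambda+\nu-n}/\Gamma(\tfrac{\lambda+\nu-n+1}{2})$ and invoking the first input, one reads off $\widetilde{\mathbb{A}}_{\lambda,\nu}=\tfrac{(-1)^{k}}{2^{k}(2k-1)!!}\,\widetilde{\mathbb{B}}_{\lambda,\nu}=\tfrac{(-1)^{k}}{2^{k}(2k-1)!!}\,\widetilde{\mathbb{B}}_{\lambda,\nu}$. (On $\mathbb{X}$ the second factor vanishes as well, the factorisation degenerates, and one argues as in (3).)

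\emph{Part (2).} Fix $(\lambda,\nu)\in/\!/$; it suffices to treat generic such pairs and extend by holomorphy along each line of $/\!/$. Here $\tfrac{\lambda-\nu}{2}=-l$, so $1/\Gamma(\tfrac{\lambda-\nu}{2})=0$, and the kernel $\KA{\lambda}{\nu}$ (entire by Theorem~\ref{thm:poleA}) restricted to $/\!/$ equals the residue of the remaining factor $\tfrac{|x_{n}|^{\lambda+\nu-n}}{\Gamma((\lambda+\nu-n+1)/2)}(|x|^{2}+x_{n}^{2})^{-\nu}$. Pair this with a test function, Taylor--expand in $x_{n}$ up to order $2l$ (the remainder is regular at $/\!/$ and is killed by the zero of $1/\Gamma(\tfrac{\lambda-\nu}{2})$), and in each $x_{n}$--integral substitute $x_{n}=|x|\,u$: the $p$--th term ($0\le p\le l$) produces
\[
\int_{\mathbb{R}}|u|^{\lambda+\nu-n+2p}(1+u^{2})^{-\nu}\,du
=\frac{\Gamma\!\bigl(\tfrac{\lambda+\nu-n+2p+1}{2}\bigr)\,\Gamma\!\bigl(\nu-\tfrac{\lambda+\nu-n+2p+1}{2}\bigr)}{\Gamma(\nu)}
\]
times $(|x|^{2})^{(\lambda-\nu-n+2p+1)/2}$ on $\mathbb{R}^{n-1}$ against $\tfrac{1}{(2p)!}\delta^{(2p)}(x_{n})$. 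On $/\!/$ the exponent equals $-\tfrac{n-1}{2}-(l-p)$: a Riesz pole with residue $\propto\Delta_{\mathbb{R}^{n-1}}^{\,l-p}\delta_{0}$, whose simple pole matches the simple zero of $1/\Gamma(\tfrac{\lambda-\nu}{2})$. When the product is formed the $\Gamma$'s conspire: on $/\!/$, $\Gamma(\nu-\tfrac{\lambda+\nu-n+2p+1}{2})=\Gamma(\tfrac{n-1}{2}+l-p)$, cancelling the Riesz denominator, while $\Gamma(\tfrac{\lambda+\nu-n+2p+1}{2})/\Gamma(\tfrac{\lambda+\nu-n+1}{2})=\prod_{i=1}^{p}(\tfrac{\lambda+\nu-n-1}{2}+i)$, reproducing the product in Juhl's operator. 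One finds that the $p$--th term is precisely $\tfrac{(-1)^{l}l!\,\pi^{(n-1)/2}}{\Gamma(\nu)\,2^{2l}}$ times the $(j=l-p)$--th term of the kernel of $\widetilde{\mathbb{C}}_{\lambda,\nu}$, with the same constant for every $p$; this is the asserted identity. (Alternatively, on $/\!/\setminus L_{\operatorname{even}}$ one knows $\dim\operatorname{Hom}_{G'}(I(\lambda),J(\nu))=1$ by Theorem~\ref{thm:dimHom}, so $\widetilde{\mathbb{A}}_{\lambda,\nu}$ and the nonzero $\widetilde{\mathbb{C}}_{\lambda,\nu}$ are proportional; the constant is pinned down by evaluating both on the spherical vector $(1+|x|^{2})^{-\lambda}$ and comparing values at the origin, and one extends to $/\!/$ by holomorphy in $\lambda$.)

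\emph{Part (3).} For $(\lambda,\nu)\in\mathbb{X}$ both degeneracies occur, $\lambda+\nu-n=-2k-1$ and $\tfrac{\lambda-\nu}{2}=-l$. Apply the same mechanism to $\widetilde{\mathbb{B}}_{\lambda,\nu}$: Leibniz reduces the pairing of $(|x|^{2}+x_{n}^{2})^{-\nu}\delta^{(2k)}(x_{n})$ with a test function to the terms $\binom{2k}{2s}(2s)!\binom{-\nu}{s}\,(|x|^{2})^{-\nu-s}\otimes\tfrac{1}{(2k-2s)!}\delta^{(2k-2s)}(x_{n})$, $0\le s\le k$; on $\mathbb{X}$, $-\nu-s=-\tfrac{n-1}{2}-(l-k+s)$, so those with $s\ge k-l$ hit Riesz poles (residue $\propto\Delta_{\mathbb{R}^{n-1}}^{\,l-k+s}\delta_{0}$) compensating the zero of $1/\Gamma(\tfrac{\lambda-\nu}{2})$, while those with $s<k-l$ vanish. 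Writing $\binom{-\nu}{s}$ as a ratio of $\Gamma$'s, using $\nu+k=\tfrac{n-1}{2}+l$ on $\mathbb{X}$ to cancel $\Gamma(\tfrac{n-1}{2}+l-k+s)$ against the Riesz denominator, and $(2k)!=2^{k}k!(2k-1)!!$, the $s$--th term comes out as $\tfrac{(-1)^{l-k}2^{k-2l}\pi^{(n-1)/2}l!\,(2k-1)!!}{\Gamma(\nu)}$ times the $(j=l-k+s)$--th term of $\widetilde{\mathbb{C}}_{\lambda,\nu}$, again with a common constant; this is (3). When $(\lambda,\nu)\notin L_{\operatorname{even}}$ it also follows from (1) and (2), but the direct computation also covers $\mathbb{X}\cap L_{\operatorname{even}}$, where $\widetilde{\mathbb{A}}_{\lambda,\nu}=0$.

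\emph{Main obstacle.} The genuine content lies in (2) and (3). One has to justify, uniformly in the parameters near $/\!/$ resp.\ $\mathbb{X}$, interchanging the pairing with analytic continuation and with the limit onto $/\!/$ resp.\ $\mathbb{X}$, and to control the Taylor remainder. The delicate point, however, is verifying that after the $\Gamma$--cancellations the \emph{same} proportionality constant emerges for every index $p$ (resp.\ $s$) --- i.e.\ that the restrictions of $\widetilde{\mathbb{A}}_{\lambda,\nu}$ to $/\!/$ and of $\widetilde{\mathbb{B}}_{\lambda,\nu}$ to $\mathbb{X}$ really coincide with the \emph{full} Juhl operator and not merely with its leading term. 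This is exactly where the precise form of Juhl's coefficients $\tfrac{2^{2l-2j}}{j!(2l-2j)!}\prod_{i=1}^{l-j}(\tfrac{\lambda+\nu-n-1}{2}+i)$ gets used, and it is the calculation I expect to demand the most care.
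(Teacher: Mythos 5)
Your fallback route for part (2) is essentially the paper's own proof: there, for $(\lambda,\nu)\in/\!/\setminus L_{\operatorname{even}}$ one shows $\operatorname{Supp}\KA{\lambda}{\nu}=\{[p_+]\}$ (Proposition \ref{prop:Ksupp}), so $\A_{\lambda,\nu}$ is a differential symmetry breaking operator and hence proportional to $\CC{\lambda}{\nu}$ by the one-dimensionality of $H(\lambda,\nu)_{\operatorname{diff}}$ (Fact \ref{fact:5.2}), the constant being fixed by Propositions \ref{prop:AminK} and \ref{prop:C11}; you replace the support-plus-uniqueness step by the full multiplicity statement of Theorem \ref{thm:dimHom}, which is legitimate (it is proved in Chapter \ref{sec:dim} without the residue formulae, so there is no circularity), and your explicit appeal to holomorphy along each line of $/\!/$ correctly covers the points where $\mathbf{1}_\lambda$ is annihilated by both operators. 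For part (1) the paper proceeds differently: it pairs both kernels against all $K$-finite vectors (Proposition \ref{prop:ABres}, whose computational core is the same one-variable residue formula you quote, but applied on $S^{n-1}$ where the radial and angular variables are separated) and concludes by density of $I(\lambda)_K$ in $I(\lambda)$.

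As written, however, there are two genuine gaps. First, in (1) your factorization identifies $\KA{\lambda}{\nu}$ with $\tfrac{(-1)^k}{2^k(2k-1)!!}\KB{\lambda}{\nu}$ only on $\mathbb{R}^n\setminus\{0\}$: the singular loci $\{x_n=0\}$ and $\{0\}$ are not transversal (exactly the difficulty the paper flags before Theorem \ref{thm:poleA}), so the meromorphically continued kernel could a priori differ from the naive product by a distribution supported at the origin. You need one more line: such a discrepancy lies in $\mathcal{S}ol_{\{0\}}(\mathbb{R}^n;\lambda,\nu)$, which is nonzero only for $(\lambda,\nu)\in/\!/$, impossible on $\backslash\!\backslash\setminus\mathbb{X}$ (alternatively, argue through $K$-finite vectors as in Proposition \ref{prop:ABres}). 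Second, for (2) and (3) your primary term-by-term computations stop precisely at the decisive verification (that every index $p$, resp.\ $s$, produces the same constant), which you acknowledge but do not carry out; and the stated fallback for (3), namely that it ``follows from (1) and (2),'' is not available to you, because your (1) excludes $\mathbb{X}$ and your treatment of (1) on $\mathbb{X}$ defers back to (3) — as written this is circular. The repair is either to extend (1) across $\mathbb{X}$ by holomorphy along the line $\lambda+\nu=n-1-2k$ (both $\tA{\lambda}{\nu}$ and $\tB{\lambda}{\nu}$ are holomorphic there and $q_B^A$ is constant; in the paper this issue does not arise since Proposition \ref{prop:ABres} holds on all of $\backslash\!\backslash$) and then combine with (2), or to argue for (3) directly as the paper does: on $\mathbb{X}\setminus L_{\operatorname{even}}$ one has $\operatorname{Supp}\KB{\lambda}{\nu}=\{[p_+]\}$ (Proposition \ref{prop:Bsupp}), so $\B_{\lambda,\nu}$ is a differential operator and is proportional to $\CC{\lambda}{\nu}$, with the constant read off from Propositions \ref{prop:B1} and \ref{prop:C11}, while on $\mathbb{X}\cap L_{\operatorname{even}}$ both sides vanish.
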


\bigskip
Theorem \ref{thm:II.4} implies 
 that singular symmetry breaking operators
 $\B_{\lambda,\nu}$ and $\C_{\lambda,\nu}$
 can be obtained
 as the residues
 of the meromorphic family 
 of (generically) regular symmetry breaking operators
 in most cases.  
An exception happens for the differential symmetry 
 breaking operator
 $\C_{\lambda,\nu}$
 for $(\lambda,\nu) \in L_{\operatorname{even}}$
 (see also Remark \ref{rem:CLeven}).  
In fact the dimension of 
 $\operatorname{Hom}_{G'} (I(\lambda), J(\nu))$
 jumps at $(\lambda,\nu) \in L_{\operatorname{even}}$
 as we have seen in Theorem \ref{thm:I.1}.

We prove a stronger form 
 of Theorem \ref{thm:I.1}
 by giving an explicit basis
 of symmetry breaking operators: 
\begin{theorem}
[explicit basis, Theorem \ref{thm:9.5}]
\label{thm:II.5}
For $(\lambda, \nu) \in {\mathbb{C}}^2$, 
 we have
\[
\operatorname{Hom}_{G'}(I(\lambda),J(\nu))
   = \begin{cases}
           \mathbb{C} \widetilde{\widetilde{{\mathbb{A}}}}_{\lambda,\nu} 
  \oplus \mathbb{C} \tilde{{\mathbb{C}}}_{\lambda,\nu}
                &\text{if\/ $(\lambda,\nu) \in L_{\operatorname{even}}$},
     \\
           \mathbb{C} \widetilde{{\mathbb{A}}}_{\lambda,\nu}
                &\text{if\/ $(\lambda,\nu) \in \mathbb{C}^2\setminus L_{\operatorname{even}}$}.
     \end{cases}
\]
\end{theorem}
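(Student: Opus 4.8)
\textbf{Proof proposal for Theorem~\ref{thm:II.5}.}
The plan is to combine the classification already obtained in Proposition~\ref{prop:II.3} with the residue formulae of Theorem~\ref{thm:II.4} to pin down an explicit basis, treating the generic locus and the discrete set $L_{\operatorname{even}}$ separately. For $(\lambda,\nu)\in{\mathbb{C}}^2\setminus L_{\operatorname{even}}$, Theorem~\ref{thm:I.1} tells us the space is one-dimensional, and Theorem~\ref{thm:II.1} guarantees that $\widetilde{\mathbb{A}}_{\lambda,\nu}$ is nonzero precisely off $L_{\operatorname{even}}$; hence $\widetilde{\mathbb{A}}_{\lambda,\nu}$ spans, which is the second case. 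The work is therefore concentrated on the case $(\lambda,\nu)\in L_{\operatorname{even}}$, where the dimension is $2$ and $\widetilde{\mathbb{A}}_{\lambda,\nu}$ vanishes, so one must exhibit two linearly independent operators from the remaining constructed families, namely $\widetilde{\widetilde{\mathbb{A}}}_{\lambda,\nu}$ and $\widetilde{\mathbb{C}}_{\lambda,\nu}$.

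First I would check that both candidate operators are nonzero on $L_{\operatorname{even}}$. For $\widetilde{\widetilde{\mathbb{A}}}_{\lambda,\nu}=\Gamma(\tfrac{\lambda-\nu}{2})\widetilde{\mathbb{A}}_{\lambda,\nu}$ one uses that on $L_{\operatorname{even}}$ we have $\lambda-\nu=-(i-j)\in-2{\mathbb{N}}$ (recall $i\ge j$, $i\equiv j\bmod2$), so $\Gamma(\tfrac{\lambda-\nu}{2})$ has a pole there that cancels the simple zero of $\widetilde{\mathbb{A}}_{\lambda,\nu}$ coming from the $\Gamma(\tfrac{\lambda-\nu}{2})^{-1}$ factor in the kernel $\KA{\lambda}{\nu}$; the renormalized operator is nonzero and depends holomorphically on $\lambda$, as already recorded in the excerpt. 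For $\widetilde{\mathbb{C}}_{\lambda,\nu}$ one observes that on $L_{\operatorname{even}}$ one has $\lambda-\nu\in-2{\mathbb{N}}$, so $(\lambda,\nu)\in/\!/$ and $l=\tfrac12(\nu-\lambda)\in{\mathbb{N}}$ is defined; the leading term of $\widetilde{\mathbb{C}}_{\lambda,\nu}$ (the $j=0$ summand) is $\operatorname{rest}\circ\tfrac{2^{2l}}{(2l)!}\prod_{i=1}^{l}(\tfrac{\lambda+\nu-n-1}{2}+i)(\tfrac{\partial}{\partial x_n})^{2l}$, and by \eqref{eqn:LX} together with $(\lambda,\nu)\in L_{\operatorname{even}}$ one has $(\lambda,\nu)\notin\backslash\!\backslash$ when $n$ is even and one checks directly that the product of shifted arguments does not vanish, so $\widetilde{\mathbb{C}}_{\lambda,\nu}\ne0$; when $n$ is odd $(\lambda,\nu)\in{\mathbb{X}}$ and the same leading-term inspection applies. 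Thus both operators are genuine nonzero elements of $\operatorname{Hom}_{G'}(I(\lambda),J(\nu))$.

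Next I would establish linear independence by a support/order argument: $\widetilde{\widetilde{\mathbb{A}}}_{\lambda,\nu}$ has distribution kernel $\Gamma(\tfrac{\lambda-\nu}{2})\KA{\lambda}{\nu}(x,x_n)$, which after renormalization still involves the factor $|x_n|^{\lambda+\nu-n}$ and is supported on all of the flag variety (its kernel is not a distribution concentrated on the hyperplane $x_n=0$), whereas $\widetilde{\mathbb{C}}_{\lambda,\nu}$ is a differential operator, so its kernel is supported on $\{x_n=0\}$. Hence no nontrivial linear combination $a\,\widetilde{\widetilde{\mathbb{A}}}_{\lambda,\nu}+b\,\widetilde{\mathbb{C}}_{\lambda,\nu}$ can vanish unless $a=0$ (forcing $b=0$ since $\widetilde{\mathbb{C}}_{\lambda,\nu}\ne0$). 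This gives two independent elements in a $2$-dimensional space, so they form a basis; combined with Proposition~\ref{prop:II.3} (which says every symmetry breaking operator lies in the span of $\widetilde{\mathbb{A}}_{\lambda,\nu},\widetilde{\widetilde{\mathbb{A}}}_{\lambda,\nu},\widetilde{\mathbb{B}}_{\lambda,\nu},\widetilde{\mathbb{C}}_{\lambda,\nu}$) and the residue identities of Theorem~\ref{thm:II.4} (which express $\widetilde{\mathbb{A}}_{\lambda,\nu}$ and $\widetilde{\mathbb{B}}_{\lambda,\nu}$ in terms of the others), we conclude $\operatorname{Hom}_{G'}(I(\lambda),J(\nu))=\mathbb{C}\widetilde{\widetilde{\mathbb{A}}}_{\lambda,\nu}\oplus\mathbb{C}\widetilde{\mathbb{C}}_{\lambda,\nu}$ on $L_{\operatorname{even}}$.

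The main obstacle I anticipate is the bookkeeping needed to verify that $\widetilde{\widetilde{\mathbb{A}}}_{\lambda,\nu}$ is genuinely nonzero and is \emph{not} itself a differential operator on $L_{\operatorname{even}}$ — one must show the renormalization removes exactly one zero, not more, and that the surviving kernel really has support off the diagonal hyperplane; this requires a careful local analysis of the distribution $|x_n|^{\lambda+\nu-n}(|x|^2+x_n^2)^{-\nu}$ near $x_n=0$, distinguishing it from the finite combinations of $\delta^{(k)}(x_n)$ that arise in $\widetilde{\mathbb{B}}_{\lambda,\nu}$ and $\widetilde{\mathbb{C}}_{\lambda,\nu}$. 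Everything else is a matter of invoking the already-established dimension count, the explicit nonvanishing criterion for $\widetilde{\mathbb{A}}_{\lambda,\nu}$, and the residue formulae.
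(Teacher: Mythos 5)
Your overall route---quote the dimension formula of Theorem~\ref{thm:I.1}, use the nonvanishing criterion for $\tA{\lambda}{\nu}$ off $L_{\operatorname{even}}$, and on $L_{\operatorname{even}}$ exhibit $\AAt_{\lambda,\nu}$ and $\C_{\lambda,\nu}$ as two nonzero independent operators---is essentially the paper's Step 3 in Section~\ref{subsec:grdim}, where one takes representatives of the graded pieces of $H(\lambda,\nu)\supset H(\lambda,\nu)_{\operatorname{sing}}\supset H(\lambda,\nu)_{\operatorname{diff}}$ (the paper proves the dimension count in the same chapter instead of quoting it, but that is a structural rather than logical difference). The genuine problem is in your independence step: you claim that on $L_{\operatorname{even}}$ the kernel of $\AAt_{\lambda,\nu}$ \emph{is supported on all of the flag variety} and is \emph{not concentrated on the hyperplane} $x_n=0$, and your anticipated verification is to show the surviving kernel has support off that hyperplane, ``distinguishing it from the finite combinations of $\delta^{(k)}(x_n)$ that arise in $\B_{\lambda,\nu}$ and $\C_{\lambda,\nu}$.'' This is false when $n$ is odd: by \eqref{eqn:LX} one then has $L_{\operatorname{even}}\subset{\mathbb{X}}\subset\backslash\!\backslash$, so $\lambda+\nu-n\in-1-2{\mathbb{N}}$ and the factor $|x_n|^{\lambda+\nu-n}/\Gamma(\tfrac{\lambda+\nu-n+1}{2})$ in $\KAA{\lambda}{\nu}$ degenerates to a multiple of $\delta^{(2k)}(x_n)$; in fact $\AAt_{\lambda,\nu}$ is then a nonzero multiple of $\BB_{\lambda,\nu}$ (Theorem~\ref{thm:reduction}~(4)), whose kernel is exactly a finite sum $\sum_i c_i\,|x|^{-2\nu-2i}\,\delta^{(2k-2i)}(x_n)$ supported in $S^{n-1}$ (Propositions~\ref{prop:SuppAtwo} and \ref{prop:BB}). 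So the verification you plan would fail for $n$ odd, and pursuing it would amount to trying to prove a false statement.

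The argument is repairable, and the repair is exactly what the paper does: the relevant dichotomy is not ``hyperplane versus full support'' but ``the single point $[p_+]$ versus anything strictly larger,'' i.e.\ differential versus non-differential (Fact~\ref{fact:Diff}~(1)). Since $\nu\in-{\mathbb{N}}$ on $L_{\operatorname{even}}$, the factors $|x|^{-2\nu-2i}$ are nonzero polynomials, so $\operatorname{Supp}\KAA{\lambda}{\nu}$ equals $S^{n-1}$ ($n$ odd) or $G/P$ ($n$ even), in either case strictly containing $\{[p_+]\}=\operatorname{Supp}\KC{\lambda}{\nu}$; hence $a\,\AAt_{\lambda,\nu}+b\,\C_{\lambda,\nu}=0$ forces $a=0$ upon restricting kernels to the complement of $[p_+]$, and then $b=0$. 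This is precisely the filtration argument behind Theorem~\ref{thm:Hombasis}, where the basis on $L_{\operatorname{even}}$ is $\{\BB_{\lambda,\nu},\C_{\lambda,\nu}\}$ for $n$ odd and $\{\AAt_{\lambda,\nu},\C_{\lambda,\nu}\}$ for $n$ even, the uniform statement of Theorem~\ref{thm:II.5} being recovered from the residue formula $\AAt_{\lambda,\nu}=q_B^A(\lambda,\nu)\,\BB_{\lambda,\nu}$. Your nonvanishing checks are fine as far as they go ($\AAt_{\lambda,\nu}\neq 0$ is Proposition~\ref{prop:nuneg}; for $\C_{\lambda,\nu}$ the $j=l$ term, with coefficient $1/l!$, gives a parameter-free reason), but the case split in $n$ forced by \eqref{eqn:LX} must be carried through the support analysis, not only through the nonvanishing of $\C_{\lambda,\nu}$.
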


\bigskip
Denote by 
\index{sbon}{1lmd@${\mathbf{1}}_{\lambda}$|textbf}
$\mathbf{1}_\lambda$
 and
\index{sbon}{1nu@${\mathbf{1}}_{\nu}$|textbf}
 $\mathbf{1}_\nu$
 the normalized spherical vectors in $I(\lambda)$ and  $J(\nu)$,
 respectively.  
The image of spherical vector $\mathbf{1}_{\lambda}$
 under the symmetry breaking operators
$\widetilde{\mathbb A}_{\lambda,\nu}$ and $\widetilde{\mathbb B}_{\lambda,\nu}$ is nonzero
 if and only if  $\lambda \not = 0,-1,-2,-3\dots $,  whereas it is always nonzero under  $\widetilde{\mathbb C}_{\lambda,\nu}$. 
More  precisely
 we prove in Propositions \ref{prop:AminK}, 
 \ref{prop:B1}, 
 and \ref{prop:C11}
 the following:

\begin{theorem}
[transformations of spherical vectors] 
\label{thm:I.4}
  
\begin{enumerate}
\renewcommand{\theenumi}{\arabic{enumi}}
\renewcommand{\labelenumi}{\upshape (\theenumi)}
 \ \\
\item
For $(\lambda, \nu) \in \mathbb{C}^2$, 
\[
\widetilde{{\mathbb A}}_{\lambda,\nu} (\mathbf{1}_\lambda)
= \frac{\pi^{\frac{n-1}{2}}}{\Gamma(\lambda)}
   \mathbf{1}_\nu.  
\]
\item
For $(\lambda, \nu) \in \backslash\!\backslash$,
we set $k:= \frac12 (n-1-\lambda-\nu)$.
Then
\[
\widetilde{{\mathbb{B}}}_{\lambda,\nu} (\mathbf{1}_\lambda)
= \frac{(-1)^k 2^k \pi^{\frac{n-1}{2}} (2k-1)!!}{\Gamma(\lambda)}
   \mathbf{1}_\nu.
\]
\item
For $(\lambda, \nu) \in /\!/$,
we set $l:= \frac12 (\nu-\lambda) \in \mathbb{N}$.
Then
\[
\widetilde{{\mathbb{C}}}_{\lambda,\nu}(\mathbf{1}_\lambda)
= \frac{(-1)^l 2^{2l} (\lambda)_{2l}}{l!} \mathbf{1}_\nu.
\]

\end{enumerate}

\end{theorem}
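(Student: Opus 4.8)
The plan is to compute each image directly in the noncompact ($N_-$-)model, where $I(\lambda)$ is realized on functions on $\mathbb{R}^n=\mathbb{R}^{n-1}\times\mathbb{R}$ with coordinates $(x,x_n)$, $J(\nu)$ on functions on $\mathbb{R}^{n-1}$, the normalized spherical vectors are $\mathbf{1}_\lambda(x,x_n)=(1+|x|^2+x_n^2)^{-\lambda}$ and $\mathbf{1}_\nu(y)=(1+|y|^2)^{-\nu}$, and a symmetry breaking operator $T$ acts on $f$ by convolving its distribution kernel in the variable $x$ and then restricting to $x_n=0$. Since $\mathbf{1}_\lambda$ spans the one-dimensional space of $K$-fixed vectors in $I(\lambda)$ and $T$ is $G'$-, hence $K'$-equivariant, the vector $T(\mathbf{1}_\lambda)$ is $K'$-fixed in $J(\nu)$ and therefore a scalar multiple of $\mathbf{1}_\nu$; because $\mathbf{1}_\nu(0)=1$, that scalar is exactly $(T\mathbf{1}_\lambda)(0)$, the value at the origin obtained by pairing the kernel of $T$ against $\mathbf{1}_\lambda$ over $\mathbb{R}^n$. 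Thus each of the three identities reduces to one explicit integral (for $\widetilde{\mathbb A}$ and $\widetilde{\mathbb B}$) or one explicit finite sum (for $\widetilde{\mathbb C}$); since both sides of each identity are holomorphic — polynomial, for $\widetilde{\mathbb C}$ — in $(\lambda,\nu)$, it is enough to carry out the computation on the open set where the integrals converge absolutely and then invoke analytic continuation.

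For (1) one has $(\widetilde{\mathbb A}_{\lambda,\nu}\mathbf{1}_\lambda)(0)=\frac{1}{\Gamma(\frac{\lambda+\nu-n+1}{2})\Gamma(\frac{\lambda-\nu}{2})}\int_{\mathbb{R}^n}|x_n|^{\lambda+\nu-n}\rho^{-2\nu}(1+\rho^2)^{-\lambda}\,dx\,dx_n$, with $\rho$ the Euclidean norm on $\mathbb{R}^n$. Passing to polar coordinates $(x,x_n)=\rho\,\omega$ with $\omega\in S^{n-1}$, the integral factors as the product of the radial integral $\int_0^\infty\rho^{\lambda-\nu-1}(1+\rho^2)^{-\lambda}\,d\rho=\tfrac12 B(\tfrac{\lambda-\nu}{2},\tfrac{\lambda+\nu}{2})$ and the spherical integral $\int_{S^{n-1}}|\omega_n|^{\lambda+\nu-n}\,d\omega=\frac{2\pi^{(n-1)/2}\Gamma(\frac{\lambda+\nu-n+1}{2})}{\Gamma(\frac{\lambda+\nu}{2})}$, both being standard one-variable beta integrals. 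Multiplying them and the prefactor, the $\Gamma$-values cancel and leave precisely $\pi^{(n-1)/2}/\Gamma(\lambda)$, which is (1).

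For (2) and (3) the quickest argument feeds (1) into the residue identities of Theorem \ref{thm:II.4}: on $\backslash\!\backslash\setminus\mathbb{X}$ (resp. on $/\!/$) that theorem identifies $\widetilde{\mathbb B}_{\lambda,\nu}$ (resp. $\widetilde{\mathbb C}_{\lambda,\nu}$) with an explicit scalar times $\widetilde{\mathbb A}_{\lambda,\nu}$, so applying it to $\mathbf{1}_\lambda$ and simplifying the scalar — for (3) using $\nu=\lambda+2l$, hence $\Gamma(\nu)/\Gamma(\lambda)=(\lambda)_{2l}$ — reproduces the asserted constants, and one passes to all of $\backslash\!\backslash$ and $/\!/$ by holomorphy of $\widetilde{\mathbb B}_{\lambda,\nu}$ in $\lambda$ and by polynomiality in $\lambda$ of $\widetilde{\mathbb C}_{\lambda,\nu}\mathbf{1}_\lambda$. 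Alternatively, and with no exceptional parameters, one computes directly. For $\widetilde{\mathbb B}$, the factor $\delta^{(2k)}(x_n)$ turns the $x_n$-integration into a $2k$-th derivative at $x_n=0$; Leibniz's rule, polar coordinates in $\mathbb{R}^{n-1}$, the cancellation $(\lambda)_{k-p}/\Gamma(\lambda+k-p)=1/\Gamma(\lambda)$, and a terminating Chu--Vandermonde summation (followed by Pochhammer sign identities on the locus $\backslash\!\backslash$) collapse the answer to $\frac{(-1)^k2^k\pi^{(n-1)/2}(2k-1)!!}{\Gamma(\lambda)}$. For $\widetilde{\mathbb C}$, one inserts $\bigl(\tfrac{\partial}{\partial x_n}\bigr)^{2m}(1+|x|^2+x_n^2)^{-\lambda}\big|_{x_n=0}=\tfrac{(2m)!}{m!}(-1)^m(\lambda)_m(1+|x|^2)^{-\lambda-m}$ and $\Delta_{\mathbb{R}^{n-1}}^j(1+|x|^2)^{-\mu}\big|_{x=0}=(-1)^j2^{2j}(\mu)_j\,\Gamma(j+\tfrac{n-1}{2})/\Gamma(\tfrac{n-1}{2})$; the factorials cancel, and after writing $\Gamma(j+\tfrac{n-1}{2})=\Gamma(\tfrac{n-1}{2})(\tfrac{n-1}{2})_j$ the sum over $j$ collapses by the Chu--Vandermonde identity $\sum_j\binom{l}{j}(a)_j(b)_{l-j}=(a+b)_l$ with $a=\tfrac{n-1}{2}$, $b=\tfrac{\lambda+\nu-n+1}{2}$, giving $\frac{(-1)^l2^{2l}(\lambda)_{2l}}{l!}$.

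The real work sits in two places. In (1) it is purely a matter of pinning down the normalization — the exact form of $\mathbf{1}_\lambda$ in the noncompact model and the precise action formula of the symmetry breaking operators, including the sign conventions in the argument of $\KA{\lambda}{\nu}$ — so that the constant comes out as $\pi^{(n-1)/2}/\Gamma(\lambda)$ with no stray factor, together with verifying absolute convergence on a nonempty open set of $(\lambda,\nu)$ before continuing; the integral itself is routine. In (2) the genuine obstacle is combinatorial: on the direct route, the terminating hypergeometric summation and the Pochhammer sign manipulations needed to see that the leftover quotient of $\Gamma$-values equals $1$ on $\backslash\!\backslash$; on the residue route, the fact that Theorem \ref{thm:II.4}(1) omits $\mathbb{X}$ and that $1/\Gamma(\nu)$ vanishes on part of $L_{\operatorname{even}}\subset\backslash\!\backslash$, so that the formula there must be obtained by continuity. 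Part (3) is comparatively clean once the two differentiation formulae above are in hand.
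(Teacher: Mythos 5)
Your proposal is correct and, in substance, is the paper's own argument. Part (1) is exactly Proposition \ref{prop:AminK}: the scalar is the pairing $\langle \KA{\lambda}{\nu},\mathbf{1}_\lambda\rangle$ (since $\mathbf{1}_\nu(0)=1$), computed in polar coordinates as a radial beta integral times the spherical integral of $|\omega_n|^{\lambda+\nu-n}$; the paper packages this inside the general $K$-finite computation of Lemma \ref{lem:KtildeFC} and finishes with the duplication formula, but for the spherical vector the two calculations coincide, and the holomorphic continuation you invoke is Theorem \ref{thm:poleA}. Your direct computation of (3) is precisely the paper's proof of Proposition \ref{prop:C11}: the two differentiation formulae you quote are Lemma \ref{lem:nLap}, and the sum collapses by the same Chu--Vandermonde identity. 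For (2) the paper argues as in your residue route, but through Proposition \ref{prop:ABres}, i.e.\ the identity $\langle\KB{\lambda}{\nu},F\rangle=(-1)^k2^k(2k-1)!!\,\langle\KA{\lambda}{\nu},F\rangle$ for all $K$-finite $F$ and all $(\lambda,\nu)\in\backslash\!\backslash$, which is proved (via the one-variable residue $|t|^{\mu}/\Gamma(\tfrac{\mu+1}{2})\rightsquigarrow\delta^{(2k)}(t)$ inside the Gegenbauer integral) independently of (2); so no continuity argument at $\mathbb{X}$ or $L_{\operatorname{even}}$ is needed, and your sketched Leibniz computation is dispensable. The one caution concerns your ``quickest'' route to (3): in the paper the constant $q_C^A(\lambda,\nu)$ of Theorem \ref{thm:reduction}(2) is itself determined by comparing the values of $\A_{\lambda,\nu}$ and $\C_{\lambda,\nu}$ on the spherical vector, i.e.\ by Propositions \ref{prop:AminK} and \ref{prop:C11}, so deriving (3) from that residue formula is circular in the paper's logical order (and on $L_{\operatorname{even}}$ the formula degenerates to $0=0$ in any case). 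Your stand-alone direct computation removes this issue; keep it as the proof of (3) and regard the residue identity as a consequence, exactly as the paper does.
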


\bigskip

We also determine
 the image of the underlying $({\mathfrak {g}}, K)$-module
 $I(\lambda)_K$ of $I(\lambda)$
 by the symmetry breaking operators 
 for all the parameters
 $(\lambda, \nu) \in {\mathbb{C}}^2$.  
Using the basis in Theorem \ref{thm:II.5}, 
we have:

\medskip
\begin{theorem}
[image of breaking symmetry operator, 
 see Theorems \ref{thm:ImageF} and \ref{thm:ImageT}]
\label{thm:image}
\ \\
\noindent
{\rm{(1)}}\enspace
Suppose that $(\lambda,\nu)\in L_{\operatorname{even}}$  
and set $j:=-\nu \in {\mathbb{N}}$.  Then
\[
  \operatorname{Image} \AAt_{\lambda,\nu}
  =F(j) \] 
and 
 \[ \operatorname{Image}\C_{{\lambda},{\nu}}=J(\nu)_{K'}.\] 

\par\noindent
{\rm{(2)}}\enspace
Suppose that $(\lambda,\nu) \not\in L_{\operatorname{even}}$. Then
\begin{alignat*}{3}
&{\rm{(2\text{-}a)}}
\qquad
&&\operatorname{Image}
\tA{\lambda}{\nu}
=F(-\nu)
\quad
&&\text{if }\nu \in -{\mathbb{N}}, 
\\
&{\rm{(2\text{-}b)}}
&&\operatorname{Image}\tA{\lambda}{\nu}
=T(\nu+1-n)_{K'}
\quad
&&\text{if }(\lambda,\nu) \in \backslash\backslash
\quad
\text{and } \nu +1-n \in{\mathbb{N}},
\\
&{\rm{(2\text{-}c)}}
&&\operatorname{Image}\tA{\lambda}{\nu}
=J(\nu)_{K'}
\quad
&&\text{otherwise}.  
\end{alignat*}
\end{theorem}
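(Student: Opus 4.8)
The plan is to determine the image of $I(\lambda)_K$ under each of the basis operators from Theorem~\ref{thm:II.5} by combining three ingredients: the functional identities of Theorem~\ref{thm:II.2} (and their extensions for $\B$, $\C$), the action on spherical vectors from Theorem~\ref{thm:I.4}, and the known submodule structure of $J(\nu)_{K'}$ at reducible points $\nu \in -\mathbb{N}$, namely the exact sequence $0 \to F(-\nu) \to J(-(-\nu)) \to \dots$ together with $J(\nu)/T(\nu+1-n)$-type quotients when $\nu+1-n \in \mathbb{N}$. First I would observe that the image of a $G'$-intertwining operator is a $(\mathfrak{g}',K')$-submodule of $J(\nu)_K'$, so at irreducible points it is automatically all of $J(\nu)_{K'}$ once we know the operator is nonzero; that handles case (2-c). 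By Theorem~\ref{thm:II.1} the operator $\tA{\lambda}{\nu}$ is nonzero precisely off $L_{\operatorname{even}}$, so in case (2) the relevant operator never vanishes, and only the reducible structure of $J(\nu)$ can obstruct surjectivity.

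For case (2-a), where $\nu = -j \in -\mathbb{N}$ but $(\lambda,\nu) \notin L_{\operatorname{even}}$: here $J(\nu)_{K'}$ has the finite-dimensional $F(j)$ as its unique irreducible submodule and $T(j)_{K'}$ as the quotient. I would show the image is exactly $F(j)$ by a two-sided squeeze. On the one hand, the image is nonzero and $G'$-stable, hence contains $F(j)$ (the socle), OR equals all of $J(\nu)_{K'}$, OR is one of the other submodules. On the other hand I would use the functional identity \eqref{eqn:TAA0}: composing with the Knapp--Stein operator $\T{n-1-\nu}{\nu}$ and exploiting that this operator kills $F(j)$ (since $F(j)$ is the image of the dual Knapp--Stein map, or equivalently $\T{n-1-\nu}{\nu}$ annihilates the finite-dimensional subrepresentation at these parameters). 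Concretely, if the image contained a vector outside $F(j)$ — i.e.\ projected nontrivially to $T(j)_{K'}$ — then post-composing $\tA{\lambda}{\nu}$ with the projection to $T(j)_{K'}$ would give a nonzero $G'$-map from $I(\lambda)_K$ onto a submodule of $T(j)_{K'}$; comparing with Theorem~\ref{thm:mIFintro}(2) or directly with the fact that $\tA{\lambda}{\nu}(\mathbf{1}_\lambda) = \frac{\pi^{(n-1)/2}}{\Gamma(\lambda)}\mathbf{1}_\nu$ (Theorem~\ref{thm:I.4}(1)) lands in the spherical line which lies in $F(j)$ when $F(j)$ contains the spherical vector — one pins down that the image is the spherical-vector-generated submodule, which is $F(j)$ because $F(j)$ is spherical and $T(j)$ is not (it has no $K'$-fixed vector). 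That last point is the crux: identifying that the cyclic module generated by $\mathbf{1}_\nu$ inside $J(\nu)_{K'}$ is precisely $F(j)$, together with showing the image is cyclic on $\mathbf{1}_\nu$.

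For case (2-b), where $(\lambda,\nu) \in \backslash\backslash$ and $m := \nu+1-n \in \mathbb{N}$: by Theorem~\ref{thm:II.4}(1) we have $\tA{\lambda}{\nu} = \frac{(-1)^k}{2^k(2k-1)!!}\widetilde{\mathbb{B}}_{\lambda,\nu}$ with $k = \frac12(n-1-\lambda-\nu)$, so I can work with $\widetilde{\mathbb{B}}_{\lambda,\nu}$ instead, whose distribution kernel is supported on the hyperplane $x_n = 0$ but is not a differential operator — it picks out the full $J(\nu)$ with $\nu - n + 1 \in \mathbb{N}$, a reducible point where $J(\nu)_{K'}$ has $T(m)_{K'}$ as its \emph{unique irreducible submodule} and $F(m)$ (wait — at $\nu$ with $\nu + 1 - n \in \mathbb{N}$, i.e.\ $\nu \geq n-1$, the representation $J(\nu)$ is a ``large-parameter'' reducible point) as quotient. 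Here I would argue the image is $T(m)_{K'}$ by showing it is nonzero, hence contains the socle $T(m)_{K'}$, and then showing it cannot be all of $J(\nu)_{K'}$: since $\widetilde{\mathbb{B}}_{\lambda,\nu}$ is built from the kernel $(|x|^2+x_n^2)^{-\nu}\delta^{(2k)}(x_n)$, pairing against test sections shows the image lies in the kernel of the natural $G'$-map $J(\nu)_{K'} \twoheadrightarrow F(m)$ (the finite-dimensional quotient is detected by polynomial sections, which are annihilated by the singular integral kernel at these parameters). Alternatively, use the functional identity of Theorem~\ref{thm:TABC} for $\B$ together with $\T{n-1-\nu}{\nu}\circ\widetilde{\mathbb{B}}$: at $\nu - n + 1 \in \mathbb{N}$ the Knapp--Stein operator $\T{n-1-\nu}{\nu}$ has image exactly the submodule $T(m)_{K'}$ — no, its kernel is $T(m)$; I would need to be careful, but the bookkeeping via which Knapp--Stein operator is zero (stated in Theorem~\ref{thm:II.2}: ``if $\nu - n + 1 \in \mathbb{N}$ the left-hand side is zero'') forces the image of $\tA{\lambda}{\nu}$ into the kernel of $\T{n-1-\nu}{\nu}$, and that kernel is $T(m)_{K'}$.

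The main obstacle I expect is case (2-b): correctly identifying the submodule structure of $J(\nu)_{K'}$ at the ``upper'' reducible points $\nu \geq n-1$ (as opposed to the ``lower'' points $\nu \in -\mathbb{N}$), keeping the roles of $F$ and $T$ straight, and verifying that the image of the singular operator $\widetilde{\mathbb{B}}_{\lambda,\nu}$ (or $\tA{\lambda}{\nu}$) genuinely hits the whole infinite-dimensional $T(m)_{K'}$ rather than a proper submodule — this requires either a clean surjectivity argument (e.g.\ $I(\lambda)_K$ is generated by its spherical vector as a $(\mathfrak{g},K)$-module away from reducible $\lambda$, so its image is the cyclic $(\mathfrak{g}',K')$-module on $\tA{\lambda}{\nu}(\mathbf{1}_\lambda) = \frac{\pi^{(n-1)/2}}{\Gamma(\lambda)}\mathbf{1}_\nu$, and then one must check that $\mathbf{1}_\nu$ generates all of $T(m)_{K'}$, i.e.\ $\mathbf{1}_\nu \notin$ any proper submodule) or a careful analysis of which $K'$-types appear in the image via the explicit kernel. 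The spherical-vector computation of Theorem~\ref{thm:I.4} is what makes the cyclicity approach viable, since $\mathbf{1}_\nu$ is a specific nonzero vector whose position in the submodule lattice of $J(\nu)_{K'}$ can be read off from the (known) lattice, reducing everything to: ``the cyclic submodule generated by the spherical vector is $F(-\nu)$ in the lower range and $T(\nu+1-n)$ in the upper range,'' which is a standard fact about $O(n,1)$ principal series that I would cite or verify directly from the $K'$-type structure.
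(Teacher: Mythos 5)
Your Knapp--Stein mechanism for the upper reducible points is indeed the paper's own argument for Theorem \ref{thm:ImageT}, but as written the proposal has genuine gaps. First, part (1) of the statement is never treated: you must show $\operatorname{Image}\AAt_{\lambda,\nu}=F(j)$ and, harder, that $\operatorname{Image}\C_{\lambda,\nu}=J(\nu)_{K'}$ at $L_{\operatorname{even}}$. The surjectivity of $\C_{\lambda,\nu}$ needs its own idea (the paper produces infinitely many linearly independent outputs from test functions supported near disjoint open pieces of $\{x_n=0\}$, so the image cannot be finite-dimensional; alternatively, within your framework, $\T{\nu}{n-1-\nu}\circ\C_{\lambda,\nu}$ is a nonzero multiple of $\B_{\lambda,n-1-\nu}\ne 0$ by Theorem \ref{thm:TABC}(2), so the image is not contained in $F(j)$). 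Second, case (2-c) is not only the irreducible points: it contains the reducible points $\nu+1-n\in\mathbb{N}$ with $(\lambda,\nu)\notin\backslash\!\backslash$, where you must rule out $\operatorname{Image}\subseteq T(\nu+1-n)_{K'}$; this is the non-vanishing direction of the very identity you use in (2-b), namely $\T{\nu}{n-1-\nu}\circ\tA{\lambda}{\nu}=\frac{\pi^{(n-1)/2}}{\Gamma(\nu)}\tA{\lambda}{n-1-\nu}\ne0$ because $(\lambda,n-1-\nu)\notin L_{\operatorname{even}}$, but you never invoke it.

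Third, the step you call the crux fails exactly where it is needed: in case (2-b) the hypotheses force $\lambda\in-\mathbb{N}$, so $\tA{\lambda}{\nu}(\mathbf{1}_\lambda)=\pi^{(n-1)/2}\mathbf{1}_\nu/\Gamma(\lambda)=0$ and $\mathbf{1}_\lambda$ lies in the finite-dimensional subrepresentation of $I(\lambda)$, hence is not cyclic; the same occurs in (2-a) and (1) whenever $\lambda\in-\mathbb{N}$, and appealing to Theorem \ref{thm:mIFintro}(2) is circular since the paper deduces that multiplicity formula from the present theorem. No cyclicity is needed: the image is a nonzero $(\mathfrak{g}',K')$-submodule of $J(\nu)_{K'}$, whose lattice is $0\subset T(j)_{K'}\subset J(\nu)_{K'}$ (resp.\ $0\subset F(j)\subset J(\nu)_{K'}$), so once it is trapped in the socle it equals the socle. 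You also have the roles of $F$ and $T$ reversed in two places: the operator $\T{n-1+j}{-j}:J(n-1+j)\to J(-j)$ annihilates $T(j)$ and has image $F(j)$, so the clean upper bound in (2-a) is simply $\operatorname{Image}\tA{\lambda}{-j}\subseteq\operatorname{Image}\T{n-1+j}{-j}=F(j)$ from \eqref{eqn:TAA0}; and in (2-b) the vanishing of $\T{\nu}{n-1-\nu}\circ\tA{\lambda}{\nu}$ comes from $\tA{\lambda}{n-1-\nu}=0$ at $L_{\operatorname{even}}$ (Theorem \ref{thm:II.1}), not from the ``$\nu-n+1\in\mathbb{N}$'' clause of Theorem \ref{thm:II.2}, which concerns the other composition and in fact yields the (2-a) bound. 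For comparison, the paper's proof of the $\nu\in-\mathbb{N}$ cases (Theorem \ref{thm:ImageF}) bypasses all of this: for $\nu=-j$ the kernels of $\A_{\lambda,\nu},\AAt_{\lambda,\nu},\B_{\lambda,\nu}$ are polynomial in the $\mathbb{R}^{n-1}$-variables, so the image consists of polynomials of bounded degree, hence is finite-dimensional and equals $F(j)$ whenever the operator is nonzero.
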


\bigskip
The outline of the article is as follows:   

\medskip \noindent
Before we start with the construction of the intertwining operators between spherical principal series representations
 of $G=O(n+1,1)$ and $G'= O(n,1)$
 we prove in Chapter \ref{sec:2}
 our main results about $G'$-intertwining operators between irreducible composition factors
 of spherical principal series representations
 (Theorem \ref{thm:compointro}). 
In this proof we use the results  about symmetry breaking operators for  spherical principal series representations of $G$ and $G'$
(Theorem \ref{thm:II.5})
 and their functional equations (Theorem \ref{thm:II.2})
 proved later in the article.

Chapter \ref{sec:gen} gives a general method 
 to study symmetry breaking operators 
 for (smooth) induced representations
 by means of their distribution kernels. 
Analyzing their supports
 we obtain a natural filtration
 of the space of symmetry breaking operators
 induced from the closure relation
 on the double coset $P'\backslash G/P$
 in Section \ref{subsec:diffres}, 
 which will be used later to estimate the dimension of  $\operatorname{Hom}_{G'} (I(\lambda), J(\nu))$.

In Chapter \ref{sec:4} 
we give preliminary results 
 on spherical principal series representations
 such as explicit formulae for the realization
 in the noncompact picture
 $C^\infty( \mathbb R ^n)$ using the open Bruhat cell. 
Then we recall the Knapp--Stein intertwining operator 
$\ntT{\lambda}{n-\lambda}$,
 define a normalized operator 
$\T{\lambda}{n-\lambda}$, 
 and show some of its properties. 
Notice that our normalization arises from analytic considerations
 and is not the same as the normalization introduced by Knapp and Stein.

Chapter \ref{sec:double} is a discussion of the double coset decompositions $G'\backslash G/P$ and  $P'\backslash G/P$. We prove in particular that $G=P'N_-P$.

In Chapter \ref{sec:3}
 we derive a system of differential equations on $\mathbb R^n$
 and show in Proposition \ref{prop:HomPDE}
 that its distribution solutions $\mathcal{S}ol (\mathbb{R}^n; \lambda,\nu)$ are isomorphic to  $\operatorname{Hom}_{G'} 
(I(\lambda),J(\nu))$.
An analysis of the solutions shows that generically the multiplicity 
 $m(I(\lambda),J(\nu))$
 of principal series representations is 1
 (see Theorem \ref{thm:I.1}).

In Chapter \ref{sec:kfini} we use the distribution  $K^{\mathbb A}_{\lambda, \nu} \in \mathcal{S}ol (\mathbb{R}^n; \lambda,\nu)$
 to define for $\nulambda$ in an open region
\index{sbon}{Omega0@$\Omega_0$}
 $\Omega_0$
a $(\mathfrak{g}',K')$-homomorphism
$
\A_{\lambda,\nu} : I(\lambda)_K \to J(\nu)_{K'}$.  
Normalizing the distribution kernel
 by a Gamma factor
 we obtain an operator $\widetilde{\mathbb A}_{\lambda,\nu}$ 
and prove that $\A_{\lambda,\nu}(\varphi)$ is holomorphic in
$\nulambda \in \mathbb{C}^2$
for every $\varphi \in I(\lambda)_K$,
and that $\A_{\lambda,\nu}(\varphi) = 0$ for all   $\varphi \in I(\lambda)_K$ if and only if
$\nulambda \in L_{\operatorname{even}}$.

In Chapter \ref{sec:8}
 we prove the existence of the meromorphic continuation
 of ${\mathbb{A}}_{\lambda,\nu}$, 
 initially defined holomorphically
 on the parameter $\nulambda$ in the open region $\Omega_0$, 
 to $\nulambda$ in the entire $\mathbb{C}^2$.  
Besides, we 
determine all the poles of the symmetry breaking operator
$\ntAln{\lambda}{\nu}$
with meromorphic parameter $\lambda$ and $\nu$ 
and show that the  normalized symmetry breaking operators
 $\A_{{\lambda},{\nu}}:I(\lambda) \to  J(\nu)$
 depend holomorphically on ${{\lambda},{\nu}}$. 
Here we use and  prove the functional equations 
 (Theorem \ref{thm:II.2})
 of the symmetry breaking operators.

An analysis on the exceptional discrete set
 $L_{\operatorname{even}}$
 is particularly important.  
We introduce  for $\nu \in -{\mathbb N}$ a different normalization to obtain nonzero operators $\widetilde{\A}_{{\lambda},{\nu}}$ 
 for $(\lambda,\nu) \in L_{\operatorname{even}}$.

In Chapter \ref{sec:B}
 we start the discussion of the singular symmetry breaking operators for $(\lambda,\nu) \in \backslash \backslash$, their analytic continuation and find a necessary and sufficient condition which determines
 if they are not zero.

Chapter \ref{sec:10} is a discussion of the differential symmetry breaking operators, which were first found by Juhl.

Building on these preparations,
 we complete in Chapter \ref{sec:dim}
 the classification
 of symmetry breaking operators
{}from the spherical principal series representation 
 $I(\lambda)$ of $G=O(n+1,1)$
 to the representations $J(\nu)$ of $G'=O(n,1)$
 and prove Theorems \ref{thm:I.1} and \ref{thm:II.5}. 
Here  again the analysis of $\mathcal{S}ol (\mathbb{R}^n; \lambda,\nu)$ for the parameter in $\backslash \backslash$ and $// $ plays  a crucial role.

In Chapter \ref{sec:reduction}  we show
 the relationships
 among the (generically) regular symmetry breaking operators
 $\A_{\lambda,\nu}$, 
 the singular symmetry breaking operators
 $\B_{\lambda,\nu}$
 and the differential symmetry breaking operators
 $\C_{\lambda,\nu}$
 by proving explicitly the residue formulae
 (Theorem \ref{thm:I.4}). 
Furthermore we also extend the functional equations
 to these singular symmetry breaking operators.

Finally, 
 Theorem \ref{thm:I.4} (1), (2), and (3)
 are proved by explicit computations in 
 Chapters \ref{sec:kfini}, 
 \ref{sec:B}, 
 and \ref{sec:10}, 
 respectively, 
 and Theorem \ref{thm:image}
 is proved by using Theorem \ref{thm:I.4}
 in Chapter \ref{sec:Image}.

The last two chapters are applications of our results. 
In Chapter \ref{sec:PGinv}
 we apply our results about symmetry breaking operators 
 $\A_{\lambda,\nu}$
 to the analysis
 on vector bundles over the semisimple symmetric space $O(n+1,1)/O(n,1)$. 
In Chapter \ref{sec:applbl}
 we construct explicitly complementary series representations of the group $G'=O(n,1) $ as discrete summands in the restriction of  the unitary complementary series representations of $O(n+1,1)$
 by using the adjoint of the differential 
 symmetry breaking operators 
 $\C_{\lambda,\nu}$
\vskip 2pc

{\bf{Notation.}}\enspace
${\mathbb{N}}=\{0,1,2,\cdots\}$, 
${\mathbb{N}}_+=\{1,2,3,\cdots\}$, 
 ${\mathbb{R}}^{\times}={\mathbb{R}} \setminus \{0\}$.  
For two subsets $A$ and $B$ of a set,
 we write 

\[
   A \setminus B:=\{a \in A: a \notin B\}
\]
rather than the usual notation $A \backslash B$.  

\section{Symmetry breaking for the spherical principal series representations}
\label{sec:2}

Before we start with the construction
 of the $G'$-intertwining operators
 between spherical principal series representations
 of $G=O(n+1,1)$ and $G'= O(n,1)$
 we want to prove the main results
  (Theorems \ref{thm:compointro} and \ref{thm:mIFintro}, 
 see Theorems \ref{thm:compo} and \ref{thm:mIF} below)
 about $G'$-intertwining operators
 between irreducible composition factors  of spherical principal series representations. 
This is intended
 for the convenience
 of the readers
 who are more interested 
 in representation theoretic results
 rather than geometric analysis
 arising from branching problems 
 in representation theory.  
In the proof we use the results  about symmetry breaking operators for  spherical principal series representations of $G$ and $G'$
 that will be proved later in the article.

\subsection{Notation and review of previous results}
\label{subsec:matrix}
Consider the quadratic form 
\begin{equation}
\label{eqn:quad}
      x_0^2 + x_1^2 +\dots +x_{n}^2-x_{n+1}^2
\end{equation}
 of signature $(n+1,1)$. 
We define $G$
 to be the indefinite orthogonal group $O(n+1,1)$
 that preserves the quadratic form 
 \eqref{eqn:quad}.  
Let $G'$ be the stabilizer
 of the vector $e_{n}={}^t\! (0,0,\cdots,0,1,0)$.  
Then $G' \simeq O(n,1)$.  
We set 
\allowdisplaybreaks
\begin{align}
K:=&O(n+1) \times O(1), 
\\
K':=&K \cap G' 
=\{
\begin{pmatrix}
A & & 
\\
  & 1 &
\\
  & & \varepsilon
\end{pmatrix}
:
A \in O(n), \varepsilon = \pm 1
\}
\simeq O(n) \times O(1).  
\label{eqn:Kprime}
\end{align}
Then $K$ and $K'$ are maximal compact subgroups
 of $G$ and $G'$, 
respectively.

Let ${\mathfrak {g}}={\mathfrak {o}}(n+1,1)$
 and ${\mathfrak {g}}'={\mathfrak {o}}(n,1)$
 be the Lie algebras of $G=O(n+1,1)$
 and $G'=O(n,1)$, 
 respectively.  
We take a hyperbolic element $H$
 as 
\begin{equation}
\index{sbon}{hypelt@$H$|textbf}
\label{eqn:aH}
H :=
  E_{0,n+1} + E_{n+1,0} \in \mathfrak{g}'.  
\end{equation}

Then $H$ is also a hyperbolic element
 in ${\mathfrak {g}}$, 
 and the eigenvalues of 
$\operatorname{ad}(H)
\in \operatorname{End}({\mathfrak {g}})$
 are $\pm 1$ and 0.  
For $1 \le j \le n$, 
 we define nilpotent elements in ${\mathfrak {g}}$
 by 
\begin{align} 
N_j^+:=& -E_{0,j} + E_{j,0} -E_{j, n+1} -E_{n+1, j}, 
\label{eqn:ngen}
\\
N_j^-:=& -E_{0,j} + E_{j,0} +E_{j, n+1} +E_{n+1, j}.  
\label{eqn:nngen}
\end{align}
Then we have maximal nilpotent subalgebras 
 of ${\mathfrak {g}}$:
\[
\index{sbon}{nplus@${\mathfrak {n}}_+$|textbf}
 {\mathfrak {n}}_+:= {\operatorname{Ker}}({\operatorname{ad}}(H)-1)
                   =\sum_{j=1}^n {\mathbb{R}}N_j^+, 
\quad
 {\mathfrak {n}}_-:= {\operatorname{Ker}}({\operatorname{ad}}(H)+1)
                   =\sum_{j=1}^n {\mathbb{R}}N_j^-.  
\]
Since $H$ is contained in the Lie algebra ${\mathfrak {g}}'$
 of split rank one, 
 we can define two maximal nilpotent subalgebras
 of ${\mathfrak {g}}'$ by  
\index{sbon}{nplusprime@${\mathfrak {n}}_+'$|textbf}
\begin{align}
{\mathfrak {n}}_+'
:=&{\mathfrak {n}}_+ \cap {\mathfrak {g}}'
=\sum_{j=1}^{n-1}{\mathbb{R}}N_j^+, 
\label{eqn:nprime}
\\
{\mathfrak {n}}_-'
:=&{\mathfrak {n}}_- \cap {\mathfrak {g}}'
=\sum_{j=1}^{n-1}{\mathbb{R}}N_j^-.  
\notag
\end{align}
Let $N_+$= exp(${\mathfrak{n}}_+)$, 
${N_-}$ =exp$({\mathfrak{n}}_- )$
 and $N'_+ := N _+\cap G'=\exp({\mathfrak {n}}_+')$, 
${N'_-} := {N_-} \cap G'=\exp({\mathfrak {n}}_-')$. 
We define

\index{sbon}{Msubgp@$M$|textbf}
\begin{align}
M :={}
  & Z_K({\mathfrak {a}})
  ={}
 \left\{
    \begin{pmatrix} 
        \varepsilon \\ & A \\ & & \varepsilon
    \end{pmatrix} :
    A \in O(n), \   \varepsilon = \pm 1
    \right\}
    \simeq O(n) \times \mathbb{Z}_2, 
\notag
\\
M' :={}&Z_{K'}({\mathfrak {a}})={}
   \left\{
    \begin{pmatrix} 
        \varepsilon \\ & B \\ & & 1 \\ & & & \varepsilon
    \end{pmatrix} :
    B \in O(n-1):   \varepsilon = \pm 1
    \right\}
\notag
\\
    \simeq & O(n-1) \times {\mathbb{Z}}_2.  
\label{eqn:Mprime}
\end{align}
We set
\index{sbon}{hypelt@$H$}
\begin{equation*}
{\mathfrak {a}}:={\mathbb{R}}H
\qquad
\text{ and }
A:=\exp {\mathfrak {a}}.  
\end{equation*}
Then $P=MAN_+$ is a Langlands decomposition
of a minimal parabolic subgroup $P$
 of $G$.  
Likewise,
 $P'=M'AN_+'$ is a Langlands decomposition
 of a minimal parabolic subgroup $P'$ of $G'$.  
We note 
 that we have chosen $H \in {\mathfrak {g}}'$
 so that we can take a common maximally split abelian subgroup
 $A$ in $P'$ and $P$.  
The Langlands decompositions
 of the Lie algebras of $P$ and $P'$ are given 
 in a compatible way as  
\begin{equation}
\label{eqn:Lang}
{\mathfrak {p}} ={\mathfrak {m}}+{\mathfrak {a}}+{\mathfrak {n}}_+, 
 \quad
{\mathfrak {p}}'={\mathfrak {m}}'+{\mathfrak {a}}+{\mathfrak {n}}_+'
 = ({\mathfrak {m}} \cap {\mathfrak {g}}')
  +({\mathfrak {a}} \cap {\mathfrak {g}}')
  +({\mathfrak {n}}_++{\mathfrak {g}}').  
\end{equation}

\medskip
We assume from now on that the principal series representations
 $I(\lambda)$
 are realized on the Fr{\'e}chet space
 of smooth sections
 of the line bundle
 $G \times_{\lambda}{\mathbb{C}} \to G/P$.  
See Section \ref{subsec:CW} a short discussion
 of the Casselman--Wallach theory
 on Fr{\'e}chet representations 
 having moderate growth
 and the underlying $({\mathfrak{g}},K)$-module.  

Let $G_0=SO_0(n+1,1)$ be the identity component
 of $G=O(n+1,1)$.  
Then the quotient group
 is of order four:
\[
   G/G_0 \simeq \{\pm \} \times \{\pm \}.  
\]
Irreducible representations 
 of the disconnected group $G$ 
 are not necessarily irreducible
 as representations
 of $G_0$.  
We have
\begin{proposition}
\label{prop:connG}
{\rm{1)}}\enspace
Suppose $n \ge 2$.  
Then any irreducible $G$-subquotient $Z$
 of $I(\lambda)$
 remains irreducible as a $G_0$-module.  
\par\noindent
{\rm{2)}}\enspace
Suppose $n =1$.  
\begin{enumerate}
\item[{\rm{2-a)}}]
For $i \in {\mathbb{N}}$, 
 $T(i)$ splits into a direct sum
 of two irreducible $G_0$-modules.  
\item[{\rm{2-b)}}]
Any irreducible $G$-subquotient $Z$
 of $I(\lambda)$
 other than $T(i)$ remains irreducible
 as a $G_0$-modules.  
\end{enumerate}
\end{proposition}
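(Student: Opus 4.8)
The plan is to pass to underlying $(\mathfrak g,K)$-modules and to study how the $K$-structure of $I(\lambda)$ restricts to $K_0:=K\cap G_0$. First one observes that $K_0$ equals the identity component $SO(n+1)\times\{1\}\simeq SO(n+1)$ of $K$: it contains the identity component, and since $G=KAN_+$ with $AN_+$ connected, $K$ meets every connected component of $G$, so $K/K_0\twoheadrightarrow G/G_0$ is onto, $|K/K_0|=4$, and this forces $K_0$ to be one of the four components of $K=O(n+1)\times O(1)$. As $[G:G_0]=4<\infty$, $K$-finiteness and $K_0$-finiteness agree, so by the Casselman--Wallach theory (Section~\ref{subsec:CW}) the irreducible $G$-subquotients (resp.\ $G_0$-subquotients) of $I(\lambda)$ correspond bijectively to irreducible $(\mathfrak g,K)$-subquotients $Z_K$ (resp.\ $(\mathfrak g,K_0)$-subquotients) of $I(\lambda)_K$; thus it suffices to decide whether each $Z_K$ stays $(\mathfrak g,K_0)$-irreducible. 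In the compact picture $I(\lambda)|_{K_0}\simeq C^\infty(K/M)|_{K_0}$, and since $K=SO(n+1)\cdot M$ with $SO(n+1)\cap M=SO(n)$ the space $K/M$ is $SO(n+1)$-equivariantly $S^n$; hence $I(\lambda)_K|_{K_0}\simeq\bigoplus_{a\in\mathbb N}\mathcal H^a(\mathbb R^{n+1})$, each degree-$a$ harmonic space once. The decisive point is the dichotomy: for $n\ge2$ each $\mathcal H^a(\mathbb R^{n+1})$ stays irreducible over $SO(n+1)$ (its highest weight $(a,0,\dots,0)$ has vanishing last coordinate when $n+1$ is even), with dimension strictly increasing in $a$; whereas for $n=1$ it splits as $\chi_a\oplus\chi_{-a}$ over $SO(2)$ when $a\ge1$.

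Suppose $n\ge2$. Then $Z_K|_{K_0}$ is multiplicity free with pairwise inequivalent, distinct-dimensional irreducible summands $\mathcal H^a$; in particular $K$ cannot permute the $K_0$-isotypic subspaces of $Z_K$, so each of them is $K$-stable. Let $W\subseteq Z_K$ be a nonzero $(\mathfrak g,K_0)$-submodule; being $K_0$-semisimple it contains an irreducible $K_0$-submodule, which by multiplicity one equals one whole summand $\mathcal H^a\subseteq Z_K$. Since $K$ normalises $U(\mathfrak g)$ through $\operatorname{Ad}$, $U(\mathfrak g)\mathcal H^a$ is a nonzero $(\mathfrak g,K)$-submodule of the $(\mathfrak g,K)$-irreducible $Z_K$, hence equals $Z_K$; as $W\supseteq\mathcal H^a$ is $\mathfrak g$-stable, $W=Z_K$. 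This proves (1).

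Now suppose $n=1$, $K_0=SO(2)$. For (2-b): apart from the $T(i)$, the irreducible subquotients of $I(\lambda)$ are the finite-dimensional $F(i)\simeq\mathcal H^i(\mathbb R^{2,1})$, which is the unique $(2i+1)$-dimensional irreducible representation of $G_0=SO_0(2,1)$ and so is $G_0$-irreducible, and --- when $\lambda$ is not an integer --- the irreducible $I(\lambda)$ itself, whose restriction to $G_0$ is the spherical principal series of $SO_0(2,1)$ at the non-integral parameter $\lambda$ and hence irreducible by the classical representation theory of $SO_0(2,1)$. For (2-a) one tracks $SO(2)$-weights: $I(-i)_K|_{SO(2)}=\bigoplus_{m\in\mathbb Z}\chi_m$ and $F(i)|_{SO(2)}=\bigoplus_{|m|\le i}\chi_m$ (both multiplicity free; for the latter the $\mathfrak{sl}_2$-weights $-2i,\dots,2i$ of the $(2i+1)$-dimensional $G_0$-representation become the $SO(2)$-weights $-i,\dots,i$ after halving), whence $T(i)_K|_{SO(2)}=\bigoplus_{m\ge i+1}\chi_m\ \oplus\ \bigoplus_{m\le -i-1}\chi_m$. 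The weight-$(i{+}1)$ vector of $T(i)_K$ is killed by the weight-lowering root vector of $\mathfrak g_{\mathbb C}$ (its image would lie in the absent weight $i$), so $\bigoplus_{m\ge i+1}\chi_m$ is a $(\mathfrak g,K_0)$-submodule --- a holomorphic discrete series representation of $SO_0(2,1)$ (or limit thereof), hence irreducible --- and symmetrically $\bigoplus_{m\le -i-1}\chi_m$ is one; these exhaust the mutually disjoint weight spaces of $T(i)_K$, so $T(i)|_{G_0}$ is their direct sum. This gives (2-a).

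I expect the main obstacle to be the case $n=1$: one must locate precisely where $O(n+1)\downarrow SO(n+1)$ becomes reducible --- it is only for $n=1$, and the relevant $\mathcal H^a$ genuinely split there --- and then identify $T(i)|_{G_0}$ explicitly, for which the weight-gap argument above, resting on the multiplicity-one $K$-picture of $I(\lambda)$ and on the computation of $F(i)|_{SO(2)}$, is the essential input.
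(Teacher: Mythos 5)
Your proof is correct and follows essentially the same route as the paper: reduce to the underlying $(\mathfrak{g},K)$-module, use that every $K$-type of $I(\lambda)$ is a spherical harmonics space $\mathcal{H}^a$ which stays $SO(n+1)$-irreducible precisely when $n\ge 2$, and for $n=1$ identify $T(i)|_{G_0}$ as the sum of a holomorphic and an antiholomorphic discrete series of $SO_0(2,1)$. The only difference is that you spell out (via the $U(\mathfrak g)\mathcal H^a$ argument and the explicit $SO(2)$-weight computation) the irreducibility criterion and the classical $SL(2,\mathbb{R})$ facts that the paper simply cites.
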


For the proof, 
 we begin with the following observation:
\begin{lemma}
\label{lem:connG}
{\rm{1)}}\enspace
A $({\mathfrak {g}}, K)$-module
 $Z_K$ is irreducible as a ${\mathfrak {g}}$-module
 if every irreducible $K$-module
 occurring in $Z_K$
 is irreducible as a $K_0$-module.  
\par\noindent
{\rm{2)}}\enspace
For $G=O(n+1,1)$, 
 let $P_0:=P \cap G_0$.  
Then $P_0$ is connected, 
 and is a minimal parabolic subgroup of $G$.  
Then we have a natural bijection:
\[
G_0/P_0 \overset \sim \to G/P
(\simeq S^n).  
\]
\end{lemma}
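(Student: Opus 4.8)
The plan is to treat the two assertions separately: part (2) is a direct computation inside the matrix realization of $G=O(n+1,1)$, while part (1) is a Clifford-theory argument for the connected normal subgroup $K_0 \subseteq K$, specialized to the situation at hand (subquotients of $I(\lambda)$).

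For part (2) I would first observe that the Lie algebras of $G$ and of its identity component $G_0 = SO_0(n+1,1)$ coincide, so $\operatorname{Lie}(P_0) = {\mathfrak p}$ is already a minimal parabolic subalgebra of ${\mathfrak g}$; it then suffices to prove that $P_0 := P \cap G_0$ is connected, for then $P_0$ is the connected minimal parabolic subgroup of $G_0$ with Lie algebra ${\mathfrak p}$. Writing $P = MAN_+$ we have $P_0 = M_0 A N_+$ with $M_0 := M \cap G_0$, and since $A \cong {\mathbb R}_{>0}$ and $N_+ \cong {\mathbb R}^n$ are connected, only $M_0$ is at issue. From the description of $M$ as the block-diagonal matrices $\operatorname{diag}(\varepsilon, B, \varepsilon)$ with $B \in O(n)$ and $\varepsilon = \pm 1$ — such a matrix has determinant $\det B$ and acts on the negative-definite line ${\mathbb R}e_{n+1}$ by the scalar $\varepsilon$ — one sees that it lies in $G_0 = SO_0(n+1,1)$ exactly when $\det B = 1$ and $\varepsilon = 1$, so $M_0 \cong SO(n)$, which is connected; hence so is $P_0$. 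Finally, to obtain the bijection I would note that $G_0$ already acts transitively on $G/P \simeq S^n$: via the Iwasawa decomposition $G_0 = K_0 A N_+$ with $K_0 \cong SO(n+1)$, the factors $A$ and $N_+$ lie in $P$ and stabilize the base isotropic line, while $K_0 \cong SO(n+1)$ moves it through all isotropic lines; therefore $g \mapsto gP$ induces a $G_0$-equivariant bijection $G_0/P_0 \overset{\sim}{\longrightarrow} G/P \simeq S^n$.

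For part (1) the first step is the reduction: any ${\mathfrak g}$-submodule $W \subseteq Z_K$ is automatically $K_0$-stable (it is ${\mathfrak k}$-stable and $K_0$-finite, and $K_0$ is connected), so the ${\mathfrak g}$-submodules of $Z_K$ are precisely its $({\mathfrak g}, K_0)$-submodules, and it is enough to prove $Z_K$ is irreducible as a $({\mathfrak g}, K_0)$-module. Here the hypothesis enters as follows: every irreducible $K_0$-constituent $\mu$ of $Z_K$ is the restriction of an irreducible $K$-constituent $\tau$ of $Z_K$ (its restriction being irreducible by assumption), hence $\mu$ is invariant under conjugation by $K/K_0$, and therefore the $\mu$-isotypic subspace $Z_K[\mu] = Z_K[\tau]$ is a $K$-submodule; irreducibility of $Z_K$ as a $({\mathfrak g}, K)$-module then forces $U({\mathfrak g}) \cdot Z_K[\mu] = Z_K$. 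Given a nonzero ${\mathfrak g}$-submodule $W$ and a $K_0$-type $\mu$ occurring in it, I would then invoke the fact that $I(\lambda)$, hence every subquotient $Z$, is multiplicity-free as a $K$-module, its $K$-types being parametrized by $j \in {\mathbb N}$ with the $j$-th one restricting to the irreducible $SO(n+1)$-module ${\mathcal H}^j({\mathbb R}^{n+1})$ when $n \ge 2$ (so that distinct $K$-types restrict to inequivalent $K_0$-types); this makes $Z_K[\mu]$ an irreducible $K_0$-module, so $W$ must contain all of $Z_K[\mu]$ and therefore $W \supseteq U({\mathfrak g}) \cdot Z_K[\mu] = Z_K$.

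The step I expect to be the real obstacle is this last one in part (1): the $K_0$-irreducibility hypothesis on the $K$-types does not by itself prevent a ${\mathfrak g}$-submodule from being spread across two $K/K_0$-conjugate pieces of the $({\mathfrak g},K_0)$-decomposition of $Z_K$, and it is the multiplicity-one structure of the spherical-harmonic $K$-types that closes this gap — consistently with the fact that the only failure, namely ${\mathcal H}^j({\mathbb R}^2)\vert_{SO(2)}$ being reducible when $n = 1$, is exactly what produces the splitting of $T(i)$ asserted in Proposition \ref{prop:connG}. A minor point needing care in part (2) is the time-orientation (orthochronicity) condition, which is what cuts $M_0$ down to $SO(n)$ rather than $SO(n) \times {\mathbb Z}_2$.
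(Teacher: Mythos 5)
The paper offers no proof of this lemma: it is stated as an observation and used at once in the proof of Proposition \ref{prop:connG}, so there is no argument of the authors to measure yours against; judged on its own terms, your proof is correct. Part (2) is exactly the expected verification: $M_0=M\cap G_0\cong SO(n)$ (determinant together with the orthochronicity condition you flag), hence $P_0=M_0AN_+$ is connected with Lie algebra $\mathfrak{p}$; injectivity of $G_0/P_0\to G/P$ is automatic from $P_0=P\cap G_0$, and surjectivity holds because $K_0\cong SO(n+1)$ is already transitive on $G/P\simeq S^n$.

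For part (1) your caution about the final step is not only justified but sharper than the paper's formulation: for an arbitrary irreducible $(\mathfrak{g},K)$-module the hypothesis on $K$-types alone does not suffice. Indeed, when $n$ is even one has $O(n+1)=SO(n+1)\times\{\pm I\}$, so \emph{every} irreducible $K$-module restricts irreducibly to $K_0$; yet for $G=O(3,1)$ the irreducible six-dimensional representation $\Lambda^2\mathbb{C}^4$ splits over $\mathfrak{g}$ into its two chirality summands, interchanged by the $\det=-1$ components of $K$ --- precisely the failure mode you describe, caused by two distinct $K$-types restricting to the same $K_0$-type. The supplementary input you invoke --- the $K$-spectrum of $I(\lambda)$, hence of any subquotient, is multiplicity free with distinct $K$-types $\mathcal{H}^j(S^n)\boxtimes\mathbf{1}$ restricting to pairwise inequivalent irreducible $K_0$-types for $n\ge 2$ --- is therefore genuinely needed, and with it your chain $Z_K[\mu]\subseteq W$ and $W\supseteq U(\mathfrak{g})\cdot Z_K[\mu]=Z_K$ is complete and is all that Proposition \ref{prop:connG} requires. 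Two cosmetic points: the assumption that $Z_K$ is irreducible as a $(\mathfrak{g},K)$-module should be made explicit (you use it, as one must), and in general $Z_K[\mu]$ is the sum of the isotypic components of all $K$-types restricting to $\mu$ rather than of a single $\tau$, which does not affect its $K$-stability.
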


\begin{proof}
[Proof of Proposition \ref{prop:connG}]
Let $Z_K$ be the underlying 
 $({\mathfrak {g}}, K)$-module of $Z$.  
It is sufficient to discuss the irreducibility
 of $Z_K$ as a $({\mathfrak {g}}, K_0)$-module.  
\par\noindent
{1)}\enspace
Any irreducible representation
 of $K \simeq O(n+1) \times O(1)$
 occurring in the spherical principal series representation 
 $I(\lambda)$
 is of the form ${\mathcal{H}}^i(S^n) \boxtimes {\bf{1}}$
 for some $i \in {\mathbb{N}}$, 
 which is still irreducible 
 as a representation
 of $K_0=SO(n+1)$
 if $n \ge 2$.  
Here ${\mathbf{1}}$ denotes the trivial 
 one-dimensional representation of $O(1)$.  
Hence the assumption of Lemma \ref{lem:connG} (1)
 is fulfilled, 
 and the first statement follows.  
\par\noindent
{2)}\enspace
By Lemma \ref{lem:connG} (2), 
 the restriction of $I(\lambda)$
 to $G_0$ is isomorphic to a spherical principal series 
 representation of $G_0=SO_0(2,1)$.  
Comparing the aforementioned composition series
 of representation $I(\lambda)$ 
 of $O(2,1)$
 with a well-known result
 for $G_0=SO_0(2,1)
 \simeq SL(2,{\mathbb{R}})/\{\pm 1\}$, 
 we see
 that $T(i)$ is a direct sum
 of a holomorphic discrete series representation
 and an anti-holomorphic discrete series representation
 of $G_0$
 and that other irreducible subquotients of $G$
 remain irreducible as $G_0$-modules.  
See also Remark \ref{rem:GL}
 for geometric interpretations
 of this decomposition.  

\end{proof}

Proposition \ref{prop:connG} and \cite{JW} imply that 
 the representation $I(\lambda ) $ is reducible
 if and only if
\[  {\lambda } = n+i
 \quad \mbox{or} \quad \lambda
 = -i \quad \mbox{ for }  i \in {\mathbb{N}}.    
\]
A reducible spherical principal series representation
 has two irreducible composition factors.
The Langlands subquotient of $I(n+i)$ is
 a finite-dimensional representation $F(i)$.
We have for $i \in {\mathbb{N}}$
 non-splitting exact sequences
 as Fr{\'e}chet $G$-modules:
\begin{equation}
\label{eqn:FIT}
\index{sbon}{Fi@$F(i)$}
\index{sbon}{Ti@$T(i)$}
 0 \rightarrow
 F(i) \rightarrow I(-i) \rightarrow 
 T(i)\rightarrow 
0, 
\end{equation}
\begin{equation}
\label{eqn:TIF}
 0 \rightarrow T(i) \rightarrow I(n+i) \rightarrow F(i)\rightarrow 0.  
\end{equation}

\medskip

Inducing from the minimal parabolic subgroup $P'$ of $G'$, 
we define the induced representation
$J(\nu)$ and the irreducible representations 
 $F(j)\equiv F^{G'}(j)$
 and $T(j)\equiv T^{G'}(j)$ of $G'$ as we did for $G$. 
We shall simply write 
 $F(j)$ for $F^{G'}(j)$
 and $T(j)$ for $T^{G'}(j)$, 
 respectively,
 if there is no confusion.

\subsection{Finite-dimensional subquotients
of disconnected groups}
\label{subsec:Fi}
Since the group $G=O(n+1,1)$
 has four connected components,  
we need to be careful to identify
 the finite-dimensional subquotient 
 $F(i)$
 with some other 
 (better-understood)
 representations.

First, 
 we consider the space of harmonic polynomials
 of degree $i \in {\mathbb{N}}$
 by 
\[
  {\mathcal{H}}^i({\mathbb{R}}^{n+1,1})
  :=
  \{
   \psi \in {\mathbb{C}}[x_0,\cdots,x_{n+1}]:
   \square \psi=0,
   \,\,
   \text{$\psi$ is homogeneous of degree $i$}
\}, 
\] 
 where $\square
=\frac{\partial^2}{\partial x_0^2}
+\cdots+
\frac{\partial^2}{\partial x_n^2}
-
\frac{\partial^2}{\partial x_{n+1}^2}
$.  
Then $G=O(n+1,1)$ acts irreducibly
 on ${\mathcal{H}}^{i}({\mathbb{R}}^{n+1,1})$
 for any $i \in {\mathbb{N}}$.  
The indefinite signature
 is not the main issue here, 
 because this representation extends
 to a holomorphic representation
of the complexified Lie group $O(n+2,{\mathbb{C}})$.  
Similarly,
 the group $G'=O(n,1)$ acts irreducibly 
 on ${\mathcal{H}}^{j}({\mathbb{R}}^{n,1})$
 for $j \in {\mathbb{N}}$.  
By the classical branching law,
 we have a $G'$-irreducible decomposition:
\begin{equation}
\label{eqn:Hbranch}
   {\mathcal{H}}^{i}({\mathbb{R}}^{n+1,1})|_{G'}
   \simeq
   \bigoplus_{j=0}^{i}
   {\mathcal{H}}^{j}({\mathbb{R}}^{n,1}).  
\end{equation}

Second,
 we notice
 that there are three non-trivial 
 one-dimensional representations
 of the disconnected group $G$.  
For our purpose,
 we consider the following one-dimensional representation
\begin{equation} \label{char:chi}
\chi:O(n+1,1) \to \{\pm 1\}
\end{equation}
 by the composition of the following maps
\[
G \to G/G_0
\simeq
 O(n+1) \times O(1)/SO(n+1) \times SO(1)
\simeq
 \{\pm 1\}\times \{\pm 1\}
 \overset {{\operatorname{pr}}_2} {\to} \{\pm 1\}, 
\]
where $G_0=SO_0(n+1,1)$
 is the identity component
 of $G$, 
 and ${\operatorname{pr}}_2$ denotes 
 the second projection.  
Similarly,
 we define 
$
\chi':O(n,1) \to \{\pm 1\}
$.  
Then by inspecting the action
 of the four disconnected components
 of $G$, 
 we have the following isomorphisms as representations
 of $G$ and $G'$, 
respectively:
\begin{align}
F(i) \simeq & \chi^i \otimes {\mathcal{H}}^i({\mathbb{R}}^{n+1,1}), 
\label{eqn:FHi}
\\
F(j) \simeq & (\chi')^j \otimes {\mathcal{H}}^j({\mathbb{R}}^{n,1}).  
\label{eqn:FHj}
\end{align}

Combining \eqref{eqn:Hbranch}
 with \eqref{eqn:FHi}
 and \eqref{eqn:FHj}, 
 we get the following branching law
 for the restriction $G \downarrow G'$:
\[
  F(i)|_{G'}
  \simeq \bigoplus_{j=0}^i(\chi')^{i-j}
  \otimes F(j).
\]
Thus we have shown the following proposition.  
\begin{proposition}
[branching law of $F(i)$
 for $G \downarrow G'$]
\label{prop:Hbranch}
Suppose $i, j \in {\mathbb{N}}$.  
\begin{enumerate}
\item[{\rm{1)}}]
$\operatorname{Hom}_{G'}(F(i), F(j)) \ne 0$
 if and only if 
 $0 \le j \le i$ and $i \equiv j \mod 2$.  
\item[{\rm{2)}}]
 $\operatorname{Hom}_{G_0'}(F(j), F(i)) \ne 0$
 if and only if $0 \le j \le i$.  
\end{enumerate}
\end{proposition}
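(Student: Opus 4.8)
The plan is to deduce both statements directly from the branching identity
\[
   F(i)|_{G'} \;\simeq\; \bigoplus_{j=0}^{i} (\chi')^{i-j} \otimes F(j)
\]
obtained just before the proposition, after recording two elementary facts about its summands. First I would use \eqref{eqn:FHj} to rewrite $(\chi')^{i-k}\otimes F(k) \simeq (\chi')^{i}\otimes {\mathcal{H}}^{k}({\mathbb{R}}^{n,1})$; since the ${\mathcal{H}}^{k}({\mathbb{R}}^{n,1})$ have pairwise distinct dimensions they are pairwise inequivalent as $G'$-modules, hence the summands above for distinct $k$ are pairwise inequivalent irreducibles and the decomposition is multiplicity-free. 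Second, I would determine when such a summand equals the fixed target $F(j) = (\chi')^{j}\otimes {\mathcal{H}}^{j}({\mathbb{R}}^{n,1})$: comparing the ${\mathcal{H}}$-factors forces $k=j$, and then the two sides agree iff $(\chi')^{i} = (\chi')^{j}$ on $G'$, that is, iff $i \equiv j \mod 2$.

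Given these facts, part (1) is immediate: $F(i)|_{G'}$ is semisimple and $F(j)$ is irreducible, so $\operatorname{Hom}_{G'}(F(i),F(j)) \ne 0$ iff $F(j)$ appears among the summands $(\chi')^{i-k}\otimes F(k)$, $0 \le k \le i$, which by the second fact happens exactly when $0 \le j \le i$ and $i \equiv j \mod 2$. For part (2) I would restrict one step further to the identity component $G_0'$, on which $\chi'$ is trivial, giving $F(i)|_{G_0'} \simeq \bigoplus_{k=0}^{i} {\mathcal{H}}^{k}({\mathbb{R}}^{n,1})|_{G_0'}$; using that ${\mathcal{H}}^{k}({\mathbb{R}}^{n,1})$ stays irreducible on $G_0'$ (for $n \ge 2$, by the analogue of Proposition~\ref{prop:connG}(1) for $G'$) this is again a multiplicity-free sum of inequivalent irreducibles, and since $F(j)|_{G_0'} \simeq {\mathcal{H}}^{j}({\mathbb{R}}^{n,1})|_{G_0'}$ is irreducible, $\operatorname{Hom}_{G_0'}(F(j),F(i)) \ne 0$ iff $F(j)|_{G_0'}$ occurs in $F(i)|_{G_0'}$, i.e.\ iff $0 \le j \le i$.

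The only point that needs attention — and it is precisely what makes part (1) stricter than part (2) — is the bookkeeping with the outer sign character $\chi'$: the $G'$-module $(\chi')^{i-j}\otimes F(j)$ is \emph{not} isomorphic to $F(j)$ unless $i-j$ is even, although it always becomes so after restriction to $G_0'$. This is the sole source of the parity condition; everything else is Schur's lemma applied to the classical branching \eqref{eqn:Hbranch}, which we are entitled to assume.
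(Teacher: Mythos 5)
Your proof is correct and follows essentially the same route as the paper: the paper obtains the same decomposition $F(i)|_{G'}\simeq\bigoplus_{j=0}^{i}(\chi')^{i-j}\otimes F(j)$ from \eqref{eqn:Hbranch}, \eqref{eqn:FHi}, and \eqref{eqn:FHj} and reads the proposition off from it, with the parity condition coming from the sign character $\chi'$ and disappearing on $G_0'$, exactly as you argue. Your explicit bookkeeping (multiplicity-freeness and when $(\chi')^{i-j}\otimes F(j)\simeq F(j)$) just fills in details the paper leaves implicit.
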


\medskip

\subsection{Symmetry breaking operators and  spherical principal series representations.}
\label{subsec:2.2}
We refer to nontrivial homomorphisms in 
\index{sbon}{H1@$H(\lambda, \nu)$|textbf}
\[
 H(\lambda, \nu) := \mbox{Hom}_{G'}(I(\lambda),J(\nu))
\]
as  {\t intertwining restriction maps} or {\it symmetric breaking operators}. In the next chapter
  general properties of symmetry breaking operators
 for principal series representations are discussed. 
In this section
 we will illustrate the functional equations satisfied by the continuous symmetry breaking operators
 (Theorem \ref{thm:TAAT}, 
 see also Theorem \ref{thm:TABC})
 by analyzing their behavior  on $I(\lambda) \times J(\nu )$
 where      both representations $I(\lambda )$ and $J(\nu)$ are reducible, 
{\it{i.e.}}, 
 ($\lambda, \nu $) are in  
\[
     \mathcal{L}
   =\{ (i,j): i,j \in {\mathbb{Z}} \mbox{ and } (i,j) \not \in  (0, n)\times (0,n-1)\}
\]

The Weyl group ${\mathfrak{S}}_2 \times {\mathfrak{S}}_2$
 of $G\times G'$ acts on $\mathcal L$. 
The action is generated by the action of  the generators 
$(\lambda, \nu) \mapsto (-\lambda+n,\nu)$
 and $(\lambda,\nu) \mapsto (\lambda, -\nu +n-1 )$. 
We write  $\mathcal{ L}_{even} \subset \mathcal L$ for the orbit  of 
 \[L=  \{(i,j): i,j \mbox{ nonpositive integers, } i=j\mbox{ mod }2\}\]  under the Weyl group and 
 $\mathcal{L}_{odd} $ its complement in $\mathcal L$.
  We consider case by case  the symmetry breaking operators parametrized by $(\lambda, \nu) $ in the intersection of $\mathcal L_{even}$, respectively $(\lambda, \nu ) \in\mathcal L_{odd}$,  with  the sets
\begin{description} 
\label{octants}
\item[I.A]  $\lambda \leq 0$, $\nu < \lambda $, 
\item[I.B]    $\lambda \leq 0$, $ \lambda\le \nu \leq 0$, 
\item[II.A]   $\lambda \leq 0$, $-\lambda +n- 1< \nu$, 
\item[II.B]  $\lambda \leq 0$, $-\lambda +n-1 \ge \nu \geq n-1$, 
\item[III.A] $\lambda \geq n$,  $  \lambda -1 < \nu $, 
\item[III.B]  $\lambda \geq n$, $ n-1 \leq \nu  \le \lambda -1$, 
 \item[IV.A]  $\lambda \geq n$, $\nu <
-\lambda +n$, 
\item[IV.B] $\lambda \geq n$, $-\lambda +n \leq \nu \leq 0 $.  
\end{description}

\medskip
\begin{figure}[h]
\begin{center}
\includegraphics[scale=1]{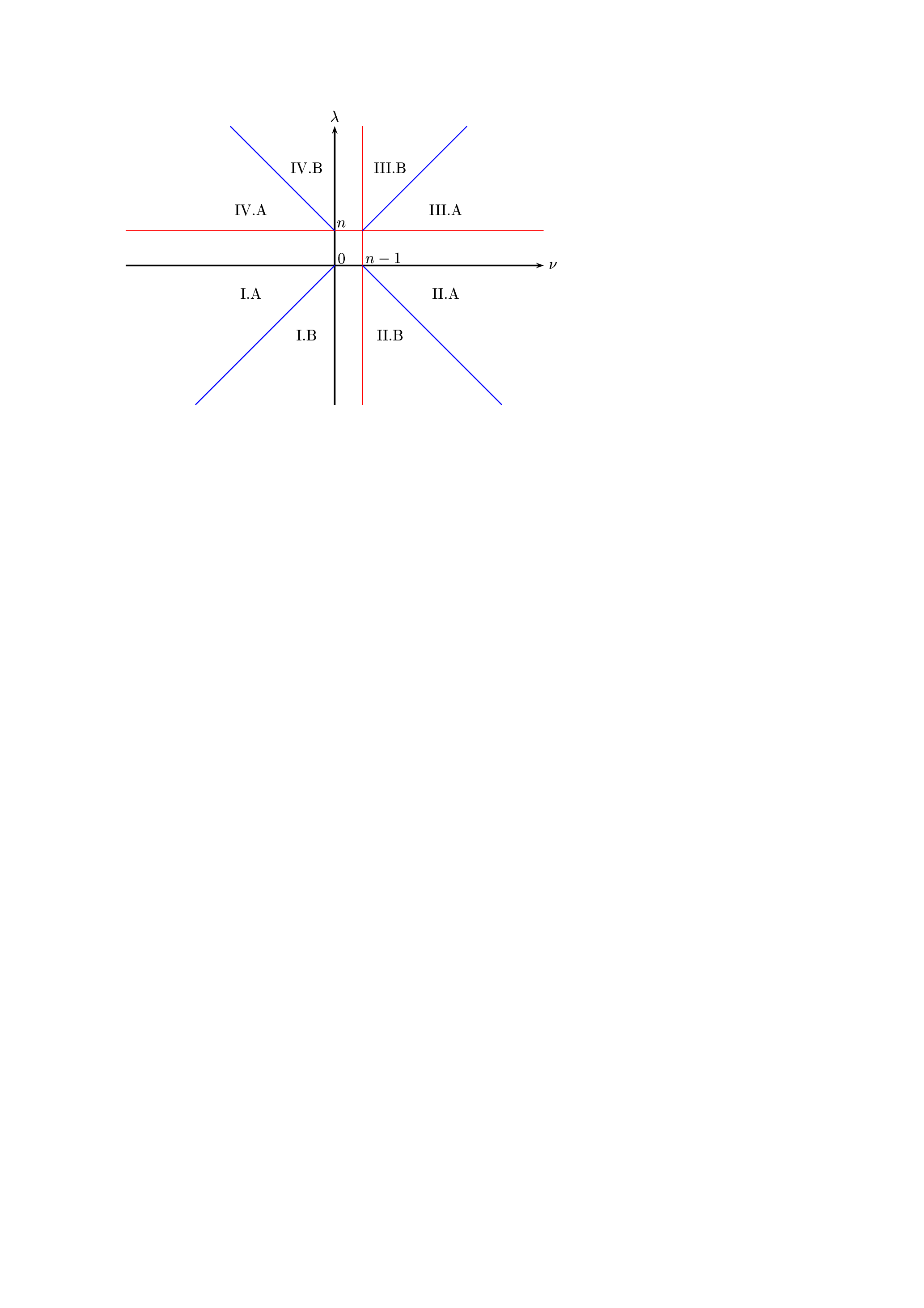}
\end{center}
\caption{Octants of the parameter space}
\end{figure}

The results are graphically represented
 in Figures 2.1--\ref{fig:2.3}.  
The large and the small rectangles stands
 for the reducible principal series representations $I(i),J(j)$ of the large group $G$
 and the small group $G'$ respectively. 
The rectangles are
 located in the octants of the parameter space
 $(\lambda,\nu)$ determined by the conditions on $(i,j)$.

The subrectangles at the bottom represents
 the irreducible subrepresentation;
 a small rectangle represents a finite-dimensional subquotient module,
 a large rectangle an infinite-dimensional subquotient.  

A colored green subrectangle is the subrepresentation
 which is contained
 in the kernel of the operator of the symmetry breaking operator
 $\widetilde{\mathbb A}_{i,j}$, 
 and a white upper rectangle implies the image of the symmetry breaking operator $\widetilde{\mathbb A}_{i,j}$ is contained in the irreducible subrepresentation.

\medskip \noindent
{\bf Suppose first that $(i,j)\in {\mathcal L}$ is contained in I.A}. 
Both representations $I(i)$ and $J(j)$
 have finite-dimensional subrepresentations $F(-i)$ and $F(-j)$
 respectively. 
Since $-j > -i$ 
 the representation  $F(-j)$ is not a summand  $F(-i)_{| G'}$
 by Proposition \ref{prop:Hbranch}, 
 and therefore
 the finite-dimensional subrepresentation $ F(-i)$ is
 in the kernel of the symmetry breaking operator
 $\A_{i,j}$. 
On the other hand, 
 by Theorem \ref{thm:II.2}, 
 we have
\[ \T{n-1 -j}{j} 
 \circ 
 \widetilde{\mathbb A}_{i,n-1-j}
 =
 \frac{\pi^{\frac{n-1}{2}}}{\Gamma(n-1-j)}
   \widetilde{\mathbb A}_{i,j}, 
\]
which implies
 that the image of the nontrivial symmetry breaking operator
 $\A_{i,j}$ is the finite-dimensional
subrepresentation $F(-j)$ or zero.  
Since $\A_{i,j}\ne 0$
 by Theorem \ref{thm:II.1}, 
the image is in fact $F(-j)$.

\medskip \noindent
{\bf Suppose now that $(i,j)\in {\mathcal L}  $ is contained  in II.A.}
The representation $I(i)$ has a finite-dimensional subrepresentation
 $F(-i)$, 
 and $J(j)$ has a finite-dimensional quotient $F(j-n+1)$. 
The image of $F(-i)$ under the symmetry breaking operator 
 $\A_{i,j}$ is finite-dimensional or zero. 
Since $J(j) $ has no finite-dimensional subrepresentation, 
the finite-dimensional subrepresentation $F(-i)$
 must be in the kernel of the symmetry breaking operator
 $\A_{i,j}$. 
By Theorems \ref{thm:II.1} and \ref{thm:II.2}, 
 we have
\[ \T{j}{n-1-j} 
 \circ 
 \widetilde{\mathbb A}_{i,j}
 =
 \frac{\pi^{\frac{n-1}{2}}}{\Gamma(j)}
   \widetilde{\mathbb A}_{i,n-1-j}  \not = 0.\]
Thus the image of $\widetilde{\mathbb A}_{i,n-1-j}$
 is finite-dimensional, 
 and therefore $\widetilde{\mathbb A}_{i,j}$ defines
 a surjective $({\mathfrak {g}},K)$-homomorphism
 $T(i)_K \to J(j)_{K'}$.  

\medskip \noindent
{\bf Suppose that $(i,j)\in {\mathcal L}$ is contained in IV.A.}
The representation $I(i)$ has a finite-dimensional quotient $F(i-n)$, 
 and $J(j)$ has a finite-dimensional subrepresentation $F(-j)$.  
The functional equation in Theorem \ref{thm:II.2}
 and the non-zero condition in Theorem \ref{thm:II.1}
 imply 
\[
 \widetilde{\mathbb A}_{i,j}
 \circ \T{n-i}{i} 
 = \frac{\pi^{\frac{n}{2}}}{\Gamma(i)} \widetilde{\mathbb A}_{n-i,j}\not = 0
\] 
Further, 
 since $(n-i,j)$ is contained in I.A, 
 the image of $T(i)$
 under the symmetry breaking operator $\widetilde{\mathbb A}_{i,j}$
 is the finite-dimensional representation $F(-j)$. 
Since $T(i)$ has a finite-codimension in $I(i)$, 
 the image of $I(i)$ under the symmetry breaking operator
 $\widetilde{\mathbb A}_{i,j}$ is still finite-dimensional, 
 hence is equal to the unique subrepresentation $F(-j)$
 of $J(j)$.

\medskip \noindent
{\bf Suppose that $(i,j)\in {\mathcal L}$ is contained in III.A.} 
The representation $I(i)$ and $J(j)$
 both have finite-dimensional quotients $F(i-n)$ and $F(j-n+1)$.  
Furthermore
 the multiplicity $m(I(i),J(j)))=1$
 by Theorem \ref{thm:I.1}. 
Again the spherical vector is not in the kernel
 of $\widetilde{\mathbb A}_{i,j}$, but its image is a spherical vector
 for $J(j)$
 by Theorem \ref{thm:I.4}, 
 which in turn generates
 the underlying $({\mathfrak {g}}',K')$-module
 of $J(j)_{K'}$. 
Hence the symmetry breaking operator 
 $\A_{i,j}$ is a surjective map from $I(i)_K$ to $J(j)_{K'}$. 

\medskip

\medskip

Figure \ref{fig:2.1} represents the results
 for the operator $\widetilde{\mathbb A}_{i,j}$
 with $(i,j) \in {\mathcal L}$
 in the four octants I.A, 
 II.A, III.A, 
 and IV.A discussed so far.

\medskip
\begin{figure}[h]
\begin{center}
\includegraphics[scale=1]{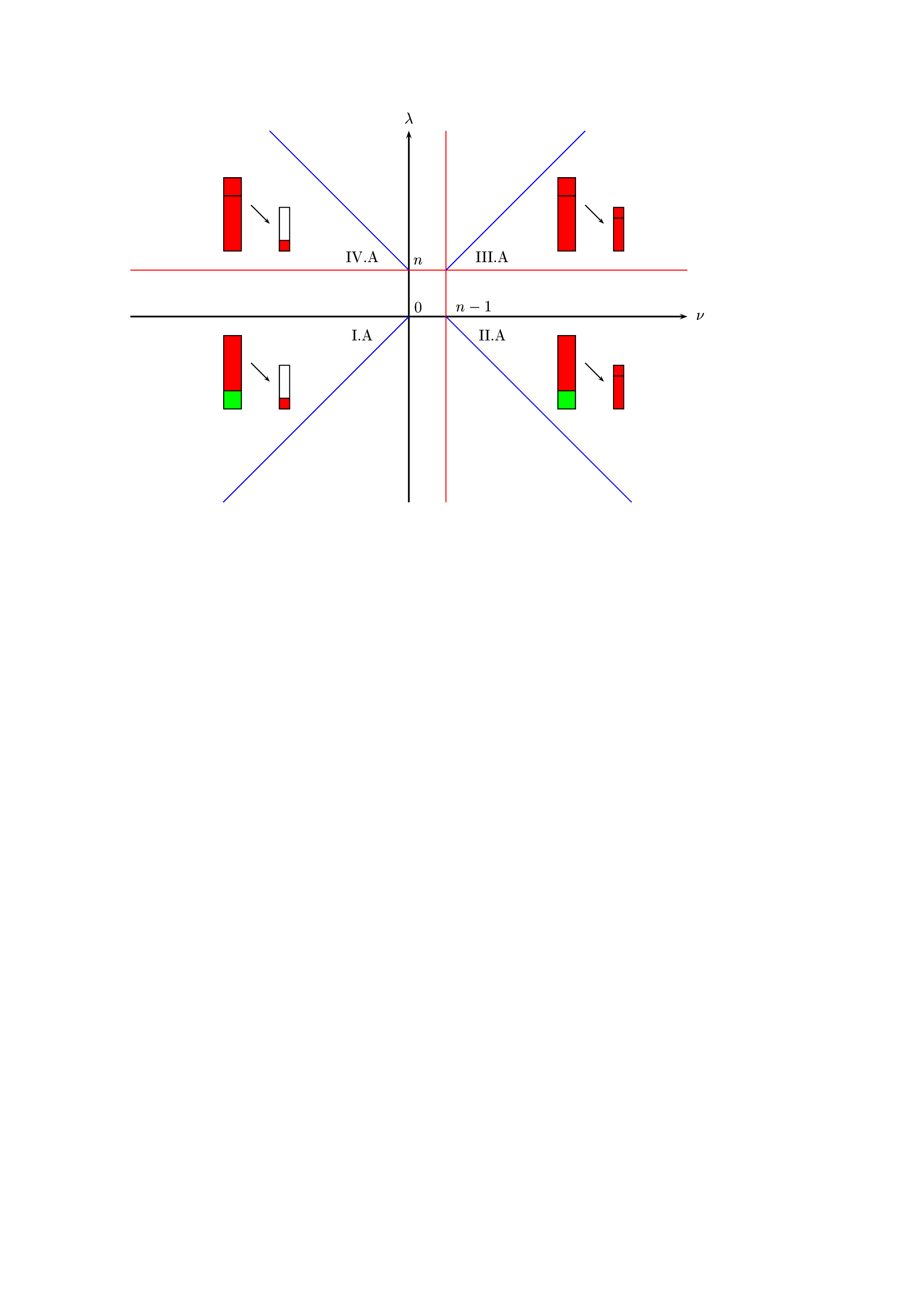}
\end{center}
\caption{Image of $\A_{i,j}$ with $(i,j) \in {\mathcal{L}}$}
\label{fig:2.1}
\end{figure}

\medskip 
\noindent
{\bf Suppose now that $(i,j)\in {\mathcal L}$ is contained in II.B.}
The representation $I(i)$ has a finite-dimensional subrepresentation
 and $J(j)$ has a finite-dimensional quotient. 
Furthermore we have $m(I(i),J(j))=1$
 by Theorem \ref{thm:I.1}.  
The image of a finite-dimensional $G'$-invariant space
 of $I(i)$ under $\widetilde{\mathbb A}_{i,j}$ is finite-dimensional
 or zero. 
Since $J(j)$ has no finite-dimensional subrepresentation, 
 the finite-dimensional subrepresentation $F(i)$ lies 
 in the kernel of the symmetry breaking operator
 $\widetilde{\mathbb A}_{i,j}.$ 
Consider the functional equation from Theorem \ref{thm:II.2}
\[ 
 \T{j}{n-1-j} 
 \circ 
 \widetilde{\mathbb A}_{i,j}
 =
 \frac{\pi^{\frac{n-1}{2}}}{\Gamma(j)}
   \widetilde{\mathbb A}_{i,n-1-j} .
\]
\noindent
\underline{ If $(i,j)\in \mathcal{L}_{odd}$}
 then the right-hand side is non-zero
 by Theorem \ref{thm:II.2}
 and thus $\T{j}{n-1-j} 
 \circ 
 \widetilde{\mathbb A}_{i,j} \not = 0$.  
In particular the image of $\T{j}{n-1-j} 
 \circ 
 \widetilde{\mathbb A}_{i,j}$
 is finite-dimensional.  
Thus the symmetry breaking operator 
$\widetilde{\mathbb A}_{i,j}$ must have a dense image,
 and therefore,
 induces a surjective
 $({\mathfrak {g}},K)$-homomorphism
 $I(i)_K \to J(j)_{K'}$.\\
\noindent
\underline{If $(i,j)\in \mathcal{L}_{even}$}
 then the symmetry breaking operator
 $\A_{i,n-1-j} =0$
 by Theorem \ref{thm:II.1}. 
Hence the image of $\A_{i,j}$
 is contained in the subrepresentation 
 $T(j-n+1)$
 and thus induces a non-zero element
 in $\operatorname{Hom}_{G'}(T(i), T(j-n+1))$.

\medskip
\noindent
{\bf Suppose now that $(i,j)\in {\mathcal L}$ is contained in IV.B.} 
The representation $I(i)$ has a finite-dimensional quotient
 and $J(j)$ has a finite-dimensional subrepresentation. 
Furthermore the multiplicity $m(I(i),J(j))=1$. 
Consider the functional equation
{}from Theorem \ref{thm:II.2}
\[ \widetilde{\mathbb A}_{i,j}
 \circ \T{n-i}i 
 = \frac{\pi^{\frac{n}{2}}}{\Gamma(i)} \widetilde{\mathbb A}_{n-i,j}\]
\noindent
\underline{ If $(i,j)\in \mathcal{L}_{odd}$}
 then 
$
   \frac{\pi^{\frac{n}{2}}}{\Gamma(i)} \widetilde{\mathbb A}_{n-i,j}\ne 0
$
 by Theorem \ref{thm:II.1}.  
Hence the image $T(i-n)$
 of the Knapp--Stein intertwining operator
 $\T{n-i}i$ is not in the kernel of the symmetry breaking operator
 $\widetilde{\mathbb A}_{i,j}$. 
By the same argument as in IV.A, 
 the image of 
the symmetry breaking operator is finite-dimensional, 
 and thus it induces 
 a nontrivial element in $\operatorname{Hom}_{G'}(T(i-n)_K, F(j))$.

\noindent
\underline{If $(i,j)\in \mathcal{L}_{even}$}
 then $ \widetilde{\mathbb A}_{n-i,j} =0$. 
Hence the image $T(i-n)$
 of the Knapp--Stein intertwining operator $\T{n-i}{i}$
 for $G$
is in the kernel of the symmetry breaking operator
 $\widetilde{\mathbb A}_{i,j}$
 and therefore it induces a non-zero operator
 in $\operatorname{Hom}_{G'}(F(i-n), F(j))$.

\medskip

\noindent
{\bf Suppose now that $(i,j)\in {\mathcal L}$ is contained in III.B.} 
The representation $I(i)$  and $J(j)$ have both finite-dimensional quotients. 
Furthermore we have $m(I(i),J(j))=1$
 by Theorem \ref{thm:I.1}. 
Consider the functional equation from Theorem \ref{thm:II.2}
\[
 \widetilde{\mathbb A}_{i,j}
 \circ \T{n-i}i 
 = \frac{\pi^{\frac{n}{2}}}{\Gamma(i)} \widetilde{\mathbb A}_{n-i,j}.
\]
It implies
 that the image $T(i-n)$
 of the Knapp--Stein intertwining operator 
 $\T {n-i}i$
 of $G$ is not in the kernel
 of $\widetilde{\mathbb A}_{i,j}$. 
Furthermore 
 $\A_{i,j}$ acts nontrivially
 on the spherical vector
 by Theorem \ref{thm:I.4},
 and its image is a cyclic vector
 in $J(j)$.  
Hence the symmetry breaking operator
 $\widetilde{\mathbb A}_{i,j}$ induces
 a surjective map from $I(i)_K$ to $J(j)_{K'}$.

\begin{remark}
{If $(i,j)\in \mathcal{L}_{even}$}, 
 then 
 the functional equation also implies 
 that the image of $T(i-n)_{K}$ is $T'(j-n+1)_{K'}$. 
\end{remark}

\medskip

\noindent
{\bf Suppose now that $(i,j)\in {\mathcal L}$ is contained in I.B.} 
The representations $I(i)$ and $J(j)$
 have finite-dimensional subrepresentations
 $F(-i)$ and $F(-j)$, 
 respectively.  

\noindent
\underline{If $(i,j)\in \mathcal{L}_{odd}$}, 
 then $m(I(i),J(j))=1$. The image of 
$\widetilde{\mathbb A}_{i,j}$ is finite-dimensional
 because
\[ 
\T{j}{n-1-j} \circ \widetilde{\mathbb A}_{i,j}
 = \frac{\pi^{\frac{n-1}{2}}}{\Gamma(j)}
   \widetilde{\mathbb A}_{i,n-1-j} = 0.  
\]
Another functional equation
\[ \widetilde{\mathbb A}_{i,j}
 \circ \T{n-i}i 
 = \frac{\pi^{\frac{n}{2}}}{\Gamma(i)} \widetilde{\mathbb A}_{n-i,j}= 0\]
 implies that the finite-dimensional representation $F(-i)$ 
 is contained 
 in the kernel of $\widetilde{\mathbb A}_{i,j}$
 and so it induces a non-zero symmetry breaking operator
 in $\operatorname{Hom}_{G'}(T(-i), F(-j))$.

\noindent
\underline{If $(i,j)\in \mathcal{L}_{even}$},
 then $ \widetilde{\mathbb A}_{i,j}= 0$
 by Theorem \ref{thm:II.1}.

\medskip

Figure \ref{fig:2.2} represents the results
 for $\widetilde{\mathbb A}_{i,j}$
 in the 4 octants
 I.B, II.B, III.B, and IV.B
 for $(i,j) \in {\mathcal L}_{odd}$

\begin{figure}[h]
\begin{center}
\includegraphics[scale=1]{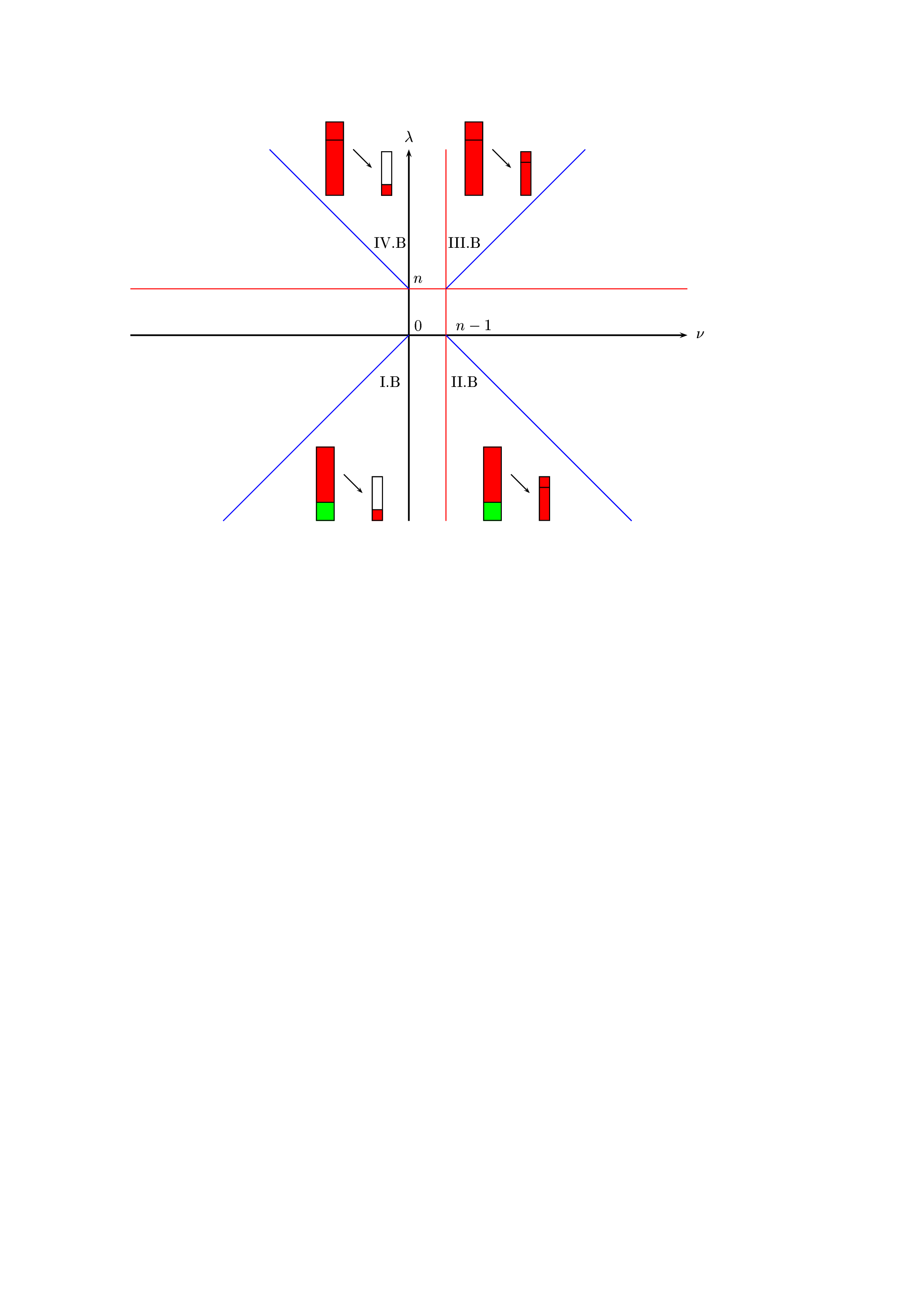}
\end{center}
\caption{Image of $\A_{i,j}$ with $(i,j) 
\in {\mathcal{L}}_{\operatorname{odd}}$
}
\label{fig:2.2}
\end{figure}
\medskip

Similarly, 
Figure \ref{fig:2.3} represents the results
 for $\widetilde{\mathbb A}_{i,j}$
 in the 4 octants
 for $(i,j) \in {\mathcal L}_{even}$
\medskip

\begin{figure}[h]
\begin{center}
\includegraphics[scale=1]{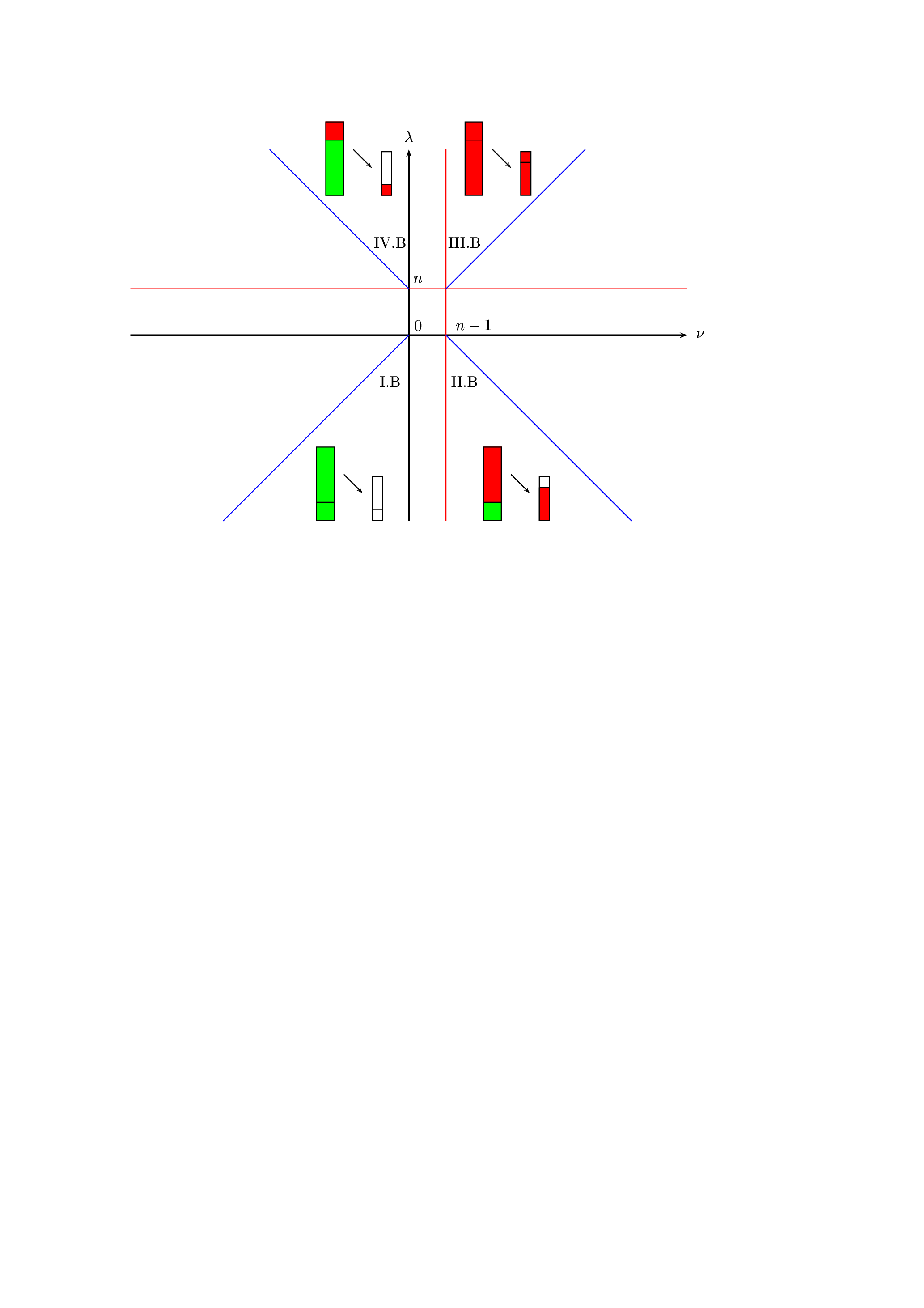}
\end{center}
\caption{Image of $\A_{i,j}$ with $(i,j) 
\in {\mathcal{L}}_{even}$
}
\label{fig:2.3}
\end{figure}
\medskip

If $(i,j) \in L_{\operatorname{even}}$, 
namely,
 if $(i,j) \in {\mathcal{L}}_{even}$
 with $i \le j \le 0$, 
 then the multiplicity $m(I(i),J(j))=2$
 and $H(i,j)$ is spanned by $\widetilde{\widetilde{{\mathbb{A}}}}_{i,j}$ and $ {\tilde{{\mathbb{C}}}}_{i,j} $
 by Theorem \ref{thm:II.5}. 
The image of $\widetilde{\widetilde{{\mathbb{A}}}}_{i,j}$
 is finite-dimensional
 and since the restriction to the finite-dimensional subrepresentation
 is nontrivial it induces an $G'$-equivariant operator
 between the finite-dimensional representations
 $F(-i)$ and $F(-j)$.  
By Theorem \ref{thm:ImageF}
 the image of $I(i)_K$
 under ${\tilde{{\mathbb{C}}}}_{i,j}$ is equal to $J(-j)_{K'}$
 and the finite-dimensional representation is not in the kernel
 by Theorem \ref{thm:kernel} (5).

\medskip
Figure \ref{fig:2.4} represents the results for the operators
 $\AAt_{i,j}$
and $\C_{i,j}$ with $(i,j) \in {L}_{\operatorname{even}}$

\medskip
\begin{figure}[h]
\index{sbon}{Leven@$L_{\operatorname{even}}$}
\begin{center}
\includegraphics[scale=1]{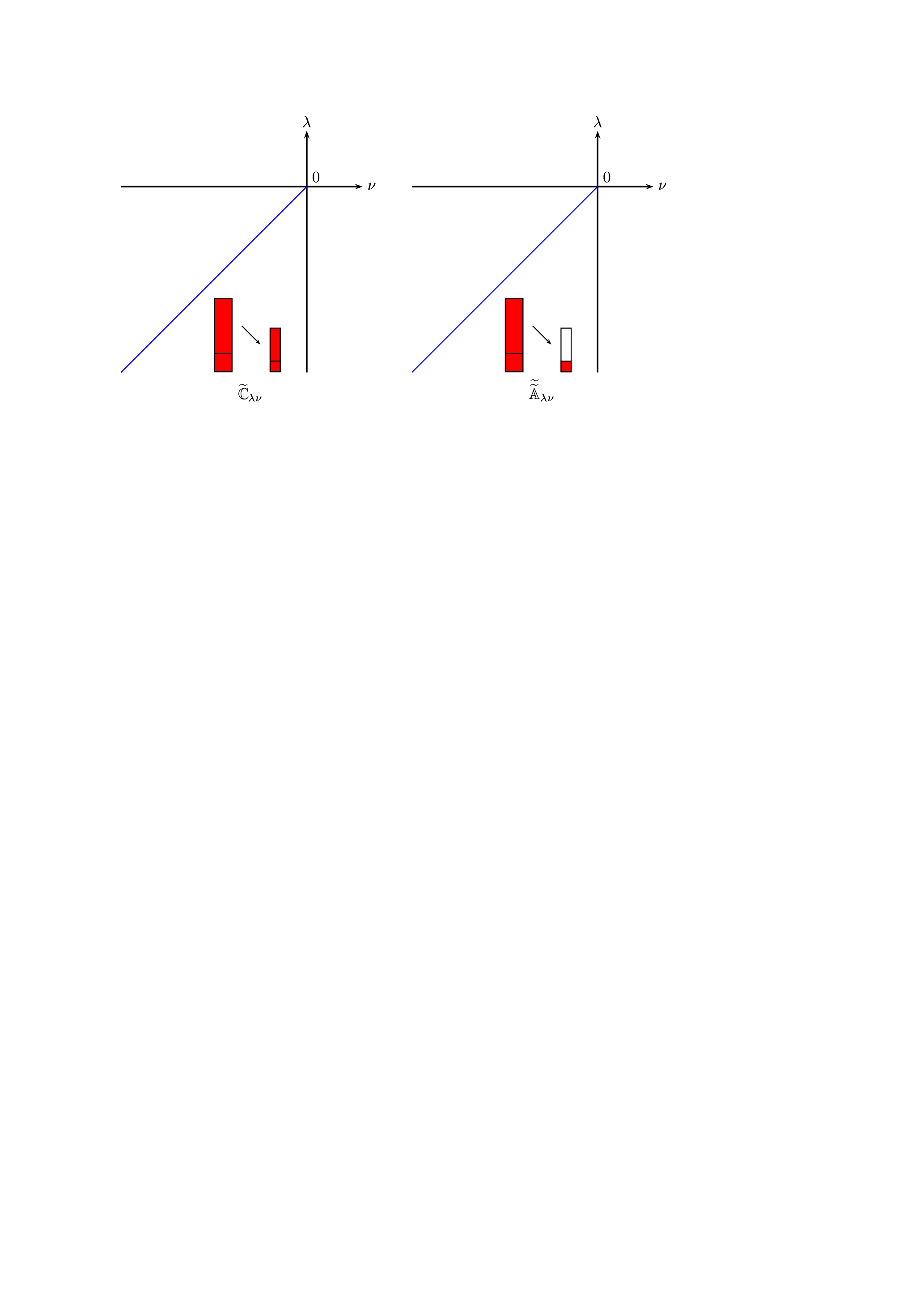}
\end{center}
\caption{Images of $\AAt_{i,j} $ and $\C_{i,j}$
 with $(i,j) \in L_{\operatorname{even}}$}
\label{fig:2.4}
\end{figure}
\medskip

\subsection{Multiplicities for composition factors}
The following theorem  generalizes the results by \cite{Lo} for $G=GL(2,{\mathbb{C}})$
 and $G'=GL(2,{\mathbb{R}})$.
\medskip
\begin{theorem} 
[multiplicities for composition factors] 
\label{thm:compo}
~~~
Let $i,j \in {\mathbb{N}}$.  
\begin{enumerate}
\item[{\rm{(1)}}]
Suppose that $i>j$.  

\par\noindent
{\rm{(1-a)}} \enspace
Assume $i \equiv j \mod 2$, 
namely,
$(-i,-j)  \in L_{\operatorname{even}}$. 
Then 
\[  
   m (T(i) ,T(j) ) = 1, 
   \qquad
   m(T(i), F(j) )= 0, 
   \qquad
   m (F(i) ,F(j) )  = 1.  
\]
\par\noindent
{\rm{(1-b)}} \enspace
Assume $i \equiv j+1 \mod 2$, 
namely,
$(-i,-j) \in L_{\operatorname{odd}} $.   
Then
\[  m(T(i) ,T(j) ) = 0, 
\qquad
    m(T(i) ,F(j) ) = 1, 
\qquad
    m (F(i) ,F(j) )  = 0.\]
\item[{\rm{(2)}}]
 Suppose that $i<j$.  
Then 
\[
m(T(i),T(j))=0, 
\quad
m(T(i),F(j))=1, 
\quad
m(F(i),F(j))=0.  
\]
\end{enumerate}
\end{theorem}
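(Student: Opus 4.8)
The plan is to derive everything from the explicit information about $\widetilde{\mathbb{A}}_{i,j}$ assembled in Section~\ref{subsec:2.2} (Figures \ref{fig:2.1}--\ref{fig:2.4}), combined with the functional equations of Theorem~\ref{thm:II.2}, the nonvanishing criterion of Theorem~\ref{thm:II.1}, the multiplicity count of Theorem~\ref{thm:I.1}, and the complete basis description of Theorem~\ref{thm:II.5}. The key point is that once we know $m(I(\lambda),J(\nu))$ and know an explicit basis of $\operatorname{Hom}_{G'}(I(\lambda),J(\nu))$ together with its image and kernel on each composition factor, every multiplicity $m(Z,Z')$ for composition factors $Z$ of $I(\lambda)$ and $Z'$ of $J(\nu)$ can be read off by chasing the short exact sequences \eqref{eqn:FIT} and \eqref{eqn:TIF} (for $G$) and their $G'$-analogues, together with the branching law of finite-dimensional representations in Proposition~\ref{prop:Hbranch}.

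Concretely, I would organize the proof by the octants. For part (1) with $i>j$: the pair $(-i,-j)$ lies in region I.A. From the analysis there, $\widetilde{\mathbb{A}}_{-i,-j}$ kills the finite-dimensional subrepresentation $F(i)\subset I(-i)$ and has image exactly $F(j)\subset J(-j)$ (this uses Theorem~\ref{thm:II.1} for nonvanishing and Theorem~\ref{thm:II.2} to pin down the image). Hence $\widetilde{\mathbb{A}}_{-i,-j}$ induces a nonzero element of $\operatorname{Hom}_{G'}(T(i),F(j))$ when $(-i,-j)\in L_{\operatorname{odd}}$, giving $m(T(i),F(j))=1$; since $m(I(-i),J(-j))=1$ by Theorem~\ref{thm:I.1} in that case, there is no room for more, so $m(T(i),T(j))=0$ and $m(F(i),F(j))=0$, the latter being also forced by Proposition~\ref{prop:Hbranch}(1) since $i\not\equiv j$. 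When $(-i,-j)\in L_{\operatorname{even}}$, Theorem~\ref{thm:II.1} gives $\widetilde{\mathbb{A}}_{-i,-j}=0$, but Theorem~\ref{thm:II.5} supplies the basis $\{\widetilde{\widetilde{\mathbb{A}}}_{-i,-j},\widetilde{\mathbb{C}}_{-i,-j}\}$ of the now two-dimensional Hom-space; by Theorem~\ref{thm:image}(1), $\operatorname{Image}\widetilde{\widetilde{\mathbb{A}}}_{-i,-j}=F(j)$ with $F(i)$ not in the kernel, giving a nonzero map in $\operatorname{Hom}_{G'}(F(i),F(j))$, hence $m(F(i),F(j))=1$ (consistent with Proposition~\ref{prop:Hbranch}(1)), while $\operatorname{Image}\widetilde{\mathbb{C}}_{-i,-j}=J(-j)_{K'}$ with $F(i)$ not in the kernel; composing with the quotient $J(-j)\to T(j)$ and restricting to $T(i)$ yields a nonzero element of $\operatorname{Hom}_{G'}(T(i),T(j))$, so $m(T(i),T(j))\ge 1$, and the upper bound $m\le 1$ for all irreducible pairs (Sun--Zhu \cite{S-Z}) forces equality. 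Finally $m(T(i),F(j))=0$: any such map would lift the composite $I(-i)\to T(i)\to F(j)$, a nonzero element of $\operatorname{Hom}_{G'}(I(-i),F(j))$ killing $F(i)$; but the two-dimensional space above is spanned by $\widetilde{\widetilde{\mathbb{A}}}$ and $\widetilde{\mathbb{C}}$, whose images land in $F(j)$ and in $J(-j)_{K'}$ respectively, and only the combinations with image in the finite-dimensional piece (multiples of $\widetilde{\widetilde{\mathbb{A}}}$) could factor through $F(j)$ — but those do not vanish on $F(i)$, a contradiction.

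For part (2) with $i<j$: here $I(-i)$ still has $F(i)$ as its unique subrepresentation, but now $-j<-i$, so $F(j)$ cannot appear in $F(i)|_{G'}$ (Proposition~\ref{prop:Hbranch}(1)) regardless of parity, forcing $m(F(i),F(j))=0$. The governing Hom-space is $\operatorname{Hom}_{G'}(I(-i),J(-j))$, which is $1$-dimensional by Theorem~\ref{thm:I.1} since $(-i,-j)\notin L_{\operatorname{even}}$ when $i<j$ (as $j\not\le i$), spanned by $\widetilde{\mathbb{A}}_{-i,-j}$; by Theorem~\ref{thm:image}(2-a), $\operatorname{Image}\widetilde{\mathbb{A}}_{-i,-j}=F(-\nu)=F(j)$ since $\nu=-j\in-\mathbb{N}$, and by Theorem~\ref{thm:I.4}(1) it is nonzero precisely when $\lambda=-i\neq 0,-1,\dots$, i.e.\ for $i\ge 1$; the $i=0$ case ($I(0)$ irreducible, $T(0)=0$) is handled directly. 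One checks as before that $\widetilde{\mathbb{A}}_{-i,-j}$ kills $F(i)$ (its image being the finite-dimensional $F(j)\subset J(-j)$ while $J(-j)$ has no finite-dimensional subrepresentation other than $F(j)$, and $F(i)|_{G'}$ does not contain $F(j)$), so it descends to $T(i)\to F(j)$, giving $m(T(i),F(j))=1$; the $1$-dimensionality of the total Hom-space then forces $m(T(i),T(j))=0$.

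The main obstacle I anticipate is the bookkeeping in the $L_{\operatorname{even}}$ case of part~(1-a): one must carefully distinguish which elements of the two-dimensional space $\operatorname{Hom}_{G'}(I(-i),J(-j))$ descend to which composition factors, using that $\widetilde{\widetilde{\mathbb{A}}}$ has finite-dimensional image while $\widetilde{\mathbb{C}}$ surjects onto $J(-j)_{K'}$, and that neither kills $F(i)$. The cleanest route is to exploit the full exact-sequence structure: a map $T(i)\to Z'$ is the same as a map $I(-i)\to Z'$ vanishing on $F(i)$, and a map $Z\to F(j)$ is the same as a map $Z\to J(-j)$ with image in $F(j)$; intersecting these two conditions inside the explicitly-known two-dimensional space pins down all the multiplicities. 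The finite-dimensional multiplicities $m(F(i),F(j))$ are entirely governed by Proposition~\ref{prop:Hbranch}(1) and need no operator analysis, which provides a useful consistency check throughout.
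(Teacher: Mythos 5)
Your overall strategy coincides with the paper's: combine the multiplicity formula and explicit basis (Theorems \ref{thm:I.1} and \ref{thm:II.5}), the images and kernels of $\A_{\lambda,\nu}$, $\AAt_{\lambda,\nu}$, $\C_{\lambda,\nu}$ (Theorems \ref{thm:I.4} and \ref{thm:image}), the branching law of Proposition \ref{prop:Hbranch}, and the Sun--Zhu bound. Your case (1-a) is essentially the paper's argument, and your construction of a nonzero element of $\operatorname{Hom}_{G'}(T(i),T(j))$ by composing $\C_{-i,-j}$ with the quotient $J(-j)\to T(j)$ is a legitimate variant of the paper's route (which instead uses $\A_{-i,n-1+j}$ in octant II.B); you only need to add the remark that $\C_{-i,-j}(F(i))$ is a finite-dimensional submodule of $J(-j)$, hence contained in $F(j)$, so the composite genuinely descends to $T(i)$ (your phrase ``restricting to $T(i)$'' presupposes this).

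There is, however, a genuine gap in the vanishing $m(T(i),T(j))=0$ in cases (1-b) and (2). You deduce it from $\dim\operatorname{Hom}_{G'}(I(-i),J(-j))=1$ (``no room for more''), but this does not follow: $T(j)$ is a quotient of $J(-j)$, not a subrepresentation (every nonzero submodule of $J(-j)$ contains $F(j)$), so a nonzero map $T(i)\to T(j)$ produces no element of $\operatorname{Hom}_{G'}(I(-i),J(-j))$ and is invisible to the count at the parameter $(-i,-j)$. The paper closes this by changing the parameter: a nonzero $T(i)\to T(j)$ yields an element of $H(-i,\,n-1+j)$ killing $F(i)$ with image inside the subrepresentation $T(j)\subset J(n-1+j)$; the octant II.A/II.B analysis (via the functional equations of Theorem \ref{thm:II.2}) shows the unique generator $\A_{-i,n-1+j}$ of that one-dimensional space has dense image, hence image not contained in $T(j)$ --- a contradiction. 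Some such reflection to $\nu=n-1+j$ (or an equivalent argument) is indispensable and is missing from your proposal. Minor slips to correct as well: for $i>j$ the pair $(-i,-j)$ lies in octant I.B, not I.A; $I(0)$ is reducible and $T(0)\neq 0$, so no special case for $i=0$ is needed (and the one you state is incorrect); and Theorem \ref{thm:I.4}(1) gives $\A_{\lambda,\nu}(\mathbf{1}_\lambda)=\pi^{\frac{n-1}{2}}\Gamma(\lambda)^{-1}\mathbf{1}_\nu$, which vanishes for \emph{every} $\lambda=-i\in-\mathbb{N}$ --- harmless, since $\mathbf{1}_\lambda\in F(i)$ lies in the kernel anyway, but your reading (``nonzero for $i\ge 1$'') is backwards; the nonvanishing you actually need is that of the operator itself, i.e.\ Theorem \ref{thm:II.1}.
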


\begin{proof}
The discussion in Section \ref{subsec:2.2}
 (see Figures \ref{fig:2.2} and \ref{fig:2.3}) shows
 that our symmetry breaking operators induce
\[
 m (T(i) ,T(j) ) \not = 0 \ \mbox{ and } m (F(i) ,F(j) )  = 1
\
\mbox{ for }(-i,-j)  \in L_{\operatorname{even}}, 
\]
\[
m(T(i) ,F(j) ) \not = 0
\quad
\text{for }
 (-i,-j) \in L_{\operatorname{odd}}, 
\]
and 
\[
m(T(i),F(j))\ne 0
\quad
\text{for }
i<j.  
\]
Hence by \cite{S-Z}
 the multiplicities are one and it suffices to show that the multiplicities are zero in the remaining cases. 
 
If $(-i,-j)  \in L_{\operatorname{even}}$
 and $m(T(i), F(j) ) \not = 0$, 
 then there would exist a nontrivial symmetry breaking operator
 $I(-i)\rightarrow J(-j)$
 with image in the subrepresentation $F(-j)$
 for which the finite-dimensional representation $F(i)$ is in the kernel. 
Since $F(i) $ is not in the kernel of $\widetilde{\widetilde{\mathbb A}}_{i,j} $ or $\widetilde{\mathbb C}_{i,j}$, 
 this would imply that $m(I(i),J(j))>2$, 
 contradicting Theorem \ref{thm:II.5}.

We have already shown 
 in Proposition \ref{prop:Hbranch}
 that $m(F(i), F(j))=0$
 if $(-i,-j) \in L_{\operatorname{odd}}$
 or $i<j$.  
Alternatively,
 this can be proved as follows:
Suppose that $(-i,-j)  \in L_{\operatorname{odd}}$
 or $i<j$.  
If $m(F(i), F(j) )\not= 0$, 
 then we would obtain an additional symmetry breaking operator
 for $(n+i,-j) $ in the octant IV.A or IV.B, 
 contradicting Theorem \ref{thm:II.5}.

Now suppose that $(-i,-j)\in L_{\operatorname{odd}}$
 or $i<j$.  
Similarly,
 if $m(T(i), T(j) )\not= 0$, 
 then we would obtain an additional symmetry breaking operator
 for $(\lambda,\nu)=(-i,n-1+j) $ in the octant II.A or II.B, 
 contradicting Theorem \ref{thm:II.5}.
\end{proof}

The following theorem determines 
 the multiplicity from principal series representations
 $I(\lambda)$ of $G$
 (not necessarily irreducible)
 to the irreducible representations 
 $F(j)$ and $T(j)$ of $G'$:
\begin{theorem}
\label{thm:mIF}
Suppose $j \in {\mathbb{N}}$. 
\begin{enumerate}
\item[{\rm{1)}}]
$
m(I(\lambda),F(j))
=1
\quad\quad
\text{for all }\,\, \lambda \in {\mathbb{C}}.  
$
\item[{\rm{2)}}]
$m(I(\lambda),T(j))
=
\begin{cases}
1
\quad&\text{if }\,\, \lambda+j \in -2{\mathbb{N}},
\\
0
\quad&\text{if }\,\, \lambda+j \not\in -2{\mathbb{N}}.
\end{cases}
$
\end{enumerate}
\end{theorem}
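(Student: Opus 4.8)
The plan is to realize the two target representations of $G'$ as \emph{subrepresentations} of spherical principal series of $G'$ and then to read off the multiplicities from the classification of symmetry breaking operators established later in the paper in Theorems~\ref{thm:II.5} and~\ref{thm:image}, both of which we are free to invoke here. Recall the two non-split exact sequences of $G'$-modules analogous to \eqref{eqn:FIT}--\eqref{eqn:TIF}:
\[
0 \to F(j) \to J(-j) \to T(j) \to 0,
\qquad
0 \to T(j) \to J((n-1)+j) \to F(j) \to 0 .
\]
Since $F(j)$ (resp.\ $T(j)$) is a closed $G'$-submodule of $J(-j)$ (resp.\ of $J((n-1)+j)$), post-composition with these inclusions identifies $\operatorname{Hom}_{G'}(I(\lambda),F(j))$ with the subspace of $S \in \operatorname{Hom}_{G'}(I(\lambda),J(-j))$ having $\operatorname{Image} S \subseteq F(j)$, and $\operatorname{Hom}_{G'}(I(\lambda),T(j))$ with the subspace of $S \in \operatorname{Hom}_{G'}(I(\lambda),J((n-1)+j))$ having $\operatorname{Image} S \subseteq T(j)$. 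So it suffices to decide, inside the (one- or two-dimensional) $\operatorname{Hom}$-spaces described by Theorem~\ref{thm:II.5}, which operators land in the relevant subrepresentation; the images of $\tA{\lambda}{\nu}$, $\AAt_{\lambda,\nu}$ and $\tC{\lambda}{\nu}$ are supplied by Theorem~\ref{thm:image}.

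For part (1) I would take $\nu = -j \in -{\mathbb{N}}$. If $(\lambda,-j) \notin L_{\operatorname{even}}$, then $\operatorname{Hom}_{G'}(I(\lambda),J(-j)) = {\mathbb{C}}\,\tA{\lambda}{-j}$ with $\tA{\lambda}{-j} \neq 0$ (Theorems~\ref{thm:II.5} and~\ref{thm:II.1}), and $\operatorname{Image}\tA{\lambda}{-j} = F(j)$ by Theorem~\ref{thm:image}(2\text{-}a), so the space is one-dimensional. If $(\lambda,-j) \in L_{\operatorname{even}}$, then $\operatorname{Hom}_{G'}(I(\lambda),J(-j)) = {\mathbb{C}}\,\AAt_{\lambda,-j} \oplus {\mathbb{C}}\,\tC{\lambda}{-j}$ with $\operatorname{Image}\AAt_{\lambda,-j} = F(j)$ and $\operatorname{Image}\tC{\lambda}{-j} = J(-j)_{K'}$ by Theorem~\ref{thm:image}(1); composing $a\,\AAt_{\lambda,-j} + b\,\tC{\lambda}{-j}$ with the quotient map $J(-j) \to T(j)$ annihilates the first summand and carries the second onto $T(j)_{K'}$, so $\operatorname{Image} \subseteq F(j)$ forces $b = 0$ and the space is again one-dimensional. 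In every case $m(I(\lambda),F(j)) = 1$.

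For part (2) I would take $\nu = (n-1)+j$. For $n \ge 2$ this gives $\nu \ge 1$, hence $(\lambda,\nu) \notin L_{\operatorname{even}}$ and $\operatorname{Hom}_{G'}(I(\lambda),J(\nu)) = {\mathbb{C}}\,\tA{\lambda}{\nu}$ with $\tA{\lambda}{\nu} \neq 0$. The elementary observation is that $(\lambda,\nu) \in \backslash\!\backslash$ exactly when $\lambda + \nu \in \{n-1,n-3,\dots\}$, i.e.\ when $\lambda + j \in -2{\mathbb{N}}$, while $\nu + 1 - n = j \in {\mathbb{N}}$ in all cases. Thus Theorem~\ref{thm:image}(2) gives
\[
\operatorname{Image}\tA{\lambda}{\nu}
=
\begin{cases}
T(\nu+1-n)_{K'} = T(j)_{K'} & \text{if } \lambda + j \in -2{\mathbb{N}} \quad (\text{case }(2\text{-}b)),\\
J(\nu)_{K'} & \text{if } \lambda + j \notin -2{\mathbb{N}} \quad (\text{case }(2\text{-}c)).
\end{cases}
\]
In the first case the unique-up-to-scalar operator has image in $T(j)$, so $m(I(\lambda),T(j)) = 1$; in the second case its image is $J(\nu)_{K'}$, which strictly contains $T(j)_{K'}$ since $F(j) \neq 0$, so no nonzero operator into $J(\nu)$ has image in $T(j)$ and $m(I(\lambda),T(j)) = 0$.

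Once Theorems~\ref{thm:II.5} and~\ref{thm:image} are in hand the generic argument is essentially formal, so I do not expect a serious obstacle there. The two points that do need care are: (a) transferring the ``$\operatorname{Image}$ equals a prescribed $(\mathfrak{g}',K')$-module'' statements of Theorem~\ref{thm:image} to the Fr\'echet-level assertion ``$\operatorname{Image}$ is contained in the closed subrepresentation'' that the reductions above require --- this follows from continuity together with the density of the $K$-finite vectors, but should be recorded explicitly; and (b) the few degenerate parameters at which $\operatorname{Hom}_{G'}(I(\lambda),J(\nu))$ jumps to dimension two for the target of part (2), which for $\nu = (n-1)+j$ happens only when $n = 1$ and $j = 0$. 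There I would argue as in part (1): in $\operatorname{Hom}_{G'}(I(-i),J(0)) = {\mathbb{C}}\,\AAt_{-i,0} \oplus {\mathbb{C}}\,\tC{-i}{0}$ (with $i$ even) one uses part (1), already proved, to recognise the composite of $\tC{-i}{0}$ with the projection onto the finite-dimensional quotient of $J(0)$ as a nonzero multiple of the corresponding composite of $\AAt_{-i,0}$; this cuts out exactly the one-dimensional subspace of operators with image in $T(0)$, consistently with $\lambda + j = -i \in -2{\mathbb{N}}$.
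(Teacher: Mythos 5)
Your argument is essentially the paper's own proof: it fixes $\nu=-j$ and $\nu=(n-1)+j$, identifies $\operatorname{Hom}_{G'}(I(\lambda),F(j))$ and $\operatorname{Hom}_{G'}(I(\lambda),T(j))$ with the operators in $\operatorname{Hom}_{G'}(I(\lambda),J(\nu))$ whose image lies in the relevant composition factor, and then decides the dimension using the classification and image theorems (the paper cites Theorems \ref{thm:I.1}, \ref{thm:II.5}, \ref{thm:ImageF}, \ref{thm:ImageT} for exactly these facts). The only differences are cosmetic: you make explicit the continuity/density step and the degenerate parameter $n=1$, $j=0$, both of which the paper leaves implicit.
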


It is noteworthy 
 that there exist non-trivial symmetry breaking operators
 to the finite-dimensional representations $F(j)$, 
 whereas there do not exist
 to the infinite-dimensional irreducible representations
 $T(j)$
 for generic parameter $\lambda$.  

\begin{proof}
[Proof of Theorem \ref{thm:mIF}]
1) \enspace
For any $\lambda \in {\mathbb{C}}$
 and $\nu=-j \in {\mathbb{N}}$, 
 we have
\index{sbon}{Att@$\AAt_{\lambda,\nu}$}
\[
0 \ne \AAt_{\lambda, \nu}
 \in \operatorname{Hom}_{G'}(I(\lambda), J(\nu))
\]
by Proposition \ref{prop:nuneg}, 
and $\operatorname{Image}\AAt_{\lambda, \nu}=F(j)$
 by Theorem \ref{thm:ImageF} (2)
 (see also Theorem \ref{thm:image}).  
Hence $m(I(\lambda),F(j))\ge 1$.

On the other hand,
 in view of the inclusion relation
 for $\nu =-j \in {\mathbb{N}}$
\[
   \operatorname{Hom}_{G'}(I(\lambda), F(j))
    \subset
   \operatorname{Hom}_{G'}(I(\lambda), J(\nu)), 
\]
we have $m(I(\lambda),F(j))\le 1$
 if $(\lambda, -j) \notin L_{\operatorname{even}}$
 by Theorem \ref{thm:I.1}.

Suppose now
 that $(\lambda, -j) \in L_{\operatorname{even}}$.  
Then $\operatorname{Hom}_{G'}(I(\lambda), J(\nu))$
 is spanned by $\AAt_{\lambda, \nu}$
 and $\C_{\lambda,\nu}$
 with $\nu = - j$
 by Theorem \ref{thm:II.5}, 
 but $\operatorname{Image}\C_{\lambda, \nu} \supsetneqq F(j)$
 by Theorem \ref{thm:image} (1).  
Hence $m(I(\lambda),F(j))\le 1$
 if $(\lambda, -j) \in L_{\operatorname{even}}$, 
 too.  
Thus the first statement is proved.  
\par\noindent
2) For $\nu =m+j$, 
 we have from Theorem \ref{thm:I.1}
 and \eqref{eqn:TIF}, 
\[
   m(I(\lambda),T(j))\le m(I(\lambda),J(\nu))=1
\]
for any $\lambda \in {\mathbb{C}}$.

On the other hand, 
 if $\lambda + j \in -2 {\mathbb{N}}$, 
then $\A_{\lambda,\nu} \ne 0$
by Theorem \ref{thm:II.1}
 and $\operatorname{Image}\A_{\lambda,\nu} \subset T(j)$
 by Theorem \ref{thm:ImageT} (2).  
Hence $m(I(\lambda),T(j))=1$
 for $\lambda +j \in -2{\mathbb{N}}$.

Finally suppose $\lambda +j \not \in -2{\mathbb{N}}$.  
Then $\A_{\lambda,\nu}$ is nonzero
 but $\operatorname{Image}\A_{\lambda,\nu}$ is not 
contained in $T(j)$
 by Theorem \ref{thm:ImageT} (2).  
If $m(I(\lambda),T(j)) \ne 0$, 
 then we would obtain 
 an additional symmetry breaking operator from 
 $I(\lambda)$ to $J(\nu)$ 
 for $\nu=m+j$, 
 contradicting Theorem \ref{thm:I.1}.  
Thus Theorem \ref{thm:mIF} is proved.  
\end{proof}
\bigskip

\section{Symmetry breaking operators}
\label{sec:gen}

Although our main object
 is the pair
 of groups $(G, G')=(O(n+1,1),O(n,1))$, 
 the techniques 
 of this article are actually directed
 at the more general problems
 of determining symmetry breaking operators.  
In this chapter we study the distribution
 kernels
 of symmetry breaking operators
 between induced representations of a Lie group $G$
 and its subgroup $G'$ from their subgroups
 $H$ and $H'$, 
 respectively,
 in the general setting,
 and introduce the notion of 
 {\textit{regular}} ({\textit{singular}}, or {\textit{differential}}) 
 symmetry breaking operators
 in terms of the double coset $H' \backslash G/H$.  
When these representations are (possibly, degenerate)
 principal series representations
 of reductive groups, 
 we discuss a reduction
 to the analysis on an open Bruhat cell 
 under some mild condition.  

\subsection{Restriction of representations
 and symmetry breaking operators}
\label{subsec:contres}

Let $H$ be a closed subgroup of $G$.
Given a finite-dimensional representation 
$\lambda:H \to GL_{\mathbb{C}}(V)$,
we define the homogeneous vector bundle
\[
\mathcal{V}_X \equiv \mathcal{V} := G \times_H V
\]
over the homogeneous space $X := G/H$.
The group $G$ acts continuously on the space
$C^\infty(X,\mathcal{V})$
of smooth sections endowed with the natural Fr\'{e}chet topology.

Suppose that $G'$ is a subgroup of $G$, 
 and $H'$ is a closed subgroup of $G'$.  
Similarly,
 given a finite-dimensional representation
 $\nu:H' \to GL_{\mathbb{C}}(W)$,
 we have a continuous representation of $G'$
 on the Fr{\'e}chet space $C^{\infty}(Y,{\mathcal{W}})$, 
where
$\mathcal{W} := G' \times_{H'} W$
is the homogeneous vector bundle
 over $Y:=G'/H'$.

We denote by
\index{sbon}{H1@$H(\lambda, \nu)$|textbf}
\[
H(\lambda,\nu):=\operatorname{Hom}_{G'}(C^\infty(X,\mathcal{V}),C^\infty(Y,\mathcal{W}))
\]
the space of continuous $G'$-homomorphisms, 
{\it{i.e.}}, 
 {\it{symmetry breaking operators}}.  

\subsection{Distribution kernels of
  symmetry breaking operators}
\label{subsec:KT}

By the Schwartz kernel theorem a continuous linear operator 
$
   T:C^{\infty}(X, {\mathcal{V}}) \to C^{\infty}(Y, {\mathcal{W}})
$ 
 is given by a distribution kernel.
 In this section,
 we analyze the kernels of the symmetry breaking operators.

Let $\mathbb{C}_{2\rho}$ be the one-dimensional representation of
$H$ defined by
\[
h \mapsto |\det(\operatorname{Ad}_{G/H}(h):
     \mathfrak{g}/\mathfrak{h} \to \mathfrak{g}/\mathfrak{h})|^{-1}.
\]
The bundle of volume densities $\Omega_X$ of $X=G/H$ is given as a
$G$-homogeneous line bundle
$\Omega_X \simeq G \times_H \mathbb{C}_{2\rho}$.
Then the dualizing bundle of $\mathcal{V}$ is given,
as a homogeneous vector bundle,
by
\[
\index{sbon}{Vcast@${{\mathcal{V}}^{\ast}}$|textbf}
\mathcal{V}^* := (G \times_H V^\vee) \otimes \Omega_X
   \simeq G \times_H (V^\vee \otimes \mathbb{C}_{2\rho}), 
\]
where $V^\vee$ denotes the contragredient representation of $V$.

In what follows
$\mathcal{D}'(X,\mathcal{V}^*)$
denotes the space of $\mathcal{V}^*$-valued distributions.
\begin{remark}
\label{rem:distr}
We shall regard distributions as generalized functions 
\`{a} la Gelfand
\cite{GS}
(or a special case of hyperfunctions \`{a} la Sato)
rather than continuous linear forms on
$C_c^\infty(X,\mathcal{V})$.
The advantage of this convention 
is that the formula of the $G$-action
 (and of the infinitesimal action of the Lie algebra ${\mathfrak{g}}$)
 on ${\mathcal{D}}'(X,{\mathcal{V}}^{\ast})$
 is the same with that of $C^{\infty}(X,{\mathcal{V}}^{\ast})$.  
\end{remark}
\begin{proposition}\label{prop:Distker}
Suppose that $G'$ and $H$ are closed  subgroups of $G$ and that
 $H'$ is a closed subgroup of $G'$.  

\begin{enumerate}[\upshape 1)]
\item  
There is a natural injective map:
\begin{equation}\label{eqn:HinvD}
\operatorname{Hom}_{G'} (C^\infty(X,\mathcal{V}),C^\infty(Y,\mathcal{W}))
 \hookrightarrow (\mathcal{D}'(X,\mathcal{V}^*) \otimes W)^{\Delta(H')}, 
 \,\,
 T \mapsto K_T.  
\end{equation}
Here $H'$ acts diagonally via the action of $G \times H'$
on $\mathcal{D}'(X,\mathcal{V}^*) \otimes W$.
\item  
If $H$ is cocompact in $G$ (e.g.\ a parabolic subgroup of $G$),
then \eqref{eqn:HinvD} is a bijection.
\end{enumerate}
\end{proposition}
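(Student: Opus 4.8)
The plan is to identify symmetry breaking operators with their Schwartz kernels and then track exactly what the $G'$-equivariance condition imposes on those kernels. First I would apply the Schwartz kernel theorem: a continuous linear operator $T : C^\infty(X,\mathcal{V}) \to C^\infty(Y,\mathcal{W})$ corresponds to a distribution section $\widetilde{K}_T \in \mathcal{D}'(X \times Y, \mathcal{V}^* \boxtimes \mathcal{W})$. Since $\mathcal{W} = G' \times_{H'} W$ is $G'$-homogeneous, a standard Frobenius-type reciprocity (pull back along $G' \to G'/H' = Y$ and take $H'$-invariants in the $W$-factor) lets me rewrite a $\mathcal{W}$-valued distribution on $X \times Y$ as an element of $(\mathcal{D}'(X,\mathcal{V}^*) \otimes W)^{H'}$, \emph{provided} the operator intertwines the $G'$-actions. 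Concretely, $G'$-equivariance of $T$ translates into: for each $h' \in H'$, the kernel on $X$ obtained by restricting $\widetilde{K}_T$ to $X \times \{eH'\}$ is fixed by the diagonal action of $h'$ on $\mathcal{D}'(X,\mathcal{V}^*) \otimes W$ (here $h'$ acts on $X$ through $G' \subset G$ and on $W$ through $\nu$). This gives the map $T \mapsto K_T$ into $(\mathcal{D}'(X,\mathcal{V}^*)\otimes W)^{\Delta(H')}$ of \eqref{eqn:HinvD}.

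For injectivity of \eqref{eqn:HinvD}, I would argue that $K_T$ determines $\widetilde{K}_T$: knowing the kernel over the single fiber $\{eH'\}$ determines it over every fiber $g'H'$ by $G'$-equivariance, since $g'$ translates the $eH'$-fiber to the $g'H'$-fiber, and $G'$ acts transitively on $Y$. Hence $K_T = 0$ forces $\widetilde{K}_T = 0$, so $T = 0$. This settles part 1).

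For part 2), when $H$ is cocompact in $G$ I need surjectivity: every $K \in (\mathcal{D}'(X,\mathcal{V}^*)\otimes W)^{\Delta(H')}$ arises as $K_T$ for some continuous $T$. The construction goes in reverse: use $G'$-translates of $K$ to define a distribution section $\widetilde{K}$ on $X \times Y$, check it is well-defined (the $\Delta(H')$-invariance is exactly what is needed for consistency over each fiber $g'H'$), and then show the resulting operator $T$, given by integration of $\widetilde{K}$ against test sections, actually maps $C^\infty(X,\mathcal{V})$ \emph{into} $C^\infty(Y,\mathcal{W})$ and is continuous. The point where cocompactness of $H$ enters — and the main obstacle — is precisely this smoothness-and-continuity claim: one must verify that pairing a distribution on $X$ with a smooth section yields a \emph{smooth} (not merely distributional) section on $Y$. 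When $X = G/H$ is compact, $C^\infty(X,\mathcal{V})$ is a nuclear Fr\'echet space on which $\mathcal{D}'(X,\mathcal{V}^*)$ acts by continuous functionals, and the $G'$-equivariant family $\{g' \cdot K\}_{g' \in G'}$ depends smoothly on the parameter $g'$ (again using $H'$-invariance to descend to $Y$), so $y \mapsto \langle \widetilde{K}_y, f\rangle$ is smooth for every $f$. In the general cocompact case one reduces to the compact case or invokes the corresponding regularity statement for the compact fibration; this smoothing/continuity verification is the technical heart of the proposition, while the algebraic identification of invariants is essentially formal.
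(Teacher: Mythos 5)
Your proposal is correct and follows essentially the same route as the paper: the Schwartz kernel theorem, translation of $G'$-equivariance into diagonal $G'$-invariance of the kernel on $X\times Y$, descent along $Y=G'/H'$ (the paper realizes your ``restriction to the fiber over $eH'$'' via the explicit bijection induced by the map $(g,g')\mapsto (g')^{-1}g$), and for part 2 the converse construction, where compactness of $X=G/H$ plus the invariance yields that $Tf$ is a smooth section and $T$ is a continuous $G'$-homomorphism. The only cosmetic difference is that cocompactness of $H$ already means $X$ is compact, so there is no separate ``general cocompact case'' to reduce to.
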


\begin{proof}
1) \enspace
Any continuous operator
$T: C^\infty(X,\mathcal{V}) \to C^\infty(Y,\mathcal{W})$
is given uniquely by a distribution kernel
$K_T \in \mathcal{D}' (X \times Y, \mathcal{V}^* \boxtimes \mathcal{W})$
owing to the Schwartz kernel theorem.
If $T$ intertwines with the $G'$-action,
then the distribution $K_T$ is invariant under the diagonal action of $G'$, 
 namely,
 $K_T(g' \cdot, g' \cdot)=K_T(\cdot, \cdot)$
 for any $g' \in G'$.
In turn,
the multiplication map
\[
m: G \times G' \to G, \ (g,g') \mapsto (g')^{-1} g
\]
induces a natural bijection
\begin{equation}\label{eqn:invaD}
m^{\ast}:\mathcal{D}' (X \times Y, \mathcal{V}^* \boxtimes \mathcal{W})^{\Delta(G')}
\overset{\sim}{\leftarrow}
(\mathcal{D}' (X,\mathcal{V}^*) \otimes W)^{\Delta(H')}.
\end{equation}
Thus we have proved the first statement.
\par\noindent
2)\enspace
Conversely,
any distribution
$K \in \mathcal{D}' (X \times Y, \mathcal{V}^* \boxtimes \mathcal{W})$
induces a linear map
\[
T: C^\infty (X,\mathcal{V}) \to \mathcal{D}'(Y,\mathcal{W})
\]
if $X$ is compact.
Further,
if $K$ is $G'$-invariant via the diagonal action,
then it follows from \eqref{eqn:invaD} that $Tf$ is
 a smooth section
 of the bundle ${\mathcal{W}} \to Y$
 for any
$f \in C^\infty(X,\mathcal{V})$ and
$T: C^\infty(X,\mathcal{V}) \to C^\infty(Y,\mathcal{W})$
is a continuous $G'$-homomorphism.
Therefore the injective morphism \eqref{eqn:HinvD} is also surjective.
\end{proof}

\medskip
In Proposition \ref{prop:Distker}, 
 the support of $K_T$
 is an $H'$--invariant closed
 subset in $G/H$.  
Thus the closed $H'$--invariant sets define
  a coarse invariant
 of a symmetry breaking operator:
\begin{equation}
\label{eqn:Suppinv}
\operatorname{Hom}_{G'}
 (C^\infty(X,\mathcal{V}), C^\infty(Y,\mathcal{W}))
\to 
\{\text{$H'$-invariant closed subsets
 in $G/H$}\},
\end{equation} 
\[
 T \mapsto \operatorname{Supp}K_T.  
\]

\medskip
In rest of the chapter we assume that:
\begin{align}
&\text{$H'$ has an open orbit on $G/H$.}
\label{eqn:GHopen}
\end{align}

\begin{definition}
\label{def:regular}
Let $U_i$ ($i=1,2,\cdots$) be the totality
 of $H'$-open orbits on $X=G/H$.  
A non-zero $G'$-intertwining operator
 $T:C^{\infty}(X,{\mathcal{V}}) \to C^{\infty}(Y,{\mathcal{W}})$
 is {\it{regular}}
 if $\operatorname{Supp}K_{T}$ contains
 at least one open orbit $U_i$.  
We say $T$ is {\it{singular}}
if $T$ is not regular,
 namely,
 if $\operatorname{Supp}K_{T} \subset X-\cup_i U_i$.  
\end{definition}

\medskip
We write  
$
\index{sbon}{H2@$H(\lambda, \nu)_{\operatorname{sing}}$|textbf}
H(\lambda,\nu)_{\operatorname{sing}}$
for  the space of singular symmetry breaking operators.  
\begin{example}
\label{ex:regular}
\begin{enumerate}
\item[{\rm{1)}}]
{\rm{(Knapp--Stein intertwining operators).}}
Here $G=G'$ and 
$H=H'$ is a minimal parabolic subgroup $P$
 of $G$ and  
$W$ and $V$ are irreducible representations of $P$.
The Bruhat decomposition determines the orbits of $P$ on $G/P$.  
Hence we have exactly one open orbit corresponding to the longest element
 in the Weyl group. 
Thus the intertwining operator
 corresponding to the longest element of the Weyl group is regular
 for generic parameter.    
See \cite{KS}.  
\item[{\rm{2)}}]
{\rm{(Poisson transforms for symmetric spaces).}}
Here $G=G'$, 
$H$ is a minimal parabolic subgroup $P$ of $G$, 
 $H'$ is a maximal compact subgroup $K$ of $G$, 
 $V$ is a one-dimensional representations of $P$ and $W$ is the trivial one-dimensional representation, 
 see \cite{Hel}.  
Since $KP=G$, 
 the assumption \eqref{eqn:GHopen} is satisfied
 and the {\rm{Poisson transform}} is a regular symmetry breaking operator.
More generally,
 if $(G,H')$ is a reductive symmetric pair,
 then $H'$ has finitely many open orbits
 on $G/P$
 and a similar integral transform 
(Poisson transform for the reductive 
 symmetric space $G/H'$)
 can be defined
 and has a meromorphic continuation
 with respect to the parameter
 of the one-dimensional representations
 of $P$
 for each $H'$-open orbit, 
 see \cite{Os}.  
\item[{\rm{3)}}]
{\rm{(Fourier transform for symmetric spaces).}}
If we switch the role of $H$ and $H'$ in 2), 
 the integral transforms
 can be defined as the adjoint of the Poisson transforms,
 and are said to be the {\rm{Fourier transforms}}
 for the Riemannian symmetric space $G/K$ \cite{Hel}
 and the reductive symmetric space $G/H'$.  
\item[{\rm{4)}}]
{\rm{(invariant trilinear form).}}
Let $P_1$ be a parabolic subgroup
 of a reductive group $G_1$, 
 $G = G_1 \times G_1$, 
 $H = P_1 \times P_1$, 
 $G'=\operatorname{diag}(G_1)$, 
 and $H'=\operatorname{diag}(G')$.  
The study of symmetry breaking operators
 is equivalent to that of invariant trilinear forms
 on $\pi_{\lambda_1}\otimes \pi_{\lambda_2}\otimes \pi_{\lambda_3}$
 where $\pi_{\lambda_i}=\operatorname{Ind}_{P_1}^{G_1} (\lambda_i)$
 $(i=1,2,3)$.  
See \cite{CKOP} for the construction
 of invariant trilinear forms
 and explicit formula of generalized Bernstein--Reznikov integrals
 in some examples
 where \eqref{eqn:GHopen} is satisfied.  
\item[{\rm{5)}}]
{\rm{(Jantzen--Zuckerman translation functor).}}
\enspace
Here $G=G' \times G'$ and $G'$ 
 is a diagonally embedded subgroup of $G$.  
We start with a brief review
 of (abstract) translation functors.  
Let $Z({\mathfrak {g}})$ be the center 
 of the enveloping algebra
 $U({\mathfrak {g}})$.  
Given a smooth admissible, 
 irreducible representation $\pi$
 and a finite-dimensional representation $F$ of $G'$, 
 we consider the restriction
 of the outer tensor product representation 
 of $G$ to $G'$:
\[
  \pi \otimes F=(\pi \boxtimes F)|_{G'}.  
\]
Since $\pi \otimes F$ is an admissible representation
 of finite length,
 the projection $\operatorname{pr}_{\chi}$
 to the component of a generalized infinitesimal character
 $\chi$ is well-defined.  
The functor $\pi \rightsquigarrow \operatorname{pr}_{\chi}(\pi \otimes F)$
 is called a translation functor.  
Geometrically 
 if $\pi$ is an induced representation
 $C^{\infty}(G/P, {\mathcal{V}})$
{}from a finite-dimensional representation
 $V$  of $P$, 
then the translation functor
 is a symmetry breaking operator
\[
     C^{\infty}(G/P, {\mathcal{V}}) \otimes F
     \to 
     C^{\infty}(G/P, {\mathcal{W}}), 
\]
where ${\mathcal{W}}=G \times_P W$
 and $W$ is a certain $P$-subquotient 
 (determined by $\chi$)
 of the finite-dimensional representation
 $(V \otimes F)|_P$.  
\end{enumerate}
\end{example}
If $T$ is a symmetry breaking operator, 
 the restriction of the distribution kernel 
 $K_T$
 to open $H'$-orbits $U_i$
 is a regular function.  
By using this, 
 we obtain an upper bound
 of linearly independent, 
 regular symmetry breaking operators
as follows:
We take $x_i \in U_i$, 
 and denote by $M_i'$ 
 the stabilizer of $H'$ at $x_i$, 
 and thus we have $H'/M_i' \simeq U_i$.  
\begin{proposition}
\label{prop:dimgen}
Assume \eqref{eqn:GHopen}.  Then
\[ 
  \dim H(\lambda,\nu)/H(\lambda,\nu)_{\operatorname{sing}}
  \le 
 \sum_i \dim \operatorname{Hom}_{M_i'}
 (\lambda|_{M_i'}, \nu|_{M_i'}).  
\]
\end{proposition}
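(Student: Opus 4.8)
The plan is to use the kernel map $T\mapsto K_T$ of Proposition~\ref{prop:Distker}, which embeds $H(\lambda,\nu)$ into $(\mathcal{D}'(X,\mathcal{V}^{\ast})\otimes W)^{\Delta(H')}$, and then to restrict $K_T$ to the open $H'$-orbits. Since each open orbit $U_i$ is $H'$-stable, restriction $K\mapsto K|_{U_i}$ is $\Delta(H')$-equivariant, so one obtains a linear map
\[
  R\colon H(\lambda,\nu)\longrightarrow \bigoplus_i \mathcal{D}'\bigl(U_i,\ \mathcal{V}^{\ast}|_{U_i}\otimes W\bigr)^{\Delta(H')},\qquad T\longmapsto (K_T|_{U_i})_i.
\]
Because a distribution vanishes on an open subset exactly when that subset misses its support, one has $K_T|_{U_i}=0$ for all $i$ if and only if $\operatorname{Supp}K_T\subset X\setminus\bigcup_i U_i$; hence $\ker R = H(\lambda,\nu)_{\operatorname{sing}}$ (with the usual convention $0\in H(\lambda,\nu)_{\operatorname{sing}}$, so that it is a subspace). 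Consequently $R$ descends to an injection $H(\lambda,\nu)/H(\lambda,\nu)_{\operatorname{sing}}\hookrightarrow\bigoplus_i \mathcal{D}'(U_i,\mathcal{V}^{\ast}|_{U_i}\otimes W)^{\Delta(H')}$, and the proposition will follow once each summand is identified with $\operatorname{Hom}_{M_i'}(\lambda|_{M_i'},\nu|_{M_i'})$.

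For the local identification I would observe that an $H'$-invariant generalized-function section over a single orbit is automatically smooth. Indeed, pulling back along the submersion $\pi\colon H'\to U_i$, $h\mapsto h\cdot x_i$, turns such a section into a left-$H'$-invariant, vector-valued generalized function on $H'$; with the Gelfand convention of Remark~\ref{rem:distr} a left-invariant generalized function on a Lie group is constant, hence smooth, so $K_T|_{U_i}$ is a smooth $H'$-invariant section of $\mathcal{V}^{\ast}|_{U_i}\otimes W$ over $U_i\simeq H'/M_i'$. Ordinary Frobenius reciprocity for smooth sections of homogeneous bundles then gives an isomorphism of this space with $(\mathcal{V}^{\ast}_{x_i}\otimes W)^{M_i'}$. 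Finally, choosing $g_i\in G$ with $g_i\cdot eH=x_i$ realizes the fibre $\mathcal{V}^{\ast}_{x_i}$, as an $M_i'=H'\cap g_iHg_i^{-1}$-module, as $V^\vee\otimes\mathbb{C}_{2\rho}$ transported through $m\mapsto g_i^{-1}mg_i$, whence $(\mathcal{V}^{\ast}_{x_i}\otimes W)^{M_i'}\simeq\operatorname{Hom}_{M_i'}(\lambda|_{M_i'},\nu|_{M_i'})$, the right-hand side being understood up to this conjugation and up to the density twist by $\mathbb{C}_{2\rho}$, which is trivial on $M_i'$ in the cases of interest (e.g.\ $M_i'$ compact). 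Summing over $i$ yields the stated inequality.

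The statement is essentially a support/bookkeeping argument, and the one place needing genuine care is the smoothness of $K_T|_{U_i}$: a priori it is only a distribution, and one must exclude $H'$-invariant distributional sections on $U_i$ that fail to be smooth. The Gelfand viewpoint of Remark~\ref{rem:distr} makes this transparent by reducing it to the fact that left-invariant generalized functions on a Lie group are constant; alternatively, one notes that the single orbit $U_i$ contains no proper $H'$-invariant closed subset on which an extra invariant distribution could be carried. A minor secondary point, indicated above, is to keep track of the $\mathbb{C}_{2\rho}$ density twist in the identification of the fibre of $\mathcal{V}^{\ast}$.
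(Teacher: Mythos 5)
Your argument is correct and is essentially the paper's own proof: you restrict $K_T$ to the open $H'$-orbits, use the $\Delta(H')$-invariance to conclude the restriction is a regular section determined by its value $K_T(x_i)\in\operatorname{Hom}_{M_i'}(\lambda|_{M_i'},\nu|_{M_i'})$, and note that the kernel of this evaluation map is exactly $H(\lambda,\nu)_{\operatorname{sing}}$. The additional care you take with the smoothness of invariant distributions on a single orbit and with the $\mathbb{C}_{2\rho}$ density twist only makes explicit what the paper's shorter proof leaves implicit.
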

\begin{proof}
Since the distribution kernel $K_T$
 is $H'$-invariant,
 the restriction of $K_T$
 to each open $H'$-orbit $U_i$ 
is a regular function which is determined uniquely by its value at a single point, e.g.,
$K_T ( x_i )$ at $x_i$.  
Further,
 $K_T(x_i)\in \operatorname{Hom}_{\mathbb{C}}
 (V,W)$
  inherits the $M_i'$-invariance from the $H'$-invariance
 of $K_T$, 
namely, 
we have $K_T (x_i)  \in
 \operatorname{Hom}_{M'_i} ( \lambda|_{M'_i}, \nu|_{M'_i} )$.  
\end{proof}

\begin{corollary}\label{prop:Msing}
If\/
$\operatorname{Hom}_{M'_i} (\lambda |_{M'_i}, \nu |_{M'_i}) = 0$
for all $i$, 
then any $G'$-intertwining operator
$T \in H(\lambda,\nu)$
is singular.
\end{corollary}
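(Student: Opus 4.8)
The plan is to deduce this directly from Proposition~\ref{prop:dimgen}. Under the hypothesis that $\operatorname{Hom}_{M'_i}(\lambda|_{M'_i},\nu|_{M'_i})=0$ for every index $i$, the right-hand side of the inequality in Proposition~\ref{prop:dimgen} vanishes, whence $\dim H(\lambda,\nu)/H(\lambda,\nu)_{\operatorname{sing}}=0$, i.e.\ $H(\lambda,\nu)=H(\lambda,\nu)_{\operatorname{sing}}$. By Definition~\ref{def:regular} this is exactly the assertion that every $T\in H(\lambda,\nu)$ is singular.

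Alternatively, I would give the contrapositive argument in one stroke, which is morally the same computation carried out in the proof of Proposition~\ref{prop:dimgen} but without the bookkeeping over all orbits: suppose $T\in H(\lambda,\nu)$ is regular. Then $\operatorname{Supp}K_T$ meets some open $H'$-orbit $U_i$, so the restriction $K_T|_{U_i}$ is a nonzero regular function. Since $K_T$ is $H'$-invariant, its value $K_T(x_i)\in\operatorname{Hom}_{\mathbb{C}}(V,W)$ at the base point $x_i\in U_i$ is nonzero and $M'_i$-invariant, hence is a nonzero element of $\operatorname{Hom}_{M'_i}(\lambda|_{M'_i},\nu|_{M'_i})$, contradicting the hypothesis.

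There is no real obstacle here: the statement is a formal consequence of Proposition~\ref{prop:dimgen} (and ultimately of the fact that an $H'$-invariant distribution restricted to an open $H'$-orbit is determined by its value at a single point, together with the $M'_i$-equivariance recorded in that proof). The only thing to be careful about is to use the convention, fixed in Remark~\ref{rem:distr}, under which distributions are generalized functions, so that ``$K_T|_{U_i}$ is a regular function'' and ``evaluation at $x_i$'' make literal sense, and to invoke the standing assumption~\eqref{eqn:GHopen} guaranteeing that open $H'$-orbits $U_i$ exist so that the notion of regularity in Definition~\ref{def:regular} is nonvacuous.
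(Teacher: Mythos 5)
Your proposal is correct and is essentially the paper's argument: the corollary is stated as an immediate consequence of Proposition \ref{prop:dimgen}, whose proof already contains the evaluation-at-$x_i$ and $M_i'$-invariance step you spell out in your contrapositive version. Nothing further is needed.
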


\begin{remark}
Even if
$\operatorname{Hom}_{M'_i} (\lambda |_{M'_i}, \nu |_{M'_i} ) = 0$,
there may exist a nonzero 
\textit{singular symmetry breaking} operator
$I (\lambda) \to J (\nu)$
 for specific parameters $\lambda$ and $\nu$. 
\end{remark}

\begin{corollary}
\label{cor:Regbd}
Assume both $V$ and $W$ are one-dimensional.  
If there are $N$ 
open $H'$-orbits on $G/H$, 
then 
\[
\dim H(\lambda,\nu)/H(\lambda,\nu)_{\operatorname{sing}} \le N.  
\]
In particular,
 if there exists a unique $H'$-orbit on $G/H$, 
then 
\[
\dim H(\lambda,\nu)/H(\lambda,\nu)_{\operatorname{sing}} \le 1.  
\]
\end{corollary}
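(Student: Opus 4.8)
The plan is to deduce this immediately from Proposition \ref{prop:dimgen}, which already supplies the bound
\[
\dim H(\lambda,\nu)/H(\lambda,\nu)_{\operatorname{sing}}
 \le \sum_i \dim \operatorname{Hom}_{M_i'}(\lambda|_{M_i'},\nu|_{M_i'}),
\]
where the sum runs over the open $H'$-orbits $U_i$ on $G/H$, of which there are exactly $N$ by hypothesis. So it suffices to bound each summand by $1$.

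First I would recall that $\operatorname{Hom}_{M_i'}(\lambda|_{M_i'},\nu|_{M_i'})$ is by definition the space of $M_i'$-fixed vectors in $\operatorname{Hom}_{\mathbb{C}}(V,W)$. Since $V$ and $W$ are assumed one-dimensional, $\operatorname{Hom}_{\mathbb{C}}(V,W)$ is a one-dimensional vector space, so $\dim\operatorname{Hom}_{M_i'}(\lambda|_{M_i'},\nu|_{M_i'})\in\{0,1\}$ for every $i$. Summing over the $N$ open orbits gives $\dim H(\lambda,\nu)/H(\lambda,\nu)_{\operatorname{sing}}\le N$, which is the first assertion. For the ``in particular'' statement, a unique open $H'$-orbit means $N=1$ (and the standing assumption \eqref{eqn:GHopen} guarantees $N\ge 1$), so the bound reads $\le 1$.

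There is essentially no obstacle here; the only thing to keep in mind is that one-dimensionality of $V$ and $W$ is exactly what forces the fiberwise Hom-spaces to be at most lines. If one prefers a self-contained argument, one can instead repeat the proof of Proposition \ref{prop:dimgen} directly: for a regular $T\in H(\lambda,\nu)$ the distribution kernel $K_T$ restricts to a smooth (hence real-analytic, in fact everywhere-determined) section on each open orbit $U_i\simeq H'/M_i'$, so it is determined by the single value $K_T(x_i)\in\operatorname{Hom}_{M_i'}(V,W)$; the assignment $T\mapsto (K_T(x_i))_i$ then descends to an injection of $H(\lambda,\nu)/H(\lambda,\nu)_{\operatorname{sing}}$ into $\bigoplus_i\operatorname{Hom}_{M_i'}(V,W)$, a space of dimension at most $N$ when each fiber is one-dimensional.
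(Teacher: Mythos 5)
Your proposal is correct and is exactly the argument the paper intends: the corollary is an immediate consequence of Proposition \ref{prop:dimgen}, since one-dimensionality of $V$ and $W$ forces each summand $\dim \operatorname{Hom}_{M_i'}(\lambda|_{M_i'},\nu|_{M_i'})$ to be $0$ or $1$, and a unique (open) orbit gives $N=1$.
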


\begin{remark}
If $H$ is a minimal parabolic subgroup of $G$
 then $N$ does not exceed the cardinality
 of the little Weyl group of $G$
 for any subgroup $H'$ of $G$
 (\cite[Corollary E]{K-O}).  
\end{remark}
\medskip
The role of the assumption \eqref{eqn:GHopen}
 is illuminated
 by the following:
\begin{proposition}
\label{prop:multinfty}
Suppose $G$ is an algebraic group,
 and the subgroups $H$ and $H'$ are defined algebraically.  
If \eqref{eqn:GHopen} is not fulfilled,
 then for any algebraic finite-dimensional representation
 $V$ of $H$, 
 there exists a finite-dimensional representation
 $W$ of $H'$
 such that 
\[
  \dim H(\lambda, \nu)=\infty. 
\]
\end{proposition}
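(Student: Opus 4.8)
The plan is to exhibit, for a suitably chosen finite-dimensional representation $W$ of $H'$, an infinite-dimensional space of $H'$-invariant distribution sections of $\mathcal V^{\ast}\otimes W$ on $X=G/H$, and then to invoke Proposition \ref{prop:Distker}. When $H$ is cocompact in $G$ — the case relevant to our applications, where $H=P$ is parabolic and $X$ is compact — Proposition \ref{prop:Distker}(2) converts such a space directly into $H(\lambda,\nu)$, so $\dim H(\lambda,\nu)=\infty$; for a general $H$ one has in addition to promote these kernels to genuine continuous operators into $C^{\infty}(Y,\mathcal W)$, a point I return to below.

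First I would set up the geometry. Replacing $X$ by a connected component on which the identity component $(H')^{0}$ has no open orbit — this is possible since $(H')^{0}\subseteq G^{0}$ preserves the components of $X$, an $H'$-orbit is open iff the underlying $(H')^{0}$-orbit is, and an algebraic action with only finitely many orbits on an irreducible variety has a dense, hence open, orbit — let $m$ be the maximal dimension of an $H'$-orbit there, so that $c:=\dim X-m\ge 1$. By Rosenlicht's theorem on generic quotients there are an $H'$-stable dense open subset $U\subseteq X$, a smooth variety $B$ of dimension $c$, and a smooth surjective $H'$-invariant morphism $p\colon U\to B$ whose fibres are exactly the $H'$-orbits in $U$, all having stabilizer conjugate to one fixed closed subgroup $S$.

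Now the construction. Fix $x_{0}\in U$ with $\operatorname{Stab}_{H'}(x_{0})=S$, and let $\mathcal O_{0}\cong H'/S$ be its orbit, a fibre of $p$. Because $p$ is $H'$-invariant, $S$ acts trivially on the normal space $T_{x_{0}}X/T_{x_{0}}\mathcal O_{0}$, which $dp$ identifies with $T_{p(x_{0})}B$; hence along $\mathcal O_{0}$ the bundle $\mathcal V^{\vee}\otimes\Omega_{U/B}$, where $\Omega_{U/B}$ denotes the bundle of densities along the fibres of $p$, is the $H'$-homogeneous bundle over $H'/S$ attached to the $S$-module $(V')^{\vee}$, with $V'$ the restriction of $V$ to $S$ twisted by the character through which $S$ acts on the fibre density of $\mathcal O_{0}$ at $x_{0}$. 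Choosing a finite-dimensional representation $W$ of $H'$ with $\operatorname{Hom}_{S}(V',W|_{S})\ne 0$ — which exists because $S$ is a closed algebraic subgroup of $H'$, so (after dualizing) an irreducible constituent of $V'$ occurs in a finite-dimensional subrepresentation of the regular representation $\mathbb C[H']$ restricted to $S$ — a nonzero element $\Psi$ of this Hom space determines an $H'$-invariant section of $\mathcal V^{\vee}\otimes\Omega_{U/B}\otimes W$ along every fibre of $p$ simultaneously, algebraic in the fibre. For $\phi\in C_{c}^{\infty}(B)$ and a fixed smooth density $\mu_{B}$ on $B$ the product $\Psi\cdot p^{\ast}(\phi\,\mu_{B})$ is then an $H'$-invariant section of $\mathcal V^{\ast}\otimes W$ over $U$; its extension by zero to $X$ is an $H'$-invariant element of $\mathcal D'(X,\mathcal V^{\ast})\otimes W$, and $\phi\mapsto\Psi\cdot p^{\ast}(\phi\,\mu_{B})$ is injective. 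Thus $(\mathcal D'(X,\mathcal V^{\ast})\otimes W)^{\Delta(H')}$ is infinite-dimensional, and by Proposition \ref{prop:Distker} so is $H(\lambda,\nu)$.

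Two places carry the real work. The first is the choice of $W$: verifying that $\operatorname{Hom}_{S}(V',W|_{S})$ can be made nonzero by a finite-dimensional $W$, and tracking precisely the twist by the fibre-density character so that the homogeneous-bundle description of $\mathcal V^{\vee}\otimes\Omega_{U/B}$ along $\mathcal O_{0}$ is correct, both rely essentially on the algebraicity of $S\subseteq H'$. The second, which I expect to be the main obstacle in the non-cocompact generality, is the reduction to honest operators: when $H$ is not cocompact one must ensure that $\Psi\cdot p^{\ast}(\phi\,\mu_{B})$ really extends to a distribution on all of $X$ (equivalently, that $p^{-1}(\operatorname{supp}\phi)$ does not accumulate badly on $X\setminus U$) and that the resulting kernel pairs continuously with $C^{\infty}(X,\mathcal V)$; alternatively one localizes, replacing $\phi$ by transverse jets of a point mass at a single $b_{0}\in B$, producing distributions supported on the single orbit $p^{-1}(b_{0})$ of unbounded transverse order — unbounded precisely because $c=\dim B\ge 1$. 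In the cocompact case $X$ is compact and this difficulty disappears, which is why the statement is most transparent, and most directly useful, there.
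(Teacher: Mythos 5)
The paper does not actually supply an argument here: its ``proof'' is a one-line remark that the argument is similar to \cite[Theorem 3.1]{K-O}, so your proposal can only be measured against that style of reasoning. Your overall strategy is the natural one and is in the spirit of the cited argument: pass to a component where $(H')^{0}$ has no open orbit, use a Rosenlicht-type generic quotient $p\colon U\to B$ with $\dim B=c\ge 1$, observe that the generic stabilizer $S$ acts trivially on the normal space of a generic orbit, choose a finite-dimensional $W$ with $\operatorname{Hom}_S(V',W|_S)\neq 0$ (matrix coefficients in $\mathbb{C}[H']$, up to the non-algebraic density twist), and then feed an infinite-dimensional family of $\Delta(H')$-invariant kernels into Proposition \ref{prop:Distker}. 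All of those preparatory steps are essentially sound, modulo the local triviality of the orbit fibration needed to define $\Psi$ on all fibres at once (which your single-orbit jet variant anyway avoids).

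The genuine gap is the globalization step. You assert that $\Psi\cdot p^{\ast}(\phi\,\mu_B)$, which is defined and smooth only on the $H'$-stable open set $U$, ``extends by zero'' to an element of $(\mathcal{D}'(X,\mathcal{V}^{\ast})\otimes W)^{\Delta(H')}$. This is not automatic: the fibres of $p$ are closed in $U$ but in general not in $X$, so $p^{-1}(\operatorname{supp}\phi)$ can accumulate on $X\setminus U$, and near $\partial U$ the invariant section $\Psi$ may fail to be locally integrable, so extension by zero need not define a distribution on $X$ at all (and an arbitrary distributional extension need not remain invariant). You do flag this issue, but you misattribute it to the non-cocompact case; compactness of $X=G/P$ does not help, because the difficulty sits at $\partial U\subset X$, not at infinity. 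The same defect afflicts your alternative construction by transverse jets along a single generic orbit: those distributions are defined only off the boundary of the orbit, and extending them across the orbit closure is exactly the hard point. Closing this gap is where the algebraicity hypotheses must really be used --- for instance via meromorphic continuation of complex powers of semialgebraic $H'$-invariant functions, or by multiplying a globally defined invariant family by a non-constant $H'$-invariant rational function, which Rosenlicht's theorem supplies precisely because there is no open orbit --- and your outline does not engage with this. Finally, in the genuinely non-cocompact case Proposition \ref{prop:Distker} is only an injection, so one must in addition produce invariant kernels that pair continuously with $C^{\infty}(X,\mathcal{V})$ (essentially, compactly supported ones); your sketch leaves that open as well, though it is immaterial for the paper's intended application $H=P$.
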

\begin{proof}
The argument is similar to that of 
\cite[Theorem 3.1]{K-O},
 and we omit the proof.  
\end{proof}

\medskip
In the case $(G,G')$ is a reductive symmetric pair,
 and $H$, $H'$ are minimal parabolic subgroups
 of $G$, $G'$, 
respectively,
 the pairs $(G,G')$ satisfying 
 \eqref{eqn:GHopen} were classified recently
 \cite{xKMt}.  

\subsection{Differential intertwining operators}
\label{subsec:diffres}

Suppose further that 
\begin{equation}
\label{eqn:HHG}
H' \subset H \cap G'.  
\end{equation}
Then we have a natural homomorphism
$\iota: Y \rightarrow X$, 
 which is $G'$-equivariant.
Using the morphism $\iota$, 
 we can define the notion of differential operators in a wider sense
 than the usual:
\begin{definition}
\label{def:diff}
We say a continuous linear operator
 $T:C^{\infty}(X, {\mathcal{V}}) \to C^{\infty}(Y, {\mathcal{W}})$
 is a {\it{differential operator}}
 if
\[
\iota({\operatorname{Supp}}(Tf))
\subset {\operatorname{Supp}}f
\quad\text{for any }f \in C^{\infty}(X, {\mathcal{V}}).  
\]
\end{definition}

In the case $H' = H \cap G'$, 
 the morphism $\iota$ is injective, 
 and a differential operator $T$
 in the sense of Definition \ref{def:diff} is locally
 of the form
\[
  T=\sum_{(\alpha, \beta) \in {\mathbb{N}}^{\dim X}}
    g_{\alpha\beta}(y)
    \frac{\partial^{|\alpha|+|\beta|}}{\partial y^\alpha\partial z^\beta}
\]
where $g_{\alpha\beta}(y)$ are
 $\operatorname{Hom}(V,W)$-valued smooth functions on $Y$, 
 and the local coordinates
 $\{(y_i, z_i)\}$ on $X$
 are chosen
 in such a way
 that $\{y_i\}$
 form an atlas on $Y$.

We denote by 
\index{sbon}{H3@$H(\lambda,\nu)_{\rm{diff}}$|textbf}
\begin{equation*}
\index{sbon}{H3@$H(\lambda,\nu)_{\rm{diff}}$|textbf}
H(\lambda,\nu)_{\rm{diff}}
\equiv
\operatorname{Diff}_{G'}(\mathcal{V}_X,\mathcal{W}_Y), 
\end{equation*}
a subspace 
 of $H(\lambda,\nu)\equiv\operatorname{Hom}_{G'}(C^\infty(X,\mathcal{V}),C^\infty(Y,\mathcal{W}))$
 consisting of $G'$-intertwining differential operators.

It is widely known that, 
when $G=G'$ and $X=Y$,  
holomorphic $G$-equivariant differential operators
between holomorphic homogeneous bundles over the complex
flag variety of a complex reductive Lie group $G$
are dual to $\mathfrak{g}$-homomorphisms between
generalized Verma modules ({\it{e.g.}}, \cite{HJ}).  
The duality can be extended to our more general situation 
where two homogeneous bundles are defined over
two different base spaces $\iota:Y \rightarrow X$.
In order to give a precise statement,
 we need to take the disconnectedness of $H$
 into account
 (see Remark \ref{rem:Hdiff}).  
For this, 
 we regard the generalized Verma module
\[
   \operatorname{ind}_{\mathfrak{h}_{\mathbb{C}}}^{\mathfrak{g}_{\mathbb{C}}}(V) 
= U(\mathfrak{g}_{\mathbb{C}}) \otimes_{U(\mathfrak{h}_{\mathbb{C}})} V
\]
as a $(\mathfrak{g},H)$-module
where $V$ is an $H$-module.  
Here
the $H$-action
 on 
$\operatorname{ind}_{\mathfrak{h}_{\mathbb{C}}}^{\mathfrak{g}_{\mathbb{C}}}(V)$
is induced from the diagonal action of $H$ on
$U(\mathfrak{g}_{\mathbb{C}}) \otimes_{\mathbb{C}} V$.  
Likewise,
$\operatorname{ind}_{\mathfrak{h}'_{\mathbb{C}}}^{\mathfrak{g}'_{\mathbb{C}}}(W^\vee)$
is a $(\mathfrak{g}',H')$-module if $W$ is an $H'$-module.
We then have:
\begin{fact}[see {\cite[Theorem 2.7]{KP}}]\label{fact:Diff}
Suppose $H' \subset H \cap G'$.  
\begin{enumerate}
\item[{\rm{1)}}]
$T \in \operatorname{Hom}_{G'}(C^{\infty}(X, {\mathcal{V}}), C^{\infty}(Y, {\mathcal{W}}))$
 is a differential operator
 if and only if 
 $\operatorname{Supp}K_T = \{e H\}$
 in $G/H$.  
\item[{\rm{2)}}]
There is a natural bijection:
\[
\operatorname{Hom}_{(\mathfrak{g}',H')}
     (\operatorname{ind}_{\mathfrak{h}'_{\mathbb{C}}}^{\mathfrak{g}'_{\mathbb{C}}}(W^\vee),
       \operatorname{ind}_{\mathfrak{h}_{\mathbb{C}}}^{\mathfrak{g}_{\mathbb{C}}}(V^\vee))
   \overset{\sim}{\to} \operatorname{Diff}_{G'} (\mathcal{V}_X, \mathcal{W}_Y).
\]
\end{enumerate}
\end{fact}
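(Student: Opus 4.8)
The plan is to prove the two assertions separately: part~(1) is a localization (Peetre-type) statement for the Schwartz kernel, and part~(2) is the standard duality between equivariant differential operators and homomorphisms of generalized Verma modules, obtained by identifying distributions supported at the base point with a generalized Verma module and then invoking Frobenius reciprocity. Throughout I would work, as the paper does, with $(\mathfrak{g},H)$-modules rather than $(\mathfrak{g},H_0)$-modules, so that the disconnectedness of $H$ is automatically accounted for (cf.\ Remark~\ref{rem:Hdiff}).

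For part~(1): by the Schwartz kernel theorem $T$ has a kernel, which under Proposition~\ref{prop:Distker} is recorded either as an $H'$-invariant $\mathcal{V}^{\ast}\otimes W$-valued distribution $K_T$ on $X=G/H$ or, equivalently, as a $\Delta(G')$-invariant distribution $\widetilde{K}_T$ on $X\times Y$. Since $Y=G'/H'$ is $G'$-homogeneous, $\widetilde{K}_T$ is the $G'$-orbit of $K_T$ placed on the fibre over $eH'$, so $\operatorname{Supp}\widetilde{K}_T=\{(g'\!\cdot\! x,g'H'):g'\in G',\,x\in\operatorname{Supp}K_T\}$; hence $\operatorname{Supp}K_T=\{eH\}$ if and only if $\operatorname{Supp}\widetilde{K}_T$ is contained in the graph $\{(\iota(y),y):y\in Y\}$ of $\iota$. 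It then remains to see that this support condition is equivalent to $T$ being a differential operator in the sense of Definition~\ref{def:diff}. If $T$ is differential and $x_0\neq\iota(y_0)$, choose $f$ supported in a small neighbourhood of $x_0$ avoiding $\iota(y_0)$; then $\operatorname{Supp}(Tf)\subset\iota^{-1}(\operatorname{Supp}f)$ misses $y_0$, so $\langle\widetilde{K}_T,f\otimes g\rangle=\langle Tf,g\rangle=0$ for every test density $g$ supported near $y_0$, and therefore $(x_0,y_0)\notin\operatorname{Supp}\widetilde{K}_T$. Conversely, if $\widetilde{K}_T$ is supported on the graph of $\iota$ (an embedding when $H'=H\cap G'$, which is the case relevant to the paper), then in local coordinates on $X$ adapted to $\iota(Y)$ the distribution $\widetilde{K}_T$ is a sum of transverse derivatives of the delta distribution along the graph with distributional coefficients on $Y$; the hypothesis that $T$ maps $C^{\infty}$ continuously into $C^{\infty}$ forces, via Peetre's theorem, this sum to be locally finite and its coefficients to be smooth, so $T$ has the local form displayed after Definition~\ref{def:diff} and is differential.

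For part~(2): granting part~(1), $\operatorname{Diff}_{G'}(\mathcal{V}_X,\mathcal{W}_Y)$ consists exactly of those $T\in H(\lambda,\nu)$ whose kernel is supported at $eH$, and such a $T$ is determined by the $\Delta(H')$-invariant element $k_T\in(\mathcal{D}'_{\{eH\}}(X,\mathcal{V}^{\ast})\otimes W)^{\Delta(H')}$. Conversely, any such $k_T$, being a distribution supported at a single point and hence of finite order, spreads out along the $G'$-orbit to a well-defined $G'$-equivariant operator (consistency over the fibre of $\iota$ is precisely the $H'$-invariance of $k_T$) which is of finite order and so maps $C^{\infty}$ into $C^{\infty}$; thus $\operatorname{Diff}_{G'}(\mathcal{V}_X,\mathcal{W}_Y)\cong(\mathcal{D}'_{\{eH\}}(X,\mathcal{V}^{\ast})\otimes W)^{\Delta(H')}$. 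Now one uses the classical $(\mathfrak{g},H)$-module isomorphism $\mathcal{D}'_{\{eH\}}(X,\mathcal{V}^{\ast})\cong\operatorname{ind}_{\mathfrak{h}_{\mathbb{C}}}^{\mathfrak{g}_{\mathbb{C}}}(V^\vee)$ — the twist by $\mathbb{C}_{2\rho}$ already built into $\mathcal{V}^{\ast}$ being exactly what makes this hold with no spurious density factor — so that $(\mathcal{D}'_{\{eH\}}(X,\mathcal{V}^{\ast})\otimes W)^{\Delta(H')}\cong\operatorname{Hom}_{H'}(W^\vee,\operatorname{ind}_{\mathfrak{h}_{\mathbb{C}}}^{\mathfrak{g}_{\mathbb{C}}}(V^\vee))=\operatorname{Hom}_{(\mathfrak{h}'_{\mathbb{C}},H')}(W^\vee,\operatorname{ind}_{\mathfrak{h}_{\mathbb{C}}}^{\mathfrak{g}_{\mathbb{C}}}(V^\vee))$, the last identity because an $H'$-equivariant map between smooth $H'$-modules automatically intertwines $\mathfrak{h}'_{\mathbb{C}}$. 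Finally, Frobenius reciprocity for the free $U(\mathfrak{g}'_{\mathbb{C}})$-module $\operatorname{ind}_{\mathfrak{h}'_{\mathbb{C}}}^{\mathfrak{g}'_{\mathbb{C}}}(W^\vee)=U(\mathfrak{g}'_{\mathbb{C}})\otimes_{U(\mathfrak{h}'_{\mathbb{C}})}W^\vee$ identifies $\operatorname{Hom}_{(\mathfrak{h}'_{\mathbb{C}},H')}(W^\vee,M)$ with $\operatorname{Hom}_{(\mathfrak{g}'_{\mathbb{C}},H')}(\operatorname{ind}_{\mathfrak{h}'_{\mathbb{C}}}^{\mathfrak{g}'_{\mathbb{C}}}(W^\vee),M)$, where $M=\operatorname{ind}_{\mathfrak{h}_{\mathbb{C}}}^{\mathfrak{g}_{\mathbb{C}}}(V^\vee)$ is regarded as a $(\mathfrak{g}',H')$-module by restriction of its $\mathfrak{g}_{\mathbb{C}}$- and $H$-actions. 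Composing these isomorphisms yields the asserted bijection, and naturality in $V$ and $W$ is visible from the construction.

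The step I expect to be the main obstacle is the reverse implication in part~(1): that a continuous operator $C^{\infty}(X,\mathcal{V})\to C^{\infty}(Y,\mathcal{W})$ whose kernel is concentrated on the graph of $\iota$ is automatically a finite-order differential operator with smooth coefficients. This is where the transverse geometry of $\iota:Y\to X$ and a Peetre-type finiteness argument are genuinely needed, and where the hypothesis $H'\subset H\cap G'$ enters (with $H'=H\cap G'$ one has an honest embedding; for $H'\subsetneq H\cap G'$ one must work a little harder, and one may simply appeal to \cite{KP}). A secondary bookkeeping issue, routine but worth care, is to keep the $\mathbb{C}_{2\rho}$-twists aligned so that $\mathcal{D}'_{\{eH\}}(X,\mathcal{V}^{\ast})$ is literally $\operatorname{ind}_{\mathfrak{h}_{\mathbb{C}}}^{\mathfrak{g}_{\mathbb{C}}}(V^\vee)$, and to treat the components of $H$ by consistently using $H'$-equivariance rather than $H'_0$-equivariance throughout.
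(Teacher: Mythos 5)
Your argument is correct, and it is essentially the standard proof of this duality: the paper itself states the result as a Fact and defers entirely to \cite[Theorem 2.7]{KP}, whose proof proceeds exactly along your lines --- localization of the Schwartz kernel to the base point for part (1), then the identification of point-supported $\mathcal{V}^{\ast}$-valued distributions with $\operatorname{ind}_{\mathfrak{h}_{\mathbb{C}}}^{\mathfrak{g}_{\mathbb{C}}}(V^\vee)$ (the $\mathbb{C}_{2\rho}$-twist absorbing the density factor) followed by Frobenius reciprocity for part (2). The only points to tidy are cosmetic: for $T=0$ one should read $\operatorname{Supp}K_T\subset\{eH\}$, the Peetre-type finiteness is not strictly needed since Definition \ref{def:diff} is purely a support condition, and the case $H'\subsetneq H\cap G'$ (non-injective $\iota$), which you delegate to \cite{KP}, is handled there by the same method.
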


\medskip
\begin{remark}
\label{rem:Hdiff}
The disconnectedness of $H'$ affects
 the dimension
 of the space of covariant differential operators. We will give an example of this in the case of Juhl's operators in Section \ref{subsec:Juhl}.   
\end{remark}

\medskip
When \eqref{eqn:GHopen}
 and \eqref{eqn:HHG} are satisfied, 
 we have the following inclusion relation:
\begin{equation}
\label{eqn:grH}
  H(\lambda,\nu) \supset H(\lambda,\nu)_{\operatorname{sing}}
  \supset H(\lambda,\nu)_{\operatorname{diff}}.  
\end{equation}
This filtration will be used
 in the classification 
 of the symmetry breaking operators
 in the later chapters
 (see Section \ref{subsec:grdim}, 
 for instance).  

\subsection{Smooth representations
 and intertwining operators}
\label{subsec:CW}

Suppose
 that $G$ is a real reductive linear Lie group.  
In this section we review quickly 
 the notion of Harish-Chandra modules
 and the Casselman--Wallach theory of Fr{\'e}chet globalization,
 and show a closed range property
 of intertwining operators
 (Proposition \ref{prop:CW} below).  

We fix a maximal compact subgroup $K$ of $G$.  
We may and do realize $G$
 as a closed subgroup 
 of $GL(n,{\mathbb{R}})$
 for some $n$
 such that $g \in G$
 if and only if ${}^tg^{-1} \in G$
 and $K=O(n) \cap G$.  
For $g \in G$ we define a map $\| \cdot \|
: G \to {\mathbb{R}}$
 by 
\[
\|g\|
:=\| g \oplus {}^t g^{-1}\|_{\operatorname{op}}
\]
where $\| \, \|_{\operatorname{op}}$
 is the operator norm of $M(2n,{\mathbb{R}})$.  

Let ${\mathcal{HC}}$ denote the category
 of Harish-Chandra modules
 where the objects are $({\mathfrak {g}}, K)$-modules
 of finite length,
 and the morphisms are $({\mathfrak {g}}, K)$-homomorphisms.

A continuous representation $\pi$ of $G$ 
on a Fr{\'e}chet space $U$
 is said to be of {\it{moderate growth}}
 if for each continuous semi-norm
 $|\cdot |$ on $U$ 
 there exist a continuous semi-norm
 $|\cdot |'$ on $U$ 
 and $d \in {\mathbb{R}}$
 such that
\[
   |\pi(g)u| \le \| g \|^d |u|'
\quad
\text{for }
g \in G, u \in U.  
\]

Suppose $(\pi, {\mathcal{H}})$ is a continuous representation 
 of $G$ on a Banach space ${\mathcal{H}}$.  
A vector $v \in {\mathcal{H}}$ is said
 to be {\it{smooth}}
 if the map
 $G \to {\mathcal{H}}$, 
 $g \mapsto \pi(g)v$ is of $C^{\infty}$-class.  
Let ${\mathcal{H}}^{\infty}$ denote
 the space
 of smooth vectors
 of the representation $(\pi,{\mathcal{H}})$.  
Then ${\mathcal{H}}^{\infty}$ carries 
 a Fr{\'e}chet topology
 with a family of semi-norms
$\|v\|_{i_1\cdots i_k}:=\|d\pi(X_{i_1}) \cdots d\pi(X_{i_k})v\|$.  
Here $\{X_1, \cdots, X_n\}$ is a basis
 of ${\mathfrak {g}}$.  
Then ${\mathcal{H}}^{\infty}$ is a $G$-invariant subspace
 of ${\mathcal{H}}$, 
 and $(\pi,{\mathcal{H}}^{\infty})$ 
 is a continuous Fr{\'e}chet representation of $G$.  

We collect some basic properties
 (\cite[Lemma 11.5.1, Theorem 11.6.7]{W}):
\begin{fact}
\label{fact:CW}
{\rm{1)}}\enspace
If $(\pi, {\mathcal{H}})$ is a Banach representation,
 then $(\pi,{\mathcal{H}}^{\infty})$
 has  moderate growth.

{\rm{2)}}\enspace
Let $U_1, U_2$ be continuous representations
 of $G$ 
 having moderate growth
such that the underlying $({\mathfrak {g}},K)$-modules
 $(U_1)_K, (U_2)_K \in {\mathcal{HC}}$.  
If $T:(U_1)_K \to (U_2)_K$
 is a $({\mathfrak {g}},K)$-homomorphism,
 then $T$ extends to a continuous $G$-intertwining operator
 $\overline T:U_1 \to U_2$,
 with closed image
 that is a topological summand of $U_2$.  
\end{fact}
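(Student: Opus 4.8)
The plan is to treat the two assertions separately: the first is a routine growth estimate, while the second is the Casselman--Wallach globalization theorem and is where essentially all of the work lies.

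For 1), I would begin from the standard fact that a continuous Banach representation $(\pi,\mathcal{H})$ has polynomially bounded operator norm: by the uniform boundedness principle $g\mapsto\|\pi(g)\|_{\mathrm{op}}$ is locally bounded, and submultiplicativity together with the Cartan decomposition $G=K(\exp\overline{\mathfrak{a}^+})K$ and the comparison $\|k_1ak_2\|\asymp\|a\|$ upgrades local boundedness to $\|\pi(g)\|_{\mathrm{op}}\le C\|g\|^{d_0}$ for suitable $C,d_0$. Now fix $v\in\mathcal{H}^\infty$ and one of the seminorms $\|d\pi(X_{i_1})\cdots d\pi(X_{i_k})v\|$ defining the Fr\'echet topology of $\mathcal{H}^\infty$. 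Applying the identity $d\pi(X)\pi(g)=\pi(g)\,d\pi(\mathrm{Ad}(g^{-1})X)$ repeatedly moves $\pi(g)$ to the outside, so $d\pi(X_{i_1})\cdots d\pi(X_{i_k})\pi(g)v=\pi(g)\,d\pi(Y_1)\cdots d\pi(Y_k)v$ with $Y_j=\mathrm{Ad}(g^{-1})X_{i_j}$; expanding each $Y_j$ in the fixed basis $\{X_i\}$ with coefficients bounded by $\|\mathrm{Ad}(g^{-1})\|_{\mathrm{op}}$, which is again polynomial in $\|g\|$, bounds the whole expression by $\|g\|^{d_0+Ck}$ times a single continuous seminorm of $v$ on $\mathcal{H}^\infty$. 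That is exactly the moderate-growth estimate for the chosen seminorm.

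For 2), the substance is the Casselman--Wallach theory, which I would structure as follows. First, invoke the uniqueness of the smooth moderate-growth globalization of a finite-length Harish-Chandra module to identify each $U_i$ canonically with the Casselman--Wallach globalization $\overline{(U_i)_K}$ of its underlying $(\mathfrak{g},K)$-module. Second, establish automatic continuity: the given $(\mathfrak{g},K)$-homomorphism $T:(U_1)_K\to(U_2)_K$ extends to a continuous $G$-map $\overline T:U_1\to U_2$; this I would deduce from Casselman's subrepresentation (embedding) theorem, which reduces the extension problem to estimates on $K$-finite matrix coefficients, together with their asymptotic expansions along $A$. Third, prove that $\overline T$ has closed image: factor $T$ in $\mathcal{HC}$ as $(U_1)_K\twoheadrightarrow\operatorname{Im}(T)_K\hookrightarrow(U_2)_K$, globalize, and use exactness of the globalization functor — the inclusion of a submodule globalizes to a closed embedding (again via the matrix-coefficient asymptotics and the open mapping theorem) and the surjection globalizes to a topological surjection by the open mapping theorem — so that $\overline T(U_1)=\overline{\operatorname{Im}(T)_K}$ is closed in $U_2$ and is itself the globalization of $\operatorname{Im}(T)_K$.

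Finally, for the topological-summand claim I would note that $U_2/\overline{\operatorname{Im}(T)_K}\cong\overline{((U_2)_K/\operatorname{Im}(T)_K)}$ is again a Casselman--Wallach globalization, and that all such spaces are nuclear Fr\'echet spaces satisfying Vogt's $(DN)$ and $(\Omega)$ conditions; hence the short exact sequence $0\to\overline{\operatorname{Im}(T)_K}\to U_2\to U_2/\overline{\operatorname{Im}(T)_K}\to0$ splits in the category of Fr\'echet spaces by the Vogt--Wagner splitting theorem, exhibiting $\overline T(U_1)$ as a topological direct summand. The main obstacle is the third step: the closed-range property is precisely the deep content of Casselman--Wallach, resting on the fine analysis of the asymptotics of matrix coefficients (equivalently, the good behaviour of the Casselman--Jacquet functor), and for it I would simply appeal to \cite{W} rather than reproduce the argument.
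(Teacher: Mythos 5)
Your proposal is correct, and it ends up in the same place as the paper: the paper gives no proof of this Fact at all, quoting it directly from Wallach \cite[Lemma 11.5.1, Theorem 11.6.7]{W}, which is exactly the reference you fall back on for the substantive closed-range/automatic-continuity content of part 2. Your added details — the polynomial bound on $\|\pi(g)\|_{\operatorname{op}}$ plus the $\operatorname{Ad}$-conjugation trick for part 1, and the globalization-functor/Vogt splitting outline for part 2 — are standard and sound, so there is nothing to correct.
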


Let $P$, $P'$ be parabolic subgroups of $G$, 
 and ${\mathcal{V}}$, ${\mathcal{W}}$
 the $G$-equivariant vector bundles
 over the real flag varieties 
 $X=G/P$, $Y=G/P'$ 
 associated to finite-dimensional representations
 of $V$, $W$
 of $P$, $P'$, 
 respectively. 
(In the setting here, 
 $G'=G$
 in the notation 
 of the previous sections.)

\begin{proposition}
[closed range property]
\label{prop:CW}
If $T:C^{\infty}(X, {\mathcal{V}})_K
 \to C^{\infty}(Y, {\mathcal{W}})_K$
 is a $({\mathfrak {g}},K)$-homomorphism,
 then $T$ lifts uniquely 
 to a continuous $G$-intertwining operator 
 $\overline T$ from 
 $C^{\infty}(X, {\mathcal{V}})$
 to $C^{\infty}(Y, {\mathcal{W}})$, 
 and the image of $\overline T$ is closed 
 in the Fr{\'e}chet topology of $C^{\infty}(Y, {\mathcal{W}})$.  
\end{proposition}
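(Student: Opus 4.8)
\medskip
\noindent\textbf{Proof proposal.}
The plan is to realize both $C^\infty(X,\mathcal V)$ and $C^\infty(Y,\mathcal W)$ as Casselman--Wallach globalizations of Harish-Chandra modules and then to invoke part~2) of Fact~\ref{fact:CW}. Set $U_1:=C^\infty(X,\mathcal V)$ and $U_2:=C^\infty(Y,\mathcal W)$. To place ourselves in the situation of Fact~\ref{fact:CW}\,2) I would verify two hypotheses: that $U_1$ and $U_2$ are continuous representations of $G$ of moderate growth, and that their underlying $(\mathfrak g,K)$-modules $(U_1)_K$ and $(U_2)_K$ belong to $\mathcal{HC}$.

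First I would check moderate growth. Since $X=G/P$ is compact, the space $C(X,\mathcal V)$ of continuous sections with the supremum norm is a Banach space on which $G$ acts continuously, and its space of smooth vectors is precisely $C^\infty(X,\mathcal V)=U_1$; by part~1) of Fact~\ref{fact:CW} the Fr\'echet representation $U_1$ then has moderate growth, and the same argument applies to $U_2$. Next I would check the Harish-Chandra property: $(U_1)_K$ is the $K$-finite part of the (degenerate) principal series $C^\infty(G/P,\mathcal V)$, hence is admissible with finite $K$-multiplicities, and it is of finite length by the general theory of parabolically induced representations (the finite-length statement reduces, by induction in stages, to the case of a minimal parabolic subgroup). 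Thus $(U_1)_K\in\mathcal{HC}$, and likewise $(U_2)_K\in\mathcal{HC}$.

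Granting these, part~2) of Fact~\ref{fact:CW}, applied to the given $(\mathfrak g,K)$-homomorphism $T:(U_1)_K\to(U_2)_K$, produces a continuous $G$-intertwining operator $\overline T:U_1\to U_2$ extending $T$, whose image is closed in the Fr\'echet topology of $U_2$ (indeed a topological direct summand). For uniqueness I would use that the $K$-finite vectors $(U_1)_K$ are dense in $U_1$ --- a general feature of smooth vectors of Banach representations of a reductive group --- so any two continuous operators agreeing with $T$ on $(U_1)_K$ coincide on all of $U_1$. Combining these statements yields exactly the assertion of Proposition~\ref{prop:CW}.

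I do not expect a genuine obstacle here: all the analytic content is already packaged inside the Casselman--Wallach theorem recorded as Fact~\ref{fact:CW}, and what remains is bookkeeping. The only points deserving a line of care are the identification of $C^\infty(G/P,\mathcal V)$ with the space of smooth vectors of a Banach globalization, and the finite-length assertion when the inducing representation $V$ is allowed to be reducible; both are routine consequences of the classical structure theory of principal series.
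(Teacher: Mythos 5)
Your proposal is correct and follows essentially the same route as the paper: realize $C^{\infty}(X,{\mathcal{V}})$ as the smooth vectors of a Banach globalization (you use sup-norm continuous sections, the paper uses $L^2$-sections, an immaterial difference), deduce moderate growth from Fact~\ref{fact:CW}~(1), note that the $K$-finite part is an object of ${\mathcal{HC}}$ since $P$ is parabolic and $V$ finite-dimensional, and conclude by Fact~\ref{fact:CW}~(2). Your extra remark on uniqueness via density of $K$-finite vectors is a fine detail the paper leaves implicit.
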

\begin{proof}
[Proof of Proposition \ref{prop:CW}]
We fix a Hermitian inner product 
 on every fiber ${\mathcal{V}}_x$
 that depends smoothly on $x \in X$, 
and denote by $L^2(X, {\mathcal{V}})$
 the Hilbert space
 of square integrable sections
 to the Hermitian vector bundle 
 ${\mathcal{V}} \to X$.  
Then we have a continuous representation
 of $G$ 
 on the Hilbert space $L^2(X, {\mathcal{V}})$, 
 and the space $(L^2(X, {\mathcal{V}}))^{\infty}$
 of smooth vectors
 is isomorphic to $C^{\infty}(X, {\mathcal{V}})$
 as Fr{\'e}chet spaces
 because $X$ is compact 
 and $V$ is finite-dimensional.  
Therefore $C^{\infty}(X, {\mathcal{V}})$ has moderate growth
 by Fact \ref{fact:CW} (1).  
Furthermore,
 the underlying $({\mathfrak {g}},K)$-module
 $C^{\infty}(X, {\mathcal{V}})_K$ is admissible
 and finitely generated
 because $P$ is parabolic subgroup
 of $G$
 and $V$ is of finite length 
as a $P$-module.  
Likewise 
 $C^{\infty}(Y, {\mathcal{W}})$ has moderate growth
 with $C^{\infty}(Y, {\mathcal{W}})_K \in {\mathcal{HC}}$.  
Now Proposition follows from Fact \ref{fact:CW} (2).  
\end{proof}

\subsection{Symmetry breaking operators
 for principal series representations}
\label{subsec:GPmap}

{}From now on we assume that  $X$ and $Y$ are 
real flag varieties of real reductive groups $G$ and $G'$.
Let $P = MAN$ and $P' = M'A'N'$
be Langlands decompositions of parabolic subgroups
$G$ and $G'$, respectively,
satisfying 
\begin{equation}\label{eqn:compati}
M' = M \cap G',  \ 
A' = A \cap G',   \
N' = N \cap G',   \
P' = P \cap G'.   
\end{equation}
This condition is fulfilled, for example,
if the two parabolic subgroups $P$ and $P'$ are defined
by the same hyperbolic element of $G'$, 
 as in the setting of Section \ref{subsec:matrix}
 that we shall work with in this article.

\medskip
Let $\Pbar = MA\Nbar$ by the opposite parabolic subgroup,
and $\nbar$ the Lie algebra of $\Nbar$.
The composition $\nbar \hookrightarrow G \to G/P$,
$Z \mapsto \exp Z \mapsto (\exp Z)P/P$
gives a parametrization of the  open Bruhat cell of the real flag variety
$X = G/P$.
We shall regard $\nbar$ simply as an open subset of
$X = G/P$.
We trivialize the dualizing vector bundle
\index{sbon}{Vcast@${{\mathcal{V}}^{\ast}}$}
$\mathcal{V}^* \to X$ on the open Bruhat cell,
and consider the restriction map
\begin{equation}\label{eqn:restDn}
\mathcal{D}' (X,\mathcal{V}^*) \to \mathcal{D}'(\nbar) \otimes
   (V^\vee \otimes \mathbb{C}_{2\rho}).
\end{equation}
The natural $G$-action on
$\mathcal{D}'(X,\mathcal{V}^*)$
defines an infinitesimal action of the Lie algebra $\mathfrak{g}$
on $\mathcal{D}'(U,\mathcal{V}^*|_U)$
for any open subset $U$ of $X$.
This induces a $\mathfrak{g}$-action on
$\mathcal{D}'(\nbar) \otimes (V^\vee \otimes \mathbb{C}_{2\rho})$
so that \eqref{eqn:restDn} is a $\mathfrak{g}$-homomorphism.
Likewise we let $MA$ act on
$\mathcal{D}'(\nbar) \otimes (V^\vee \otimes \mathbb{C}_{2\rho})$
so that \eqref{eqn:restDn} is a
$(\mathfrak{g},M')$-map.

\medskip
From now on we assume that 
\begin{equation}\label{eqn:PNPG}
P' \Nbar P = G.
\end{equation}

\medskip
\begin{theorem}\label{thm:KT}
Suppose \eqref{eqn:compati} and \eqref{eqn:PNPG} are satisfied.
Then we have a natural bijection:
\[
\operatorname{Hom}_{G'}(C^\infty(G/P,\mathcal{V}),C^\infty(G'/P',\mathcal{W}))
  \simeq \mathcal{D}'(\nbar,\operatorname{Hom}_{\mathbb{C}}
             (V \otimes \mathbb{C}_{-2\rho},W))^{M'A',\mathfrak{n}}.
\]
\end{theorem}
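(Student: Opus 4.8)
The plan is to combine Proposition \ref{prop:Distker} with the Bruhat-cell analysis of Section \ref{subsec:GPmap}. Since $P$ is cocompact in $G$ (indeed parabolic), part 2) of Proposition \ref{prop:Distker} gives a bijection between $\operatorname{Hom}_{G'}(C^\infty(G/P,\mathcal{V}),C^\infty(G'/P',\mathcal{W}))$ and $(\mathcal{D}'(X,\mathcal{V}^*)\otimes W)^{\Delta(H')}$, where here $H=P$ and $H'=P'$. In the reductive-group setting, identifying $W$ with a $P'$-module and using the isomorphism $\mathcal{W}=G'\times_{P'}W$, the $\Delta(P')$-invariance is equivalent to saying that $K_T$, regarded as a $\mathcal{V}^*$-valued distribution on $X$, transforms under the left $P'$-action on $X=G/P$ compatibly with the $P'$-action on $W$. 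So the first step is to rewrite the target space as $\mathcal{D}'(X,\mathcal{V}^*\otimes_{\mathbb{C}}W)^{P'}$, where $P'$ acts diagonally via its left action on $X$ and its given action on $W$.

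The second step is to transport this from the whole flag variety $X=G/P$ to the open Bruhat cell $\mathfrak{n}_-\hookrightarrow X$. This is exactly where hypothesis \eqref{eqn:PNPG}, $P'\bar N P=G$, is used: it guarantees that the $P'$-orbit through the base point $eP$ is open and dense, so $P'\cdot\mathfrak{n}_-=X$ up to a lower-dimensional set. More importantly, it means a $P'$-invariant distribution on $X$ is determined by its restriction to $\mathfrak{n}_-$, and conversely any distribution on $\mathfrak{n}_-$ that is invariant under the infinitesimal $P'$-action (i.e. under $\mathfrak{n}$ via the $\mathfrak{g}$-action of \eqref{eqn:restDn}, and under $M'A'$) extends uniquely to a $P'$-invariant distribution on $X$. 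Concretely: the left $P'$-action on $X$ restricted to the open cell gives, via \eqref{eqn:restDn}, the $(\mathfrak{g},M'A')$-action on $\mathcal{D}'(\mathfrak{n}_-)\otimes(V^\vee\otimes\mathbb{C}_{2\rho})$; invariance under connected $N'$ (hence under $\mathfrak{n}$ since $N'\subset P'$) plus invariance under $M'A'$ is the infinitesimal shadow of $P'$-invariance. After trivializing $\mathcal{V}^*$ on the cell and absorbing the density twist $\mathbb{C}_{2\rho}$ — so that $V^\vee\otimes\mathbb{C}_{2\rho}\otimes W$ is rewritten as $\operatorname{Hom}_{\mathbb{C}}(V\otimes\mathbb{C}_{-2\rho},W)$ — one obtains the claimed space $\mathcal{D}'(\mathfrak{n}_-,\operatorname{Hom}_{\mathbb{C}}(V\otimes\mathbb{C}_{-2\rho},W))^{M'A',\mathfrak{n}}$.

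The main obstacle is the extension/uniqueness step: showing that the restriction map from $P'$-invariant distributions on $X$ to $(M'A',\mathfrak{n})$-invariant distributions on the open cell $\mathfrak{n}_-$ is a bijection, not merely an injection. Injectivity is essentially the density of $P'\cdot\mathfrak{n}_-$ together with the fact that a distribution invariant under a connected group acting with a dense orbit is determined on that orbit. Surjectivity is the subtle part: given a distribution on $\mathfrak{n}_-$ invariant under the infinitesimal $P'$-action, one must produce a genuine global $P'$-invariant distribution on $X$ extending it. This is where the hypothesis \eqref{eqn:PNPG} does real work — it forces the complement $X\setminus(P'\cdot\mathfrak{n}_-)$ to be small enough that the $P'$-equivariant extension exists and is unique (one spreads the local datum around by the $P'$-action and checks the resulting distribution is well-defined across the overlaps, the consistency being precisely the assumed $(\mathfrak{g},M'A')$-invariance). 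I would handle connectedness issues by passing to the identity component of $P'$ where needed and checking the finite-group part separately via $M'A'$-invariance. The remaining verifications — that the density twist bookkeeping gives $\operatorname{Hom}_{\mathbb{C}}(V\otimes\mathbb{C}_{-2\rho},W)$ rather than some other twist, and that \eqref{eqn:compati} makes all the parabolic data compatible — are routine.
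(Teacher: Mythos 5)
Your overall architecture is the same as the paper's: Proposition \ref{prop:Distker} (2) to pass to $\Delta(P')$-invariant distribution kernels, restriction to the open Bruhat cell, then spreading the datum by the $P'$-action and gluing. But the step where \eqref{eqn:PNPG} enters is misstated in a way that breaks the argument. The hypothesis $P'\Nbar P=G$ does \emph{not} say that the $P'$-orbit through the base point $eP$ is open and dense --- in the situation of this paper that orbit is the closed orbit $\{[p_+]\}$, a single point (Proposition \ref{lem:PGP}) --- and it does not give $\bigcup_{p'\in P'}p'\cdot\nbar=X$ merely ``up to a lower-dimensional set''. It says exactly that the $P'$-translates of the open cell cover all of $X=G/P$: every $gP$ equals $p'\bar n P$ with $p'\in P'$, $\bar n\in\Nbar$, so the complement is empty, not just ``small''.

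This exactness is not a cosmetic point, because distributions can be supported on closed sets of positive codimension. Your injectivity mechanism --- ``a distribution invariant under a connected group acting with a dense orbit is determined on that orbit'' --- is false in general (a delta distribution at a fixed point is invariant and restricts to zero off that point), and in this very paper the singular symmetry breaking operators have kernels supported in the lower-dimensional $P'$-invariant subset $S^{n-1}$ of $G/P$. If the translates of $\nbar$ missed even a lower-dimensional closed $P'$-invariant set, the restriction map \eqref{eqn:Drestinj} could kill nonzero invariant kernels, and the $P'$-equivariant extension of a kernel from $\nbar$ would be neither forced to exist nor unique; so ``dense'' or ``up to a lower-dimensional set'' cannot carry the proof. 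Once you replace this by the exact covering $X=\bigcup_{p'\in P'}p'\cdot\nbar$ (which is precisely \eqref{eqn:PNPG}, verified for the present pair in Corollary \ref{cor:PNPG}), both directions become the paper's argument: injectivity because an invariant kernel vanishing on $\nbar$ vanishes on every translate, hence on $X$; surjectivity because the translates $p'\cdot K$ agree on overlaps by the $(M'A',\mathfrak{n}')$-invariance and hence glue to a global $P'$-invariant kernel. Your remaining bookkeeping (the $\mathbb{C}_{2\rho}$ twist, and handling the disconnectedness of $P'$ via $M'A'$ together with infinitesimal $\mathfrak{n}'$-invariance for the connected factor) is fine.
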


\begin{remark}\label{rem:KT}
The right-hand side may be regarded as 
$\operatorname{Hom}(V,W)$-valued distributions on
$\nbar$ satisfying certain
$(M'A',\mathfrak{n}')$-invariance conditions
because $\dim \mathbb{C}_{-2\rho} = 1$.
In the main part of this article
we treat the case 
\[\dim V = \dim W = 1\]
and we  identify the distribution kernel $K_T$
with a distribution on $\nbar$.  
We shall see that \eqref{eqn:PNPG} is fulfilled
 for $(G,G')=(O(n+1,1),O(n,1))$
 in Corollary \ref{cor:PNPG}.  
\end{remark}

\begin{proof}[Proof of Theorem \ref{thm:KT}]
Since $P'\Nbar P = G$,
the restriction map \eqref{eqn:restDn} induces an injective morphism 
on $\Delta(P')$-invariant distributions:
\begin{equation}\label{eqn:Drestinj}
(\mathcal{D}'(X,\mathcal{V}^*)\otimes W)^{\Delta(P')}
 \hookrightarrow
(\mathcal{D}'(\nbar) \otimes 
   (V^\vee \otimes \mathbb{C}_{2\rho}) \otimes W)^{M'A',\mathfrak{n}'}.
\end{equation}
Conversely,
given a 
$K \in \mathcal{D}'(\nbar,\mathcal{V}^*|_{\nbar}) \otimes W$
we define for each $p' \in P'$
\[
p' \cdot K \in \mathcal{D}' (p' \cdot \nbar, \mathcal{V}^*|_{p'\cdot\nbar}) \otimes W.
\]
Then $p' \cdot K = K$ on the intersection
$\nbar \cap p' \cdot \nbar$ if
$T \in (\mathcal{D}'(\nbar) \otimes
   \operatorname{Hom}_{\mathbb{C}} (V \otimes \mathbb{C}_{-2\rho},W))^{M'A',\mathfrak{n}'}$.
This shows the surjectivity of the restriction map \eqref{eqn:Drestinj}.
Hence Theorem \ref{thm:KT} follows from Proposition \ref{prop:Distker}.
\end{proof}

\medskip

\subsection{Meromorphic continuation of symmetry breaking operators}
\label{subsec:MeroTK}

In order to discuss the meromorphic continuation
 of symmetry breaking operators for principal series representations,
we fix finite-dimensional representations 
\[\sigma:M \rightarrow GL_{\mathbb{C }}(V) \] and 
\[\tau: M' \rightarrow GL_{\mathbb{C}}(W).\]For
 $\lambda \in \mathfrak{a}_{\mathbb{C}}^*$ 
and $\nu \in (\mathfrak{a}'_{\mathbb{C}})^*$,
we define representations   $V_\lambda$ of $P$ and
$W_\nu$ of $P'$  by
\begin{alignat*}{4}
P &= MAN &&\ni m e^H n &&\mapsto \sigma(m) e^{\lambda(H)} &&\in GL_{\mathbb{C}}(V)
\\
P' &= M'A'N' &&\ni m' e^{H'} n' &&\mapsto \tau(m') e^{\nu(H')} &&\in GL_{\mathbb{C}}(W),
\end{alignat*}
respectively.
Here by abuse of notation, 
 we use $\lambda$ and $\nu$ also as
  parameters
 of representation spaces of $P$ and $P'$, 
respectively.  
(Later,
 we shall work with the case
 where $\sigma ={\bf{1}}$
 and $\tau={\bf{1}}$.)
We then have  homogeneous bundles
\[\mathcal{V}_\lambda := G \times_P V_\lambda\]
and
\[\mathcal{W}_\nu := G' \times_{P'} W_\nu \]
over the real flag varieties $G/P$ and $G'/P'$, 
 respectively.  
Observe that the pull-back 
 of the vector bundle ${\mathcal{V}}_{\lambda}$
 via the $K$-diffeomorphism
 $K/M \overset \sim \to G/P$
 is a $K$-homogeneous
 vector bundle
 ${\mathcal{V}}=K \times_M(\sigma, V)$.  
Similarly,
 the pull-back of the vector bundle ${\mathcal{W}}_{\nu}$
 yields a $K'$-homogeneous vector bundle
 ${\mathcal{W}}=K' \times_{M'}(\tau, W)$.  
Thus,
 even though the $G$-module
$C^\infty(G/P,\mathcal{V}_\lambda)$
and the $G'$-module
$C^\infty(G'/P',\mathcal{W}_\nu)$
have complex parameters $\lambda\in\mathfrak{a}_{\mathbb{C}}^*$
and $\nu\in(\mathfrak{a}'_{\mathbb{C}})^*$,
respectively,
but the spaces themselves
 can be defined independently of $\lambda$ and $\nu$
as Fr\'{e}chet spaces via the isomorphisms
\begin{equation}\label{eqn:Kpict}
C^\infty(G/P,\mathcal{V}_\lambda) \simeq C^\infty(K/M,{\mathcal{V}}),
\
C^\infty(G'/P',\mathcal{W}_\nu) \simeq C^\infty(K'/M',{\mathcal{W}}).
\end{equation}
Thus, 
 for a family of
continuous $G'$-homomorphisms
$T_{\lambda,\nu} : C^\infty(G/P,\mathcal{V}_\lambda) \to C^\infty(G'/P',\mathcal{W}_\nu)$, 
 we can define the holomorphic (or meromorphic)
dependence on the complex parameter $\nulambda$ 
via \eqref{eqn:Kpict}, 
  namely,
 we call a continuous linear map
 $T_{\lambda,\nu}$ depends holomorphically/meromorphically
 on $(\lambda,\nu)$
 if $T_{\lambda,\nu}(\varphi)$ depends 
 holomorphically/meromorphically
 on $(\lambda,\nu)$
 for any $\varphi\in C^{\infty}(K/M, {\mathcal{V}})$.

\medskip
For a family of distributions
 $D_{\lambda,\nu} \in \mathcal{D}'(\nbar,\operatorname{Hom}(V, W))$, 
 we say $D_{\lambda,\nu}$ depends holomorphically/meromorphically
 on the parameter $(\lambda,\nu)\in \Omega $
 if for every test function $F \in C^{\infty}({\mathfrak{n}}_-, V)$
 the function $D_{\lambda,\nu}(F)$ depends 
 holomorphically/meromorphically on $(\lambda,\nu)$.

\medskip
The following proposition asserts
 that the existence
 of holomorphic/meromorphic continuation
 of symmetry breaking operators
 is determined
 only by that of distribution kernels
 on the open Bruhat cell under a mild assumption.  

\begin{proposition}\label{prop:meroK}
Assume \eqref{eqn:compati} and \eqref{eqn:PNPG} are satisfied.
Let $\Omega' \subset \Omega$ be two open domains in
$\mathfrak{a}_{\mathbb{C}}^* \times (\mathfrak{a}'_{\mathbb{C}})^*$.
Suppose we are given a family of continuous $G'$-homomorphisms
\[T_{\lambda,\nu}: C^\infty(G/P,\mathcal{V}_\lambda)
   \to C^\infty(G'/P',\mathcal{W}_\nu)\]
for $\nulambda \in \Omega'$.
Denote by $K_{\lambda,\nu}$ the restriction
 of the distribution kernel
 to the open Bruhat cell and suppose that $K_{\lambda,\nu}$ depends holomorphically on $(\lambda,\nu) \in \Omega'$. 
We assume
\begin{equation}\label{eqn:holoOmega}
\text{$K_{\lambda,\nu}$ extends holomorphically to $\Omega$ as a
$\operatorname{Hom}(V,W)$-valued distribution on $\nbar$}.
\end{equation}
Then the family $T_{\lambda,\nu}$ of symmetry breaking operators also extends 
to a family of continuous $G'$-homomorphisms 
$T_{\lambda,\nu}:C^\infty(G/P,\mathcal{V}_\lambda)\rightarrow 
C^\infty(G'/P',\mathcal{W}_\nu)$ which depends holomorphically on $(\lambda, \nu)\in \Omega$.
\end{proposition}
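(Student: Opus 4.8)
The plan is to deduce the statement from Theorem~\ref{thm:KT}, which, for each fixed $(\lambda,\nu)$, identifies $\operatorname{Hom}_{G'}(C^\infty(G/P,\mathcal V_\lambda),C^\infty(G'/P',\mathcal W_\nu))$ with the space of $\operatorname{Hom}_{\mathbb C}(V\otimes\mathbb C_{-2\rho},W)$-valued distributions on the open Bruhat cell $\nbar\simeq\Nbar P/P$ satisfying a fixed list of $(M'A',\mathfrak n)$-invariance conditions, via $T\mapsto K_T|_{\nbar}$. In the noncompact picture these conditions say that $K$ is covariant under the group $M'A'$ and is annihilated by a finite family of first-order differential operators coming from the infinitesimal $\mathfrak n$-action; in every case the operators (and the $M'A'$-cocycle) involved are built from the structure constants of $\mathfrak g$ twisted by the characters $e^\lambda$ and $e^\nu$, hence depend polynomially --- in particular holomorphically --- on $(\lambda,\nu)$. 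Thus the whole condition ``$K\in\mathcal D'(\nbar,\dots)^{M'A',\mathfrak n}$'' can be rephrased as the vanishing, for all test functions $F$ and all parameters $g\in M'A'$, $Z\in\mathfrak n$, of scalars $\langle K,\Psi_{\lambda,\nu}(g,Z,F)\rangle$, where $\Psi_{\lambda,\nu}(g,Z,F)\in C^\infty(\nbar,\dots)$ depends holomorphically on $(\lambda,\nu)$.

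First I would show that the holomorphic extension $K_{\lambda,\nu}$ furnished by~\eqref{eqn:holoOmega} still satisfies all these conditions on the larger domain $\Omega$. Fix $g$, $Z$, $F$. For $(\lambda,\nu)\in\Omega'$ one has $\langle K_{\lambda,\nu},\Psi_{\lambda,\nu}(g,Z,F)\rangle=0$ because $K_{\lambda,\nu}=K_{T_{\lambda,\nu}}|_{\nbar}$ already lies in the invariant subspace. On the other hand, on all of $\Omega$ the function $(\lambda,\nu)\mapsto\langle K_{\lambda,\nu},\Psi_{\lambda,\nu}(g,Z,F)\rangle$ is holomorphic, since a holomorphic family of distributions paired with a holomorphic family of test functions produces a holomorphic scalar function (separate holomorphy together with separate continuity, then Hartogs). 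As $\Omega$ is a domain, hence connected, and $\Omega'$ is a nonempty open subset, the identity theorem forces this scalar to vanish on all of $\Omega$. Since $g$, $Z$, $F$ were arbitrary, $K_{\lambda,\nu}$ lies in $\mathcal D'(\nbar,\dots)^{M'A',\mathfrak n}$ for every $(\lambda,\nu)\in\Omega$, so by Theorem~\ref{thm:KT} it is the Bruhat-cell restriction of the distribution kernel of a unique continuous $G'$-intertwining operator $T_{\lambda,\nu}:C^\infty(G/P,\mathcal V_\lambda)\to C^\infty(G'/P',\mathcal W_\nu)$, agreeing with the given family on $\Omega'$.

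It remains to check that the resulting family $T_{\lambda,\nu}$ depends holomorphically on $(\lambda,\nu)\in\Omega$ in the sense explained before the proposition, and I expect this to be the only real obstacle. The point is that, by~\eqref{eqn:PNPG}, the $P'$-translates of the open cell $\nbar$ cover the compact flag variety $G/P$; by the $\Delta(P')$-invariance the local distributions $p'\cdot K_{\lambda,\nu}$ agree on overlaps, so, fixing once and for all a finite partition of unity subordinate to a finite subcover, they glue to a global distributional section $\widetilde K_{\lambda,\nu}\in\mathcal D'(G/P,\mathcal V_\lambda^\ast)$ whose construction is visibly holomorphic in $(\lambda,\nu)$, each ingredient (the $P'$-action on distributions, the bundle trivializations) carrying only holomorphic $(\lambda,\nu)$-dependence. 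The operator $T_{\lambda,\nu}$ is then recovered by integrating $\varphi$ against $\widetilde K_{\lambda,\nu}$ after the $\lambda,\nu$-independent identifications~\eqref{eqn:Kpict}, so for every $\varphi\in C^\infty(K/M,\mathcal V)$ and every distribution $\mu$ on $K'/M'$ the scalar $\langle\mu,T_{\lambda,\nu}\varphi\rangle$ is holomorphic on $\Omega$. Since $C^\infty(K'/M',\mathcal W)$ is a Fr\'echet space, weak holomorphy implies holomorphy, so $T_{\lambda,\nu}(\varphi)$ is holomorphic in $(\lambda,\nu)$ for each $\varphi$, which is exactly the asserted holomorphic dependence. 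The only delicate bookkeeping is tracking the $\lambda$- and $\nu$-dependent bundle trivializations through the gluing; everything else reduces to the identity theorem applied pairing by pairing.
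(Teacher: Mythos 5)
Your proposal is correct and follows essentially the same route as the paper: reduce via Theorem~\ref{thm:KT} to showing that the $(M'A',\mathfrak n')$-invariance conditions on $K_{\lambda,\nu}$, which hold on $\Omega'$ and depend holomorphically on $(\lambda,\nu)$, propagate to all of $\Omega$ by the identity theorem, so that $K_{\lambda,\nu}$ defines the desired intertwining operator for every $(\lambda,\nu)\in\Omega$. Your additional paragraph verifying the holomorphic dependence of the extended operators (gluing the $P'$-translates of $K_{\lambda,\nu}$ and using weak holomorphy in the Fr\'echet space $C^\infty(K'/M',\mathcal W)$) is a legitimate elaboration of a point the paper leaves implicit in the bijection of Theorem~\ref{thm:KT}.
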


\begin{proof}
By Theorem \ref{thm:KT},
it is sufficient to prove that if
\begin{equation}\label{eqn:Kman}
K_{\lambda,\nu} \in \mathcal{D}'(\nbar,\operatorname{Hom}(V_{\lambda-2\rho},
         W_\nu))^{M'A',\mathfrak{n}'}
\end{equation}
for all $\nulambda \in \Omega'$
then \eqref{eqn:Kman} holds for all
$\nulambda \in \Omega$.
This statement holds
 because the equation 
 for the $(M'A', {\mathfrak {n}})$-invariance
 in \eqref{eqn:Kman} depends
holomorphically on
$\nulambda \in \mathfrak{a}_{\mathbb{C}}^* \times (\mathfrak{a}'_{\mathbb{C}})^*$
and because $K_{\lambda,\nu}$
is a $\operatorname{Hom}(V,W)$-valued distribution on
$\nbar$ which depends holomorphically
on $\nulambda \in \Omega$.
\end{proof}

We can strengthen Proposition \ref{prop:meroK} by relaxing
the assumption \eqref{eqn:holoOmega}
 as in the following proposition,
which we will use in later chapters.
\begin{proposition}\label{prop:meroKK}
Retain the setting of Proposition \ref{prop:meroK}.
Then the same conclusion still holds if we replace the assumption \eqref{eqn:holoOmega}
by the following two assumptions:
\begin{itemize}
\item
$K_{\lambda,\nu}$ extends meromorphically to\/ $\Omega$
as a\/ $\operatorname{Hom}(V,W)$-valued distribution on\/ $\nbar$.
\item
There exists a dense subspace $Z$ of
$C^\infty(K/M, {\mathcal{V}})$
such that $T_{\lambda,\nu} \varphi$
is holomorphic on\/ $\Omega$ for any
$\varphi \in Z$.
\end{itemize}
\end{proposition}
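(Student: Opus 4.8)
The plan is to convert the meromorphically continued distribution kernel into a family of symmetry breaking operators on $\Omega$ which is holomorphic off a polar divisor, and then to exploit the holomorphy of $T_{\lambda,\nu}\varphi$ for $\varphi$ in the dense subspace $Z$ to show that this divisor is removable.

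\textbf{Step 1 (meromorphic continuation of the operators).} Exactly as in the proof of Proposition~\ref{prop:meroK}, the $(M'A',\mathfrak{n}')$-invariance conditions in \eqref{eqn:Kman} depend holomorphically on $(\lambda,\nu)$, so wherever the meromorphic continuation of $K_{\lambda,\nu}$ to $\Omega$ is holomorphic it still lies in $\mathcal{D}'(\nbar,\operatorname{Hom}(V_{\lambda-2\rho},W_\nu))^{M'A',\mathfrak{n}'}$. Writing $S\subset\Omega$ for the polar set of $K_{\lambda,\nu}$ (a proper analytic subvariety, of codimension one since the kernel is holomorphic on the nonempty open set $\Omega'$), Theorem~\ref{thm:KT} produces a family of continuous $G'$-homomorphisms $T_{\lambda,\nu}\colon C^\infty(G/P,\mathcal{V}_\lambda)\to C^\infty(G'/P',\mathcal{W}_\nu)$ for $(\lambda,\nu)\in\Omega\setminus S$, holomorphic there and agreeing with the given family on $\Omega'\subseteq\Omega\setminus S$. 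Since the kernel on $\nbar$ is meromorphic, $(\lambda,\nu)\mapsto T_{\lambda,\nu}\varphi$ extends to a meromorphic $C^\infty(K'/M',\mathcal{W})$-valued function on $\Omega$ with poles contained in $S$, for every $\varphi\in C^\infty(K/M,\mathcal{V})$.

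\textbf{Step 2 (the key point: removing the poles).} Fix a smooth point $p_0$ of $S$ and local coordinates $(z_1,\dots,z_N)$ on a polydisc $D\ni p_0$ in which $S\cap D=\{z_1=0\}$. For fixed $z'=(z_2,\dots,z_N)$ the map $z_1\mapsto T_{(z_1,z')}$ is a holomorphic family of continuous operators on a punctured disc, hence has a Laurent expansion whose singular coefficients are, by Cauchy's integral formula over a small circle in the $z_1$-variable, again continuous operators $C^\infty(K/M,\mathcal{V})\to C^\infty(K'/M',\mathcal{W})$. By hypothesis $T_{\lambda,\nu}\varphi$ is holomorphic on $\Omega$ for every $\varphi\in Z$, so each such singular Laurent coefficient annihilates $Z$; being continuous and $Z$ dense, it vanishes identically. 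Thus $T_{(z_1,z')}$ has no pole at $z_1=0$, i.e.\ the family $T_{\lambda,\nu}$ extends holomorphically across the regular part of $S$. Consequently $(\lambda,\nu)\mapsto T_{\lambda,\nu}\varphi$ is holomorphic on $\Omega$ minus the codimension $\ge 2$ set $\operatorname{Sing}(S)$ for every $\varphi$, hence holomorphic on all of $\Omega$ by the Hartogs extension theorem (equivalently, it is locally bounded near $S$ and extends by Riemann's theorem).

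\textbf{Step 3 and conclusion.} Step 2 shows that $T_{\lambda,\nu}$ is a holomorphic family of continuous operators on $\Omega$; patching the local extensions, which agree on the dense open set $\Omega\setminus S$, yields a single such family on $\Omega$ restricting to the original one on $\Omega'$. Each $T_{\lambda,\nu}$ is moreover a continuous $G'$-homomorphism: its distribution kernel on $\nbar$ is the holomorphic extension of $K_{\lambda,\nu}$, which lies in $\mathcal{D}'(\nbar,\operatorname{Hom}(V_{\lambda-2\rho},W_\nu))^{M'A',\mathfrak{n}'}$ for every $(\lambda,\nu)\in\Omega$ (the invariance being a holomorphic condition that holds on the dense set $\Omega\setminus S$), so Theorem~\ref{thm:KT} applies. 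This proves the proposition.

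\textbf{Main obstacle.} The heart of the argument is the upgrade in Step 2 from holomorphy on the dense subspace $Z$ to holomorphy on the whole Fr\'echet space $C^\infty(K/M,\mathcal{V})$: one needs to know that the singular Laurent coefficients of the \emph{operator} family transverse to $S$ are themselves continuous operators, so that their vanishing on $Z$ forces them to vanish. This is precisely where the meromorphy of the distribution kernel enters, through the identification of operators with invariant kernels in Theorem~\ref{thm:KT}. The remaining technical point, namely the possibly singular polar locus $S$, is routine once the smooth part is handled, via Hartogs' theorem.
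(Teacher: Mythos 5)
Your proof is correct and follows what is clearly the intended argument (the paper omits the proof, saying only that it is ``similar to Proposition \ref{prop:meroK}''): reduce to the distribution kernel via Theorem \ref{thm:KT}, and use the density of $Z$ together with continuity of the operators to show that the singular part of the meromorphically continued family vanishes, so the poles are removable. The two points you leave implicit are routine and easily supplied: the singular Laurent coefficients are continuous operators by equicontinuity (Banach--Steinhaus on the integration contour), and their vanishing also removes the poles of $K_{\lambda,\nu}$ itself (its negative Laurent coefficients are the Schwartz kernels of the vanishing operator coefficients, or equivalently one tests against sections supported in the open Bruhat cell), which is what licenses the appeal to Theorem \ref{thm:KT} at points of the polar set in your Step 3.
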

\begin{proof}
This can be shown similarly
 to Proposition \ref{prop:meroK}.  
Thus we omit the proof.  
\end{proof}

\section{More about principal series representations}
\label{sec:4}
This chapter collects some basic facts on the principal
series representation of $G=O(n+1,1)$
 in a way
 that we shall use them later.  
Most of the material here is well-known.  
\bigskip

\subsection{Models of principal series representations}
\label{subsec:KNpicture}
To obtain a formula for the symmetry breaking operator and to obtain its analytic continuation  we work
 on models of the representations
\index{sbon}{Ilmd@${I(\lambda)}$}
 $I(\lambda)$ and $J(\lambda)$.

\bigskip

The isotropic cone
\[
\index{sbon}{Xi@$\Xi$|textbf}
     \Xi\equiv \Xi({\mathbb{R}}^{n+1,1})=
     \{(\xi_0, \cdots, \xi_{n+1}) \in {\mathbb{R}}^{n+2}:
        \xi_0^2+ \cdots + \xi_{n}^2 - \xi_{n+1}^2=0\}
     \setminus
     \{0\}.    
\]
is a homogeneous $G$-space.

For $\lambda \in \mathbb C$,
let
\begin{equation}\label{eq:2.9.1}
  C_{\lambda}^{\infty}(\Xi) := \{h \in C^\infty(\Xi):
                  h(t \widetilde{\xi}) = t^\lambda h(\widetilde{\xi}), \ 
 \text{ for any } \widetilde{\xi} \in \Xi, t \in {\mathbb{R}}^{\times}\},
\end{equation}
be the space  of smooth functions on $\Xi$ homogeneous of degree $\lambda$. 
Likewise we define
\[
{\mathcal{A}}_{\lambda}(\Xi)
\subset
C_{\lambda}^{\infty}(\Xi)
\subset
{\mathcal{D}}_{\lambda}'(\Xi)
\subset
{\mathcal{B}}_{\lambda}(\Xi)
\]
for the sheaves
 ${\mathcal{A}}$ (analytic functions), 
 ${\mathcal{D}}'$ (distributions), 
 and ${\mathcal{B}}$ (hyperfunctions).

We endow $C_{\lambda}^{\infty}(\Xi)$
 with the Fr{\'e}chet topology
 of uniform convergence
 of derivatives of finite order on compact sets.  
The group $G$ acts on $C_\lambda^\infty(\Xi)$ by left translations $l(g)$ and thus we obtain a representation
 $(l_{\Xi},C_{\lambda}^{\infty}(\Xi))$.  Then 
\begin{equation}
\label{eq:2.9.3}
 I(\lambda) 
 \simeq (l_{\Xi},C_{-\lambda}^{\infty}(\Xi)).  
\end{equation}
and we will from now
 on identify  $I(\lambda)$
 and its homogeneous model $(l_{\Xi},C_{-\lambda}^{\infty}(\Xi))$.

Let denote by 
\index{sbon}{Ilmdinf@${I(\lambda)^{-\infty}}$|textbf}
 $I(\lambda)^{-\infty}$
 the space of distribution vectors.  
Then as the dual of the isomorphism
 \eqref{eq:2.9.3}, 
 we have a natural isomorphism
\begin{equation}
\label{eqn:IDist}
I(\lambda)^{-\infty} \simeq {\mathcal{D}}_{-\lambda}'(\Xi), 
\end{equation}
see Remark \ref{rem:distr}
 for our convention.  
{

\medskip

The isotropic cone $\Xi$ covers the sphere $S^n =G/P$
\begin{alignat*}{6}
   &G/O(n) N \,\,&& \simeq\,\,  && \Xi
   && \quad g O(n)N && \mapsto && \ g p_+
\\
&  {\mathbb{R}}^{\times}\downarrow && && \downarrow {\mathbb{R}}^{\times}
   && \qquad \rotatebox[origin=c]{-90}{$\mapsto$}  && && \  \rotatebox[origin=c]{-90}{$\mapsto$}
\\
     &\,G/P && \simeq \,\, && S^n,
     &&
     \quad\  gP &&\mapsto && \  g p_+
\end{alignat*}
where 
\index{sbon}{pplus@${p_+}$|textbf}
$p_+ :={} {}^{t\!} (1, 0, \cdots, 0, 1) \in \Xi (\mathbb{R}^{n+1,1})$.

\medskip
For $b={}^t(b_1, \cdots, b_n) \in {\mathbb{R}}^n$, 
 we define unipotent matrices in $O(n+1,1)$ by
\index{sbon}{nminusb@$n_-(b)$|textbf}
\begin{align}
n(b):=& \exp(\sum_{j=1}^n b_j N_j^+)=I_{n+2} 
       + 
\begin{pmatrix} 
     - \frac 1 2 Q(b) &-{}^{t\!}b & \frac 1 2 Q(b) 
\\
       b & 0 & -b
\\
     - \frac 1 2 Q(b) &-{}^{t\!}b & \frac 1 2 Q(b) 
\end{pmatrix},
\nonumber
\\
n_-(b):=&\exp(\sum_{j=1}^n b_j N_j^-)= I_{n+2} 
       + 
\begin{pmatrix} 
     - \frac 1 2 Q(b) &-{}^{t\!}b & -\frac 1 2 Q(b) 
\\
       b & 0 & b
\\
     \frac 1 2 Q(b) &{}^{t\!}b & \frac 1 2 Q(b) 
\end{pmatrix}, 
\label{eqn:nbar}
\end{align}
where $N_j^+$ and $N_j^-$
 ($1 \le j \le n$) are 
 the basis elements of the nilpotent Lie algebras ${\mathfrak {n}}_+$
 and ${\mathfrak {n}}_-$
 defined in \eqref{eqn:ngen} and \eqref{eqn:nngen}
 respectively.  
We collect some basic formulae:
\index{sbon}{hypelt@$H$}
\begin{align}
\label{eqn:Nfixed}
    n(b) \begin{pmatrix} 1 \\ 0 \\ 1 \end{pmatrix}
 =&
\begin{pmatrix} 1 \\ 0 \\ 1 \end{pmatrix},
\quad
    n_-(b) \begin{pmatrix} 1 \\ 0 \\ -1 \end{pmatrix}
 =
\begin{pmatrix} 1 \\ 0 \\ -1 \end{pmatrix}
\\
{n_-} (Ab)
   =& \begin{pmatrix} 1 \\ & A \\ && 1 \end{pmatrix}
      n_- (b)
      \begin{pmatrix} 1 \\ & A^{-1} \\ && 1 \end{pmatrix}
\quad
\text{for $A \in O(n)$, }
\nonumber
\\
 {n_-} (-b)
   =& m_- {n_-} (b) m_-^{-1},
\nonumber
\\
 {n_-} (e^{-t} b)
   =& e^{tH} {n_-} (b) e^{-tH},
\label{eqn:invm}
\end{align}
where we set 
\index{sbon}{mminus@$m_-$|textbf}
\begin{equation}\label{eqn:m-}
m_- \ := \ 
  \left(
   \begin{array}{cc|c}
      -1  &        &   \\
        & I_{n}&      \\
           \hline
        &        &      -1
    \end{array}
    \right)\in K'.  
   \end{equation}
We note
 that $m_-$ does not belong to the identity component
 of $G'$
 (cf. Lemma \ref{lem:5.1}).

The $N_+$-action on the isotropic cone
\index{sbon}{Xi@$\Xi$}
 $\Xi$ is given in the coordinates as 
\begin{equation}
\label{eqn:nbxi}
  n(b) \begin{pmatrix} \xi_0 \\ \xi \\ \xi_{n+1} \end{pmatrix}
 =
\begin{pmatrix}
 \xi_0 -(b,\xi)
\\
 \xi 
\\ 
 \xi_{n+1}-(b,\xi)
\end{pmatrix}
+
\frac{\xi_{n+1} -\xi_0}{2}
\begin{pmatrix} Q(b)\\ -2 b\\ Q(b)\end{pmatrix}.  
\end{equation}
where
 $b \in {\mathbb{R}}^n$, 
 $\xi \in {\mathbb{R}}^n$ and $\xi_0, \xi_{n+1}\in\mathbb{R}$.

The intersections
 of the isotropic cone $\Xi$
 with the hyper planes
 $\xi_0 + \xi_{n+1}=2$
 or $\xi_{n+1}=1$
 can be identified with ${\mathbb{R}}^n$ or $S^n$, 
 respectively.  
We write down the embeddings
 $\iota_N:{\mathbb{R}}^n \hookrightarrow \Xi$
 and $\iota_K:S^n \hookrightarrow \Xi$
 in the coordinates as follows:
\begin{align}
\label{eqn:NXi}
\iota_N:&
\mathbb{R}^n \hookrightarrow \Xi,
\
{}^t(x,x_n) \mapsto 
{n_-} (x,x_n) p_+
   = \left(\begin{array}{l}
           1 - |x|^2 - x_n^2  \\
           2x                     \\
           2x_n                  \\
           1 + |x|^2 + x_n^2
     \end{array}\right), 
\\
\label{eqn:KXi}
\iota_K:&
S^n \to \Xi, 
\eta \mapsto (\eta,1).  
\end{align}

The composition of $\iota_N$ and the projection
\begin{equation}
\label{eqn:XiSn}
   \Xi \to \Xi/\mathbb{R}^\times \overset{\sim}{\to} S^n,
\quad
\xi \mapsto \frac{1}{\xi_{n+1}} (\xi_0,\dots,\xi_n)
\end{equation}
yields the conformal compactification of ${\mathbb{R}}^n$:
\begin{equation*}
\mathbb{R}^n \hookrightarrow S^n,
\quad
r \omega \mapsto \eta = (s, \sqrt{1-s^2} \, \omega ) 
= \Bigl( \frac{1-r^2}{1+r^2}, \frac{2r}{1+r^2} \omega \Bigr).  
\end{equation*}
Here $\omega \in S^{n-1}$
 and the inverse map is given by 
$
r = \sqrt{\frac{1-s}{1+s}}
$
 for $s \ne -1$.

The composition of $\iota_K$ and the projection \eqref{eqn:XiSn}
 is clearly the identity map on $S^n$.

We thus obtain two models
 of $I(\lambda)$:
\begin{definition}\label{def:model}
{\rm 1)}
{\rm{(compact model of $I(\lambda)$, $K$-picture)}}
\enspace
The restriction 
\index{sbon}{iotaKast@$\iota_K^*$|textbf}
$\iota_K^*:C_{-\lambda}^{\infty}(\Xi) \to C^\infty(S^n),
 h \mapsto h|_{S^n}$
 induces  for  $\lambda \in \mathbb C$
 an isomorphism of $G$-modules between $C_{-\lambda}^{\infty}(\Xi)$ 
 and a representation 
$\pi_{\lambda,K}$ on $C^\infty(S^n))$  .
\newline
{\rm 2)}\enspace
{\rm{(noncompact model of $I(\lambda)$, $N$-picture)}}\enspace
The restriction 
\index{sbon}{iotaNast@$\iota_N^*$|textbf}
$\iota^*_N: C_{-\lambda}^{\infty}(\Xi) \to C^\infty(\mathbb R^n),
 h \mapsto h|_{\mathbb R^n}$
 induces  for $ \lambda \in \mathbb C$ 
 an isomorphism of $G$-modules between $C_{-\lambda}^{\infty}(\Xi)$ 
 and a representation 
$\pi_{\lambda}$ on 
$ \iota_N^*(C_{-\lambda}^\infty(\Xi)))$.
\end{definition}

\medskip
In order to connect the two models directly,
 we define a linear map
 for each $\lambda \in {\mathbb{C}}$:
\index{sbon}{iotalmdast@$\iota_{\lambda}^*$|textbf}
\[
\iota_\lambda^* : 
                   C^\infty (S^n) 
\rightarrow 
         C^\infty (\mathbb{R}^n),
\  f \mapsto F
\]
 by
\begin{equation}
\label{eqn:KtoN}
F(r \omega) 
:= 
( 1+r^2)^{-\lambda}
 f  \Bigl(\frac{1-r^2}{1+r^2} , 
 \frac{2r}{1+r^2} \omega
\Bigr).
\end{equation}
Then
 the inverse of $\iota_{\lambda}^{\ast}$
 is given by
\[
(\iota_\lambda^*)^{-1} F (u_0, u)
= \Bigl| \frac{1+u_0}{2} \Bigr|^{-\lambda}
   F \Bigl( \frac{u}{1+u_0}, \frac{1}{1+u_0} \Bigr).
\]
We note that the parity condition 
$f (-\eta) = \pm f (\eta)$
($\eta \in S^n$) holds
if and only if
\begin{equation}
\label{eqn:Rparity}
F (-\frac{\omega}{r} ) = \pm r^{2\lambda} F(r \omega).
\end{equation}
Since $\iota_K^*$ is bijective
and
 $\iota_N^*$ is injective, 
 we have the commutative diagram 

\begin{alignat}{4}
&&&\hphantom{Mo}  C_{-\lambda}^{\infty}(\Xi) &&
\nonumber
\\
&&&\iota_K^*\swarrow &\searrow \iota_N^*&
\label{eq:2.9.4}
\\
&&\hphantom{M}C^\infty(S^n)
&\ \quad\quad \ \overset{\iota_\lambda^*}{\longrightarrow}
&& \iota_{\lambda}^{\ast}(C^\infty(S^n))
\subset C^\infty({\mathbb{R}}^n)
\nonumber
\end{alignat}
of representations of $G$.

\bigskip

We shall use \eqref{eq:2.9.4}, 
 in particular,
 for the description
 of the regular symmetry breaking operators
 $\A_{\lambda,\nu}$
 in \eqref{eqn:iNk} and \eqref{eqn:iKk}, 
 and the singular symmetry breaking operators 
 $\B_{\lambda,\nu}$
 in \eqref{eqn:iNkB} and \eqref{eqn:iKkB}.  

\medskip
We define a natural bilinear form
$
\langle \ , \ \rangle :
C_{-\lambda}^{\infty}(\Xi) 
\times 
C_{\lambda-n}^{\infty}(\Xi)
 \to \mathbb C
$
 by
\begin{align}
  \langle h_1, h_2 \rangle 
  &:=\int_{S^n} h_1(\iota_K(b)) h_2(\iota_K(b)) \ d b
\label{eqn:Gpair}
\\
  &=  \int_{\mathbb R^n} h_1(\iota_N(z)) h_2(\iota_N(z)) \ d z.
\notag
\end{align}
Here, $d b$ is the Riemannian measure on $S^{n}$.
The bilinear form is $G$-invariant,
 namely,
\begin{equation}
\label{eqn:Ginvpair}
\langle h_1(g^{-1}\cdot) , h_2(g^{-1}\cdot) \rangle
=
\langle h_1 , h_2 \rangle
\quad
\text{for }
g \in G, 
\end{equation}
and extends
 to the space of distributions:
\begin{equation}
\label{eqn:IDC}
I(\lambda) \times {\mathcal{D}}_{\lambda-n}'(\Xi)
\to 
{\mathbb{C}}.  
\end{equation}

For later purpose,
 we write down explicit formulae for the action of elements in the Lie algebras \index{sbon}{nplus@${\mathfrak {n}}_+$}
${\mathfrak{n}}_+$ and ${\mathfrak{n}}_-$ in the noncompact model.

\begin{lemma}
\label{lem:4.5}
Let $N_j^+$ and $N_j^-$
 be the basis elements 
 for ${\mathfrak {n}}_+$ and 
 ${\mathfrak {n}}_-$
 defined in \eqref{eqn:ngen} and \eqref{eqn:nngen}, 
 respectively. 
For $(x_1, \cdots, x_n) \in {\mathbb{R}}^n$
\[d\pi_\lambda(N_j^-) = \frac{\partial}{\partial x_j}\]
\[d\pi_\lambda(N_j^+) = -2\lambda x_j -2x_j \sum_{i= i} ^n x_i \frac{\partial}{\partial x_i }
+\sum_{i=1}^{n} x_i^2 \frac{\partial}{\partial x_j}\
\]
for $1 \leq j \leq n$.
\end{lemma}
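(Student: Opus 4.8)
The plan is to read off both vector fields directly from the homogeneous model $I(\lambda)\simeq(l_\Xi,C_{-\lambda}^\infty(\Xi))$ of \eqref{eq:2.9.3}, transported to the noncompact ($N$-)picture via the embedding $\iota_N\colon\mathbb R^n\hookrightarrow\Xi$, ${}^t(x_1,\dots,x_n)\mapsto n_-(x)p_+$ of \eqref{eqn:NXi}. Given $h\in C_{-\lambda}^\infty(\Xi)$, set $F:=\iota_N^\ast h=h\circ\iota_N$; then for $g\in G$ one has $(\pi_\lambda(g)F)(x)=(l(g)h)(\iota_N(x))=h(g^{-1}\iota_N(x))$, so the task reduces to differentiating $t\mapsto h(\exp(-tX)\,\iota_N(x))$ at $t=0$ for $X=N_j^\pm$ and reading off the resulting first-order differential operator on $\mathbb R^n$.

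For $X=N_j^-$ I would use that $\mathfrak n_-$ is abelian: since $\exp(tN_j^-)=n_-(te_j)$ and $n_-(te_j)n_-(x)=n_-(x+te_j)$, the curve $\exp(-tN_j^-)\iota_N(x)=n_-(x-te_j)p_+=\iota_N(x-te_j)$ stays on the image $\iota_N(\mathbb R^n)$, so no renormalization is needed and $(\pi_\lambda(\exp tN_j^-)F)(x)=F(x-te_j)$. Differentiating at $t=0$ gives that $d\pi_\lambda(N_j^-)$ is the constant-coefficient operator $\partial/\partial x_j$, the sign being dictated by the conventions fixed in Section \ref{subsec:KNpicture}.

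For $X=N_j^+$ I would invoke the explicit $N_+$-action on the cone recorded in \eqref{eqn:nbxi}. Writing $\iota_N(x)=(1-|x|^2,\,2x,\,1+|x|^2)$ with $|x|^2=\sum_{i=1}^n x_i^2$, and applying \eqref{eqn:nbxi} with $b=-te_j$, a short computation (which amounts to checking the single scalar identity $|x'|^2=|x|^2/(1+2tx_j+t^2|x|^2)$) gives
\[
\exp(-tN_j^+)\,\iota_N(x)=\big(1+2tx_j+t^2|x|^2\big)\cdot\iota_N\!\Big(\tfrac{x+t|x|^2e_j}{1+2tx_j+t^2|x|^2}\Big).
\]
Since $h\in C_{-\lambda}^\infty(\Xi)$ is homogeneous of degree $-\lambda$, this yields
\[
(\pi_\lambda(\exp tN_j^+)F)(x)=\big(1+2tx_j+t^2|x|^2\big)^{-\lambda}\,F\!\Big(\tfrac{x+t|x|^2e_j}{1+2tx_j+t^2|x|^2}\Big),
\]
and differentiating at $t=0$ splits into two contributions: the factor $\big(1+2tx_j+\cdots\big)^{-\lambda}$ produces the scalar term $-2\lambda x_j$, and the Möbius-type argument produces, by the chain rule, the vector field $\sum_i\big(|x|^2\delta_{ij}-2x_jx_i\big)\,\partial/\partial x_i=\big(\sum_i x_i^2\big)\,\partial/\partial x_j-2x_j\sum_i x_i\,\partial/\partial x_i$. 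Summing the two gives the asserted expression for $d\pi_\lambda(N_j^+)$.

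The one genuinely delicate point will be the renormalization in the $N_j^+$ case: $N_+$ does not preserve the affine slice $\{\xi_0+\xi_{n+1}=2\}\cap\Xi\cong\mathbb R^n$ parametrized by $\iota_N$, so after applying \eqref{eqn:nbxi} one must rescale back onto that slice and keep track of the homogeneity weight $-\lambda$, and it is precisely this weight that produces the term $-2\lambda x_j$. Everything else is a routine first-order Taylor expansion. An alternative, more conceptual route is to note that $G$ acts conformally on $S^n\supset\mathbb R^n$ with $N_-$ and $N_+$ generating translations and special conformal transformations respectively, and that $d\pi_\lambda$ is the induced action on $\lambda$-densities, the scalar term being the corresponding weight correction; but the direct computation via \eqref{eqn:nbxi} is shorter here since the needed matrix formulas are already in place.
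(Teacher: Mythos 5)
Your proposal is correct and takes essentially the same route as the paper's proof: the paper likewise reads off from \eqref{eqn:nbxi} that $n_-(b)\iota_N(x)=\iota_N(x+b)$ and $n(b)\,\iota_N(x)=c(b)\,\iota_N\bigl((x-|x|^2b)/c(b)\bigr)$ with $c(b)=1-2(b,x)+Q(b)|x|^2$ (your identity $|x'|^2=|x|^2/c$ is exactly what verifies this), then uses homogeneity of degree $-\lambda$ and differentiates at $t=0$. The only point to tidy is the sign bookkeeping in the $N_j^-$ step, where your displayed $F(x-te_j)$ would literally differentiate to $-\partial/\partial x_j$ and you defer to the convention—precisely the same looseness as in the paper's own one-line conclusion, so it does not affect the substance of the argument.
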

\begin{proof}
In light of the formulae \eqref{eqn:nbxi}
 and \eqref{eqn:NXi}, 
 the action of the unipotent groups $N_{\pm}$ on $\Xi$
 is given as follows:
\[
   n(b) \iota_N(x) = c(b) \iota_N(\frac{x-|x|^2b }{c(b)})
\]
\index{sbon}{nminusb@$n_-(b)$}
\[n_-(b) \iota_N(x) = \iota_N(x+b),\]
where 
$
   c(b):= 1-2(b,x) +Q(b)|x|^2$. Hence for $F \in C_{-\lambda}^\infty(\Xi)
$
 we have
\[ 
 (\pi_\lambda(n_-(b)^{-1})F)(x) = F(x-b).  
\]
Now the results follow by differentiation.
\end{proof}

\subsection{Explicit $K$-finite functions
 in the noncompact model}
\label{subsec:Kfinite}
We give explicit formulae
 for $K$-finite functions
 in  $(\pi_{\lambda}, \iota_\lambda^*(C^\infty(S^n)))$
 in a way
 that we can take a control of the $M$-action as well.

The eigenvalues
 of the Laplacian $\Delta_{S^n}$ on the standard sphere $S^n$
 are of the form 
 $-L(L+n-1)$ for some $L\in {\mathbb{N}}$, 
 and we denote
\[
{\mathcal{H}}^L(S^n)
:=
\{\phi \in C^{\infty}(S^n)
:
\Delta_{S^n}\phi = -L(L+n-1) \phi\}.  
\]
Then the orthogonal group $O(n+1)$ acts irreducibly 
 on ${\mathcal{H}}^L(S^n)$ for every $L \in {\mathbb{N}}$, 
 and the direct sum $\bigoplus_{L=0}^{\infty}{\mathcal{H}}^L(S^n)$
 is a dense subspace in $C^{\infty}(S^n)$.  
The branching law
 with respect to the restriction 
 $O(n+1) \downarrow O(n)$
\[
\mathcal{H}^L (S^n)
\simeq \bigoplus_{N=0}^L \mathcal{H}^N (S^{n-1})
\]
is explicitly constructed by using the Gegenbauer polynomial 
(see \cite[Appendix]{KO3})
\[
I_{N\to L} :
\mathcal{H}^N (S^{n-1}) \to \mathcal{H}^L (S^n),
\]
\begin{equation}
\label{eqn:Ikl}
(I_{N\to L} \phi) (\eta_0, \eta)
:= |\eta|^N\phi \Bigl(\frac{\eta}{|\eta|}\Bigr)
   \tilde{\tilde{C}}_{L-N}^{\frac{n-1}{2}+N} (\eta_0).
\end{equation}
Here 
\index{sbon}{Ctt@$\tilde{\tilde{C}}_N^\mu (t)$}
$\Tilde{\Tilde{C}}_{N}^{\mu}(t)$ is the renormalized
 Gegenbauer polynomial 
 (see \eqref{eqn:nGeg} for the definition).  
Then
\index{sbon}{iotalmdast@$\iota_{\lambda}^*$}
\[
\iota_\lambda^* (I_{N\to L} \phi)
= (1 + r^2)^{-\lambda} \Bigl( \frac{2r}{1+r^2}\Bigr)^N
   \phi (\omega) \tilde{\tilde{C}}_{L-N}^{\frac{n-1}{2}+N} \Bigl(\frac{1-r^2}{1+r^2}\Bigr).
\]
In particular, 
$(1+r^2)^{-\lambda}$ is a spherical vector 
 in the noncompact model.  
We note that\/
$I_{N \to L}$ is $(1 \times O(n))$-equivariant 
 but  not $K'$-equivariant.

\medskip

For $\psi \in {\mathcal{H}}^N(S^{n-1})$
 and $h \in {\mathbb{C}}[s]$, 
 we set 
\begin{equation}\label{eqn:F12}
\index{sbon}{Fl@$F_{\lambda}[\psi,h]$|textbf}
  F_{\lambda}[\psi,h](r\omega)
  :=
  (1+r^2)^{-\lambda}
  (\frac {2r}{1+r^2})^N
  \psi(\omega)
  h(\frac{1-r^2}{1+r^2}).  
\end{equation}
The following proposition describes
 all $K$-finite functions
 in the noncompact model of $I(\lambda)$.

\begin{proposition}\label{prop:IKfinite}
\[
\iota_K^{\ast}(C^{\infty}(S^n)_K)
=
{\mathbb{C}}\operatorname{-span}
\{F_{\lambda}[\psi,h]
:N \in {\mathbb{N}}, 
 \psi \in {\mathcal{H}}^N(S^{n-1}), 
 h \in {\mathbb{C}}[s]
\}.  
\]
\end{proposition}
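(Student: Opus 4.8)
The plan is to reduce the statement to the known branching law for the restriction $O(n+1)\downarrow O(n)$ and the density of $K$-finite vectors, then unwind the isomorphism $\iota_K^*$ together with the formula \eqref{eqn:KtoN} for $\iota_\lambda^*$. First, recall that by definition $C^\infty(S^n)_K$ is the algebraic direct sum $\bigoplus_{L=0}^\infty \mathcal{H}^L(S^n)$, and that $O(n+1)$ acts irreducibly on each $\mathcal{H}^L(S^n)$. The branching law $\mathcal{H}^L(S^n)\simeq\bigoplus_{N=0}^L\mathcal{H}^N(S^{n-1})$, realized concretely through the maps $I_{N\to L}$ of \eqref{eqn:Ikl}, shows that the functions $I_{N\to L}\phi$ with $0\le N\le L$ and $\phi\in\mathcal{H}^N(S^{n-1})$ span $\mathcal{H}^L(S^n)$; hence the collection $\{I_{N\to L}\phi : N\le L,\ \phi\in\mathcal{H}^N(S^{n-1})\}$ spans $C^\infty(S^n)_K$ over $L$ as well.

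Next, I would apply $\iota_\lambda^*$ to these spanning functions. The computation recorded just after \eqref{eqn:Ikl} gives
\[
\iota_\lambda^* (I_{N\to L} \phi)
= (1 + r^2)^{-\lambda} \Bigl( \frac{2r}{1+r^2}\Bigr)^N
   \phi (\omega)\, \tilde{\tilde{C}}_{L-N}^{\frac{n-1}{2}+N} \Bigl(\frac{1-r^2}{1+r^2}\Bigr),
\]
which by the definition \eqref{eqn:F12} is exactly $F_\lambda[\phi, \tilde{\tilde{C}}_{L-N}^{\frac{n-1}{2}+N}]$ with $\phi\in\mathcal{H}^N(S^{n-1})$ and the polynomial argument $h(s)=\tilde{\tilde{C}}_{L-N}^{\frac{n-1}{2}+N}(s)\in\mathbb{C}[s]$. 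Since $\iota_K^*$ and $\iota_\lambda^*$ fit into the commutative diagram \eqref{eq:2.9.4} (with $\iota_K^*$ bijective), this shows $\iota_K^*(C^\infty(S^n)_K)$ is contained in the $\mathbb{C}$-span of the $F_\lambda[\psi,h]$, proving one inclusion.

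For the reverse inclusion, I would argue that for fixed $N$ the renormalized Gegenbauer polynomials $\{\tilde{\tilde{C}}_m^{\frac{n-1}{2}+N}(s) : m\in\mathbb{N}\}$ form a basis of $\mathbb{C}[s]$ (they have degree $m$, up to the renormalization, which is the content of \eqref{eqn:nGeg}); consequently every $F_\lambda[\psi,h]$ with $\psi\in\mathcal{H}^N(S^{n-1})$ is a finite linear combination of the functions $\iota_\lambda^*(I_{N\to L}\psi)$, $L\ge N$, hence lies in $\iota_\lambda^*(C^\infty(S^n)_K)$, i.e.\ in $\iota_K^*(C^\infty(S^n)_K)$ after transporting through the diagram. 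The main obstacle — though a mild one — is handling the renormalization of the Gegenbauer polynomials carefully: one must check that the map $m\mapsto\tilde{\tilde{C}}_m^{\mu}$ is still a bijection onto a graded basis of $\mathbb{C}[s]$ after normalizing (so that no degree is killed), and that the bookkeeping of which $N$ occur for a given $L$ matches the triangularity $N\le L$ cleanly; once that is in place, both inclusions follow formally from the branching law and the explicit formula for $\iota_\lambda^*$.
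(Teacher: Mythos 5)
Your proposal is correct and follows essentially the same route as the paper: decompose $C^{\infty}(S^n)_K$ via the branching law through the maps $I_{N\to L}$, identify $\iota_{\lambda}^{\ast}(I_{N\to L}\phi)$ with $F_{\lambda}[\phi,\tilde{\tilde{C}}_{L-N}^{\frac{n-1}{2}+N}]$, and use that for fixed $N$ the polynomials $\tilde{\tilde{C}}_{L-N}^{\frac{n-1}{2}+N}$, $L\ge N$, span ${\mathbb{C}}[s]$ because their degrees are $L-N$ and their leading coefficients do not vanish (the parameter $\frac{n-1}{2}+N$ is not a negative integer). The caveat you flag about the renormalization is exactly the point the paper checks, so there is no gap.
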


\begin{remark}
As abstract groups, 
 $K'$ and $M$ are isomorphic to the group
$O(n) \times O(1)$.
Proposition \ref{prop:IKfinite}
 respects the restrictions
 of the chain of subgroups  
$G \supset K \supset M$,
but not the chains $G \supset K \supset K'$.
\end{remark}
\begin{proof}[Proof of Proposition \ref{prop:IKfinite}]
Since $C^{\infty}(S^n)_K \simeq \bigoplus_{L=0}^{\infty}
{\mathcal{H}}^L(S^n)$, 
 we have 
\begin{align*}
\iota_{\lambda}^{\ast}(C^{\infty}(S^n)_K)
=& \bigoplus_{L=0}^{\infty}\iota_{\lambda}^{\ast}({\mathcal{H}}^L(S^n))
\\
=& \bigoplus_{L=0}^{\infty} \bigoplus_{N=0}^{L}
   \iota_{\lambda}^{\ast}(I_{N \to L}({\mathcal{H}}^N(S^{n-1})))
\\
=& \bigoplus_{N=0}^{\infty} \bigoplus_{L=N}^{\infty}
   \iota_{\lambda}^{\ast}(I_{N \to L}({\mathcal{H}}^N(S^{n-1}))).  
\end{align*}
The formula \eqref{eqn:Ikl} relates the values
 at the north/south poles
 of the spherical harmonics
{} to the initial data of a differential equation 
 on the equator.  
Since the Gegenbauer polynomials
 $ \tilde{\tilde C}_{m}^{\nu}(s)$ are polynomials
 of degree $m$
 and since their highest order term
 does not vanish 
 if $\nu \notin -{\mathbb{N}}_+$, 
we have  
\[
  {\mathbb{C}}\operatorname{-span}
  \{\tilde{\tilde{C}}_{L-N}^{\nu}(s):L \ge N\}
  =
  {\mathbb{C}}[s]
\]
 if $\nu \notin -{\mathbb{N}}_+$.  
This completes the proof.  
\end{proof}

\bigskip

\subsection{Normalized Knapp--Stein intertwining operator}
\label{subsec:Knapp}
We now review the Knapp--Stein intertwining operators
 in the noncompact picture
 for the group $G'=O(n,1)$.  

\bigskip
The Riesz distribution
\[
r^\nu := (x_1^2 +\dots+ x_m^2)^{\frac{\nu}{2}}
\]
is a locally integrable function on ${\mathbb{R}}^m$
 if $\operatorname{Re}\nu> -m$, 
 and satisfies the following identity:

\begin{equation}
\label{eqn:bfunction}
     \Delta_{\mathbb{R}^m}(r^{\nu+2})=(\nu+2)(\nu+m)r^{\nu}, 
\end{equation}
{}from which we see 
 that $r^{\nu}$, 
 initially holomorphic in $\operatorname{Re} \nu > -m$, 
 extends to a tempered distribution 
with meromorphic parameter $\nu$
 in $\operatorname{Re}\nu >-m-2$.  
By an iterated argument,
 we see
 that $r^{\nu}$ extends meromorphically
 in the entire complex plane.  
The poles are located 
 at $\nu \in \{-m, -m-2, -m-4, \cdots\}$, 
 and all the poles are simple.  
Therefore, 
if we normalize it by 
\begin{equation}\label{eqn:rtilde}
\index{sbon}{rt@$\widetilde{r}^\nu$|textbf}
\widetilde{r}^\nu
  := \frac{1}{\Gamma(\frac{\nu+m}{2})} r^\nu, 
\end{equation}
then $\widetilde{r}^\nu$ is a tempered distribution on
$\mathbb{R}^m$ with holomorphic parameter
$\nu\in\mathbb{C}$.  
The residue of $r^\nu$ at
$\nu \in -m-2\mathbb{N}$ is given by 
\begin{align}
\label{eqn:resr}
  \widetilde r^{\nu}|_{\nu = - m -2k}
&  =
  \frac{(-1)^k \operatorname{vol}(S^{m-1}) \Delta_{\mathbb{R}^m}^k \delta (x)}
       {2^{k+1} m \cdots (m+2k-2)}
\\
& = \frac{(-1)^k \pi^{\frac{m}{2}}}
             {2^{2k} \Gamma(\frac{m}{2}+k)}
     \Delta_{\mathbb{R}^m}^k \delta(x), 
\nonumber
\end{align}
see \cite[Ch. I, \S 3.9]{GS}.  
Here,
 $\delta(x)$ is the Dirac delta function
 in ${\mathbb{R}}^m$
 and 
 $\operatorname{vol}(S^{m-1})$
 is the volume
 of the standard sphere $S^{m-1}$ in ${\mathbb{R}}^m$, 
 which is equal to $\frac{2\pi^{\frac{m}{2}}}{\Gamma(\frac{m}{2})}$.  
In particular, 
\[
\widetilde r^{\nu}|_{\nu=-m}
=\frac{\pi^{\frac m 2}}{\Gamma(\frac m 2)}\delta(x).  
\]

We define the Fourier transform 
 ${\mathcal{F}}_{{\mathbb{R}}^m}$
 on the space ${\mathcal{S}}'({\mathbb{R}}^m)$
 of tempered distributions
 by 
\[
   {\mathcal{F}}_{{\mathbb{R}}^m}:
   {\mathcal{S}}'({\mathbb{R}}^m)\to {\mathcal{S}}'({\mathbb{R}}^m), 
   \quad
   f(x) \mapsto ({\mathcal{F}}_{{\mathbb{R}}^m}f)(\xi)
   =\int_{-\infty}^{\infty}f(x) e^{-\langle x, \xi \rangle} dx.  
\]
\begin{lemma} Let $\rho:=(\xi_1^2+\cdots+\xi_m^2)^{\frac 12}$.  
With the normalization \eqref{eqn:rtilde},
we have
\begin{equation}\label{eqn:Fxpower}
  {\mathcal{F}}_{\mathbb{R}^m}
  (\widetilde {r}^{2(\nu -m)})
  = 2^{2 \nu-m}\pi^{\frac m 2}\widetilde{\rho}^{m-2\nu}, 
\end{equation}
\begin{equation}
\label{eqn:rstarr}
\widetilde r^{2(\nu-m)} \ast \widetilde r^{-2\nu}
=
\frac{\pi^m}{\Gamma(m-\nu)\Gamma(\nu)} \delta(x).  
\end{equation}
\end{lemma}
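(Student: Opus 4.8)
The plan is to reduce both identities to the classical Fourier transform of a power of the radial variable on $\mathbb{R}^m$ and then keep track of the $\Gamma$-normalizations; the whole point of the renormalization \eqref{eqn:rtilde} is that the constants produced along the way turn out to be entire in the exponent, so no cancellation of poles has to be checked by hand.

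For \eqref{eqn:Fxpower} I would first record the unnormalized formula
\[
{\mathcal{F}}_{\mathbb{R}^m}(r^{-\alpha})
= \pi^{\frac m2}\,2^{m-\alpha}\,
  \frac{\Gamma(\tfrac{m-\alpha}{2})}{\Gamma(\tfrac{\alpha}{2})}\,
  \rho^{\alpha-m},
\qquad 0<\operatorname{Re}\alpha<m,
\]
which is fastest proved by subordination: write $r^{-\alpha}=\tfrac{1}{\Gamma(\alpha/2)}\int_0^\infty t^{\alpha/2-1}e^{-tr^2}\,dt$, apply ${\mathcal{F}}_{\mathbb{R}^m}(e^{-t|x|^2})(\xi)=(\pi/t)^{m/2}e^{-|\xi|^2/(4t)}$ under the integral, and substitute $t\mapsto 1/(4t)$ to recover a Gamma integral in $\rho^2$. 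Setting $\alpha:=2(m-\nu)$, so that $r^{-\alpha}=r^{2(\nu-m)}$, $\rho^{\alpha-m}=\rho^{m-2\nu}$, $\Gamma(\tfrac{\alpha}{2})=\Gamma(m-\nu)$ and $\Gamma(\tfrac{m-\alpha}{2})=\Gamma(\nu-\tfrac m2)$, and dividing by $\Gamma(\nu-\tfrac m2)$, the right-hand side becomes $\pi^{m/2}2^{2\nu-m}\tfrac{1}{\Gamma(m-\nu)}\rho^{m-2\nu}=\pi^{m/2}2^{2\nu-m}\widetilde\rho^{\,m-2\nu}$ while the left-hand side becomes $\widetilde r^{\,2(\nu-m)}$ by \eqref{eqn:rtilde}. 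This proves \eqref{eqn:Fxpower} for $\operatorname{Re}\nu$ in a vertical strip; since $\nu\mapsto\widetilde r^{\,\nu}$ and $\nu\mapsto\widetilde\rho^{\,\nu}$ are holomorphic families of tempered distributions (which is what \eqref{eqn:resr} together with \eqref{eqn:rtilde} gives) and ${\mathcal{F}}_{\mathbb{R}^m}$ is continuous, both sides are holomorphic in $\nu\in\mathbb{C}$, so the identity holds for all $\nu$.

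For \eqref{eqn:rstarr} I would apply ${\mathcal{F}}_{\mathbb{R}^m}$ and use that it turns convolution into product. By \eqref{eqn:Fxpower}, ${\mathcal{F}}_{\mathbb{R}^m}(\widetilde r^{\,2(\nu-m)})=\pi^{m/2}2^{2\nu-m}\widetilde\rho^{\,m-2\nu}$, and applying \eqref{eqn:Fxpower} with $\nu$ replaced by $m-\nu$ gives ${\mathcal{F}}_{\mathbb{R}^m}(\widetilde r^{\,-2\nu})=\pi^{m/2}2^{m-2\nu}\widetilde\rho^{\,2\nu-m}$. Multiplying, the powers of $2$ cancel and $\rho^{m-2\nu}\rho^{2\nu-m}=1$, so the product equals $\pi^m\,\tfrac{1}{\Gamma(m-\nu)\Gamma(\nu)}$, a constant; inverting the Fourier transform (a constant is the transform of that constant times $\delta$) yields \eqref{eqn:rstarr}. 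Equivalently, \eqref{eqn:rstarr} is the semigroup identity for Riesz potentials, $I_{a}\ast I_{-a}=\delta$, in the present normalization.

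The argument is routine; the only points needing care are (i) that the direct computation of ${\mathcal{F}}_{\mathbb{R}^m}(r^{-\alpha})$ is valid only on a strip, so one must invoke analytic continuation together with the fact that the normalized families are entire — which is precisely why the lemma is phrased with $\widetilde r$, $\widetilde\rho$ rather than $r$, $\rho$; and (ii) that the convolution in \eqref{eqn:rstarr} must be interpreted through the (everywhere-defined) product of Fourier transforms, since the convolution of two tempered distributions need not a priori exist — again the holomorphic continuation in $\nu$ settles this, the right-hand side $\tfrac{\pi^m}{\Gamma(m-\nu)\Gamma(\nu)}\delta$ being an entire family.
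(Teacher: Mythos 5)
Your proof is correct and follows essentially the same route as the paper: the first identity is the classical formula ${\mathcal{F}}_{\mathbb{R}^m}(r^{\nu})=2^{\nu+m}\pi^{\frac m2}\tfrac{\Gamma(\frac{\nu+m}{2})}{\Gamma(-\frac{\nu}{2})}\rho^{-\nu-m}$ plus the normalization \eqref{eqn:rtilde} and analytic continuation, and the second follows by taking Fourier transforms, multiplying the two normalized transforms so the powers of $2$ and of $\rho$ cancel, and inverting. The only difference is cosmetic: you re-derive the classical transform via the subordination (heat-kernel) trick, whereas the paper simply cites Gelfand--Shilov.
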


\begin{proof}
The first formula follows from
\begin{equation}
\label{eqn:Fr}
  {\mathcal{F}}_{\mathbb{R}^m}(r^{\nu})(\rho)
  =
  2 ^{\nu+ m} \pi^{\frac m 2}
  \frac{\Gamma(\frac{\nu+m}{2})}{\Gamma(-\frac{\nu}{2})}
  \rho^{-\nu-m}
\end{equation}
and its analytic continuation
 \cite[Ch.2, \S 3.3]{GS}.

The second formula is the inverse Fourier transform of
the following identity:
\begin{align*}
  {\mathcal{F}}_{{\mathbb{R}}^m}
(\widetilde r^{2(\nu-m)} \ast \widetilde r^{-2\nu})
=&{\mathcal{F}}(\widetilde r^{2(\nu-m)})
  {\mathcal{F}}(\widetilde r^{-2\nu})
\\
=&(2^{2\nu-m}\pi^{\frac m 2}\widetilde \rho^{m-2\nu})
(2^{m-2\nu}\pi^{\frac m 2}\widetilde \rho^{2\nu-m})
\\
=& \frac{\pi^m}{\Gamma(m-\nu)\Gamma(\nu)} {\bf{1}}.  
\end{align*}
\end{proof}

We review  now the Knapp--Stein intertwining operators for
$G' = O(n,1)$ in the noncompact model.
We set $n=m+1$ as before.
The finite-dimensional representation
$F(k)$ ($k=0,1,2,\cdots$)
 of $G'$ occurs as the unique submodule 
 of $J(-k)$
 and as the unique quotient of $J(k+n-1)$.  

In the noncompact model of $J(\nu)$, 
 the normalized Knapp--Stein intertwining operator
\begin{equation*}
\index{sbon}{ttt4@$\T{\nu}{m-\nu}$|textbf}
  \T{\nu}{m-\nu}
  : J(\nu) \to J(-\nu+m),
  \end{equation*}
 is the convolution operator
 with $\widetilde{|x|}^{2(\nu-m)}$, 
{\it{i.e.}}, 
\begin{equation}\label{eqn:KSdef}
   (\T{\nu}{m-\nu}f)(y)
  = \frac{1}{\Gamma(\nu-\frac m 2)}
     \int_{\mathbb{R}^m} |x-y|^{2(\nu-m)}f(x) dx.  
\end{equation}

 By \eqref{eqn:rstarr}, 
we recover a well-known formula:
\begin{equation}\label{eqn:TTKS}
  \T{\nu}{m-\nu}
  \circ 
   \  \T{m-\nu}{\nu}
  = \ 
  \frac{\pi^m}{\Gamma(m-\nu)\Gamma(\nu)}
  \operatorname{id}.
\end{equation}

The following formula is more informative,
and also it gives an alternative proof of \eqref{eqn:TTKS}.
\begin{proposition}\label{prop:T1}
The Knapp--Stein intertwining operator
\[  \T{\nu}{m-\nu}: J(\nu) \to J(-\nu+m)\]
acts on spherical vectors as follows:
\begin{equation*}
\index{sbon}{1nu@${\mathbf{1}}_{\nu}$}
\T{\nu}{m-\nu} ({\mathbf{1}}_\nu)
=
\frac{\pi^{\frac{m}{2}}}{\Gamma(\nu)}
\mathbf{1}_{-\nu+m}.
\end{equation*}
\end{proposition}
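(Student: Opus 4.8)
The plan is to carry out the computation in the noncompact model of $J(\nu)$ on $C^\infty(\mathbb{R}^m)$ with $m=n-1$, where, as recorded in Section~\ref{subsec:Kfinite}, the normalized spherical vector $\mathbf{1}_\nu$ is represented by the function $x\mapsto(1+|x|^2)^{-\nu}$, and similarly $\mathbf{1}_{-\nu+m}$ by $y\mapsto(1+|y|^2)^{\nu-m}$. First I would use that $\T{\nu}{m-\nu}$ is a $G'$-homomorphism, hence in particular $K'$-equivariant: since $\mathbf{1}_\nu$ is $K'$-fixed, $\T{\nu}{m-\nu}(\mathbf{1}_\nu)$ is a $K'$-fixed vector of $J(-\nu+m)$. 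The $K'$-fixed subspace of a spherical principal series representation is one-dimensional, spanned by its normalized spherical vector, so $\T{\nu}{m-\nu}(\mathbf{1}_\nu)=c(\nu)\,\mathbf{1}_{-\nu+m}$ for a scalar $c(\nu)$, and it suffices to compute $c(\nu)$. Evaluating both sides at $y=0$, where $\mathbf{1}_{-\nu+m}$ takes the value $1$, reduces the problem to the single integral $c(\nu)=(\T{\nu}{m-\nu}\mathbf{1}_\nu)(0)$.

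Next I would compute this value directly from the integral formula \eqref{eqn:KSdef}. Passing to polar coordinates and then substituting $t=r^2$ turns
\[
 c(\nu)=\frac{1}{\Gamma(\nu-\frac m2)}\int_{\mathbb{R}^m}|x|^{2(\nu-m)}(1+|x|^2)^{-\nu}\,dx
 =\frac{\operatorname{vol}(S^{m-1})}{\Gamma(\nu-\frac m2)}\int_0^\infty r^{2\nu-m-1}(1+r^2)^{-\nu}\,dr
\]
into a Beta integral,
\[
 \int_0^\infty r^{2\nu-m-1}(1+r^2)^{-\nu}\,dr=\tfrac12\,B\Bigl(\nu-\tfrac m2,\tfrac m2\Bigr)
 =\frac{\Gamma(\nu-\frac m2)\,\Gamma(\frac m2)}{2\,\Gamma(\nu)};
\]
inserting $\operatorname{vol}(S^{m-1})=2\pi^{m/2}/\Gamma(\frac m2)$ then collapses everything to $c(\nu)=\pi^{m/2}/\Gamma(\nu)$. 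This computation is valid for $\operatorname{Re}\nu>\frac m2$, where the integral converges absolutely. Since the normalized operator $\T{\nu}{m-\nu}$ depends holomorphically on $\nu$ and both $\mathbf{1}_{-\nu+m}$ and $\pi^{m/2}/\Gamma(\nu)$ are entire in $\nu$, the identity $\T{\nu}{m-\nu}(\mathbf{1}_\nu)=\frac{\pi^{m/2}}{\Gamma(\nu)}\mathbf{1}_{-\nu+m}$ extends to all $\nu\in\mathbb{C}$ by analytic continuation.

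As remarked after the statement, this yields an alternative proof of \eqref{eqn:TTKS}: applying $\T{m-\nu}{\nu}$ and using the proposition twice gives $\T{m-\nu}{\nu}\circ\T{\nu}{m-\nu}(\mathbf{1}_\nu)=\frac{\pi^m}{\Gamma(\nu)\Gamma(m-\nu)}\mathbf{1}_\nu$, and since $\mathbf{1}_\nu$ generates $J(\nu)$ as a $(\mathfrak{g}',K')$-module for generic $\nu$, the operator identity \eqref{eqn:TTKS} follows first for generic $\nu$ and then everywhere by continuity. The argument above is essentially routine; the only point requiring a little attention is ensuring the convolution defining $\T{\nu}{m-\nu}(\mathbf{1}_\nu)$ makes sense before continuation, which is why I would start in the strip $\operatorname{Re}\nu>\frac m2$. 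One could instead avoid the intertwining argument altogether and evaluate the full integral $\int_{\mathbb{R}^m}|x-y|^{2(\nu-m)}(1+|x|^2)^{-\nu}\,dx$ for all $y$ (a standard but more laborious computation, e.g.\ via a Schwinger parameter), which directly produces $(1+|y|^2)^{\nu-m}$ up to the same constant; I would mention this alternative but not grind through it.
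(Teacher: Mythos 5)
Your proof is correct, and its first reduction is exactly the paper's: both use that the $K'$-fixed vectors of a spherical principal series are unique up to scalar, so $\T{\nu}{m-\nu}(\mathbf{1}_\nu)=c(\nu)\,\mathbf{1}_{m-\nu}$ and only the scalar must be computed. Where you diverge is in how $c(\nu)$ is found. You evaluate at the origin of the noncompact picture, reduce to the single Beta integral $\int_0^\infty r^{2\nu-m-1}(1+r^2)^{-\nu}\,dr$, and then extend from the half-plane $\operatorname{Re}\nu>\frac m2$ by holomorphy of the normalized family $\T{\nu}{m-\nu}$ (which the paper sets up via the compact picture, where $\mathbf{1}_\nu$ is the constant function, so the continuation step is legitimate). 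The paper instead takes the Fourier transform of the convolution identity $|\widetilde{x}|^{2(\nu-m)}\ast(1+|x|^2)^{-\nu}=c\,(1+|x|^2)^{m-\nu}$, using \eqref{eqn:Fxpower}, the K-Bessel formula \eqref{eqn:Fone}, and the duality \eqref{eqn:Kdual} to read off $c=\pi^{m/2}/\Gamma(\nu)$. Your route is more elementary (no Bessel functions) but needs the explicit convergence strip plus analytic continuation; the paper's route leans on machinery it has already assembled and reuses elsewhere (e.g.\ in the $L^2$-model of Chapter \ref{sec:applbl}), and yields the identity as an equation of tempered distributions without isolating a half-plane first. Your concluding remark recovering \eqref{eqn:TTKS} matches the comment the paper makes after \eqref{eqn:TTKS} as well.
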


\begin{proof}
Since $K'$-fixed vectors 
 in the principal series representation
 are unique 
up to scalar,
there exists a constant $c$ depending on $\nu$ such that
\begin{equation}\label{eqn:T1c}
\T{\nu}{m-\nu} (\mathbf{1}_\nu)
=
c\mathbf{1}_{-\nu+m}.
\end{equation}

Let us find the constant $c$.
We recall from Section \ref{subsec:Kfinite}
 that the normalized spherical vector
 in the noncompact model
 of $J(\nu)$
 is given by $(1+|x|^2)^{-\nu}$.  
Therefore, 
 the identity \eqref{eqn:T1c} amounts to
\[
|\tilde{x}|^{2(\nu-m)} * (1+|x|^2)^{-\nu}
=
c (1+|x|^2)^{m-\nu}.
\]
Taking the Fourier transform,
we get
\[
2^{2\nu-m} \pi^{\frac m 2} |\tilde{\xi}|^{m-2\nu}
\cdot \frac{2\pi^{\frac m 2}}{\Gamma(\nu)}
\widetilde{K}_{\frac m 2 - \nu} (|\xi|)
=
c \frac{2\pi^{\frac m 2}}{\Gamma(m-\nu)}
\widetilde{K}_{\nu-\frac m 2} (|\xi|)
\]
by the integration formulae \eqref{eqn:Fxpower} and \eqref{eqn:Fone}. 
Here $\widetilde{K}_{\frac m 2 - \nu}$
 is a renormalized K-Bessel function, 
 see \eqref{eqn:KBessel}
 for the normalization.

By  definition
$|\tilde{\xi}|^{m-2\nu}
 = \frac 1{\Gamma(m-\nu)} |\xi|^{m-2\nu}$
and by the duality
$\widetilde{K}_{\nu-\frac m 2} (|\xi|)
 = ( \frac{|\xi|} 2 )^{m-2\nu} \widetilde{K}_{-\nu+\frac m 2} (|\xi|)$
(see \eqref{eqn:Kdual}),
we get
$c = \frac{\pi^{\frac m 2}}{\Gamma(\nu)}$.
\end{proof}

\medskip 
\begin{remark} 
\label{rem:4.8} 
By the residue formula \eqref{eqn:resr}
 of the Riesz distribution, 
 we see
 that the normalized Knapp--Stein intertwining operator
 $\T{\frac{m}{2}-j}{\frac{m}{2}+j}:
J(\frac{m}{2}-j) \to J(\frac{m}{2}+j)$ 
 reduces to a differential operator
 of order $2j$
 if $j \in {\mathbb{N}}$,
 which amounts to 
\begin{equation}\label{eqn:KSres}
\T{\frac{m}{2}-j}{\frac{m}{2}+j}
= \frac{(-1)^j \pi^{\frac{m}{2}}}
          {2^{2j} \Gamma(\frac{m}{2}+j)}
   \Delta_{\mathbb{R}^m}^j.  
\end{equation}
Combining with Proposition \ref{prop:T1}
 that for $\nu=\frac m 2 -j$
 ($j \in {\mathbb{N}}$), 
we get 
\begin{equation}
\label{eqn:D1}
  \Delta_{{\mathbb{R}}^m}^j {\bf{1}}_{\nu}
  =
  \frac{(-1)^j 2^{2j}\Gamma(\frac m 2+j)}{\Gamma(\frac m 2-j)}
  {\bf{1}}_{m-\nu}.  
\end{equation}
This formula \eqref{eqn:D1} is also derived
{} from the computation in \eqref{eqn:Lap0}.  

Conversely,
 any differential $G'$-intertwining operator
 between spherical principal series representations
 $J(\nu)$ and $J(\nu')$
 of $G'$ is 
 are of the form $\Delta^j:J(\frac m 2-j) \to J(\frac m 2+j)$
 for some $j\in {\mathbb{N}}$
 up to scalar multiple
 (see Lemma \ref{lem:Vermagg}).  
\end{remark}
 
\begin{remark}
For $\nu=m+j$
 ($j=0,1,\cdots$)
\[
   \T{m+j}{-j}:J(m+j) \to J(-j)
\]
 is given by the convolution of a polynomial of degree
 at most $2j$.  
In particular,
 the image of $\T{m+j}{-j}$
 is the finite-dimensional representation 
\index{sbon}{Fj@$F(j)$}
 $F(j)$.  
\end{remark}

\begin{remark}
Our normalization arises from analytic considerations
 and is not the same as the normalization introduced by Knapp and Stein
 in \cite{KS}.
\end{remark}
\bigskip

\section{Double coset decomposition $P' \backslash G/P$  }
\label{sec:double}

We have shown in Section \ref{subsec:KT}
 that the double coset $P' \backslash G/P$
 plays a fundamental role 
 in the analysis of symmetry breaking operators
{}from the principal series representation 
 $I(\lambda)$ of $G$
 to $J(\nu)$ of the subgroup $G'$.  
In general
 if a subgroup $H$ of a reductive Lie group $G$
 has an open orbit 
 on the real flag variety $G/P$
 then the number of $H$-orbits on $G/P$
 is finite
 (\cite[Remark 2.5 (4)]{K-O}).  
If $(G,H)$ is a symmetric pair,
then $H$ has an open orbit on $G/P$
 and the combinatorial description of
 the double coset space
 $H \backslash G/P$
 was studied in details by T. Matsuki.
For the symmetric pair 
 $(G,G')=(O(n+1,1),O(n,1))$, 
 we shall see
 that not only $G'$ but also a minimal parabolic subgroup $P'$
 of $G'$
 has an open orbit on $G/P$, 
and thus both $G' \backslash G/P$
 and $P' \backslash G/P$ are finite sets.  
In this chapter, 
 we give an explicit description
 of the double coset decomposition $G' \backslash G/P$ and  $P' \backslash G/P$.

\medskip

We recall from Section \ref{subsec:KNpicture}
 that the isotropic cone 
\index{sbon}{Xi@$\Xi$}
$\Xi\equiv \Xi({\mathbb{R}}^{n+1,1})$
 is a $G$-homogeneous space.  
We shall consider the $G'$-action
 (or the action of subgroups of $G'$)
 on $\Xi$, 
 and then transfer the orbit decomposition on $\Xi$
 to that on $G/P$ 
 by the natural projection:
\[
  \Xi \to \Xi/{\mathbb{R}}^{\times}
  \simeq S^n \simeq G/P.  
\]
We rewrite the defining equation of $\Xi$ 
 as 
\[
\xi_{0}^2+ \cdots +\xi_{n-1}^2-\xi_{n+1}^2=-\xi_{n}^2.
\]
Since $G'$ leaves the $(n+1)$-th coordinate $\xi_n$ invariant,
 the $G'$-orbit decomposition 
 of $\Xi({\mathbb{R}}^{n+1,1})$ is given 
 as 
\begin{equation}
\label{eqn:Xitwo}
\Xi({\mathbb{R}}^{n+1,1})
=
 \coprod_{\xi_{n} \in {\mathbb{R}} \setminus \{0\}}
 G' \cdot (0,\cdots, 0, \xi_{n}, |\xi_{n}|)
\amalg
 \Xi({\mathbb{R}}^{n,1}).  
\end{equation}
We set
\begin{align*}
  p_{\pm} :={}& {}^{t\!} (\pm 1, 0, \cdots, 0, 1) \in \Xi (\mathbb{R}^{n+1,1}), \\
  q_{\pm} :={}& 
            {}^{t\!}(0,\cdots, 0,\pm 1,1)\in \Xi({\mathbb{R}}^{n+1,1}).     
\end{align*}
Let $[p_{\pm}]$ and $[q_{\pm}]$
 denote the image of the points $p_\pm$ and $q_\pm$
 by the projection 
 $\Xi \to \Xi/{\mathbb{R}}^{\times} \simeq S^n \simeq G/P$.  
We begin with the following double coset decompositions
 $G' \backslash G/P$ and $G_0' \backslash G/P$:
\begin{lemma} 
\label{lem:5.1}
\
\begin{enumerate}
\item[{\rm{1)}}]  $G/P$ is a union of two disjoint $G'$-orbits.
We have
\index{sbon}{pplus@${p_+}$}
\begin{equation}
\label{eqn:GpGP}
  G/P = G'[q_+] \cup G'[p_+]
      \simeq  G' /O(n) \cup G'/P'.  
\end{equation} 
\item[{\rm{2)}}]
 Let $G_0'$, 
 $K_0'$ and $P_0'$ denote the identity components
 of $G'$, $K'$, 
 and $P'$, 
 respectively.  
(Thus $G_0' \simeq SO_0(n,1)$, 
 $K_0' \simeq SO(n)$, 
 and $P'$ is a minimal parabolic subgroup of $G'$.)
Then 
$G/P$ is a union of three disjoint orbits
 of $G_0'$.  
We have
\[
G/P =G_0'[q_+] \cup G_0'[q_-] \cup G_0'[p_+]
    \simeq G_0'/K_0' \cup G_0'/K_0'\cup G_0'/P_0'.  
\]
\end{enumerate}
\end{lemma}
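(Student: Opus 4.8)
The plan is to read off both decompositions from the $G'$-orbit decomposition \eqref{eqn:Xitwo} of the isotropic cone $\Xi$ by pushing it forward along the projection $\Xi \to \Xi/{\mathbb{R}}^{\times} \simeq S^n \simeq G/P$. The organizing remark is that the vanishing of the coordinate $\xi_n$ is preserved by rescaling, hence is a well-defined $G'$-invariant condition on $S^n$, so $S^n$ decomposes $G'$-invariantly as $\{[\xi]:\xi_n=0\}\amalg\{[\xi]:\xi_n\ne0\}$. The first stratum is the image of $\Xi({\mathbb{R}}^{n,1})$, which is exactly the great sphere $S^{n-1}\subset S^n$; since $[p_+]$ is the base point of $G/P$ (its stabilizer in $G$ being $P$), its stabilizer in $G'$ is $P\cap G'=P'$ by the compatibility \eqref{eqn:compati}, and $G'$ acts transitively on this $S^{n-1}$, so $\{[\xi]:\xi_n=0\}=G'[p_+]\simeq G'/P'$. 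For the second stratum, the $G'$-orbits $G'\cdot(0,\dots,0,\xi_n,|\xi_n|)$ of \eqref{eqn:Xitwo} (one for each $\xi_n\ne0$) all project to one and the same $G'$-orbit in $S^n$: given $[\xi]$ with $\xi_n\ne0$, rescaling reduces $\xi$ to a vector whose $n$-th coordinate is $1$ and whose remaining coordinates $(\xi_0,\dots,\xi_{n-1},\xi_{n+1})$ satisfy $\xi_0^2+\dots+\xi_{n-1}^2-\xi_{n+1}^2=-1$, and $O(n,1)$ acts transitively on that hyperboloid, so $G'$ is transitive on $\{[\xi]:\xi_n\ne0\}$. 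Moreover, since $G'$ fixes $\xi_n$, any $g\in G'$ with $g[q_+]=[q_+]$ fixes $q_+$ itself, and the stabilizer in $O(n,1)$ of the timelike unit vector $(0,\dots,0,1)$ is $O(n)$; hence $\{[\xi]:\xi_n\ne0\}=G'[q_+]\simeq G'/O(n)$. Disjointness and exhaustion are built into the stratification, which proves (1).

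For (2), I would rerun the argument with $G_0'=SO_0(n,1)$ in place of $G'=O(n,1)$, tracking the two places where disconnectedness intervenes. On the great sphere, $SO_0(n,1)$ is still transitive on $S^{n-1}$ and the stabilizer of $[p_+]$ in $G_0'$ is $P\cap G_0'=P_0'$, so $G_0'[p_+]\simeq G_0'/P_0'$. On $\{[\xi]:\xi_n\ne0\}$, however, the sign of $\xi_{n+1}/\xi_n$ is a rescaling-invariant, $G_0'$-invariant function (because $SO_0(n,1)$ preserves the time orientation on timelike vectors) separating $[q_+]$ (sign $+$) from $[q_-]$ (sign $-$), and on each of the two pieces $SO_0(n,1)$ is transitive, being transitive on each sheet of the two-sheeted hyperboloid $\{\xi_0^2+\dots+\xi_{n-1}^2-\xi_{n+1}^2=-1\}$ in ${\mathbb{R}}^{n,1}$; the stabilizer of $[q_\pm]$ in $G_0'$ is $O(n)\cap SO_0(n,1)=SO(n)=K_0'$. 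This gives $G/P=G_0'[q_+]\cup G_0'[q_-]\cup G_0'[p_+]\simeq G_0'/K_0'\cup G_0'/K_0'\cup G_0'/P_0'$ with the three pieces disjoint. It is worth recording that $[q_+]$ and $[q_-]$ fuse into the single $G'$-orbit $G'[q_+]$ precisely because the reflection $m_-$ of \eqref{eqn:m-} lies in $G'$ but not in $G_0'$ and satisfies $m_- q_+=-q_-$; this is exactly what drops the count from three orbits for $G_0'$ to two for $G'$, consistently with the appearance of $P'$ rather than $P_0'$ in part (1).

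The bookkeeping is elementary; the one delicate point is the transition from $\Xi$ to $\Xi/{\mathbb{R}}^{\times}$, where a one-parameter family of $G'$-orbits in $\Xi$ (indexed by $\xi_n\in{\mathbb{R}}-\{0\}$) collapses to a single orbit in $S^n$, while dually the forward and backward light cones of ${\mathbb{R}}^{n,1}$ — two distinct $G_0'$-orbits inside $\Xi({\mathbb{R}}^{n,1})$ — become identified in the quotient. The clean way to handle this is the normalization $\xi_n=1$ used above, which converts the orbit count on the stratum $\xi_n\ne0$ into the standard transitivity facts for $O(n,1)$ and $SO_0(n,1)$ on a hyperboloid in ${\mathbb{R}}^{n,1}$; once the orbit structure is in hand, the identification of the four stabilizers (and the observation that stabilizing the line ${\mathbb{R}}q_+$ is the same as stabilizing $q_+$) is immediate.
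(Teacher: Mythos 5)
Your proof is correct and follows essentially the same route as the paper: both read off the decomposition from the $G'$-orbit structure \eqref{eqn:Xitwo} of the cone $\Xi$, project to $\Xi/{\mathbb{R}}^{\times}\simeq G/P$, and identify the isotropy groups at $[q_{\pm}]$ and $[p_+]$ as $O(n)$ (resp.\ $SO(n)=K_0'$) and $P'$ (resp.\ $P_0'$). The only cosmetic difference is that you separate the two $G_0'$-orbits on $\{\xi_n\neq 0\}$ by the invariant $\operatorname{sign}(\xi_{n+1}/\xi_n)$ and transitivity on hyperboloid sheets, whereas the paper notes that $G'/O(n)=O(n,1)/O(n)$ has two connected components interchanged by $m_-$ with $[q_-]=[m_-q_+]$ --- the same bookkeeping in different clothing.
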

\begin{proof}
1)\enspace
The first statement is immediate from \eqref{eqn:Xitwo}.  
Indeed, 
 the isotropy subgroup of $G'$
 at $[q_{\pm}]\in \Xi/{\mathbb{R}}^{\times} \simeq G/P$
 is $O(n) \times 1$.  
(We note
 that this subgroup is of index two
 in $K'$.)
The other orbit 
 $\Xi({\mathbb{R}}^{n,1})/{\mathbb{R}}^{\times}
 \simeq S^{n-1}$
 is closed 
 and passes through $[p_+]$.  
In view of \eqref{eqn:Nfixed}, 
the isotropy subgroup 
 at $[p_+]$ is $P'$.  
Thus the first statement is proved.  
\par\noindent
2)\enspace
By \eqref{eqn:GpGP}, 
 it suffices to consider the $G_0'$-orbit decomposition 
 on $G'[q_+] \simeq G'/O(n)$
 and $G'[p_+] \simeq G'/P'$.

We begin with the open $G'$-orbit
 $G'[q_+] \simeq G'/O(n)=O(n,1)/O(n)$, 
 which has two connected components.  
The connected group $G_0'$ has two orbits
 containing $[q_+]$ and $[q_-]=[m_-q_+]$, 
respectively
 where $m_- \in G' \setminus G_0'$
 was defined in \eqref{eqn:m-}.  
On the other hand,
 $G_0'$ acts transitively
 on the closed $G'$-orbit:
\[
  G_0'[p_+] \simeq
  G_0'/P_0'
  \simeq
  G'/P'
 \simeq 
  G'[p_+]
  \simeq
  S^{n-1}.  
\]
Thus the second claim is shown.  
\end{proof}

\begin{remark}
For $n=2$, 
 the action of $SO_0(2,1)$ on $S^2$
 is identified with the action of 
 $SL(2,{\mathbb{C}})$
 on ${\mathbb{P}}^1{\mathbb{C}}\simeq {\mathbb{C}} \cup \{\infty\}$.  
Then $p_+$, $p_-$, $q_+$
 and $q_-$
 correspond to $0$, $\infty$, $i$, 
 and $-i$, 
respectively.  
\end{remark}

If we set 
\[w_{\varepsilon} \  := \ 
  \left(
   \begin{array}{ccc|c}
        &        & -\varepsilon  & \\
        & I_{n-1}&     & \\
     \varepsilon  &        &     & \\
    \hline
        &        &     & 1
    \end{array}
    \right)\in K
    \quad
    \text{for } \varepsilon =\pm 1,   
\]
then we have    
\[
  q_\pm = w_{\pm}p_+ = w_{\mp} p_- .
\]

\medskip

We define subgroups
 of
\index{sbon}{Msubgp@$M$}
 $M$ by 
\begin{align}
M^{w}
 :={}
  &\{g \in M:g w_- = w_- g\}
  =\{g \in M:g w_+ = w_+ g\}
\notag
\\
 ={}
  & \left\{
    \begin{pmatrix} 
        \varepsilon \\ & B \\ & & \varepsilon \\ & & & \varepsilon
    \end{pmatrix} :
    B \in O(n-1), \   \varepsilon = \pm 1
    \right\}
    \simeq O(n-1) \times \mathbb{Z}_2, 
\notag
\\
M'_+ :={}& 
M^w \cap M' 
\simeq
\left\{ \begin{pmatrix} 1 \\ & B \\ && 1 \\ &&& 1 \end{pmatrix}
           : B \in O(n-1) \right\}.  
\label{eqn:m+}
\end{align}

\begin{remark}
We recall from \eqref{eqn:Mprime}
 that $M'=Z_{K}({\mathfrak {a}})$
 is isomorphic to $O(n-1) \times \mathbb{Z}_2$.  
The group $M^{w}$ is also isomorphic
 to $O(n-1) \times \mathbb{Z}_2$, 
 however,
 $M^w \ne M'$.  
In fact, 
 $M_+' =M^w \cap M'$
 is a subgroup of $M^w$
 of index two,
 and also is of index two in $M'$.  
\end{remark}

\medskip

Now the following proposition and corollary are
 derived directly from the description in Lemma~\ref{lem:5.1}.
\begin{proposition}\label{lem:PGP}
\

{\upshape 1)}\enspace
$G/P$ is a union of three disjoint $P'$-orbits. We have
\begin{equation}
  G/P = P'[q_+] \cup P'[p_-] \cup P'[p_+]
\label{eqn:PGP}
\end{equation}

{\upshape 2)}\enspace
The isotropy subgroups at\/ $[q_+]$, $[p_-]$, and $[p_+]$
are given by
\begin{alignat*}{3}
& S^n \setminus S^{n-1}
&& = P' [q_+] 
&& \simeq P' / M'_+
\\
& S^{n-1} \setminus \{ [p_+] \}
&& = P' [p_-]
&& \simeq P' / M' A'
\\
& \{ [p_+ ] \}
&& = P' [p_+]
&& = P' / P' .
\end{alignat*}
\end{proposition}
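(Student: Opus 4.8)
The plan is to refine the $G'$-orbit decomposition of $G/P$ obtained in Lemma~\ref{lem:5.1} into $P'$-orbits, handling the open and the closed $G'$-orbit separately and computing each isotropy subgroup by explicit matrix manipulations on the isotropic cone. By Lemma~\ref{lem:5.1}(1) we have $G/P = G'[q_+]\,\amalg\,G'[p_+]$, where $G'[q_+]=S^n\setminus S^{n-1}$ is the open $G'$-orbit with isotropy $O(n)\times 1$, and $G'[p_+]=S^{n-1}$ is the closed one with isotropy $P'$. Since the three sets $S^n\setminus S^{n-1}$, $S^{n-1}\setminus\{[p_+]\}$ and $\{[p_+]\}$ are pairwise disjoint and cover $S^n$, it suffices to show that $P'$ acts transitively on $S^n\setminus S^{n-1}$ with stabilizer $M'_+$ at $[q_+]$, and that $S^{n-1}$ breaks into the two $P'$-orbits $P'[p_-]$ and $\{[p_+]\}$ with stabilizers $M'A'$ and $P'$; only the stabilizer assertions require computation.

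For the open orbit I would first determine $(P')_{[q_+]}=P'\cap(G')_{[q_+]}$. Realizing $(G')_{[q_+]}$ as $\{\operatorname{diag}(A,1,1):A\in O(n)\}$ (the ``$O(n)\times 1$'' of Lemma~\ref{lem:5.1}) and writing a general element of $P'=M'A'N'$ in coordinates, one checks that the $A'$-component $\exp(tH)$ and the unipotent component $n(b)$ both move $q_+$ out of its $\mathbb{R}^\times$-line unless $t=0$ and $b=0$; hence the intersection reduces to $M'\cap(O(n)\times 1)$, which the matrix forms of \eqref{eqn:m-} and \eqref{eqn:m+} identify with $M'_+$. To see that $P'[q_+]$ fills out all of $S^n\setminus S^{n-1}$ (which has two connected components), I would pass to the identity component via Lemma~\ref{lem:5.1}(2): the Iwasawa decomposition $G_0'=N'A'K_0'$ shows $A'N'\subset P_0'$ acts transitively on each sheet $G_0'/K_0'$, so $G_0'[q_\pm]=P_0'[q_\pm]$; since $m_-\in M'\subset P'$ satisfies $m_-[q_+]=[q_-]$, we get $P'[q_+]\supseteq P_0'[q_+]\cup P_0'[q_-]=S^n\setminus S^{n-1}$, and the reverse inclusion is immediate from Lemma~\ref{lem:5.1}(1).

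For the closed orbit $G'[p_+]\simeq G'/P'\simeq S^{n-1}$, the point $\{[p_+]\}$ is itself a $P'$-orbit with stabilizer $P'$, and the Bruhat decomposition gives $P'\backslash G'/P'\leftrightarrow W(G',A')$, which has exactly two elements because $G'=O(n,1)$ has real rank one. So there is precisely one further $P'$-orbit on $S^{n-1}$; I would identify it as $P'[p_-]$ by observing that $[p_-]$ lies in $S^{n-1}$ and is distinct from $[p_+]$, and then compute $(P')_{[p_-]}$ directly from the formula \eqref{eqn:nbxi}: the boost $\exp(tH)$ scales $p_-$ and every element of $M'$ scales $p_-$, while a nontrivial $n(b)\in N'$ genuinely moves $[p_-]$, so $(P')_{[p_-]}=M'A'$. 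The orbit $P'/M'A'$ has dimension $n-1=\dim S^{n-1}$, hence is the open $P'$-orbit and must coincide with $S^{n-1}\setminus\{[p_+]\}$. Collecting the three pieces yields \eqref{eqn:PGP} together with the stated isotropy subgroups.

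The main obstacle is the disconnectedness of $G'=O(n,1)$: one must verify that $P'$, not merely its identity component $P_0'$, acts transitively on the two-sheeted hyperboloid $G'[q_+]$, and this hinges on $P'\cap G_0'=P_0'$ being transitive on each individual sheet together with the element $m_-$ of \eqref{eqn:m-} interchanging the sheets. A secondary, purely bookkeeping difficulty is keeping the matrix realizations of $M'$, $A'$ and $N'$ straight so that $P'\cap(O(n)\times 1)$ is correctly identified with $M'_+$ rather than a conjugate of it.
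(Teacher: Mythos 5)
Your proof is correct and takes essentially the same route as the paper, which obtains the proposition directly from the $G'$- and $G_0'$-orbit decompositions of Lemma \ref{lem:5.1}: you refine those orbits into $P'$-orbits (via Iwasawa/Bruhat for the transitivity statements) and compute the stabilizers $M'_+$, $M'A'$, and $P'$ from the explicit matrix realizations. The paper simply records the result as an immediate consequence of Lemma \ref{lem:5.1}, and your write-up supplies the same computations in more detail.
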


Thus the assumption \eqref{eqn:PNPG}
 of Theorem \ref{thm:KT}
 is fulfilled for $(G,G')=(O(n+1,1),O(n,1))$:
\medskip
\begin{corollary}
\label{cor:PNPG}
We have 
$P' N_- P = G.$
\end{corollary}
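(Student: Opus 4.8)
The plan is to deduce Corollary~\ref{cor:PNPG} directly from the orbit description in Proposition~\ref{lem:PGP}. Recall that $G/P \simeq S^n$ via the identification coming from the isotropic cone, and that under this identification the open Bruhat cell $\Nbar = \exp(\nbar)$ embeds as an open subset of $G/P$; concretely, by \eqref{eqn:NXi} the point $n_-(x,x_n)p_+$ traces out exactly the affine chart $\{\xi_0 + \xi_{n+1} = 2\}\cap\Xi$, which projects onto $S^n \setminus \{[p_-]\}$ (the complement of the single point at infinity). So $\Nbar P / P$ is the complement in $G/P$ of the single point $[p_-]$.

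First I would observe that the statement $P'\Nbar P = G$ is equivalent, after passing to $G/P$, to the assertion that the $P'$-saturation of the open cell $\Nbar P/P$ is all of $G/P$, i.e.\ that $P'\cdot(\Nbar P/P) = G/P$. Since $\Nbar P/P = (G/P)\setminus\{[p_-]\}$, it suffices to check that the missing point $[p_-]$ lies in the $P'$-orbit of some point of the open cell. By Proposition~\ref{lem:PGP}, $G/P$ decomposes into the three $P'$-orbits $P'[q_+]$, $P'[p_-]$, and $P'[p_+]$, and both $[q_+]$ and $[p_+]$ are visibly points of the open cell: indeed $[p_+] = n_-(0)p_+$ corresponds to the origin $x = 0$, $x_n = 0$ in the chart \eqref{eqn:NXi}, and $[q_+] = {}^t(0,\dots,0,1,1)$ lies in $\{\xi_0+\xi_{n+1}=2\}$ hence also in the open cell. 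Therefore $P'\cdot(\Nbar P/P)$ contains $P'[q_+]\cup P'[p_+]$, and it remains only to produce a single $P'$-translate of $[p_-]$ lying in the open cell.

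The key computation is thus to exhibit $g'\in P'$ with $g'\cdot[p_-] \ne [p_-]$, equivalently $g'[p_-]\in \Nbar P/P$. Since $[p_-]=[q_+]$ would be too strong to ask, I instead use that $[p_-]$ and $[p_+]$ lie on the closed orbit $S^{n-1}$ and that $P'$ acts on this $S^{n-1}$ as a group of conformal transformations with only the fixed point $[p_+]$; so any element of $P'$ moving $[p_-]$ off $[p_-]$ and off $[p_+]$ sends it into $S^{n-1}\setminus\{[p_\pm]\} \subset \Nbar P/P$. Concretely, one takes $g' = n'_-(b)$ for a suitable $b\in\mathbb{R}^{n-1}$, or an element of $A'$, and checks via \eqref{eqn:Nfixed}--\eqref{eqn:nbxi} that $n'_-(b)p_-$ has nonzero first coordinate in the chart, hence lies in the open cell. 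The main obstacle is just this last bookkeeping step: one must be slightly careful that the chosen $P'$-element indeed lands the image in $\{\xi_0+\xi_{n+1}\ne 0\}$ rather than back on the hyperplane at infinity, but the formula \eqref{eqn:nbxi} for the $N_+$-action (together with the analogous one for $N_-$) makes this explicit. Once $[p_-]\in P'\cdot(\Nbar P/P)$, we conclude $P'\cdot(\Nbar P/P)=G/P$, hence $P'\Nbar P=G$, which is exactly the hypothesis \eqref{eqn:PNPG} of Theorem~\ref{thm:KT}.
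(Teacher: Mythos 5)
Your route is the same as the paper's: Corollary \ref{cor:PNPG} is meant to be read off from the orbit description (Lemma \ref{lem:5.1} and Proposition \ref{lem:PGP}), and your reduction is the right one — the open Bruhat cell $N_-P/P$ is exactly $G/P\setminus\{[p_-]\}$ (your chart computation via \eqref{eqn:NXi} is correct), so $P'N_-P=G$ amounts to the single statement that some element of $P'$ moves $[p_-]$. The problem is the witness you offer for that last, only nontrivial, step. Neither element you name works: $n_-'(b)$ does not belong to $P'=M'AN_+'$ at all (and in any case all of $N_-$ fixes $[p_-]$, by the second identity in \eqref{eqn:Nfixed}, since ${}^t(1,0,\dots,0,-1)=-p_-$), and every element of $A'$ — indeed of $M'A$ — also fixes $[p_-]$, because $e^{tH}p_-=e^{-t}p_-$ and $M'$ sends $p_-$ to $\pm p_-$. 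So the ``bookkeeping step'' you defer to would in fact fail as described, and the assertion that $[p_+]$ is the only $P'$-fixed point on $S^{n-1}$ is left resting on it.

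The repair is easy, in either of two ways. Abstractly, Proposition \ref{lem:PGP} already gives $P'[p_-]=S^{n-1}\setminus\{[p_+]\}\simeq P'/M'A'$, which is not a single point (for $n\ge 2$), so some $p'\in P'$ moves $[p_-]$, hence into the open cell; this is essentially how the paper intends the corollary to be ``derived directly.'' Concretely, the witness must be taken in $N_+'$, not $N_-'$ or $A'$: for $b=(b_1,\dots,b_{n-1},0)\ne 0$ one gets from \eqref{eqn:nbxi} that $n(b)p_-={}^t(|b|^2-1,\,-2b,\,|b|^2+1)$, whose coordinates satisfy $\xi_0+\xi_{n+1}=2|b|^2\ne 0$, so $n(b)[p_-]$ lies in the open cell. (Both arguments, like the paper's, tacitly use $n\ge 2$; for $n=1$ the orbit $P'[p_-]$ is the single point $[p_-]$ and $P'N_-P=N_-P\ne G$.) A final small slip: $q_+$ itself has $\xi_0+\xi_{n+1}=1$, so it does not lie on the slice $\{\xi_0+\xi_{n+1}=2\}$; what you need, and what is true, is only that $\xi_0+\xi_{n+1}\ne 0$ at $q_+$, so that $[q_+]$ is in the open cell.
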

\bigskip

\section{Differential equations satisfied by the distribution kernels of  symmetry breaking operators}
\label{sec:3}

In this chapter
 we characterize the distribution kernel $K_T$
 of symmetry breaking operators
  for $(G,G')=(O(n+1,1),O(n,1))$.  
We derive a systems of differential equations on $\mathbb R^n$
 and prove that its distribution solutions 
$
   \mathcal{S}ol (\mathbb{R}^n; \lambda,\nu)
$
 are isomorphic to 
\index{sbon}{H1@$H(\lambda, \nu)$}
$
   H(\lambda,\nu)\equiv\operatorname{Hom}_{G'}(I(\lambda),J(\nu))$.
An analysis of the solutions shows that generically the multiplicity 
 $m(I(\lambda),J(\nu))$
 of principal series representations is 1.
  
\subsection{A system of differential equations
 for symmetry breaking operators}
For future reference,
we begin with a formulation in the vector-bundle case.

We have seen in \eqref{eqn:Suppinv}
 that the support of the distribution kernel 
 $K_T$ of a symmetry breaking operator
 $T:C^{\infty}(X,{\mathcal{V}}) \to C^{\infty}(Y,{\mathcal{W}})$
 is a $P'$-invariant closed subset 
 of $G/P$
 if ${\mathcal{V}}$ is a $G$-equivariant vector 
bundle over $X=G/P$
 and ${\mathcal{W}}$ is a $G'$-equivariant vector 
bundle over $Y=G'/P'$.  
By the description
 of the double coset space $P' \backslash G/P$
 for $(G,G')=(O(n+1,1),O(n,1))$
 in Proposition \ref{lem:PGP}, 
we get
\begin{lemma}\label{lem:SuppInv}
If
$T : C^{\infty}(X,{\mathcal{V}}) \to C^{\infty}(Y,{\mathcal{W}})$
is a nonzero continuous $G'$-homomorphism,
then the support of the distribution kernel $K_T$
 is one of 
\index{sbon}{pplus@${p_+}$}
$\{[p_+]\}$, 
 $\overline{P' [p_-]} = P' [p_-] \cup \{ [p_+] \}
 (\simeq S^{n-1})$,
or $G/P \simeq S^n$.
\end{lemma}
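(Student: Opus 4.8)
The plan is to combine the general support invariant from Section \ref{subsec:KT} with the explicit orbit decomposition of Proposition \ref{lem:PGP}. Recall from \eqref{eqn:Suppinv} that for any nonzero $T \in \operatorname{Hom}_{G'}(C^\infty(X,\mathcal{V}), C^\infty(Y,\mathcal{W}))$, the distribution kernel $K_T$ is a nonzero element of $(\mathcal{D}'(X,\mathcal{V}^*)\otimes W)^{\Delta(P')}$, so its support $\operatorname{Supp}K_T$ is a nonempty, $P'$-invariant, closed subset of $G/P$. Thus the first step is purely set-theoretic: I would enumerate the $P'$-invariant closed subsets of $G/P$. By Proposition \ref{lem:PGP}, $G/P$ decomposes into exactly three $P'$-orbits, namely the open orbit $P'[q_+] = S^n \setminus S^{n-1}$, the orbit $P'[p_-] = S^{n-1}\setminus\{[p_+]\}$, and the singleton $\{[p_+]\}$.

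Next I would work out the closure relations among these three orbits. The orbit $\{[p_+]\}$ is already closed. The orbit $P'[p_-]$ is a punctured sphere $S^{n-1}\setminus\{[p_+]\}$ sitting inside the closed subset $S^{n-1}$ of $S^n$; its closure is therefore $\overline{P'[p_-]} = P'[p_-]\cup\{[p_+]\}\simeq S^{n-1}$, which is $P'$-invariant since it is the union of two $P'$-orbits. Finally $\overline{P'[q_+]}$ is all of $S^n = G/P$, since $P'[q_+]$ is the open dense orbit. Hence the $P'$-invariant closed subsets of $G/P$ are precisely the unions of orbits that are closed, and the only nonempty ones are $\{[p_+]\}$, $\overline{P'[p_-]} = P'[p_-]\cup\{[p_+]\}\simeq S^{n-1}$, and $G/P\simeq S^n$; the remaining possible unions (such as $P'[q_+]\cup\{[p_+]\}$ or $P'[q_+]\cup P'[p_-]$) fail to be closed because omitting a point of the boundary $S^{n-1}$ or omitting the whole boundary respectively destroys closedness. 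The point $[p_+]$ lies in the closure of every orbit, which forces the list to be this short chain.

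Putting the two steps together: since $T \ne 0$, the support $\operatorname{Supp}K_T$ is a nonempty $P'$-invariant closed subset of $G/P$, hence equals one of the three sets just listed. This is exactly the assertion of Lemma \ref{lem:SuppInv}. I do not expect any real obstacle here; the only mild care needed is in verifying the closure relations, which amounts to the elementary geometric fact that the isotropic cone picture identifies $S^{n-1} = \Xi(\mathbb{R}^{n,1})/\mathbb{R}^\times$ as a closed $G'$-stable submanifold of $S^n$ containing $[p_+]$, together with the observation that $[p_+]$ is in the closure of the punctured orbit $P'[p_-] = S^{n-1}\setminus\{[p_+]\}$ because $S^{n-1}$ is connected of positive dimension when $n \ge 2$ (the case $n=1$ being degenerate but handled directly, as $S^0$ is two points and $[p_-]$ is an isolated point whose orbit closure is itself, so one re-examines the orbit list there). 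The substance of the lemma is entirely carried by Proposition \ref{lem:PGP}; this statement is just the translation of that orbit count into the language of supports of intertwining distributions.
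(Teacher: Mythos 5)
Your argument is exactly the paper's: the support of $K_T$ is a nonempty closed $P'$-invariant subset of $G/P$ by \eqref{eqn:Suppinv}, and the orbit decomposition of Proposition \ref{lem:PGP} together with the evident closure relations leaves only the three sets listed. Your extra care about the closure of $P'[p_-]$ and the degenerate case $n=1$ is a reasonable elaboration of what the paper leaves implicit, but the substance is the same.
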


We recall from \eqref{eqn:nbar}
 that the open Bruhat cell of $G/P$ is given 
 in the coordinates
 by ${\mathbb{R}}^n \hookrightarrow G/P$, 
 $(x,x_n) \mapsto n_- (x,x_n)P$.

Then we have:
\begin{lemma}\label{lem:resinj}
There is a natural bijection:
\begin{equation}
\operatorname{Hom}_{G'} (C^{\infty}(G/P, {\mathcal{V}}), 
                    C^{\infty}(G'/P', {\mathcal{W}}))
\overset{\sim}{\to} \mathcal{D}' ( \mathbb{R}^n, \operatorname{Hom}
     (V \otimes {\mathbb{C}}_{-2\rho}
, W))^{M'A,\mathfrak{n}_+'}.
\label{eqn:Rinv}
\end{equation}
\end{lemma}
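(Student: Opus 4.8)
The plan is to derive the bijection \eqref{eqn:Rinv} as a special case of Theorem \ref{thm:KT}, which in the present rank-one setting applies because the hypotheses \eqref{eqn:compati} and \eqref{eqn:PNPG} have both been verified: the compatibility $M'=M\cap G'$, $A'=A\cap G'$, $N'_+=N_+\cap G'$, $P'=P\cap G'$ holds by construction in Section \ref{subsec:matrix} (see \eqref{eqn:Lang}), and $P'N_-P=G$ is exactly Corollary \ref{cor:PNPG}. Thus Theorem \ref{thm:KT} gives a natural bijection
\[
\operatorname{Hom}_{G'}(C^\infty(G/P,\mathcal{V}),C^\infty(G'/P',\mathcal{W}))
  \simeq \mathcal{D}'(\nbar,\operatorname{Hom}_{\mathbb{C}}
             (V \otimes \mathbb{C}_{-2\rho},W))^{M'A',\mathfrak{n}'}.
\]
It then remains only to rewrite the right-hand side in the coordinates at hand. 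First I would use the parametrization $\nbar\simeq\mathbb{R}^n$, $(x,x_n)\mapsto n_-(x,x_n)P$, from \eqref{eqn:nbar} and \eqref{eqn:NXi}, which identifies $\mathcal{D}'(\nbar)$ with $\mathcal{D}'(\mathbb{R}^n)$ as a $(\mathfrak{g},M'A')$-module; the invariance conditions transport accordingly. Note that in the rank-one situation $A'=A\cap G'=A$ since ${\mathfrak{a}}={\mathbb{R}}H$ with $H\in{\mathfrak{g}}'$, and $\mathfrak{n}'=\mathfrak{n}_+'$ in the notation of Section \ref{subsec:matrix}; substituting these identifications turns $(M'A',\mathfrak{n}')$-invariance into $(M'A,\mathfrak{n}_+')$-invariance, which is the invariance appearing on the right of \eqref{eqn:Rinv}.

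The remaining bookkeeping is to match the coefficient bundles: on the right of Theorem \ref{thm:KT} one has $\operatorname{Hom}_{\mathbb{C}}(V\otimes\mathbb{C}_{-2\rho},W)$, while \eqref{eqn:Rinv} is stated with $\operatorname{Hom}(V\otimes\mathbb{C}_{-2\rho},W)$ — these are the same once one recalls (Section \ref{subsec:KT}) that $\mathbb{C}_{2\rho}$ is the $H$-character governing the dualizing bundle $\Omega_X$, so the twist by $\mathbb{C}_{-2\rho}$ on the source is precisely the difference between $\mathcal{V}^*$ and $\mathcal{V}^\vee$ already absorbed in the statement of Theorem \ref{thm:KT}. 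Thus no extra factor appears. I would spell out that in the applications of interest $\dim V=\dim W=1$, in which case $\operatorname{Hom}(V\otimes\mathbb{C}_{-2\rho},W)$ is one-dimensional and $K_T$ is simply a scalar-valued distribution on $\mathbb{R}^n$, as indicated in Remark \ref{rem:KT}; the $(M'A,\mathfrak{n}_+')$-invariance then becomes the system of differential equations that is the subject of the rest of this chapter.

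I do not anticipate a genuine obstacle here: the lemma is essentially a transcription of Theorem \ref{thm:KT} into the explicit coordinates of the pair $(O(n+1,1),O(n,1))$, and the only points requiring care are (i) checking that the hypotheses of Theorem \ref{thm:KT} really do hold in this setting — handled by \eqref{eqn:Lang} and Corollary \ref{cor:PNPG} — and (ii) verifying that the $\mathfrak{g}$-action (hence the $\mathfrak{n}_+'$-invariance) on $\mathcal{D}'(\mathbb{R}^n)$ induced through the open Bruhat cell is the one computed in Lemma \ref{lem:4.5}, so that the invariance condition is the expected first-order system. If anything is mildly delicate it is tracking the $M'A$-action versus the $M'A'$-action and confirming $A=A'$; but this is immediate from the choice of the hyperbolic element $H\in{\mathfrak{g}}'$ in \eqref{eqn:aH}. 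Hence the proof reduces to invoking Theorem \ref{thm:KT} and translating notation, and I would present it in precisely that order: cite the compatibility conditions, cite Corollary \ref{cor:PNPG}, apply Theorem \ref{thm:KT}, then rewrite $\nbar$ as $\mathbb{R}^n$ and $(M'A',\mathfrak{n}')$ as $(M'A,\mathfrak{n}_+')$.
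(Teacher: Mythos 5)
Your proposal is correct and takes essentially the same route as the paper, whose proof of Lemma \ref{lem:resinj} consists precisely of noting that the hypothesis $P'N_-P=G$ of Theorem \ref{thm:KT} holds by Corollary \ref{cor:PNPG} and concluding. The extra bookkeeping you supply (the compatibility \eqref{eqn:compati} from the common hyperbolic element $H\in\mathfrak{g}'$, the identification $A=A'$, the parametrization $\nbar\simeq\mathbb{R}^n$, and the matching of the coefficient bundle $\operatorname{Hom}(V\otimes\mathbb{C}_{-2\rho},W)$) is exactly what the paper leaves implicit.
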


\begin{proof}
The assumption $P'\Nbar P = G$ 
 of Theorem \ref{thm:KT} is satisfied by Corollary \ref{cor:PNPG}.  
Thus Lemma follows.  
\end{proof}

\begin{remark}\label{rem:singR}
Recall from Definition \ref{def:regular}
that a non-zero symmetry breaking operator 
$T$ is
\textit{singular} if
$\operatorname{Supp} K_T \ne G/P$, equivalently,
$\operatorname{Supp} K_T \subset S^{n-1}$
 by Lemma \ref{lem:SuppInv}.  
Further, 
 $T$ is a differential operator if and only if
$\operatorname{Supp} K_T = \{ [p_+] \}$.
By Lemma \ref{lem:resinj},
 we do not lose any information
if we restrict $K_T$ to ${\mathbb{R}}^n$.  
Therefore, 
 $T$ is singular if and only if
$\operatorname{Supp} (K_T |_{\mathbb{R}^n}) \subset \mathbb{R}^{n-1}$.
$T$ is a differential operator if and only if
$\operatorname{Supp}(K_T|_{\mathbb{R}^n}) = \{0\}$.
\end{remark}

In \eqref{eqn:Rinv},
the invariance under $M' A$ for
$F \in \mathcal{D}' (\mathbb{R}^n$,
$\operatorname{Hom} (V \otimes {\mathbb{C}}_{-2\rho}, W))$
is written as 
\begin{alignat}{7}
& \tau(m)  && \,\circ\, && F(m^{-1} \cdot )  && \,\circ\,  && \sigma(m^{-1})  && = F
   \qquad&&\text{for $m \in M'_+ \simeq O(n-1)$},
\label{eqn:m+F}
\\
& \tau (m_-)  && \,\circ\,  &&  F ((-1) \cdot )  && \,\circ\,   && \sigma (m_-^{-1})   && = F, &&
\label{eqn:panty}
\\
& e^{t\nu}   &&   && F (e^t \cdot )   &&  && e^{(n-\lambda)t}   && = F
   \qquad&&\text{for any $t \in {\mathbb{R}}$}.  
\nonumber
\end{alignat}
Here, the identity \eqref{eqn:panty}
 for 
\index{sbon}{mminus@$m_-$}
$m_- \in M'$ (see \eqref{eqn:m-})
is derived from \eqref{eqn:invm}.

\vskip 1pc
Returning to the line bundle setting 
 as before, 
we obtain:
\begin{proposition}\label{prop:PDE}
Let
$T: I(\lambda) \to J(\nu)$ be
 any $G'$-intertwining operator.
Then the restriction
$K_T|_{\mathbb{R}^n}$
of the distribution kernel
 satisfies
the following system of differential equations:
\begin{align}
&(E - (\lambda - \nu - n)) F = 0,
\label{eqn:Fa}
\\
&( (\lambda-n) x_j - x_j E + \frac12 ( |x|^2 + x_n^2 ) \frac{\partial}{\partial x_j})
        F = 0  \quad (1 \le j \le n-1),
\label{eqn:Fn}
\end{align}
and the $M'$-invariance condition:
\begin{align}
\label{eqn:invM}
F(mx,x_n) =& F(x,x_n)
\quad\text{for any $m \in O(n-1)$},
\\
\label{eqn:parity}
F (-\tilde{x}) =& F(\tilde{x}).
\end{align}
Here $\tilde{x} = (x, x_n) \in \mathbb{R}^n$ and
$E = \sum_{j=1}^n x_j \frac{\partial}{\partial x_j}$.
\end{proposition}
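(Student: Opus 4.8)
The strategy is to translate the $G'$-equivariance of $T$, or equivalently the $(\mathfrak g', M'A'N'_+)$-invariance of the distribution kernel $K_T$ established in Lemma~\ref{lem:resinj}, into explicit differential equations in the coordinates $(x,x_n)$ on the open Bruhat cell $\mathbb R^n \hookrightarrow G/P$, $(x,x_n)\mapsto n_-(x,x_n)P$. By Lemma~\ref{lem:resinj} (the line-bundle specialization of Theorem~\ref{thm:KT}), the restriction $F := K_T|_{\mathbb R^n}$ lies in $\mathcal D'(\mathbb R^n)^{M'A,\mathfrak n_+'}$, so I must unwind what each of the three pieces of invariance — under $M'_+\simeq O(n-1)$, under $m_-$, under $A$, and under the Lie algebra $\mathfrak n_+'$ — says about $F$. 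The conditions \eqref{eqn:invM} and \eqref{eqn:parity} are exactly the $M'_+$-invariance \eqref{eqn:m+F} and the $m_-$-relation \eqref{eqn:panty}, which in the line-bundle case ($\sigma=\tau=\mathbf 1$, $\dim\mathbb C_{-2\rho}=1$) read simply $F(mx,x_n)=F(x,x_n)$ for $m\in O(n-1)$ and $F(-\tilde x)=F(\tilde x)$; these require no computation beyond identifying the group elements, using \eqref{eqn:invm} for the $m_-$ statement.

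The substantive part is to produce \eqref{eqn:Fa} and \eqref{eqn:Fn} from the infinitesimal invariance under $\mathfrak a$ and $\mathfrak n_+'$. First I would record the action of $\mathfrak g$ on $\mathcal D'(\mathbb R^n)$ in the $N$-picture. By Remark~\ref{rem:distr} the formula for the $\mathfrak g$-action on distribution kernels is the same as on $C^\infty$, so I can read off from Lemma~\ref{lem:4.5} that, acting on the dualizing-bundle-twisted kernel with parameter $\lambda$ replaced by $n-\lambda$ (the twist by $\mathbb C_{-2\rho}$ shifts $\lambda\mapsto\lambda-n$, hence $-\lambda\mapsto n-\lambda$ after the contragredient), $d\pi(N_j^-)=\partial/\partial x_j$ and $d\pi(N_j^+)= 2(\lambda-n)x_j - 2x_j E + |x|^2_{\text{full}}\,\partial/\partial x_j$ where $E=\sum_{i=1}^n x_i\partial/\partial x_i$ and $|x|^2_{\text{full}}=|x|^2+x_n^2$ is the full quadratic form on $\mathbb R^n$. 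The invariance under $A=\exp\mathbb R H$ gives, via \eqref{eqn:invm} and the homogeneity bookkeeping in \eqref{eqn:m+F} (the $A$-line), the Euler-type equation: $F(e^t x, e^t x_n)$ transforms with weight accounting for $e^{t\nu}$ on the $J(\nu)$-side and $e^{(n-\lambda)t}$ from $\mathbb C_{-2\rho}\otimes V_\lambda$, which differentiated at $t=0$ yields $(E-(\lambda-\nu-n))F=0$, i.e. \eqref{eqn:Fa}. The invariance under $\mathfrak n_+' = \sum_{j=1}^{n-1}\mathbb R N_j^+$ (note $j$ only runs to $n-1$, since $N_n^+\notin\mathfrak g'$) gives $d\pi(N_j^+)F=0$ for $1\le j\le n-1$; substituting the formula for $d\pi(N_j^+)$ and then using \eqref{eqn:Fa} to replace $E$ by $\lambda-\nu-n$ in the terms that survive — more precisely, rewriting $2(\lambda-n)x_j - 2x_j E + (|x|^2+x_n^2)\partial_j = 0$ and simplifying via $x_j E = x_j(E) $ together with the Euler relation — should collapse to $((\lambda-n)x_j - x_j E + \tfrac12(|x|^2+x_n^2)\partial_j)F=0$, which is \eqref{eqn:Fn}. (One must be careful that $\partial_j$ here is $\partial/\partial x_j$ for $j\le n-1$, and that no $\partial/\partial x_n$ term is lost; this is automatic because $N_j^+$ for $j\le n-1$ involves only $x_j$ and the full Euler operator.)

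The main obstacle — and the step I would be most careful about — is getting the normalization of the $\mathfrak g$-action on the \emph{dualizing} bundle right, i.e.\ tracking the $\mathbb C_{-2\rho}$ (equivalently $\mathbb C_{2\rho}$ on the dual side) twist and the contragredient so that the parameter appearing in Lemma~\ref{lem:4.5} is correctly shifted to produce $(\lambda-n)$ rather than $-\lambda$ or $\lambda$ in \eqref{eqn:Fn}, and the constant $\lambda-\nu-n$ rather than, say, $n-\lambda-\nu$ in \eqref{eqn:Fa}. A clean way to handle this is to fix the pairing \eqref{eqn:IDC} between $I(\lambda)$ and $\mathcal D'_{\lambda-n}(\Xi)$ and the homogeneity degree conventions from \eqref{eq:2.9.1}–\eqref{eq:2.9.3}, and compute the $A$-weight and the $N_j^+$-action directly on homogeneous functions on $\Xi$ restricted to $\mathbb R^n$ via $\iota_N$ in \eqref{eqn:NXi}, rather than quoting Lemma~\ref{lem:4.5} with a shifted parameter; this makes the factor of $\tfrac12$ in \eqref{eqn:Fn} (coming from $n_-(x,x_n)p_+$ having $1+|x|^2+x_n^2$ rather than $1+\tfrac12(\cdots)$ in its last coordinate, cf.\ \eqref{eqn:nbar}, \eqref{eqn:NXi}) transparent. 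Once the action is pinned down, \eqref{eqn:Fa}–\eqref{eqn:parity} follow by a direct substitution, with \eqref{eqn:Fn} obtained by using \eqref{eqn:Fa} to eliminate the $E$-dependence as indicated above.
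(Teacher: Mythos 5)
Your proposal is correct and follows essentially the same route as the paper: identify $K_T|_{\mathbb{R}^n}$ with an element of $\mathcal{D}'(\mathbb{R}^n)^{M'A,\mathfrak{n}_+'}$ via Lemma \ref{lem:resinj}, read off the $\mathfrak{g}$-action in the noncompact picture from Lemma \ref{lem:4.5} with the $\mathbb{C}_{2\rho}$-twisted parameter $n-\lambda$, and translate the $\mathfrak{a}$-, $\mathfrak{n}_+'$-, and $M'$-invariances into \eqref{eqn:Fa}, \eqref{eqn:Fn}, \eqref{eqn:invM}, \eqref{eqn:parity}. One small simplification: your appeal to \eqref{eqn:Fa} to eliminate $E$ in the $\mathfrak{n}_+'$-equation is unnecessary, since $2(\lambda-n)x_j-2x_jE+(|x|^2+x_n^2)\frac{\partial}{\partial x_j}$ is exactly twice the operator in \eqref{eqn:Fn}, so the $\mathfrak{n}_+'$-invariance gives \eqref{eqn:Fn} directly.
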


\begin{proof}
We recall from Lemma \ref{lem:4.5}
 that the Lie algebra 
\index{sbon}{nplus@${\mathfrak {n}}_+$}
$\mathfrak{n}_+$ acts on
$\mathcal{B} (G/P, \sigma_{2\rho}^\vee)$
by 
\[
N_j \mapsto
   2(\lambda-n) x_j - 2x_j E + ( |x|^2 + x_n^2 ) \frac{\partial}{\partial x_j}
  \quad  (1 \le j \le n).  
\]
Hence
\eqref{eqn:Fn} is the invariance of 
\index{sbon}{nplusprime@${\mathfrak {n}}_+'$}
$\mathfrak{n}_+'$.
The remaining conditions \eqref{eqn:Fa}, \eqref{eqn:invM} and \eqref{eqn:parity} is the invariance of
$\mathfrak{a}$ and $m_- \in M'$ as above.
\end{proof}

For an open subset $U$ of $\mathbb{R}^n$ which is
$(O(n-1) \times O(1))$-invariant,
we define
\begin{equation}\label{eqn:KTeqU}
\index{sbon}{Sol@$\mathcal{S}ol (U; \lambda,\nu)$|textbf}
\mathcal{S}ol (U; \lambda,\nu)
:= \{ F \in \mathcal{D}'(U) :
       \text{$F$ satisfies \eqref{eqn:Fa}, \eqref{eqn:Fn},
                \eqref{eqn:invM}, and \eqref{eqn:parity}}\}
\end{equation}
Then by Lemma \ref{lem:resinj},
we have
\begin{proposition}\label{prop:HomPDE}
The correspondence $T \mapsto K_T$ gives a bijection:
\index{sbon}{Ilmd@${I(\lambda)}$}
\index{sbon}{Jnu@${J(\nu)}$}
\begin{equation}\label{eqn:HomSol}
\operatorname{Hom}_{G'} 
(I(\lambda),J(\nu)) \overset{\sim}{\to}
\mathcal{S}ol (\mathbb{R}^n; \lambda,\nu).
\end{equation}
\end{proposition}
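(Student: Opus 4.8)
The plan is to combine the two reductions already prepared in this chapter. First, by Lemma \ref{lem:resinj} (which applies because $P'N_-P = G$ by Corollary \ref{cor:PNPG}), restriction of the distribution kernel to the open Bruhat cell $\mathbb{R}^n \hookrightarrow G/P$ gives a bijection between $\operatorname{Hom}_{G'}(I(\lambda),J(\nu))$ and the space of $\operatorname{Hom}(V\otimes\mathbb{C}_{-2\rho},W)$-valued distributions on $\mathbb{R}^n$ satisfying the $(M'A,\mathfrak{n}_+')$-invariance conditions; in the present line-bundle setting ($\dim V = \dim W = 1$, $\sigma = \tau = \mathbf{1}$), these distributions are simply scalar distributions on $\mathbb{R}^n$. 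Second, Proposition \ref{prop:PDE} identifies those invariance conditions explicitly: $\mathfrak{n}_+'$-invariance becomes the system \eqref{eqn:Fn} (using the Lie algebra action formulae of Lemma \ref{lem:4.5}), $\mathfrak{a}$-invariance becomes the Euler equation \eqref{eqn:Fa}, and $M'$-invariance splits into the $O(n-1)$-condition \eqref{eqn:invM} together with the parity condition \eqref{eqn:parity} coming from $m_-$ via \eqref{eqn:invm}. By definition \eqref{eqn:KTeqU}, the target of the resulting bijection is exactly $\mathcal{S}ol(\mathbb{R}^n;\lambda,\nu)$.

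Concretely, the steps I would carry out are: (i) specialize Lemma \ref{lem:resinj} to line bundles, noting $\mathbb{C}_{-2\rho}$ only rescales the $A$-weight and does not affect that the kernel is a scalar distribution on $\mathbb{R}^n$; (ii) observe that the map $T \mapsto K_T|_{\mathbb{R}^n}$ is the composite of the Schwartz-kernel correspondence (Proposition \ref{prop:Distker}, valid since $P$ is cocompact in $G$) with the restriction to the open cell, and is injective because $P'[q_+] = S^n \setminus S^{n-1}$ is dense (Proposition \ref{lem:PGP}) and is surjective onto the invariant distributions by the gluing argument in the proof of Theorem \ref{thm:KT}; (iii) invoke Proposition \ref{prop:PDE} to see that a distribution $F$ on $\mathbb{R}^n$ lies in the image if and only if it satisfies \eqref{eqn:Fa}--\eqref{eqn:parity}, i.e.\ if and only if $F \in \mathcal{S}ol(\mathbb{R}^n;\lambda,\nu)$.

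The only point requiring a little care — and the closest thing to an obstacle — is the surjectivity half: one must check that every $F \in \mathcal{S}ol(\mathbb{R}^n;\lambda,\nu)$ actually extends to a $G'$-intertwining operator on the \emph{smooth} (not merely hyperbolic or distributional) globalizations. This is handled by the transitivity of the $P'$-action off $S^{n-1}$ (Proposition \ref{lem:PGP}): one propagates $F$ from the open cell by the $P'$-action, the cocycle conditions \eqref{eqn:m+F}, \eqref{eqn:panty} guaranteeing consistency on overlaps, so that $F$ patches to a well-defined $G'$-invariant distribution on $G/P \simeq S^n$; then Proposition \ref{prop:Distker}(2) turns it into a continuous operator $I(\lambda)^\infty \to J(\nu)^\infty$. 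The rest is bookkeeping, and I would not belabor it.
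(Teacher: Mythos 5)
Your proposal is correct and follows essentially the same route as the paper: the paper obtains Proposition \ref{prop:HomPDE} directly by combining Lemma \ref{lem:resinj} (valid since $P'N_-P=G$, Corollary \ref{cor:PNPG}) with Proposition \ref{prop:PDE}, the conditions \eqref{eqn:Fa}--\eqref{eqn:parity} being exactly the $(M'A,\mathfrak{n}_+')$-invariance written out in the line-bundle case, so the target is $\mathcal{S}ol(\mathbb{R}^n;\lambda,\nu)$ by the definition \eqref{eqn:KTeqU}. Your additional remarks on injectivity (density of the open $P'$-orbit) and surjectivity (gluing by the $P'$-action and Proposition \ref{prop:Distker}) simply unpack the proof of Theorem \ref{thm:KT}, which the paper invokes through Lemma \ref{lem:resinj}.
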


\subsection{ The solutions  $\mathcal{S}ol (\mathbb{R}^n \setminus \{0\}; \lambda,\nu)$}

For a closed subset $S$ of $U$,
we define a subspace of $\mathcal{S}ol (U; \lambda,\nu)$
 by 
\index{sbon}{Sol@$\mathcal{S}ol (U; \lambda,\nu)$|textbf}
\index{sbon}{Sole@$\mathcal{S}ol_S (U; \lambda,\nu)$|textbf}
\[
\mathcal{S}ol_S (U; \lambda,\nu)
:= \{ F \in \mathcal{S}ol(U;\lambda,\nu): \operatorname{Supp} F \subset S\}.
\]
Then we have an exact sequence
\begin{equation}\label{eqn:Solexact}
0 \to \mathcal{S}ol_S(U;\lambda,\nu) \to \mathcal{S}ol(U;\lambda,\nu)
   \to \mathcal{S}ol(U \setminus S;\lambda,\nu).
\end{equation}
Applying \eqref{eqn:Solexact} to $U = \mathbb{R}^n$ and $S = \{0\}$,
we get
\begin{proposition}\label{prop:DHexact}
There is an exact sequence
\[
0 \to \operatorname{Diff}_{G'} (I(\lambda),J(\nu))
   \to \operatorname{Hom}_{G'} (I(\lambda),U(\nu))
   \to \mathcal{S}ol(\mathbb{R}^n \setminus \{0\};\lambda,\nu).
\]
Here 
\index{sbon}{H1@$H(\lambda, \nu)$}
$\operatorname{Diff}_{G'}(I(\lambda),J(\nu))
\equiv H(\lambda,\nu)_{\operatorname{diff}}$
denotes the space
 of differential symmetry breaking operators.  
\end{proposition}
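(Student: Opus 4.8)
The plan is to obtain the exact sequence by transporting the tautological exact sequence \eqref{eqn:Solexact} of distribution-solution spaces through the isomorphism of Proposition \ref{prop:HomPDE}, after identifying its first term with the space of differential symmetry breaking operators. Nothing new needs to be proved analytically; the work is purely in matching up three facts already in hand.

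First I would invoke Proposition \ref{prop:HomPDE}: the assignment $T \mapsto K_T|_{\mathbb{R}^n}$ is a linear bijection $\operatorname{Hom}_{G'}(I(\lambda),J(\nu)) \overset{\sim}{\to} \mathcal{S}ol(\mathbb{R}^n;\lambda,\nu)$, restriction to the open Bruhat cell losing no information by Lemma \ref{lem:resinj}, and $K_T|_{\mathbb{R}^n}$ solving the system \eqref{eqn:Fa}--\eqref{eqn:parity}. Next I would specialize \eqref{eqn:Solexact} to $U=\mathbb{R}^n$ and $S=\{0\}$, which gives
\[
0 \to \mathcal{S}ol_{\{0\}}(\mathbb{R}^n;\lambda,\nu) \to \mathcal{S}ol(\mathbb{R}^n;\lambda,\nu) \to \mathcal{S}ol(\mathbb{R}^n\setminus\{0\};\lambda,\nu),
\]
whose right-hand arrow, precomposed with the bijection of Proposition \ref{prop:HomPDE}, is exactly the last map appearing in the assertion. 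It then remains only to check that, under this bijection, the subspace $\mathcal{S}ol_{\{0\}}(\mathbb{R}^n;\lambda,\nu)$ of solutions supported at the origin corresponds to $\operatorname{Diff}_{G'}(I(\lambda),J(\nu))$.

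For that final identification I would combine Fact \ref{fact:Diff}(1) with the orbit trichotomy of Lemma \ref{lem:SuppInv}: a nonzero $T \in \operatorname{Hom}_{G'}(I(\lambda),J(\nu))$ is a differential operator exactly when $\operatorname{Supp}K_T = \{[p_+]\}$ in $G/P$, while $[p_+]$ is the image of $0 \in \mathbb{R}^n$ under the Bruhat parametrization $(x,x_n) \mapsto n_-(x,x_n)P$ of \eqref{eqn:nbar}. Since by Lemma \ref{lem:SuppInv} the set $\operatorname{Supp}K_T$ is one of $\{[p_+]\}$, $\overline{P'[p_-]}\simeq S^{n-1}$, or $S^n$, and the latter two meet the open cell in $\mathbb{R}^{n-1}$ and $\mathbb{R}^n$ respectively, the equality $\operatorname{Supp}K_T = \{[p_+]\}$ holds if and only if $\operatorname{Supp}(K_T|_{\mathbb{R}^n}) = \{0\}$, i.e. if and only if $K_T|_{\mathbb{R}^n} \in \mathcal{S}ol_{\{0\}}(\mathbb{R}^n;\lambda,\nu)$; this is precisely what Remark \ref{rem:singR} records. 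Substituting this identification into the displayed short exact sequence and replacing its middle and right terms via Proposition \ref{prop:HomPDE} yields the stated exact sequence. The only point that deserves care --- and hence the main obstacle, modest as it is --- is the compatibility of the two notions of support ("concentrated at $[p_+]$ in $G/P$" versus "concentrated at $0$ in the cell $\mathbb{R}^n$"), which is settled entirely by the trichotomy of Lemma \ref{lem:SuppInv}.
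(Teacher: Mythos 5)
Your proposal is correct and follows essentially the same route as the paper: identify $\operatorname{Hom}_{G'}(I(\lambda),J(\nu))$ with $\mathcal{S}ol(\mathbb{R}^n;\lambda,\nu)$ via Proposition \ref{prop:HomPDE}, match $\operatorname{Diff}_{G'}(I(\lambda),J(\nu))$ with $\mathcal{S}ol_{\{0\}}(\mathbb{R}^n;\lambda,\nu)$ via Fact \ref{fact:Diff} (1), and apply \eqref{eqn:Solexact} with $U=\mathbb{R}^n$, $S=\{0\}$. Your extra care about reconciling support at $[p_+]$ in $G/P$ with support at $0$ in the Bruhat cell (Lemma \ref{lem:SuppInv}, Remark \ref{rem:singR}) is exactly the point the paper leaves implicit.
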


\begin{proof}
As subspaces of \eqref{eqn:HomSol},
we have from Fact \ref{fact:Diff} (1)
 the following natural bijection:
\[
\operatorname{Diff}_{G'} (I(\lambda),J(\nu))
\overset{\sim}{\to} \mathcal{S}ol_{\{0\}} (\mathbb{R}^n;\lambda,\nu).
\]
Hence Proposition is immediate from \eqref{eqn:Solexact}.
\end{proof}

In order to analyze 
 $\mathcal{S}ol(\mathbb{R}^n;\lambda,\nu) $, 
 we begin with an explicit structural result on 
 $\mathcal{S}ol(\mathbb{R}^n \setminus \{0\};\lambda,\nu)$:
\begin{lemma}
\label{lem:Sol}
$\dim \mathcal{S}ol (\mathbb{R}^n \setminus \{0\};\lambda,\nu)=1$
 for all $\nulambda \in {\mathbb{C}}^2$.  
More precisely, 
\begin{equation*}
\mathcal{S}ol(\mathbb{R}^n \setminus \{0\};\lambda,\nu)
= \begin{cases}
  \mathbb{C} |x_n|^{\lambda+\nu-n}(|x|^2+x_n^2)^{-\nu}
  &\text{ if\/ $\nulambda \notin\backslash\!\backslash$}, 
  \\
  \mathbb{C} \delta^{(-\lambda-\nu+n-1)}(x_n)(|x|^2+x_n^2)^{-\nu}
  &\text{ if\/ $\nulambda \in \backslash\!\backslash$}.  
  \end{cases}
\end{equation*}
\end{lemma}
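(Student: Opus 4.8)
The plan is to solve the overdetermined system \eqref{eqn:Fa}--\eqref{eqn:parity} explicitly on the punctured space $\mathbb R^n \setminus \{0\}$, splitting the analysis according to whether $x \ne 0$ (the complement of the hyperplane $x_n$-axis, i.e.\ the generic stratum where $|x|^2 > 0$) or where $x = 0$ but $x_n \ne 0$. First I would observe that on the open set $\{|x|^2 + x_n^2 > 0\}$ a distribution solution, being annihilated by a hypoelliptic-type system involving the Euler operator and a first-order operator in each $x_j$ $(1 \le j \le n-1)$, is automatically a (real-)analytic function away from the hyperplane $\{x_n = 0\}$, and more precisely is locally of the form $g(x_n) \cdot (|x|^2 + x_n^2)^{-\nu}$ for some one-variable distribution $g$: indeed \eqref{eqn:Fn} for $1 \le j \le n-1$ says exactly that $(|x|^2 + x_n^2)^{\nu} F$ is annihilated by $(\lambda - n)x_j - x_j E + \tfrac12(|x|^2+x_n^2)\tfrac{\partial}{\partial x_j}$ after absorbing the $\nu$-power — a short computation shows $(|x|^2+x_n^2)^\nu F$ is killed by $\partial/\partial x_j$ on $\{x_n \ne 0\}$, hence depends only on $x_n$ there. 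Combined with the $O(n-1)$-invariance \eqref{eqn:invM}, which is then automatic, the only remaining constraint is the homogeneity/Euler equation \eqref{eqn:Fa}, which forces $g(x_n)$ to be homogeneous of degree $\lambda + \nu - n$ in $x_n$.

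The next step is the classification of homogeneous distributions in one variable that are moreover even (by the parity condition \eqref{eqn:parity}, which for the factored form reduces to $g(-x_n) = g(x_n)$). It is classical that the space of even distributions on $\mathbb R$ homogeneous of degree $d$ is one-dimensional: spanned by $|x_n|^d$ if $d \notin \{-1,-3,-5,\dots\}$, and spanned by $\delta^{(-d-1)}(x_n)$ (an even derivative of the delta) when $-d-1 \in 2\mathbb N$, i.e.\ $d \in \{-1,-3,\dots\}$. Now $d = \lambda + \nu - n$, and $d \in \{-1,-3,-5,\dots\}$ is precisely the condition $\lambda + \nu \in \{n-1, n-3, n-5, \dots\}$, which is the definition of $\backslash\!\backslash$. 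This gives the two cases in the statement and shows the dimension is at most one. Conversely one checks the two candidate kernels actually satisfy all four equations: for the regular kernel this is a direct substitution (using $E(|x_n|^{d}) = d\,|x_n|^d$ and $E((|x|^2+x_n^2)^{-\nu}) = -2\nu(|x|^2+x_n^2)^{-\nu}$ for the Euler equation, and the defining recursion for the Riesz/power distribution in the $x_j$-equations); for the singular kernel one uses the homogeneity degree of $\delta^{(2k)}(x_n)$ together with the fact that on $\mathbb R^n \setminus \{0\}$ multiplication of $\delta^{(2k)}(x_n)$ by the smooth function $(|x|^2+x_n^2)^{-\nu}$ is well-defined (the singular support lies in $\{x_n = 0\}$ but the origin is excluded, so $|x|^2 > 0$ there).

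The main obstacle I anticipate is the passage from "solution on $\{x_n \ne 0\}$" to "solution on all of $\mathbb R^n \setminus \{0\}$", i.e.\ controlling the behaviour of a distribution solution across the hyperplane $\{x_n = 0\}$ minus the origin. One must argue that any distribution on $\mathbb R^n \setminus \{0\}$ satisfying the system, when restricted near a point of $\{x_n = 0, x \ne 0\}$, is governed by the same one-variable homogeneity analysis: here the transversal variable is $x_n$ and the equations \eqref{eqn:Fn}, \eqref{eqn:Fa} reduce — after dividing by the nonvanishing smooth factor $(|x|^2 + x_n^2)^{-\nu}$ and freezing $x \ne 0$ — to the statement that the transversal distribution is homogeneous of the fixed degree, so the local structure theorem for homogeneous distributions applies transversally and patches with the $O(n-1)$-invariance to give the global form. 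I would handle this by a partition-of-unity/sheaf argument: cover $\mathbb R^n \setminus \{0\}$ by the open dense set $\{x \ne 0\}$ and the open set $\{x_n \ne 0\}$, establish the claimed form on the overlap, and check the two local expressions glue to the single global kernel; uniqueness of the transversal homogeneous extension (for the given degree) guarantees consistency. Once this local-to-global step is in place, the dimension count and the explicit formulae follow as above.
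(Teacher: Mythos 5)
Your proposal is correct and follows essentially the same route as the paper's proof: substituting the Euler equation \eqref{eqn:Fa} into \eqref{eqn:Fn} to get $\frac{\partial}{\partial x_j}\bigl((|x|^2+x_n^2)^{\nu}F\bigr)=0$ for $1\le j\le n-1$, deducing the product form $g(x_n)\,(|x|^2+x_n^2)^{-\nu}$, and then classifying even homogeneous distributions of one variable according to whether $\lambda+\nu-n\in\{-1,-3,\dots\}$. The paper streamlines your hyperplane-crossing step by observing that this identity holds on all of $\mathbb{R}^n\setminus\{0\}$ and that every level set $\{x_n=c\}\setminus\{0\}$ is connected for $n\ge 3$ (the parity condition \eqref{eqn:parity} handling $n=2$), so $g$ is a distribution on the whole line $\mathbb{R}$ from the start and your local patching across $\{x_n=0\}\setminus\{0\}$ becomes unnecessary.
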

\begin{proof}
Substituting \eqref{eqn:Fa} into \eqref{eqn:Fn},
we have
\[
( (|x|^2 + x_n^2) \frac{\partial}{\partial x_j} + 2 \nu x_j ) F = 0,
\]
or equivalently,
\[
\frac{\partial}{\partial x_j} ( ( |x|^2 + x_n^2 )^\nu F ) = 0
\quad  (1 \le j \le n-1).
\]
For $n \ge 3$, 
 the level set $\{x_n=c\}\setminus \{0\}$
 is connected for all $c \in {\mathbb{R}}$, 
 and therefore 
the restriction  $F|_{\mathbb{R}^n\setminus\{0\}}$ must be of the form
\[
F(x) = ( |x|^2 + x_n^2 )^{-\nu} g (x_n)
\]
for some $g \in \mathcal{D}' (\mathbb{R})$.
In turn,
\eqref{eqn:Fa} and \eqref{eqn:parity} force $g$
 to be even and homogeneous
 of  degree
$\lambda + \nu - n$.

For $n \ge 2$, 
using in addition that $F(-x,-x_n)=F(x,x_n)$, 
 we get the same conclusion.  

Since any even and homogeneous distribution on ${\mathbb{R}}$
 of degree $a$
 is of the form 
\begin{equation*}
  g(t)=
  \begin{cases}
  |t|^a &\text{ if $a \ne -1,-3,-5, \cdots$}
  \\
  \delta^{(-a-1)}(t) & \text{ if $a=-1,-3,-5,\cdots$}
  \end{cases}
\end{equation*}
up to a scalar multiple,
 we obtain the Lemma.  
\end{proof}

Lemma \ref{lem:Sol} explains why
 and how (generically) regular symmetry 
 breaking operators
\index{sbon}{Alnt@$\A_{\lambda,\nu}$}
 $\A_{\lambda,\nu}$
 (Chapter \ref{sec:kfini})
 and singular symmetry 
 breaking operators
\index{sbon}{Bt@$\B_{\lambda,\nu}$}
 $\B_{\lambda,\nu}$
 (Chapter \ref{sec:8})
 appear.

In order to find
$\operatorname{Hom}_{G'}(I(\lambda),J(\nu))$
by using Proposition \ref{prop:DHexact},
we need to find
$\operatorname{Diff}_{G'}(I(\lambda),J(\nu))$.

The dimension is known as follows,
 see Fact \ref{fact:5.2}:
\begin{align*}
\dim\operatorname{Diff}_{G'}(I(\lambda),J(\nu))
&= \dim\operatorname{Hom}_{(\mathfrak{g}',P')}
           (\operatorname{ind}_{\mathfrak{p}'_{\mathbb{C}}}^{\mathfrak{g}'_{\mathbb{C}}} (\mathbb{C}_{-\nu}),
            \operatorname{ind}_{\mathfrak{p}_{\mathbb{C}}}^{\mathfrak{g}_{\mathbb{C}}} (\mathbb{C}_{-\lambda}))
\\
&= \begin{cases}
         1   &\text{if $\nulambda \in /\!/$},   \\
         0   &\text{if $\nulambda \notin /\!/$}.
     \end{cases}
\end{align*}

Combining the above mentioned dimension formula with 
Proposition \ref{prop:DHexact} and Lemma \ref{lem:Sol},
we obtain
\begin{proposition}\label{prop:upperdim}
\[
\dim\operatorname{Hom}_{G'}
   (I(\lambda),J(\nu)) \le 1
\quad\text{for any $\nulambda \in \mathbb{C}^2 \setminus /\!/$}.
\]
\end{proposition}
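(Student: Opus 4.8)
The plan is to combine the exact sequence of Proposition~\ref{prop:DHexact} with the one-dimensionality result of Lemma~\ref{lem:Sol} and the known computation of the space of differential symmetry breaking operators. Recall that Proposition~\ref{prop:DHexact} gives an exact sequence
\[
0 \to \operatorname{Diff}_{G'}(I(\lambda),J(\nu))
   \to \operatorname{Hom}_{G'}(I(\lambda),J(\nu))
   \to \mathcal{S}ol(\mathbb{R}^n \setminus \{0\};\lambda,\nu),
\]
so that
\[
\dim\operatorname{Hom}_{G'}(I(\lambda),J(\nu))
\le
\dim\operatorname{Diff}_{G'}(I(\lambda),J(\nu))
+ \dim\mathcal{S}ol(\mathbb{R}^n\setminus\{0\};\lambda,\nu).
\]
By Lemma~\ref{lem:Sol} the second term on the right is always equal to $1$. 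The first term is governed by the dimension formula recorded just before the statement (via Fact~\ref{fact:5.2} and the duality with homomorphisms of generalized Verma modules): it equals $1$ if $\nulambda \in /\!/$ and $0$ otherwise. Hence for $\nulambda \in \mathbb{C}^2 \setminus /\!/$ we get $\dim\operatorname{Diff}_{G'}(I(\lambda),J(\nu)) = 0$, and the inequality above collapses to
\[
\dim\operatorname{Hom}_{G'}(I(\lambda),J(\nu)) \le 0 + 1 = 1,
\]
which is exactly the assertion.

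The one genuinely substantive input here is Lemma~\ref{lem:Sol}, whose proof is already given in the excerpt: one substitutes the Euler-homogeneity relation \eqref{eqn:Fa} into the system \eqref{eqn:Fn} to obtain $\partial_{x_j}\big((|x|^2+x_n^2)^{\nu} F\big) = 0$ for $1 \le j \le n-1$, uses connectedness of the slices $\{x_n = c\}\setminus\{0\}$ (for $n \ge 3$; for $n=2$ one additionally invokes the parity condition $F(-x,-x_n)=F(x,x_n)$) to conclude $F = (|x|^2+x_n^2)^{-\nu} g(x_n)$ for a single-variable distribution $g$, and then classifies the even, homogeneous distributions on $\mathbb{R}$ of a given degree. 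So the proof of Proposition~\ref{prop:upperdim} itself is short: it is purely the bookkeeping of assembling the exact sequence with the two dimension counts.

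The main obstacle—such as it is—is making sure the dimension formula for $\operatorname{Diff}_{G'}(I(\lambda),J(\nu))$ is genuinely available at this point. It is asserted in the excerpt to follow from Fact~\ref{fact:5.2}, namely the identification of $G'$-equivariant differential operators between the line bundles with $(\mathfrak{g}',P')$-homomorphisms between the induced (generalized Verma) modules $\operatorname{ind}_{\mathfrak{p}'_{\mathbb{C}}}^{\mathfrak{g}'_{\mathbb{C}}}(\mathbb{C}_{-\nu})$ and $\operatorname{ind}_{\mathfrak{p}_{\mathbb{C}}}^{\mathfrak{g}_{\mathbb{C}}}(\mathbb{C}_{-\lambda})$, together with the classification of such homomorphisms in rank one. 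Granting that, there is nothing left to prove. I would therefore present the argument in three lines: (i) cite the exact sequence of Proposition~\ref{prop:DHexact}; (ii) cite $\dim\mathcal{S}ol(\mathbb{R}^n\setminus\{0\};\lambda,\nu)=1$ from Lemma~\ref{lem:Sol} and $\dim\operatorname{Diff}_{G'}(I(\lambda),J(\nu))=0$ for $\nulambda\notin/\!/$ from the displayed dimension formula; (iii) add the two bounds and conclude. If one wanted to be slightly more self-contained one could also note that the vanishing of the differential-operator space for $\nulambda\notin/\!/$ can be seen directly: a differential solution $F$ is supported at $\{0\}$, hence a finite linear combination of derivatives of $\delta$, and imposing \eqref{eqn:Fa}, \eqref{eqn:invM}, \eqref{eqn:parity} together with the $\mathfrak{n}_+'$-invariance \eqref{eqn:Fn} forces $\nulambda\in/\!/$; but citing Fact~\ref{fact:5.2} is cleaner.
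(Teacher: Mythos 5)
Your proposal is correct and follows exactly the paper's own argument: the authors likewise combine the exact sequence of Proposition \ref{prop:DHexact} with $\dim\mathcal{S}ol(\mathbb{R}^n\setminus\{0\};\lambda,\nu)=1$ from Lemma \ref{lem:Sol} and the vanishing of $\operatorname{Diff}_{G'}(I(\lambda),J(\nu))$ for $(\lambda,\nu)\notin/\!/$ obtained from Fact \ref{fact:5.2}. Nothing further is needed.
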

This proposition will be used in the proof of the meromorphic
continuation of the operator $\A_{\lambda,\nu}$
and its functional equations.
We shall determine the precise dimension of
$\operatorname{Hom}_{G'}(I(\lambda),J(\nu))$
for all $\nulambda \in\mathbb{C}^2$
in Theorem \ref{thm:dimHom}.

\begin{remark}\label{rem:extGP}
In Proposition \ref{prop:DHexact},
$\operatorname{Hom}_{G'}(I(\lambda),J(\nu)) \to
   \mathcal{S}ol(\mathbb{R}^n \setminus \{0\};\lambda,\nu)$
is not necessarily surjective.
See Proposition \ref{prop:surj}.  
\end{remark}

\section{$K$-finite vectors
 and regular symmetry breaking operators $\tA{\lambda}{\nu}$}
\label{sec:kfini}
The goal of this chapter
 is to introduce a $(\mathfrak{g}'_{\mathbb{C}},K')$-homomorphism
\[
\A_{\lambda,\nu} : I(\lambda)_K \to J(\nu)_{K'}.  
\]
We see that $\A_{\lambda,\nu}(\varphi)$ is holomorphic in
$\nulambda \in \mathbb{C}^2$
for any $\varphi \in I(\lambda)_K$,
and that $\A_{\lambda,\nu}$ vanishes if and only if
$\nulambda \in L_{\operatorname{even}}$.
In the next chapter
 we shall discuss an analytic continuation
of the operator $\A_{\lambda,\nu}$ acting on the space
$I(\lambda)$ of smooth vectors.  

\subsection
{Distribution kernel $\ka{\lambda}{\nu}$ and its normalization}
\label{subsec:5.1}
For $(x,x_n) \in \mathbb{R}^{n-1} \oplus \mathbb{R}$,
we define
\begin{equation}\label{eqn:KAdef}
\index{sbon}{Kxnt@$\ka{\lambda}{\nu} (x,x_n)$|textbf}
\ka{\lambda}{\nu} (x,x_n)
:= |x_n|^{\lambda+\nu-n} (|x|^2 + x_n^2)^{-\nu}.
\end{equation}
We write $d \omega$
 for the volume form 
 on the standard sphere $S^{n-1}$.  
Using the polar coordinates
$(x,x_n) = r\omega$, $r > 0$, $\omega \in S^{n-1}$,
we see
\begin{equation}\label{eqn:Kpolar}
\ka{\lambda}{\nu} (x , x_n) dx \, dx_n
= r^{\lambda-\nu-n} |\omega_n|^{\lambda+\nu-n} r^{n-1} dr \, d\omega
\end{equation}
is locally integrable on $\mathbb{R}^n$ if
$\nulambda$ belongs to 
\begin{equation}
\label{eqn:Omega0}
\Omega_0
:=\{\nulambda \in {\mathbb{C}}^2:
\operatorname{Re}(\lambda-\nu) > 0
\text{ and }
\operatorname{Re}(\lambda+\nu) > n-1
\}.  
\end{equation}
In order to see the $P'$-covariance
 of $\ka{\lambda}{\nu}$, 
 it is convenient 
 to use homogeneous coordinates.  
Namely,
 for $\xi = (\xi_0,\dots,\xi_{n+1}) \in \Xi$,
we set
\begin{equation}\label{eqn:klmdmu}
\ska{\lambda}{\nu} (\xi)
:= 2^{-\lambda+n} |\xi_n|^{\lambda+\nu-n} (\xi_{n+1} - \xi_0)^{-\nu}
 \in \mathcal{D}'_{\lambda-n} (\Xi) \simeq I(n - \lambda)^{-\infty}.
\end{equation}
In view of the formula \eqref{eqn:NXi} of the embedding
$i_N:\mathbb{R}^n \hookrightarrow \Xi$
given by
\[
(\xi_0,\dots,\xi_{n+1})
= (1 - \sum_{i=1}^n x_i^2, 2x_1, \dots, 2x_n, 1 + \sum_{i=1}^n x_i^2),
\]
we have
\index{sbon}{iotaNast@$\iota_N^*$}
\index{sbon}{Kxa@$k_{\lambda,\nu}^{\mathbb{A}}$|textbf}
\index{sbon}{iotaKast@$\iota_K^*$}
\begin{alignat}{2}
& \iota_N^* \ska{\lambda}{\nu} &&= \ka{\lambda}{\nu}
\label{eqn:iNk}
\\
& (\iota_K^* \ska{\lambda}{\nu}) (\eta)
&& = 2^{-\lambda+n}
|\eta_n|^{\lambda+\nu-n} (1 - \eta_0)^{-\nu} ,
\label{eqn:iKk}
\end{alignat}
where
$\eta = (\eta_0, \eta_1, \dots, \eta_n) \in S^n$.
Then 
\begin{equation}
\label{eqn:PkA}
\ska{\lambda}{\nu}(me^{-t H}n\xi)
=e^{\nu t} \ska{\lambda}{\nu}(\xi), 
\end{equation}
 for any $m \in M'$, 
 $t \in {\mathbb{R}}$
 and $n \in N_+$
 (see \eqref{eqn:nbxi}), 
 and therefore we have the following lemma:
\begin{lemma}
\label{lem:KAOmega}
For $\nulambda \in \Omega_0$, 
$
\ka{\lambda}{\nu} \in \mathcal{S}ol(\mathbb{R}^n;\lambda,\nu).  
$
\end{lemma}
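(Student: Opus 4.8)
The plan is to verify that the explicitly given function $\ka{\lambda}{\nu}(x,x_n) = |x_n|^{\lambda+\nu-n}(|x|^2+x_n^2)^{-\nu}$ lies in $\mathcal{S}ol(\mathbb{R}^n;\lambda,\nu)$, i.e.\ that it is a well-defined distribution on all of $\mathbb{R}^n$ for $\nulambda \in \Omega_0$ and that it satisfies the four conditions \eqref{eqn:Fa}, \eqref{eqn:Fn}, \eqref{eqn:invM}, \eqref{eqn:parity} of Proposition \ref{prop:PDE}. The local integrability on $\mathbb{R}^n$ for $\nulambda \in \Omega_0$ has essentially already been done in \eqref{eqn:Kpolar}: in polar coordinates $(x,x_n)=r\omega$ the singularity at the origin is governed by $r^{\operatorname{Re}(\lambda-\nu)-1}\,dr$ near $r=0$, which is integrable since $\operatorname{Re}(\lambda-\nu)>0$, and the factor $|\omega_n|^{\lambda+\nu-n}$ is integrable over $S^{n-1}$ since $\operatorname{Re}(\lambda+\nu)>n-1$ (the exponent on the $(n-2)$-sphere near $\omega_n=0$ is $\operatorname{Re}(\lambda+\nu)-n>-1$). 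So $\ka{\lambda}{\nu}$ is a locally integrable function, hence defines a distribution.

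Next I would check the four differential/invariance conditions, and the cleanest route is exactly the one indicated in the excerpt: pass to the homogeneous picture. Define $\ska{\lambda}{\nu}(\xi) = 2^{-\lambda+n}|\xi_n|^{\lambda+\nu-n}(\xi_{n+1}-\xi_0)^{-\nu}$ on the isotropic cone $\Xi$; this is homogeneous of degree $\lambda-n$, so it lies in $\mathcal{D}'_{\lambda-n}(\Xi) \simeq I(n-\lambda)^{-\infty} = I(\lambda)^{*,-\infty}$ in the appropriate duality. The key computation is the transformation rule \eqref{eqn:PkA}, namely $\ska{\lambda}{\nu}(m e^{-tH}n\,\xi) = e^{\nu t}\ska{\lambda}{\nu}(\xi)$ for $m \in M'$, $t\in\mathbb{R}$, $n\in N_+$. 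To see this: the function $|\xi_n|^{\lambda+\nu-n}$ is $M'$-invariant because $M'$ fixes $\xi_n$ (by the very definition of $G'$ as the stabilizer of $e_n$), and it is also $N_+$-invariant since the $N_+$-action \eqref{eqn:nbxi} leaves the $\xi$-component of $\xi$ untouched only up to the middle block — here one uses that $N_+' \subset N_+$ acts trivially on the $n$-th coordinate (this is where $H \in \mathfrak{g}'$ is used); the factor $(\xi_{n+1}-\xi_0)^{-\nu}$ transforms by $e^{\nu t}$ under $e^{-tH}$ by the formula for the $A$-action on $\Xi$, and is $M'$- and $N_+$-invariant by inspection of \eqref{eqn:nbxi} (the quantity $\xi_{n+1}-\xi_0$ is unchanged by $n(b)$). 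Assembling these gives \eqref{eqn:PkA}.

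Then the translation back to $\mathbb{R}^n$ via $\iota_N^*\ska{\lambda}{\nu} = \ka{\lambda}{\nu}$ (formula \eqref{eqn:iNk}) converts \eqref{eqn:PkA} into precisely the four conditions defining $\mathcal{S}ol(\mathbb{R}^n;\lambda,\nu)$: the $M'_+ \simeq O(n-1)$-invariance gives \eqref{eqn:invM}, the $m_-$-invariance (which is part of $M'$ but not of the identity component, cf.\ \eqref{eqn:invm} and \eqref{eqn:panty}) gives the parity condition \eqref{eqn:parity} — here one notes $n-2$ of the first coordinates and $x_n$ all flip sign and $\ka{\lambda}{\nu}$ is even in each of them — the $\mathfrak{a}$-eigenvalue equation $e^{t\nu}F(e^t\cdot)e^{(n-\lambda)t}=F$ gives the Euler equation \eqref{eqn:Fa} upon differentiating at $t=0$ (the homogeneity degree of $\ka{\lambda}{\nu}$ as a function of $(x,x_n)$ is $\lambda-\nu-n$, matching $E - (\lambda-\nu-n)$), and the $\mathfrak{n}_+'$-invariance gives \eqref{eqn:Fn}. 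I expect the main obstacle to be purely bookkeeping: keeping the various sign conventions, the shift by $2\rho$, and the distinction between $M'$, $M'_+$, and $M^w$ straight, and checking that the differentiations of $|x_n|^{\lambda+\nu-n}(|x|^2+x_n^2)^{-\nu}$ as a distribution (not just as a function on the complement of the singular locus) are valid on all of $\mathbb{R}^n$ for $\nulambda \in \Omega_0$ — but since the function is locally $L^1$ and the operators in \eqref{eqn:Fa}–\eqref{eqn:Fn} have polynomial coefficients vanishing to the right order on the singular set, no boundary terms arise, so the distributional identities follow from the pointwise ones by continuity. Alternatively, one can avoid distributional subtleties entirely by invoking \eqref{eqn:PkA} directly, since the $G$-action on $I(n-\lambda)^{-\infty}$ is defined a priori and the identity \eqref{eqn:PkA} is an identity of distributions obtained by pulling back the (obviously valid) identity of continuous functions on the dense open set where $\xi_n \ne 0$ and $\xi_{n+1}-\xi_0 \ne 0$.
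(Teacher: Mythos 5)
Your proposal is correct and takes essentially the same route as the paper: local integrability on $\Omega_0$ via the polar-coordinate computation \eqref{eqn:Kpolar}, together with the $P'$-covariance \eqref{eqn:PkA} of the homogeneous kernel $\ska{\lambda}{\nu}$, pulled back through $\iota_N^*\ska{\lambda}{\nu}=\ka{\lambda}{\nu}$ to give the four conditions of Proposition \ref{prop:PDE}. The only slips are harmless: the covariance holds for $N_+'$ rather than all of $N_+$ (as you yourself point out, $N_+'$ fixes $\xi_n$, which is exactly what is needed for the $P'=M'AN_+'$-invariance), and under $m_-$ all $n$ coordinates $(x,x_n)$ change sign, which does not matter since the kernel is even in each variable.
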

Thus we get a continuous $G'$-homomorphism
\index{sbon}{aln@${\mathbb{A}}_{\lambda,\nu}$|textbf}
\[
\ntAln{\lambda}{\nu} : I(\lambda) \to J(\nu)
\]
and a $(\mathfrak{g}',K')$-homomorphism
\[
\ntAln{\lambda}{\nu} : I(\lambda)_K \to J(\nu)_{K'}
\]
for $\nulambda \in \Omega_0$.  

For the meromorphic continuation of $\ka{\lambda}{\nu}$, 
 we note that the singularities
 of $\ka{\lambda}{\nu}$ arise from the equations
 $x_n = 0$ and
$|x|^2 + x_n^2 = 0$.  
Since the corresponding varieties 
$\mathbb{R}^{n-1}$ and $\{0\}$, respectively
(or $S^{n-1}$
 and 
\index{sbon}{pplus@${p_+}$}
$[p_+]$ in $G/P$, respectively)
 are
not transversal to each other, 
 the proof of the meromorphic distribution
$\ka{\lambda}{\nu} (x-y, x_n) dx \, dx_n$
is more involved.
We shall study $\ka{\lambda}{\nu}$ algebraically
 in this chapter,
 and analytically in the next chapter 
(see Theorem \ref{thm:poleA}).
Our idea is to look carefully
 at the two variable case by using special functions in accordance to a 
`desingularization' of the (real) algebraic variety.

We normalize the distribution $\ka{\lambda}{\nu}$
 by 
\index{sbon}{Kxt@$\KA{\lambda}{\nu} (x, x_n)$|textbf}
\begin{align}
\KA{\lambda}{\nu} (x, x_n)
:=& \frac{1}{\Gamma (\frac{\lambda+\nu-n+1}{2}) \Gamma (\frac{\lambda-\nu}{2})}
       \ka{\lambda}{\nu} (x,x_n)
\notag
\\
=&\frac{1}{\Gamma (\frac{\lambda+\nu-n+1}{2}) \Gamma (\frac{\lambda-\nu}{2})}
|x_n|^{\lambda+\nu-n}(|x|^2+x_n^2)^{-\nu}
\label{eqn:KAA}
\end{align}
and
\[
\index{sbon}{Alnt@$\A_{\lambda,\nu}$|textbf}
\A_{\lambda,\nu}
:= \frac{1}{\Gamma(\frac{\lambda+\nu-n+1}{2})\Gamma(\frac{\lambda-\nu}{2})}
    \mathbb{A}_{\lambda,\nu}.
\]

\begin{remark}\label{rem:basicpole}
The denominator of the distribution kernel
$\KA{\lambda}{\nu}$ has poles at
$\backslash\!\backslash \cup /\!/$ as follows:
\begin{alignat*}{2}
&\Gamma (\frac{\lambda+\nu-n+1}{2})
  &&\text{ has a simple pole } \Leftrightarrow \nulambda \in \backslash\!\backslash,
\\
&\Gamma (\frac{\lambda-\nu}{2})
  &&\text{ has a simple pole } \Leftrightarrow \nulambda \in /\!/.
\end{alignat*}.
\end{remark}

We recall from Definition \ref{def:model}
 and \eqref{eq:2.9.4}
 the following isomorphism:
\index{sbon}{iotalmdast@$\iota_{\lambda}^*$}
\[
  I(\lambda)_K \simeq \iota_{\lambda}^{\ast}(C^{\infty}(S^n)_K).  
\]
\begin{proposition}\label{prop:4.2}
\begin{enumerate}
\item[{\rm{1)}}]
For any $f \in C^{\infty}(S^n)_K$, 
$\langle \KA{\lambda}{\nu},  \iota_{\lambda}^{\ast} f\rangle$
is holomorphic in $\nulambda \in {\mathbb{C}}^2$.  
\item[{\rm{2)}}]
$\langle \KA{\lambda}{\nu},  F\rangle=0$
 for any $F\in I(\lambda)_K$
 if and only if\/ $\nulambda \in 
\index{sbon}{Leven@$L_{\operatorname{even}}$}
L_{\operatorname{even}}$. 
\end{enumerate}
\end{proposition}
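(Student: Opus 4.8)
\textbf{Proof strategy for Proposition \ref{prop:4.2}.}

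The plan is to reduce both assertions to a one-variable statement about the pairing of $\KA{\lambda}{\nu}$ with the explicit $K$-finite functions $F_{\lambda}[\psi,h]$ of Proposition \ref{prop:IKfinite}, using the $M'$-invariance that $\KA{\lambda}{\nu}$ enjoys. First I would recall that by \eqref{eqn:invM} the distribution $\ka{\lambda}{\nu}$ (hence $\KA{\lambda}{\nu}$) is invariant under $O(n-1)$ acting on the first $n-1$ coordinates, so when we pair it against $F_{\lambda}[\psi,h]$ with $\psi \in \mathcal{H}^N(S^{n-1})$ the integral over the $S^{n-2}$-variable kills all $\psi$ except the constant ($N=0$); thus it suffices to treat $F_{\lambda}[1,h]$ for $h \in {\mathbb{C}}[s]$. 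For such a function, writing $(x,x_n) = r\omega = (r\omega', r\omega_n)$ with $\omega \in S^{n-1}$ and passing to polar coordinates as in \eqref{eqn:Kpolar}, the pairing
\[
\langle \KA{\lambda}{\nu}, \iota_\lambda^*(I_{0\to L}h) \rangle
= \frac{1}{\Gamma(\tfrac{\lambda+\nu-n+1}{2})\Gamma(\tfrac{\lambda-\nu}{2})}
  \int_0^\infty \int_{S^{n-1}} r^{\lambda-\nu-n}|\omega_n|^{\lambda+\nu-n}
  \left(\tfrac{2r}{1+r^2}\right)^{0} h\!\left(\tfrac{1-r^2}{1+r^2}\right)
  (1+r^2)^{-\lambda} r^{n-1}\, dr\, d\omega
\]
factors as a product of a radial integral in $r$ and an angular integral over $S^{n-1}$ of $|\omega_n|^{\lambda+\nu-n}$. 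Both are classical beta-type integrals: the angular integral is (a constant times) $B(\tfrac{n-1}{2}, \tfrac{\lambda+\nu-n+1}{2})$, which has its poles exactly at $\lambda+\nu-n+1 \in -2{\mathbb{N}}$, i.e.\ on $\backslash\!\backslash$; and after the substitution $s = \tfrac{1-r^2}{1+r^2}$ the radial integral becomes an integral of $h(s)$ against $(1-s)^{(\lambda-\nu-2)/2}(1+s)^{\cdots}$ over $(-1,1)$, which produces beta factors whose poles lie on $/\!/$ (from $\Gamma(\tfrac{\lambda-\nu}{2})$) and on $\backslash\!\backslash$ again.

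The key point for part (1) is then that the two Gamma factors in the \emph{denominator} of $\KA{\lambda}{\nu}$ are tailored precisely to cancel these poles: $\Gamma(\tfrac{\lambda+\nu-n+1}{2})^{-1}$ kills the $\backslash\!\backslash$-poles coming from the angular beta integral (and the radial one), and $\Gamma(\tfrac{\lambda-\nu}{2})^{-1}$ kills the $/\!/$-poles from the radial beta integral. So $\langle \KA{\lambda}{\nu}, \iota_\lambda^* f\rangle$ extends to an entire function of $\nulambda \in {\mathbb{C}}^2$ for each $K$-finite $f$, proving (1). For part (2), having the explicit product formula in hand, I would read off exactly where the entire function $\langle \KA{\lambda}{\nu}, \iota_\lambda^*(I_{0\to L}h)\rangle$ vanishes: the normalized beta integrals $1/(\Gamma(\cdots)\Gamma(\cdots))$ times $\Gamma(\cdots)$'s leave over a finite product of linear factors in $\lambda,\nu$ (reciprocal Gamma factors) which vanishes iff $\nulambda$ hits $L_{\operatorname{even}}$; one must check that the vanishing is simultaneous for all $h \in {\mathbb{C}}[s]$ and all $L$, which is where the structure of $L_{\operatorname{even}}$ (the constraint $i\equiv j \bmod 2$, $j \le i$, with $i=-\lambda$, $j=-\nu$) enters. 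The direction "$\nulambda \in L_{\operatorname{even}} \Rightarrow$ vanishing" follows because at those points \emph{both} surviving reciprocal-Gamma factors (or the appropriate combination forced by the parity condition) are zero; the converse "vanishing $\Rightarrow \nulambda \in L_{\operatorname{even}}$" requires exhibiting, for $\nulambda \notin L_{\operatorname{even}}$, at least one test function $h$ (e.g.\ $h \equiv 1$, giving the spherical vector) on which the pairing is nonzero — this is consistent with, and in fact anticipates, the computation $\tA{\lambda}{\nu}(\mathbf{1}_\lambda) = \tfrac{\pi^{(n-1)/2}}{\Gamma(\lambda)}\mathbf{1}_\nu$ of Theorem \ref{thm:I.4}(1), which shows the spherical pairing is nonzero precisely off $\lambda \in -{\mathbb{N}}$, and then one handles the remaining locus $\lambda \in -{\mathbb{N}}$, $\nulambda \notin L_{\operatorname{even}}$ by choosing a higher-degree $h$.

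\textbf{Main obstacle.} The routine part is the beta-integral bookkeeping; the delicate part is the converse in (2), namely showing that the pairing does \emph{not} vanish identically on all of $I(\lambda)_K$ for every $\nulambda \notin L_{\operatorname{even}}$, including the "hard" sublocus where $\lambda$ is a non-positive integer (so the spherical vector pairs to zero) but $\nulambda \notin L_{\operatorname{even}}$. There one has to track which polynomial $h$ survives: the reciprocal-Gamma factor $\Gamma(\tfrac{\lambda-\nu}{2})^{-1}$ or $\Gamma(\tfrac{\lambda+\nu-n+1}{2})^{-1}$ introduces extra zeros that must be compensated by zeros of the radial beta integral against a suitable $h$, and one needs a careful case analysis (matching the parity of $L-N$ against that of $\lambda-\nu$) to produce a test function realizing a nonzero value. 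I expect this parity matching — essentially the same combinatorics that defines $L_{\operatorname{even}}$ versus $L_{\operatorname{odd}}$ — to be the crux of the argument.
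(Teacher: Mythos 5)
Your reduction to $N=0$ is where the argument breaks. The $O(n-1)$-invariance of $\KA{\lambda}{\nu}$ does \emph{not} kill every $\psi\in{\mathcal{H}}^N(S^{n-1})$ with $N>0$: for the pair $(O(n),O(n-1))$ each ${\mathcal{H}}^N(S^{n-1})$ contains the trivial $O(n-1)$-type exactly once, namely ${\mathcal{H}}^N(S^{n-1})^{O(n-1)}={\mathbb{C}}\,\Tilde{\Tilde{C}}_N^{\frac n2-1}(\omega_n)$ (Lemma \ref{lem:Bnint} (1)), and the pairing of $\KA{\lambda}{\nu}$ with $F_\lambda[\Tilde{\Tilde{C}}_N^{\frac n2-1}(\omega_n),h]$ is generically nonzero for every even $N$. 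These higher-$N$ invariant vectors are not a convenience but the crux: with $N=0$ alone the pairing equals $c\,P_{\frac{\lambda-\nu}{2},\frac{\lambda+\nu}{2}}(h)$, which by Lemma \ref{lem:Beta} vanishes for \emph{all} $h\in{\mathbb{C}}[s]$ exactly when $\frac{\lambda-\nu}{2},\frac{\lambda+\nu}{2}\in-{\mathbb{N}}$, a set strictly larger than $L_{\operatorname{even}}$ (it contains, e.g., $\nulambda=(-4,2)$, where $\nu>0$). At such points no choice of $h$ can rescue your argument, since the whole $N=0$ family pairs to zero; the nonvanishing asserted in part (2) is only detected by an invariant vector of higher degree (at $(-4,2)$, by $N=4$). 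So the ``only if'' direction of (2) cannot be completed inside your reduction, and the ``parity matching'' you defer to is exactly the missing content.

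The paper's proof keeps all $N$: by Proposition \ref{prop:IKfinite} every $K$-finite vector is a combination of $F_\lambda[\psi,h]$, the $O(n-1)$-invariance kills only the component of $\psi$ orthogonal to $\Tilde{\Tilde{C}}_N^{\frac n2-1}(\omega_n)$, and Lemma \ref{lem:KtildeFC} evaluates the surviving pairings as $c\,P_{\frac{\lambda-\nu+N}{2},\frac{\lambda+\nu+N}{2}}(h)\prod_{j=0}^{\frac N2-1}\bigl(\frac{\lambda-\nu}{2}+j\bigr)\bigl(\frac{\lambda+\nu-n}{2}-j\bigr)$; holomorphy (part (1)) then follows from Lemma \ref{lem:Beta}, and part (2) from intersecting the zero sets ${\mathcal{Z}}_N$ over all $N\in2{\mathbb{N}}$, which is what cuts the naive locus $\{\frac{\lambda-\nu}{2},\frac{\lambda+\nu}{2}\in-{\mathbb{N}}\}$ down to $L_{\operatorname{even}}$. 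Your beta-integral bookkeeping for the radial and angular factors is essentially the $N=0$ case of that lemma and is fine as far as it goes; to repair the proof you must carry the extra parameter $N$ (equivalently the Gegenbauer factor in $\omega_n$) through both parts.
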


As the proof requires a number
 of preliminary results,
 we show the proposition in Section \ref{subsec:profpr7.3}
In the course of the proof,
we also obtain the following result
(see Lemma \ref{lem:KtildeFC} for a more general statement):
\begin{proposition}\label{prop:AminK}
Let\/
$\mathbf{1}_\lambda := \iota_\lambda^* (\mathbf{1})$,
$\mathbf{1}_\nu := \iota_\nu^* (\mathbf{1})$
be the normalized spherical vectors in
$I(\lambda)$, $J(\nu)$, respectively.
Then
\[
\A_{\lambda,\nu} (\mathbf{1}_\lambda)
= \frac{\pi^{\frac{n-1}{2}}}{\Gamma(\lambda)} \mathbf{1}_\nu.
\]
\end{proposition}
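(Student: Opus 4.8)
The plan is to compute the scalar $c$ in the relation $\A_{\lambda,\nu}(\mathbf{1}_\lambda) = c\,\mathbf{1}_\nu$ directly by pairing with the spherical vector on the target side. Since $K'$-fixed vectors in $J(\nu)$ are unique up to scalar (the $K'$-spherical line in a spherical principal series is one-dimensional), and since $\A_{\lambda,\nu}$ is a $(\mathfrak{g}',K')$-homomorphism sending the $K$-spherical vector $\mathbf{1}_\lambda$ to a $K'$-fixed vector, we know $\A_{\lambda,\nu}(\mathbf{1}_\lambda)$ is a multiple of $\mathbf{1}_\nu$. To pin down the multiple, the cleanest route is to evaluate the associated distributional pairing at a convenient test vector — namely the spherical vector of the dual representation — so that the constant $c$ is extracted as an explicit integral.

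First I would use the $N$-picture: in the noncompact model the spherical vector of $I(\lambda)$ is $\mathbf{1}_\lambda = (1+|z|^2)^{-\lambda}$ on $\mathbb{R}^n$ (Section \ref{subsec:Kfinite}), and $\A_{\lambda,\nu}$ is realized as integration against the kernel $\KA{\lambda}{\nu}(x-y, x_n)$ restricted to the open Bruhat cell. Thus $\A_{\lambda,\nu}(\mathbf 1_\lambda)(y)$ is, up to the Gamma normalization, the convolution
\[
\frac{1}{\Gamma(\tfrac{\lambda+\nu-n+1}{2})\Gamma(\tfrac{\lambda-\nu}{2})}
\int_{\mathbb{R}^n} |x_n|^{\lambda+\nu-n}(|x|^2+x_n^2)^{-\nu}(1+|x+y|^2)^{-\lambda}\,dx\,dx_n.
\]
By $K'$-equivariance it suffices to evaluate at $y=0$; then the integral factorizes after passing to polar coordinates $(x,x_n)=r\omega$ with $\omega\in S^{n-1}$, as in \eqref{eqn:Kpolar}. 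The angular integral produces a Beta-type integral in $\omega_n$ over $S^{n-1}$, giving a ratio of Gamma functions in $\tfrac{\lambda+\nu-n+1}{2}$ and $\tfrac{n-1}{2}$; the radial integral $\int_0^\infty r^{\lambda-\nu-1}(1+r^2)^{-\lambda}\,dr$ is a standard Beta integral giving $\tfrac12 B(\tfrac{\lambda-\nu}{2}, \tfrac{\lambda+\nu}{2})$. Multiplying these together and clearing the normalizing factor $\Gamma(\tfrac{\lambda+\nu-n+1}{2})\Gamma(\tfrac{\lambda-\nu}{2})$ in the denominator, the $\Gamma(\tfrac{\lambda-\nu}{2})$ and $\Gamma(\tfrac{\lambda+\nu-n+1}{2})$ factors cancel against the ones produced by the two integrals, leaving $c = \pi^{(n-1)/2}/\Gamma(\lambda)$ after simplification via the duplication formula. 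I would then check that the resulting function of $y$ is indeed $(1+|y|^2)^{-\nu} = \mathbf 1_\nu$, which follows from the $G'$-equivariance already established, so the computation at $y=0$ determines everything.

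The main obstacle is convergence and analytic continuation: the factorized integrals converge only for $\nulambda$ in the region $\Omega_0$ of \eqref{eqn:Omega0} (and the radial integral needs $\operatorname{Re}\lambda$ large as well for $(1+r^2)^{-\lambda}$ decay). So the honest argument is to carry out the explicit evaluation on a nonempty open subset of $\Omega_0$ where all the Beta integrals converge absolutely, obtaining the identity $\A_{\lambda,\nu}(\mathbf 1_\lambda) = \tfrac{\pi^{(n-1)/2}}{\Gamma(\lambda)}\mathbf 1_\nu$ there, and then invoke Proposition \ref{prop:4.2}(1) — holomorphy of $\langle \KA{\lambda}{\nu}, \iota_\lambda^*f\rangle$ in $\nulambda\in\mathbb{C}^2$ for every $K$-finite $f$, applied to $f=\mathbf 1$ — together with holomorphy of $1/\Gamma(\lambda)$ to extend the identity to all $\nulambda\in\mathbb{C}^2$ by uniqueness of holomorphic continuation. (The statement is claimed as a special case of Lemma \ref{lem:KtildeFC}, whose general computation of $\A_{\lambda,\nu}$ on $F_\lambda[\psi,h]$ presumably proceeds along exactly these lines; one could alternatively derive the spherical case as the specialization $N=0$, $h\equiv 1$ of that lemma, but the self-contained Beta-integral computation above is the most transparent.)
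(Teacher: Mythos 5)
Your proposal is correct and follows essentially the same route as the paper: the paper proves the proposition by invoking Lemma \ref{lem:KtildeFC} with $N=0$ and $h=\mathbf{1}$, and the proof of that lemma is exactly the polar-coordinate factorization into the radial Beta integral and the angular integral over $S^{n-1}$ that you carry out, with the holomorphic continuation from $\Omega_0$ to all of $(\lambda,\nu)\in\mathbb{C}^2$ handled as you indicate via Proposition \ref{prop:4.2} (1). The only cosmetic difference is that in your direct parametrization the Gamma factors cancel without needing the duplication formula, which the paper uses only because its angular integral is expressed through the Gegenbauer normalization.
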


\begin{proof}
Applying Lemma \ref{lem:KtildeFC}
with $k = 0$ and $h = \mathbf{1}$,
we get
\[
\langle \KA{\lambda}{\nu}, \Gamma(\frac{n}{2}) \mathbf{1}_\lambda \rangle
=
\frac{\pi^{\frac{n-1}{2}} \Gamma(\frac{n}{2})}{\Gamma(\lambda)} .
\]
Here we have used the duplication formula \eqref{eqn:dupl} of the Gamma function.
Hence we get the proposition.
\end{proof}

Proposition \ref{prop:AminK} will be used
 in finding the constants 
 appearing in various functional equations
 (see Chapter \ref{sec:reduction}).  
We shall discuss the meaning of Proposition \ref{prop:AminK}
 also in Chapter \ref{sec:PGinv}
 in relation with analysis
 on the semisimple symmetric space $G/G'$.

\subsection{Preliminary results}
We prepare two elementary lemmas that will be used in the proof of Proposition \ref{prop:4.2}.

The first lemma illustrates
 that the zero set of an operator with holomorphic
parameters is not necessarily of codimension one in the parameter space,
as Proposition \ref{prop:4.2} states.

For a polynomial $g(s)$ of one variable,
we define
\begin{equation}\label{eqn:Pab}
P_{a,b}(g) := \frac{1}{\Gamma(a)\Gamma(b)}
   \int_{-1}^1 (1-s)^{a-1} (1+s)^{b-1} g(s) ds.
\end{equation}
Our normalization is given as
\[
P_{a,b} (1) = \frac{1}{\Gamma(a+b)}
\]

\begin{lemma}\label{lem:Beta}
For any $g \in \mathbb{C}[s]$,
 $P_{a,b} (g)$ is holomorphic as a function of two variables $(a,b) \in \mathbb{C}^2$.
Further,
$P_{a,b} \equiv 0$
if and only if $(-a, -b) \in \mathbb{N} \times \mathbb{N}$.
\end{lemma}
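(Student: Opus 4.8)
The plan is to analyze the Beta-type integral $P_{a,b}(g)$ directly. First I would observe that it suffices to treat the monomial basis: since $P_{a,b}$ is linear in $g$, and since $\{(1-s)^k : k \in \mathbb{N}\}$ (or equivalently $\{(1+s)^k\}$) spans $\mathbb{C}[s]$, it is enough to understand $P_{a,b}((1-s)^k)$ for each $k \in \mathbb{N}$. For such a monomial, the integral telescopes to the classical Beta integral:
\[
P_{a,b}((1-s)^k)
= \frac{1}{\Gamma(a)\Gamma(b)} \int_{-1}^1 (1-s)^{a+k-1}(1+s)^{b-1}\, ds
= \frac{2^{a+b+k-1}\,\Gamma(a+k)\Gamma(b)}{\Gamma(a)\Gamma(b)\,\Gamma(a+b+k)}
= \frac{2^{a+b+k-1}(a)_k}{\Gamma(a+b+k)},
\]
where $(a)_k = a(a+1)\cdots(a+k-1)$ is the Pochhammer symbol (with $(a)_0 = 1$). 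Here one first verifies the identity for $\operatorname{Re} a, \operatorname{Re} b > 0$ by the standard substitution $s \mapsto \tfrac{1-2t}{1}$ reducing to $B(a+k,b)$, and then notes both sides are entire in $(a,b)$: the right-hand side visibly is (the poles of $1/\Gamma(a+b+k)$ are cancelled by nothing, and $(a)_k$ is polynomial), so the holomorphy claim follows for each monomial and hence for all $g$ by linearity.

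Next I would prove the vanishing criterion. From the monomial formula, $P_{a,b}\bigl((1-s)^k\bigr) = 0$ exactly when $\frac{(a)_k}{\Gamma(a+b+k)} = 0$, i.e. when either $(a)_k = 0$ — which happens iff $a \in \{0,-1,\dots,-(k-1)\}$, equivalently $-a \in \{0,1,\dots,k-1\}$ — or $1/\Gamma(a+b+k) = 0$, i.e. $a+b+k \in -\mathbb{N}$, equivalently $a+b \in \{-k,-k-1,\dots\}$. For a general $g = \sum_{k=0}^d c_k (1-s)^k$ with $c_d \neq 0$, the operator $P_{a,b}(g)$ is identically zero as a function of $(a,b)$ iff each coefficient function $\frac{2^{a+b+k-1}(a)_k}{\Gamma(a+b+k)}$ contributes consistently to a cancellation — but I would argue more cleanly by picking a point where everything is under control. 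Specifically: if $(-a,-b) \in \mathbb{N} \times \mathbb{N}$, say $a = -p$, $b = -q$ with $p,q \in \mathbb{N}$, then for every $k$, if $k > p$ then $(a)_k = (-p)_k = 0$, and if $k \le p$ then $a+b+k = -p-q+k \le -q \le 0$ so $1/\Gamma(a+b+k) = 0$; hence $P_{a,b}\bigl((1-s)^k\bigr) = 0$ for all $k$, so $P_{a,b}(g) = 0$. This gives the "if" direction for all $g$ simultaneously.

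For the converse, suppose $P_{a,b}(g) \equiv 0$ on $\mathbb{C}^2$; I want $(-a,-b) \in \mathbb{N}\times\mathbb{N}$ to fail to happen — wait, I need the statement as written: $P_{a,b} \equiv 0$ (as an operator, i.e. for all $g$) iff $(-a,-b)\in\mathbb{N}\times\mathbb{N}$. So the converse to establish is: if $(-a,-b) \notin \mathbb{N}\times\mathbb{N}$ then there exists $g$ with $P_{a,b}(g) \neq 0$. I would take $g = \mathbf{1}$, giving $P_{a,b}(\mathbf{1}) = 1/\Gamma(a+b)$, which is nonzero unless $a+b \in -\mathbb{N}$. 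If $a+b \notin -\mathbb{N}$ we are done. If $a + b = -N$ with $N \in \mathbb{N}$, then since $(-a,-b) \notin \mathbb{N}\times\mathbb{N}$ at least one of $-a,-b$, say $-a$, is not a nonnegative integer; then pick $k$ large enough that $a + b + k = k - N > 0$, so $1/\Gamma(a+b+k) \neq 0$, and $(a)_k \neq 0$ because $a \notin \{0,-1,\dots,-(k-1)\}$ (as $-a \notin \mathbb{N}$ at all). Hence $P_{a,b}\bigl((1-s)^k\bigr) \neq 0$. This completes the equivalence.

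The main obstacle, modest as it is, is bookkeeping: one must be careful about which factor ($(a)_k$ versus $1/\Gamma(a+b+k)$) is responsible for a given vanishing, and the monomial basis $(1-s)^k$ is asymmetric in $a$ and $b$ whereas the final criterion is symmetric — so a sanity check using the $(1+s)^k$ basis (which swaps the roles of $a$ and $b$) is worthwhile to confirm consistency. Everything else reduces to the classical Beta integral and elementary properties of the Gamma function, both of which I would cite rather than reprove.
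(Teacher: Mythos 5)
Your proof is correct and follows essentially the same route as the paper: evaluate $P_{a,b}$ on a spanning family of monomials via the classical Beta integral and read the vanishing locus off the resulting explicit Gamma-factor formula. The only difference is that the paper works with the symmetric two-parameter family $(1-s)^{l_1}(1+s)^{l_2}$, so the criterion $(-a,-b)\in\mathbb{N}\times\mathbb{N}$ falls out directly as the intersection of the zero sets, whereas your single family $(1-s)^k$ forces you to invoke the $(1+s)^k$ computation (the $a\leftrightarrow b$ swap) to handle the case $-a\in\mathbb{N}$, $-b\notin\mathbb{N}$ in the converse --- which you do flag, so there is no gap.
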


\begin{proof}
For $l_1, l_2 \in {\mathbb{N}}$, 
 we set 
\[
  g_{l_1,l_2}(s):=(1-s)^{l_1}(1+s)^{l_2}.  
\]
Then the polynomials 
$g_{l_1,l_2}(s)$ ($l_1,l_2 \in \mathbb{N}$)
span the vector space $\mathbb{C} [s]$.
We then compute 
\begin{align*}
P_{a,b} (g_{l_1,l_2})
&= \frac{2^{a+b+l_1+l_2-1}B(a+l_1,b+l_2)}{\Gamma(a)\Gamma(b)}
\\
&= \frac{2^{a+b+l_1+l_2-1}}{\Gamma(a+b+l_1+l_2)}
   \prod_{i=0}^{l_1-1} (a+i)
   \prod_{j=0}^{l_2-1} (b+j), 
\end{align*}
where $B(\,,\,)$ is the Beta function.   
Thus $P_{a,b} (g_{l_1,l_2})$ is holomorphic for any $l_1$
 and $l_2$, 
 and the first statement is proved.

The zero set of $P_{a,b}(g_{l_1,l_2})$ is given by
\begin{align*}
\mathcal{N}_{l_1,l_2} :={}&
 \{(a,b) \in \mathbb{C}^2: P_{a,b}(g_{l_1,l_2}) = 0 \}
\\
  ={}&
 \bigcup_{i=0}^{l_1-1} \{ a = -i \} 
\cup 
\bigcup_{j=0}^{l_2-1} \{ b = -j \}  
\cup \{ a + b = -2l \}.
\end{align*}
Taking the intersection of all $\mathcal{N}_{l_1,l_2}$,
we get
\[
\bigcap_{l_1=0}^\infty \bigcap_{l_2=0}^{\infty}\, 
\mathcal{N}_{l_1,l_2}
= \{ (a,b) \in \mathbb{C}^2 : a \in -\mathbb{N},  \  b \in -\mathbb{N} \}.
\]
Thus Lemma \ref{lem:Beta} is proved.
\end{proof}

The orthogonal group $O(n)$ acts irreducibly on the space
$\mathcal{H}^N(S^{n-1})$ of spherical harmonics,
and we let
 $O(n-1)$ act on ${\mathbb{R}}^n$
 in the first $(n-1)$-coordinates.  
We denote by ${\mathcal{H}}^N(S^{n-1})^{O(n-1)}$
 the subspace consisting of $O(n-1)$-invariant
 spherical harmonics
 of degree $N$, 
 and by $({\mathcal{H}}^N(S^{n-1})^{O(n-1)})^{\perp}$
 the orthogonal complementary subspace
 with respect the $L^2$-inner product on $S^{n-1}$.  
Then we have a direct sum decomposition:
\[
  {\mathcal{H}}^N(S^{n-1})
  ={\mathcal{H}}^N(S^{n-1})^{O(n-1)}
   \oplus
   ({\mathcal{H}}^N(S^{n-1})^{O(n-1)})^{\perp}.  
\]
 
Let 
\index{sbon}{Ctt@$\tilde{\tilde{C}}_N^\mu (t)$}
$\Tilde{\Tilde{C}}_N^{\mu}(t)$ be
 the renormalized Gegenbauer polynomial, 
 see \eqref{eqn:nGeg}.  
The next lemma is classical.  
\begin{lemma}\label{lem:Bnint}
\begin{enumerate}
\item[{\rm{1)}}]
We regard $\Tilde{\Tilde{C}}_N^{\mu} (\omega_n)$
 as a function on $S^{n-1}$
 in the coordinates $(\omega_1,\dots,\omega_n)$
 of the ambient space ${\mathbb{R}}^n$.  
Then we have:
\[
{\mathcal{H}}^N(S^{n-1})^{O(n-1)}
={\mathbb{C}}\operatorname{-span}
\Tilde{\Tilde{C}}_N^{\frac{n}{2}-1} (\omega_n).  
\]
\item[{\rm{2)}}]
Let $\psi \in {\mathcal{H}}^N(S^{n-1})$.  
If $N$ is odd or $\psi \perp {\mathcal{H}}^N(S^{n-1})^{O(n-1)}$, 
 then 
\[
   \int_{S^{n-1}} |\omega_n|^{\lambda+\nu-n}\psi(\omega)d\omega=0.  
\]
If $N$ is even,
 then 
\[
   \int_{S^{n-1}} |\omega_n|^{\lambda+\nu-n} \Tilde{\Tilde{C}}_N^{\frac{n}{2}-1} (\omega_n)d\omega=
   d_{n,N}(\lambda,\nu) \, g (\lambda,\nu), 
\]
where 
\begin{align}
d_{n,N}(\lambda,\nu)
:=&{} \frac{2^{2-\lambda-\nu} \pi^{\frac{n+1}{2}} \Gamma(n+N-1)}
             {\Gamma(\frac{n-1}{2}) \Gamma(N+1)}, 
\label{eqn:intGegen}
\\
   g(\lambda, \nu)
  :=&{} \frac{\Gamma (\lambda+\nu-n+1)}
         {\Gamma (\frac{\lambda+\nu-n-N+2}{2}) 
          \Gamma (\frac{\lambda+\nu+N}{2})}.  
\notag
\end{align}
\end{enumerate}
\end{lemma}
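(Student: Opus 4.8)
The statement is a classical computation about integrating a power of one coordinate against a Gegenbauer polynomial over the sphere, so the plan is to reduce everything to a one-variable Beta integral. First I would set up coordinates on $S^{n-1}\subset{\mathbb{R}}^n$ adapted to the last coordinate: write $\omega=(\sqrt{1-t^2}\,\theta,\,t)$ with $t=\omega_n\in[-1,1]$ and $\theta\in S^{n-2}$, so that the surface measure factors as $d\omega=(1-t^2)^{\frac{n-3}{2}}\,dt\,d\theta$. Integrating out $\theta$ contributes $\operatorname{vol}(S^{n-2})=\frac{2\pi^{\frac{n-1}{2}}}{\Gamma(\frac{n-1}{2})}$. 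Part (1) and the vanishing assertions in part (2) are then immediate: $\Tilde{\Tilde{C}}_N^{\frac{n}{2}-1}(\omega_n)$ depends only on $\omega_n$, hence is $O(n-1)$-invariant, and it spans the one-dimensional space ${\mathcal{H}}^N(S^{n-1})^{O(n-1)}$ because the zonal spherical harmonic with pole at $e_n$ is unique up to scalar and equals a Gegenbauer polynomial in $\omega_n$ of index $\frac{n}{2}-1$; orthogonality of this zonal function to $({\mathcal{H}}^N(S^{n-1})^{O(n-1)})^{\perp}$ and to all of ${\mathcal{H}}^{N}$ when $N$ is odd (parity in $t$, since $|\omega_n|^{\lambda+\nu-n}$ is even) kills those integrals.

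For the main formula I would reduce to the scalar integral
\[
\int_{S^{n-1}} |\omega_n|^{\lambda+\nu-n}\,\Tilde{\Tilde{C}}_N^{\frac{n}{2}-1}(\omega_n)\,d\omega
= \frac{2\pi^{\frac{n-1}{2}}}{\Gamma(\frac{n-1}{2})}
  \int_{-1}^{1} |t|^{\lambda+\nu-n}(1-t^2)^{\frac{n-3}{2}}\,\Tilde{\Tilde{C}}_N^{\frac{n}{2}-1}(t)\,dt,
\]
valid for $\operatorname{Re}(\lambda+\nu)>n-1$ and then continued meromorphically (indeed holomorphically after the normalization, as in Lemma \ref{lem:Beta}). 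Since $N$ is even the integrand is even in $t$, so the integral is $2\int_0^1$; substituting $s=t^2$ turns it into a single Beta-type integral $\int_0^1 s^{\frac{\lambda+\nu-n-1}{2}}(1-s)^{\frac{n-3}{2}}\Tilde{\Tilde{C}}_N^{\frac{n}{2}-1}(\sqrt{s})\,ds$. The last ingredient is the standard evaluation of the Gegenbauer polynomial against such a weight: using the hypergeometric/Rodrigues expansion of $C_N^{\mu}$ and the Beta integral term by term, one obtains a ${}_2F_1$ evaluated at $1$ which collapses by Gauss's theorem, producing precisely the ratio of Gamma functions $g(\lambda,\nu)=\Gamma(\lambda+\nu-n+1)/\bigl(\Gamma(\tfrac{\lambda+\nu-n-N+2}{2})\Gamma(\tfrac{\lambda+\nu+N}{2})\bigr)$ together with the $N$- and $n$-dependent constant $d_{n,N}(\lambda,\nu)$. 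The normalization conventions for $\Tilde{\Tilde{C}}_N^{\mu}$ fixed in \eqref{eqn:nGeg} are what pin down the exact constant $\frac{2^{2-\lambda-\nu}\pi^{\frac{n+1}{2}}\Gamma(n+N-1)}{\Gamma(\frac{n-1}{2})\Gamma(N+1)}$; I would check this by matching at one convenient value, e.g. $N=0$, where the identity reduces to the known integral $\int_{S^{n-1}}|\omega_n|^{\lambda+\nu-n}d\omega = \frac{2\pi^{\frac{n-1}{2}}\Gamma(\frac{\lambda+\nu-n+1}{2})}{\Gamma(\frac{\lambda+\nu}{2})}$, and separately tracking the leading coefficient for general even $N$.

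The only real obstacle is bookkeeping: getting the constant $d_{n,N}(\lambda,\nu)$ exactly right through the chain of substitutions, the duplication formula for $\Gamma$, and the specific renormalization of the Gegenbauer polynomials in \eqref{eqn:nGeg}. There is no conceptual difficulty — the integral is a classical one (it appears, e.g., in the theory of Funk--Hecke and in Gelfand--Shilov \cite{GS}) — but since the whole paper's constants propagate into the functional equations and residue formulae, I would be careful to verify the final expression against the $N=0$ and $N=2$ cases before moving on. The holomorphy statement (that the normalized quantity $g(\lambda,\nu)$, equivalently $\tfrac{1}{\Gamma(\frac{\lambda+\nu-n+1}{2})\Gamma(\frac{\lambda-\nu}{2})}\times(\text{this integral})$, extends holomorphically) then follows from the explicit Gamma-function formula exactly as in the proof of Lemma \ref{lem:Beta}.
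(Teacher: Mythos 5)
Your proposal matches the paper's proof in all essentials: both reduce the sphere integral to the one-variable integral $\operatorname{vol}(S^{n-2})\int_{-1}^1 |t|^{\lambda+\nu-n}(1-t^2)^{\frac{n-3}{2}}\Tilde{\Tilde{C}}_N^{\frac n2-1}(t)\,dt$, handle the vanishing cases by parity and $O(n-1)$-invariance, take part (1) as the standard zonal-harmonic fact, and evaluate the remaining Gegenbauer integral by the classical formula (the paper quotes the tabulated integral \eqref{eqn:Ct int}--\eqref{eqn:Ct int2} plus the duplication formula, while you would rederive the same formula via the ${}_2F_1$ expansion and Gauss summation — an equivalent route). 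Your plan to pin down the constant by checking $N=0$ is a sensible safeguard but does not change the argument.
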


\begin{proof}
1) \enspace
The result is well-known.
See {\it{e.g.}}, 
 \cite[Lemma 5.2]{KO3}.  
\begin{enumerate}
\item[2)]
For $\phi(\omega_n)$ regarded as an $O(n-1)$-invariant function,
we have
\[
\int_{S^{n-1}} \phi (\omega_n) d\omega
= \operatorname{vol} (S^{n-2}) \int_{-1}^1 \phi (t) (1-t^2)^{\frac{n-3}{2}}dt.
\]
Therefore,
\begin{align*}
\int_{S^{n-1} }|\omega_n|^{\lambda+\nu-n} \Tilde{\Tilde{C}}_N^{\frac{n}{2}-1} (\omega_n) d\omega
&= \operatorname{vol} (S^{n-2}) \int_{-1}^1 |t|^{\lambda+\nu-n} (1-t^2)^{\frac{n-3}{2}} \Tilde{\Tilde{C}}_N^{\frac{n}{2}-1} (t) dt
\\
&= \begin{cases}
             0                                                                       &\text{for $N$ odd, }
     \\
             d_{n,N}(\lambda,\nu) g(\lambda,\nu)                                     &\text{for $N$ even.}
     \end{cases}
\end{align*}
The last equality follows from 
 $\operatorname{vol}(S^{n-2}) = \frac{2\pi^{\frac{n-1}{2}}}{\Gamma(\frac{n-1}{2})}$
 and from the integration formula \eqref{eqn:Ct int2}
 of the Gegenbauer polynomial.
\end{enumerate}
\end{proof}

\subsection{Proof of Proposition \ref{prop:4.2}}
\label{subsec:profpr7.3}
In light of Proposition \ref{prop:IKfinite}, 
any element in $I(\lambda)_K$
 is a linear combination of functions
 of the form \eqref{eqn:F12}, namely,
\index{sbon}{Fl@$F_{\lambda}[\psi,h]$}
\[
F_\lambda [\psi, h] (r\omega) =
(1 + r^2)^{-\lambda} \left( \frac{2r}{1+r^2} \right)^N
   \psi (\omega) h \left( \frac{1-r^2}{1+r^2} \right)
\]
for $\psi \in \mathcal{H}^N (S^{n-1})$
 and $h \in {\mathbb{C}}[s]$ in the polar coordinates
$(x, x_n) = r\omega$ ($r > 0$, $\omega \in S^{n-1}$).

Then the first statement of Proposition \ref{prop:4.2} follows immediately
{}from the next lemma:
\begin{lemma}\label{lem:KtildeFC}
\begin{enumerate}
\item[{\rm{1)}}]
Suppose $N \in {\mathbb{N}}$, 
 $\psi \in {\mathcal{H}}^N(S^{n-1})$
 and $h \in \mathbb{C}[s]$.  
If $N$ is odd or
 $\psi \perp {\mathcal{H}}^N(S^{n-1})^{O(n-1)}$, 
then 
\[
\langle
\KA{\lambda}{\nu},F_{\lambda}[\psi,h]
\rangle
=0.  
\]
\item[{\rm{2)}}]
For $N \in 2\mathbb{N}$ and $h \in \mathbb{C}[s]$,
 we have
\[
\langle \KA{\lambda}{\nu}, F_\lambda [ \Tilde{\Tilde{C}}_N^{\frac{n}{2}-1} (\omega_n), h] \rangle
= c P_{\frac{\lambda-\nu+N}{2},\frac{\lambda+\nu+N}{2}} (h)
   \prod_{j=0}^{\frac{N}{2}-1}
   \Bigl( \frac{\lambda-\nu}{2} + j \Bigr)
   \Bigl( \frac{\lambda+\nu-n}{2} - j \Bigr), 
\]
where $P_{a,b}(h)$ was defined in \eqref{eqn:Pab}, 
 and the non-zero constant $c$ is given by 
\[
c=2^{\nu-n+1} d_{n,N}(\lambda,\nu) \pi^{-\frac12} 
=
\frac{2^{3-\lambda-n} \pi^{\frac{n}{2}} \Gamma(n+N-1)}
       {\Gamma(\frac{n-1}{2}) \Gamma(N+1)} .
\]
\end{enumerate}
\end{lemma}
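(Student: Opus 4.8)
The plan is to reduce the computation of $\langle \KA{\lambda}{\nu}, F_\lambda[\psi,h]\rangle$ to a one-variable integral over $[-1,1]$, and then recognize that integral as an instance of the operator $P_{a,b}$ from \eqref{eqn:Pab} times an explicit product of linear factors. First I would use the pairing \eqref{eqn:Gpair} in the noncompact ($N$-picture) realization: by \eqref{eqn:iNk} we have $\iota_N^*\ska{\lambda}{\nu} = \ka{\lambda}{\nu}$, so
\[
\langle \KA{\lambda}{\nu}, F_\lambda[\psi,h]\rangle
= \frac{1}{\Gamma(\tfrac{\lambda+\nu-n+1}{2})\Gamma(\tfrac{\lambda-\nu}{2})}
   \int_{\mathbb{R}^n} |x_n|^{\lambda+\nu-n}(|x|^2+x_n^2)^{-\nu}\,F_\lambda[\psi,h](x,x_n)\,dx\,dx_n.
\]
Passing to polar coordinates $(x,x_n)=r\omega$ with $r>0$, $\omega\in S^{n-1}$, and inserting the definition \eqref{eqn:F12} of $F_\lambda[\psi,h]$, the integrand factors as a product of a radial part and an angular part: the radial integral is $\int_0^\infty r^{\lambda-\nu-n+N+n-1}(1+r^2)^{-\lambda}(\tfrac{2r}{1+r^2})^{\cdot\, 0}\cdots$ — more precisely, collecting powers of $r$ and of $(1+r^2)$ and substituting $s=\tfrac{1-r^2}{1+r^2}$ (so $r^2=\tfrac{1-s}{1+s}$, $dr$ transforms accordingly), the radial integral becomes $\int_{-1}^1 (1-s)^{a-1}(1+s)^{b-1}h(s)\,ds$ up to an explicit power of $2$, with $a=\tfrac{\lambda-\nu+N}{2}$ and $b=\tfrac{\lambda+\nu+N}{2}$. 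That is exactly $\Gamma(a)\Gamma(b)\,P_{a,b}(h)$.

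Next I would handle the angular integral $\int_{S^{n-1}}|\omega_n|^{\lambda+\nu-n}\psi(\omega)\,d\omega$. For part (1), Lemma \ref{lem:Bnint}(2) shows this vanishes when $N$ is odd or $\psi\perp\mathcal{H}^N(S^{n-1})^{O(n-1)}$, giving the stated vanishing immediately (the radial factor, whatever it is, is multiplied by zero). For part (2), with $\psi=\Tilde{\Tilde{C}}_N^{\frac{n}{2}-1}(\omega_n)$ and $N$ even, Lemma \ref{lem:Bnint}(2) evaluates the angular integral as $d_{n,N}(\lambda,\nu)\,g(\lambda,\nu)$ with $d_{n,N}$ and $g$ given by \eqref{eqn:intGegen}. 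Multiplying the radial contribution $2^{(\text{power})}\Gamma(a)\Gamma(b)P_{a,b}(h)$ by the angular contribution and dividing by the normalizing Gamma factors $\Gamma(\tfrac{\lambda+\nu-n+1}{2})\Gamma(\tfrac{\lambda-\nu}{2})$ in $\KA{\lambda}{\nu}$, I would simplify. The factor $g(\lambda,\nu)=\Gamma(\lambda+\nu-n+1)/(\Gamma(\tfrac{\lambda+\nu-n-N+2}{2})\Gamma(\tfrac{\lambda+\nu+N}{2}))$ combines with $\Gamma(b)=\Gamma(\tfrac{\lambda+\nu+N}{2})$ and with $1/\Gamma(\tfrac{\lambda+\nu-n+1}{2})$; using the duplication formula $\Gamma(z)\Gamma(z+\tfrac12)=2^{1-2z}\sqrt\pi\,\Gamma(2z)$ to rewrite $\Gamma(\lambda+\nu-n+1)$ in terms of $\Gamma(\tfrac{\lambda+\nu-n+1}{2})$ and $\Gamma(\tfrac{\lambda+\nu-n+2}{2})$, one obtains a ratio of Gamma functions differing by integer shifts, which becomes the finite product $\prod_{j=0}^{N/2-1}(\tfrac{\lambda+\nu-n}{2}-j)$. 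Similarly $\Gamma(a)/\Gamma(\tfrac{\lambda-\nu}{2})=\Gamma(\tfrac{\lambda-\nu+N}{2})/\Gamma(\tfrac{\lambda-\nu}{2})=\prod_{j=0}^{N/2-1}(\tfrac{\lambda-\nu}{2}+j)$. Tracking the powers of $2$ and $\pi$ through $d_{n,N}$, the $2^{\nu-n+1}\pi^{-1/2}$ prefactor, and the duplication formula gives the claimed constant $c$.

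The main obstacle I anticipate is not conceptual but bookkeeping: correctly normalizing the polar-coordinate Jacobian and the substitution $s=\tfrac{1-r^2}{1+r^2}$ so that the radial integral lands exactly on $P_{a,b}$ with the right shift of $N/2$ in both parameters, and then cancelling the profusion of $2$-powers, $\pi$-powers, and Gamma factors coming from $d_{n,N}(\lambda,\nu)$, the duplication formula, and $\operatorname{vol}(S^{n-2})$ to recover precisely $c=\frac{2^{3-\lambda-n}\pi^{n/2}\Gamma(n+N-1)}{\Gamma(\frac{n-1}{2})\Gamma(N+1)}$. I would keep $c$ symbolic as long as possible and only at the end verify it against the $N=0$, $h=\mathbf 1$ case, where $P_{a,b}(\mathbf 1)=1/\Gamma(a+b)$ and the formula must reduce to the spherical-vector computation $\langle\KA{\lambda}{\nu},\Gamma(\tfrac n2)\mathbf 1_\lambda\rangle=\pi^{(n-1)/2}\Gamma(\tfrac n2)/\Gamma(\lambda)$ of Proposition \ref{prop:AminK} (via the duplication formula), which is a useful internal consistency check. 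The holomorphy claim in part (1), and implicitly throughout, follows from Lemma \ref{lem:Beta}: $P_{a,b}(h)$ is entire in $(a,b)$, and the remaining factors are entire in $(\lambda,\nu)$.
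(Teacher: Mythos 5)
Your proposal is correct and follows essentially the same route as the paper's proof: factor the pairing in polar coordinates into a radial integral (which after $s=\tfrac{1-r^2}{1+r^2}$ becomes $\Gamma(a)\Gamma(b)P_{a,b}(h)$ with $a=\tfrac{\lambda-\nu+N}{2}$, $b=\tfrac{\lambda+\nu+N}{2}$) times the angular integral evaluated by Lemma \ref{lem:Bnint}, then simplify the Gamma quotients via the duplication formula into the products $\prod_{j=0}^{\frac N2-1}(\tfrac{\lambda-\nu}{2}+j)(\tfrac{\lambda+\nu-n}{2}-j)$. The only additions beyond the paper's argument are your (harmless) consistency check at $N=0$, $h=\mathbf 1$ against Proposition \ref{prop:AminK}.
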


\begin{proof}
By using the expression \eqref{eqn:Kpolar}
 of $\ka{\lambda}{\nu}$
 in the polar coordinates,
 we have
\begin{align}
\langle \ka{\lambda}{\nu}, F_\lambda [\psi, h] \rangle
=& 2^{-\lambda} R S,
\label{eqn:KFRS}
\\
\langle
\KA{\lambda}{\nu},F_{\lambda}[\psi,h]
\rangle
=&
\frac{2^{-\lambda}}
     {\Gamma(\frac{\lambda+\nu-n+1}{2})
      \Gamma(\frac{\lambda-\nu}{2})}RS,  
\notag
\end{align}
where
\begin{align*}
R :={}
& 2^{N+\lambda} \int_0^\infty r^{\lambda-\nu+N-1} (1+r^2)^{-\lambda-N}
   h \left( \frac{1-r^2}{1+r^2} \right) dr
\\
={}
& \int_{-1}^1 (1-s)^{\frac{\lambda-\nu+N-2}{2}}
   (1+s)^{\frac{\lambda+\nu+N-2}{2}} h(s) ds
\\
={}
& \Gamma(\frac{\lambda-\nu+N}{2})
  \Gamma(\frac{\lambda+\nu+N}{2})
  P_{\frac{\lambda-\nu+N}{2}, \frac{\lambda+\nu+N}{2}}(h), 
\\
S :={}
& \int_{S^{n-1}} |\omega_n|^{\lambda+\nu-n} \psi (\omega) d\omega.  
\end{align*}
\begin{enumerate}
\item[1)]
It follows from Lemma \ref{lem:Bnint}
 that $S$ vanishes
 if $N$ is odd or 
 $\psi \perp ({\mathcal{H}}^N(S^{n-1})^{O(n-1)})^{\perp}$.  
\item[2)]
By \eqref{eqn:KFRS} and Lemmas \ref{lem:Beta} and \ref{lem:Bnint},
\[
\langle \KA{\lambda}{\nu}, F_\lambda [ \Tilde{\Tilde{C}}_N^{\frac{n}{2}-1} (\omega_n), h] \rangle
= c P_{\frac{\lambda-\nu+N}{2},\frac{\lambda+\nu+N}{2}} (h) V,
\]
where
\begin{align*}
V :={}&
   2^{-\lambda-\nu+n-1} \pi^{\frac{1}{2}}
   \frac{\Gamma(\frac{\lambda-\nu+N}{2})\Gamma(\frac{\lambda+\nu+N}{2})}
          {\Gamma(\frac{\lambda+\nu-n+1}{2})\Gamma(\frac{\lambda-\nu}{2})}
   g(\lambda, \nu)
\\
 ={}&
\frac{\Gamma (\frac{\lambda-\nu+N}{2}) \Gamma (\frac{\lambda+\nu-n+2}{2})}
       {\Gamma (\frac{\lambda-\nu}{2}) \Gamma (\frac{\lambda+\nu-n-N+2}{2})}
\\
 ={}&
\prod_{j=0}^{\frac{N}{2}-1}
   \Bigl( \frac{\lambda-\nu}{2} + j \Bigr)
   \Bigl( \frac{\lambda+\nu-n}{2} - j \Bigr).
\end{align*}
In the second equality we have used the duplication formula \eqref{eqn:dupl} of the Gamma function.
\end{enumerate}
\end{proof}

\begin{proof}[Proof of Proposition \ref{prop:4.2} (2)]
For $N \in {\mathbb{N}}$, 
 Let $\psi \in {\mathcal{H}}^N(S^{n-1})$
 and $h \in {\mathbb{C}}[s]$, 
 we set
\begin{align*}
\mathcal{N} [\psi, h]
:=& \{ \nulambda \in \mathbb{C}^2 :
      \langle \KA{\lambda}{\nu}, F_\lambda [\psi, h] \rangle = 0 \}, 
\\
{\mathcal{Z}}_N:=&\bigcap_{\psi\in\mathcal{H}^N(S^{n-1})}  \,
\bigcap_{h\in\mathbb{C}[s]}
\mathcal{N} [\psi, h].  
\end{align*}
Then, 
we have
\begin{align*}
&\{\nulambda \in \mathbb{C}^2 :
  \langle \KA{\lambda}{\nu}, F \rangle = 0
  \text{ for all } F \in I(\lambda)_K\}
\\
=& \bigcap_{N \in {\mathbb{N}}}
  \{ \nulambda \in \mathbb{C}^2 :
   \langle \KA{\lambda}{\nu}, F_\lambda [\psi, h] \rangle = 0
   \  \text{for any $\psi \in \mathcal{H}^N (S^{n-1})$, $h \in \mathbb{C} [s]$
} \}
\\
=& \bigcap_{N\in \mathbb{N}} {\mathcal{Z}}_N.  
\end{align*}

Let us compute ${\mathcal{Z}}_N$ explicitly.  
For this, 
 we set
\begin{align*}
\Lambda_{\mathbb{Z}^2}
:={}&
\{ \nulambda \in \mathbb{Z}^2 : \lambda + |\nu| \le 0,
    \lambda \equiv \nu \bmod 2 \}
\\
={}&
\{ \nulambda \in \mathbb{Z}^2 : \lambda - \nu \in -2\mathbb{N} \text{ and }
   \lambda + \nu \in -2\mathbb{N} \}.
\end{align*}
For $N \in \mathbb{N}$,
we define the parallel translation of $\Lambda_{\mathbb{Z}^2}$ by
$(-N,0)$:
\[
\Lambda_{\mathbb{Z}^2} [N] 
:= \{ \nulambda \in \mathbb{Z}^2 :
      (\lambda+ N, \nu) \in \Lambda_{\mathbb{Z}^2} \}.
\]
Then it follows from Lemma \ref{lem:Bnint} that
$
P_{ \frac{\lambda-\nu+N}{2}, \frac{\lambda+\nu+N}{2}} (h) = 0
$
for all $h \in \mathbb{C}[s]$
if and only if 
$\nulambda \in \Lambda_{\mathbb{Z}^2} [N]$.

In turn,
 it follows from Lemma \ref{lem:KtildeFC}
that 
${\mathcal{Z}}_N={\mathbb{C}}^2$ for $N\in 2 {\mathbb{N}}+1$, 
 and for $n\in 2 {\mathbb{N}}$, 
\begin{align*}
{\mathcal{Z}}_N&= \Lambda_{\mathbb{Z}^2} [N] \cup
   \{ \nulambda \in \mathbb{C}^2:
       \prod_{j=0}^{\frac{N}{2}-1}
       \Bigl( \frac{\lambda-\nu}{2} + j \Bigr)
       \Bigl( \frac{\lambda+\nu-n}{2} - j \Bigr) = 0 \}
\\
&=
    \Lambda_{\mathbb{Z}^2} [N] \cup
    \bigcup_{j=0}^{\frac{N}{2}-1}
    \{ \nulambda : \lambda - \nu = -2j \}
   \cup \bigcup_{j=0}^{\frac{N}{2}-1}
   \{ \nulambda : \lambda + \nu = n + 2j \}.  
\end{align*}
In Figure \ref{fig:ZNLeven} below,
 ${\mathcal {Z}}_N$
 ($N=8$)
 consists of black dots and $4+4$ lines;
 $L_{\operatorname{even}}$ consists of red circles.  

Taking the intersection of all ${\mathcal{Z}}_N$, 
 we get 
\begin{equation*}
\bigcap_{N \in {\mathbb{N}}} {\mathcal{Z}}_N
= \{ \nulambda : \text{$\lambda, \nu \in -{\mathbb{N}}$, 
 $\lambda \equiv \nu \mod 2$, and $\lambda \le \nu$}  \} .
\end{equation*}

\medskip
\begin{figure}[h]
\begin{center}
\includegraphics[scale=1]{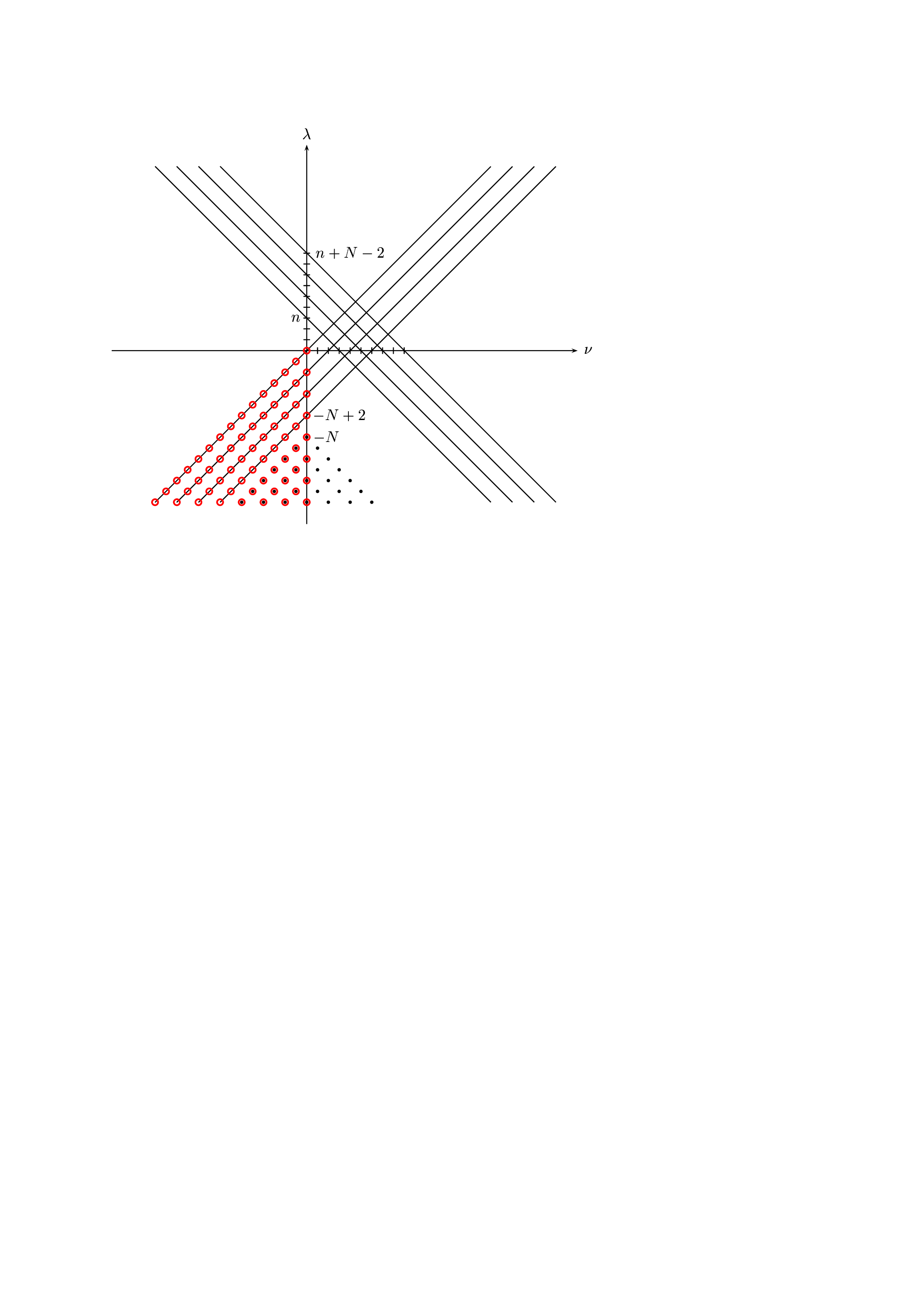}
\end{center}
\caption{${\mathcal{Z}}_N$ ($N=8$) and $L_{\operatorname{even}}$}
\label{fig:ZNLeven}
\end{figure}
\medskip

Hence Proposition \ref{prop:4.2} is proved.
\end{proof}

\section{Meromorphic continuation
 of regular symmetry breaking operators $\ka{\lambda}{\nu}$}
\label{sec:8}

The goal of this chapter
 is to prove the existence of the meromorphic
continuation of our symmetry breaking operator
$\ntAln{\lambda}{\nu}$,
 initially holomorphic in an open set $\Omega_0$, 
 to $\nulambda \in \mathbb{C}^2$.
Besides, we 
determine all the poles of the symmetry breaking operator
$\ntAln{\lambda}{\nu}$
with meromorphic parameter $\lambda$ and $\nu$.  
The normalized symmetry breaking operators
 $\A_{{\lambda},{\nu}}$
 depends holomorphically on $\nulambda$
 in the entire space ${\mathbb{C}}^2$.  
Surprisingly,
 there exist  countably many points 
 in the complex set in ${\mathbb{C}}^2$
 such that $\A_{\lambda,\nu}$ vanishes,
 namely, 
 $\A_{\lambda,\nu}$ is zero
 on the set $L_{\operatorname{even}}$ of codimension two 
in ${\mathbb{C}}^2$.  
We shall prove 
\begin{theorem} 
\label{thm:poleA}
{\rm{1)}}\enspace
$\KA{\lambda}{\nu}$
is a distribution on $\mathbb{R}^n$ that depends holomorphically on
 parameters $\lambda$
 and $\nu$ in the entire plane ${\mathbb{C}}^2$.
\begin{enumerate}
\item[{\rm{2)}}]
$\KA{\lambda}{\nu} \in \mathcal{S}ol(\mathbb{R}^n;\lambda,\nu)$
for all $\nulambda \in\mathbb{C}^2$,
and thus defines a continuous $G'$-homomorphism
\begin{equation}
\label{eqn:Adef}
\A_{\lambda,\nu} : I(\lambda) \to J(\nu).
\end{equation}
This operator $\A_{\lambda,\nu}$ vanishes
 if and only if\/  
\index{sbon}{Leven@$L_{\operatorname{even}}$}
$\nulambda \in 
L_{\operatorname{even}}$, namely,
\begin{equation}
\label{eqn:Avanish}
\lambda , \nu \in - {\mathbb{N}}, 
\,
\lambda \equiv \nu \mod 2, 
\text { and }
\lambda \le \nu.  
\end{equation}
\end{enumerate}
\end{theorem}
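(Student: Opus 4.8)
\textbf{Plan for the proof of Theorem \ref{thm:poleA}.}
The strategy is to establish the holomorphic continuation of $\KA{\lambda}{\nu}$ by decomposing the distribution kernel so as to separate the two sources of singularity -- the hyperplane $x_n = 0$ (where $|x_n|^{\lambda+\nu-n}$ is singular) and the point $x = x_n = 0$ (where $(|x|^2+x_n^2)^{-\nu}$ is singular) -- which are not transversal. The cleanest route is to use polar coordinates $(x,x_n) = r\omega$ with $r > 0$, $\omega \in S^{n-1}$, as in \eqref{eqn:Kpolar}, so that
\[
\KA{\lambda}{\nu}(x,x_n)\, dx\, dx_n
= \frac{1}{\Gamma(\frac{\lambda+\nu-n+1}{2})\Gamma(\frac{\lambda-\nu}{2})}
  \, r^{\lambda-\nu-1} |\omega_n|^{\lambda+\nu-n}\, dr\, d\omega .
\]
The radial factor $r^{\lambda-\nu-1}\, dr$, once divided by $\Gamma(\frac{\lambda-\nu}{2})$, is (a renormalized, even-in-$s$-after-substitution version of) a one-dimensional Riesz distribution and hence holomorphic in $\lambda - \nu$; the angular factor $|\omega_n|^{\lambda+\nu-n}\, d\omega$, once divided by $\Gamma(\frac{\lambda+\nu-n+1}{2})$, is holomorphic in $\lambda+\nu$ by the same one-variable Riesz argument applied on $S^{n-1}$ (reducing to the integral $\int_{-1}^1 |t|^{\lambda+\nu-n}(1-t^2)^{(n-3)/2}\, dt$, cf.\ Lemma \ref{lem:Bnint}). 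The subtlety is that one must test against functions that are genuinely smooth \emph{across} $r = 0$ on $\mathbb{R}^n$, not merely smooth on $(0,\infty) \times S^{n-1}$; so the first real step is to show that pairing $\KA{\lambda}{\nu}$ against the $K$-finite basis $F_\lambda[\psi,h]$ of Proposition \ref{prop:IKfinite} produces an entire function of $(\lambda,\nu)$ -- but that is precisely Lemma \ref{lem:KtildeFC}, already at our disposal, and then a density/uniform-boundedness argument (the $K$-finite vectors are dense in $C^\infty(S^n)$, and the Gamma-normalization controls the growth) upgrades this to holomorphy of $\langle \KA{\lambda}{\nu}, \varphi\rangle$ for every $\varphi$ and hence to holomorphic dependence of $\KA{\lambda}{\nu}$ as a distribution. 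This proves (1).

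For (2), the covariance identity $\KA{\lambda}{\nu} \in \mathcal{S}ol(\mathbb{R}^n;\lambda,\nu)$ is known on the open set $\Omega_0$ by Lemma \ref{lem:KAOmega}, where $\ka{\lambda}{\nu}$ is locally integrable and the $(M'A,\mathfrak{n}_+')$-equations of Proposition \ref{prop:PDE} hold by direct computation using \eqref{eqn:PkA}. Since these differential equations have coefficients that are polynomial (hence holomorphic) in $(\lambda,\nu)$, and since $\KA{\lambda}{\nu}$ now depends holomorphically on $(\lambda,\nu)$ on all of $\mathbb{C}^2$ by part (1), the equations persist by analytic continuation: both sides of each equation in \eqref{eqn:Fa}--\eqref{eqn:parity} are holomorphic $\mathbb{C}^2$-families of distributions agreeing on the nonempty open set $\Omega_0$, hence agree everywhere. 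This is exactly the mechanism of Proposition \ref{prop:meroK} (with $\Omega' = \Omega_0$, $\Omega = \mathbb{C}^2$), and it yields the continuous $G'$-homomorphism $\A_{\lambda,\nu}: I(\lambda) \to J(\nu)$ for all $(\lambda,\nu)$.

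It remains to identify exactly when $\A_{\lambda,\nu} = 0$. Here one direction is immediate: if $\A_{\lambda,\nu} \neq 0$ then $\langle \KA{\lambda}{\nu}, F\rangle \neq 0$ for some $F \in I(\lambda)$, and by density and continuity one may take $F \in I(\lambda)_K$; so $\A_{\lambda,\nu} = 0$ \emph{on smooth vectors} iff it vanishes on $I(\lambda)_K$, and the latter happens exactly on $L_{\operatorname{even}}$ by Proposition \ref{prop:4.2}(2). (The passage from $(\mathfrak{g}',K')$-homomorphism to smooth $G'$-homomorphism here uses the Casselman--Wallach closed-range property, Proposition \ref{prop:CW}, so that vanishing on $K$-finite vectors forces vanishing on all of $I(\lambda)$.) The condition $\nulambda \in L_{\operatorname{even}}$ unwinds to $\lambda,\nu \in -\mathbb{N}$, $\lambda \equiv \nu \bmod 2$, $\lambda \le \nu$, which is \eqref{eqn:Avanish}.

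\textbf{Main obstacle.} The genuinely delicate point is part (1): proving that $\KA{\lambda}{\nu}$ -- not just its pairings with the special test functions $F_\lambda[\psi,h]$, but as a bona fide distribution against \emph{arbitrary} $\varphi \in C_c^\infty(\mathbb{R}^n)$ -- extends holomorphically past the singular set $\backslash\!\backslash \cup /\!/$, because the varieties $\{x_n = 0\}$ and $\{x = x_n = 0\}$ fail to be transversal and the naive products of Riesz distributions are not obviously well-defined there. The polar-coordinate factorization is what saves the situation: it exhibits the kernel as a \emph{tensor-like} object in the single radial variable $r$ and the single angular variable $\omega_n$, each separately a one-dimensional Riesz-type distribution made entire by its own Gamma factor. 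One must check carefully that a general test function, expanded in spherical harmonics with smooth radial coefficients that are even and smooth across $r=0$, is handled term-by-term with estimates uniform enough to sum; this is where Lemma \ref{lem:KtildeFC} together with the explicit growth of $P_{a,b}$ (Lemma \ref{lem:Beta}) and of $d_{n,N}(\lambda,\nu)$ does the work. Once this analytic continuation is in hand, steps (2) and the vanishing criterion are essentially formal consequences of results already proved.
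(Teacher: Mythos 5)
Your plan for part (2) and for the vanishing criterion matches the paper (analytic continuation of the invariance equations from $\Omega_0$, then Proposition \ref{prop:4.2}(2) plus density of $K$-finite vectors), but the heart of the matter is part (1), and there your argument has a genuine gap. You propose to deduce holomorphy of $\langle \KA{\lambda}{\nu},\varphi\rangle$ for \emph{arbitrary} test functions from Lemma \ref{lem:KtildeFC} by a ``density/uniform-boundedness argument''. Density alone cannot do this: holomorphy of the pairings on a dense subspace of test functions does not produce an extension of $\KA{\lambda}{\nu}$ as a distribution beyond the domain where it is already defined, unless you either (a) already know a meromorphic continuation of the kernel as a distribution-valued family (so that the polar parts are distributions annihilated by a dense subspace, hence zero --- this is exactly the hypothesis structure of Proposition \ref{prop:meroKK}), or (b) prove locally uniform estimates of $\langle \KA{\lambda}{\nu},\varphi\rangle$ in terms of seminorms of $\varphi$. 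The paper supplies (a) by two steps you do not carry out: the recurrence relations \eqref{eqn:mero1}--\eqref{eqn:mero2} (Lemma \ref{lem:mero1}) give meromorphic continuation of the unnormalized kernel to $\Omega_1=\{\operatorname{Re}(\lambda-\nu)>0\}$, and the functional equations with the Knapp--Stein operators on $\Omega_0$ and $\Omega_2$ (Lemma \ref{lem:TAAT}) push it to all of $\mathbb{C}^2$; only then does the $K$-finite computation (Proposition \ref{prop:4.2}(1)) remove the poles. The ``Gamma-normalization controls the growth'' phrase does not substitute for either ingredient, and Lemma \ref{lem:KtildeFC} only evaluates the pairing on the special radial profiles $F_\lambda[\psi,h]$ with polynomial $h$, so it cannot be cited term-by-term for a general $\varphi\in C_c^\infty(\mathbb{R}^n)$ whose radial coefficients are not of that form.

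Your opening polar-coordinate observation does contain the germ of a correct (and genuinely different) proof of (1): for arbitrary $\varphi$, set $\Phi(r):=\bigl\langle |\omega_n|^{\lambda+\nu-n}/\Gamma(\tfrac{\lambda+\nu-n+1}{2}),\,\varphi(r\,\cdot)\bigr\rangle_{S^{n-1}}$, note that $\Phi$ is smooth up to $r=0$ and \emph{even} in $r$, and then the one-variable continuation of $\int_0^\infty r^{\lambda-\nu-1}\Phi(r)\,dr$ has its even poles killed by $1/\Gamma(\tfrac{\lambda-\nu}{2})$ and its odd poles killed by the vanishing of the odd derivatives of $\Phi$ at $0$; with locally uniform bounds in $(\lambda,\nu)$ this yields the entire distribution-valued family directly, without the functional equations. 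But this is precisely the analytic work your proposal defers (``estimates uniform enough to sum''), and as written you replace it by the invalid density upgrade. Either carry out the $\Phi$-based estimate for general test functions, or follow the paper's route through Lemmas \ref{lem:mero1}, \ref{lem:TAAT} and Proposition \ref{prop:meroKK}; as it stands, part (1) is not proved.
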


In what follows,
we shall consider the following open subsets in $\mathbb{C}^2$:
\begin{alignat}{2}
&\Omega_1&&:=\{\nulambda \in {\mathbb{C}}^2
             :\operatorname{Re}(\lambda-\nu)>0\},
\label{eqn:Omega1}
\\
&D_{n-1}^+
&&:= \{\nulambda \in \mathbb{C}^2:
        \operatorname{Re}(\lambda+\nu) > n-1\},
\nonumber
\\
&D_{n}^-
&&:= \{\nulambda \in \mathbb{C}^2:
        \operatorname{Re}(\lambda+\nu) < n\}.
\nonumber
\end{alignat}
Obviously,
$D_{n-1}^+ \cup D_n^- = \mathbb{C}^2$.
We recall from \eqref{eqn:Omega0}
\begin{equation}
\index{sbon}{Omega0@$\Omega_0$|textbf}
\Omega_0 = \Omega_1 \cap D_{n-1}^+.
\end{equation}

The proof of  Theorem \ref{thm:poleA}
 consists of the following two steps:
\par\noindent
{\bf{Step 1.}}\enspace
$\Omega_0 \Rightarrow \Omega_1$.  
Use differential equations, 
 see \eqref{eqn:mero1} and \eqref{eqn:mero2}.  
\par\noindent
{\bf{Step 2.}}\enspace
$\Omega_1 \Rightarrow {\mathbb{C}}^2$.  
Use functional equations, 
 see \eqref{eqn:TAAnew} and \eqref{eqn:ATTnew}.  

\subsection{Recurrence relations
 of the distribution kernels $\ka{\lambda}{\nu}$}
\label{subsec:meroPDE}

As the first step,
we shall use recurrence relations of
$\ka{\lambda}{\nu}(x,x_n)$.
We set 
\[
  K_{\lambda,\nu}^{\pm}(x,x_n)
  :=
  (x_n)_\pm^{\lambda+\nu-n}(|x|^2+x_n^2)^{-\nu}.  
\]
Then $K_{\lambda,\nu}^{\pm}(x,x_n)$ is locally integrable
 if $\nulambda \in\Omega_0$, 
 and thus gives a distribution on ${\mathbb{R}}^n$
 with holomorphic parameter $\nulambda \in \Omega_0$.  
\begin{lemma}
\label{lem:mero1}
$K_{\lambda,\nu}^{\pm}(x,x_n)$ extends
 meromorphically to $\Omega_1$
as distributions on ${\mathbb{R}}^n$.  
\end{lemma}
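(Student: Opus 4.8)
The plan is to exhibit an explicit first-order differential operator in the $x_n$-variable that shifts the parameter $\nu$ down by one unit (equivalently, shifts the homogeneity degree of the $x_n$-factor), so that a single application moves us from the region $\Omega_0$, where $K_{\lambda,\nu}^{\pm}$ is manifestly locally integrable, into all of $\Omega_1$. Concretely, I would compute
\begin{equation*}
\frac{\partial}{\partial x_n}\,K_{\lambda,\nu}^{\pm}(x,x_n)
= \pm(\lambda+\nu-n)\,K_{\lambda-1,\nu}^{\pm}(x,x_n)
 - 2\nu\,x_n\,(x_n)_{\pm}^{\lambda+\nu-n}(|x|^2+x_n^2)^{-\nu-1},
\end{equation*}
valid as an identity of locally integrable functions when $\operatorname{Re}(\lambda+\nu)$ is large and $\operatorname{Re}(\lambda-\nu)>0$. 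The second term on the right is, up to the factor $-2\nu$, equal to $(x_n)_{\pm}^{\lambda+\nu-n+1}(|x|^2+x_n^2)^{-\nu-1} = K_{\lambda,\nu+1}^{\pm}$; rearranging, one obtains a recurrence expressing $K^{\pm}_{\lambda,\nu+1}$ as a combination of $\partial_{x_n}K^{\pm}_{\lambda,\nu}$ and $K^{\pm}_{\lambda-1,\nu}$. Iterating in the direction that increases $\operatorname{Re}\nu$ (hence $\operatorname{Re}(\lambda+\nu)$) expresses $K^{\pm}_{\lambda,\nu}$ for $(\lambda,\nu)\in\Omega_1$ in terms of $x_n$-derivatives of distributions $K^{\pm}_{\lambda+k,\nu+k}$ with $(\lambda+k,\nu+k)\in\Omega_0$, which are honest locally integrable functions depending holomorphically on the parameters there. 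Since differentiation is a continuous operation on $\mathcal{D}'(\mathbb{R}^n)$ preserving holomorphy in parameters, and since the rational coefficients appearing in the recurrence have poles only along the hyperplanes $\lambda+\nu\in n-\mathbb{N}$ (i.e.\ along $\backslash\!\backslash$), this produces a meromorphic continuation of $K^{\pm}_{\lambda,\nu}$ to $\Omega_1$ with poles confined to $\backslash\!\backslash$.

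The step I expect to be slightly delicate is the bookkeeping of the resulting singularities: one must check that the product of Gamma-type factors accumulated along the iteration has exactly the poles claimed (simple poles along $\backslash\!\backslash$), and that no spurious singularities are introduced along $x_n=0$ — this is where the restriction to $\operatorname{Re}(\lambda-\nu)>0$ is essential, since it guarantees that the $x_n$-power $(x_n)_{\pm}^{\lambda+\nu-n}$ stays away from the problematic integer exponents that would force a $\delta(x_n)$-type distribution rather than a function. In other words, for $(\lambda,\nu)\in\Omega_1$ the homogeneity degree $\lambda+\nu-n$ in $x_n$ can be negative but the transverse degree $\lambda-\nu$ in the radial variable remains positive, so the one-variable regularization of $(x_n)_{\pm}^{\alpha}$ (as in the Riesz/Gelfand–Shilov calculus recalled in Section \ref{subsec:Knapp}) applies fiberwise and the issue is purely the $\nu$-direction, which the recurrence handles.

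Alternatively — and this may in fact be the cleanest write-up — I would change to polar-type coordinates adapted to the two incident hypersurfaces: write $x_n = r\omega_n$, $x=r\omega'$ with $r>0$, $\omega\in S^{n-1}$, so that from \eqref{eqn:Kpolar} the kernel becomes $r^{\lambda-\nu-n}\,|\omega_n|^{\lambda+\nu-n}$ against the measure $r^{n-1}\,dr\,d\omega$. The $r$-integration against a test function is a Mellin transform in $r$, holomorphic for $\operatorname{Re}(\lambda-\nu)>0$ and hence over all of $\Omega_1$ after the usual integration-by-parts regularization near $r=0$; the $\omega$-integration reduces to the one-variable Riesz distribution $|\omega_n|^{\lambda+\nu-n}$ on the interval $[-1,1]$, whose meromorphic continuation (with simple poles exactly at $\lambda+\nu-n\in-1-2\mathbb{N}$, i.e.\ at $\backslash\!\backslash$) is classical. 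Tensoring the two one-variable continuations gives the meromorphic extension to $\Omega_1$ with the asserted pole locus. Either route suffices; the recurrence-relation version is preferable here because the same operator identities feed directly into the next step of Theorem \ref{thm:poleA} (the passage $\Omega_1\Rightarrow\mathbb{C}^2$ via the functional equations), so I would present that one.
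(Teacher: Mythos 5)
Your preferred route (the single $\partial_{x_n}$-recurrence) has a genuine gap, and it is precisely the gap that forces the paper to introduce a second recurrence. Your differentiation identity is correct (it is the paper's \eqref{eqn:mero1} up to a shift of $\lambda$), but it ties together the three points $(\lambda,\nu)$, $(\lambda+1,\nu)$, $(\lambda,\nu+1)$: solving it for the unknown kernel at $(\lambda,\nu)$ requires the kernel at $(\lambda,\nu+1)$, where $\operatorname{Re}(\lambda-\nu)$ has dropped by $1$. Hence iterating does \emph{not} express $K^{\pm}_{\lambda,\nu}$ only through the diagonal translates $K^{\pm}_{\lambda+k,\nu+k}$ as you claim; the recursion tree always contains the branch $K^{\pm}_{\lambda,\nu+b}$, and if $0<\operatorname{Re}(\lambda-\nu)\le 1$ while $\operatorname{Re}(\lambda+\nu)\le n-1$, no member of that branch ever reaches $\Omega_0$ (for $b=0$ the condition $\operatorname{Re}(\lambda+\nu)>n-1$ fails, for $b\ge 1$ the condition $\operatorname{Re}(\lambda-\nu)>b$ fails). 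In fact the set reachable from $\Omega_0$ by this one recurrence is $\bigcup_{k\ge 0}\{\operatorname{Re}(\lambda-\nu)>k,\ \operatorname{Re}(\lambda+\nu)>n-1-k\}$, which lies inside $\{\operatorname{Re}\lambda>\frac{n-1}{2}\}$ and is therefore a proper subset of $\Omega_1$. Two further slips: the rearrangement you state solves for $K^{\pm}_{\lambda,\nu+1}$ in terms of kernels with \emph{smaller} $\operatorname{Re}(\lambda+\nu)$, i.e.\ it runs in the wrong direction for continuation out of $\Omega_0$; and the condition $\operatorname{Re}(\lambda-\nu)>0$ does not keep the $x_n$-exponent away from the bad integers --- in $\Omega_1$ the exponent $\lambda+\nu-n$ does cross $-1,-2,\dots$, which is exactly the source of the poles (along $\backslash\!\backslash$ for the sum $K^++K^-$); what $\operatorname{Re}(\lambda-\nu)>0$ controls is the singularity at the origin. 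The paper closes the gap with the second recurrence \eqref{eqn:mero2}, coming from $\Delta_{\mathbb{R}^{n-1}}$, whose shifts $(\lambda,\nu)\mapsto(\lambda+1,\nu-1)$ and $(\lambda+1,\nu+1)$ never decrease $\operatorname{Re}(\lambda-\nu)$; the two recurrences together carry $\Omega_0$ first to (essentially) $\{\operatorname{Re}(\lambda-\nu)>0,\ \operatorname{Re}\lambda>\frac n2\}$ and then to all of $\Omega_1$.

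Your alternative polar-coordinate route, by contrast, is essentially sound and would prove the lemma: under $(r,\omega)\mapsto r\omega$ the kernel becomes the normal-crossing product $r^{\lambda-\nu-1}(\omega_n)_{\pm}^{\lambda+\nu-n}\,dr\,d\omega$; the $r$-factor needs no regularization on $\Omega_1$ since $\operatorname{Re}(\lambda-\nu)>0$ and test functions pull back to smooth functions of $(r,\omega)$ up to $r=0$, while the $\omega_n$-factor is the classical one-variable family, meromorphic because $\omega_n$ is a submersion near its zero set in $S^{n-1}$ (for the individual kernels $(x_n)_{\pm}$ the poles occur at all of $\lambda+\nu-n\in-1-\mathbb{N}$, not only the odd ones). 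This is exactly the ``desingularization'' the paper alludes to before normalizing $\KA{\lambda}{\nu}$, and it differs from the paper's argument, which stays with recurrence relations. So if you present the recurrence version (useful, as you note, for Step 2 of Theorem \ref{thm:poleA}), you must include an analogue of \eqref{eqn:mero2}; otherwise present the polar-coordinate version, which is complete as an argument for Lemma \ref{lem:mero1} itself.
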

\begin{proof}
We only give 
 a proof for $K^+(x,x_n)$;
the case for $K(x,x_n)$ can be shown similarly.  
First observe
 that the distribution 
$K_{\lambda,\nu}^+ \in \mathcal{D}'(\mathbb{R}^n)$
satisfies the following differential equations when
$\operatorname{Re}(\lambda-\nu) \gg 0$
and $\operatorname{Re}(\lambda+\nu) \gg 0$:
\begin{align}
\label{eqn:mero1}
\frac{\partial}{\partial x_n}K_{\lambda+1, \nu}^{+}
=&
(\lambda+\nu-n+1)K_{\lambda, \nu}^{+}
-2\nu K_{\lambda, \nu+1}^{+}
\\
\label{eqn:mero2}
\Delta_{{\mathbb{R}}^{n-1}}
K_{\lambda+1, \nu-1}^{+}
=&
2(\nu-1)(2\nu-n+1)K_{\lambda, \nu}^{+}
-4(\nu-1)\nu K_{\lambda+1, \nu+1}^{+}
\end{align}
We show the lemma 
 by iterating meromorphic continuations
 based on the two steps
 $\Omega \rightsquigarrow \Omega \cup \Omega^+$
 and 
$\Omega \rightsquigarrow \Omega \cup \Omega_+$
 below
 using \eqref{eqn:mero1} and \eqref{eqn:mero2}, 
respectively.  
Suppose 
 $K_{\lambda, \nu}^{+}$ is proved
 to extend meromorphically on a certain domain $\Omega$ in $\mathbb{C}^2$ as
  distributions on ${\mathbb{R}}^{n}$.  
Then the equation \eqref{eqn:mero1} shows 
 that $K_{\lambda, \nu}^{+}$ extends meromorphically
 to the following open subset 
\begin{align*}
  \Omega^+:={}&
   \{\nulambda \in \mathbb{C}^2 :
     (\lambda+1,\nu) \in \Omega \ \text{and}\ 
     (\lambda,\nu+1) \in \Omega \} 
\\
={}&
   (\Omega+ (-1,0)) \cap (\Omega+(0,-1)).  
\end{align*}
Here we have used the following notation:
\[
  \Omega+(a,b)
  :=
  \{\nulambda \in {\mathbb{C}}^2
   : (\lambda-a, \nu-b)\in \Omega\}.  
\]
Likewise, the equation \eqref{eqn:mero2} shows that
 $K_{\lambda, \nu}^{+}$ extends meromorphically
 to 
\[
  \Omega_+:=(\Omega+(-1,1)) \cap (\Omega+(-1,-1)).  
\]

Now, 
 first, 
 we set $\Omega=\Omega_0$.  
By iterating the meromorphic continuation process
 $\Omega \rightsquigarrow \Omega \cup \Omega^+$, 
 the distribution $K_{\lambda,\nu}^+$ extends
 meromorphically to the domain
$\bigcup_{k=0}^\infty (\Omega_0 + (0,-k))$,
which contains
\[
\Omega'_0 :=
  \{\nulambda \in {\mathbb{C}}^2
    :\operatorname{Re}(\lambda-\nu)>0, 
     \operatorname{Re}\lambda>\frac n 2\}, 
\]
see Figure \ref{fig:8.1}.  
\medskip
\begin{figure}[h]
\includegraphics{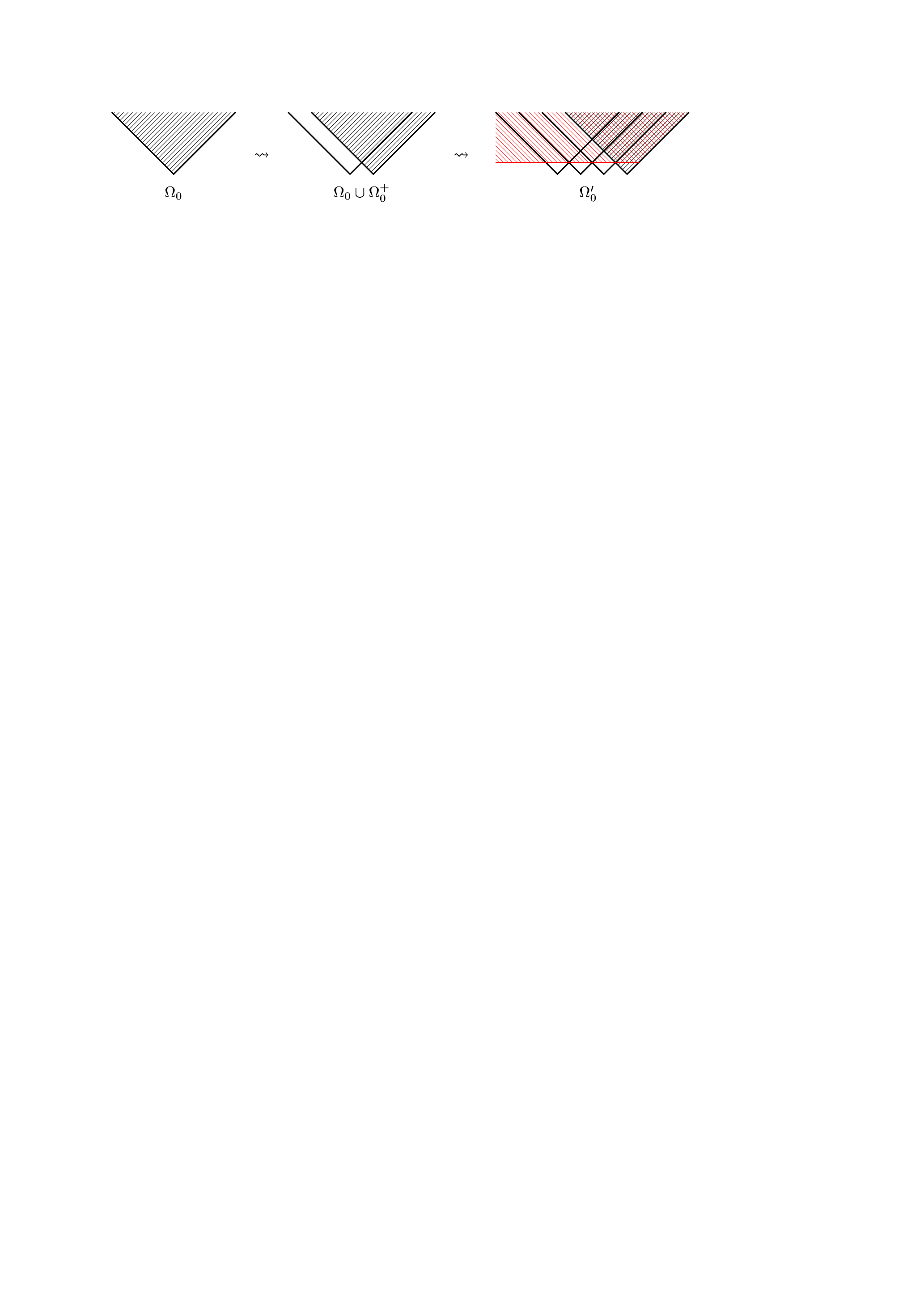}
\caption{Analytic continuation from $\Omega_0$
 to $\Omega_0'$}
\label{fig:8.1}
\end{figure}
\medskip

Second, we begin with $\Omega'_0$ and 
 iterate the process $\Omega \rightsquigarrow\Omega \cup \Omega_+$. 
Then we see that $K_{\lambda,\nu}^+$ extends meromorphically
 to the domain $\Omega_1=\{\nulambda \in {\mathbb{C}}^2
    :\operatorname{Re}(\lambda-\nu)>0\}$, 
 see Figure \ref{fig:8.2}.      
\medskip
\begin{figure}[h]
\includegraphics{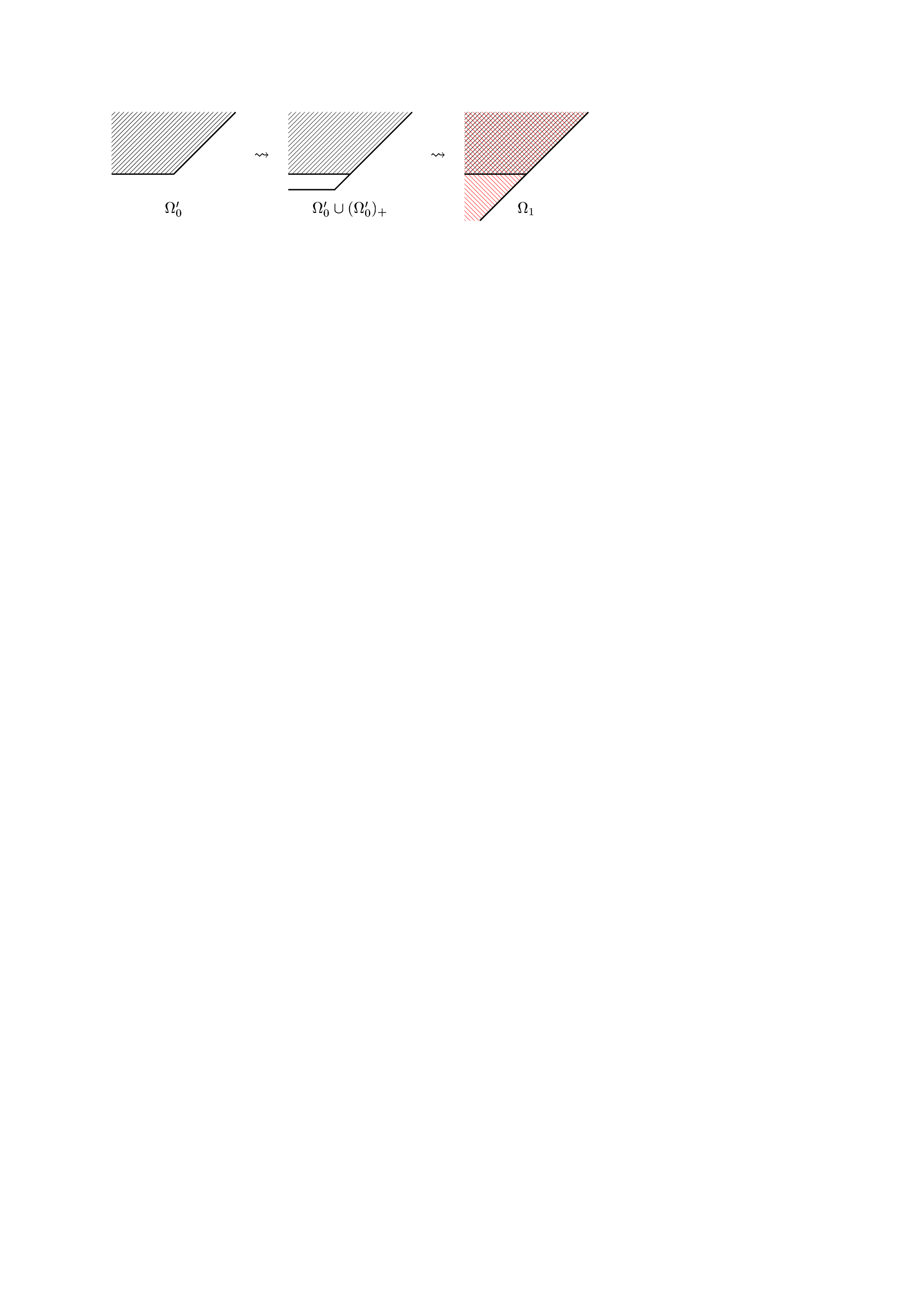}
\caption{Analytic continuation from $\Omega_0'$
 to $\Omega_1$}
\label{fig:8.2}
\end{figure}
\end{proof}

\begin{lemma}\label{lem:mero2}
If\/ $\operatorname{Re}(\lambda-\nu) > 0$,
then
$\KA{\lambda}{\nu} \in \mathcal{S}ol(\mathbb{R}^n;\lambda,\nu)$
and defines a nonzero $G'$-intertwining operator
$I(\lambda) \to J(\nu)$,
to be denoted by the same symbol $\A_{\lambda,\nu}$.
Then $\A_{\lambda,\nu}$ depends holomorphically on\/ $\nulambda$
in the domain\/ $\Omega_1$.
\end{lemma}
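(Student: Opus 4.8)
\textbf{Proof plan for Lemma \ref{lem:mero2}.}
The plan is to combine the meromorphic continuation already obtained in Lemma \ref{lem:mero1} with the holomorphy statement proved algebraically in Proposition \ref{prop:4.2}. First I would observe that by Lemma \ref{lem:mero1} the distributions $K^{\pm}_{\lambda,\nu}$, hence $\ka{\lambda}{\nu} = K^+_{\lambda,\nu} + K^-_{\lambda,\nu}$, extend meromorphically as distributions on $\mathbb{R}^n$ to the domain $\Omega_1$, and consequently the normalized kernel $\KA{\lambda}{\nu}$, obtained by dividing by the Gamma factor $\Gamma(\frac{\lambda+\nu-n+1}{2})\Gamma(\frac{\lambda-\nu}{2})$, extends meromorphically to $\Omega_1$ as well. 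On the open region $\Omega_0 = \Omega_1 \cap D_{n-1}^+$ the kernel $\ka{\lambda}{\nu}$ is given by a locally integrable function and $\KA{\lambda}{\nu} \in \mathcal{S}ol(\mathbb{R}^n;\lambda,\nu)$ by Lemma \ref{lem:KAOmega}; since the system of differential equations \eqref{eqn:Fa}, \eqref{eqn:Fn}, \eqref{eqn:invM}, \eqref{eqn:parity} has coefficients depending polynomially on $(\lambda,\nu)$, each equation is satisfied identically on the connected domain $\Omega_1$ by the uniqueness of meromorphic continuation. Hence $\KA{\lambda}{\nu} \in \mathcal{S}ol(\mathbb{R}^n;\lambda,\nu)$ wherever it is holomorphic in $\Omega_1$, and by Proposition \ref{prop:HomPDE} it defines a $G'$-intertwining operator $\A_{\lambda,\nu} : I(\lambda) \to J(\nu)$ there.

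The key point is then to upgrade "meromorphic on $\Omega_1$" to "holomorphic on $\Omega_1$". For this I would invoke Proposition \ref{prop:4.2}(1): for every $K$-finite vector $\varphi = \iota_\lambda^* f$ with $f \in C^\infty(S^n)_K$, the pairing $\langle \KA{\lambda}{\nu}, \varphi\rangle$ is holomorphic in $(\lambda,\nu) \in \mathbb{C}^2$, in particular on $\Omega_1$. Since the $K$-finite vectors form a dense subspace of $I(\lambda) \simeq C^\infty(K/M,\mathcal{V})$, and since $\A_{\lambda,\nu}(\varphi)$ is determined by the distribution $\KA{\lambda}{\nu}$ paired against such $\varphi$, this says the operator $\A_{\lambda,\nu}$ applied to a dense family of test vectors is holomorphic on $\Omega_1$. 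Combining this with the fact that $\KA{\lambda}{\nu}$ is already known to be meromorphic on $\Omega_1$, Proposition \ref{prop:meroKK} (applied with $\Omega' = \Omega_0$, $\Omega = \Omega_1$, and $Z$ the span of $K$-finite vectors) yields that $\A_{\lambda,\nu}$ extends to a family of continuous $G'$-homomorphisms depending holomorphically on $(\lambda,\nu) \in \Omega_1$.

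Finally I would check non-vanishing: for $\nulambda \in \Omega_1$ we have in particular $\lambda - \nu \notin -2\mathbb{N}$ is not forced, but $L_{\operatorname{even}}$ requires $\lambda \le \nu$, i.e. $\operatorname{Re}(\lambda-\nu) \le 0$, which is incompatible with $\operatorname{Re}(\lambda-\nu) > 0$; hence $\Omega_1 \cap L_{\operatorname{even}} = \emptyset$, and Proposition \ref{prop:4.2}(2) shows $\langle \KA{\lambda}{\nu}, \varphi\rangle \neq 0$ for some $K$-finite $\varphi$, so $\A_{\lambda,\nu} \neq 0$ on all of $\Omega_1$. The main obstacle I anticipate is the bookkeeping in the step that passes from "holomorphic pairing against $K$-finite vectors plus meromorphic kernel" to "holomorphic operator-valued family" — one must be careful that a possible pole of the meromorphic extension on a subvariety of $\Omega_1$ would have to annihilate all $K$-finite vectors, which is precisely excluded by the holomorphy in Proposition \ref{prop:4.2}(1); this is exactly the content packaged into Proposition \ref{prop:meroKK}, so the argument reduces to verifying its hypotheses carefully rather than to any new estimate.
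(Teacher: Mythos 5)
Your proposal is correct and follows essentially the same route as the paper: meromorphic continuation of the kernel to $\Omega_1$ via Lemma \ref{lem:mero1}, holomorphy and non-vanishing on $K$-finite vectors from Proposition \ref{prop:4.2} (with $\Omega_1 \cap L_{\operatorname{even}} = \emptyset$), and then Proposition \ref{prop:meroKK} applied with $\Omega'=\Omega_0$, $\Omega=\Omega_1$ and $Z$ the span of $K$-finite vectors. The extra bookkeeping you add (persistence of the covariance equations under analytic continuation) is exactly what Proposition \ref{prop:meroKK} packages, so no new ideas are needed.
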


\begin{proof}
By Lemma \ref{lem:mero1}, 
 $\ka{\lambda}{\nu}= K_{\lambda,\nu}^+ + K_{\lambda,\nu}^-$
is a distribution on $\mathbb{R}^n$ that depends meromorphically
on $\Omega_1$.
On the other hand,
 Proposition \ref{prop:4.2} shows that $\A_{\lambda,\nu}$ is nowhere
vanishing and that $\A_{\lambda,\nu}(\varphi)$ does not have
a pole for any $K$-finite vector
$\varphi \in I(\lambda)_K$ in the domain
$\operatorname{Re}(\lambda-\nu) > 0$.
Now Lemma \ref{lem:mero2} follows from Proposition \ref{prop:meroKK}.
\end{proof}

\subsection{Functional equations}\label{subsec:TAAT}

Let $\T{m-\nu}{\nu}:J(m-\nu)\to J(\nu)$
 be the Knapp--Stein intertwining operator
 for $G'$, 
 and $\T{\lambda}{n-\lambda}:I(\lambda)\to I(n-\lambda)$
 for $G$
 with $m=n-1$.  
The second step of the proof of Theorem \ref{thm:poleA}
 is to prove the following functional equations
 (see Theorem \ref{thm:TAAT} below):
\index{sbon}{ttt3@$\T{\lambda}{n-\lambda}$}
\begin{alignat}{2}
&
\T{m-\nu}{\nu} \circ \A_{\lambda,m-\nu}
&&= \frac{\pi^{\frac{m}{2}}}{\Gamma(m-\nu)} \A_{\lambda,\nu}.
\label{eqn:TAAnew}
\\
&\A_{n-\lambda,\nu} \circ \T{\lambda}{n-\lambda}
&&= \frac{\pi^{\frac{n}{2}}}{\Gamma(n-\lambda)} \A_{\lambda,\nu}.
\label{eqn:ATTnew}
\end{alignat}

We begin with 
\begin{lemma}
\label{lem:TAAT}
{\rm{1)}}\enspace
The identity \eqref{eqn:TAAnew} holds in the domain
$\Omega_0 = \Omega_1 \cap D_{n-1}^+$ (see \eqref{eqn:Omega0}).  
\par\noindent
{\rm{2)}}\enspace
The identity \eqref{eqn:ATTnew} holds in
the domain
\begin{equation}\label{eqn:Omega2}
\Omega_2 := \Omega_1 \cap D_n^-.
\end{equation}
\end{lemma}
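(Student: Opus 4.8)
The plan is to prove each of the two functional identities first on the open, dense, connected subset of the stated domain where the multiplicity is known to be at most one, to pin down the proportionality constant by evaluating on the normalized spherical vector, and then to propagate the identity over the whole domain by the identity theorem in several complex variables.

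For 1), the first step is to check that on $\Omega_0=\Omega_1\cap D_{n-1}^+$ both sides of \eqref{eqn:TAAnew} are holomorphic families of continuous $G'$-homomorphisms $I(\lambda)\to J(\nu)$. By Lemma \ref{lem:mero2} the operator $\A_{\lambda,\nu}$ on the right is holomorphic (and nonzero) on $\Omega_1$. On the left, $\A_{\lambda,m-\nu}: I(\lambda)\to J(m-\nu)$ is holomorphic precisely when $(\lambda,m-\nu)\in\Omega_1$, i.e. when $\operatorname{Re}(\lambda+\nu)>m=n-1$, which is the defining condition of $D_{n-1}^+$, and $\T{m-\nu}{\nu}$ is continuous with holomorphic parameter $\nu$; hence the composite is holomorphic on $\Omega_1\cap D_{n-1}^+=\Omega_0$. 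On the complement of $/\!/$ in $\Omega_0$ — an open, dense, connected set, since $/\!/$ is a countable union of complex hyperplanes, and $\Omega_0$ is convex — Proposition \ref{prop:upperdim} gives $\dim\operatorname{Hom}_{G'}(I(\lambda),J(\nu))\le 1$, while $\A_{\lambda,\nu}\ne 0$ there, so $\operatorname{Hom}_{G'}(I(\lambda),J(\nu))=\mathbb{C}\,\A_{\lambda,\nu}$ and therefore $\T{m-\nu}{\nu}\circ\A_{\lambda,m-\nu}=c(\lambda,\nu)\,\A_{\lambda,\nu}$ for a scalar $c(\lambda,\nu)$.

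To identify $c(\lambda,\nu)$ I would apply both sides to the normalized spherical vector $\mathbf{1}_\lambda$. By Proposition \ref{prop:AminK}, $\A_{\lambda,m-\nu}(\mathbf{1}_\lambda)=\frac{\pi^{(n-1)/2}}{\Gamma(\lambda)}\mathbf{1}_{m-\nu}$ and $\A_{\lambda,\nu}(\mathbf{1}_\lambda)=\frac{\pi^{(n-1)/2}}{\Gamma(\lambda)}\mathbf{1}_\nu$, the latter being nonzero on $\Omega_0$ since $\operatorname{Re}\lambda>0$ there, so $\lambda\notin-\mathbb{N}$. By Proposition \ref{prop:T1} applied with parameter $m-\nu$, $\T{m-\nu}{\nu}(\mathbf{1}_{m-\nu})=\frac{\pi^{m/2}}{\Gamma(m-\nu)}\mathbf{1}_\nu$. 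Comparing the two computations forces $c(\lambda,\nu)=\frac{\pi^{m/2}}{\Gamma(m-\nu)}$, which is \eqref{eqn:TAAnew} on the complement of $/\!/$ in $\Omega_0$. Finally, testing both sides of \eqref{eqn:TAAnew} against a fixed $\varphi\in C^\infty(K/M)$ gives two holomorphic $C^\infty(K'/M')$-valued functions on the connected set $\Omega_0$ agreeing on a dense open subset, hence agreeing on all of $\Omega_0$; this proves 1). For 2) I would run the parallel argument on $\Omega_2=\Omega_1\cap D_n^-$ with the Knapp--Stein operator $\T{\lambda}{n-\lambda}: I(\lambda)\to I(n-\lambda)$ of $G$ (holomorphic in $\lambda$) and $\A_{n-\lambda,\nu}: I(n-\lambda)\to J(\nu)$, which is holomorphic precisely when $(n-\lambda,\nu)\in\Omega_1$, i.e. $\operatorname{Re}(\lambda+\nu)<n$ — the defining condition of $D_n^-$. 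On the open dense connected subset of $\Omega_2$ where $(\lambda,\nu)\notin/\!/$ and $\lambda\notin-\mathbb{N}$, the same multiplicity-one reasoning gives $\A_{n-\lambda,\nu}\circ\T{\lambda}{n-\lambda}=c(\lambda,\nu)\,\A_{\lambda,\nu}$; evaluating on $\mathbf{1}_\lambda$ and using Proposition \ref{prop:AminK} with parameter $n-\lambda$ together with the analogue for $G$ of Proposition \ref{prop:T1}, namely $\T{\lambda}{n-\lambda}(\mathbf{1}_\lambda)=\frac{\pi^{n/2}}{\Gamma(\lambda)}\mathbf{1}_{n-\lambda}$ (same computation), yields $c(\lambda,\nu)=\frac{\pi^{n/2}}{\Gamma(n-\lambda)}$; here $\mathbf{1}_\lambda$ is not annihilated since $\lambda\notin-\mathbb{N}$, whereas $\mathbf{1}_{n-\lambda}$ may be annihilated when $\lambda\in n+\mathbb{N}$, which is harmless since that locus is thin. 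Holomorphy on the connected set $\Omega_2$ then extends \eqref{eqn:ATTnew} from the dense subset to all of $\Omega_2$.

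The only step requiring genuine care is the bookkeeping of the domains of holomorphy: verifying the equivalences $(\lambda,m-\nu)\in\Omega_1\Leftrightarrow(\lambda,\nu)\in D_{n-1}^+$ and $(n-\lambda,\nu)\in\Omega_1\Leftrightarrow(\lambda,\nu)\in D_n^-$, and confirming that the exceptional sets ($/\!/$ and $\{\lambda\in-\mathbb{N}\}$) are thin enough that their complements in $\Omega_0$ and $\Omega_2$ stay connected and dense, so that the identity theorem applies. Beyond this, the lemma is a direct consequence of the explicit normalizations in Propositions \ref{prop:AminK} and \ref{prop:T1} (and its $G$-analogue) together with the a priori bound of Proposition \ref{prop:upperdim}; there is no deeper obstacle, since this is the manifestly holomorphic precursor of the full analytic continuation carried out later.
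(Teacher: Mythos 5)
Your proof is correct and follows essentially the paper's own argument: both compositions lie in $\operatorname{Hom}_{G'}(I(\lambda),J(\nu))$ on the stated domain, Proposition \ref{prop:upperdim} together with the nonvanishing of $\A_{\lambda,\nu}$ on $\Omega_1$ gives proportionality, and evaluation on $\mathbf{1}_\lambda$ via Propositions \ref{prop:AminK} and \ref{prop:T1} (and its $G$-analogue) determines the constants. The only differences are cosmetic: your excision of $/\!/$ is vacuous, since $\operatorname{Re}(\lambda-\nu)>0$ on $\Omega_1$ already forces $(\lambda,\nu)\notin/\!/$, and your holomorphy argument covering the thin locus $\lambda\in-\mathbb{N}$ inside $\Omega_2$ is a minor point the paper leaves implicit.
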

\begin{proof}
Since the (renormalized) Knapp--Stein intertwining operator
$\T{m-\nu}{\nu}$
depends holomorphically on $\nu \in \mathbb{C}$,
the composition
\[
\T{m-\nu}{\nu} \circ \A_{\lambda,m-\nu} :
I(\lambda) \to J(m-\nu) \to J(\nu)
\]
is a continuous $G'$-homomorphism that depends holomorphically
on $\nulambda$ by Lemma \ref{lem:mero2}
if $\operatorname{Re}(\lambda-(m-\nu)) > 0$,
namely,
if  $\nulambda \in D_{n-1}^+$.
Thus $\A_{\lambda,\nu}$
and $\T{m-\nu}{\nu} \circ \A_{\lambda,m-\nu}$
 are in $\operatorname{Hom}_{G'}(I(\lambda),J(\nu))$
if $\nulambda \in \Omega_0$.

We recall from Lemma \ref{lem:mero2} that
$\A_{\lambda,\nu} \ne 0$
if $\operatorname{Re}(\lambda-\nu) > 0$
and from Proposition \ref{prop:upperdim} that
$\dim\operatorname{Hom}_{G'}(I(\lambda),J(\nu)) \le 1$
if
$\nulambda \in \Omega_0$ ($\subset \mathbb{C}^2 \setminus /\!/$).
Therefore there exists
$b(\lambda,\nu) \in \mathbb{C}$ such that
\[
\T{m-\nu}{\nu} \circ \A_{\lambda,m-\nu}
= b(\lambda,\nu) \A_{\lambda,\nu}
\]
for $\nulambda \in \Omega_0$.
Applying these operators to the trivial one-dimensional
$K$-type $\mathbf{1}_\lambda \in I(\lambda)_K$,
we get from Proposition \ref{prop:AminK} and Proposition \ref{prop:T1}
\[
\frac{\pi^{\frac{m}{2}}}{\Gamma(m-\nu)}
\frac{\pi^{\frac{m}{2}}}{2^{\lambda-1}\Gamma(\lambda)}
\mathbf{1}_\nu
=
b(\lambda,\nu)
\frac{\pi^{\frac{m}{2}}}{2^{\lambda-1}\Gamma(\lambda)}
\mathbf{1}_\nu,
\]
and therefore
$b(\lambda,\nu) = \frac{\pi^{\frac{m}{2}}}{\Gamma(m-\nu)}$.
Thus we have proved
 \eqref{eqn:TAAnew}
 in the domain $\Omega_0 = \Omega_1 \cap D_{n-1}^+$.

Similarly the composition
\[
\A_{n-\lambda,\nu} \circ \T{\lambda}{n-\lambda} :
I(\lambda) \to I(n-\lambda) \to J(\nu)
\]
is a continuous $G'$-homomorphism if
$\operatorname{Re}((n-\lambda)-\nu) > 0$,
namely, if $\nulambda \in D_n^-$.
Therefore there exists
$c(\lambda,\nu) \in \mathbb{C}$
such that
\[
\A_{n-\lambda,\nu} \circ \T{\lambda}{n-\lambda}
= c(\lambda,\nu) \A_{\lambda,\nu}
\]
if $\operatorname{Re}(\lambda-\nu) > 0$
and $\operatorname{Re}(\lambda+\nu) < n$.
Applying these operators to $\mathbf{1}_\lambda$,
we get
\[
\frac{\pi^{\frac{n-1}{2}}}{\Gamma(n-\lambda)}
\frac{\pi^{\frac{n}{2}}}{\Gamma(\lambda)}
\mathbf{1}_\nu
=
c(\lambda,\nu)
\frac{\pi^{\frac{n-1}{2}}}{\Gamma(\lambda)}
\mathbf{1}_\nu,
\]
whence
$c(\lambda,\nu) = \frac{\pi^{\frac{n}{2}}}{\Gamma(n-\lambda)}$.
Thus we have proved \eqref{eqn:ATTnew}
 in the domain 
 $\Omega_2=\Omega_1 \cap D_n^-$.  
\end{proof}

Now we are ready to complete the proof of Theorem \ref{thm:poleA}.

\begin{proof}
[Proof of Theorem \ref{thm:poleA}]
In order to extend $\ka{\lambda}{\nu}$ meromorphically,
 it is sufficient to prove it
 for $\ka{\lambda}{\nu}|_{{\mathbb{R}}^n}$
 by Proposition \ref{prop:meroK}.  
Then it follows from Lemma \ref{lem:mero1}
 that $\ka{\lambda}{\nu}$ extends meromorphically
 from $\Omega_0$ to $\Omega_1$
 as distributions on ${\mathbb{R}}^n$.  
In turn,
 $\ka{\lambda}{\nu}$ extends meromorphically
to the domain
$\Omega_1 \cup D_{n-1}^+$
as a distribution on $\mathbb{R}^n$
 by Lemma \ref{lem:TAAT} (1), 
 and it extends meromorphically
to the domain $\Omega_1 \cup D_n^-$
as a distribution on $\mathbb{R}^n$
 by Lemma \ref{lem:TAAT} (2).  
Hence $\ka{\lambda}{\nu}$ extends meromorphically to ${\mathbb{C}}^2$.  
\end{proof}

Now,
 by Theorem \ref{thm:poleA}, 
 Lemma \ref{lem:TAAT}
 can be strengthened
 as follows:
\begin{theorem}
\label{thm:TAAT}
The functional equations \eqref{eqn:TAAnew}
 and \eqref{eqn:ATTnew} hold
 for entire $(\lambda,\nu)\in {\mathbb{C}}^2$, 
 namely,
\begin{alignat}{3}
&
\T{m-\nu}{\nu} \circ \A_{\lambda,m-\nu}
&&= \frac{\pi^{\frac{m}{2}}}{\Gamma(m-\nu)} \A_{\lambda,\nu}
\qquad
&&\text{for $(\lambda,\nu)\in {\mathbb{C}}^2$}, 
\\
&\A_{n-\lambda,\nu} \circ \T{\lambda}{n-\lambda}
&&= \frac{\pi^{\frac{n}{2}}}{\Gamma(n-\lambda)} \A_{\lambda,\nu}
\qquad
&&\text{for $(\lambda,\nu)\in {\mathbb{C}}^2$}.
\end{alignat}
\end{theorem}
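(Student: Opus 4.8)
The plan is to bootstrap from the already-established identities in Lemma \ref{lem:TAAT} to all of $\mathbb{C}^2$ by a standard meromorphy-plus-density argument, now that Theorem \ref{thm:poleA} has upgraded $\A_{\lambda,\nu}$ to a genuinely holomorphic family on the entire parameter space. First I would observe that in Theorem \ref{thm:poleA} we have proved that the normalized kernel $\KA{\lambda}{\nu}$ is a distribution on $\mathbb{R}^n$ depending holomorphically on $(\lambda,\nu)\in\mathbb{C}^2$, and hence (via Theorem \ref{thm:KT} and the compact picture \eqref{eqn:Kpict}) the operator $\A_{\lambda,\nu}(\varphi)$ is holomorphic in $(\lambda,\nu)$ for every fixed $\varphi\in C^\infty(K/M,\mathcal{V})\simeq I(\lambda)_K$-ambient space. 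Likewise, the normalized Knapp--Stein operators $\T{m-\nu}{\nu}$ and $\T{\lambda}{n-\lambda}$ depend holomorphically on their parameters by the discussion in Section \ref{subsec:Knapp} (the Riesz distribution $\widetilde{r}^{\,2(\nu-m)}$ is holomorphic in $\nu$). Therefore both sides of \eqref{eqn:TAAnew}, applied to any fixed $\varphi$, are holomorphic functions of $(\lambda,\nu)$ on all of $\mathbb{C}^2$ with values in $J(\nu)^{-\infty}$ (indeed in $J(\nu)$).

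Next I would invoke Lemma \ref{lem:TAAT}: \eqref{eqn:TAAnew} holds identically on the nonempty open set $\Omega_0=\Omega_1\cap D_{n-1}^+$, and \eqref{eqn:ATTnew} holds identically on the nonempty open set $\Omega_2=\Omega_1\cap D_n^-$. Since a holomorphic function on a connected open subset of $\mathbb{C}^2$ that vanishes on a nonempty open subset vanishes identically, the difference of the two sides of each identity — evaluated at any fixed $\varphi$ — vanishes on all of $\mathbb{C}^2$. Concretely, fix $\varphi\in I(\lambda)_K$; the map
\[
(\lambda,\nu)\longmapsto
\T{m-\nu}{\nu}\bigl(\A_{\lambda,m-\nu}\varphi\bigr)
-\frac{\pi^{\frac{m}{2}}}{\Gamma(m-\nu)}\A_{\lambda,\nu}\varphi
\]
is a holomorphic $J(\nu)$-valued function on $\mathbb{C}^2$ (using that both the $G'$-equivariance of the composition and the scalar $\pi^{m/2}/\Gamma(m-\nu)$ are holomorphic in $\nu$), it is identically zero on $\Omega_0$, hence identically zero. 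Since $K$-finite vectors are dense in $I(\lambda)$ and all operators involved are continuous, the identity \eqref{eqn:TAAnew} of operators $I(\lambda)\to J(\nu)$ follows for every $(\lambda,\nu)\in\mathbb{C}^2$. The same argument with $\Omega_2$ in place of $\Omega_0$, and with the composition $\A_{n-\lambda,\nu}\circ\T{\lambda}{n-\lambda}$, gives \eqref{eqn:ATTnew} on all of $\mathbb{C}^2$.

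The one point requiring a little care — and the main (mild) obstacle — is making sure the holomorphy claim for the \emph{composed} operators is legitimate at points where some intermediate $\Gamma$-factor degenerates: e.g.\ where $I(n-\lambda)$ or $J(m-\nu)$ becomes reducible, or where $\A_{\lambda,m-\nu}$ vanishes on $L_{\operatorname{even}}$. This is handled by noting that we work throughout with the \emph{normalized} operators, for which Theorem \ref{thm:poleA} and Proposition \ref{prop:T1} guarantee holomorphic dependence with no poles, so the composition is a composition of honestly holomorphic families of continuous maps between fixed Fréchet spaces (via \eqref{eqn:Kpict}); holomorphy of such a composition, tested against a fixed vector, follows by the standard argument (weak holomorphy implies holomorphy, and composition of continuous linear maps is continuous). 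The last sentence of Theorem \ref{thm:II.2} in the introduction — that the left-hand sides vanish when $\nu-n+1\in\mathbb{N}$ or $\lambda-n\in\mathbb{N}$ — is then immediate from the right-hand sides: $1/\Gamma(m-\nu)=1/\Gamma(n-1-\nu)$ vanishes precisely when $n-1-\nu\in-\mathbb{N}$, i.e.\ $\nu-n+1\in\mathbb{N}$, and $1/\Gamma(n-\lambda)$ vanishes precisely when $\lambda-n\in\mathbb{N}$; combined with Theorem \ref{thm:poleA}(2) that $\A_{\lambda,\nu}$ is not identically zero off $L_{\operatorname{even}}$, the corresponding left-hand composites must vanish there.
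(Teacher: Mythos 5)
Your argument is correct and is essentially the paper's own route: the paper first establishes the identities on the open sets $\Omega_0$ and $\Omega_2$ (Lemma \ref{lem:TAAT}), then, once Theorem \ref{thm:poleA} guarantees that $\A_{\lambda,\nu}$ (hence each side of \eqref{eqn:TAAnew} and \eqref{eqn:ATTnew} applied to a fixed vector) depends holomorphically on $(\lambda,\nu)\in\mathbb{C}^2$, it concludes by analytic continuation, exactly the identity-theorem-plus-density-of-$K$-finite-vectors step you spell out. Your proposal merely makes explicit the continuation and the holomorphy-of-composition details that the paper compresses into the remark that Lemma \ref{lem:TAAT} ``can be strengthened'' by Theorem \ref{thm:poleA}.
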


\subsection{Support of $\KA{\lambda}{\nu}$}
We determine the support of the distribution kernel
\index{sbon}{Kxt@$\KA{\lambda}{\nu} (x, x_n)$}
 $\KA{\lambda}{\nu}$
 when it is nonzero, 
 equivalently,
 by Theorem \ref{thm:poleA} (2), 
 namely,
when 
\index{sbon}{Leven@$L_{\operatorname{even}}$}
$\nulambda \notin L_{\operatorname{even}}$.
\begin{proposition}\label{prop:Ksupp}
Suppose $\nulambda \notin L_{\operatorname{even}}$,
equivalently, 
 $\A_{\lambda,\nu} \ne 0$.  
Then
\index{sbon}{pplus@${p_+}$}
\[
\operatorname{Supp} \KA{\lambda}{\nu}
= \begin{cases}
         S^{n-1}   &\text{if\/ $\nulambda \in \backslash\!\backslash \setminus \mathbb{X}$},   \\
         \{[p_+]\}  &\text{if\/ $\nulambda \in /\!/ - L_{\operatorname{even}}$},     \\
         G/P       &\text{otherwise}.
   \end{cases}
\]
\end{proposition}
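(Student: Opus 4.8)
The statement to prove is Proposition~\ref{prop:Ksupp}, which computes $\operatorname{Supp}\KA{\lambda}{\nu}$ when $\nulambda\notin L_{\operatorname{even}}$. The plan is to combine the explicit formula for the distribution kernel on $\mathbb R^n$ with Lemma~\ref{lem:SuppInv}, which says that the support must be one of $\{[p_+]\}$, $S^{n-1}=\overline{P'[p_-]}$, or $G/P=S^n$. Recall (Theorem~\ref{thm:poleA}) that $\KA{\lambda}{\nu}$ is a holomorphically-parametrized distribution on $\mathbb R^n$ which on the open Bruhat cell equals $\frac{1}{\Gamma(\frac{\lambda+\nu-n+1}{2})\Gamma(\frac{\lambda-\nu}{2})}|x_n|^{\lambda+\nu-n}(|x|^2+x_n^2)^{-\nu}$, and since the restriction map $K_T\mapsto K_T|_{\mathbb R^n}$ loses no information (Lemma~\ref{lem:resinj}, Remark~\ref{rem:singR}), it suffices to locate $\operatorname{Supp}(\KA{\lambda}{\nu}|_{\mathbb R^n})$ inside $\mathbb R^n$, where it is $\{0\}$, $\mathbb R^{n-1}$, or all of $\mathbb R^n$ accordingly.

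First I would treat the three parameter regimes. In the generic case ($\nulambda\notin\backslash\!\backslash$ and $\nulambda\notin/\!/$), both Gamma factors in the denominator are finite and nonzero, and away from the singular loci the kernel is genuinely the locally integrable function $|x_n|^{\lambda+\nu-n}(|x|^2+x_n^2)^{-\nu}$, which is nonzero on a dense open subset of $\mathbb R^n$; hence the support is all of $\mathbb R^n$, giving $\operatorname{Supp}\KA{\lambda}{\nu}=G/P$. For $\nulambda\in\backslash\!\backslash\setminus\mathbb X$: here $\Gamma(\frac{\lambda+\nu-n+1}{2})$ has a pole (Remark~\ref{rem:basicpole}), so the normalization forces a residue supported on $\{x_n=0\}$; concretely, by the residue formula for the Riesz distribution in one variable, $\frac{1}{\Gamma(\frac{\lambda+\nu-n+1}{2})}|x_n|^{\lambda+\nu-n}$ becomes (a constant multiple of) $\delta^{(n-1-\lambda-\nu)}(x_n)$ along $\backslash\!\backslash$, and this is exactly the content of Lemma~\ref{lem:Sol}, which identifies $\mathcal Sol(\mathbb R^n\setminus\{0\};\lambda,\nu)$ with $\mathbb C\,\delta^{(-\lambda-\nu+n-1)}(x_n)(|x|^2+x_n^2)^{-\nu}$ on $\backslash\!\backslash$. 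Since $\nulambda\notin\mathbb X=\backslash\!\backslash\cap/\!/$ the other Gamma factor is finite nonzero, so the kernel is a nonzero element of that one-dimensional space, its support is $\{x_n=0\}=\mathbb R^{n-1}$ in the cell, hence $S^{n-1}$ in $G/P$. It remains to check it is not supported at the single point $[p_+]$ alone: this follows because $(|x|^2+x_n^2)^{-\nu}$ is a nonvanishing function near generic points of $\mathbb R^{n-1}$, so the distribution $\delta^{(2k)}(x_n)(|x|^2+x_n^2)^{-\nu}$ has support exactly $\mathbb R^{n-1}$, not $\{0\}$.

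For $\nulambda\in/\!/-L_{\operatorname{even}}$: now $\Gamma(\frac{\lambda-\nu}{2})$ has a pole and, by Theorem~\ref{thm:II.4}(2) together with the residue formula~\eqref{eqn:KSres} for the Riesz distribution, $\KA{\lambda}{\nu}$ reduces to a constant multiple of Juhl's differential operator $\C_{\lambda,\nu}$, whose kernel is supported at the origin; concretely $\frac{1}{\Gamma(\frac{\lambda-\nu}{2})}(|x|^2+x_n^2)^{-\nu}$, with $-\nu=l+\cdots$, collapses to a derivative of $\delta$ at $0$. Alternatively one can argue directly: on the cell $\KA{\lambda}{\nu}$ solves the holonomic system \eqref{eqn:Fa}--\eqref{eqn:parity}; on $\mathbb R^n\setminus\{0\}$ the solution space is one-dimensional (Lemma~\ref{lem:Sol}) and spanned by $|x_n|^{\lambda+\nu-n}(|x|^2+x_n^2)^{-\nu}$, but on $/\!/$ the normalizing factor $\frac{1}{\Gamma(\frac{\lambda-\nu}{2})}$ kills this branch, so $\KA{\lambda}{\nu}$ restricts to $0$ on $\mathbb R^n\setminus\{0\}$; being nonzero (as $\nulambda\notin L_{\operatorname{even}}$, by Theorem~\ref{thm:poleA}(2)) it must be supported at $\{0\}$, i.e.\ at $[p_+]$. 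Finally, in the overlap $\mathbb X=\backslash\!\backslash\cap/\!/$ one has $\mathbb X\setminus L_{\operatorname{even}}$ falling under the "$/\!/-L_{\operatorname{even}}$" case of the proposition (since $\mathbb X\subset/\!/$), and consistency with the $\backslash\!\backslash$ analysis is exactly Theorem~\ref{thm:II.4}(3), which expresses $\B_{\lambda,\nu}$ as a multiple of $\C_{\lambda,\nu}$; this is the bookkeeping one must be careful about.

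\textbf{Main obstacle.} The routine part is the one-variable Riesz-distribution residue computation; the genuinely delicate point is the case analysis at the overlap $\mathbb X$ and, more importantly, verifying in the $\backslash\!\backslash\setminus\mathbb X$ case that the support is \emph{exactly} $S^{n-1}$ and does not degenerate further to $\{[p_+]\}$ — this requires knowing that the coefficient $(|x|^2+x_n^2)^{-\nu}$ does not itself vanish or blow up along $\mathbb R^{n-1}$, and that the order of the transverse derivative $2k=n-1-\lambda-\nu$ is the correct finite nonnegative integer, which is guaranteed precisely by $\nulambda\in\backslash\!\backslash$. I would organize the write-up as: (i) reduce to $\mathbb R^n$ via Lemma~\ref{lem:resinj}; (ii) invoke Lemma~\ref{lem:Sol} to get the restriction to $\mathbb R^n\setminus\{0\}$; (iii) read off whether this restriction is nonzero (generic and $\backslash\!\backslash\setminus\mathbb X$ cases) or zero ($/\!/-L_{\operatorname{even}}$ case), and in the latter use Theorem~\ref{thm:poleA}(2) plus Lemma~\ref{lem:SuppInv} to conclude support $=\{[p_+]\}$; (iv) in the nonzero cases use the explicit form of the kernel to pin the support down to $S^{n-1}$ or $S^n$.
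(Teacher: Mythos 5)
Your plan is essentially the paper's proof: restrict to the punctured Bruhat cell, analyze the one–variable distribution $|x_n|^{\lambda+\nu-n}$ (simple pole along $\backslash\!\backslash$ with residue supported on $\{x_n=0\}$, the factor $(|x|^2+x_n^2)^{-\nu}$ being smooth and nonvanishing there), and then combine the resulting description of $\operatorname{Supp}(\KA{\lambda}{\nu}|_{\mathbb{R}^n\setminus\{0\}})$ with the trichotomy of Lemma \ref{lem:SuppInv} and the nonvanishing statement of Theorem \ref{thm:poleA} (2). The three cases you obtain, and the way you rule out degeneration of the support in the $\backslash\!\backslash\setminus\mathbb{X}$ case, match the paper.

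One caveat on your bookkeeping at $/\!/$ and $\mathbb{X}$: you cannot invoke Theorem \ref{thm:II.4} (2) here, because in the paper that residue formula ($\A_{\lambda,\nu}=q_C^A\,\C_{\lambda,\nu}$ on $/\!/$) is itself proved \emph{from} Proposition \ref{prop:Ksupp}, so that route is circular; likewise the appeal to Theorem \ref{thm:II.4} (3) for the overlap $\mathbb{X}$ is an unnecessary detour. Your alternative direct argument is the right one and should be promoted to the main argument: off $\backslash\!\backslash$ the restriction $\ka{\lambda}{\nu}|_{\mathbb{R}^n\setminus\{0\}}$ is holomorphic in $(\lambda,\nu)$, so the simple zero of $1/\Gamma(\frac{\lambda-\nu}{2})$ kills it on $/\!/\setminus\mathbb{X}$; and on $\mathbb{X}$ the product $\Gamma(\frac{\lambda+\nu-n+1}{2})\Gamma(\frac{\lambda-\nu}{2})$ has a pole of order two while $\ka{\lambda}{\nu}|_{\mathbb{R}^n\setminus\{0\}}$ has only a simple pole (coming from $|x_n|^{\lambda+\nu-n}$), so the normalized restriction again vanishes — this order-of-pole observation is exactly how the paper settles the $\mathbb{X}$ case, and with it your step (iii)–(iv) closes the proof without any forward reference.
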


\begin{proof}
We recall from \eqref{eqn:Kpolar} that
as a distribution on $\mathbb{R}^n \setminus \{0\}$,
\[
\ka{\lambda}{\nu} (x, x_n)|_{{\mathbb{R}}^n \setminus \{0\}}
= r^{\lambda-\nu-1} |\omega_n|^{\lambda+\nu-n}
\]
in the polar coordinates
$(x, x_n) = r\omega$.
Since the function
\[
\omega_n:S^{n-1} \to {\mathbb{R}}, 
\quad
\omega =(\omega_1, \cdots, \omega_n) \mapsto \omega_n
\]
 is regular at $\omega_n=0$, 
the distribution 
$|\omega_n|^{\lambda+\nu-n}$
 on $S^{n-1}$ 
has a simple pole at
$\lambda+\nu = n-1, n-3, \dotsc$,
and the support of its residue is equal to
$S^{n-1} = \{ \omega_{n} = 0 \}$.
In light of Remark \ref{rem:basicpole},
we have thus 
\begin{alignat*}{2}
&
\operatorname{Supp} (\KA{\lambda}{\nu}|_{\mathbb{R}^n\setminus\{0\}})
  = \mathbb{R}^{n-1} \setminus \{0\}
&&\text{ if } \nulambda \in \backslash\!\backslash \setminus \mathbb{X},
\\
&
\operatorname{Supp} (\KA{\lambda}{\nu}|_{\mathbb{R}^n\setminus\{0\}})
  = \emptyset
&&\text{ if } \nulambda \in /\!/ \setminus \mathbb{X}.
\end{alignat*}
For
$\nulambda \in \mathbb{X}$,
$\Gamma(\frac{\lambda+\nu-n+1}{2}) \Gamma(\frac{\lambda-\nu}{2})$
has a pole of order two,
and therefore
$\KA{\lambda}{\nu}|_{\mathbb{R}^n \setminus \{0\}} = 0$.
Hence
\[
\operatorname{Supp}
(\KA \lambda \nu |_{{\mathbb{R}}^n \setminus \{0\}})=\emptyset
\quad
\text{if }
(\lambda,\nu) \in {\mathbb{X}}.  
\]
Now
we get Proposition \ref{prop:Ksupp} by
 Lemma \ref{lem:SuppInv}.
\end{proof}

\subsection{Renormalization $\AAt_{\lambda,\nu}$ for $\nu \in -\mathbb{N}$}
\label{subsec:5.5}
We have seen in Theorem \ref{thm:poleA} (2) that the distribution
$\A_{\lambda,\nu}$ with holomorphic parameter $\nulambda$
vanishes in the discrete subset of $\mathbb{C}^2$, 
{\it{i.e.}},   
if $\nulambda \in L_{\operatorname{even}}$.  
In this section
 we renormalize
$\A_{\lambda,\nu}$ 
as a function of a single variable $\lambda$
by fixing
$\nu \in -\mathbb{N}$
in order to obtain nonzero symmetry breaking operators.

Suppose $\nu \in -{\mathbb{N}}$.  
Then the factor $(\xi_{n+1}-\xi_0)^{-\nu}$
of the distribution kernel 
\index{sbon}{Kxa@$k_{\lambda,\nu}^{\mathbb{A}}$}
$\ska{\lambda}{\nu}(\xi)$
in \eqref{eqn:klmdmu} is a polynomial,
and thus the distribution kernel $\ska{\lambda}{\nu}$
has a better regularity.

\begin{proposition}\label{prop:nuneg}
Suppose $\nu \in -\mathbb{N}$.
Then
\begin{equation}\label{eqn:K2lmdnu}
\index{sbon}{Kxta@$\KAA{\lambda}{\nu}(x,x_n)$|textbf}
\index{sbon}{Kxnt@$\ka{\lambda}{\nu} (x,x_n)$}
\KAA{\lambda}{\nu}(x,x_n)
:= \frac{1}{\Gamma(\frac{\lambda+\nu-n+1}{2})} \ka{\lambda}{\nu}(x,x_n)
= \Gamma (\frac{\lambda-\nu}{2}) \KA{\lambda}{\nu}(x,x_n)
\end{equation}
extends to a distribution on
$K/M \simeq G/P$
which depends holomorphically
 in $\lambda$ in the whole complex plane.
Then there exists
 a nonzero $G'$-intertwining operator
\begin{equation}
\label{eqn:AAdef}
\index{sbon}{Att@$\AAt_{\lambda,\nu}$|textbf}
\AAt_{\lambda,\nu} : I(\lambda) \to J(\nu), 
\end{equation}
whose distribution kernel 
 is $\KAA{\lambda}{\nu}$.  
Further,
\index{sbon}{1lmd@${\mathbf{1}}_{\lambda}$}
\begin{equation}
\label{eqn:AA1}
\AAt_{\lambda,\nu}(\mathbf{1}_\lambda)
= \frac{\pi^{\frac{n-1}{2}}\Gamma(\frac{\lambda-\nu}{2})}{\Gamma(\lambda)}
   \mathbf{1}_\nu.  
\end{equation}
\end{proposition}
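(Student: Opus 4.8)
The plan is to establish Proposition \ref{prop:nuneg} in three stages: holomorphy of the renormalized kernel, nonvanishing, and the spherical-vector formula. First I would analyze the distribution $\ska{\lambda}{\nu}(\xi) = 2^{-\lambda+n}|\xi_n|^{\lambda+\nu-n}(\xi_{n+1}-\xi_0)^{-\nu}$ on $\Xi$, equivalently $\ka{\lambda}{\nu}(x,x_n) = |x_n|^{\lambda+\nu-n}(|x|^2+x_n^2)^{-\nu}$ on $\mathbb{R}^n$. Since $\nu \in -{\mathbb{N}}$, the factor $(|x|^2+x_n^2)^{-\nu}$ is a polynomial, so the only possible singularity comes from $|x_n|^{\lambda+\nu-n}$, a one-variable homogeneous distribution in $x_n$ alone. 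This is exactly the situation of the Riesz distribution in one variable: $|x_n|^{a}$ has simple poles at $a \in -1-2{\mathbb{N}}$, i.e. at $\lambda+\nu-n \in -1-2{\mathbb{N}}$, which is precisely $\nulambda \in \backslash\!\backslash$, matching Remark \ref{rem:basicpole}. Dividing by $\Gamma(\frac{\lambda+\nu-n+1}{2})$, whose poles are also exactly at $\nulambda \in \backslash\!\backslash$ (all simple), cancels these poles, so $\KAA{\lambda}{\nu} = \frac{1}{\Gamma(\frac{\lambda+\nu-n+1}{2})}\ka{\lambda}{\nu}$ is a distribution on $\mathbb{R}^n$ depending holomorphically on $\lambda \in {\mathbb{C}}$. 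The identity $\KAA{\lambda}{\nu} = \Gamma(\frac{\lambda-\nu}{2})\KA{\lambda}{\nu}$ is then immediate from the definition \eqref{eqn:KAA} of $\KA{\lambda}{\nu}$. By Proposition \ref{prop:meroK} (applied with $\Omega = {\mathbb{C}} \times \{\nu\}$), the holomorphic extension of the kernel on the open Bruhat cell yields a holomorphically-varying family of $G'$-intertwining operators $\AAt_{\lambda,\nu}: I(\lambda) \to J(\nu)$, once we know $\KAA{\lambda}{\nu}$ lies in $\mathcal{S}ol(\mathbb{R}^n;\lambda,\nu)$ — but this holds because $\ka{\lambda}{\nu} \in \mathcal{S}ol$ for $\nulambda \in \Omega_0$ by Lemma \ref{lem:KAOmega}, the defining PDE system depends holomorphically on parameters, and holomorphic extension preserves the solution property.

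Next I would prove $\AAt_{\lambda,\nu} \ne 0$. Since $\AAt_{\lambda,\nu} = \Gamma(\frac{\lambda-\nu}{2})\A_{\lambda,\nu}$ wherever both sides make sense, and $\A_{\lambda,\nu}$ vanishes exactly on $L_{\operatorname{even}}$ by Theorem \ref{thm:poleA}(2), the renormalization by $\Gamma(\frac{\lambda-\nu}{2})$ — which has simple poles precisely at $\nulambda \in /\!/$, hence at all points of $L_{\operatorname{even}}$ since $L_{\operatorname{even}} \subset /\!/$ by \eqref{eqn:LX} — precisely compensates the zeros. More carefully: for fixed $\nu = -j \in -{\mathbb{N}}$, the one-parameter family $\lambda \mapsto \AAt_{\lambda,\nu}$ is holomorphic and nonzero off $L_{\operatorname{even}}$ (being a nonzero multiple of $\A_{\lambda,\nu}$ there); if it vanished at some $(\lambda_0,\nu) \in L_{\operatorname{even}}$, then by Proposition \ref{prop:4.2}(2) applied to $\KAA{\lambda}{\nu}$ evaluated on $K$-finite vectors, one sees via Lemma \ref{lem:KtildeFC} that the order of vanishing of $\langle\KA{\lambda}{\nu},F\rangle$ at such a point is exactly one (coming from a single factor $\frac{\lambda-\nu}{2}+j$ in the relevant product), which is cancelled by the simple pole of $\Gamma(\frac{\lambda-\nu}{2})$; hence $\langle\KAA{\lambda_0}{\nu},F\rangle \ne 0$ for a suitable $K$-finite $F$. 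The cleanest route is to directly use the spherical-vector computation below, which already shows $\AAt_{\lambda,\nu}(\mathbf{1}_\lambda) \ne 0$ for generic $\lambda$ and, by holomorphy, is a nonzero operator for all $\lambda$ except where $\frac{\Gamma(\frac{\lambda-\nu}{2})}{\Gamma(\lambda)}$ vanishes — and one checks this ratio has no zeros, only the relevant poles cancel, so in fact $\AAt_{\lambda,\nu}(\mathbf{1}_\lambda) \ne 0$ for every $\lambda \notin -{\mathbb{N}}$, which already forces $\AAt_{\lambda,\nu} \ne 0$ there; for $\lambda \in -{\mathbb{N}}$ one appeals to holomorphy and the density of nonvanishing points.

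Finally, for the formula \eqref{eqn:AA1}: by Proposition \ref{prop:AminK} we have $\A_{\lambda,\nu}(\mathbf{1}_\lambda) = \frac{\pi^{(n-1)/2}}{\Gamma(\lambda)}\mathbf{1}_\nu$ for all $\nulambda \in {\mathbb{C}}^2$. Multiplying both sides by $\Gamma(\frac{\lambda-\nu}{2})$ and using $\AAt_{\lambda,\nu} = \Gamma(\frac{\lambda-\nu}{2})\A_{\lambda,\nu}$ gives
\[
\AAt_{\lambda,\nu}(\mathbf{1}_\lambda)
= \frac{\pi^{\frac{n-1}{2}}\Gamma(\frac{\lambda-\nu}{2})}{\Gamma(\lambda)}\mathbf{1}_\nu
\]
as an identity of meromorphic functions of $\lambda$; since the left-hand side is holomorphic in $\lambda$ (as $\AAt_{\lambda,\nu}$ is) and so is the right-hand side (the poles of $\Gamma(\frac{\lambda-\nu}{2})$ at $\lambda \in \nu + 2{\mathbb{N}}_{\le 0}$... wait — for $\nu = -j$ these are at $\lambda \in -j - 2{\mathbb{N}}$, which are poles of $\Gamma(\lambda)$ in the denominator too when $\lambda$ is a nonpositive integer of the right parity, and one checks the ratio is entire), the identity holds everywhere by analytic continuation. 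The main obstacle I anticipate is the careful pole-and-zero bookkeeping in the nonvanishing step — one must be sure that the simple pole of $\Gamma(\frac{\lambda-\nu}{2})$ exactly matches the simple zero of $\A_{\lambda,\nu}$ at each point of $L_{\operatorname{even}}$, with nothing left over; this is where Lemma \ref{lem:KtildeFC}'s explicit product formula, which exhibits the vanishing order as exactly one, does the real work. Alternatively, invoking Proposition \ref{prop:meroKK} with the dense subspace $Z$ of $K$-finite vectors and the explicit formulae of Lemma \ref{lem:KtildeFC} sidesteps any delicate limiting argument.
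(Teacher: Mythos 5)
Your overall strategy can be made to work, but it is organized very differently from the paper's proof, which is only a few lines long and uses none of the machinery you invoke. The paper argues directly in the compact picture: by \eqref{eqn:iKk} the kernel on $S^n$ is $2^{-\lambda+n}|\eta_n|^{\lambda+\nu-n}(1-\eta_0)^{-\nu}$; since $\nu\in-\mathbb{N}$ the factor $(1-\eta_0)^{-\nu}$ is a polynomial, hence smooth on all of $S^n$, while $\frac{1}{\Gamma(\frac{\lambda+\nu-n+1}{2})}|\eta_n|^{\lambda+\nu-n}$ is a family of distributions on $S^n$ that is holomorphic in $\lambda$ and never identically zero, because $\eta_n$ is a regular function on $S^n$ (at the poles the normalized distribution degenerates to a derivative of the delta along $S^{n-1}$). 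Multiplying by the polynomial, which is nonvanishing near any point with $\eta_0\ne 1$ and $\eta_n=0$, gives in one stroke the extension to $G/P$ (so no appeal to Proposition \ref{prop:meroK} is needed; note also that your $\Omega=\mathbb{C}\times\{\nu\}$ is not open in $\mathbb{C}^2$, so that proposition would anyway need a one-variable variant), the holomorphy in $\lambda$, and the nonvanishing of $\KAA{\lambda}{\nu}$ for every $\lambda$; nonvanishing of the operator then follows from the bijection of Proposition \ref{prop:Distker}. Your noncompact-picture route (polynomiality of $(|x|^2+x_n^2)^{-\nu}$, the one-variable Riesz distribution in $x_n$, then Proposition \ref{prop:meroK}) is workable but heavier, and your derivation of \eqref{eqn:AA1} from Proposition \ref{prop:AminK} by multiplying by $\Gamma(\frac{\lambda-\nu}{2})$ and continuing analytically, after checking the coefficient is entire, is correct (cf.\ Remark \ref{rem:nuneg}).

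The genuine weak point is your ``cleanest route'' to nonvanishing: knowing $\AAt_{\lambda,\nu}\ne 0$ for $\lambda\notin-\mathbb{N}$ plus holomorphy does \emph{not} yield nonvanishing at $\lambda\in-\mathbb{N}$ ``by density'' --- a holomorphic family can vanish at isolated parameters, which is exactly what $\A_{\lambda,\nu}$ does on $L_{\operatorname{even}}$, and $L_{\operatorname{even}}$ sits inside the set $\lambda\in-\mathbb{N}$ you are waving away. The fix lies inside your own computation: the coefficient $\pi^{\frac{n-1}{2}}\Gamma(\frac{\lambda-\nu}{2})/\Gamma(\lambda)$ in \eqref{eqn:AA1} is entire in $\lambda$, and among $\lambda\in-\mathbb{N}$ it is nonzero precisely when $(\lambda,\nu)\in L_{\operatorname{even}}$ (both Gamma factors then have simple poles; see Remark \ref{rem:nuneg}), so the spherical vector itself witnesses $\AAt_{\lambda,\nu}\ne0$ exactly at the dangerous points; at the remaining $\lambda\in-\mathbb{N}$ one has $(\lambda,\nu)\notin L_{\operatorname{even}}$, hence $\A_{\lambda,\nu}\ne0$ and $\Gamma(\frac{\lambda-\nu}{2})$ finite and nonzero (for $\nu\in-\mathbb{N}$ the conditions $/\!/$ and $L_{\operatorname{even}}$ cut out the same $\lambda$'s), so $\AAt_{\lambda,\nu}=\Gamma(\frac{\lambda-\nu}{2})\A_{\lambda,\nu}\ne0$ trivially. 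Your alternative argument via Lemma \ref{lem:KtildeFC} is also sound, but ``the order of vanishing is exactly one'' must be verified, not asserted: with $\nu=-j$ and $\lambda_0=\nu-2l$, take $N=2l+2$ and $h=(1-s)^{j}$; then only the factor $\frac{\lambda-\nu}{2}+l$ vanishes at $\lambda_0$, the second product is nonzero since $\lambda_0+\nu<n$, and $P_{1,\,1-j}\bigl((1-s)^{j}\bigr)=2\,j!\ne0$ by the formula in the proof of Lemma \ref{lem:Beta}, so the zero is simple and is cancelled by the simple pole of $\Gamma(\frac{\lambda-\nu}{2})$. Finally, Proposition \ref{prop:meroKK} cannot ``sidestep'' this issue: it concerns holomorphic extension, not nonvanishing.
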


\begin{remark}\label{rem:nuneg}
For a fixed $\nu \in -\mathbb{N}$,
$\nulambda \in L_{\operatorname{even}}$
 if and only if
$\lambda$ is a (simple) pole of
$\Gamma (\frac{\lambda-\nu}{2})$.
In this case,
 the formula \eqref{eqn:AA1}
 amounts to 
\[
\AAt_{\lambda,\nu}(\mathbf{1}_\lambda)
= \frac{\pi^{\frac{n-1}{2}}(-\lambda)! (-1)^{\lambda+l}}{l!} \mathbf{1}_\nu, 
\]
where $l\in\mathbb{N}$ is defined by the relation
$\nu-\lambda = 2l$.
\end{remark}

\begin{proof}[Proof of Proposition \ref{prop:nuneg}]
In the coordinates
$\eta = (\eta_0,\dots,\eta_n) \in S^n$,
\[
\frac{1}{\Gamma(\frac{\lambda+\nu-n+1}{2})}
|\eta_n|^{\lambda+\nu-n}
\]
is a nonzero distribution on $S^n$ which is holomorphic in 
$\lambda$ in the whole complex plane
 because $\eta_n:S^n \to {\mathbb{R}}$
 is regular at $\eta_n=0$.  
On the other hand,
 since $\nu \in -\mathbb{N}$,
$(1-\eta_0)^{-\nu}$
is a polynomial in $\eta_0$,
\[
\frac{1}{\Gamma(\frac{\lambda+\nu-n+1}{2})}
|\eta_n|^{\lambda+\nu-n} (1 - \eta_0)^{-\nu}
\]
is well-defined and gives a distribution on
$S^n \simeq G/P$
with holomorphic parameter $\lambda \in \mathbb{C}$.
Moreover it is nonzero for any $\lambda$ in any neighbourhood of $\eta$ with
$\eta_0 \ne 1$ and $\eta_n = 0$.
Now Proposition \ref{prop:nuneg} follows from \eqref{eqn:iKk}.
\end{proof}

The following proposition shows
 that the renormalized symmetry breaking operator
 $\AAt_{\lambda,\nu}$ is 
 generically regular
 in the sense of Definition \ref{def:regular}.  

\begin{proposition}
[Support of $\KAA{\lambda}{\nu}$]
\label{prop:SuppAtwo}
\index{sbon}{Kxta@$\KAA{\lambda}{\nu}(x,x_n)$}
Suppose $\nu \in -\mathbb{N}$.
Then the distribution kernel $\KAA{\lambda}{\nu}$
of the symmetry breaking operator
\index{sbon}{Att@$\AAt_{\lambda,\nu}$|textbf}
$\AAt_{\lambda,\nu}$
has the following support:
\[
\operatorname{Supp} \KAA{\lambda}{\nu}
= \begin{cases}
                    G/P     &\text{if\/ $\nulambda \notin \backslash\!\backslash$},   \\
                    S^{n-1} &\text{if\/ $\nulambda \in \backslash\!\backslash$}.
   \end{cases}
\]
\end{proposition}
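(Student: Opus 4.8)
The plan is to read off the support directly from the explicit formula for the distribution kernel
\[
\KAA{\lambda}{\nu}(x,x_n)
= \frac{1}{\Gamma(\frac{\lambda+\nu-n+1}{2})}
  |x_n|^{\lambda+\nu-n}(|x|^2+x_n^2)^{-\nu},
\]
together with the general constraint on supports of symmetry breaking operators from Lemma \ref{lem:SuppInv}: since $\KAA{\lambda}{\nu}$ is the kernel of a nonzero $G'$-homomorphism $\AAt_{\lambda,\nu}:I(\lambda)\to J(\nu)$ (Proposition \ref{prop:nuneg}), its support must be one of the three $P'$-invariant closed sets $\{[p_+]\}$, $S^{n-1}$, or $G/P\simeq S^n$. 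So the task reduces to deciding, for each $\lambda$, which of these three possibilities occurs.

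First I would treat the open Bruhat cell. Because $\nu\in-\mathbb N$, the factor $(|x|^2+x_n^2)^{-\nu}$ is a polynomial and contributes no singularities, so on ${\mathbb R}^n$ the kernel is, up to a nonvanishing smooth factor, the distribution $\frac{1}{\Gamma(\frac{\lambda+\nu-n+1}{2})}|x_n|^{\lambda+\nu-n}$. Here $x_n:{\mathbb R}^n\to{\mathbb R}$ is a regular function, so $|x_n|^{\lambda+\nu-n}$, renormalized by the Gamma factor, is a distribution on ${\mathbb R}^n$ holomorphic in $\lambda$; it is nowhere zero as long as $\lambda+\nu-n\notin\{-1,-3,-5,\dots\}$, i.e.\ as long as $\nulambda\notin\backslash\!\backslash$, in which case its restriction to the cell has full support ${\mathbb R}^n$. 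When $\nulambda\in\backslash\!\backslash$, the Gamma factor has a simple pole and the renormalized distribution becomes a nonzero constant multiple of $\delta^{(2k)}(x_n)\,(|x|^2+x_n^2)^{-\nu}$ with $k=\frac12(n-1-\lambda-\nu)$ (this is exactly the residue formula \eqref{eqn:resr} applied in the single variable $x_n$); its support is the hyperplane $\{x_n=0\}={\mathbb R}^{n-1}$. Note also that in the present renormalization there is only one Gamma factor, so no pole of order two can occur — this is what makes the dichotomy clean, in contrast with the two-factor normalization of $\KA{\lambda}{\nu}$ treated in Proposition \ref{prop:Ksupp}.

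Combining these two computations with Lemma \ref{lem:SuppInv} finishes the proof: if $\nulambda\notin\backslash\!\backslash$ then $\operatorname{Supp}\KAA{\lambda}{\nu}$ meets the open cell, hence equals all of $G/P$; if $\nulambda\in\backslash\!\backslash$ then the restriction to the cell is supported on ${\mathbb R}^{n-1}$, so the global support is contained in $\overline{P'[p_-]}\simeq S^{n-1}$, and it is not contained in $\{[p_+]\}$ because the cell-restriction $\delta^{(2k)}(x_n)(|x|^2+x_n^2)^{-\nu}$ is not supported at the origin; by Lemma \ref{lem:SuppInv} it must then be exactly $S^{n-1}$. The only slightly delicate point — and the one I would be most careful about — is verifying that the residue of $\frac{1}{\Gamma(\frac{\lambda+\nu-n+1}{2})}|x_n|^{\lambda+\nu-n}$ along $\backslash\!\backslash$ is genuinely nonzero and has support exactly the full hyperplane $\{x_n=0\}$ (not a proper subset), which follows from the regularity of the function $x_n$ and the explicit constant in \eqref{eqn:resr}; the polynomial factor $(|x|^2+x_n^2)^{-\nu}$ restricted to $x_n=0$ equals $|x|^{-2\nu}$, which vanishes only at the origin, so multiplying by it does not shrink the support away from $S^{n-1}$.
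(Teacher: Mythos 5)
Your proof is correct and follows essentially the same route as the paper: the paper also reads the support off the explicit kernel, the only difference being that it works in the compact model, computing $\operatorname{Supp}\bigl(\tfrac{1}{\Gamma(\frac{\lambda+\nu-n+1}{2})}|\eta_n|^{\lambda+\nu-n}\bigr)$ directly on $S^n\simeq G/P$ (so no globalization step is needed) and then noting that multiplication by the smooth factor $(1-\eta_0)^{-\nu}$, which vanishes only at the single point $[p_+]$, cannot shrink the support, whereas you compute on the open Bruhat cell and then invoke Lemma \ref{lem:SuppInv} to pass from ${\mathbb{R}}^n$ to $G/P$. One small imprecision: your phrase ``up to a nonvanishing smooth factor'' is not literally right, since $(|x|^2+x_n^2)^{-\nu}$ vanishes at the origin for $\nu\in-{\mathbb{N}}_+$, but your final paragraph addresses exactly this point (the factor vanishes only at one point, hence does not alter the support), so the argument is complete.
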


\begin{proof}
As a distribution on $G/P$,
we have
\[
\operatorname{Supp} \frac{1}{\Gamma(\frac{\lambda+\nu-n+1}{2})}
   |\eta_n|^{\lambda+\nu-n}
= \begin{cases}
                    G/P     &\text{if\/ $\nulambda \notin \backslash\!\backslash$},   \\
                    S^{n-1} &\text{if\/ $\nulambda \in \backslash\!\backslash$}.
   \end{cases}
\]
The multiplication of
$(1-\eta_0)^{-\nu} \in C^\infty(S^n)$
is well-defined,
and does not affect the support
 because the equation 
$1-\eta_0 = 0$ holds 
 only if
$\eta = (1,0,\dots,0) \in S^n$.
Thus we proved the Proposition.
\end{proof}

\section{Singular symmetry breaking operator $\B_{\lambda,\nu}$}
\label{sec:B}

We have seen in Lemma \ref{lem:Sol}
 that
 singular symmetry breaking operators exist
 only if 
\index{sbon}{Xl@${\backslash\!\backslash}$}
$\nulambda \in \backslash\!\backslash$.
In this chapter
 we construct a family of singular symmetry breaking operators
\begin{equation}
\label{eqn:Bdef}
\B_{\lambda,\nu}: I(\lambda) \to J(\nu)
\quad\text{for}\quad \nulambda \in \backslash\!\backslash
\end{equation}
 by giving an explicit formula
 of the distribution kernel,
 see \eqref{eqn:KB}.  
The operator $\tB{\lambda}{\nu}$ depends 
 holomorphically on $\lambda \in {\mathbb{C}}$
 (or on $\nu \in {\mathbb{C}}$)
under the constraints 
 that $\nulambda \in \backslash \!\backslash$.  
We find a necessary and sufficient condition
that $\tB{\lambda}{\nu} \ne 0$.  
Other singular symmetry breaking operators
 are only the differential operators
 $\tC{\lambda}{\nu}$
 that will be discussed in the next chapter.  
 
The classification of singular symmetry breaking operators
 will be
given in Proposition \ref{prop:sing}.

\subsection{Singular symmetry breaking operator $\B_{\lambda,\nu}$}
\label{subsec:Blmd}

For $\nulambda \in \backslash\!\backslash$,
we define $k \in \mathbb{N}$ by the relation
\begin{equation}\label{eqn:defk}
\lambda + \nu = n - 1 - 2k.
\end{equation}
In what follows,
we shall fix $k \in \mathbb{N}$
 and discuss the meromorphic continuation by varying 
$\nu \in \mathbb{C}$ (or $\lambda \in \mathbb{C}$)
under the constraints \eqref{eqn:defk}.

For $\xi= (\xi_0,\dots,\xi_{n+1})\in\Xi$,
we set
\begin{equation*}
k_{\lambda,\nu}^{\mathbb{B}}(\xi)
:= 2^{2k+1+\nu} \delta^{(2k)} (\xi_n) (\xi_{n+1}-\xi_0)^{-\nu}.  
\end{equation*}
Then $k_{\lambda,\nu}^{\mathbb{B}}(\xi)$ is 
 a distribution on $\Xi$, 
 when $\operatorname{Re}\nu \ll 0$.  
Further 
$
   k_{\lambda,\nu}^{\mathbb{B}}(\xi)
   \in \mathcal{D}'_{\lambda-n} (\Xi) \simeq I(n-\lambda)^{-\infty}
$
 and, 
 as in \eqref{eqn:PkA}, 
it satisfies a $P'$-covariance
\begin{equation}
\label{eqn:PkB}
k_{\lambda,\nu}^{\mathbb{B}}(m e^{-t H} n \xi)
= e^{\nu t} k_{\lambda,\nu}^{\mathbb{B}}(\xi)
\end{equation}
for any $m e^{-t H} n \in M'A N_+'=P'$.  
By \eqref{eqn:KBnorm} we have:
\index{sbon}{iotaNast@$\iota_N^*$}
\index{sbon}{Kxb@$k_{\lambda,\nu}^{\mathbb{B}}$|textbf}
\index{sbon}{iotaKast@$\iota_K^*$}
\begin{alignat}{2}
&\iota_N^* k_{\lambda,\nu}^{\mathbb{B}} 
&&= (|x|^2+x_n^2)^{-\nu}\delta^{(2k)}(x_n).
\label{eqn:iNkB}
\\
&(\iota_K^* k_{\lambda,\nu}^{\mathbb{B}})(\eta)
&&= 2^{2k+1+\nu} \delta^{(2k)} (\eta_n) (1-\eta_0)^{-\nu}.
\label{eqn:iKkB}
\end{alignat}
In order to give the meromorphic continuation
 of the distribution kernel,
 which is initially holomorphic 
 when $\operatorname{Re}\nu \ll 0$,
 we normalize \eqref{eqn:iNkB} as 
\index{sbon}{KxtB@$\KB{\lambda}{\nu}(x,x_n)$}
\begin{align}\label{eqn:KB}
\KB \lambda\nu(x,x_n) :={}&
\frac{1}{\Gamma(\frac{\lambda-\nu}{2})}
   (|x|^2+x_n^2)^{-\nu} \delta^{(2k)} (x_n)
\\
={}&
\frac{1}{\Gamma(\frac{n-1}{2}-\nu-k)}
   (|x|^2+x_n^2)^{-\nu} \delta^{(2k)} (x_n).
\nonumber
\end{align}

The main properties of $\tB{\lambda}{\nu}$
 are summarized as follows.  
\begin{theorem}\label{thm:BK}
Suppose $\nulambda \in \backslash\!\backslash$.
\begin{enumerate}[\upshape 1)]
\item
For $\nulambda \in\backslash\!\backslash$ with
$\operatorname{Re}(\lambda-\nu) > 0$,
$\KB{\lambda}{\nu}$ is well-defined as
a distribution on $\mathbb{R}^n$,
and satisfies:
\begin{equation}\label{eqn:Bexp}
\KB{\lambda}{\nu} (x,x_n)
= \frac{1}{\Gamma(\frac{n-1}{2}-\nu-k)} \sum_{i=0}^k
     \frac{(-1)^i(2k)!(\nu)_i}{(2k-2i)! \, i!}
\,  |x|^{-2\nu-2i} \delta^{(2k-2i)} (x_n).
\end{equation}
\item
Fix $k\in\mathbb{N}$.
Then $\KB \lambda \nu$
extends to a distribution on $\mathbb{R}^n$
that depends holomorphically on
$\nu$ in the entire plane $\mathbb{C}$ (or $\lambda\in\mathbb{C}$).
\item
$\KB{\lambda}{\nu} \in \mathcal{S}ol(\mathbb{R}^n; \lambda,\nu)$
(see \eqref{eqn:KTeqU} for the definition) for all
$\nulambda \in \backslash\!\backslash$,
and induces a continuous $G'$-intertwining operator
\[
\index{sbon}{Bt@$\B_{\lambda,\nu}$|textbf}
\B_{\lambda,\nu} : I(\lambda) \to J(\nu).
\]
\item
$\B_{\lambda,\nu} = 0$
if and only if $n$ is odd and
$\nulambda \in L_{\operatorname{even}}$.
\end{enumerate}
\end{theorem}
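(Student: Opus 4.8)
\textbf{Proof strategy for Theorem \ref{thm:BK} (4).}
By Lemma~\ref{lem:resinj} one has $\B_{\lambda,\nu}=0$ if and only if the distribution kernel $\KB{\lambda}{\nu}$ vanishes on $\mathbb{R}^n$, so it suffices to locate the zero set of $\KB{\lambda}{\nu}$ inside $\backslash\!\backslash$. I would start from the finite expansion \eqref{eqn:Bexp} of part~1), which by part~2) persists for all $\nu\in\mathbb{C}$ (with $k$ fixed and $\lambda=n-1-2k-\nu$). The first step is to make this expansion manifestly regular in $\nu$: rewrite each factor $|x|^{-2\nu-2i}$ on $\mathbb{R}^{n-1}$ as $\Gamma(\tfrac{n-1}{2}-\nu-i)$ times the \emph{normalized} Riesz distribution $\widetilde{|x|}^{\,-2\nu-2i}:=\Gamma(\tfrac{n-1}{2}-\nu-i)^{-1}|x|^{-2\nu-2i}$, which is entire in $\nu$ and is never the zero distribution (for large $\operatorname{Re}$ it is a positive function, and at the exceptional values it equals a nonzero multiple of $\Delta_{\mathbb{R}^{n-1}}^{j}\delta$ by \eqref{eqn:resr}). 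Since $\Gamma(\tfrac{n-1}{2}-\nu-i)/\Gamma(\tfrac{n-1}{2}-\nu-k)=\prod_{l=i+1}^{k}(\tfrac{n-1}{2}-\nu-l)$, the expansion becomes
\begin{equation*}
\KB{\lambda}{\nu}=\sum_{i=0}^{k}C_i(\nu)\,\widetilde{|x|}^{\,-2\nu-2i}\,\delta^{(2k-2i)}(x_n),
\qquad
C_i(\nu)=\frac{(-1)^i(2k)!}{(2k-2i)!\,i!}(\nu)_i\prod_{l=i+1}^{k}\Bigl(\tfrac{n-1}{2}-\nu-l\Bigr),
\end{equation*}
a sum of $k+1$ distributions whose coefficients $C_i(\nu)$ are polynomials in $\nu$.

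The next step is a linear-independence observation. For every fixed $\nu$ the distributions $\widetilde{|x|}^{\,-2\nu-2i}\,\delta^{(2k-2i)}(x_n)$ ($0\le i\le k$) are linearly independent: testing against functions $\phi(x)\psi(x_n)$ and using that the jet $(\psi^{(2k)}(0),\dots,\psi^{(0)}(0))$ can be prescribed freely shows that a vanishing combination forces each $C_i(\nu)\widetilde{|x|}^{\,-2\nu-2i}$, hence (as $\widetilde{|x|}^{\,-2\nu-2i}\neq0$) each $C_i(\nu)$, to vanish. Therefore $\B_{\lambda,\nu}=0$ if and only if $C_i(\nu)=0$ for all $i=0,\dots,k$. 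Reading off the linear factors, $C_i(\nu)=0$ iff $\nu\in\{0,-1,\dots,-(i-1)\}$ (the vanishing of $(\nu)_i$) or $\nu=\tfrac{n-1}{2}-l$ for some $l\in\{i+1,\dots,k\}$. In particular $C_k(\nu)=0$ forces $\nu\in\{0,-1,\dots,-(k-1)\}$ and $C_0(\nu)=0$ forces $\nu=\tfrac{n-1}{2}-l$ for some $l\in\{1,\dots,k\}$; these are compatible only when $\tfrac{n-1}{2}\in\mathbb{Z}$, i.e.\ only when $n$ is odd. This yields the parity condition and, in view of \eqref{eqn:LX}, is consistent with $\B_{\lambda,\nu}$ being nonzero everywhere on $\backslash\!\backslash$ when $n$ is even.

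Assume now $n$ odd, write $p:=\tfrac{n-1}{2}\in\mathbb{Z}_{\ge0}$ and $\nu=-j$, $j\in\mathbb{N}$. A short bookkeeping of the conditions $C_i(-j)=0$ (for $i>j$ the factor $(\nu)_i$ already vanishes; for $i\le j$ one needs $-j=p-l$ with $i<l\le k$, i.e.\ $l=p+j$ and, since $i\le j\le p+j-1$ when $p\ge1$, simply $p+j\le k$) shows, for $n\ge3$, that $\B_{\lambda,\nu}=0$ precisely when $\nu=-j$ with $0\le j$ and $p+j\le k$. It then remains to match this with $L_{\operatorname{even}}$: using $\lambda+\nu=n-1-2k=2p-2k$ and writing $\lambda=-i$ gives $i=2(k-p)-j$, which satisfies $i\ge j\ge0$ and $i\equiv j\bmod 2$, so $(\lambda,\nu)=(-i,-j)\in L_{\operatorname{even}}$; and the constraint $p+j\le k$ is exactly $j\le i$. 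Carrying out this translation in both directions identifies the zero set of $\B_{\lambda,\nu}$ on $\backslash\!\backslash$ with $L_{\operatorname{even}}\cap\backslash\!\backslash$, which proves the statement for $n\ge3$; the degenerate case $n=1$, where $\mathbb{R}^{n-1}$ is a point and \eqref{eqn:Bexp} collapses, I would settle by a direct computation of $\KB{\lambda}{\nu}=\Gamma(\tfrac{\lambda-\nu}{2})^{-1}|x_n|^{-2\nu}\delta^{(2k)}(x_n)$. The only genuinely delicate step is the first one — justifying the regularized form of \eqref{eqn:Bexp} for all $\nu$, i.e.\ checking that the poles of the power functions are exactly cancelled by the zeros of $1/\Gamma(\tfrac{\lambda-\nu}{2})$; once that is in place the remainder is elementary index bookkeeping.
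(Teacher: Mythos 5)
Your proposal addresses only part 4), taking parts 1)--2) as given; that is legitimate (the paper proves 1)--3) first, via Lemma \ref{lem:4.15} and Proposition \ref{prop:meroK}, independently of 4)), but note that as written it is not a proof of the whole theorem. For 4) your route is correct but genuinely different from the paper's. The paper never examines the zero set of the kernel directly: it pairs $\KB{\lambda}{\nu}$ against the $K$-finite vectors $F_\lambda[\psi,h]$ and shows, via the Gegenbauer-integral computations of Lemmas \ref{lem:7.3} and \ref{lem:KBtildeFC}, that $\langle \KB{\lambda}{\nu},F\rangle=(-1)^k2^k(2k-1)!!\,\langle \KA{\lambda}{\nu},F\rangle$ for all $F\in I(\lambda)_K$ (Proposition \ref{prop:ABres}); the vanishing criterion is then inherited verbatim from the zero-set analysis already done for $\KA{\lambda}{\nu}$ in Proposition \ref{prop:4.2} (Proposition \ref{prop:4.2B}), and density of $I(\lambda)_K$ upgrades this to vanishing of the operator. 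That detour buys the explicit proportionality with $\A_{\lambda,\nu}$, which the paper reuses for the residue formula (Theorem \ref{thm:reduction} (1)) and the spherical-vector formula. Your argument -- Lemma \ref{lem:resinj} to reduce to the kernel on $\mathbb{R}^n$, the rewriting of \eqref{eqn:Bexp} in terms of the normalized Riesz distributions (entire and nowhere vanishing by \eqref{eqn:resr}), linear independence of the terms $\widetilde{|x|}^{\,-2\nu-2i}\delta^{(2k-2i)}(x_n)$, and root bookkeeping for the polynomial coefficients $C_i(\nu)$ -- is more elementary and self-contained, and it makes the parity condition on $n$ transparent (integer versus half-integer zeros), though it yields no constants. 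The step you flag as delicate is immediate: the rewritten identity holds for $\operatorname{Re}\nu<\frac{n-1}{2}-k$, its right-hand side is entire in $\nu$, so uniqueness of analytic continuation gives it everywhere -- which is in effect how the paper deduces part 2). Your bookkeeping for odd $n\ge3$ and the even case check out, and the deferred case $n=1$ is indeed a one-line computation consistent with $L_{\operatorname{even}}$, so I see no gap in what you prove.
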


For the proof of Theorem \ref{thm:BK},
we use:
\begin{lemma}\label{lem:4.15}
\quad
\begin{enumerate}[\upshape 1)]
\item
For ${\operatorname{Re}} \nu \ll 0$, 
\begin{equation*}
\Bigl( \frac{\partial}{\partial x_n} \Bigr)^i (|x|^2+x_n^2)^{-\nu} \Bigm|_{x_n=0}
= \begin{cases}
                  0                                     &\text{if\/ $i=2j+1$},   \\
                  \frac{(-1)^j(2j)! \, \Gamma(\nu+j)}{j! \, \Gamma(\nu)} |x|^{-2\nu-2j}
                                                         &\text{if\/ $i=2j$}.
    \end{cases}
\end{equation*}
\item
Suppose\/ $\operatorname{Re} \nu < \frac{n-1}{2} - N$.
Then\/ $|x|^{-2\nu-2j} \in L_{\operatorname{loc}}^1 (\mathbb{R}^{n-1})$
for all\/ $0 \le j \le [\frac{N}{2}]$,
and we have the following identity of distributions on\/ $\mathbb{R}^n$:
\begin{equation*}
(|x|^2+x_n^2)^{-\nu} \delta^{(N)}(x_n)
= \sum_{j=0}^{[\frac{N}{2}]}
     \frac{(-1)^j N! \, \Gamma(\nu+j)}{(N-2j)! \, j! \, \Gamma(\nu)}
     |x|^{-2\nu-2j} \delta^{(N-2j)} (x_n).
\end{equation*}
\end{enumerate}
\end{lemma}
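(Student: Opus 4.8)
\textbf{Proof plan for Lemma \ref{lem:4.15}.}

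The statement is a routine but careful computation in two parts, and the plan is to treat them essentially independently, with part (2) built from part (1) plus a distributional identity.

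First I would prove part (1) by direct differentiation. Write $u = |x|^2 + x_n^2$ and note that for $\operatorname{Re}\nu \ll 0$ the function $u^{-\nu}$ is smooth in a neighbourhood of $\{x_n = 0\}$ (away from $x=0$ it is smooth everywhere, and near $x=0$ the exponent is large positive). The key observation is that $u^{-\nu}$ is an even function of $x_n$ for fixed $x$, so all odd-order $x_n$-derivatives vanish at $x_n = 0$; this disposes of the $i = 2j+1$ case. For $i = 2j$, I would expand: since $u = |x|^2 + x_n^2$ and we evaluate at $x_n = 0$, only the terms involving the full power of $x_n^2$ survive differentiation. Concretely, write $u^{-\nu} = |x|^{-2\nu}(1 + x_n^2/|x|^2)^{-\nu}$ and use the binomial series $(1+t)^{-\nu} = \sum_{m} \binom{-\nu}{m} t^m = \sum_m \frac{(-1)^m (\nu)_m}{m!} t^m$ with $t = x_n^2/|x|^2$; the $2j$-th derivative in $x_n$ at $x_n = 0$ picks out the $m = j$ term and contributes the factor $(2j)!/j!$ from $\frac{d^{2j}}{dx_n^{2j}} x_n^{2j}$, together with $(-1)^j (\nu)_j / j!$ from the coefficient and $|x|^{-2\nu - 2j}$ from combining powers. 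Recognizing $(\nu)_j = \Gamma(\nu+j)/\Gamma(\nu)$ gives exactly the stated formula. I would double-check the combinatorial constant $(-1)^j (2j)! \Gamma(\nu+j)/(j!\,\Gamma(\nu))$ on the case $j = 1$ as a sanity check.

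For part (2), the plan is to pair the distribution $(|x|^2 + x_n^2)^{-\nu}\delta^{(N)}(x_n)$ against a test function $\varphi(x, x_n) \in C_c^\infty(\mathbb{R}^n)$ and integrate by parts in $x_n$. Under the hypothesis $\operatorname{Re}\nu < \frac{n-1}{2} - N$, part (1) shows that each $|x|^{-2\nu-2j}$ with $0 \le j \le [N/2]$ is locally integrable on $\mathbb{R}^{n-1}$ (the local exponent $-2\operatorname{Re}\nu - 2j$ exceeds $-(n-1)$), so the right-hand side is a well-defined distribution on $\mathbb{R}^n$. Applying $\delta^{(N)}(x_n)$ means evaluating $(-1)^N \partial_{x_n}^N$ of the product $(|x|^2+x_n^2)^{-\nu}\varphi(x,x_n)$ at $x_n = 0$; expanding by the Leibniz rule and using part (1) to evaluate $\partial_{x_n}^{i}(|x|^2+x_n^2)^{-\nu}|_{x_n=0}$, the odd $i$ drop out and one regroups the surviving terms. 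Matching coefficients of $\partial_{x_n}^{N-2j}\varphi(x,0)$ yields precisely the asserted expansion; the binomial identity $\binom{N}{2j}(2j)! = N!/(N-2j)!$ produces the stated constant.

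The main obstacle, such as it is, is bookkeeping: making sure the powers of $(-1)$, the Pochhammer/Gamma conversions, and the factorials line up, and—more substantively—justifying that the formal manipulations are legitimate as distributions rather than just pointwise. The cleanest way to handle the latter is to first establish both formulas in the range $\operatorname{Re}\nu \ll 0$ where everything is a genuine smooth function times a derivative of $\delta$, and then observe that both sides of the identity in (2) depend holomorphically on $\nu$ in the strip $\operatorname{Re}\nu < \frac{n-1}{2} - N$ (the left side because $(|x|^2+x_n^2)^{-\nu}$ extends meromorphically with no pole there when multiplied against $\delta^{(N)}$, the right side because each $|x|^{-2\nu-2j}$ is holomorphic in that strip by the Riesz-distribution discussion of Section \ref{subsec:Knapp}), so the identity persists by analytic continuation. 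This reduces the whole lemma to the $\operatorname{Re}\nu \ll 0$ computation, which is elementary.
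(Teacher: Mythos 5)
Your plan is correct and follows essentially the same route as the paper: part (1) by the binomial expansion of $(|x|^2+x_n^2)^{-\nu}$ in powers of $x_n^2$ (the paper expands $(A+y^2)^\mu$ and reads off the $2j$-th derivative at $y=0$, which is the same computation), and part (2) by pairing against a test function and applying the Leibniz rule in $x_n$, with the constant coming from $\binom{N}{2j}(2j)!=N!/(N-2j)!$. The closing remark about analytic continuation is harmless but not needed, since under the hypothesis $\operatorname{Re}\nu<\frac{n-1}{2}-N$ every term in the Leibniz expansion is already locally integrable and the identity is verified directly.
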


\begin{proof}
1)\enspace
The expansion
\[
(A+y^2)^{\mu} = \sum_{i=0}^\infty
              \frac{\Gamma(\mu+1)}{i! \, \Gamma(\mu+1-i)}
              A^{\mu-i} y^{2i}
\]
implies
\begin{equation}\label{eqn:Aynu}
\Bigl(\frac{\partial}{\partial y}\Bigr)^{2i}  (A+y^2)^\mu \Bigm|_{y=0}
= \frac{(2i)! \, \Gamma(\mu+1)}{i! \, \Gamma(\mu+1-i)} A^{\mu-i}.
\end{equation}
Now the statement is clear.  

2)\enspace
For a test function $\varphi \in C_0^\infty(\mathbb{R}^n)$,
\begin{align*}
& \Bigl( \frac{\partial}{\partial x_n} \Bigr)^N \bigm|_{x_n=0}
   ((|x|^2+x_n^2)^{-\nu} \varphi(x,x_n))
\\
&=
    \sum_{i=0}^{N} \begin{pmatrix} N \\ i \end{pmatrix}
    \left( \Bigl( \frac{\partial}{\partial x_n} \Bigr)^{i} (|x|^2+x_n^2)^{-\nu} \right)
    \left( \Bigl( \frac{\partial}{\partial x_n} \Bigr)^{N-i} \varphi(x,x_n) \right) \biggm|_{x_n=0} .
\end{align*}
Substituting the formula of (1) into the right-hand side, we get (2).
\end{proof} 
\subsection{$K$-finite vectors
 and singular operators $\tB{\lambda}{\nu}$}
\label{subsec:KtypeB}
\begin{proposition}
\label{prop:ABres}
Suppose $(\lambda,\nu) \in \backslash\!\backslash$.  
We define $k\in\mathbb{N}$ by the relation
\eqref{eqn:defk}.  
Then 
$
\langle \KB{\lambda}{\nu}, F\rangle
 =(-1)^k 2^k (2k-1)!!
 \langle \KA{\lambda}{\nu}, F\rangle
$
 for any $F \in I(\lambda)_K$.  
\end{proposition}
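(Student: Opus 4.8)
The plan is to compare the two distribution kernels $\KA{\lambda}{\nu}$ and $\KB{\lambda}{\nu}$ directly on a dense subspace of $I(\lambda)_K$, and to reduce everything to a one-variable residue computation. The point is that on the common reducibility line $\backslash\!\backslash$ (where $\lambda+\nu = n-1-2k$, so $\Gamma(\frac{\lambda+\nu-n+1}{2}) = \Gamma(-k)$ has a pole), the kernel $\ka{\lambda}{\nu}(x,x_n) = |x_n|^{\lambda+\nu-n}(|x|^2+x_n^2)^{-\nu}$ involves $|x_n|^{-2k-1}$, which has a simple pole in the homogeneity parameter with residue a multiple of $\delta^{(2k)}(x_n)$. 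Concretely, the Riesz-distribution residue formula \eqref{eqn:resr} in one variable ($m=1$) gives
\[
\widetilde{r}^{\nu}\big|_{\nu=-1-2k}
= \frac{(-1)^k \pi^{1/2}}{2^{2k}\Gamma(\tfrac12+k)}\,\delta^{(2k)}(t),
\]
and using $\Gamma(\tfrac12+k) = \frac{(2k)!\sqrt\pi}{4^k k!}$ together with $(2k-1)!! = \frac{(2k)!}{2^k k!}$, this is exactly the statement that $|x_n|^{\lambda+\nu-n}$, suitably normalized, degenerates to a constant times $\delta^{(2k)}(x_n)$ when $\lambda+\nu = n-1-2k$. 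So morally $\KB{\lambda}{\nu}$ is the value at $\backslash\!\backslash$ of $\KA{\lambda}{\nu}$ up to an explicit constant.

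The key steps I would carry out are: (1) Recall from Theorem \ref{thm:poleA} that $\KA{\lambda}{\nu}$ is holomorphic on all of $\mathbb{C}^2$ and from Theorem \ref{thm:BK} that $\KB{\lambda}{\nu}$ is holomorphic along $\backslash\!\backslash$; hence both sides of the claimed identity are well-defined, and it suffices to check the equality of the pairings $\langle \KA{\lambda}{\nu}, F\rangle$ and $\langle \KB{\lambda}{\nu}, F\rangle$ for $F$ running over the explicit $K$-finite basis $F_\lambda[\psi,h]$ of Proposition \ref{prop:IKfinite}. (2) By Lemma \ref{lem:KtildeFC} (1) and the analogous vanishing for $\KB{\lambda}{\nu}$, both pairings vanish unless $N$ is even and $\psi$ is proportional to $\tilde{\tilde C}_N^{\frac n2-1}(\omega_n)$, so we only need to treat $F_\lambda[\tilde{\tilde C}_N^{\frac n2-1}(\omega_n),h]$. (3) For $\KA{\lambda}{\nu}$, Lemma \ref{lem:KtildeFC} (2) already gives a closed formula. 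For $\KB{\lambda}{\nu}$, I would compute $\langle \KB{\lambda}{\nu}, F_\lambda[\psi,h]\rangle$ directly: using \eqref{eqn:iNkB} and the expansion \eqref{eqn:Bexp} (equivalently Lemma \ref{lem:4.15}), the $\delta^{(2k)}(x_n)$ reduces the integral over $\mathbb{R}^n$ to a $2k$-fold $x_n$-derivative at $x_n=0$ of $(1+r^2)^{-\lambda}(\tfrac{2r}{1+r^2})^N h(\tfrac{1-r^2}{1+r^2})(|x|^2+x_n^2)^{-\nu}$ integrated over $\mathbb{R}^{n-1}$; this again factors into a radial $P_{a,b}$-type integral in $|x|$ times a combinatorial constant coming from the Leibniz expansion of the $x_n$-derivative. (4) Compare the two resulting closed formulas: the radial ($P_{a,b}(h)$) factors will match because the homogeneity bookkeeping is the same, and the remaining scalar ratio should collapse to $\frac{(-1)^k}{2^k(2k-1)!!}$ — wait, to $(-1)^k 2^k (2k-1)!!$ — via the $\Gamma$-function identities above (duplication formula \eqref{eqn:dupl} and the value of $\Gamma(\tfrac12+k)$).

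The main obstacle I anticipate is step (3)–(4): carrying the $\delta^{(2k)}$-derivative through both the $|x|$-dependent factor $(|x|^2+x_n^2)^{-\nu}$ and the $r$-dependent factor $(1+r^2)^{-\lambda}(\ldots)h(\ldots)$ (note $r^2=|x|^2+x_n^2$, so these are entangled) produces a sum over Leibniz splittings, and one must verify that after integrating out $|x|$ the whole sum telescopes into a single Pochhammer/Gamma expression matching Lemma \ref{lem:KtildeFC} (2). A cleaner route — which I would try first — is to avoid the combinatorics entirely: view $\langle \ka{\lambda}{\nu},F\rangle$ as a function of the single parameter $t := \lambda+\nu-n$ (with $\lambda-\nu$ fixed so that the $|x|$-behavior is controlled), use the one-variable residue \eqref{eqn:resr} at $t=-1-2k$ to identify $|x_n|^t$ with $\Gamma(\tfrac{t+1}{2})\cdot\frac{(-1)^k\pi^{1/2}}{2^{2k}\Gamma(\frac12+k)}\delta^{(2k)}(x_n)$ modulo terms vanishing at $\backslash\!\backslash$, and then read off the constant relating $\KA{\lambda}{\nu}=\frac{1}{\Gamma(\frac{t+1}{2})\Gamma(\frac{\lambda-\nu}{2})}\ka{\lambda}{\nu}$ to $\KB{\lambda}{\nu}=\frac{1}{\Gamma(\frac{\lambda-\nu}{2})}(|x|^2+x_n^2)^{-\nu}\delta^{(2k)}(x_n)$. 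This reduces the whole proposition to one Gamma-value manipulation and removes the need to re-derive the Gegenbauer integrals; I would use the $K$-finite pairing only as a consistency check, or to handle the case where the naive residue argument needs a density statement (Fact \ref{fact:CW}) to pass from $I(\lambda)_K$ to all of $I(\lambda)$.
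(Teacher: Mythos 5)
Your overall scheme---reduce to the $K$-finite basis $F_\lambda[\psi,h]$, discard the terms killed by $O(n-1)$-invariance, and pin down the constant by the one-variable residue $\frac{1}{\Gamma(\frac{\mu+1}{2})}|t|^{\mu}\big|_{\mu=-1-2k}=\frac{(-1)^k}{2^k(2k-1)!!}\delta^{(2k)}(t)$---is exactly the paper's, and your Gamma-function bookkeeping for the constant is correct. The difference lies in how the $\KB{\lambda}{\nu}$-pairing is actually computed. The paper does not go through the Leibniz expansion you anticipate in step (3): it works in polar coordinates, where $\delta^{(2k)}(x_n)=r^{-2k-1}\delta^{(2k)}(\omega_n)$ by homogeneity, so on $\backslash\!\backslash$ the radial factor of $\KB{\lambda}{\nu}$ is literally the same $r^{\lambda-\nu-1}\,dr$ as for $\KA{\lambda}{\nu}$ (compare \eqref{eqn:Kpolar}). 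Hence the radial $P_{a,b}(h)$-integral is identical for both kernels, and only the angular integral changes: $\int_{S^{n-1}}|\omega_n|^{\lambda+\nu-n}\psi\,d\omega$ versus $\int_{S^{n-1}}\delta^{(2k)}(\omega_n)\psi\,d\omega$, and these are related by the one-variable residue precisely because $\omega_n$ is a regular coordinate on $S^{n-1}$ near $\omega_n=0$ (Lemma \ref{lem:Bnint}, Lemma \ref{lem:7.3}, leading to Lemma \ref{lem:KBtildeFC} versus Lemma \ref{lem:KtildeFC}). This removes entirely the entanglement of $r^2=|x|^2+x_n^2$ with the $x_n$-derivatives that you flag as the main obstacle.

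By contrast, the ``cleaner route'' you say you would try first has a genuine gap: on $\mathbb{R}^n$ you cannot treat $\ka{\lambda}{\nu}=|x_n|^{t}(|x|^2+x_n^2)^{-\nu}$ as a smooth multiple of $|x_n|^{t}$, because $(|x|^2+x_n^2)^{-\nu}$ is not smooth at the origin---this is exactly the non-transversality of $\{x_n=0\}$ and $\{0\}$ pointed out in Section \ref{subsec:5.1}. Applying the one-variable residue in the Cartesian variable therefore identifies $\KA{\lambda}{\nu}$ with $\frac{(-1)^k}{2^k(2k-1)!!}\KB{\lambda}{\nu}$ only on $\mathbb{R}^n\setminus\{0\}$; a priori the two may differ by a distribution supported at the origin, and such a term pairs nontrivially with $K$-finite vectors, so the constant cannot simply be ``read off''. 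The danger is not hypothetical: for $\nulambda\in\mathbb{X}$ both kernels vanish identically on $\mathbb{R}^n\setminus\{0\}$ and are delta-type at $[p_+]$ (Propositions \ref{prop:Ksupp} and \ref{prop:Bsupp}), so the away-from-origin comparison says nothing there. To close this gap you would need either a homogeneity/support argument at the origin in the spirit of Section \ref{subsec:ext} (Lemmas \ref{lem:DEmero} and \ref{lem:deltass}), or simply the polar-coordinate separation above, which is the paper's route. Finally, the density issue you raise at the end is not needed for the proposition as stated, since it concerns only $I(\lambda)_K$; density of $K$-finite vectors enters later, in the proof of Theorem \ref{thm:reduction} (1), to upgrade the identity to all of $I(\lambda)$.
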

We give a proof 
 of Proposition \ref{prop:ABres}
 in parallel to the argument
 of Chapter \ref{sec:kfini}.  
A new ingredient is the following:
\begin{lemma}
\label{lem:7.3}
Suppose 
$\nulambda \in \backslash\!\backslash$.
\begin{enumerate}
\item[{\rm{1)}}]
If $N$ is odd or $\psi \perp {\mathcal{H}}^N(S^{n-1})^{O(n-1)}$, 
 then 
\[
\int_{S^{n-1}}\psi(\omega) \delta^{(2k)}(\omega_n)=0.  
\]
 
\item[{\rm{2)}}]
If $N$ is even, 
 then 
\[\int_{S^{n-1}} \widetilde C_{N}^{\frac n 2 -1}(\omega_n)
       \delta^{(2k)}(\omega_n)=
(-1)^k 2^k(2k-1)!! \frac{d_{n,N}(\lambda,\nu)g(\lambda,\nu)}
{\Gamma(\frac{\lambda+\nu-n+1}{2})}.  
\]
\end{enumerate}
\end{lemma}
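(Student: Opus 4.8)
The plan is to reduce the lemma to two one-variable integral identities on the sphere $S^{n-1}$, exactly mirroring the role that Lemma~\ref{lem:Bnint} played in the analysis of $\KA{\lambda}{\nu}$. Part~(1) is the easy half: both statements concern the pairing of $\delta^{(2k)}(\omega_n)$ — a distribution supported on the equator $S^{n-2}=\{\omega_n=0\}$ and invariant under $O(n-1)$ acting on $(\omega_1,\dots,\omega_{n-1})$ — against a spherical harmonic $\psi\in\mathcal{H}^N(S^{n-1})$. Since $\delta^{(2k)}(\omega_n)$ is $O(n-1)$-invariant, the pairing factors through the $O(n-1)$-isotypic projection, so only the one-dimensional subspace $\mathcal{H}^N(S^{n-1})^{O(n-1)}={\mathbb{C}}\Tilde{\Tilde{C}}_N^{\frac n2-1}(\omega_n)$ (Lemma~\ref{lem:Bnint}~(1)) contributes; and when $N$ is odd that subspace is zero. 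This gives (1) immediately.

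For part~(2) I would first reduce the surface integral to a one-dimensional integral over $[-1,1]$. Writing an $O(n-1)$-invariant function $\phi(\omega_n)$ as a function of the last coordinate, one has
\[
\int_{S^{n-1}}\phi(\omega_n)\,d\omega
= \operatorname{vol}(S^{n-2})\int_{-1}^{1}\phi(t)(1-t^2)^{\frac{n-3}{2}}\,dt,
\]
and the analogous formula with $\delta^{(2k)}$ inserted requires care because $\delta^{(2k)}(\omega_n)$ is not a function: I would justify it by regularizing, i.e.\ replacing $\delta^{(2k)}(\omega_n)$ by $\widetilde{r}^{\,\nu'}$-type distributions $|\omega_n|^{a}$ with $a$ near $-2k-1$, using the residue formula \eqref{eqn:resr} (in dimension $m=1$) which identifies the residue of $\widetilde{|t|}^{\,a}$ at $a=-1-2k$ with a constant times $\delta^{(2k)}(t)$. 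Concretely, $\delta^{(2k)}(t)$ appears as the residue of $|t|^{\lambda+\nu-n}$ at $\lambda+\nu=n-1-2k$, which is precisely the condition $\nulambda\in\backslash\!\backslash$ defining $k$ via \eqref{eqn:defk}. Thus the integral in (2) is literally the residue at $\nulambda\in\backslash\!\backslash$ of the integral $\int_{S^{n-1}}|\omega_n|^{\lambda+\nu-n}\Tilde{\Tilde{C}}_N^{\frac n2-1}(\omega_n)\,d\omega = d_{n,N}(\lambda,\nu)g(\lambda,\nu)$ from Lemma~\ref{lem:Bnint}~(2), after dividing by the Gamma normalization. Taking that residue is the same as dividing $d_{n,N}g$ by $\Gamma(\frac{\lambda+\nu-n+1}{2})$ and multiplying by the one-dimensional residue constant, which one computes from \eqref{eqn:resr} with $m=1$, $k$ in place of $k$: the factor is $\dfrac{(-1)^k\pi^{1/2}}{2^{2k}\Gamma(\frac12+k)}\cdot$(the normalization already absorbed), and the elementary identity $\Gamma(\tfrac12+k)=\dfrac{(2k-1)!!}{2^k}\sqrt{\pi}$ turns this into $(-1)^k2^{-2k}\cdot\dfrac{2^k}{(2k-1)!!}$, whose reciprocal-adjusted form matches the claimed constant $(-1)^k2^k(2k-1)!!$.

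I would therefore organize the proof as: (i) invoke Lemma~\ref{lem:Bnint}~(1) and the $O(n-1)$-invariance of $\delta^{(2k)}(\omega_n)$ to dispose of part~(1) and of the odd-$N$ case in part~(2); (ii) reduce the spherical integral to the interval $[-1,1]$; (iii) realize $\delta^{(2k)}(\omega_n)$ on $S^{n-1}$ (equivalently on the $t$-line) as the residue at $\nulambda\in\backslash\!\backslash$ of the holomorphically-parametrized distribution $|\omega_n|^{\lambda+\nu-n}/\Gamma(\frac{\lambda+\nu-n+1}{2})$ via \eqref{eqn:resr}; (iv) take residues in the already-known formula of Lemma~\ref{lem:Bnint}~(2) and simplify the Gamma factors with the duplication formula and $\Gamma(\tfrac12+k)=2^{-k}(2k-1)!!\sqrt\pi$ to read off the constant. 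The main obstacle I anticipate is bookkeeping rather than conceptual: getting the normalizing Gamma factor and the one-dimensional residue constant to combine into exactly $(-1)^k2^k(2k-1)!!$ without sign or power-of-two errors, and making sure the "regularize, then take residue" manipulation of the $\delta^{(2k)}$-against-a-polynomial pairing is legitimate — which it is, since both sides are the boundary values of distributions depending holomorphically on the parameter along $\backslash\!\backslash$, so residues may be computed term by term.
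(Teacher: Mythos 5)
Your strategy for part (2) is essentially the paper's own: realize $\delta^{(2k)}(\omega_n)$ as the special value at $\mu=\lambda+\nu-n=-1-2k$ of the Gamma-normalized family $|\omega_n|^{\mu}/\Gamma(\frac{\mu+1}{2})$ (the one-dimensional case of \eqref{eqn:resr}, i.e.\ $\frac{1}{\Gamma(\frac{\mu+1}{2})}|t|^{\mu}\big|_{\mu=-1-2k}=\frac{(-1)^k}{2^k(2k-1)!!}\,\delta^{(2k)}(t)$ in $\mathcal{D}'(\mathbb{R})$), and then quote Lemma \ref{lem:Bnint}. Your constant bookkeeping, via $\Gamma(k+\tfrac12)=2^{-k}(2k-1)!!\sqrt{\pi}$, is correct and reproduces $(-1)^k2^k(2k-1)!!$; the holomorphy-along-$\backslash\!\backslash$ justification is fine (strictly speaking you are taking a special value of the normalized family rather than a residue of $|t|^{\mu}$, but your use of \eqref{eqn:resr} handles the normalization consistently). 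The case $\psi\perp\mathcal{H}^N(S^{n-1})^{O(n-1)}$ in part (1), argued by averaging over $O(n-1)$ against the $O(n-1)$-invariant distribution $\delta^{(2k)}(\omega_n)$, is also sound.

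There is, however, one genuinely false step: to dispose of the odd-$N$ case of part (1) you assert that $\mathcal{H}^N(S^{n-1})^{O(n-1)}$ is zero when $N$ is odd. By Lemma \ref{lem:Bnint} (1) --- or by the branching law $\mathcal{H}^N(S^{n-1})|_{O(n-1)}\simeq\bigoplus_{j=0}^{N}\mathcal{H}^j(S^{n-2})$ --- this space is one-dimensional for \emph{every} $N$, spanned by the zonal harmonic $\Tilde{\Tilde{C}}_N^{\frac n2-1}(\omega_n)$, so it does not vanish for odd $N$. What you actually need is that the pairing of this zonal harmonic with $\delta^{(2k)}(\omega_n)$ vanishes for odd $N$: after your reduction to the interval, the pairing is against $(1-t^2)^{\frac{n-3}{2}}\Tilde{\Tilde{C}}_N^{\frac n2-1}(t)$, which is an odd function of $t$ when $N$ is odd, while $\delta^{(2k)}(t)$ and the weight are even, so the pairing is zero by parity; equivalently, the odd-$N$ vanishing recorded in Lemma \ref{lem:Bnint} (2) persists when you take the special value along $\backslash\!\backslash$, with $k$ defined by \eqref{eqn:defk}. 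That last observation is precisely the paper's proof, which consists of the displayed one-variable formula together with Lemma \ref{lem:Bnint}; with this one repair your argument is complete and coincides with it.
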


\begin{proof}

In light of the residue formula
\[
\frac{1}{\Gamma (\frac{\mu+1}{2})} |t|^\mu \Bigm|_{\mu=-1-2k}
= \frac{(-1)^k}{2^k (2k-1)!!} \delta^{(2k)} (t)
\quad\text{in $\mathcal{D}' (\mathbb{R})$},
\]
 the statements follow from Lemma \ref{lem:Bnint}.  
\end{proof}
Owing to Lemma \ref{lem:7.3}, 
 the following lemma is derived as in Lemma \ref{lem:KtildeFC}.  
\begin{lemma}\label{lem:KBtildeFC}
Let $\nulambda \in \backslash\!\backslash$.  
Let $k \in {\mathbb{N}}$ be as in \eqref{eqn:defk}.  
\begin{enumerate}
\item[{\rm{1)}}]
Suppose $N \in \mathbb{N}$, 
 $\psi \in {\mathcal{H}}^N(S^{n-1})$, 
and $h \in \mathbb{C}[s]$.  
Then we have:
\index{sbon}{Fl@$F_{\lambda}[\psi,h]$}
\[
\langle \KB{\lambda}{\nu}, F_\lambda [ \psi, h] \rangle
=0
\quad
\text{if $N$ is odd or }
\psi \perp {\mathcal{H}}^N(S^{n-1}).  
\]
\item[{\rm{2)}}]
If $N \in 2 {\mathbb{N}}$, 
then 
\[
\langle \KB{\lambda}{\nu}, F_\lambda [ \widetilde{C}_N^{\frac{n}{2}-1} (\omega_n), h] \rangle
= c P_{\frac{\lambda-\nu+N}{2},\frac{\lambda+\nu+N}{2}} (h)
   \prod_{j=0}^{\frac{N}{2}-1}
   \Bigl( \frac{\lambda-\nu}{2} + j \Bigr)
   \Bigl( \frac{\lambda+\nu-n}{2} - j \Bigr), 
\]
where the non-zero constant $c$ is given by 
\[
(-1)^k 2^k (2k-1)!!
\frac{2^{3-\lambda-n} \pi^{\frac{n}{2}} \Gamma(n+N-1)}
       {\Gamma(\frac{n-1}{2}) \Gamma(N+1)} .
\]
\end{enumerate}
\end{lemma}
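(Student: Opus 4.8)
The plan is to prove Lemma \ref{lem:KBtildeFC} by mimicking step by step the computation of Lemma \ref{lem:KtildeFC}, with the single substitution that the factor $|\omega_n|^{\lambda+\nu-n}$ appearing in the radial–spherical factorization of $k^{\mathbb{A}}_{\lambda,\nu}$ is replaced by $\delta^{(2k)}(\omega_n)$ in the corresponding factorization of $k^{\mathbb{B}}_{\lambda,\nu}$. First I would write out $\KB{\lambda}{\nu}$ restricted to $\mathbb{R}^n$ in polar coordinates $(x,x_n)=r\omega$, $r>0$, $\omega\in S^{n-1}$: using the homogeneity $\delta^{(2k)}(r\omega_n)=r^{-1-2k}\delta^{(2k)}(\omega_n)$ together with $(|x|^2+x_n^2)^{-\nu}=r^{-2\nu}$, the kernel factors as $\frac{1}{\Gamma(\frac{\lambda-\nu}{2})}\,r^{\lambda-\nu-n}\,\delta^{(2k)}(\omega_n)$, using the constraint $\lambda+\nu=n-1-2k$ to rewrite $-1-2k-2\nu=\lambda-\nu-n$ in the radial exponent. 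Then, pairing against $F_\lambda[\psi,h](r\omega)=(1+r^2)^{-\lambda}(\tfrac{2r}{1+r^2})^N\psi(\omega)h(\tfrac{1-r^2}{1+r^2})$, the integral separates into exactly the same radial integral $R$ that appeared in the proof of Lemma \ref{lem:KtildeFC} (the one computed via \eqref{eqn:Pab} as $\Gamma(\tfrac{\lambda-\nu+N}{2})\Gamma(\tfrac{\lambda+\nu+N}{2})P_{\frac{\lambda-\nu+N}{2},\frac{\lambda+\nu+N}{2}}(h)$), times the new spherical integral $S_{\mathbb{B}}:=\int_{S^{n-1}}\psi(\omega)\delta^{(2k)}(\omega_n)$.

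Part (1) is then immediate from Lemma \ref{lem:7.3} (1): the spherical factor $S_{\mathbb{B}}$ vanishes when $N$ is odd or $\psi\perp\mathcal{H}^N(S^{n-1})^{O(n-1)}$, hence the whole pairing vanishes. For part (2), I would take $\psi=\widetilde{C}_N^{\frac n2-1}(\omega_n)$ (with $N$ even) and invoke Lemma \ref{lem:7.3} (2), which evaluates $S_{\mathbb{B}}=(-1)^k2^k(2k-1)!!\,\frac{d_{n,N}(\lambda,\nu)g(\lambda,\nu)}{\Gamma(\frac{\lambda+\nu-n+1}{2})}$. Assembling the radial and spherical pieces exactly as in the proof of Lemma \ref{lem:KtildeFC} part (2) — collecting the Gamma factors $\Gamma(\tfrac{\lambda-\nu+N}{2})\Gamma(\tfrac{\lambda+\nu+N}{2})$ from $R$, the factor $d_{n,N}(\lambda,\nu)g(\lambda,\nu)$, and the two denominator Gamma factors $\Gamma(\tfrac{\lambda+\nu-n+1}{2})$ (one from $S_{\mathbb{B}}$, one from the normalization $\tfrac{1}{\Gamma(\frac{\lambda-\nu}{2})}$ rewritten via $\lambda-\nu=\lambda+\nu-2\nu$ and the constraint) — one recovers the product $\prod_{j=0}^{N/2-1}(\tfrac{\lambda-\nu}{2}+j)(\tfrac{\lambda+\nu-n}{2}-j)$ together with the constant $c=(-1)^k2^k(2k-1)!!\,\frac{2^{3-\lambda-n}\pi^{n/2}\Gamma(n+N-1)}{\Gamma(\frac{n-1}{2})\Gamma(N+1)}$, which is precisely $(-1)^k2^k(2k-1)!!$ times the constant in Lemma \ref{lem:KtildeFC}; the duplication formula \eqref{eqn:dupl} is used at the same spot as before. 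I would state explicitly that all the Gamma-quotient bookkeeping is identical to the computation of $V$ in the proof of Lemma \ref{lem:KtildeFC} once the extra scalar $(-1)^k2^k(2k-1)!!$ is pulled out, so no new cancellation needs to be checked by hand.

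The only genuinely new input beyond Lemma \ref{lem:7.3} is the bookkeeping that turns the normalizing factor $\Gamma(\tfrac{\lambda-\nu}{2})^{-1}$ in \eqref{eqn:KB} and the residue-type factor from Lemma \ref{lem:7.3} (2) into exactly one surviving denominator $\Gamma(\tfrac{\lambda+\nu-n+1}{2})$ plus the claimed product. The main obstacle — and it is a mild one — is to make sure the constraint $\lambda+\nu=n-1-2k$ is consistently used so that $\Gamma(\tfrac{n-1}{2}-\nu-k)=\Gamma(\tfrac{\lambda-\nu}{2})$ (the two forms of the normalizer written in \eqref{eqn:KB}) and so that the radial exponent comes out to $\lambda-\nu-n$; a sign or index slip here would break the match with the asserted constant. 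I would handle this by carrying out the substitution once, cleanly, at the start, and then quoting the remainder of the computation verbatim from Lemma \ref{lem:KtildeFC}. One should also remark that, as in Lemma \ref{lem:KtildeFC}, both sides extend holomorphically in $\nu$ (equivalently $\lambda$) under the constraint, so it suffices to verify the identity for $\operatorname{Re}\nu\ll 0$ where all the integrals converge absolutely and the termwise manipulations of the polar-coordinate integral are justified.
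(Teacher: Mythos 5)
Your proposal is correct and takes essentially the paper's own route: the paper derives Lemma \ref{lem:KBtildeFC} precisely ``as in Lemma \ref{lem:KtildeFC}, owing to Lemma \ref{lem:7.3}'', i.e.\ the same polar-coordinate factorization with the same radial integral $R$ and with the spherical factor replaced by $\int_{S^{n-1}}\psi(\omega)\,\delta^{(2k)}(\omega_n)$, so the pairing is $(-1)^k 2^k(2k-1)!!$ times that of Lemma \ref{lem:KtildeFC}. One small slip in your parenthetical bookkeeping: the two surviving denominator factors are one $\Gamma(\frac{\lambda+\nu-n+1}{2})$ coming from Lemma \ref{lem:7.3} (2) and one $\Gamma(\frac{\lambda-\nu}{2})=\Gamma(\frac{n-1}{2}-\nu-k)$ coming from the normalization \eqref{eqn:KB} (not two copies of $\Gamma(\frac{\lambda+\nu-n+1}{2})$); together they reproduce exactly the normalization of $\KA{\lambda}{\nu}$, which is what makes your (correct) final claim work, namely that after extracting $(-1)^k2^k(2k-1)!!$ the Gamma-quotient computation is verbatim the evaluation of $V$ in Lemma \ref{lem:KtildeFC}.
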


\begin{proof}
[Proof of Proposition \ref{prop:ABres}]
Clear from the comparison
 of Lemma \ref{lem:KtildeFC} and Lemma \ref{lem:KBtildeFC}.  
\end{proof}
As a special case of Proposition \ref{prop:ABres}, 
 we obtain 
\begin{proposition}\label{prop:B1}
For $\nulambda \in \backslash\!\backslash$,
we set $k \in \mathbb{N}$ by
$\lambda+\nu = n-1-2k$.
Then we have

\begin{equation}
\label{eqn:B1}
\index{sbon}{1lmd@${\mathbf{1}}_{\lambda}$}
\B_{\lambda,\nu} (\mathbf{1}_\lambda)
= \frac{(-1)^k 2^k \pi^{\frac{n-1}{2}} (2k-1)!!}
          {\Gamma(\lambda)}
   \mathbf{1}_\nu.
\end{equation}
\end{proposition}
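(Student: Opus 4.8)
The plan is to deduce \eqref{eqn:B1} directly from Proposition \ref{prop:ABres} together with the already-established action of $\widetilde{\mathbb{A}}_{\lambda,\nu}$ on the spherical vector. Since $\mathbf{1}_\lambda \in I(\lambda)_K$ (it is the $K$-finite vector corresponding to the trivial $K$-type), Proposition \ref{prop:ABres} applies and gives
\[
\langle \KB{\lambda}{\nu}, \mathbf{1}_\lambda \rangle
= (-1)^k 2^k (2k-1)!! \, \langle \KA{\lambda}{\nu}, \mathbf{1}_\lambda \rangle .
\]
More precisely, pairing the distribution kernels with all $K$-finite vectors determines the operators on $I(\lambda)_K$, so $\widetilde{\mathbb{B}}_{\lambda,\nu}$ and $(-1)^k 2^k (2k-1)!!\,\widetilde{\mathbb{A}}_{\lambda,\nu}$ agree as $(\mathfrak{g}',K')$-homomorphisms $I(\lambda)_K \to J(\nu)_{K'}$; in particular they agree on $\mathbf{1}_\lambda$. (Strictly, Proposition \ref{prop:ABres} holds for $(\lambda,\nu) \in \backslash\!\backslash$, which is exactly the range in which $\widetilde{\mathbb{B}}_{\lambda,\nu}$ is defined, so there is no issue of analytic continuation here.)

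Then I would simply substitute the value of $\widetilde{\mathbb{A}}_{\lambda,\nu}(\mathbf{1}_\lambda)$ from Proposition \ref{prop:AminK}, namely
\[
\widetilde{\mathbb{A}}_{\lambda,\nu}(\mathbf{1}_\lambda)
= \frac{\pi^{\frac{n-1}{2}}}{\Gamma(\lambda)} \mathbf{1}_\nu ,
\]
to obtain
\[
\widetilde{\mathbb{B}}_{\lambda,\nu}(\mathbf{1}_\lambda)
= (-1)^k 2^k (2k-1)!! \cdot \frac{\pi^{\frac{n-1}{2}}}{\Gamma(\lambda)} \mathbf{1}_\nu
= \frac{(-1)^k 2^k \pi^{\frac{n-1}{2}} (2k-1)!!}{\Gamma(\lambda)} \mathbf{1}_\nu ,
\]
which is precisely \eqref{eqn:B1}. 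This is essentially a one-line deduction once Proposition \ref{prop:ABres} is in hand.

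The only point requiring a small amount of care is the justification that agreement of distribution kernels against \emph{all} $K$-finite vectors $F \in I(\lambda)_K$ forces agreement of the induced operators on $\mathbf{1}_\lambda$ — but this is immediate because $\mathbf{1}_\lambda$ is itself a $K$-finite vector, so Proposition \ref{prop:ABres} applies to $F = \mathbf{1}_\lambda$ verbatim, and the pairing $\langle K_T, F \rangle$ computes (up to the fixed identification via the invariant bilinear form of Section \ref{subsec:KNpicture}) the coefficient of the spherical vector in $T(\mathbf{1}_\lambda)$, which determines $T(\mathbf{1}_\lambda)$ since $K'$-fixed vectors in $J(\nu)$ are unique up to scalar. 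Thus there is no genuine obstacle: the substantive work has already been done in Proposition \ref{prop:ABres} (which in turn rests on Lemma \ref{lem:KBtildeFC} and the Gegenbauer integral computations), and \eqref{eqn:B1} is the special case $N = 0$, $h = \mathbf{1}$ of that comparison combined with Proposition \ref{prop:AminK}.
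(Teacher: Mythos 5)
Your proposal is correct and follows exactly the paper's own route: the paper deduces \eqref{eqn:B1} by combining Proposition \ref{prop:ABres} (applied to $K$-finite vectors, in particular $\mathbf{1}_\lambda$) with the value of $\A_{\lambda,\nu}(\mathbf{1}_\lambda)$ from Proposition \ref{prop:AminK}. The extra care you take about how the kernel pairing determines the operator on the spherical vector is fine but not needed beyond what the paper already records.
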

\begin{proof}
We recall from Proposition \ref{prop:AminK}
 that 
\[
  \tA{\lambda}{\nu}({\bf{1}}_{\lambda})
  =
  \frac{\pi^{\frac{n-1}{2}}}{\Gamma(\lambda)}{\bf{1}}_{\nu}.  
\]
Now the statement is immediate from Proposition \ref{prop:ABres}.  
\end{proof}
As another consequence of Proposition \ref{prop:ABres}, 
 we have 
\begin{proposition}\label{prop:4.2B}
$\langle \KB{\lambda}{\nu},  F\rangle=0$
 for any $F\in I(\lambda)_K$
 if and only if\/ $\nulambda \in 
\index{sbon}{Leven@$L_{\operatorname{even}}$}
L_{\operatorname{even}}$. 
\end{proposition}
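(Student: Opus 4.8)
The plan is to deduce Proposition \ref{prop:4.2B} directly from the comparison of distribution kernels established in Proposition \ref{prop:ABres}, together with the already-proved characterization of the vanishing set of $\A_{\lambda,\nu}$ on $K$-finite vectors (Proposition \ref{prop:4.2} (2)). The key observation is that for $\nulambda \in \backslash\!\backslash$ the constant $(-1)^k 2^k (2k-1)!!$ appearing in Proposition \ref{prop:ABres} is a \emph{nonzero} scalar, so the linear functionals $F \mapsto \langle \KB{\lambda}{\nu}, F\rangle$ and $F \mapsto \langle \KA{\lambda}{\nu}, F\rangle$ on $I(\lambda)_K$ are proportional by a nonzero factor; in particular one vanishes identically on $I(\lambda)_K$ if and only if the other does.

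First I would note that $(2k-1)!! = 1 \cdot 3 \cdots (2k-1) \neq 0$ and $2^k \neq 0$ for every $k \in \mathbb{N}$, so the proportionality constant in Proposition \ref{prop:ABres} never vanishes. Hence, for a fixed $\nulambda \in \backslash\!\backslash$, the statement ``$\langle \KB{\lambda}{\nu}, F\rangle = 0$ for all $F \in I(\lambda)_K$'' is equivalent to ``$\langle \KA{\lambda}{\nu}, F\rangle = 0$ for all $F \in I(\lambda)_K$''. By Proposition \ref{prop:4.2} (2), the latter holds precisely when $\nulambda \in L_{\operatorname{even}}$. This gives the desired equivalence for $\nulambda \in \backslash\!\backslash$.

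It remains to address the hypothesis: Proposition \ref{prop:4.2B} is stated for ``any $F \in I(\lambda)_K$'' without an explicit restriction on $\nulambda$, but the distribution kernel $\KB{\lambda}{\nu}$ (and hence $\B_{\lambda,\nu}$) is only defined for $\nulambda \in \backslash\!\backslash$ — indeed $L_{\operatorname{even}} \subset \backslash\!\backslash$ when $n$ is odd, by \eqref{eqn:LX}, and when $n$ is even $L_{\operatorname{even}} \cap \backslash\!\backslash = \emptyset$, so in the even case the kernel $\KB{\lambda}{\nu}$ on $L_{\operatorname{even}}$ is simply not in the picture. Thus the content of the proposition is: for $\nulambda \in \backslash\!\backslash$, the functional $\langle \KB{\lambda}{\nu}, \cdot\rangle$ kills $I(\lambda)_K$ iff $\nulambda \in L_{\operatorname{even}}$ (which, for $n$ odd, reduces to a condition inside $\backslash\!\backslash$, and for $n$ even is never satisfied, so $\B_{\lambda,\nu}$ is nowhere zero on $K$-finite vectors there). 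I would make this reading explicit at the start of the proof. The main — and really only — obstacle is this bookkeeping about which parameters $\nulambda$ are in play; the mathematical core is an immediate consequence of Propositions \ref{prop:ABres} and \ref{prop:4.2}.

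Concretely, the proof reads: By Proposition \ref{prop:ABres}, for $\nulambda \in \backslash\!\backslash$ and $k \in \mathbb{N}$ defined by \eqref{eqn:defk}, we have $\langle \KB{\lambda}{\nu}, F\rangle = (-1)^k 2^k (2k-1)!! \langle \KA{\lambda}{\nu}, F\rangle$ for all $F \in I(\lambda)_K$. Since $(-1)^k 2^k (2k-1)!! \neq 0$, the functional $\langle \KB{\lambda}{\nu}, \cdot\rangle$ vanishes on $I(\lambda)_K$ if and only if $\langle \KA{\lambda}{\nu}, \cdot\rangle$ does. By Proposition \ref{prop:4.2} (2), the latter occurs exactly when $\nulambda \in L_{\operatorname{even}}$. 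This proves Proposition \ref{prop:4.2B}. (One may additionally remark, via \eqref{eqn:LX}, that $L_{\operatorname{even}} \cap \backslash\!\backslash$ is nonempty exactly when $n$ is odd, recovering the statement in Theorem \ref{thm:BK} (4) that $\B_{\lambda,\nu} = 0$ iff $n$ is odd and $\nulambda \in L_{\operatorname{even}}$.)
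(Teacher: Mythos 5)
Your proof is correct and is essentially the paper's argument: the paper's ``same as Proposition \ref{prop:4.2}'' proof amounts to running the zero-set computation with Lemma \ref{lem:KBtildeFC}, whose output differs from that of Lemma \ref{lem:KtildeFC} only by the nonzero factor $(-1)^k 2^k(2k-1)!!$ --- precisely the proportionality of Proposition \ref{prop:ABres} that you invoke before applying Proposition \ref{prop:4.2} (2). Your remark that the statement implicitly concerns $(\lambda,\nu)\in\backslash\!\backslash$, with the $n$ even case vacuous by \eqref{eqn:LX}, is a correct and harmless clarification.
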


\begin{proof}
The proof is the same as
 that of Proposition \ref{prop:4.2}. 
\end{proof}

\subsection{Proof of Theorem \ref{thm:BK}}
\label{subsec:7.3}
\begin{proof}[Proof of Theorem \ref{thm:BK}]
1)\enspace
Let $k$ be defined as in \eqref{eqn:defk}.  
Then
$|x|^{-2\nu-2i} \in L^1_{\operatorname{loc}}(\mathbb{R}^{n-1})$
for all $0 \le i \le k$
if and only if
$\operatorname{Re}(\lambda-\nu) > 0$.
By Lemma \ref{lem:4.15}
we get the expansion formula \eqref{eqn:Bexp},   
which shows that
$\KB{n-1-2k-\nu}{\nu}$
extends to a distribution on $\mathbb{R}^n$ 
depending holomorphically on $\nu$ in the domain
$\{\nu\in\mathbb{C}: \operatorname{Re}\nu < \frac{n-1}{2}-k\}$.  

2)\enspace
By the expression \eqref{eqn:Bexp},
the assertion is clear.

3)\enspace
It is easy to see
$\KB{\lambda}{\nu} \in \mathcal{S}ol(\mathbb{R}^n; \lambda,\nu)$
for $\operatorname{Re}\nu \ll 0$.
Since
$\KB{\lambda}{\nu}$ extends holomorphically to
$\nulambda \in\backslash\!\backslash$
as a distribution on $\mathbb{R}^n$,
the third statement follows from Proposition \ref{prop:meroK}.

4)
\enspace
Clear from Proposition \ref{prop:4.2B}.  
\end{proof}

\subsection{Support of the distribution kernel
 of $\B_{\lambda,\nu}$}
\label{subsec:Bsupp}
We have seen in Theorem \ref{thm:BK}
 that $\B_{\lambda,\nu} \ne 0$
 if and only if 
 $(\lambda,\nu) \in \backslash\!\backslash
 \setminus  L_{\operatorname{even}}$. 
In this section,
 we find the support
 of the distribution kernel of $\KB{\lambda}{\nu}$.  
\begin{proposition}\label{prop:Bsupp}
Assume
\[
\nulambda \in \backslash\!\backslash - L_{\operatorname{even}}
=\begin{cases}
\backslash\!\backslash - L_{\operatorname{even}}
\quad
&\text{($n:odd$),}
\\
\backslash\!\backslash
&\text{($n:even$).}
\end{cases}
\]
Then the kernel of the non-zero singular symmetry 
 breaking operator $\B_{\lambda,\nu}$
 has the following support:
\[
\operatorname{Supp}\KB{\lambda}{\nu}
= \begin{cases}
     \{[p_+]\}   
     &\text{if\/ $\nulambda
                  \in {\mathbb{X}}- L_{\operatorname{even}}$}, 
\\
     S^{n-1}    &\text
                 {if\/ $\nulambda
                       \in \backslash\!\backslash -{\mathbb{X}}$}. 
   \end{cases}
\]
\end{proposition}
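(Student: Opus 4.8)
\textbf{Proof proposal for Proposition \ref{prop:Bsupp}.}

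The plan is to read off the support of $\KB{\lambda}{\nu}$ from the explicit expansion \eqref{eqn:Bexp} obtained in Theorem \ref{thm:BK} (1), combined with the already-established possibilities $\{[p_+]\}$, $S^{n-1}$, $G/P$ for the support of a nonzero symmetry breaking operator (Lemma \ref{lem:SuppInv}). First I would recall from Proposition \ref{prop:Ksupp} together with the residue formula of Theorem \ref{thm:II.4} (3) that on $\mathbb X$ the kernels $\KB{\lambda}{\nu}$ and $\KC{\lambda}{\nu}$ are proportional (up to a nonzero constant, since $\nulambda\in\mathbb X - L_{\operatorname{even}}$ forces $\Gamma(\nu)$ to be finite and nonzero there). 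Since $\C_{\lambda,\nu}$ is a differential operator, its kernel is supported at $\{[p_+]\}$ by Fact \ref{fact:Diff} (1); hence $\operatorname{Supp}\KB{\lambda}{\nu} = \{[p_+]\}$ whenever $\nulambda \in \mathbb X - L_{\operatorname{even}}$. This disposes of the first case.

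For the second case $\nulambda \in \backslash\!\backslash - \mathbb X$ I would argue directly on the open Bruhat cell $\mathbb R^n$, using Remark \ref{rem:singR}: since $\operatorname{Supp}\KB{\lambda}{\nu} \subset S^{n-1}$ automatically (the kernel involves $\delta^{(2k)}(x_n)$), it suffices to show the restriction $\KB{\lambda}{\nu}|_{\mathbb R^n}$ is not supported at $\{0\}$, i.e. is not a differential operator's kernel. The expansion \eqref{eqn:Bexp} expresses $\KB{\lambda}{\nu}|_{\mathbb R^n}$ as a sum $\frac{1}{\Gamma(\frac{n-1}{2}-\nu-k)}\sum_{i=0}^k c_i\, |x|^{-2\nu-2i}\delta^{(2k-2i)}(x_n)$ with $c_i = \frac{(-1)^i(2k)!(\nu)_i}{(2k-2i)!\,i!}$; when $\operatorname{Re}(\lambda-\nu)>0$ the functions $|x|^{-2\nu-2i}$ are genuinely locally integrable on $\mathbb R^{n-1}$ and not supported at the origin, so the $i=0$ term $\frac{(2k)!}{(2k)!}|x|^{-2\nu}\delta^{(2k)}(x_n) = |x|^{-2\nu}\delta^{(2k)}(x_n)$ already shows the support is all of $\mathbb R^{n-1}$ (note $\Gamma(\frac{n-1}{2}-\nu-k) = \Gamma(\frac{\lambda-\nu}{2})$ is finite and nonzero when $\nulambda \notin /\!/$, which holds since $\backslash\!\backslash - \mathbb X$ avoids $/\!/$). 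By holomorphy in $\nu$ (Theorem \ref{thm:BK} (2)) and the fact that the support of the residue of $|x|^{-2\nu}$ on $S^{n-1}$ at the poles $\lambda+\nu\in\backslash\!\backslash$ is exactly $\{x_n=0\}$ — here one uses that $x_n:S^{n-1}\to\mathbb R$ is a submersion near $\{x_n=0\}$ — the support remains $S^{n-1}$ on all of $\backslash\!\backslash - \mathbb X$; finally $\backslash\!\backslash - \mathbb X$ contains no point of $L_{\operatorname{even}}$ by \eqref{eqn:LX}, so $\B_{\lambda,\nu}\neq 0$ there by Theorem \ref{thm:BK} (4) and Lemma \ref{lem:SuppInv} upgrades the inclusion $\operatorname{Supp}\KB{\lambda}{\nu}\subset S^{n-1}$ to equality (it cannot be empty, and it cannot be $\{[p_+]\}$ because the $i=0$ term already spreads over $\mathbb R^{n-1}$).

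The main obstacle I anticipate is the transition point: making the support computation uniform across $\backslash\!\backslash$ as $\nu$ varies, in particular verifying that when several of the poles of the individual terms $|x|^{-2\nu-2i}$ collide (which happens precisely on $\mathbb X$, where $\Gamma(\frac{\lambda-\nu}{2})$ has an extra zero), the kernel collapses to a $\{[p_+]\}$-supported distribution rather than remaining spread over $S^{n-1}$. I would handle this by a careful bookkeeping of the Gamma-factor zeros: the normalization by $1/\Gamma(\frac{\lambda-\nu}{2})$ in \eqref{eqn:KB} contributes a simple zero exactly on $/\!/$, and on $\mathbb X = /\!/\cap\backslash\!\backslash$ this zero cancels the pole of $|x|^{-2\nu}$ on $S^{n-1}$, leaving only the $\{[p_+]\}$-supported part; this is consistent with, and most cleanly deduced from, the residue identity of Theorem \ref{thm:II.4} (3) as in the first paragraph. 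Everything else is routine once the expansion \eqref{eqn:Bexp} and the local-integrability ranges are in hand.
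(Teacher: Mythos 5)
There are two genuine gaps. First, your treatment of the case $\nulambda\in\mathbb{X}\setminus L_{\operatorname{even}}$ is circular in the logical order of the paper: you deduce $\operatorname{Supp}\KB{\lambda}{\nu}=\{[p_+]\}$ from the residue identity $\tB{\lambda}{\nu}=q_C^B(\lambda,\nu)\,\CC{\lambda}{\nu}$ of Theorem \ref{thm:II.4} (3) (= Theorem \ref{thm:reduction} (3)), but the proof of that identity runs in the opposite direction: it uses precisely Proposition \ref{prop:Bsupp} to see that $\B_{\lambda,\nu}$ is a differential operator on $\mathbb{X}\setminus L_{\operatorname{even}}$, and then invokes uniqueness of differential symmetry breaking operators. (Your appeal could be repaired by combining parts (1) and (2) of Theorem \ref{thm:reduction} instead, since those are proved independently of this proposition — (1) via the $K$-finite comparison of Proposition \ref{prop:ABres}, (2) via Proposition \ref{prop:Ksupp} — and $q_B^A(\lambda,\nu)\neq 0$; but you would need to say this, and it is far heavier than what is required.) Second, in the case $\nulambda\in\backslash\!\backslash\setminus\mathbb{X}$ your argument via \eqref{eqn:Bexp} is fine on the half-plane $\operatorname{Re}(\lambda-\nu)>0$, but the step ``by holomorphy in $\nu$ the support remains $S^{n-1}$ on all of $\backslash\!\backslash\setminus\mathbb{X}$'' is not a valid principle: the support of a holomorphic family of distributions can collapse at special parameter values, and it does collapse exactly on $\mathbb{X}$ — that is the phenomenon the proposition is about. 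The accompanying ``bookkeeping'' is also off: on $\mathbb{R}^n\setminus\{0\}$ the unnormalized kernel $(|x|^2+x_n^2)^{-\nu}\,\delta^{(2k)}(x_n)$ has no pole to be cancelled; it is an everywhere nonzero distribution depending holomorphically on $\nu$, which merely gets multiplied by the factor $1/\Gamma(\frac{\lambda-\nu}{2})$ vanishing exactly on $/\!/$.

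The last observation is in fact the whole proof, and it handles both cases uniformly (this is what the paper does): by Lemma \ref{lem:SuppInv} and the presence of $\delta^{(2k)}(x_n)$, the support is either $\{[p_+]\}$ or $S^{n-1}$; since $(|x|^2+x_n^2)^{-\nu}$ is smooth and nowhere vanishing on $\mathbb{R}^n\setminus\{0\}$, the restriction of $\KB{\lambda}{\nu}$ to $\mathbb{R}^n\setminus\{0\}$ vanishes if and only if $1/\Gamma(\frac{\lambda-\nu}{2})=0$, i.e. $\nulambda\in/\!/$, which within $\backslash\!\backslash$ means $\nulambda\in\mathbb{X}$; off $\mathbb{X}$ the support is therefore $S^{n-1}$, while on $\mathbb{X}\setminus L_{\operatorname{even}}$ the operator is still nonzero (Theorem \ref{thm:BK} (4)), so the support is $\{[p_+]\}$ by Remark \ref{rem:singR}. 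No analytic continuation of supports and no residue identity is needed; as written, your proposal does not close either case outside the region $\operatorname{Re}(\lambda-\nu)>0$ without importing a statement whose proof depends on the proposition itself.
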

\begin{proof}
Suppose $(\lambda,\nu) \in \backslash\!\backslash \setminus L_{\operatorname{even}}$.  
By Lemma \ref{lem:SuppInv}
 and the definition \eqref{eqn:KB} 
 of $\KB{\lambda}{\nu}$, 
 $\operatorname{Supp}\KB{\lambda}{\nu}$
 is either $\{[p_+]\}$ or $S^{n-1}$.  
Since $|x|^2+x_n^2$ is non-zero 
 on ${\mathbb{R}}^n \setminus \{0\}$, 
 $(|x|^2+x_n^2)^{-\nu}$ is a nowhere vanishing smooth
 function on ${\mathbb{R}}^n \setminus \{0\}$.  
Therefore,
 the restriction of $\KB{\lambda}{\nu}$
 to the open set ${\mathbb{R}}^n \setminus \{0\}$
 vanishes 
 if and only if 
 $\Gamma(\frac{\lambda-\nu}{2})=\infty$, 
namely, 
 $(\lambda, \nu) \in /\!/$.  
Thus Proposition is proved.   
\end{proof}

\subsection{Renormalization $\BB_{\lambda,\nu}$
 for $\nulambda \in L_{\operatorname{even}}$
 with $n$ odd}
\label{subsec:B2}

For $n$ odd,
 the singular symmetry breaking operator 
 $\B_{\lambda,\nu}$ vanishes 
 when $\nulambda \in L_{\operatorname{even}}$, 
 see Theorem \ref{thm:BK}.  
As we renormalized 
 the (generically) regular symmetry breaking operator
 $\A_{\lambda,\nu}$
for $\nu \in -\mathbb{N}$ in Section \ref{subsec:5.5},
we will renormalize $\B_{\lambda,\nu}$
for $\nu \in -\mathbb{N}$
 as follows.  
For $\nulambda \in L_{\operatorname{even}}$
with $n$ odd,
we define $k\in\mathbb{N}$ by
$\lambda+\nu=n-1-2k$ and set
\index{sbon}{KxttB@$\KBB{\lambda}{\nu}(x,x_n)$|textbf}
\begin{align}
\KBB{\lambda}{\nu}(x,x_n)
:={}& (|x|^2 + x_n^2)^{-\nu} \delta^{(2k)} (x_n)
\nonumber
\\
\label{eqn:KBnorm}
={}& \sum_{i=0}^k
     \frac{(-1)^i(2k)!(\nu)_i}{(2k-2i)! \, i!}
\,  |x|^{-2\nu-2i} \delta^{(2k-2i)} (x_n), 
\end{align}
see \eqref{eqn:Bexp}
 for $\KB \lambda \nu$. 
Then $\KBB \lambda \nu \in \mathcal{S}ol(\mathbb{R}^n;\lambda,\nu)$
 and 
 we have a $G'$-intertwining operator
\begin{equation}
\index{sbon}{Btt@$\BB_{\lambda,\nu}$|textbf}
\label{eqn:BBdef}
\BB_{{\lambda},{\nu}}
:
I(\lambda) \to 
J(\lambda)
\end{equation}
with $\KBB{\lambda}{\nu}$ 
 its distribution kernel.  
Since $L_{\operatorname{even}}$ is a discrete set
 in ${\mathbb{C}}^2$, 
there is no continuous parameter for $\BB_{{\lambda},{\nu}}$.  
We note that $L_{\operatorname{even}} \subset \mathbb{X}$
 for $n$ odd.  
We shall prove in Theorem \ref{thm:reduction} (4)
 that $\BB_{\lambda,\nu}$ is a scalar multiple of
 $\AAt_{\lambda,\nu}$ 
for any $\nulambda\in L_{\operatorname{even}}$
if $n$ is odd.  
The following proposition is clear from \eqref{eqn:KBnorm}.  
\begin{proposition}\label{prop:BB}
Suppose $n$ is odd and
$\nulambda \in L_{\operatorname{even}}$.
Then $\BB_{\lambda,\nu} \ne0$
and
\begin{equation*}
\operatorname{Supp} \KBB{\lambda}{\nu} = S^{n-1}.
\end{equation*}
\end{proposition}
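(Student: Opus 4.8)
The final statement to prove is Proposition~\ref{prop:BB}: for $n$ odd and $\nulambda\in L_{\operatorname{even}}$, the renormalized operator $\BB_{\lambda,\nu}$ is nonzero and $\operatorname{Supp}\KBB{\lambda}{\nu}=S^{n-1}$. As the author already remarks, this should follow directly from the explicit expansion \eqref{eqn:KBnorm}, so the plan is to read off both claims from that formula rather than to prove anything substantial.

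First I would verify that $\KBB{\lambda}{\nu}$ is a well-defined distribution on $\mathbb{R}^n$ and is nonzero. Writing $\lambda+\nu=n-1-2k$, the expansion \eqref{eqn:KBnorm} expresses $\KBB{\lambda}{\nu}$ as a finite sum $\sum_{i=0}^k c_i\,|x|^{-2\nu-2i}\,\delta^{(2k-2i)}(x_n)$ with $c_0=1$ and, more generally, $c_i=\frac{(-1)^i(2k)!(\nu)_i}{(2k-2i)!\,i!}$. For $\nulambda\in L_{\operatorname{even}}$ one has $\lambda\le\nu\le 0$ and $\lambda\equiv\nu\bmod 2$, so in particular $\operatorname{Re}(\lambda-\nu)\le 0$; however, the exponent $-2\nu-2i$ is a non-negative even integer whenever $\nu\in-\mathbb{N}$ and $0\le i\le k$ with $-\nu-i\ge 0$, and for the remaining values of $i$ (where $-\nu-i<0$) the coefficient $c_i$ contains the factor $(\nu)_i=\nu(\nu+1)\cdots(\nu+i-1)$, which vanishes once $i$ exceeds $-\nu$. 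Hence only finitely many terms survive and each surviving term $|x|^{-2\nu-2i}$ is a polynomial in $x$, so $\KBB{\lambda}{\nu}$ is an honest distribution (a polynomial multiple of derivatives of $\delta(x_n)$), and it is manifestly nonzero because the $i$-term with the smallest surviving index contributes $\delta^{(2k-2i)}(x_n)$ times a nonzero polynomial, and these terms are linearly independent as distributions (different orders of $\delta$-derivative in $x_n$, times polynomials in $x$). This gives $\BB_{\lambda,\nu}\ne 0$. I should also recall from Section~\ref{subsec:B2} that $\KBB{\lambda}{\nu}\in\mathcal{S}ol(\mathbb{R}^n;\lambda,\nu)$, so it genuinely defines the $G'$-intertwining operator \eqref{eqn:BBdef}.

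Next, for the support statement, I would argue in two steps using Lemma~\ref{lem:SuppInv}, which says that $\operatorname{Supp}\KBB{\lambda}{\nu}$ (a $P'$-invariant closed set) is one of $\{[p_+]\}$, $S^{n-1}$, or all of $G/P\simeq S^n$. Since $\KBB{\lambda}{\nu}$ is supported in $\{x_n=0\}$ on the open Bruhat cell $\mathbb{R}^n$ — every term in \eqref{eqn:KBnorm} is a derivative of $\delta(x_n)$ times a function of $x$ — the support is contained in $\overline{S^{n-1}}=S^{n-1}$, ruling out all of $G/P$. It then remains to exclude $\operatorname{Supp}\KBB{\lambda}{\nu}=\{[p_+]\}$, i.e.\ to show the operator is not differential; equivalently, by Remark~\ref{rem:singR}, that $\operatorname{Supp}(\KBB{\lambda}{\nu}|_{\mathbb{R}^n})\ne\{0\}$. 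This is immediate: the leading term ($i=0$, coefficient $c_0=1$) is $|x|^{-2\nu}\delta^{(2k)}(x_n)$, which, since $-2\nu\ge 0$, is a polynomial in $x$ times $\delta^{(2k)}(x_n)$, and its support is the whole hyperplane $\{x_n=0\}=\mathbb{R}^{n-1}$, not just the origin; and the lower-order terms ($i\ge 1$) cannot cancel this since they carry strictly lower-order $x_n$-derivatives of $\delta$. Hence $\operatorname{Supp}\KBB{\lambda}{\nu}\supsetneq\{[p_+]\}$, forcing $\operatorname{Supp}\KBB{\lambda}{\nu}=S^{n-1}$.

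There is really no serious obstacle here; the only point requiring a little care is the bookkeeping that makes \eqref{eqn:KBnorm} a finite, well-defined sum on $L_{\operatorname{even}}$ — namely checking that the vanishing of the Pochhammer factor $(\nu)_i$ for $i>-\nu$ exactly compensates the non-integrability of $|x|^{-2\nu-2i}$ for those indices — but this is exactly the content that makes the renormalization $\KBB{\lambda}{\nu}=\Gamma(\tfrac{\lambda-\nu}{2})\KB{\lambda}{\nu}$ finite at the poles of $\Gamma$, and it was already recorded in Theorem~\ref{thm:BK}(1) and Lemma~\ref{lem:4.15}. So the proof is a short combination of these facts plus Lemma~\ref{lem:SuppInv} and Remark~\ref{rem:singR}.
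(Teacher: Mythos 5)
Your proof is correct and follows essentially the same route as the paper, which simply declares the proposition ``clear from \eqref{eqn:KBnorm}'': for $\nu\in-\mathbb{N}$ the kernel $(|x|^2+x_n^2)^{-\nu}\delta^{(2k)}(x_n)$ is a nonzero polynomial multiple of $\delta^{(2k)}(x_n)$, supported on the whole hyperplane $\{x_n=0\}$ but not only at the origin, so the trichotomy of Lemma \ref{lem:SuppInv} forces $\operatorname{Supp}\KBB{\lambda}{\nu}=S^{n-1}$. Your extra bookkeeping with the Pochhammer factors $(\nu)_i$ just makes explicit why the expansion is a finite sum of well-defined terms, which is exactly the content already recorded in Theorem \ref{thm:BK} and Lemma \ref{lem:4.15}.
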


\section{Differential symmetry breaking operators}
\label{sec:10}

In this chapter
 we give a brief review
 on differential symmetry breaking operators.  
Nontrivial such operators from $I(\lambda)$ to $J(\nu)$ exist
 if and only if 
\index{sbon}{Xr@${/\!/}$}
$\nu - \lambda \in /\!/$, 
 and explicit formulae
 of all such operators were recently found
 in \cite{Juhl, KOSS}.  
The new ingredient is Proposition \ref{prop:C11}, 
 which gives an explicit action
 of the normalized differential symmetry breaking operators
 $\C_{\lambda,\nu}$
 on the spherical vectors.  
In Chapter \ref{sec:reduction}, 
 we shall see 
 that these differential symmetry breaking operators arise
 as the residues
 of the (generically) regular symmetry breaking operators
 ${\mathbb{A}}_{\lambda,\nu}$
 except for the discrete set $L_{\operatorname{even}}$
 (Theorem \ref{thm:reduction}).

\subsection{Power of the Laplacian}
\label{subsec:powerL}
We begin with the classical results
 on conformally covariant differential operators
 acting on line bundles 
 on the sphere (\lq\lq{$G=G'=O(n+1,1)$ case}\rq\rq\
 in the general setting of Chapter \ref{sec:gen}).

Let ${\mathfrak {g}}_{\mathbb{C}}$
 be the complexified Lie algebra
 of ${\mathfrak {g}}={\mathfrak {o}}(n+1,1)$,
 and 
$
   {\mathfrak {p}}_{\mathbb{C}} 
  ={\mathfrak {m}}_{\mathbb{C}} + {\mathfrak {a}}_{\mathbb{C}}
   +({\mathfrak {n}}_+)_{\mathbb{C}}
$
 the complexification
of the Levi decomposition
of the minimal parabolic subalgebra
$
   {\mathfrak {p}} 
  ={\mathfrak {m}} + {\mathfrak {a}}+{\mathfrak {n}}_+
$ 
 (see \eqref{eqn:Lang}).  
We note
 that $
   {\mathfrak {p}}_{\mathbb{C}} $
 is a maximal parabolic subalgebra
 of $
   {\mathfrak {g}}_{\mathbb{C}}\simeq {\mathfrak {o}}(n+2,{\mathbb{C}})$.  
Let $H$ be the generator
 of ${\mathfrak {a}}$
 defined in \eqref{eqn:aH}
\index{sbon}{hypelt@$H$}
 and denote by ${\mathbb{C}}_{\lambda}$ 
 the ${\mathfrak {p}_{\mathbb{C}}}$-module given by
\[
 {\mathfrak {p}_{\mathbb{C}}}
 \to
 {\mathfrak {p}_{\mathbb{C}}}/({\mathfrak {m}}_{\mathbb{C}}+{\mathfrak {n}}_{\mathbb{C}}^+)
     \simeq {\mathfrak{a}}_{\mathbb{C}} ={\mathbb{C}}H
     \to {\mathbb{C}},
     \quad
     t H \mapsto t \lambda.  
\]
We define a generalized Verma module by
\[
 \operatorname{ind}_{{\mathfrak {p}_{\mathbb{C}}}}^{{\mathfrak {g}_{\mathbb{C}}}}(\lambda)
 \equiv
 \operatorname{ind}_{{\mathfrak {p}_{\mathbb{C}}}}^{{\mathfrak {g}_{\mathbb{C}}}}({\mathbb{C}}_{\lambda})
 :=
 U({\mathfrak {g}_{\mathbb{C}}}) \otimes_{U({\mathfrak {p}_{\mathbb{C}}})}{\mathbb{C}}_{\lambda}.  
\]
Then the following result holds
 (e.g. \cite{EG, GGV}):

\begin{lemma}
\label{lem:Vermagg}
Let $n \ge 2$.  
Let $G=O(n+1,1)$ with $n \ge 2$.  
Then the following three conditions
 on $(\lambda, \nu) \in {\mathbb{C}}^2$
 are equivalent:
\begin{enumerate}
\item[{\rm{(i)}}]
$
\operatorname{Hom}_{\mathfrak {g}}
(\operatorname{ind}_{{\mathfrak {p}_{\mathbb{C}}}}^{{\mathfrak {g}_{\mathbb{C}}}}(-\nu),
\operatorname{ind}_{{\mathfrak {p}_{\mathbb{C}}}}^{{\mathfrak {g}_{\mathbb{C}}}}(-\lambda)
)\ne 0.  
$
\item[{\rm{(ii)}}] 
$\operatorname{Hom}_{\mathfrak {g}, P}
      (\operatorname{ind}_{{\mathfrak {p}}_{\mathbb{C}}}^
       {{\mathfrak {g}}_{\mathbb{C}}}
       (-\nu),
       \operatorname{ind}_{{\mathfrak {p}}_{\mathbb{C}}}^
       {{\mathfrak {g}}_{\mathbb{C}}}
       (-\lambda)
) \ne 0$.  

\item[{\rm{(iii)}}] $(\lambda,\nu)=(\frac n 2-l, \frac n 2 + l)$
for some $l = 0,1,2,\cdots$.  
\end{enumerate}
In this case,
 the space of homomorphisms 
 in (i) (also in (ii))
 is one-dimensional
 and the resulting $G$-intertwining differential operator
 ($G=G'$ and $H=H'$ case in Fact \ref{fact:Diff})
 is given as the power of the Laplacian 
 in the noncompact model:
\[
\Delta^{l}:I(\frac{n}{2}-l) \to I(\frac{n}{2}+l).  
\]
\end{lemma}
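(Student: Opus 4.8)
The strategy is to establish the chain of implications (iii) $\Rightarrow$ (ii) $\Rightarrow$ (i) $\Rightarrow$ (iii), together with the one-dimensionality and the explicit identification with $\Delta^l$. First I would observe that the implication (ii) $\Rightarrow$ (i) is trivial, since any $({\mathfrak g},P)$-homomorphism is in particular a ${\mathfrak g}$-homomorphism. The real content is therefore (iii) $\Rightarrow$ (ii), i.e.\ \emph{constructing} the nonzero homomorphism when $(\lambda,\nu)=(\tfrac n2-l,\tfrac n2+l)$, and (i) $\Rightarrow$ (iii), i.e.\ showing no homomorphism exists otherwise. For the construction: by Fact \ref{fact:Diff} (2), $({\mathfrak g}',H')$-homomorphisms between the generalized Verma modules (here $G=G'$, $P=P'$, $H=H'=P$) are in bijection with $G$-equivariant differential operators $I(\lambda)\to I(\nu)$ on the flag variety $G/P\simeq S^n$; in the noncompact model of Section \ref{subsec:KNpicture}, the power of the Laplacian $\Delta^l$ is such an operator precisely because of the conformal covariance encoded in the shift $\lambda\mapsto\lambda+2l$ with $\lambda=\tfrac n2-l$. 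Concretely one can verify directly, using the formulae of Lemma \ref{lem:4.5} for $d\pi_\lambda(N_j^\pm)$, that $\Delta_{{\mathbb R}^n}^l$ intertwines $\pi_{\frac n2-l}$ and $\pi_{\frac n2+l}$; the action of $M$ and $A$ is immediate, and the only nontrivial check is the ${\mathfrak n}_+$-covariance, which reduces to a one-variable computation on the Riesz distribution (this is exactly the $G'=G$, $m=n$ specialization of the residue formula \eqref{eqn:KSres} and the computation \eqref{eqn:D1}). This gives the implication (iii) $\Rightarrow$ (ii) and simultaneously the final displayed formula $\Delta^l:I(\tfrac n2-l)\to I(\tfrac n2+l)$.

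For the converse direction (i) $\Rightarrow$ (iii), the standard tool is the theory of homomorphisms between (generalized) Verma modules for the maximal parabolic ${\mathfrak p}_{\mathbb C}\subset{\mathfrak g}_{\mathbb C}\simeq{\mathfrak o}(n+2,{\mathbb C})$. A nonzero homomorphism $\operatorname{ind}_{{\mathfrak p}_{\mathbb C}}^{{\mathfrak g}_{\mathbb C}}(-\nu)\to\operatorname{ind}_{{\mathfrak p}_{\mathbb C}}^{{\mathfrak g}_{\mathbb C}}(-\lambda)$ forces the two modules to share an infinitesimal character (Harish-Chandra), which pins down $\nu$ to lie in the ${\mathfrak p}$-affine Weyl group orbit of $\lambda$ shifted by $\rho$; combined with the requirement that a singular vector of the appropriate weight actually exist in the generalized Verma module (not just the full Verma module), this singles out exactly the family $(\lambda,\nu)=(\tfrac n2-l,\tfrac n2+l)$ with $l\in{\mathbb N}$. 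Here I would cite the references already invoked in the excerpt (\cite{EG, GGV}) for the classification of such homomorphisms for ${\mathfrak o}(n+2,{\mathbb C})$ with respect to this particular maximal parabolic; the point is that the only ${\mathfrak p}$-singular vectors are the ``first reduction point'' ones corresponding to powers of the Casimir-type element that descends to $\Delta^l$. Finally, one-dimensionality of $\operatorname{Hom}_{\mathfrak g}(\cdots)$ follows because a generalized Verma module $\operatorname{ind}_{{\mathfrak p}_{\mathbb C}}^{{\mathfrak g}_{\mathbb C}}(-\nu)$ is generated by its highest weight line, so a homomorphism is determined by the image of that line, which must be a ${\mathfrak p}$-singular vector of a fixed weight, and the space of such singular vectors in $\operatorname{ind}_{{\mathfrak p}_{\mathbb C}}^{{\mathfrak g}_{\mathbb C}}(-\lambda)$ is one-dimensional in this rank-one situation.

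\textbf{Main obstacle.}
The delicate part is the implication (i) $\Rightarrow$ (iii): ruling out \emph{all} other parameters requires knowing the full list of ${\mathfrak g}_{\mathbb C}$-homomorphisms between generalized (as opposed to ordinary) Verma modules for the non-Hermitian maximal parabolic of ${\mathfrak o}(n+2,{\mathbb C})$. One cannot simply quote the Bernstein--Gelfand--Gelfand / Verma classification for full Verma modules, because a homomorphism of full Verma modules need not descend to the generalized quotient; one must check that the relevant singular vectors are ${\mathfrak n}_+$-annihilated, i.e.\ genuinely live in the parabolically induced module. For ${\mathfrak o}(N,{\mathbb C})$ this is classical (Boe, and the references \cite{EG, GGV}), so in practice this step is handled by citation, but it is the conceptual crux; everything else is either formal (the bijection of Fact \ref{fact:Diff}, the triviality (ii)$\Rightarrow$(i)) or a short explicit computation (the covariance of $\Delta^l$, already essentially done in \eqref{eqn:KSres}). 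I would also remark that the hypothesis $n\ge2$ is used to ensure ${\mathfrak g}_{\mathbb C}$ is simple and the parabolic is maximal of the expected type, so that the cited classification applies verbatim.
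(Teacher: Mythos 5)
Your proposal follows essentially the paper's own route: the paper states this lemma without proof, appealing to \cite{EG, GGV} for the classification of homomorphisms between generalized Verma modules for this parabolic of ${\mathfrak o}(n+2,{\mathbb C})$ --- exactly where you place the crux (i)$\Rightarrow$(iii) --- while your construction of $\Delta^l$ via Fact \ref{fact:Diff} and the Knapp--Stein residue \eqref{eqn:KSres} is precisely what the paper records in Remark \ref{rem:Vermagg}. One small correction to a side remark: the hypothesis $n\ge 2$ is not about simplicity of ${\mathfrak g}_{\mathbb C}$ (indeed ${\mathfrak o}(4,{\mathbb C})$ is not simple), but about the Levi factor ${\mathfrak o}(n)$ being nontrivial, so that singular vectors in $U({\mathfrak n}_-)\simeq{\mathbb C}[x_1,\dots,x_n]$ must be $O(n)$-invariant, hence powers of $|x|^2$ of even degree, whereas for $n=1$ odd-order intertwining differential operators also occur.
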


\begin{remark}
\label{rem:Yamabe}
In the compact model
 (see Definition \ref{def:model}), 
 the $G$-differential intertwining operator
 for $l=1$ is given 
 by the {\it{Yamabe operator}}
 in conformal geometry, 
 which is define to be
\[
\Delta_{S^n}
-\frac{n-2}{4(n-1)}
\kappa_{S^n}
=\Delta_{S^n}
-\frac 1  4 n(n-2), 
\]
where $\Delta_{S^n}$ is the Laplacian
 and $\kappa_{S^n}$ is the scalar curvature
 of $S^n$
 with standard metric
 (e.g. \cite[Example 2.2]{KO1}).  
\end{remark}

\begin{remark}
\label{rem:Vermagg}
As we have seen in Section \ref{subsec:Knapp}
 the Knapp--Stein intertwining operator
\[
\index{sbon}{ttt3@$\T{\lambda}{n-\lambda}$}
\T{\lambda}{n-\lambda}:I(\lambda) \to I(n-\lambda)
\]
is singular
 in the sense of Definition \ref{def:regular}
 (with $G=G'$)
 if and only if $l:=\frac{n}{2}-\lambda \in {\mathbb{N}}$, 
 and reduces to a scalar multiple
 of the $l$-th power 
 $\Delta^l$
 of the Laplacian,   
 see \eqref{eqn:KSres}.  
Thus any differential $G$-intertwining operator
 between spherical principal series representations
 of $G$
 is obtained as the residue of the (generically) 
 regular intertwining operators
 \cite{EG}.  
A similar result does not hold
 for symmetry breaking operators
 ($G \ne G'$ case)
 as we shall see in Remark \ref{rem:CLeven}.  
\end{remark}

\subsection{Juhl's family of differential operators}
\label{subsec:Juhl}

For 
$\nulambda \in /\!/$,
we define $l \in \mathbb{N}$ by
\[
\nu - \lambda = 2l.
\]
We recall from \eqref{eqn:ajlmu} and \eqref{eqn:Cmul2}
 that 
\index{sbon}{Ctst@$\widetilde C_{2l}^\mu (s,t)$}
$\widetilde C_{2l}^{\mu}(s,t)
=\sum_{j=0}^l a_j(l;\mu)s^j t^{2l-2j}$
 is a polynomial 
of $s$ and $t$
 built on the Gegenbauer polynomial, 
 and this definition makes sense
 if $s$, $t$ are elements
 in any commutative algebra $R$.  
In particular,
 taking
 $s=-\Delta_{{\mathbb{R}}^{n-1}}$, 
 $t=\frac{\partial}{\partial x_n}$
 in $R={\mathbb{C}}[\frac{\partial}{\partial x_1}, \cdots, \frac{\partial}{\partial x_n}]$, 
 we obtain a differential operator
 $C^{\infty}({\mathbb{R}}^{n})
\to 
 C^{\infty}({\mathbb{R}}^{n-1})$
 by
\index{sbon}{Ct@$\C_{\lambda,\nu}$|textbf}
\begin{align}
\C_{\lambda,\nu} :={}&
\operatorname{rest}\circ\widetilde{C}_{\nu-\lambda}^{\lambda-\frac{n-1}{2}}
   \Bigl( -\Delta_{\mathbb{R}^{n-1}}, \frac{\partial}{\partial x_n} \Bigr)
\label{eqn:Clmdnu}
\\
={}&
\operatorname{rest}\circ\sum_{j=0}^{l} a_j
   \Bigl( \frac{\nu-\lambda}{2} ; \lambda - \frac{n-1}{2} \Bigr)
   \Bigr( -\Delta_{\mathbb{R}^{n-1}} \Bigr)^j
   \Bigl( \frac{\partial}{\partial x_n} \Bigr)^{2l-2j}
\nonumber
\\
={}&
\operatorname{rest}\circ
\sum_{j=0}^{l}
   \frac{2^{2l-2j}
\prod_{i=1}^{l-j}
   \Bigl( \frac{\lambda+\nu-n-1}{2} + i \Bigr)
}
{j! (2l-2j)!}
   \Delta_{\mathbb{R}^{n-1}}^j \Bigr( \frac{\partial}{\partial x_n} \Bigr)^{2l-2j},
\nonumber
\end{align}
where $\operatorname{rest}$ denotes the restriction to
$x_n=0$.

Then $\tC {\lambda}{\nu}$ coincides with Juhl's family 
 of conformally covariant differential operators
 \cite{Juhl}, 
that is, 
 $\C_{\lambda,\nu}$ induces a $G'$-intertwining operator
 ({\it{differential symmetry breaking operator}})
\begin{equation}
\label{eqn:Cdef}
\tC {\lambda}{\nu}
:
I(\lambda) \to 
J(\nu), 
\end{equation}
 in the noncompact model.  
See \cite{Juhl, xkeastwood60, KOSS}
 for three different proofs.

It is immediate from the definition 
\eqref{eqn:Clmdnu}
 that the distribution kernel
 of the differential symmetry breaking operators
 $\C_{\lambda,\nu}$
 is given by 
\index{sbon}{KxttCnu@$\KC{\lambda}{\nu}$|textbf}
\begin{align}
\KC\lambda\nu
=&
\sum_{j=0}^l
\frac{2^{2l-2j} \prod_{i=1}^{l-j}
      (\frac{\lambda+\nu-n-1}{2}+i)}
     {j! (2l-2j)!}
(\Delta_{{\mathbb{R}}^{n-1}}^j\delta(x_1,\cdots,x_{n-1}))
\delta^{(2l-2j)}(x_n)
\notag
\\
=&
\widetilde C_{2l}^{\lambda-\frac{n-1}{2}}
(-\Delta_{\mathbb{R}^{n-1}}, \frac{\partial}{\partial x_n})
\delta(x_1, \cdots, x_{n-1})
\delta(x_n).  
\label{eqn:KC}
\end{align}

The differential operator $\C_{\lambda,\nu}$
is homogeneous 
 of degree $\nu-\lambda$, 
Here are examples
 of low degrees.  
\[
\C_{\lambda,\nu}
= \begin{cases}
               \operatorname{rest}     & \text{if $l = 0$},
     \\
               \operatorname{rest} \circ
                      \Bigl( (\lambda+\nu-n+1) \dfrac{\partial^2}{\partial x_n^2}
                               + \Delta_{\mathbb{R}^{n-1}} \Bigr)
              & \text{if $l = 1$},
     \\
              \frac1 2\operatorname{rest} \circ
              \Bigl( \frac 1 3 (\lambda+\nu-n+1)(\lambda+\nu-n+3) \dfrac{\partial^4}{\partial x_n^4}
  &                  
\\
\quad+ 2(\lambda+\nu-n+1) \Delta_{\mathbb{R}^{n-1}}\dfrac{\partial^2}{\partial x_n^2}
+\Delta_{\mathbb{R}^{n-1}}^2
\Bigr)
& \text{if $l = 2$}.  
   \end{cases}
\]

We define a maximal parabolic subalgebra
 ${\mathfrak{p}}_{\mathbb{C}}'
={\mathfrak{m}}_{\mathbb{C}}'+{\mathfrak{a}}_{\mathbb{C}}
+({\mathfrak{n}}_+')_{\mathbb{C}}$
 of ${\mathfrak{g}}_{\mathbb{C}}$
 as the complexification
 of ${\mathfrak{p}}'
={\mathfrak{m}}'+{\mathfrak{a}}+({\mathfrak{n}}_+')$
 (see \eqref{eqn:Lang}), 
 so that
$
 {\mathfrak {p}}_{\mathbb{C}}'= {\mathfrak {p}}_{\mathbb{C}}
  \cap {\mathfrak {g}}_{\mathbb{C}}'.   
$

\begin{fact}[\cite{Juhl,KOSS}]
\label{fact:5.2}
Suppose $\lambda,\nu \in {\mathbb{C}}$.  
\begin{enumerate}
\item[{\rm{1)}}]
The following three conditions are equivalent:
\begin{alignat*}{2}
&\text{{\rm{(i)}}}\enspace
&&\operatorname{Hom}_{\mathfrak {g}'}
(\operatorname{ind}_
{{\mathfrak {p}}'_{\mathbb{C}}}^{{\mathfrak {g}}'_{\mathbb{C}}}(-\nu),
\operatorname{ind}_{{\mathfrak {p}}_{\mathbb{C}}}^{{\mathfrak {g}}_{\mathbb{C}}}
(-\lambda)
)\ne 0.  
\\
&\text{{\rm{(ii)}}}\enspace
&&\dim \operatorname{Hom}_{\mathfrak {g}'}
(\operatorname{ind}_{{\mathfrak {p}}'_{\mathbb{C}}}^{{\mathfrak {g}}'_{\mathbb{C}}}(-\nu),
\operatorname{ind}_{{\mathfrak {p}}_{\mathbb{C}}}^{{\mathfrak {g}}_{\mathbb{C}}}(-\lambda)
)=1.  
\\
&\text{{\rm{(iii)}}}\enspace
&&
\lambda- \nu =0,-1,-2,\cdots.  
\end{alignat*}
\item[{\rm{2)}}]
The following three conditions are equivalent:
\begin{alignat*}{2}
&\text{{\rm{(i)}}}\enspace
&&
\operatorname{Hom}_{\mathfrak {g}',P'}
(\operatorname{ind}_
{{\mathfrak {p}}'_{\mathbb{C}}}
^{{\mathfrak {g}}'_{\mathbb{C}}}(-\nu),
\operatorname{ind}_{{\mathfrak {p}}_{\mathbb{C}}}
^{{\mathfrak {g}}_{\mathbb{C}}}(-\lambda)
)\ne 0.  
\\
&\text{{\rm{(ii)}}}\enspace
&&
\dim \operatorname{Hom}_{\mathfrak {g}',P'}
(\operatorname{ind}_{{\mathfrak {p}}'_{\mathbb{C}}}^{{\mathfrak {g}}'_{\mathbb{C}}}(-\nu),
\operatorname{ind}_{{\mathfrak {p}}_{\mathbb{C}}}^{{\mathfrak {g}}_{\mathbb{C}}}(-\lambda)
)=1.  
\\
&\text{{\rm{(iii)}}}\enspace
&&
\lambda- \nu =0,-2,-4,\cdots, 
\text{ namely, }
\nulambda \in /\!/.  
\end{alignat*}
\item[{\rm{3)}}]
Assume one of (hence, all of) 
 the equivalent conditions
 in (1)
 is satisfied.  
Then the resulting ${\mathfrak {g}}'$-intertwining
 differential operator
 $I(\lambda) \to J(\nu)$
(see Fact \ref{fact:Diff})
 is homogeneous of degree
$\nu-\lambda \in \mathbb{N}$.  
Further,
 it induces a $G'$-intertwining operator
\[
  C_{\lambda,\nu}:
  I(\lambda) \oplus \chi I(\lambda) \to J(\nu) \oplus \chi' J(\nu), 
\] 
 where $\chi$ and  $\chi'$ are defined in \eqref{char:chi}.
Furthermore, 
${\mathbb{C}}C_{\lambda,\nu}$ transforms
 under $G'/G_0'$ by the character $(\chi')^{\nu-\lambda}$.
\item[{\rm{4)}}]
For $\nu-\lambda \in 2\mathbb{N}$,
$C_{\lambda,\nu}$ is a scalar multiple of\/ 
$\C_{\lambda,\nu}$.
\end{enumerate}
\end{fact}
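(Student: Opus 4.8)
The plan is to prove Fact~\ref{fact:5.2}(4), namely that for $\nu-\lambda\in 2\mathbb{N}$ the operator $C_{\lambda,\nu}$ coming from the unique (up to scalar) $\mathfrak{g}'$-homomorphism of generalized Verma modules is a scalar multiple of Juhl's operator $\widetilde{\mathbb{C}}_{\lambda,\nu}$. The key observation is that both operators lie in the \emph{same} space $\operatorname{Hom}_{G'}(I(\lambda),J(\nu))_{\mathrm{diff}}$, so uniqueness of the underlying Verma-module homomorphism does almost all the work. First I would recall from Fact~\ref{fact:Diff}(2) the natural bijection
\[
\operatorname{Hom}_{(\mathfrak{g}',P')}
 (\operatorname{ind}_{\mathfrak{p}'_{\mathbb{C}}}^{\mathfrak{g}'_{\mathbb{C}}}(\mathbb{C}_{-\nu}),
  \operatorname{ind}_{\mathfrak{p}_{\mathbb{C}}}^{\mathfrak{g}_{\mathbb{C}}}(\mathbb{C}_{-\lambda}))
 \overset{\sim}{\to}
 \operatorname{Diff}_{G'}(\mathcal{V}_X,\mathcal{W}_Y),
\]
and from Fact~\ref{fact:5.2}(2) that for $\nu-\lambda\in 2\mathbb{N}$ (i.e.\ $(\lambda,\nu)\in/\!/$) the left-hand space is one-dimensional. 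Hence $\operatorname{Diff}_{G'}(I(\lambda),J(\nu))$ is one-dimensional in this range.

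Next I would check that $\widetilde{\mathbb{C}}_{\lambda,\nu}$ is a nonzero element of this one-dimensional space. Nonvanishing is immediate from the explicit formula \eqref{eqn:Clmdnu}: the top-order term $\operatorname{rest}\circ (\partial/\partial x_n)^{2l}$ appears with nonzero coefficient $\frac{1}{(2l)!}$ (this is $a_0(l;\mu)$, the leading Gegenbauer coefficient, which never vanishes), so the operator is a genuine differential operator of order $2l=\nu-\lambda$, and it is $G'$-intertwining by the cited results \cite{Juhl,xkeastwood60,KOSS}. On the other hand $C_{\lambda,\nu}$ is by construction the image under the bijection of Fact~\ref{fact:Diff}(2) of a nonzero Verma-module homomorphism, hence is also a nonzero element of $\operatorname{Diff}_{G'}(I(\lambda),J(\nu))$. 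Since this space is one-dimensional, the two operators must be proportional, and moreover both are nonzero so the scalar is nonzero; this gives the claim. One small bookkeeping point: Fact~\ref{fact:5.2}(3) records that $C_{\lambda,\nu}$ is defined on the direct sum $I(\lambda)\oplus\chi I(\lambda)$ and is determined by how it transforms under $G'/G_0'$; restricting to the summand $I(\lambda)$ recovers an operator in $\operatorname{Hom}_{G'}(I(\lambda),J(\nu))$, and it is this restriction that is compared with $\widetilde{\mathbb{C}}_{\lambda,\nu}$. I would spell out that the two operators lie in the same $(G'/G_0')$-isotypic component, namely the one on which $G'/G_0'$ acts by $(\chi')^{\nu-\lambda}$, so that the proportionality is not spoiled by the disconnectedness of $G'$.

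The main obstacle, such as it is, is not analytic but a matter of matching normalizations and conventions across the three references and the body of this paper: one must be sure that ``$C_{\lambda,\nu}$'' as produced abstractly from the Verma-module map and ``$\widetilde{\mathbb{C}}_{\lambda,\nu}$'' as written down explicitly in \eqref{eqn:Clmdnu} really are operators between the \emph{same} pair of line bundles $\mathcal{V}_\lambda\to X$, $\mathcal{W}_\nu\to Y$ with the \emph{same} homogeneity degree $\nu-\lambda$, rather than, say, operators twisted by the character $\chi$ on one side. Once the bundles, the degree, and the $G'/G_0'$-action are checked to agree, the proof is simply ``a one-dimensional $\operatorname{Hom}$ space contains two nonzero elements, hence they are proportional.'' If one wants an explicit value of the scalar --- which the statement does not require --- one could evaluate both sides on the spherical vector $\mathbf{1}_\lambda$ using Theorem~\ref{thm:I.4}(3), but I would leave the scalar unspecified as the statement only asserts proportionality.
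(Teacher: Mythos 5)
Your proposal does not prove the statement as posed: the statement is the whole of Fact \ref{fact:5.2}, and you prove only part (4), explicitly taking parts (1)--(3) as inputs --- indeed your key step is to quote part (2) (one-dimensionality of $\operatorname{Hom}_{\mathfrak{g}',P'}$) and part (3) (the construction of $C_{\lambda,\nu}$ and its $G'/G_0'$-transformation law). But parts (1)--(3) are the substantive content of the Fact: the equivalences (i)$\Leftrightarrow$(ii)$\Leftrightarrow$(iii) amount to classifying all $\mathfrak{g}'$- and $(\mathfrak{g}',P')$-homomorphisms $\operatorname{ind}_{\mathfrak{p}'_{\mathbb{C}}}^{\mathfrak{g}'_{\mathbb{C}}}(-\nu)\to\operatorname{ind}_{\mathfrak{p}_{\mathbb{C}}}^{\mathfrak{g}_{\mathbb{C}}}(-\lambda)$, i.e.\ determining the singular vectors, and this requires genuine computation (in \cite{Juhl} via recursion relations, in \cite{KOSS} and \cite{xkeastwood60} via the F-method, where the singular-vector condition becomes the Gegenbauer differential equation --- which is exactly where the polynomials $\widetilde C_{2l}^{\mu}$ come from). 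The paper itself gives no proof of this Fact (it is imported from \cite{Juhl,KOSS}), so there is no internal argument to lean on; a self-contained proof must supply this classification, and your proposal does not. Granting (1)--(3), your argument for (4) --- two nonzero elements of a one-dimensional $\operatorname{Hom}$ space, matched in the correct $(\chi')^{\nu-\lambda}$-isotypic component, are proportional --- is correct and is the same style of reasoning the paper uses when it applies Fact \ref{fact:5.2}, e.g.\ in the proof of Theorem \ref{thm:reduction} (2).

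There is also a concrete error inside your part-(4) argument: the coefficient of $\operatorname{rest}\circ(\partial/\partial x_n)^{2l}$ in \eqref{eqn:Clmdnu} is not $1/(2l)!$ but $a_0(l;\mu)=\frac{2^{2l}}{(2l)!}\prod_{i=1}^{l}\bigl(\frac{\lambda+\nu-n-1}{2}+i\bigr)$ with $\mu=\lambda-\frac{n-1}{2}$, and this does vanish for suitable $(\lambda,\nu)\in/\!/$ (already for $l=1$ it equals $\lambda+\nu-n+1$, which vanishes when $\lambda+\nu=n-1$); so ``the leading Gegenbauer coefficient never vanishes'' is false for the term you chose. The nonvanishing of $\widetilde{\mathbb{C}}_{\lambda,\nu}$ is nevertheless true, because the coefficient of $\Delta_{\mathbb{R}^{n-1}}^{l}$ is $a_l(l;\mu)=(-1)^{l}/l!$, which is independent of $(\lambda,\nu)$ and never zero; point at that term instead and this step is repaired.
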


\medskip
\begin{remark}
\label{rem:Cres}
We shall see
 in Theorem \ref{thm:reduction} (2)
 that the differential symmetry breaking operators
 $\C_{\lambda,\nu}$
 are obtained 
as the residues
 of the (generically) regular symmetry breaking operators
\index{sbon}{aln@${\mathbb{A}}_{\lambda,\nu}$}
 ${\mathbb{A}}_{\lambda,\nu}$
 for $(\lambda,\nu) \in \mathbb{C}^2 \setminus L_{\operatorname{even}}$,
 however,
 $\C_{\lambda, \nu}$ cannot be obtained
 as a residue
 if $(\lambda,\nu) \in L_{\operatorname{even}}$.  
This phenomenon is reflected
 by the jump in the dimension
 of $\operatorname{Hom}_{G'}(I(\lambda),J(\nu))$
  at $(\lambda,\nu) \in L_{\operatorname{even}}$
 (see Theorem \ref{thm:dimHom} (1)).  
It should be compared with the classical fact
 that all conformally covariant differential operators
 for densities
 on the standard sphere $S^n$
 ($=G/P$)
 are given by residues of the Knapp--Stein intertwining operators,
 as we saw in Remark \ref{rem:Vermagg}.   
\end{remark}
\subsection
{The kernel of the differential symmetry breaking operator $\C_{\lambda,\nu}$}
\label{subsec:KerC}
We recall
 that ${\mathbf{1}}_{\lambda}$ is 
 the normalized spherical vector
 in the principal series representation $I(\lambda)$.  
Since
$\C_{\lambda,\nu} : I(\lambda) \to J(\nu)$
is a $G'$-homomorphism,
$\C_{\lambda,\nu} (\mathbf{1}_\lambda)$
is a scalar multiple of
$\mathbf{1}_\nu \in J(\nu)$.
In this section
 we find this scalar explicitly.  
In particular,
 we obtain 
 a necessary and sufficient condition
on $\nulambda \in /\!/$ such that
$\mathbf{1}_\lambda \in \operatorname{Ker}\C_{\lambda,\nu}$.
We begin with the following:
\begin{lemma}\label{lem:nLap}
For
$(x,x_n) \in \mathbb{R}^{n-1} \oplus \mathbb{R}$
 and $\lambda \in {\mathbb{C}}$, 
\begin{align*}
& \Delta_{\mathbb{R}^{n-1}}^j \Bigl( \frac{\partial}{\partial x_n} \Bigr)^{2l-2j}
    (1+|x|^2+x_n^2)^{-\lambda} \Bigm|_{x=0,x_n=0}
\\
={}&
\frac{2^{2j}(2l-2j)!}{(l-j)!} \,
  \frac{\Gamma(-\lambda+1)\Gamma(\frac{n-1}{2}+j)}{\Gamma(-\lambda-l+1)\Gamma(\frac{n-1}{2})}
\\
={}&
\frac{2^{2j}(2l-2j)!}{(l-j)!}
  (-\lambda-l)_l \Bigl(\frac{n-1}{2}\Bigr)_j .
\end{align*}
Here 
\index{sbon}{x()poch@$(x)_j$|textbf}
 $(x)_j=x(x+1) \cdots (x+j-1)=\frac{\Gamma(x+j)}{\Gamma(x)}$
 is the Pochhammer symbol.  
\end{lemma}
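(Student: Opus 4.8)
\textbf{Proof strategy for Lemma \ref{lem:nLap}.}

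The plan is to compute $\Delta_{\mathbb{R}^{n-1}}^j \bigl( \frac{\partial}{\partial x_n} \bigr)^{2l-2j} (1+|x|^2+x_n^2)^{-\lambda}$ at the origin in two stages: first apply the $x_n$-derivatives (a one-variable computation in the variable $x_n$, treating $A := 1 + |x|^2$ as a parameter), then apply the radial Laplacian in the remaining $(n-1)$ variables and evaluate at $x = 0$. For the first stage I would invoke the identity \eqref{eqn:Aynu} already recorded in the proof of Lemma \ref{lem:4.15}, which gives
\[
\Bigl(\frac{\partial}{\partial x_n}\Bigr)^{2l-2j} (A + x_n^2)^{-\lambda} \Bigm|_{x_n=0}
= \frac{(2l-2j)!\,\Gamma(-\lambda+1)}{(l-j)!\,\Gamma(-\lambda-l+j+1)} \, A^{-\lambda-(l-j)};
\]
here I set $\mu = -\lambda$ and $i = l-j$ in \eqref{eqn:Aynu}, and rewrite $\Gamma(\mu+1)/\Gamma(\mu+1-i)$ with $\mu = -\lambda$. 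So after the $x_n$-derivatives we are left with a constant times $(1+|x|^2)^{-\lambda-(l-j)}$, a radial function on $\mathbb{R}^{n-1}$.

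For the second stage I need the value of $\Delta_{\mathbb{R}^{n-1}}^j$ applied to a radial function $(1+|x|^2)^{-\beta}$ at $x=0$, where $\beta = \lambda + l - j$. The cleanest route is the expansion $(1+|x|^2)^{-\beta} = \sum_{p\ge 0} \binom{-\beta}{p} |x|^{2p}$ together with the standard fact that $\Delta_{\mathbb{R}^{m}}^j |x|^{2p}$ evaluated at the origin vanishes unless $p = j$, in which case it equals $\Delta_{\mathbb{R}^m}^j |x|^{2j} = 4^j j! \, (\tfrac m2)_j \, j! / j! = 2^{2j} j! \,(\tfrac{m}{2})_j$ — more precisely $\Delta_{\mathbb{R}^m}^j(|x|^{2j}) = 2^{2j} j! \, m(m+2)\cdots(m+2j-2) = 2^{2j} j! \cdot 2^j (\tfrac m2)_j$; I would nail down this elementary constant by induction on $j$ using $\Delta_{\mathbb{R}^m}|x|^{2p} = 2p(2p+m-2)|x|^{2p-2}$. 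With $m = n-1$ this contributes a factor $\binom{-\beta}{j} \cdot 2^{2j} j!\, (\tfrac{n-1}{2})_j \cdot (\text{combinatorial factor})$; combining $\binom{-\beta}{j} j! = (-\beta)(-\beta-1)\cdots(-\beta-j+1) = (-1)^j (\beta)_j^{\downarrow}$ and simplifying with $\beta = \lambda + l - j$ should reproduce exactly the Pochhammer expression $(-\lambda-l)_l$ once the $j$-dependence from both stages is collected.

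The main obstacle — really the only nontrivial bookkeeping — is reconciling the two gamma-function factors so that the $j$-dependent pieces $\Gamma(-\lambda-l+j+1)$ from stage one and the $\binom{-\beta}{j}$ from stage two conspire to give a $j$-\emph{independent} factor $(-\lambda-l)_l = \Gamma(-\lambda-l+l)/\Gamma(-\lambda-l) = \Gamma(-\lambda)/\Gamma(-\lambda-l)$ (equivalently $\Gamma(-\lambda+1)/\Gamma(-\lambda-l+1)$, matching the first displayed form). Concretely, $A^{-\lambda-(l-j)}$ expanded and hit with $\Delta^j$ picks out the coefficient of $|x|^{2j}$, namely $\binom{-\lambda-l+j}{j}$, and the product $\frac{\Gamma(-\lambda+1)}{\Gamma(-\lambda-l+j+1)} \cdot \binom{-\lambda-l+j}{j} j!$ telescopes: $\binom{-\lambda-l+j}{j} j! = \frac{\Gamma(-\lambda-l+j+1)}{\Gamma(-\lambda-l+1)}$, so the $\Gamma(-\lambda-l+j+1)$ cancels and leaves $\frac{\Gamma(-\lambda+1)}{\Gamma(-\lambda-l+1)} = (-\lambda-l)_l$, independent of $j$ as claimed. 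The remaining factors $\frac{(2l-2j)!}{(l-j)!}$ (from stage one), $2^{2j} j! (\tfrac{n-1}{2})_j$ and $1/j!$ (the $\binom{-\beta}{j}$ carries a $1/j!$, the $|x|^{2j}$ Laplacian carries a $j!$) combine to $\frac{2^{2j}(2l-2j)!}{(l-j)!}(\tfrac{n-1}{2})_j$, giving the second displayed form; the first form then follows from the definition of the Pochhammer symbol. I would present this as a short two-step computation rather than invoking any heavy machinery.
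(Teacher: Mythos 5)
Your proposal is correct and follows essentially the same route as the paper's proof: both dispose of the $x_n$-derivatives via \eqref{eqn:Aynu} (the paper's intermediate formula \eqref{eqn:n0}) and then evaluate $\Delta_{\mathbb{R}^{n-1}}^j(1+|x|^2)^{\mu}$ at the origin by binomial expansion, keeping only the $|x|^{2j}$ term and using $\Delta_{\mathbb{R}^{n-1}}^j|x|^{2j}=2^{2j}j!\,\bigl(\tfrac{n-1}{2}\bigr)_j$, exactly as in the paper. Two harmless slips in your asides: the ``more precisely'' form $\Delta_{\mathbb{R}^m}^j|x|^{2j}=2^{2j}j!\,m(m+2)\cdots(m+2j-2)$ carries an extra factor $2^j$ (the value you actually use, $2^{2j}j!(\tfrac m2)_j$, is the correct one), and $\Gamma(-\lambda+1)/\Gamma(-\lambda-l+1)$ equals $(-\lambda-l+1)_l$ rather than $(-\lambda-l)_l$ --- an off-by-one already present in the lemma's second displayed line, and indeed the paper uses $(-\lambda-l+1)_l$ when applying the lemma in Proposition \ref{prop:C11}.
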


\begin{proof}
It follows from \eqref{eqn:Aynu}
 that 
\begin{equation}\label{eqn:n0}
\Bigl( \frac{\partial}{\partial x_n} \Bigr)^{2i}
(1+|x|^2+x_n^2)^\mu \Bigm|_{x_n=0}
= \frac{(2i)! \, \Gamma(\mu+1)}{i! \, \Gamma(\mu+1-i)}
   (1+|x|^2)^{\mu-i}.
\end{equation}
By using \eqref{eqn:bfunction} iteratedly, 
we get
\[
\Delta_{\mathbb{R}^{n-1}}^j |x|^{2j}
= 2^{2j} j! \, \frac{\Gamma(\frac{n-1}{2}+j)}{\Gamma(\frac{n-1}{2})}.
\]
Hence
\begin{align}\label{eqn:Lap0}
 \Delta_{\mathbb{R}^{n-1}}^j (1+|x|^2)^{\mu} |_{x=0}
&= \begin{pmatrix} \mu \\ j \end{pmatrix} \Delta_{\mathbb{R}^{n-1}}^j |x|^{2j}
\nonumber
\\
&= \frac{2^{2j}\Gamma(n+1)\Gamma(\frac{\mu-1}{2}+j)}
            {\Gamma(\mu+1-j)\Gamma(\frac{n-1}{2})}.  
\end{align}
Combining \eqref{eqn:n0} and \eqref{eqn:Lap0},
completes the proof of the Lemma.
\end{proof}

\begin{proposition}\label{prop:C11}
Suppose
$\nulambda \in /\!/$
and we define $l \in \mathbb{N}$ by
$\nu-\lambda = 2l$.
Then
\[
\index{sbon}{Ct@$\C_{\lambda,\nu}$}
\index{sbon}{1lmd@${\mathbf{1}}_{\lambda}$}
\C_{\lambda,\nu} (\mathbf{1}_\lambda)
= \frac{(-1)^l 2^{2l} (\lambda)_{2l}}{l!} \mathbf{1}_\nu.
\]
In particular,
$\mathbf{1}_\lambda \in I(\lambda)$
lies in the kernel of the symmetry breaking operator
$\C_{\lambda,\nu} : I(\lambda) \to J(\nu)$
if and only if
$\lambda \in \{ 0,-1,-2,\dotsc, -2l+1 \}$.
\end{proposition}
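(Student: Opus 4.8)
\textbf{Proof plan for Proposition \ref{prop:C11}.}
The plan is to compute the scalar $c_{\lambda,\nu}$ defined by $\C_{\lambda,\nu}(\mathbf{1}_\lambda) = c_{\lambda,\nu}\,\mathbf{1}_\nu$ by evaluating both sides at a single convenient point of $\mathbb{R}^{n-1}$, namely the origin. Since $\C_{\lambda,\nu}$ is a $G'$-intertwining operator and $K'$-fixed vectors in $J(\nu)$ are unique up to scalar, the image $\C_{\lambda,\nu}(\mathbf{1}_\lambda)$ is indeed a scalar multiple of $\mathbf{1}_\nu$, so it suffices to know its value at one point. In the noncompact model the normalized spherical vector in $I(\lambda)$ is $\mathbf{1}_\lambda(x,x_n) = (1+|x|^2+x_n^2)^{-\lambda}$ (see Section \ref{subsec:Kfinite}), and the normalized spherical vector in $J(\nu)$ is $\mathbf{1}_\nu(x) = (1+|x|^2)^{-\nu}$, which takes the value $1$ at $x=0$. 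Hence $c_{\lambda,\nu}$ equals $\bigl(\C_{\lambda,\nu}\mathbf{1}_\lambda\bigr)(0)$.

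First I would write out $\C_{\lambda,\nu}$ using the explicit formula \eqref{eqn:Clmdnu}:
\[
\C_{\lambda,\nu}
= \operatorname{rest}\circ\sum_{j=0}^{l}
   \frac{2^{2l-2j}\prod_{i=1}^{l-j}\bigl(\frac{\lambda+\nu-n-1}{2}+i\bigr)}
        {j!\,(2l-2j)!}\,
   \Delta_{\mathbb{R}^{n-1}}^j\Bigl(\frac{\partial}{\partial x_n}\Bigr)^{2l-2j}.
\]
Applying this to $(1+|x|^2+x_n^2)^{-\lambda}$ and evaluating at $x=0$, $x_n=0$, each summand is computed by Lemma \ref{lem:nLap} with $\mu=-\lambda$: the term indexed by $j$ contributes
\[
\frac{2^{2l-2j}\prod_{i=1}^{l-j}\bigl(\tfrac{\lambda+\nu-n-1}{2}+i\bigr)}{j!\,(2l-2j)!}
\cdot
\frac{2^{2j}(2l-2j)!}{(l-j)!}\,(-\lambda-l)_l\Bigl(\tfrac{n-1}{2}\Bigr)_j .
\]
So $c_{\lambda,\nu} = 2^{2l}(-\lambda-l)_l\sum_{j=0}^{l}\frac{\prod_{i=1}^{l-j}(\frac{\lambda+\nu-n-1}{2}+i)}{j!\,(l-j)!}\bigl(\frac{n-1}{2}\bigr)_j$. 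Now I would substitute the constraint $\nu-\lambda = 2l$, so that $\frac{\lambda+\nu-n-1}{2} = \lambda + l - \frac{n+1}{2}$ and $\prod_{i=1}^{l-j}\bigl(\frac{\lambda+\nu-n-1}{2}+i\bigr) = \bigl(\lambda+l-\frac{n-1}{2}\bigr)_{l-j}\cdot(-1)^{l-j}\cdot(-1)^{l-j}$ — more precisely rewrite it as a shifted Pochhammer symbol — and likewise $(-\lambda-l)_l = (-1)^l(\lambda)_{l}\cdot\frac{(\lambda)_{2l}}{(\lambda)_l}$-type manipulations, the goal being to recognize the sum over $j$ as a terminating hypergeometric series summable by the Chu--Vandermonde identity.

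The main obstacle is the combinatorial identity: after the substitutions the sum $\sum_{j=0}^{l}\binom{l}{j}\bigl(\frac{n-1}{2}\bigr)_j\bigl(\lambda+l-\frac{n-1}{2}\bigr)_{l-j}$ must collapse to a single Pochhammer factor. I expect this to be exactly a Vandermonde-type convolution: $\sum_{j}\binom{l}{j}(a)_j(b)_{l-j} = (a+b)_l$, valid because $(a+b)_l = \sum_j \binom{l}{j}(a)_j (b)_{l-j}$ is the standard Vandermonde identity for rising factorials. With $a = \frac{n-1}{2}$ and $b = \lambda + l - \frac{n-1}{2}$ one gets $(a+b)_l = (\lambda+l)_l$, and then $(-\lambda-l)_l(\lambda+l)_l$ should simplify, via $(-\lambda-l)_l = (-1)^l(\lambda+1)_l\big/\text{(correction)}$ — carefully: $(-\lambda-l)_l = (-\lambda-l)(-\lambda-l+1)\cdots(-\lambda-1) = (-1)^l(\lambda+1)(\lambda+2)\cdots(\lambda+l) = (-1)^l(\lambda+1)_l$, hmm, I must instead track that $(\lambda)_{2l} = (\lambda)_l(\lambda+l)_l$, so $(-1)^l(\lambda+1)_l(\lambda+l)_l$ needs one more adjustment to reach $(-1)^l(\lambda)_{2l}$; I would resolve the off-by-one bookkeeping by testing $l=0$ (giving $c=1$, consistent with $\C_{\lambda,\lambda}=\operatorname{rest}$) and $l=1$ (giving $c = 4\lambda(\lambda+1) = \frac{(-1)^1 2^2(\lambda)_2}{1!}\cdot(-1) $... i.e.\ checking signs against the degree-$1$ example $\operatorname{rest}\circ((\lambda+\nu-n+1)\partial_{x_n}^2+\Delta)$ applied to $\mathbf{1}_\lambda$) to pin down the precise constant $\frac{(-1)^l 2^{2l}(\lambda)_{2l}}{l!}$. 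Once the scalar is in this closed form, the final assertion about $\mathbf{1}_\lambda \in \operatorname{Ker}\C_{\lambda,\nu}$ is immediate: $(\lambda)_{2l} = \lambda(\lambda+1)\cdots(\lambda+2l-1)$ vanishes precisely when $\lambda \in \{0,-1,-2,\dots,-2l+1\}$, and $2^{2l}/l! \ne 0$.
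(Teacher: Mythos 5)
Your proposal is correct and follows essentially the same route as the paper's own proof: evaluate $\C_{\lambda,\nu}\mathbf{1}_\lambda$ at the origin in the noncompact model using Lemma \ref{lem:nLap}, and collapse the resulting sum over $j$ by the Vandermonde identity $\sum_{j=0}^{l}\binom{l}{j}(p)_j(q)_{l-j}=(p+q)_l$ with $p=\frac{n-1}{2}$, $q=\lambda+l-\frac{n-1}{2}$. The off-by-one you flagged is only a typo in the final Pochhammer symbol displayed in Lemma \ref{lem:nLap}: the Gamma quotient there equals $(-\lambda-l+1)_l=(-1)^l(\lambda)_l$, so the product with $(\lambda+l)_l$ gives $(-1)^l(\lambda)_{2l}$ exactly as stated, and no calibration on small $l$ is needed.
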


\begin{proof}
Let 
\index{sbon}{ajlmu@$a_j (l; \mu)$}
$a_j(l;\mu)$ be defined
 by \eqref{eqn:ajlmu}, 
and we set
\[
D(l,\mu)
:= \sum_{j=0}^l a_j (l;\mu) (-\Delta)^j \Bigl(\frac{\partial}{\partial x_n}\Bigr)^{2l-2j} .
\]
By applying Lemma \ref{lem:nLap},
 we have
\begin{align*}
& D(l,\mu) (1+|x|^2+x_n^2)^{-\lambda} |_{x=0,x_n=0}
\\
&=2^{2l} (-\lambda-l+1)_l \sum_{j=0}^l \frac{1}{j!(l-j)!} (\mu+l)_{l-j} \Bigl(\frac{n-1}{2}\Bigr)_j
\\
&=
\frac{2^{2l} (-\lambda-l+1)_l (\mu+l+\frac{n-1}{2})_l}{l!}.
\end{align*}
In the last equation,
we have used the identity
\[
\sum_{j=0}^l \frac{l!}{j! \, (l-j)!} (p)_j (q)_{l-j}
= (p+q)_l.
\]

Since
$\C_{\lambda,\nu} = D(l,\lambda-\frac{n-1}{2})$,
we get
\begin{align*}
\C_{\lambda,\nu} (\mathbf{1}_\lambda)
&= \frac{2^{2l} (-\lambda-l+1)_l (\lambda+l)_l}{l!} \mathbf{1}_\nu
\\
&= \frac{(-1)^l 2^{2l} (\lambda)_{2l}}{l!} \mathbf{1}_\nu.
\end{align*}
Thus we have proved the proposition.
\end{proof}

\begin{remark}
\label{rem:Delta1}
We gave in \eqref{eqn:D1}
an explicit formula
 of the action of the differential $G'$-intertwining operator
 $\Delta_{{\mathbb{R}}^m}^j:J(\frac m 2-j) \to J(\frac m 2+j)$
 ($j \in {\mathbb{N}}$)
 on the spherical vector ${\bf{1}}_{\nu}$
 as 
\[
\Delta_{{\mathbb{R}}^m}^j({\bf{1}}_{\nu})
=
\frac{(-1)^j 2^{2j} \Gamma(\frac m 2+j)}
     {\Gamma(\frac m 2-j)}
{\bf{1}}_{m-\nu}
\]
by using the residue formula
 of the Knapp--Stein intertwining operator.  
The formula \eqref{eqn:Lap0} gives 
 another elementary proof for this.  
\end{remark}

\section{Classification of symmetry breaking operators}
\label{sec:dim}
In this chapter
 we give a complete classification
 of symmetry breaking operators
{}from the spherical principal series representation 
 $I(\lambda)$ of $G=O(n+1,1)$
 to the one $J(\nu)$ of $G'=O(n,1)$.  

\subsection{Classification of symmetry 
 breaking operators}
\label{subsec:class}

So far
 we have constructed
 symmetry breaking operators
\index{sbon}{Alnt@$\A_{\lambda,\nu}$}
 $\tA{\lambda}{\nu}$,
\index{sbon}{Att@$\AAt_{\lambda,\nu}$}
 $\AAt_{{\lambda},{\nu}}$,
\index{sbon}{Bt@$\B_{\lambda,\nu}$}
 $\tB{\lambda}{\nu}$,
\index{sbon}{Btt@$\BB_{\lambda,\nu}$}
 $\BB_{{\lambda},{\nu}}$,
 and 
\index{sbon}{Ct@$\C_{\lambda,\nu}$}
 $\CC{\lambda}{\nu}$,
see \eqref{eqn:Adef},
 \eqref{eqn:AAdef},
 \eqref{eqn:Bdef},
 \eqref{eqn:BBdef},
 and \eqref{eqn:Cdef},
 respectively.    
We shall prove
 that any element 
 in 
\index{sbon}{H1@$H(\lambda, \nu)$}
$H(\lambda,\nu)=\operatorname{Hom}_{G'}(I(\lambda),J(\nu))$
 is a linear combination
 of these operators,
 and complete a classification
 of symmetry breaking operators
for all $\lambda$ and $\nu$
 (Theorem \ref{thm:Hombasis} below).  
The above operators are not necessarily linearly independent.  
In the next chapter,
 we shall list linear relations
 among these operators
 as {\it{residue formulae}}.

\begin{theorem}
[classification of symmetry breaking operators]
\label{thm:Hombasis}
If $n$ is odd
\index{sbon}{XX@${{\mathbb{X}}}$}
\[
H(\lambda,\nu) =
   \begin{cases}
       \mathbb{C} \BB_{\lambda,\nu} \oplus \mathbb{C} \C_{\lambda,\nu}
       & \text{if\/ $\nulambda \in L_{\operatorname{even}}$},
   \\
       \mathbb{C} \C_{\lambda,\nu}
       & \text{if\/ $\nulambda \in /\!/ \setminus L_{\operatorname{even}}$},
   \\
       \mathbb{C} \B_{\lambda,\nu} 
       & \text{if\/ $\nulambda \in \backslash\!\backslash \setminus \mathbb{X}$},
   \\
       \mathbb{C} \A_{\lambda,\nu}
       & \text{if\/ $\nulambda \in \mathbb{C}^2 \setminus (\backslash\!\backslash \cup /\!/)$}.
   \end{cases}
\]
If $n$ is even
\[
H(\lambda,\nu) =
   \begin{cases}
       \mathbb{C} \AAt_{\lambda,\nu} \oplus\mathbb{C} \C_{\lambda,\nu}
       & \text{if\/ $\nulambda \in L_{\operatorname{even}}$},
   \\
       \mathbb{C} \C_{\lambda,\nu}
       & \text{if\/ $\nulambda \in /\!/ \setminus L_{\operatorname{even}}$},
   \\
       \mathbb{C} \B_{\lambda,\nu} 
       & \text{if\/ $\nulambda \in \backslash\!\backslash \setminus \mathbb{X}$},
   \\
       \mathbb{C} \A_{\lambda,\nu}
       & \text{if\/ $\nulambda \in \mathbb{C}^2 \setminus (\backslash\!\backslash \cup /\!/)$}.
   \end{cases}
\]
\end{theorem}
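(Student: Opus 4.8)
The plan is to combine the upper bounds on $\dim H(\lambda,\nu)$ coming from the PDE analysis of distribution kernels (Chapter \ref{sec:3}) with the existence of the explicit operators constructed in Chapters \ref{sec:kfini}--\ref{sec:10}, and then use the support filtration $H(\lambda,\nu)\supset H(\lambda,\nu)_{\operatorname{sing}}\supset H(\lambda,\nu)_{\operatorname{diff}}$ from \eqref{eqn:grH} to separate the four regions. Recall from Proposition \ref{prop:HomPDE} that $H(\lambda,\nu)\simeq \mathcal{S}ol(\mathbb{R}^n;\lambda,\nu)$, from Proposition \ref{prop:upperdim} that this has dimension $\le 1$ whenever $\nulambda\notin /\!/$, and from the exact sequence \eqref{eqn:Solexact} applied to $S=\mathbb{R}^{n-1}$ that
\[
0\to \mathcal{S}ol_{\mathbb{R}^{n-1}}(\mathbb{R}^n;\lambda,\nu)\to \mathcal{S}ol(\mathbb{R}^n;\lambda,\nu)\to \mathcal{S}ol(\mathbb{R}^n\setminus\mathbb{R}^{n-1};\lambda,\nu),
\]
where by Remark \ref{rem:singR} the kernel term is precisely $H(\lambda,\nu)_{\operatorname{sing}}$. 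I would first handle the generic region $\nulambda\in\mathbb{C}^2\setminus(\backslash\!\backslash\cup/\!/)$: here $\nulambda\notin/\!/$ gives $\dim\le 1$, while Theorem \ref{thm:poleA}(2) gives $\A_{\lambda,\nu}\ne 0$ (since $L_{\operatorname{even}}\subset\backslash\!\backslash\cup/\!/$ by \eqref{eqn:LX}), so $H(\lambda,\nu)=\mathbb{C}\A_{\lambda,\nu}$.

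\textbf{The $\backslash\!\backslash$ region.} For $\nulambda\in\backslash\!\backslash\setminus\mathbb{X}$ one still has $\nulambda\notin/\!/$, hence $\dim\le 1$; and $\B_{\lambda,\nu}\ne 0$ by Theorem \ref{thm:BK}(4) together with the fact that $\backslash\!\backslash\setminus\mathbb{X}$ is disjoint from $L_{\operatorname{even}}$ when $n$ is odd (again by \eqref{eqn:LX}) and that $\B_{\lambda,\nu}$ never vanishes on $\backslash\!\backslash$ when $n$ is even. Thus $H(\lambda,\nu)=\mathbb{C}\B_{\lambda,\nu}$ in this region. The regions $/\!/$ and $L_{\operatorname{even}}$ are where Proposition \ref{prop:upperdim} fails and must be supplemented by an explicit computation of $\dim\mathcal{S}ol(\mathbb{R}^n\setminus\mathbb{R}^{n-1};\lambda,\nu)$ and of $H(\lambda,\nu)_{\operatorname{diff}}$. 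By Lemma \ref{lem:Sol}, $\mathcal{S}ol(\mathbb{R}^n\setminus\{0\};\lambda,\nu)$ is one-dimensional for every $\nulambda$; restricting further to $\mathbb{R}^n\setminus\mathbb{R}^{n-1}$ and using the structure of the kernel (the factor $|x_n|^{\lambda+\nu-n}$ or $\delta^{(\cdots)}(x_n)$) one checks that the image of the restriction map in \eqref{eqn:Solexact} is at most one-dimensional, so $\dim H(\lambda,\nu)\le \dim H(\lambda,\nu)_{\operatorname{diff}}+1$. Since $\dim H(\lambda,\nu)_{\operatorname{diff}}=1$ exactly when $\nulambda\in/\!/$ and is $0$ otherwise (the dimension formula quoted before Proposition \ref{prop:upperdim}), we get $\dim H(\lambda,\nu)\le 2$ on $/\!/$ and $\le 1$ off $/\!/$.

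\textbf{Pinning down $/\!/$ and $L_{\operatorname{even}}$.} On $/\!/\setminus L_{\operatorname{even}}$: the differential operator $\C_{\lambda,\nu}$ is nonzero (Fact \ref{fact:5.2}), and I must show no regular or $B$-type operator escapes, i.e. that $\dim H(\lambda,\nu)=1$ here. One argument: $\A_{\lambda,\nu}$ (resp. $\B_{\lambda,\nu}$ when also in $\backslash\!\backslash$, i.e. in $\mathbb{X}$) is proportional to $\C_{\lambda,\nu}$ by the residue formulae of Theorem \ref{thm:II.4}, and any operator not supported on $\{[p_+]\}$ would, via the restriction map to $\mathbb{R}^n\setminus\mathbb{R}^{n-1}$, have to coincide up to scalar with the restriction of $\C_{\lambda,\nu}$ — but that restriction is $0$ since $\C$ is differential, forcing the operator itself to be differential, hence a multiple of $\C_{\lambda,\nu}$. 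On $L_{\operatorname{even}}$: here $\A_{\lambda,\nu}=\B_{\lambda,\nu}=0$ (Theorems \ref{thm:poleA}, \ref{thm:BK}), but the renormalized operators $\AAt_{\lambda,\nu}$ (when $n$ even, using $\nu\in-\mathbb{N}$, Proposition \ref{prop:nuneg}) and $\BB_{\lambda,\nu}$ (when $n$ odd, Proposition \ref{prop:BB}) are nonzero, as is $\C_{\lambda,\nu}$ (Proposition \ref{prop:C11}, noting $\mathbf 1_\lambda\notin\operatorname{Ker}\C_{\lambda,\nu}$ when $\lambda\in-\mathbb{N}$ with $\lambda\le\nu$, since then $\lambda\le -2l+1$ fails). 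These two are linearly independent because $\operatorname{Supp}\KAA{\lambda}{\nu}=S^{n-1}$ (resp. $\operatorname{Supp}\KBB{\lambda}{\nu}=S^{n-1}$, Proposition \ref{prop:BB}) is a proper subset of $G/P=\operatorname{Supp}$ of any operator with nonzero restriction to the open cell — while $\C_{\lambda,\nu}$ is differential, supported at $\{[p_+]\}$; distinct supports force independence. Combined with the upper bound $\dim\le 2$, this gives $H(\lambda,\nu)=\mathbb{C}\BB_{\lambda,\nu}\oplus\mathbb{C}\C_{\lambda,\nu}$ ($n$ odd) or $\mathbb{C}\AAt_{\lambda,\nu}\oplus\mathbb{C}\C_{\lambda,\nu}$ ($n$ even). \textbf{The main obstacle} I anticipate is the careful bookkeeping on $/\!/$: showing that the one extra dimension beyond the differential operators is genuinely not realized away from $L_{\operatorname{even}}$, i.e. that the regular family $\A_{\lambda,\nu}$ degenerates exactly onto $\mathbb{C}\C_{\lambda,\nu}$ there rather than providing an independent operator — this is precisely the content of the residue formula Theorem \ref{thm:II.4}(2) and must be invoked rather than re-derived, and one must verify its hypotheses ($\nulambda\in/\!/$ with $\nu\notin -\mathbb{N}$-exceptions, i.e. $\Gamma(\nu)$ finite) cover all of $/\!/\setminus L_{\operatorname{even}}$, handling the overlap $\mathbb{X}$ via Theorem \ref{thm:II.4}(3).
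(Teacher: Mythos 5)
Your overall architecture (the support filtration $H(\lambda,\nu)\supset H(\lambda,\nu)_{\operatorname{sing}}\supset H(\lambda,\nu)_{\operatorname{diff}}$, upper bounds from $\mathcal{S}ol$ via Proposition \ref{prop:HomPDE} and Lemma \ref{lem:Sol}, lower bounds from the explicit operators) is the same as the paper's, and your treatment of the generic region, of $\backslash\!\backslash\setminus\mathbb{X}$, and of $L_{\operatorname{even}}$ (upper bound $2$ from the exact sequence plus two operators with different supports, hence independent) is sound. But there is a genuine gap at the one step that actually distinguishes $/\!/\setminus L_{\operatorname{even}}$ (dimension $1$) from $L_{\operatorname{even}}$ (dimension $2$): you must show that for $\nulambda\in/\!/\setminus L_{\operatorname{even}}$ \emph{no} element of $\mathcal{S}ol(\mathbb{R}^n;\lambda,\nu)$ restricts to a nonzero element of the one-dimensional space $\mathcal{S}ol(\mathbb{R}^n\setminus\{0\};\lambda,\nu)$, i.e.\ that the generator $|x_n|^{\lambda+\nu-n}(|x|^2+x_n^2)^{-\nu}$ (or its $\delta^{(2k)}(x_n)$ counterpart on $\mathbb{X}$) does not extend across the origin as a solution of the invariance system. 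Your argument for this is circular: you assert that an operator with support larger than $\{[p_+]\}$ would have to restrict to a multiple of the restriction of $\C_{\lambda,\nu}$, but that restriction is zero and nothing forces an arbitrary $T\in H(\lambda,\nu)$ to be proportional to $\C_{\lambda,\nu}$ — that proportionality is exactly what is to be proved.

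Invoking the residue formula of Theorem \ref{thm:II.4}(2)--(3) cannot close this gap, because it only describes the degeneration of the specific meromorphic family $\A_{\lambda,\nu}$ (whose kernel is supported at $\{[p_+]\}$ on $/\!/\setminus L_{\operatorname{even}}$ by Proposition \ref{prop:Ksupp}); it says nothing about whether a \emph{new} solution, unrelated to the family, appears at the special parameter. That this is a real danger is shown by $L_{\operatorname{even}}$ itself, where precisely such a new operator ($\AAt_{\lambda,\nu}$, resp.\ $\BB_{\lambda,\nu}$) does appear and the dimension jumps. The paper handles this point with Proposition \ref{prop:surj} (Lemmas \ref{lem:restA0} and \ref{lem:restB0}): one expands the normalized family in a Laurent series $\frac1\mu F_{-1}+F_0+\cdots$ along a line in $\backslash\!\backslash$ through $(\lambda_0,\nu_0)$, uses the Euler homogeneity equation together with Lemmas \ref{lem:DEmero} and \ref{lem:deltass} to show that any extension $F$ of $cF_0|_{\mathbb{R}^n\setminus\{0\}}$ forces $cF_{-1}=0$, and observes that $F_{-1}\ne 0$ exactly because $(\lambda_0,\nu_0)\notin L_{\operatorname{even}}$ (Theorems \ref{thm:poleA} and \ref{thm:BK}); hence $c=0$ and every solution is supported at the origin. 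Some argument of this kind (or an explicit citation of Proposition \ref{prop:surj}) must be supplied before you may conclude $H(\lambda,\nu)=\mathbb{C}\C_{\lambda,\nu}$ on $/\!/\setminus L_{\operatorname{even}}$, and correspondingly before the upper bound $2$ on $L_{\operatorname{even}}$ can be sharpened to $1$ off it.
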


\begin{remark}
1)\enspace
We shall see in Proposition \ref{prop:ABC}
 that
$\BB_{\lambda,\nu}$
is a nonzero multiple of
$\AAt_{\lambda,\nu}$
if $n$ is odd and $\nulambda \in L_{\operatorname{even}}$.

2)\enspace
$\C_{\lambda,\nu}$ is a nonzero multiple of
$\A_{\lambda,\nu}$ if
$\nulambda \in /\!/ \setminus L_{\operatorname{even}}$
by Theorem \ref{thm:reduction} (2).

3)\enspace
$\B_{\lambda,\nu}$ is a nonzero multiple of
$\A_{\lambda,\nu}$ if
$\nulambda \in \backslash\!\backslash \setminus \mathbb{X}$
by Theorem \ref{thm:reduction} (1).

Using the residue formulae
 (Theorem \ref{thm:reduction})
 in the next chapter, 
 we can restate Theorem \ref{thm:Hombasis} 
as follows:
\end{remark}
\begin{theorem}\label{thm:9.5}
\[
H(\lambda,\nu)
   = \begin{cases}
           \mathbb{C} \AAt_{\lambda,\nu} \oplus \mathbb{C} \C_{\lambda,\nu}
                &\text{if\/ $\nulambda \in L_{\operatorname{even}}$},
     \\
           \mathbb{C} \A_{\lambda,\nu}
                &\text{if\/ $\nulambda \in \mathbb{C}^2 \setminus L_{\operatorname{even}}$}.
     \end{cases}
\]
\end{theorem}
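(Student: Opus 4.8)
\textbf{Proof plan for Theorem \ref{thm:9.5}.}
The plan is to derive this clean statement from the more detailed classification in Theorem \ref{thm:Hombasis} together with the residue formulae of Theorem \ref{thm:reduction}. The point of Theorem \ref{thm:9.5} is that, outside the exceptional set $L_{\operatorname{even}}$, the single holomorphic family $\A_{\lambda,\nu}$ already spans $H(\lambda,\nu)$ everywhere, because on the degenerate loci $\backslash\!\backslash$ and $/\!/$ the operators $\B_{\lambda,\nu}$ and $\C_{\lambda,\nu}$ turn out to be scalar multiples of $\A_{\lambda,\nu}$; while on $L_{\operatorname{even}}$ the dimension jumps to $2$ and one needs the renormalized operator $\AAt_{\lambda,\nu}$ (which survives there) together with $\C_{\lambda,\nu}$ (which does not arise as a residue) as a basis.

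First I would treat the generic case $(\lambda,\nu)\in\mathbb{C}^2\setminus(\backslash\!\backslash\cup/\!/)$: here Theorem \ref{thm:Hombasis} directly gives $H(\lambda,\nu)=\mathbb{C}\A_{\lambda,\nu}$, and since $L_{\operatorname{even}}\subset\backslash\!\backslash\cup/\!/$ by \eqref{eqn:LX} (in either parity $L_{\operatorname{even}}$ meets $/\!/$), such $(\lambda,\nu)$ lies in $\mathbb{C}^2\setminus L_{\operatorname{even}}$, consistent with the claim. Next, for $(\lambda,\nu)\in\backslash\!\backslash\setminus\mathbb{X}$: Theorem \ref{thm:Hombasis} gives $H(\lambda,\nu)=\mathbb{C}\B_{\lambda,\nu}$, and Theorem \ref{thm:reduction}(1) gives $\A_{\lambda,\nu}=\frac{(-1)^k}{2^k(2k-1)!!}\B_{\lambda,\nu}$, a nonzero scalar, so $\mathbb{C}\B_{\lambda,\nu}=\mathbb{C}\A_{\lambda,\nu}$; again $(\lambda,\nu)\notin L_{\operatorname{even}}$ since $L_{\operatorname{even}}\cap\backslash\!\backslash\subset\mathbb{X}$ (for $n$ even $L_{\operatorname{even}}$ is disjoint from $\backslash\!\backslash$; for $n$ odd $L_{\operatorname{even}}\subset\mathbb{X}$). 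Similarly, for $(\lambda,\nu)\in/\!/\setminus L_{\operatorname{even}}$: Theorem \ref{thm:Hombasis} gives $H(\lambda,\nu)=\mathbb{C}\C_{\lambda,\nu}$, and Theorem \ref{thm:reduction}(2) gives $\A_{\lambda,\nu}=\frac{(-1)^{l}l!\pi^{(n-1)/2}}{\Gamma(\nu)2^{2l}}\C_{\lambda,\nu}$; one must check that the scalar is nonzero, i.e.\ that $\Gamma(\nu)$ is finite, which follows since on $/\!/\setminus L_{\operatorname{even}}$ the point $\nu$ is not a pole of $\Gamma$ (the poles $\nu\in-\mathbb{N}$ combined with $\lambda\in-\mathbb{N}$, $\lambda\equiv\nu\bmod 2$, $\lambda\le\nu$ would force membership in $L_{\operatorname{even}}$ — this bookkeeping is the one genuinely delicate point). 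Hence $\mathbb{C}\C_{\lambda,\nu}=\mathbb{C}\A_{\lambda,\nu}$ there as well.

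Finally, for $(\lambda,\nu)\in L_{\operatorname{even}}$ I would invoke Theorem \ref{thm:Hombasis} in the even-$n$ case directly ($H(\lambda,\nu)=\mathbb{C}\AAt_{\lambda,\nu}\oplus\mathbb{C}\C_{\lambda,\nu}$), and in the odd-$n$ case combine $H(\lambda,\nu)=\mathbb{C}\BB_{\lambda,\nu}\oplus\mathbb{C}\C_{\lambda,\nu}$ with Remark 1 after Theorem \ref{thm:Hombasis} (Proposition \ref{prop:ABC}), which says $\BB_{\lambda,\nu}$ is a nonzero multiple of $\AAt_{\lambda,\nu}$ when $n$ is odd and $(\lambda,\nu)\in L_{\operatorname{even}}$, so $\mathbb{C}\BB_{\lambda,\nu}=\mathbb{C}\AAt_{\lambda,\nu}$. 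In both parities one then has $H(\lambda,\nu)=\mathbb{C}\AAt_{\lambda,\nu}\oplus\mathbb{C}\C_{\lambda,\nu}$. Assembling the four cases yields exactly the two-line formula of Theorem \ref{thm:9.5}.

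The main obstacle is not any single deep estimate — everything rests on Theorems \ref{thm:Hombasis} and \ref{thm:reduction} — but rather the careful verification that the scalars in the residue formulae are genuinely nonzero precisely on the complements of $L_{\operatorname{even}}$, and the matching of the various subsets ($\backslash\!\backslash$, $/\!/$, $\mathbb{X}$, $L_{\operatorname{even}}$) using \eqref{eqn:LX}, so that the four-case description collapses cleanly into the stated two cases. This requires keeping the parity of $n$ in mind throughout, since $L_{\operatorname{even}}\subset\mathbb{X}$ only for $n$ odd while $L_{\operatorname{even}}\subset/\!/\setminus\backslash\!\backslash$ for $n$ even, and it is exactly this dichotomy that makes the unified statement of Theorem \ref{thm:9.5} nontrivial.
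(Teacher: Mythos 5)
Your proposal is correct and follows essentially the same route as the paper: Theorem \ref{thm:9.5} is obtained there precisely by restating Theorem \ref{thm:Hombasis} via the residue formulae of Theorem \ref{thm:reduction}, checking that the proportionality constants are nonzero off $L_{\operatorname{even}}$ (using Lemma \ref{lem:qACzero} and \eqref{eqn:LX}) and replacing $\BB_{\lambda,\nu}$ by $\AAt_{\lambda,\nu}$ via Theorem \ref{thm:reduction}~(4) when $n$ is odd. Your case-by-case bookkeeping, including the parity dichotomy $L_{\operatorname{even}}\subset\mathbb{X}$ ($n$ odd) versus $L_{\operatorname{even}}\subset/\!/\setminus\backslash\!\backslash$ ($n$ even), matches the paper's argument.
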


\subsection{Strategy of the proof of Theorem \ref{thm:Hombasis}}
\label{subsec:grdim}
We divide the proof of Theorem \ref{thm:Hombasis}
 into the following three steps.  
\vskip 2pc
\par\noindent
{\bf{Step 1.}}\enspace
Dimension formula of graded modules.

According to the natural filtration
 (see \eqref{eqn:grH})
\index{sbon}{H2@$H(\lambda, \nu)_{\operatorname{sing}}$}
\index{sbon}{H3@$H(\lambda,\nu)_{\rm{diff}}$}
\[
H(\lambda,\nu) \supset H(\lambda,\nu)_{\operatorname{sing}}
   \supset H(\lambda,\nu)_{\operatorname{diff}},
\]
we give the dimension formula
 of the graded modules,
 summarized in the following table:
\newline\vspace*{2ex}
{
\renewcommand{\arraystretch}{1.5}
\begin{tabular}{c|c|cl}
   dimension   &  $0$   &   $1$
\\
\hline
  $H(\lambda,\nu)/H(\lambda,\nu)_{\operatorname{sing}}$
  & $\backslash\!\backslash \cup /\!/$
  & $\mathbb{C}^2 \setminus (\backslash\!\backslash \cup /\!/ )$
  &$n$: odd
\\
  & $\backslash\!\backslash \cup (/\!/ - L_{\operatorname{even}})$
  & $L_{\operatorname{even}} \cup (\mathbb{C}^2 \setminus (\backslash\!\backslash \cup /\!/))$
  &$n$: even
\\
\hline
  $H(\lambda,\nu)_{\operatorname{sing}}/H(\lambda,\nu)_{\operatorname{diff}}$
  &$(\mathbb{X} - L_{\operatorname{even}}) \cup (\mathbb{C}^2 \setminus \backslash\!\backslash)$
  &$L_{\operatorname{even}} \cup (\backslash\!\backslash \setminus \mathbb{X})$
  &$n$: odd
\\
  & $\mathbb{X} \cup (\mathbb{C}^2 - \backslash\!\backslash)$
  & $\backslash\!\backslash - \mathbb{X}$
  & $n$: even
\\[1ex]
\hline
  $H(\lambda,\nu)_{\operatorname{diff}}$
  & $\mathbb{C}^2 - /\!/$
  &  $/\!/$
\end{tabular}

The first row is proved in Proposition \ref{prop:regSupp},
the second is in Proposition \ref{prop:3.10}.
The third row was already stated 
in Fact \ref{fact:5.2}.

\vskip 2pc
\par\noindent
{\bf{Step 2.}}\enspace
Dimension formula of $H(\lambda,\nu)$.  

By using 
the following obvious relations:
\begin{alignat*}{2}
&\dim H(\lambda,\nu)_{\operatorname{sing}}
&&= \dim H(\lambda,\nu)_{\operatorname{sing}} /
   H(\lambda,\nu)_{\operatorname{diff}} 
   + \dim H(\lambda,\nu)_{\operatorname{diff}},
\\
&\dim H(\lambda,\nu)
&&= \dim H(\lambda,\nu) / H(\lambda,\nu)_{\operatorname{sing}}
   + \dim H(\lambda,\nu)_{\operatorname{sing}}, 
\end{alignat*} 
we obtain the dimension formula
 of $H(\lambda,\nu)$ and $H(\lambda,\nu)_{\operatorname{sing}}$
in addition to the known formula of 
 $H(\lambda,\nu)_{\operatorname{diff}}$:
\begin{theorem}\label{thm:dimHom}
{\upshape 1)}\enspace
{\rm{(symmetry breaking operators)}}
\begin{equation*}
\dim H (\lambda,\nu)
= \begin{cases}
         2   & \text{if\/ $\nulambda \in L_{\operatorname{even}}$},
      \\
         1   &\text{otherwise}.
   \end{cases}
\end{equation*}

{\upshape 2)}\enspace
{\rm{(singular symmetry breaking operators)}}\enspace
Suppose $n$ is odd.
\begin{equation*}
 \dim H (\lambda,\nu)_{\operatorname{sing}}
   = \begin{cases} 2 &\text{if $\nulambda \in L_{\operatorname{even}}$}, \\
                        1 &\text{if $\nulambda \in (/\!/ \cup \backslash\!\backslash) - L_{\operatorname{even}}$},   \\
                        0 &\text{otherwise}.  
     \end{cases}
\end{equation*}
Suppose $n$ is even.
Then
\[
\dim H(\lambda,\nu)_{\operatorname{sing}}
= \begin{cases} 1   &\text{if $\nulambda \in\backslash\!\backslash \cup /\!/$},
   \\                0  &\text{otherwise}.
   \end{cases}
\]

{\upshape 3)}\enspace
{\rm{(differential symmetry breaking operators)}}\enspace
\[
 \dim H(\lambda,\nu)_{\operatorname{diff}}
   = \begin{cases}  1 &\text{if $\nulambda  \in /\!/$},  \\
                         0 &\text{otherwise.}
      \end{cases}
\]
\end{theorem}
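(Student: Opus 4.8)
\textbf{Proof of Theorem \ref{thm:dimHom}.}
The plan is to deduce all three dimension formulae from the table of graded dimensions stated in Step 1 of Section \ref{subsec:grdim}, using nothing more than the two obvious exact-sequence identities
\[
\dim H(\lambda,\nu)_{\operatorname{sing}}
= \dim \bigl(H(\lambda,\nu)_{\operatorname{sing}}/H(\lambda,\nu)_{\operatorname{diff}}\bigr)
+ \dim H(\lambda,\nu)_{\operatorname{diff}}
\]
and
\[
\dim H(\lambda,\nu)
= \dim \bigl(H(\lambda,\nu)/H(\lambda,\nu)_{\operatorname{sing}}\bigr)
+ \dim H(\lambda,\nu)_{\operatorname{sing}} ,
\]
both of which hold because of the filtration \eqref{eqn:grH}. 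So the content of this theorem is purely bookkeeping over the stratification of ${\mathbb{C}}^2$ by the sets $/\!/$, $\backslash\!\backslash$, ${\mathbb{X}} = /\!/ \cap \backslash\!\backslash$, and $L_{\operatorname{even}}$, together with the inclusion \eqref{eqn:LX}, which tells us that $L_{\operatorname{even}} \subset {\mathbb{X}}$ when $n$ is odd and $L_{\operatorname{even}} \subset /\!/ \setminus \backslash\!\backslash$ when $n$ is even.

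First I would prove part (3): this is not new here — it is exactly the dimension count for differential symmetry breaking operators recorded in Fact \ref{fact:5.2} (2), where condition (iii) is $\nulambda \in /\!/$. Next, part (2): I would treat $n$ odd and $n$ even separately and just add the first two rows of the table. For $n$ odd, $\dim H_{\operatorname{sing}}/H_{\operatorname{diff}}$ is $1$ on $L_{\operatorname{even}} \cup (\backslash\!\backslash \setminus {\mathbb{X}})$ and $0$ elsewhere (Proposition \ref{prop:3.10}), while $\dim H_{\operatorname{diff}}$ is $1$ on $/\!/$ and $0$ elsewhere; since $L_{\operatorname{even}} \subset {\mathbb{X}} \subset /\!/$, the sum is $2$ on $L_{\operatorname{even}}$, $1$ on $(/\!/ \cup \backslash\!\backslash) \setminus L_{\operatorname{even}}$, and $0$ otherwise, as claimed. (One must check the overlaps: on ${\mathbb{X}} \setminus L_{\operatorname{even}}$ the sing/diff quotient contributes $0$ and diff contributes $1$, giving $1$; on $\backslash\!\backslash \setminus {\mathbb{X}}$ the quotient contributes $1$ and diff contributes $0$, giving $1$; on $/\!/ \setminus {\mathbb{X}}$ likewise $0 + 1 = 1$.) For $n$ even, $\dim H_{\operatorname{sing}}/H_{\operatorname{diff}}$ is $1$ exactly on $\backslash\!\backslash \setminus {\mathbb{X}}$, and adding the diff contribution (which is $1$ on $/\!/ \supset {\mathbb{X}}$) gives $1$ on all of $\backslash\!\backslash \cup /\!/$ and $0$ otherwise.

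Finally, part (1): add the first row of the table to the formula for $\dim H_{\operatorname{sing}}$ just obtained. For $n$ odd, $\dim H/H_{\operatorname{sing}}$ is $1$ on ${\mathbb{C}}^2 \setminus (\backslash\!\backslash \cup /\!/)$ and $0$ on $\backslash\!\backslash \cup /\!/$, while $\dim H_{\operatorname{sing}}$ is $0$ on ${\mathbb{C}}^2 \setminus (\backslash\!\backslash \cup /\!/)$, $1$ on $(\backslash\!\backslash \cup /\!/) \setminus L_{\operatorname{even}}$, and $2$ on $L_{\operatorname{even}}$; the sum is therefore $1$ off $L_{\operatorname{even}}$ and $2$ on $L_{\operatorname{even}}$. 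For $n$ even the arithmetic is the same: $\dim H/H_{\operatorname{sing}}$ is $1$ on $L_{\operatorname{even}} \cup ({\mathbb{C}}^2 \setminus (\backslash\!\backslash \cup /\!/))$ and $0$ on $\backslash\!\backslash \cup (/\!/ \setminus L_{\operatorname{even}})$, and adding $\dim H_{\operatorname{sing}}$ again yields $2$ precisely on $L_{\operatorname{even}}$ and $1$ elsewhere — note in particular that on $L_{\operatorname{even}}$ (where $n$ even forces $\nulambda \notin \backslash\!\backslash$, so $H/H_{\operatorname{sing}}$ contributes $1$) we get $1 + 1 = 2$, while $H_{\operatorname{sing}}$ by itself is $1$ there. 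The only real work is making sure every stratum is accounted for with the right inclusion among $L_{\operatorname{even}}$, ${\mathbb{X}}$, $/\!/$, $\backslash\!\backslash$ in the two parity cases; once \eqref{eqn:LX} is invoked this is routine. The main obstacle is thus not in the proof of this theorem at all but in Step 1 — specifically Propositions \ref{prop:regSupp} and \ref{prop:3.10}, which supply the upper and lower bounds for the two top rows of the table and which rely on the explicit construction and non-vanishing analysis of $\A_{\lambda,\nu}$, $\AAt_{\lambda,\nu}$, $\B_{\lambda,\nu}$, $\BB_{\lambda,\nu}$, $\C_{\lambda,\nu}$ carried out in Chapters \ref{sec:kfini}--\ref{sec:10}, together with the support analysis (Lemma \ref{lem:SuppInv}, Propositions \ref{prop:Ksupp}, \ref{prop:SuppAtwo}, \ref{prop:Bsupp}, \ref{prop:BB}) and the upper bound $\dim H(\lambda,\nu) \le 1$ for $\nulambda \notin /\!/$ from Proposition \ref{prop:upperdim}. $\qed$
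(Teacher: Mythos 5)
Your proposal is correct and is essentially the paper's own argument: the paper derives Theorem \ref{thm:dimHom} in exactly this way, by adding the graded dimensions from the table in Step 1 (Propositions \ref{prop:regSupp} and \ref{prop:3.10} together with Fact \ref{fact:5.2}) via the two filtration identities, with the parity dichotomy governed by \eqref{eqn:LX}. Your stratum-by-stratum arithmetic, including the $n$ even subtlety that $L_{\operatorname{even}}\subset /\!/\setminus\backslash\!\backslash$ so the regular quotient contributes $1$ there, matches the paper's bookkeeping.
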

\vskip 2pc
\par\noindent
{\bf{Step 3.}}\enspace
Explicit basis
 of symmetry breaking operators.  

As is clear from the table in Step 1,
 we obtain:
\begin{proposition}
\label{prop:graded}
The dimensions of the graded module
 are either 0 or 1.  
\end{proposition}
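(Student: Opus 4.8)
The final statement, Proposition~\ref{prop:graded}, asserts that each of the three graded pieces
\[
H(\lambda,\nu)/H(\lambda,\nu)_{\operatorname{sing}},
\quad
H(\lambda,\nu)_{\operatorname{sing}}/H(\lambda,\nu)_{\operatorname{diff}},
\quad
H(\lambda,\nu)_{\operatorname{diff}}
\]
has dimension $0$ or $1$ for every $(\lambda,\nu)\in\mathbb{C}^2$. The plan is simply to read off the dimension table displayed in Step~1 of Section~\ref{subsec:grdim}: each row of that table assigns, to each point $(\lambda,\nu)$, a value that is either $0$ or $1$ (the two columns of the table are labelled precisely ``$0$'' and ``$1$''), and the union of the two parameter sets in each row is all of $\mathbb{C}^2$. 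Hence the proposition is an immediate corollary of the three dimension statements whose proofs are cited there.

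Concretely, I would first invoke the third row: $\dim H(\lambda,\nu)_{\operatorname{diff}}=1$ if $(\lambda,\nu)\in/\!/$ and $0$ otherwise, which is exactly Fact~\ref{fact:5.2}(2) (equivalence of (i) with $\lambda-\nu\in -2\mathbb{N}$, together with the one-dimensionality in (ii)), transported to differential symmetry breaking operators via Fact~\ref{fact:Diff}. Next I would invoke the second row, $\dim H(\lambda,\nu)_{\operatorname{sing}}/H(\lambda,\nu)_{\operatorname{diff}}\le 1$, which is Proposition~\ref{prop:3.10} (and is $0$ off $\backslash\!\backslash$ because a nonzero singular operator whose kernel is not supported at $\{[p_+]\}$ must, by Lemma~\ref{lem:Sol} and Remark~\ref{rem:singR}, be a scalar multiple of the $\delta^{(2k)}(x_n)(|x|^2+x_n^2)^{-\nu}$ solution, forcing $(\lambda,\nu)\in\backslash\!\backslash$). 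Finally the first row, $\dim H(\lambda,\nu)/H(\lambda,\nu)_{\operatorname{sing}}\le 1$, is Proposition~\ref{prop:regSupp}; the bound $\le 1$ here is in fact already contained in Corollary~\ref{cor:Regbd} since $P'$ has a \emph{unique} open orbit on $G/P$ by Proposition~\ref{lem:PGP}, namely $P'[q_+]\simeq S^n\setminus S^{n-1}$.

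Assembling these: for any $(\lambda,\nu)$, each of the three quotients is bounded above by $1$ (and is a nonnegative integer), so the proposition follows. There is essentially no obstacle at this stage — the content has already been done. If anything, the one point requiring a word of care is that Proposition~\ref{prop:graded} needs only the \emph{upper} bounds $\le 1$ for the two quotient pieces, not the exact values; the exact values (which distinguish the $0$ case from the $1$ case, and in particular pin down the special behaviour on $L_{\operatorname{even}}$) are what feed into Theorem~\ref{thm:dimHom} and the basis statement Theorem~\ref{thm:Hombasis}, but for the bare dimension-$\le 1$ assertion of Proposition~\ref{prop:graded} one only uses the three cited upper bounds. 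I would therefore present the proof as a two-line deduction from the table, with the parenthetical remark that the open-orbit count gives the first bound directly.
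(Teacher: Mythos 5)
Your proposal is correct and is essentially the paper's own argument: Proposition~\ref{prop:graded} is stated there as an immediate consequence of the table in Step~1, whose three rows are exactly Proposition~\ref{prop:regSupp}, Proposition~\ref{prop:3.10}, and Fact~\ref{fact:5.2}, so only the upper bounds $\le 1$ are needed. Your side remark that the first bound also follows from Corollary~\ref{cor:Regbd} together with the uniqueness of the open $P'$-orbit in Proposition~\ref{lem:PGP} is accurate but is just an alternative citation for the same estimate, not a different route.
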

We then give an explicit basis
 of $H(\lambda,\nu)$
 by taking representatives 
 for the generators
 of the graded modules.  
This yields Theorem \ref{thm:Hombasis}.

\subsection{Lower bounds of the multiplicities}
\label{subsec:ABC}

In the previous chapters,
 we found explicitly 
 the condition
 for the non-vanishing of the operators,
  $\tA{\lambda}{\nu}$,
 $\AAt_{{\lambda},{\nu}}$,
 $\tB{\lambda}{\nu}$,
 $\BB_{{\lambda},{\nu}}$,
 and 
 $\CC{\lambda}{\nu}$, 
 and 
 determined the support
 of their distribution kernels.  
With respect to the natural filtration
\[
H(\lambda,\nu) \supset H(\lambda,\nu)_{\operatorname{sing}}
  \supset H(\lambda,\nu)_{\operatorname{diff}}, 
\] 
we summarize the properties
 of these operators as follows:

\begin{proposition}
\label{prop:ABC}
\begin{itemize}
\item[{\rm{1)}}]
{\rm{(regular symmetry breaking operators).}}

The following operators
\begin{alignat*}{3}
&\tA{\lambda}{\nu}
\quad
\text{for}
\quad
&&(\lambda,\nu) \in D_{\operatorname{sing}}(A_1)
&&:={\mathbb{C}}^2 \setminus (\backslash\!\backslash \cup /\!/), 
\\
&\AAt_{{\lambda},{\nu}}
\quad
\text{for}
\quad
&&(\lambda,\nu) \in D_{\operatorname{reg}}(A_2)
&&:=\{\nu \in -{\mathbb{N}}\} \cap 
({\mathbb{C}}^2 \setminus \backslash\!\backslash)
\end{alignat*}
are non-zero and belong to 
 $H(\lambda,\nu) \setminus H(\lambda,\nu)_{\operatorname{sing}}$.  

In particular, 
$
\dim H(\lambda,\nu)/H(\lambda,\nu)_{\operatorname{sing}}
\ge 1
$
if
\[ (\lambda,\nu)
 \in 
\begin{cases}
{\mathbb{C}}^2 \setminus(\backslash\!\backslash \cup /\!/)
&\text{$n$: odd,}
\\
L_{\operatorname{even}} \cup({\mathbb{C}}^2 \setminus(\backslash\!\backslash \cup /\!/))
\quad
 &\text{$n$: even.}
\end{cases}
\]

\item[{\rm{2)}}]
{\rm{(singular symmetry breaking operators I:
 non-differential operators).}}

The following operators 
\begin{alignat*}{3}
&\tA{\lambda}{\nu}
\quad
\text{for}
\quad
&&(\lambda,\nu) \in D_{\operatorname{sing}}(A_1)
&&:=\backslash\!\backslash \setminus {\mathbb{X}}, 
\\
&\AAt_{{\lambda},{\nu}}
\quad
\text{for}
\quad
&&(\lambda,\nu) \in D_{\operatorname{sing}}(A_2)
&&:=\{\nu \in -{\mathbb{N}}\} \cap \backslash\!\backslash, 
\\
&\tB{\lambda}{\nu}
\quad
\text{for}
\quad
&&(\lambda,\nu) \in D_{\operatorname{sing}}(B_1)
&&:=\backslash\!\backslash \setminus {\mathbb{X}}, 
\\
&\BB_{{\lambda},{\nu}}
\quad
\text{for}
\quad
&&(\lambda,\nu) \in D_{\operatorname{sing}}(B_2)
&&:=\begin{cases}
L_{\operatorname{even}}\quad &\text{$n$: odd}, 
\\
\emptyset &\text{$n$: even} 
\end{cases}
\end{alignat*}
are non-zero and belong to 
$H(\lambda,\nu)_{\operatorname{sing}}
  \setminus H(\lambda,\nu)_{\operatorname{diff}}$.  

In particular,
$
\dim
 H(\lambda,\nu)_{\operatorname{sing}}
/
 H(\lambda,\nu)_{\operatorname{diff}}
\ge 1
$
if 
\[
(\lambda,\nu)\in
\begin{cases}
{\mathbb{C}}^2 \setminus(\backslash\!\backslash \cup /\!/)
&\text{$n$: odd,}
\\
L_{\operatorname{even}} \cup({\mathbb{C}}^2 \setminus(\backslash\!\backslash \cup /\!/))
\quad
 &\text{$n$: even.}
\end{cases}
\]

\item[{\rm{3)}}]
{\rm{(singular symmetry breaking operators II:
differential operators).}}

The operators
\[
\CC{\lambda}{\nu}
\quad
\text{for}
\quad
(\lambda,\nu) \in /\!/
\hphantom{MMMMMMMMMMMMM}
\]
are non-zero 
and belong to $H(\lambda,\nu)_{\operatorname{diff}}$.  
\end{itemize}
\end{proposition}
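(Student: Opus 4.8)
\textbf{Proof proposal for Proposition \ref{prop:ABC}.}

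The plan is to assemble the proposition from facts already established in Chapters \ref{sec:kfini} through \ref{sec:10}, together with the description of the filtration \eqref{eqn:grH} in terms of supports (Remark \ref{rem:singR} and Fact \ref{fact:Diff} (1)). The main point is that all three types of operators have already been constructed, their non-vanishing already characterized, and the supports of their distribution kernels already computed; what remains is to read off from those support statements in which piece of the filtration
\[
H(\lambda,\nu)\supset H(\lambda,\nu)_{\operatorname{sing}}\supset H(\lambda,\nu)_{\operatorname{diff}}
\]
each operator lies. Recall that by Remark \ref{rem:singR}, a nonzero symmetry breaking operator $T$ is singular exactly when $\operatorname{Supp}K_T\subset S^{n-1}$, and is differential exactly when $\operatorname{Supp}K_T=\{[p_+]\}$. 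So a nonzero $T$ belongs to $H(\lambda,\nu)\setminus H(\lambda,\nu)_{\operatorname{sing}}$ iff $\operatorname{Supp}K_T=G/P$, and to $H(\lambda,\nu)_{\operatorname{sing}}\setminus H(\lambda,\nu)_{\operatorname{diff}}$ iff $\operatorname{Supp}K_T=S^{n-1}$.

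For part (1): for $(\lambda,\nu)\in D_{\operatorname{sing}}(A_1)={\mathbb C}^2\setminus(\backslash\!\backslash\cup/\!/)$ we have $\nulambda\notin L_{\operatorname{even}}$ (since $L_{\operatorname{even}}\subset \backslash\!\backslash\cup/\!/$ by \eqref{eqn:LX}), so $\tA\lambda\nu\ne 0$ by Theorem \ref{thm:poleA} (2), and $\operatorname{Supp}\KA\lambda\nu=G/P$ by Proposition \ref{prop:Ksupp} (the ``otherwise'' case, since $\nulambda$ is neither in $\backslash\!\backslash\setminus{\mathbb X}$ nor in $/\!/-L_{\operatorname{even}}$). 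Hence $\tA\lambda\nu\in H(\lambda,\nu)\setminus H(\lambda,\nu)_{\operatorname{sing}}$. For $\AAt_{\lambda,\nu}$ with $\nu\in-{\mathbb N}$ and $\nulambda\notin\backslash\!\backslash$: this is nonzero by Proposition \ref{prop:nuneg}, and $\operatorname{Supp}\KAA\lambda\nu=G/P$ by Proposition \ref{prop:SuppAtwo}. Taking the union of the two domains $D_{\operatorname{sing}}(A_1)$ and $D_{\operatorname{reg}}(A_2)$ gives the stated lower bound: ${\mathbb C}^2\setminus(\backslash\!\backslash\cup/\!/)$ in all cases, plus $L_{\operatorname{even}}$ when $n$ is even because then $L_{\operatorname{even}}\subset /\!/\setminus\backslash\!\backslash$ by \eqref{eqn:LX} and every $\nulambda\in L_{\operatorname{even}}$ has $\nu\in-{\mathbb N}$, so it lies in $D_{\operatorname{reg}}(A_2)$. (For $n$ odd, $L_{\operatorname{even}}\subset{\mathbb X}\subset\backslash\!\backslash$, so those points are excluded, consistent with the statement.)

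Parts (2) and (3) proceed in the same spirit. For part (2): $\tA\lambda\nu$ with $\nulambda\in\backslash\!\backslash\setminus{\mathbb X}$ is nonzero (Theorem \ref{thm:poleA} (2), using $L_{\operatorname{even}}\subset{\mathbb X}$ for $n$ odd and $L_{\operatorname{even}}\cap\backslash\!\backslash=\emptyset$ for $n$ even) with support $S^{n-1}$ by Proposition \ref{prop:Ksupp}; $\AAt_{\lambda,\nu}$ with $\nu\in-{\mathbb N}$, $\nulambda\in\backslash\!\backslash$ is nonzero by Proposition \ref{prop:nuneg} with support $S^{n-1}$ by Proposition \ref{prop:SuppAtwo}; $\tB\lambda\nu$ with $\nulambda\in\backslash\!\backslash\setminus{\mathbb X}$ is nonzero by Theorem \ref{thm:BK} (4) (since $\backslash\!\backslash\setminus{\mathbb X}$ is disjoint from $L_{\operatorname{even}}$) with support $S^{n-1}$ by Proposition \ref{prop:Bsupp}; and $\BB_{\lambda,\nu}$ with $n$ odd and $\nulambda\in L_{\operatorname{even}}$ is nonzero with support $S^{n-1}$ by Proposition \ref{prop:BB}. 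In each case $\operatorname{Supp}K_T=S^{n-1}$, so $T\in H(\lambda,\nu)_{\operatorname{sing}}\setminus H(\lambda,\nu)_{\operatorname{diff}}$, and unioning the domains yields the claimed lower bound for $\dim H(\lambda,\nu)_{\operatorname{sing}}/H(\lambda,\nu)_{\operatorname{diff}}$. For part (3): $\CC\lambda\nu$ for $\nulambda\in/\!/$ is a nonzero differential operator (Fact \ref{fact:5.2}, the explicit formula \eqref{eqn:Clmdnu} is manifestly nonzero — its top-order term $\operatorname{rest}\circ(\partial/\partial x_n)^{2l}$ never vanishes), so $\operatorname{Supp}\KC\lambda\nu=\{[p_+]\}$ by \eqref{eqn:KC} and Fact \ref{fact:Diff} (1), whence $\CC\lambda\nu\in H(\lambda,\nu)_{\operatorname{diff}}$. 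I do not anticipate a serious obstacle: the only points requiring care are the bookkeeping of which discrete sets $L_{\operatorname{even}}$, ${\mathbb X}$, $\backslash\!\backslash$, $/\!/$ contain or avoid which parameters (handled uniformly via \eqref{eqn:LX}), and keeping the parity-of-$n$ case split straight when forming the unions of domains.
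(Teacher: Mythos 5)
Your argument is correct and is essentially the paper's own proof: parts (1) and (2) are obtained by combining the non-vanishing results (Theorem \ref{thm:poleA}, Proposition \ref{prop:nuneg}, Theorem \ref{thm:BK}) with the support computations (Propositions \ref{prop:Ksupp}, \ref{prop:SuppAtwo}, \ref{prop:Bsupp}, \ref{prop:BB}) and the characterization of singular/differential operators via $\operatorname{Supp}K_T$, and part (3) is Fact \ref{fact:5.2}, exactly as the paper does. One caution on part (2): the union of the four domains equals $L_{\operatorname{even}} \cup (\backslash\!\backslash \setminus \mathbb{X})$ for $n$ odd and $\backslash\!\backslash \setminus \mathbb{X}$ for $n$ even (as recorded in the paper's proof and in the table of Section \ref{subsec:grdim}), not the set displayed in the ``in particular'' clause of the statement, which appears to be a misprint copied from part (1); so your closing phrase ``unioning the domains yields the claimed lower bound'' should be stated with this corrected set.
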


\begin{proof}
1) The statements for $\tA{\lambda}{\nu}$
 and $\AAt_{{\lambda},{\nu}}$
 follow from Proposition \ref{prop:Ksupp} and Proposition \ref{prop:SuppAtwo}, 
respectively.  
The lower bound for the dimension holds
 because 
\[
D_{\operatorname{sing}}(A_1) \cup D_{\operatorname{sing}}(A_2)
=
\begin{cases}
{\mathbb{C}}^2 \setminus(\backslash\!\backslash \cup /\!/)
&\text{$n$: odd,}
\\
L_{\operatorname{even}} \cup({\mathbb{C}}^2 \setminus(\backslash\!\backslash \cup /\!/))
\quad
 &\text{$n$: even.}
\end{cases}
\]

2)\enspace
The statements for 
 $\tA{\lambda}{\nu}$,
 $\AAt_{{\lambda},{\nu}}$,
 $\tB{\lambda}{\nu}$,
 and $\BB_{{\lambda},{\nu}}$
 follow from Proposition \ref{prop:Ksupp}, 
 Proposition \ref{prop:SuppAtwo}, 
 Proposition \ref{prop:Bsupp},
 and Proposition \ref{prop:BB}, 
 respectively.  
The lower bound
 for the dimension holds because 
\[
 D_{\operatorname{sing}}(A_1) \cup
 D_{\operatorname{sing}}(A_2) \cup
 D_{\operatorname{sing}}(B_1) \cup
 D_{\operatorname{sing}}(B_2)
=
\begin{cases}
L_{\operatorname{even}} \cup (\backslash\!\backslash \setminus {\mathbb{X}})
&\text{$n$: odd,}
\\
\backslash\!\backslash \setminus{\mathbb{X}}
\quad
 &\text{$n$: even.}
\end{cases}
\]

3)\enspace
See Fact \ref{fact:5.2}.  
\end{proof}

\subsection
{Extension of solutions from $\mathbb{R}^n \setminus \{0\}$
 to $\mathbb{R}^n$}
\label{subsec:ext}

We consider the following exact sequence
(see \eqref{eqn:Solexact}):
\begin{equation}\label{ex:0exact}
0 \to \mathcal{S}ol_{\{0\}} (\mathbb{R}^n; \lambda,\nu)
   \to \mathcal{S}ol(\mathbb{R}^n; \lambda,\nu)
   \to \mathcal{S}ol(\mathbb{R}^n \setminus \{0\}; \lambda,\nu).
\end{equation}
According to Lemma \ref{lem:Sol},
$\mathcal{S}ol(\mathbb{R}^n \setminus \{0\}; \lambda,\nu)$
is one-dimensional for any
$\nulambda \in \mathbb{C}^2$.
However, 
 the following proposition
 asserts that 
for specific $\nulambda$,
 we cannot extend any non-zero element 
$F \in \mathcal{S}ol(\mathbb{R}^n \setminus \{0\}; \lambda,\nu)$
 to an element in
$\mathcal{S}ol(\mathbb{R}^n; \lambda,\nu)$.  
\begin{proposition}\label{prop:surj}
Assume $\nulambda \in /\!/ \setminus L_{\operatorname{even}}$.
\begin{enumerate}[\upshape 1)]
\item  
The restriction map
\begin{equation}\label{eqn:Solorigin}
\mathcal{S}ol(\mathbb{R}^n; \lambda,\nu)
\to \mathcal{S}ol(\mathbb{R}^n \setminus \{0\};\lambda,\nu)
\end{equation}
is identically zero.
\item   
$\mathcal{S}ol (\mathbb{R}^n; \lambda,\nu)
= \mathcal{S}ol_{\{0\}} (\mathbb{R}^n; \lambda,\nu)$.
\end{enumerate}
\end{proposition}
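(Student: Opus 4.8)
The statement to prove is Proposition \ref{prop:surj}: for $\nulambda \in /\!/ \setminus L_{\operatorname{even}}$, the restriction map $\mathcal{S}ol(\mathbb{R}^n;\lambda,\nu) \to \mathcal{S}ol(\mathbb{R}^n \setminus \{0\};\lambda,\nu)$ is identically zero, and consequently $\mathcal{S}ol(\mathbb{R}^n;\lambda,\nu) = \mathcal{S}ol_{\{0\}}(\mathbb{R}^n;\lambda,\nu)$. Note that part (2) follows from part (1) together with the exact sequence \eqref{ex:0exact}, so the real content is part (1).

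The plan is to argue by contradiction and a dimension count. Suppose the restriction map \eqref{eqn:Solorigin} is nonzero. Since $\mathcal{S}ol(\mathbb{R}^n \setminus \{0\};\lambda,\nu)$ is one-dimensional by Lemma \ref{lem:Sol}, the map is then surjective. For $\nulambda \in /\!/$ we are in the non-$\backslash\!\backslash$ generic case of Lemma \ref{lem:Sol} (indeed $/\!/ \setminus L_{\operatorname{even}}$ avoids $\backslash\!\backslash$ unless we are at $\mathbb{X}$, where however $\KA{\lambda}{\nu}$ vanishes; I would need to handle the $\mathbb{X}$-subcase separately, but there $\mathcal{S}ol(\mathbb{R}^n\setminus\{0\})$ is still spanned by $|x_n|^{\lambda+\nu-n}(|x|^2+x_n^2)^{-\nu}$ with $\lambda+\nu-n \notin -2\mathbb{N}-1$, so the formula from Lemma \ref{lem:Sol} applies as stated). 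So surjectivity would give a symmetry breaking operator $T \in \operatorname{Hom}_{G'}(I(\lambda),J(\nu))$ whose restriction of the distribution kernel to $\mathbb{R}^n \setminus \{0\}$ is a nonzero multiple of $\ka{\lambda}{\nu}$, i.e.\ a \emph{regular} operator. On the other hand, $\operatorname{Diff}_{G'}(I(\lambda),J(\nu)) = \mathbb{C}\C_{\lambda,\nu}$ is one-dimensional for $\nulambda \in /\!/$ (Fact \ref{fact:5.2}), and this space is precisely $\mathcal{S}ol_{\{0\}}(\mathbb{R}^n;\lambda,\nu)$. If the restriction map were surjective, then by the exact sequence \eqref{ex:0exact} we would get $\dim \mathcal{S}ol(\mathbb{R}^n;\lambda,\nu) = 2$, hence $\dim H(\lambda,\nu) = 2$ by Proposition \ref{prop:HomPDE}, which by Theorem \ref{thm:I.1} forces $\nulambda \in L_{\operatorname{even}}$ — contradicting the hypothesis $\nulambda \in /\!/ \setminus L_{\operatorname{even}}$.

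However, this argument has circularity concerns, since Theorem \ref{thm:I.1} (= Theorem \ref{thm:dimHom}) is proved \emph{later} using exactly this kind of input; so I would instead give a direct argument. The direct approach: take $F \in \mathcal{S}ol(\mathbb{R}^n;\lambda,\nu)$ and suppose its restriction to $\mathbb{R}^n \setminus \{0\}$ is the nonzero generator $\ka{\lambda}{\nu}$ (up to scalar). The key point is that for $\nulambda \in /\!/$ the function $\ka{\lambda}{\nu}(x,x_n) = |x_n|^{\lambda+\nu-n}(|x|^2+x_n^2)^{-\nu}$ is \emph{not locally integrable near the origin} — precisely because $\lambda - \nu \in -2\mathbb{N}$ makes the radial exponent $\lambda-\nu-1 \le -1$ in the polar expression \eqref{eqn:Kpolar}, so $r^{\lambda-\nu-n}r^{n-1}\,dr = r^{\lambda-\nu-1}\,dr$ diverges at $r=0$. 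I would show that $\ka{\lambda}{\nu}$ therefore admits \emph{no} extension to a distribution on all of $\mathbb{R}^n$ lying in $\mathcal{S}ol(\mathbb{R}^n;\lambda,\nu)$: any such extension $F$ would, after multiplying by the nonvanishing smooth factor $(|x|^2+x_n^2)^{\nu}$ near points off $S^{n-1}$ and analyzing the homogeneity and $M'A$-equivariance constraints \eqref{eqn:Fa}–\eqref{eqn:parity}, be forced to be homogeneous of degree $\lambda - \nu - n$ as a distribution on $\mathbb{R}^n$ away from $S^{n-1}$; but a homogeneous distribution of degree $\le -n$ that restricts to a given non-integrable function on the open dense set cannot exist (the obstruction is exactly the residue of the meromorphic family at the pole, and for $\nulambda \in /\!/$ this residue is supported at $\{0\}$ — it is a multiple of $\KC{\lambda}{\nu}$, not of $\ka{\lambda}{\nu}$). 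Concretely, I would use the already-established meromorphic continuation of $\KA{\lambda}{\nu} = \frac{1}{\Gamma(\frac{\lambda+\nu-n+1}{2})\Gamma(\frac{\lambda-\nu}{2})}\ka{\lambda}{\nu}$ (Theorem \ref{thm:poleA}): since $\Gamma(\frac{\lambda-\nu}{2})$ has a simple pole on $/\!/$ and $\KA{\lambda}{\nu}$ stays holomorphic there, the \emph{un-normalized} $\ka{\lambda}{\nu}$ has at most a simple pole along $/\!/$ as a distribution on $\mathbb{R}^n$, and its residue equals (a nonzero multiple of) $\KC{\lambda}{\nu}$ by Theorem \ref{thm:reduction} (2) — which is supported at $\{0\}$. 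Hence no distribution on $\mathbb{R}^n$ restricting to the regular function $\ka{\lambda}{\nu}$ on $\mathbb{R}^n \setminus \{0\}$ can satisfy the system, because the "would-be value" at the origin is precisely what the pole obstructs. This shows the restriction map is zero, and part (2) is then immediate from \eqref{ex:0exact}.

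The main obstacle will be organizing the logical dependencies so as to avoid circularity with Theorem \ref{thm:dimHom}: I must phrase the argument using only Lemma \ref{lem:Sol}, Proposition \ref{prop:DHexact}, Fact \ref{fact:5.2}, Theorem \ref{thm:poleA}, and Theorem \ref{thm:reduction} (2) — all of which are genuinely prior — and not invoke the final dimension count. A secondary technical point is making precise the claim that a non-locally-integrable homogeneous function of critical degree admits no $\mathcal{S}ol$-extension; the cleanest route is the residue/meromorphy argument just sketched rather than a bare-hands distribution-theory computation, since the meromorphic machinery is already in place. I would also need to double-check the edge case $\nulambda \in \mathbb{X} \setminus L_{\operatorname{even}}$ (which is nonempty only when $n$ is odd), where $\KA{\lambda}{\nu}$ itself vanishes identically: there $\mathcal{S}ol(\mathbb{R}^n\setminus\{0\})$ is still one-dimensional but the restriction-to-$\mathbb{R}^n$ obstruction is even more severe, so the conclusion holds a fortiori.
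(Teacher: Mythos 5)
Your strategy for the generic subcase agrees in essence with the paper's own proof (Lemma \ref{lem:restA0}): fix $\lambda+\nu$, view $\ka{\lambda}{\nu}$ as a meromorphic family in $\mu:=\lambda-\nu+2l$, and exploit that its residue at $\mu=0$ is nonzero exactly off $L_{\operatorname{even}}$ and is supported at the origin (for this, Proposition \ref{prop:Ksupp} already suffices; your appeal to Theorem \ref{thm:reduction} (2) is not circular, and you were right to discard the dimension-count sketch that uses Theorem \ref{thm:I.1}). The genuine gap is the final inference. From ``the residue $F_{-1}$ is nonzero and supported at $\{0\}$'' you conclude that no $F\in\mathcal{S}ol(\mathbb{R}^n;\lambda,\nu)$ can restrict to the generator, but a priori such an $F$ could differ from the constant Laurent term $F_0$ by an arbitrary distribution supported at the origin, and one must show that no such correction can repair the homogeneity equation \eqref{eqn:Fa}. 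This is precisely what the paper proves via Lemma \ref{lem:DEmero} (the Laurent expansion of $((E+n+2l)-\mu)\,\ka{\lambda}{\nu}=0$ yields $(E+n+2l)F_{-1}=0$ and $(E+n+2l)F_0=F_{-1}$) combined with Lemma \ref{lem:deltass} on point-supported distributions: setting $h=F-cF_0$ one gets $(E+n+2l)^2h=0$, hence $(E+n+2l)h=0$, hence $cF_{-1}=0$ and $c=0$. The mechanism is that $F_{-1}$ is itself homogeneous of the critical degree $-n-2l$, hence not in the image of $E+n+2l$ acting on origin-supported distributions. Your sentence that ``the would-be value at the origin is precisely what the pole obstructs'' states this conclusion but does not prove it; without an argument of this kind (or the equivalent classical obstruction computation for extending homogeneous distributions of critical degree, which is not a one-line remark) the proof is incomplete.

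The subcase $\nulambda\in\mathbb{X}\setminus L_{\operatorname{even}}$ is mishandled rather than ``a fortiori''. Since $\mathbb{X}\subset\backslash\!\backslash$, on $\mathbb{X}$ one has $\lambda+\nu-n\in-1-2\mathbb{N}$, so Lemma \ref{lem:Sol} gives the generator $\delta^{(2k)}(x_n)(|x|^2+x_n^2)^{-\nu}$, not $|x_n|^{\lambda+\nu-n}(|x|^2+x_n^2)^{-\nu}$ as you assert; moreover $\KA{\lambda}{\nu}$ does not vanish identically there (only its restriction to $\mathbb{R}^n\setminus\{0\}$ does, its support being $\{[p_+]\}$ by Proposition \ref{prop:Ksupp}), and $\mathbb{X}\setminus L_{\operatorname{even}}$ is nonempty for every $n$ (for $n$ even it is all of $\mathbb{X}$). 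Because the $\mathbb{A}$-family restricted off the origin vanishes identically on $\mathbb{X}$, it cannot detect extensions of the delta-type generator; the paper therefore repeats the Laurent/Euler argument with the renormalized singular kernels $\KBB{\lambda}{\nu}$ (Theorem \ref{thm:BK}, Lemma \ref{lem:restB0}), whose residue along $\lambda+\nu=\lambda_0+\nu_0$ is nonzero exactly off $L_{\operatorname{even}}$. You would need to supply this parallel argument; as written, your treatment of this case rests on incorrect premises and a hand-wave.
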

The proof of Proposition \ref{prop:surj} is divided
into the following two lemmas:
\begin{lemma}\label{lem:restA0}
Proposition \ref{prop:surj} holds if
\[
\nulambda \in /\!/ \setminus (\mathbb{X} \cup L_{\operatorname{even}})
= \begin{cases}
           /\!/ \setminus \mathbb{X}   &(n: \text{odd}),
     \\
           /\!/ \setminus (\mathbb{X} \cup L_{\operatorname{even}})  &(n: \text{even}).
  \end{cases}
\]
\end{lemma}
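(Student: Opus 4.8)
\textbf{Proof strategy for Lemma \ref{lem:restA0}.}
The plan is to argue by contradiction using the dimension bounds already established together with the support analysis of the differential operators $\C_{\lambda,\nu}$. Suppose $\nulambda \in /\!/ \setminus (\mathbb{X} \cup L_{\operatorname{even}})$ and that the restriction map \eqref{eqn:Solorigin} is not identically zero; equivalently, suppose there exists a symmetry breaking operator $T \in H(\lambda,\nu)$ whose distribution kernel, when restricted to $\mathbb{R}^n \setminus \{0\}$, is a nonzero multiple of the unique generator of $\mathcal{S}ol(\mathbb{R}^n \setminus \{0\};\lambda,\nu)$ described in Lemma \ref{lem:Sol}. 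Since $\nulambda \in /\!/$ and $\nulambda \notin \backslash\!\backslash$ (because $\nulambda \notin \mathbb{X}$, and for $n$ even $L_{\operatorname{even}}$ is the only part of $/\!/$ meeting $\backslash\!\backslash$; for $n$ odd $/\!/ \cap \backslash\!\backslash = \mathbb{X}$), Lemma \ref{lem:Sol} tells us this generator is $|x_n|^{\lambda+\nu-n}(|x|^2+x_n^2)^{-\nu}$, which is exactly $\ka{\lambda}{\nu}$. So $K_T|_{\mathbb{R}^n\setminus\{0\}}$ would be a nonzero multiple of $\ka{\lambda}{\nu}$.

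First I would invoke Remark \ref{rem:basicpole}: on $/\!/$ the factor $\Gamma(\frac{\lambda-\nu}{2})$ has a simple pole, and since $\nulambda \notin \backslash\!\backslash$ the other Gamma factor $\Gamma(\frac{\lambda+\nu-n+1}{2})$ is finite and nonzero there. Consequently the \emph{normalized} kernel $\KA{\lambda}{\nu}$ vanishes on $\mathbb{R}^n \setminus \{0\}$ — indeed $\operatorname{Supp}\KA{\lambda}{\nu} = \{[p_+]\}$ for $\nulambda \in /\!/ \setminus L_{\operatorname{even}}$ by Proposition \ref{prop:Ksupp}. This means that the unnormalized $\ka{\lambda}{\nu}$, viewed as a meromorphic family of distributions on $\mathbb{R}^n$, has a pole at $\nulambda \in /\!/$, and its leading term (the residue) is supported on $\{0\}$; equivalently, the \emph{holomorphic} representative of the family at such $\nulambda$ is a distribution whose support is $\{0\}$, namely a constant multiple of $\C_{\lambda,\nu}$'s kernel by the residue formula (Theorem \ref{thm:reduction} (2), which applies precisely away from $L_{\operatorname{even}}$). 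The point is that there is \emph{no} element of $\mathcal{S}ol(\mathbb{R}^n;\lambda,\nu)$ restricting to a nonzero multiple of $\ka{\lambda}{\nu}$ on $\mathbb{R}^n \setminus \{0\}$: if there were, it would provide an operator in $H(\lambda,\nu)_{\operatorname{sing}}$ (or outside it) linearly independent from $\C_{\lambda,\nu}$.

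Next I would count dimensions. By Proposition \ref{prop:DHexact} we have the exact sequence $0 \to H(\lambda,\nu)_{\operatorname{diff}} \to H(\lambda,\nu) \to \mathcal{S}ol(\mathbb{R}^n\setminus\{0\};\lambda,\nu)$, and $\dim H(\lambda,\nu)_{\operatorname{diff}} = 1$ on $/\!/$ by Fact \ref{fact:5.2} (with basis $\C_{\lambda,\nu}$). If the restriction map \eqref{eqn:Solorigin} were nonzero, then $\dim H(\lambda,\nu) \ge 2$. But $\nulambda \notin L_{\operatorname{even}}$, so this would contradict the upper bound $\dim H(\lambda,\nu) \le 1$. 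The delicate point — and the one I would treat most carefully — is to make sure this upper bound is available at this stage of the argument without circularity: Proposition \ref{prop:upperdim} gives $\dim H(\lambda,\nu) \le 1$ only for $\nulambda \notin /\!/$, which is useless here. So instead I would produce the bound directly by analyzing $\mathcal{S}ol_{\{0\}}(\mathbb{R}^n;\lambda,\nu) = H(\lambda,\nu)_{\operatorname{diff}}$, which is one-dimensional on $/\!/$, combined with the claim that \eqref{eqn:Solorigin} has image contained in the \emph{image} of the residue construction — i.e.\ that any $F \in \mathcal{S}ol(\mathbb{R}^n;\lambda,\nu)$ restricting nontrivially to $\mathbb{R}^n\setminus\{0\}$ must, after subtracting the holomorphically-continued $\ka{\lambda}{\nu}$-family's value, land in $\mathcal{S}ol_{\{0\}}$; but that continued value is $\C_{\lambda,\nu}$'s kernel, already counted.

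Thus the main obstacle is the following local extension question: \emph{can the distribution $|x_n|^{\lambda+\nu-n}(|x|^2+x_n^2)^{-\nu}$ on $\mathbb{R}^n\setminus\{0\}$ be extended across the origin to a solution of the full system \eqref{eqn:Fa}--\eqref{eqn:parity}?} I expect the answer is no for exactly these parameters, and the cleanest way to see it is to study the one-variable reduction: restrict attention to $O(n-1)$-invariant solutions, reduce to an ODE in $t = x_n$ with a distributional forcing supported at $t=0$, and observe that the homogeneity degree $\lambda+\nu-n$ of the relevant factor equals $-1-2l$ precisely on $/\!/$, which is exactly the resonance value at which $|t|^{\lambda+\nu-n}$ develops a pole and its holomorphic renormalization becomes $\delta^{(2l)}(t)$ — supported at the origin. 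Since $\nulambda \notin \mathbb{X}$, the ``transverse'' obstruction coming from $(|x|^2+x_n^2)^{-\nu}$ being singular at $0$ does not interfere (that is where $\nulambda \notin \backslash\!\backslash$ is used), so the only distributional extension is the one supported at $\{0\}$, already captured by $\C_{\lambda,\nu}$. This forces the restriction map to be zero and hence $\mathcal{S}ol(\mathbb{R}^n;\lambda,\nu) = \mathcal{S}ol_{\{0\}}(\mathbb{R}^n;\lambda,\nu)$, proving both assertions of Proposition \ref{prop:surj} in this range of parameters.
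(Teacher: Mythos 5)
You have correctly identified the danger of circularity (the bound $\dim H(\lambda,\nu)\le 1$ off $L_{\operatorname{even}}$ is a \emph{consequence} of Proposition \ref{prop:surj}, so it cannot be invoked here), but the substitute argument you offer does not close the gap, and its key mechanism is wrong. First, the resonance you locate is the wrong one: the identity $\lambda+\nu-n=-1-2k$ characterizes $\backslash\!\backslash$, not $/\!/$; on $/\!/\setminus\mathbb{X}$ the factor $|x_n|^{\lambda+\nu-n}$ is \emph{not} at a pole, and no renormalization of it produces $\delta^{(2l)}(x_n)$ (that is the $\B$-phenomenon, supported on the hyperplane, which is excluded here precisely because $\nulambda\notin\backslash\!\backslash$). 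On $/\!/$ the genuine resonance is radial: the generator of $\mathcal{S}ol(\mathbb{R}^n\setminus\{0\};\lambda,\nu)$ is homogeneous of total degree $\lambda-\nu-n=-n-2l$, which is exactly the critical degree for extending a homogeneous distribution across the single point $\{0\}$. Second, your proposed mechanism never uses $\nulambda\notin L_{\operatorname{even}}$, yet the lemma is false on $L_{\operatorname{even}}$ when $n$ is even: there $\KAA{\lambda}{\nu}$, the kernel of $\AAt_{\lambda,\nu}$, lies in $\mathcal{S}ol(\mathbb{R}^n;\lambda,\nu)$ and has full support (Proposition \ref{prop:SuppAtwo}), so the restriction map \eqref{eqn:Solorigin} is \emph{not} zero. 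An argument that would apply equally to those parameters cannot be correct. Third, the ``subtract the holomorphically-continued value'' step is vacuous: the continued (normalized) value $\KA{\lambda_0}{\nu_0}$ restricts to zero on $\mathbb{R}^n\setminus\{0\}$, so subtracting it does not move a putative solution into $\mathcal{S}ol_{\{0\}}$.

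What is actually needed, and what the paper does, is to quantify the obstruction to extension via the Laurent expansion of the \emph{unnormalized} family: along the line $\lambda+\nu=\lambda_0+\nu_0$ with parameter $\mu=\lambda-\nu+2l$, write $\ka{\lambda}{\nu}=\mu^{-1}F_{-1}+F_0+\mu F_1+\cdots$, where $F_{-1}\ne 0$ precisely because $(\lambda_0,\nu_0)\notin L_{\operatorname{even}}$ (Theorem \ref{thm:poleA} (2)) --- this is where the excluded set enters. The Euler equation \eqref{eqn:Fa} for the family gives, by Lemma \ref{lem:DEmero}, $(E+n+2l)F_{-1}=0$ and $(E+n+2l)F_0=F_{-1}$; so the finite part $F_0$, whose restriction off the origin is the generator, is \emph{not} itself a solution, and that failure is the obstruction. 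If $F\in\mathcal{S}ol(\mathbb{R}^n;\lambda_0,\nu_0)$ restricts to $cF_0$ off the origin, then $h:=F-cF_0$ is supported at $\{0\}$ and satisfies $(E+n+2l)^2h=0$, hence $(E+n+2l)h=0$ by Lemma \ref{lem:deltass}, forcing $cF_{-1}=0$ and thus $c=0$. Your write-up contains neither the generalized-eigenvector relation $(E+n+2l)F_0=F_{-1}$ nor any substitute for it, so the central step --- ruling out an extension with nontrivial restriction --- remains unproved.
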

Owing to Lemma \ref{lem:Sol}, 
 the distribution solution
\[
\mathbb{C} |x_n|^{\lambda+\nu-n} (|x|^2 + x_n^2)^{-\nu} |_{\mathbb{R}^n\setminus\{0\}}
\in 
\mathcal{S}ol(\mathbb{R}^n \setminus \{0\}; \lambda, \nu)
\]
does not extend to an element
 of $\mathcal{S}ol(\mathbb{R}^n; \lambda, \nu)$
 if $(\lambda, \nu)$ satisfies
 the assumption
 of Lemma \ref{lem:restA0}.  

\begin{lemma}\label{lem:restB0}
Proposition \ref{prop:surj} holds if
\[
\nulambda \in \mathbb{X} \setminus L_{\operatorname{even}}
= \begin{cases}
           \mathbb{X} \setminus L_{\operatorname{even}}   &(n: \text{odd}),
      \\
           \mathbb{X}   &(n: \text{even}).
   \end{cases}
\]
\end{lemma}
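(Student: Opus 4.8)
\textbf{Proof plan for Lemma \ref{lem:restB0}.}
The plan is to treat the remaining exceptional locus $\mathbb{X}\setminus L_{\operatorname{even}}$ (equal to $\mathbb{X}$ itself when $n$ is even, since $L_{\operatorname{even}}\subset\mathbb{X}$ only matters when $n$ is odd) by the same strategy as in Lemma \ref{lem:restA0}, but now using the singular operator $\B_{\lambda,\nu}$ in place of $\A_{\lambda,\nu}$. Recall from Lemma \ref{lem:Sol} that for $\nulambda\in\backslash\!\backslash$ the one-dimensional space $\mathcal{S}ol(\mathbb{R}^n\setminus\{0\};\lambda,\nu)$ is spanned by the distribution $\delta^{(-\lambda-\nu+n-1)}(x_n)(|x|^2+x_n^2)^{-\nu}$ restricted to $\mathbb{R}^n\setminus\{0\}$, which is, up to a nonzero scalar coming from the Gamma normalization in \eqref{eqn:KB}, the restriction of $\KB{\lambda}{\nu}$ to $\mathbb{R}^n\setminus\{0\}$. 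So the content of the lemma is: for $\nulambda\in\mathbb{X}\setminus L_{\operatorname{even}}$, the restriction map \eqref{eqn:Solorigin} kills the class of $\KB{\lambda}{\nu}|_{\mathbb{R}^n\setminus\{0\}}$, hence is identically zero, and consequently $\mathcal{S}ol(\mathbb{R}^n;\lambda,\nu)=\mathcal{S}ol_{\{0\}}(\mathbb{R}^n;\lambda,\nu)$ by the exact sequence \eqref{ex:0exact}.

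First I would fix $\nulambda\in\mathbb{X}\setminus L_{\operatorname{even}}$, so that $\nulambda\in\backslash\!\backslash\cap/\!/$, and set $k:=\frac12(n-1-\lambda-\nu)\in\mathbb{N}$, $l:=\frac12(\nu-\lambda)\in\mathbb{N}$ as usual. By Proposition \ref{prop:Bsupp}, since $\nulambda\in\mathbb{X}-L_{\operatorname{even}}$, the operator $\B_{\lambda,\nu}$ is nonzero and its distribution kernel $\KB{\lambda}{\nu}$ has $\operatorname{Supp}\KB{\lambda}{\nu}=\{[p_+]\}$; equivalently, $\operatorname{Supp}(\KB{\lambda}{\nu}|_{\mathbb{R}^n})=\{0\}$. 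In particular $\KB{\lambda}{\nu}|_{\mathbb{R}^n}$ already lies in $\mathcal{S}ol_{\{0\}}(\mathbb{R}^n;\lambda,\nu)$ and its image in $\mathcal{S}ol(\mathbb{R}^n\setminus\{0\};\lambda,\nu)$ is zero. The remaining point is that this exhausts the possible lifts: I need to show that no element of $\mathcal{S}ol(\mathbb{R}^n;\lambda,\nu)$ restricts to a nonzero multiple of the generator of $\mathcal{S}ol(\mathbb{R}^n\setminus\{0\};\lambda,\nu)$. Suppose $F\in\mathcal{S}ol(\mathbb{R}^n;\lambda,\nu)$ has nonzero image downstairs. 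Since $\KB{\lambda}{\nu}|_{\mathbb{R}^n}$ also lies in $\mathcal{S}ol(\mathbb{R}^n;\lambda,\nu)$ but restricts to $0$, and $\mathcal{S}ol(\mathbb{R}^n\setminus\{0\};\lambda,\nu)$ is one-dimensional, I may normalize so that $F$ restricts to the generator; then $F-c\,\B$-kernel-type corrections, i.e.\ $F$ itself, must agree on $\mathbb{R}^n\setminus\{0\}$ with $\delta^{(2k)}(x_n)(|x|^2+x_n^2)^{-\nu}$ up to scalar. The substance is then to argue, exactly as in the analysis behind Theorem \ref{thm:BK} and Lemma \ref{lem:4.15}, that the only distribution on all of $\mathbb{R}^n$ whose restriction to $\mathbb{R}^n\setminus\{0\}$ is (a nonzero multiple of) $\delta^{(2k)}(x_n)(|x|^2+x_n^2)^{-\nu}$ and which satisfies the $\mathfrak{n}_+'$-invariance \eqref{eqn:Fn}, together with \eqref{eqn:Fa}, \eqref{eqn:invM}, \eqref{eqn:parity}, is forced — at these particular poles $\nulambda\in\mathbb{X}$ — to have a singularity at the origin incompatible with the homogeneity degree, so that no such global $F$ exists. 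Concretely, I would expand $(|x|^2+x_n^2)^{-\nu}\delta^{(2k)}(x_n)$ by Lemma \ref{lem:4.15}(2) into $\sum_{i=0}^k c_i\,|x|^{-2\nu-2i}\delta^{(2k-2i)}(x_n)$; when $\nulambda\in/\!/$ one has $\lambda-\nu=-2l\le 0$, so $\operatorname{Re}(\lambda-\nu)\le 0$ and the terms $|x|^{-2\nu-2i}$ for the top values of $i$ fail to be locally integrable on $\mathbb{R}^{n-1}$, and the obstruction to extending across $x_n=0$ away from the origin (already used to produce the $\delta$-kernels) now collides with the obstruction at the origin. Combining the two localizations — at $S^{n-1}\setminus\{[p_+]\}$ and at $[p_+]$ — via the support stratification of Lemma \ref{lem:SuppInv} shows that a global solution extending the generator would have to be supported on $S^{n-1}$, contradicting the computation that $\KB{\lambda}{\nu}$ (the essentially unique candidate with that boundary behavior) is instead supported at $\{[p_+]\}$ when $\nulambda\in\mathbb{X}$.

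A cleaner way to package the same argument, which I would use to keep the bookkeeping under control, is to invoke the dimension count already set up in Step 1 of Section \ref{subsec:grdim}: for $\nulambda\in\mathbb{X}-L_{\operatorname{even}}$ one has from the table (third row and the first two rows) that $\dim H(\lambda,\nu)=1$ by Theorem \ref{thm:I.1}, while $H(\lambda,\nu)_{\operatorname{diff}}$ is one-dimensional since $\nulambda\in/\!/$; hence $H(\lambda,\nu)=H(\lambda,\nu)_{\operatorname{diff}}$, i.e.\ every symmetry breaking operator is differential, which by Fact \ref{fact:Diff}(1) means every $K_T$ is supported at $\{[p_+]\}$, i.e.\ in the language of solutions $\mathcal{S}ol(\mathbb{R}^n;\lambda,\nu)=\mathcal{S}ol_{\{0\}}(\mathbb{R}^n;\lambda,\nu)$, which is precisely statement 2), and statement 1) follows from the exact sequence \eqref{ex:0exact}. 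However, since Theorem \ref{thm:I.1} is itself proved only after this classification is complete, I must instead run the direct argument above and \emph{not} the dimension-count shortcut, to avoid circularity. The main obstacle I anticipate is therefore the careful local analysis at the origin: establishing rigorously that the local solution $\delta^{(2k)}(x_n)(|x|^2+x_n^2)^{-\nu}$ on $\mathbb{R}^n\setminus\{0\}$ admits \emph{no} extension to a global solution when $\nulambda\in\mathbb{X}$, as opposed to the case $\nulambda\in\backslash\!\backslash\setminus\mathbb{X}$ where the extension $\KB{\lambda}{\nu}$ does exist with support $S^{n-1}$. This hinges on tracking the order of the pole of $\Gamma(\frac{\lambda-\nu}{2})$ (which becomes infinite exactly on $/\!/$) against the pole of $\Gamma(\frac{\lambda+\nu-n+1}{2})$ (infinite on $\backslash\!\backslash$), so that on the intersection $\mathbb{X}$ the product has a double pole and the naive normalization produces the zero distribution on $\mathbb{R}^n\setminus\{0\}$ while a nonzero honest solution on $\mathbb{R}^n$ would have to be a differential-operator kernel — which has the wrong support. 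I expect this to be the technical heart; the rest is assembling \eqref{ex:0exact}, Lemma \ref{lem:Sol}, Proposition \ref{prop:Bsupp}, and Lemma \ref{lem:SuppInv}.
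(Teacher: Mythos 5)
Your framing is the right one and matches the paper's setup: reduce, via the exact sequence \eqref{ex:0exact} and Lemma \ref{lem:Sol}, to showing that no element of $\mathcal{S}ol(\mathbb{R}^n;\lambda,\nu)$ restricts to a nonzero multiple of the local generator $\delta^{(2k)}(x_n)(|x|^2+x_n^2)^{-\nu}|_{\mathbb{R}^n\setminus\{0\}}$, and you correctly note both that $\KB{\lambda}{\nu}$ itself is useless as a lift on $\mathbb{X}$ (its restriction to $\mathbb{R}^n\setminus\{0\}$ vanishes, by Proposition \ref{prop:Bsupp}) and that the dimension-count shortcut through Theorem \ref{thm:I.1} would be circular. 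But the heart of the lemma --- the actual proof that the local solution admits \emph{no} extension in $\mathcal{S}ol(\mathbb{R}^n;\lambda,\nu)$ --- is missing, and the two concrete arguments you sketch in its place do not work. First, the local-integrability remark is not an obstruction: the terms $|x|^{-2\nu-2i}$ in Lemma \ref{lem:4.15}(2) extend as Riesz distributions far beyond the locally integrable range, so failure of integrability at the top terms says nothing by itself about the existence of a distributional extension satisfying \eqref{eqn:Fa}--\eqref{eqn:parity}. Second, your closing ``contradiction'' --- that a global extension would be supported on $S^{n-1}$, whereas $\KB{\lambda}{\nu}$ is supported at $\{[p_+]\}$ --- begs the question: an arbitrary element of $\mathcal{S}ol_{\mathbb{R}^{n-1}}(\mathbb{R}^n;\lambda,\nu)$ restricting to the generator need not coincide with the particular normalized kernel $\KB{\lambda}{\nu}$, and calling the latter ``the essentially unique candidate'' is precisely the uniqueness/non-existence statement the lemma is supposed to establish.

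What actually closes the gap in the paper is a residue argument along the line $\lambda+\nu=\lambda_0+\nu_0$ inside $\backslash\!\backslash$: one expands the \emph{unnormalized} kernel $\KBB{\lambda}{\nu}=(|x|^2+x_n^2)^{-\nu}\delta^{(2k)}(x_n)$ in a Laurent series $\tfrac1\mu F_{-1}+F_0+\mu F_1+\cdots$ in $\mu=\lambda-\nu+2l$. Since $(|x|^2+x_n^2)^{-\nu}$ is smooth off the origin, $F_{-1}$ is supported at $\{0\}$ and $F_0$ restricts to the generator; Lemma \ref{lem:DEmero} applied to the Euler equation gives $(E+n+2l)F_{-1}=0$ and $(E+n+2l)F_0=F_{-1}$. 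If $F$ is any global solution with $F|_{\mathbb{R}^n\setminus\{0\}}=cF_0|_{\mathbb{R}^n\setminus\{0\}}$, then $h=F-cF_0$ is supported at the origin with $(E+n+2l)^2h=0$, so Lemma \ref{lem:deltass} forces $(E+n+2l)h=0$ and hence $cF_{-1}=0$. The exclusion of $L_{\operatorname{even}}$ enters exactly here: $F_{-1}$ is a nonzero multiple of $\KB{\lambda_0}{\nu_0}$, which is nonzero on $\mathbb{X}\setminus L_{\operatorname{even}}$ by Theorem \ref{thm:BK}(4), so $c=0$. Your proposal never identifies this mechanism (the meromorphic family, the residue supported at the origin, the Euler/delta-homogeneity bookkeeping, and the non-vanishing of $\B_{\lambda,\nu}$ as the reason the hypothesis $\nulambda\notin L_{\operatorname{even}}$ is needed), so as written the proof has a genuine gap at its central step.
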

Similarly,
 the distribution solution 
\[
  \mathbb{C} \delta^{(-\lambda-\nu+n-1)} (x_n)
     (|x|^2 + x_n^2)^{-\nu} |_{\mathbb{R}^n\setminus \{0\}}
  \in 
  \mathcal{S}ol(\mathbb{R}^n \setminus \{0\}; \lambda,\nu)
\]
does not extend to an element
 of $\mathcal{S}ol(\mathbb{R}^n; \lambda,\nu)$
 if $(\lambda, \nu)$ satisfies
 the assumption
 of Lemma \ref{lem:restB0}.

For the proofs of Lemmas \ref{lem:restA0} and \ref{lem:restB0}, 
we need  the following two general results:
\begin{lemma}\label{lem:DEmero}
Suppose $D_\mu$ is a differential operator with holomorphic parameter $\mu$,
and $F_\mu$ is a distribution on $\mathbb{R}^n$
that depends meromorphically on $\mu$
having the following expansions:
\begin{align*}
&D_\mu = D_0 + \mu D_1 + \mu^2 D_2 + \dotsb ,
\\
&F_\mu = \frac1\mu F_{-1} + F_0 + \mu  F_1 + \dotsb, 
\end{align*}
where $D_j$ and $F_i$ are distributions on ${\mathbb{R}}^n$.  
Assume that there exists $\varepsilon>0$
 such that $D_\mu F_\mu = 0$ for any
complex number $\mu$ with $0 < |\mu| < \varepsilon$.  
Then the distributions $F_0$ and $F_{-1}$ satisfy
the following differential equations:
\[
D_0 F_{-1} = 0  \quad\text{and}\quad 
D_0 F_0 + D_1 F_{-1} = 0.
\]
\end{lemma}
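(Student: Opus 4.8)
The statement to prove is Lemma \ref{lem:DEmero}, an elementary fact about Laurent coefficients of meromorphic families of distributions. The plan is to expand the product $D_\mu F_\mu$ as a formal Laurent series in $\mu$, collect the coefficient of each power of $\mu$, and exploit the fact that a distribution-valued function of $\mu$ which vanishes identically on a punctured disc must have all Laurent coefficients equal to zero.

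First I would multiply the two given expansions termwise. Since $D_\mu = \sum_{j\ge 0}\mu^j D_j$ has no negative powers and $F_\mu = \mu^{-1}F_{-1} + \sum_{i\ge 0}\mu^i F_i$ has a simple pole, the product $D_\mu F_\mu$ is a Laurent series starting at $\mu^{-1}$:
\[
D_\mu F_\mu = \frac1\mu\, D_0 F_{-1} + \bigl(D_0 F_0 + D_1 F_{-1}\bigr) + \mu\bigl(D_0 F_1 + D_1 F_0 + D_2 F_{-1}\bigr) + \cdots .
\]
This is purely formal manipulation of power series; the coefficients are well-defined distributions on $\mathbb{R}^n$ because each is a finite sum of differential operators applied to distributions.

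Next I would invoke the hypothesis that $D_\mu F_\mu = 0$ for all $\mu$ with $0 < |\mu| < \varepsilon$. Testing against any fixed $\varphi \in C_c^\infty(\mathbb{R}^n)$, the scalar-valued function $\mu \mapsto \langle D_\mu F_\mu, \varphi\rangle$ is meromorphic on the punctured disc and vanishes there, hence is identically zero as a meromorphic function; therefore each of its Laurent coefficients vanishes. Since $\varphi$ is arbitrary, each distribution-valued Laurent coefficient of $D_\mu F_\mu$ is zero. Reading off the $\mu^{-1}$ and $\mu^0$ coefficients from the expansion above gives exactly $D_0 F_{-1} = 0$ and $D_0 F_0 + D_1 F_{-1} = 0$, which is the assertion.

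There is essentially no serious obstacle here; the only point requiring a word of care is the legitimacy of extracting Laurent coefficients from a meromorphic distribution-valued function, which reduces to the scalar case by pairing with test functions. One should also note that only the first two coefficients are needed, so convergence issues for the full series are irrelevant — the argument is finite at each stage.
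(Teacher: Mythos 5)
Your proof is correct and follows essentially the same route as the paper, which simply reads off the $\mu^{-1}$ and $\mu^{0}$ coefficients of the Laurent expansion $D_\mu F_\mu = \frac1\mu D_0F_{-1} + (D_0F_0 + D_1F_{-1}) + \cdots$; your additional remark on pairing with test functions to justify extracting distribution-valued Laurent coefficients is a reasonable elaboration of the same argument.
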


\begin{proof}
Clear from the Laurent expansion 
\[
 D_{\mu} F_{\mu}
=\frac 1 {\mu} D_0 F_{-1}
+(D_0F_0+D_1F_{-1})+ \cdots.  
\]
\end{proof}

We recall that
$E = \sum_{j=1}^n x_j \frac{\partial}{\partial x_j}$
is the Euler differential operator.

\begin{lemma}\label{lem:deltass}
Suppose $h \in \mathcal{D}' (\mathbb{R}^n)$
is supported at the origin.
If
$(E + A)^2  h = 0$
for some $A \in \mathbb{Z}$ then
$(E + A) h = 0$.
\end{lemma}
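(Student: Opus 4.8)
\textbf{Proof plan for Lemma \ref{lem:deltass}.}

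The plan is to use the fact that a distribution supported at the origin is a finite linear combination of derivatives of the Dirac delta, and that such a space is a direct sum of eigenspaces for the Euler operator $E$. Concretely, write $h = \sum_{\alpha} c_\alpha \partial^\alpha \delta$, a finite sum over multi-indices $\alpha \in \mathbb{N}^n$. A standard computation gives $E (\partial^\alpha \delta) = -(|\alpha| + n) \partial^\alpha \delta$, so each $\partial^\alpha\delta$ is an eigenvector of $E$ with integer eigenvalue $-(|\alpha|+n)$. Hence the space $\mathcal{D}'_{\{0\}}(\mathbb{R}^n)$ of distributions supported at $\{0\}$ decomposes as $\bigoplus_{m \ge n} V_{-m}$, where $V_{-m}$ is the (finite-dimensional) $(-m)$-eigenspace of $E$, spanned by $\{\partial^\alpha\delta : |\alpha| = m-n\}$.

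First I would decompose $h = \sum_m h_m$ according to this eigenspace decomposition, with $E h_m = -m\, h_m$. Then $(E+A) h = \sum_m (A - m) h_m$ and $(E+A)^2 h = \sum_m (A-m)^2 h_m$. Since the eigenspaces $V_{-m}$ are linearly independent, the hypothesis $(E+A)^2 h = 0$ forces $(A-m)^2 h_m = 0$ for every $m$, hence $h_m = 0$ whenever $m \ne A$. Therefore $h = h_A$ (which is zero unless $A \ge n$), and $(E+A) h = (A - A) h_A = 0$, as desired. The same argument shows more generally that $(E+A)^k h = 0$ implies $(E+A) h = 0$ for any $k \ge 1$: the point is simply that $E$ acts semisimply on distributions supported at a point, so no nontrivial Jordan blocks occur.

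The only mild subtlety — and the step I would be most careful about — is justifying that $E$ acts semisimply (diagonalizably) on $\mathcal{D}'_{\{0\}}(\mathbb{R}^n)$, i.e. the identity $E(\partial^\alpha\delta) = -(|\alpha|+n)\partial^\alpha\delta$ and the linear independence of $\{\partial^\alpha\delta\}$. This is routine: for a test function $\varphi$, $\langle E(\partial^\alpha\delta), \varphi\rangle = -\langle \partial^\alpha\delta, (n + E)\varphi\rangle = -(-1)^{|\alpha|}\partial^\alpha\big((n+E)\varphi\big)(0)$, and expanding $\partial^\alpha(E\varphi)(0) = |\alpha|\,\partial^\alpha\varphi(0)$ (by Leibniz, since $E = \sum x_j\partial_j$ and evaluation at $0$ kills all terms except those where each $x_j$ is differentiated away) gives the claimed eigenvalue. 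Linear independence of the $\partial^\alpha\delta$ is classical (pair against monomials $x^\beta$). With these facts in hand the lemma is immediate; there is no real obstacle, just bookkeeping.
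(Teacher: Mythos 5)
Your argument is correct and is essentially the paper's own proof: both write $h$ as a finite combination of derivatives $\partial^\alpha\delta$, use that each is an $E$-eigenvector with eigenvalue $-(n+|\alpha|)$ (i.e.\ homogeneity of degree $-n-|\alpha|$), and conclude from linear independence that $(E+A)^2h=0$ forces $(E+A)h=0$. Your explicit verification of the eigenvalue identity and the eigenspace-decomposition language are just a slightly more detailed write-up of the same computation.
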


\begin{proof}[Proof of Lemma \ref{lem:deltass}]
By the structural theory of distributions,
$h$ is a finite linear combination of the
derivations of the Dirac delta function of $n$-variables:
\[
h = \sum_{\alpha\in\mathbb{N}^n} b_\alpha \delta^{(\alpha)}
\quad\text{(finite sum) for some $b_{\alpha} \in {\mathbb{C}}$}.
\]
For a multi-index $\alpha =(\alpha_1, \cdots, \alpha_n) \in {\mathbb{N}}^n$, 
 we set $|\alpha| =\alpha_1+ \cdots + \alpha_n$.  
In view that $\delta^{(\alpha)}$ is homogeneous of degree
$-n-|\alpha|$,
we have
\begin{align*}
&(E+A)h =
 \sum_{\alpha\in\mathbb{N}^n} b_\alpha
  (A - n - |\alpha|) \delta^{(\alpha)},
\\
&(E+A)^2 h =
  \sum_{\alpha\in\mathbb{N}^n} b_\alpha
  (A - n - |\alpha|)^2 \delta^{(\alpha)}.
\end{align*}
Hence
$(E+A)h = 0$
if and only if
$(E+A)^2 h = 0$
because $\delta^\alpha$ ($\alpha\in\mathbb{N}^n)$ are linearly independent.
\end{proof}

We are ready to prove
 Lemma \ref{lem:restA0} and \ref{lem:restB0}.  
\begin{proof}[Proof of Lemma \ref{lem:restA0}]
Suppose
$(\lambda_0, \nu_0) \in /\!/ \setminus (\mathbb{X} \cup 
L_{\operatorname{even}})$.
We set 
$l := \frac12(\nu_0 - \lambda_0) \in \mathbb{N}$.
Consider
$\nulambda \in \mathbb{C}^2$
with constraints
$\lambda +\nu= \lambda_0+\nu_0$,
so that $\nulambda$ stays in $\backslash\!\backslash$
with a complex parameter
\[
\mu:=\lambda-\nu+2l.
\]
We note
 that $\nulambda \notin \backslash\!\backslash$
 because $(\lambda_0, \nu_0)\notin \backslash\!\backslash$.
Then $\ka{\lambda}{\nu}$ is a distribution on $\mathbb{R}^n$
that depends meromorphically on $\mu$
 by Theorem \ref{thm:poleA}(1).  
Since the normalizing factor 
 of $\KA\lambda\nu$, 
namely,
$\Gamma(\frac{\lambda-\nu}{2}) \Gamma(\frac{\lambda+\nu-n+1}{2})$
has a simple pole
 at $\mu=0$, 
 we can expand $\ka{\lambda}{\nu}$
near $(\lambda_0, \nu_0)$ as
\[
\ka{\lambda}{\nu}
= \frac{1}{\mu} F_{-1} + F_0 + \mu F_1 + \dotsb
\]
with some distributions $F_{-1}$, $F_0$, $F_1$, 
$\cdots$.  
By Theorem \ref{thm:poleA}(2),
we see that the distribution $F_{-1}$ is non-zero
because $(\lambda_0, \nu_0) \notin L_{\operatorname{even}}$.

Applying Lemma \ref{lem:DEmero} to 
 the differential equation
\[
(E - (\lambda-\nu-n)) \ka{\lambda}{\nu}
= ((E + n + 2l) - \mu \cdot 1)\ka{\lambda}{\nu}=0,
\]
we have
\begin{equation}\label{eqn:K0eq}
(E+n+2l)F_{-1} = 0
\quad\text{and}\quad
(E+n+2l) F_0 - F_{-1} = 0.
\end{equation}
Suppose that
$F \in \mathcal{S}ol(\mathbb{R}^n; \lambda_0,\nu_0)$.
It follows from Lemma \ref{lem:Sol} that
$F |_{\mathbb{R}^n\setminus\{0\}} = cF_0 |_{\mathbb{R}^n\setminus\{0\}}$
for some $c\in\mathbb{C}$.
We set
\[
h := F - c F_0 \in \mathcal{D}' (\mathbb{R}^n).
\]
Then
$\operatorname{Supp} h \subset \{0\}$ and
\[
(E+n+2l)^2 h = (E+n+2l)^2 F - c(E+n+2l)^2 F_0 = 0
\]
by \eqref{eqn:Fa} and \eqref{eqn:K0eq}.
Applying Lemma \ref{lem:deltass},
we have $(E+n+2l)h = 0$.
In turn,
$c F_{-1} = 0$ by \eqref{eqn:Fa} and \eqref{eqn:K0eq}.
Since
$F_{-1} \ne 0$,
we get $c=0$.
Hence
$\operatorname{Supp} F = \operatorname{Supp} h \subset \{0\}$.
Thus we have proved
 that \eqref{eqn:Solorigin} is
 a zero-map
 and  
 $\mathcal{S}ol(\mathbb{R}^n; \lambda_0,\nu_0)
=
\mathcal{S}ol_{\{0\}}(\mathbb{R}^n; \lambda_0,\nu_0)$.
\end{proof}

\begin{proof}[Proof of Lemma \ref{lem:restB0}]
Suppose $(\lambda_0, \nu_0) \in \mathbb{X}$,
and we define $k, l \in \mathbb{N}$
by $\lambda_0+\nu_0 = n-1-2k$
and $\nu_0-\lambda_0 = 2l$.
Consider $\nulambda \in \mathbb{C}^2$
with constraints
$\nu+\lambda = \nu_0+\lambda_0$
so that $\nulambda$ stays in $\backslash\!\backslash$
with a complex parameter
\[
\mu:=\lambda-\nu+2l.
\]
Then 
$\KBB{\lambda}{\nu} (x,x_n)$
is a distribution on $\mathbb{R}^n$ that depends meromorphically on
$\mu \in \mathbb{C}$ by Theorem \ref{thm:BK},
and we have an expansion
\[
\KBB{\lambda}{\nu} =
\frac{1}{\mu} F_{-1} + F_0 + \mu F_1 + \dotsb ,
\]
where $F_{-1}, F_0, F_1, \dotsc$ are distributions on $\mathbb{R}^n$.
Since $(|x|^2+x_n^2)^{-\nu}$ is a smooth function on
$\mathbb{R}^n \setminus \{0\}$
for all $\nu \in \mathbb{C}$,
the restriction
$(|x|^2+x_n^2)^{-\nu} \delta^{(2k)} (x_n) |_{\mathbb{R}^n\setminus\{0\}}$
is a distribution on $\mathbb{R}^n\setminus\{0\}$
that depends \textit{holomorphically} on $\nu$.
Hence we have
\[
F_{-1} |_{\mathbb{R}^n\setminus\{0\}} = 0
\quad\text{and}\quad
F_0 |_{\mathbb{R}^n\setminus\{0\}}
= (|x|^2+x_n^2)^{-\nu} \delta^{(2k)} (x_n) |_{\mathbb{R}^n\setminus\{0\}}.
\]
Applying Lemma \ref{lem:DEmero} to the differential equation
\[
(E - (\lambda-\nu-n))\KBB{\lambda}{\nu}
 = ((E+n+2l) - \mu \cdot 1)\KBB{\lambda}{\nu}=0,
\]
we have
\begin{equation}\label{eqn:EEF}
(E+n+2l)F_{-1} = 0, \
(E+n-2l)F_0 - F_{-1} = 0.
\end{equation}
Take any $F \in \mathcal{S}ol(\mathbb{R}^n; \lambda_0,\nu_0)$.  
By Lemma \ref{lem:Sol}, 
there exists $c \in {\mathbb{C}}$
such that
\[
F|_{\mathbb{R}^n\setminus\{0\}}
=c (|x|^2+x_n^2)^{-\nu} \delta^{(2k)} (x_n) |_{\mathbb{R}^n\setminus\{0\}}.
\]
Then $h := F - c F_0 \in\mathcal{D}'(\mathbb{R}^n)$
is supported at the origin,
and satisfies
\[
(E+n+2l)^2 h = 0
\]
because $F$ satisfies \eqref{eqn:Fa}
 and $F_0$ satisfies \eqref{eqn:EEF}.
By Lemma \ref{lem:deltass},
we get $(E+n+2l)h = 0$.
Using again \eqref{eqn:Fa} and \eqref{eqn:EEF},
we see $c F_{-1} = 0$.
On the other hand,
 by Theorem \ref{thm:BK}, 
 $F_{-1} \ne 0$
if $(\lambda_0,\nu_0) \in L_{\operatorname{even}}$
and $n$ is odd.
Thus if $(\lambda_0,\nu_0) \in \mathbb{X} \setminus L_{\operatorname{even}}$
then $c=0$, 
and therefore the restriction map \eqref{eqn:Solorigin} is identically zero.
\end{proof}

\subsection{Regular symmetry breaking operators}
In this section
 we find the dimension of the space
$H(\lambda,\nu)/H(\lambda,\nu)_{\operatorname{sing}}$.

\begin{proposition}\label{prop:regSupp}
For any $(\lambda, \nu) \in \mathbb{C}^2$,
\begin{equation}\label{eqn:HHsing}
\dim H(\lambda,\nu) / H(\lambda,\nu)_{\operatorname{sing}} \le 1.
\end{equation}
Moreover,
 there exist non-zero regular symmetry breaking operators
 (see Definition \ref{def:regular})
 if and only if
\begin{equation*}
\nulambda
\in 
\begin{cases}
{\mathbb{C}} \setminus (\backslash\!\backslash \cup /\!/)
\quad
&\text{($n$:odd)},
\\
({\mathbb{C}}^2 \setminus (\backslash\!\backslash \cup /\!/))
\cup
L_{\operatorname{even}}
\quad
&\text{($n$:even)}.  
\end{cases}
\end{equation*}
\end{proposition}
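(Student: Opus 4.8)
The plan is to combine the general upper bound from Proposition~\ref{prop:dimgen} with the explicit construction of non-zero regular operators in Proposition~\ref{prop:ABC}~(1). First I would establish the inequality \eqref{eqn:HHsing}. By Proposition~\ref{lem:PGP}, the unique open $P'$-orbit in $G/P$ is $P'[q_+] \simeq P'/M_+'$, so there is exactly $N=1$ open orbit. Applying Corollary~\ref{cor:Regbd} (or directly Proposition~\ref{prop:dimgen} with the single point $x_1 = [q_+]$ and stabilizer $M_1' = M_+'$), and recalling that $V$ and $W$ are one-dimensional in our line-bundle setting, we get $\dim H(\lambda,\nu)/H(\lambda,\nu)_{\operatorname{sing}} \le 1$ for all $(\lambda,\nu) \in \mathbb{C}^2$. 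This handles the first assertion.

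For the second assertion I would argue by two inclusions. The ``if'' direction is exactly the content of Proposition~\ref{prop:ABC}~(1): the operator $\tA{\lambda}{\nu}$ is non-zero and regular (i.e. in $H(\lambda,\nu)\setminus H(\lambda,\nu)_{\operatorname{sing}}$) for $(\lambda,\nu)\in D_{\operatorname{sing}}(A_1) = \mathbb{C}^2\setminus(\backslash\!\backslash\cup/\!/)$, and the renormalized operator $\AAt_{\lambda,\nu}$ is non-zero and regular for $(\lambda,\nu)\in D_{\operatorname{reg}}(A_2) = \{\nu\in-\mathbb{N}\}\cap(\mathbb{C}^2\setminus\backslash\!\backslash)$. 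Taking the union of these two parameter sets and using \eqref{eqn:LX} gives precisely $\mathbb{C}^2\setminus(\backslash\!\backslash\cup/\!/)$ when $n$ is odd (since then $L_{\operatorname{even}}\subset\mathbb{X}\subset\backslash\!\backslash\cap/\!/$, so $D_{\operatorname{reg}}(A_2)$ contributes nothing new outside $\backslash\!\backslash\cup/\!/$ — one must check that $\{\nu\in-\mathbb{N}\}\cap/\!/\subset\backslash\!\backslash$ is false in general, so actually the union does add the points of $\{\nu\in-\mathbb N\}\cap(/\!/\setminus\backslash\!\backslash)$; I would verify this set is empty for $n$ odd via \eqref{eqn:LX}), and $(\mathbb{C}^2\setminus(\backslash\!\backslash\cup/\!/))\cup L_{\operatorname{even}}$ when $n$ is even (since $L_{\operatorname{even}}\subset/\!/\setminus\backslash\!\backslash$, and for $\nu=-j\in-\mathbb{N}$ with $(\lambda,-j)\in L_{\operatorname{even}}$ one has $(\lambda,-j)\notin\backslash\!\backslash$, so $L_{\operatorname{even}}\subset D_{\operatorname{reg}}(A_2)$).

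For the ``only if'' direction I would show that no regular symmetry breaking operator can exist outside the stated set. The key input is Corollary~\ref{prop:Msing}: a regular operator exists only if $\operatorname{Hom}_{M_+'}(\lambda|_{M_+'},\nu|_{M_+'}) \ne 0$. Since $\sigma = \tau = \mathbf{1}$ are trivial and $M_+' \simeq O(n-1)$ is connected-enough that the trivial characters restrict trivially, this Hom-space is always one-dimensional, so Corollary~\ref{prop:Msing} gives no obstruction by itself --- hence the genuine obstruction must come from the vanishing of the candidate kernels. Concretely, the restriction of $K_T$ to the open orbit must be a multiple of $k^{\mathbb{A}}_{\lambda,\nu}|_{P'[q_+]}$, and I would argue that for $(\lambda,\nu)\in\backslash\!\backslash\cup/\!/$ (in the odd case) or $(\lambda,\nu)\in\backslash\!\backslash\cup(/\!/\setminus L_{\operatorname{even}})$ (in the even case), any element of $\mathcal{S}ol(\mathbb{R}^n;\lambda,\nu)$ has support contained in $S^{n-1}$, i.e.\ is singular. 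This is exactly what Lemma~\ref{lem:Sol} together with Proposition~\ref{prop:surj} (and its two sub-lemmas \ref{lem:restA0}, \ref{lem:restB0}) gives: for $(\lambda,\nu)\in/\!/\setminus L_{\operatorname{even}}$ the restriction map $\mathcal{S}ol(\mathbb{R}^n;\lambda,\nu)\to\mathcal{S}ol(\mathbb{R}^n\setminus\{0\};\lambda,\nu)$ is zero, so every solution is supported at the origin (hence differential, a fortiori singular); and for $(\lambda,\nu)\in\backslash\!\backslash\setminus\mathbb{X}$ the unique solution up to scalar is $\delta^{(\cdots)}(x_n)(|x|^2+x_n^2)^{-\nu}$ which is supported in $\mathbb{R}^{n-1}$, hence singular. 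The one delicate point --- and the main obstacle --- is the case $(\lambda,\nu)\in\mathbb{X}$ (and, when $n$ is odd, the case $(\lambda,\nu)\in L_{\operatorname{even}}$, which lies in $\mathbb{X}$): here the normalizing Gamma factor has a double pole and one must rule out the existence of a regular solution despite the candidate $K^{\mathbb{A}}_{\lambda,\nu}$ vanishing identically. For this I would again invoke Proposition~\ref{prop:surj} in the form of Lemma~\ref{lem:restB0}, which covers $\mathbb{X}\setminus L_{\operatorname{even}}$, and for the residual odd-case points of $L_{\operatorname{even}}$ use Theorem~\ref{thm:poleA}~(2) (together with the renormalization $\BB_{\lambda,\nu}$, which is singular by Proposition~\ref{prop:BB}) to see that every solution there is singular. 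Assembling these support statements with the upper bound completes the proof.
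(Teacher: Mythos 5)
Your proposal is correct in outline and, for the classification part, follows the paper's own route: existence of regular operators comes from Proposition \ref{prop:ABC} (1) together with \eqref{eqn:LX}, and non-existence on $/\!/ \setminus L_{\operatorname{even}}$ comes from Proposition \ref{prop:surj}. For the inequality \eqref{eqn:HHsing} you argue differently, via Corollary \ref{cor:Regbd} (equivalently Proposition \ref{prop:dimgen}) and the fact from Proposition \ref{lem:PGP} that $P'[q_+]$ is the unique open $P'$-orbit; this is legitimate and is the ``general theory'' argument. The paper instead stays inside the solution-space framework: the natural map $\mathcal{S}ol(\mathbb{R}^n;\lambda,\nu)/\mathcal{S}ol_{\mathbb{R}^{n-1}}(\mathbb{R}^n;\lambda,\nu)\to \mathcal{S}ol(\mathbb{R}^n\setminus\{0\};\lambda,\nu)/\mathcal{S}ol_{\mathbb{R}^{n-1}\setminus\{0\}}(\mathbb{R}^n\setminus\{0\};\lambda,\nu)$ is injective, and $\dim\mathcal{S}ol(\mathbb{R}^n\setminus\{0\};\lambda,\nu)=1$ by Lemma \ref{lem:Sol}; this gives the bound and, at the same time, kills the regular part on all of $\backslash\!\backslash$, since for $\nulambda\in\backslash\!\backslash$ the generator of $\mathcal{S}ol(\mathbb{R}^n\setminus\{0\};\lambda,\nu)$ is $\delta^{(-\lambda-\nu+n-1)}(x_n)(|x|^2+x_n^2)^{-\nu}$, supported in $\{x_n=0\}$.

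The one genuine soft spot is your treatment of $\mathbb{X}$ and, for $n$ odd, of $L_{\operatorname{even}}$. First, the case split is unnecessary: the support argument you use on $\backslash\!\backslash\setminus\mathbb{X}$ applies verbatim to every point of $\backslash\!\backslash$ (Lemma \ref{lem:Sol} makes no exception at $\mathbb{X}$), so any $F\in\mathcal{S}ol(\mathbb{R}^n;\lambda,\nu)$ with $\nulambda\in\backslash\!\backslash$ satisfies $\operatorname{Supp}F\subset\mathbb{R}^{n-1}$ and is singular; no double-pole analysis and no appeal to Lemma \ref{lem:restB0} is needed for this proposition. Second, the argument you actually propose for the residual points $\nulambda\in L_{\operatorname{even}}$ with $n$ odd --- Theorem \ref{thm:poleA} (2) plus the singularity of $\BB_{\lambda,\nu}$ (Proposition \ref{prop:BB}) --- does not prove the claim: knowing that the two operators you have constructed are respectively zero and singular does not exclude the existence of some other, regular, element of $H(\lambda,\nu)$, unless you already know a dimension bound at those points, which at this stage would be circular (the classification Theorems \ref{thm:Hombasis} and \ref{thm:9.5} rest on this very proposition). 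Replacing that sentence by the uniform support argument over $\backslash\!\backslash$ just described closes the gap and makes your proof complete.
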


\begin{proof}
In light of the isomorphism
\[
H(\lambda,\nu) / H(\lambda,\nu)_{\operatorname{sing}}
\simeq
  \mathcal{S}ol(\mathbb{R}^n;\lambda,\nu)
  /
  \mathcal{S}ol_{\mathbb{R}^{n-1}}(\mathbb{R}^n;\lambda,\nu), 
\]
we consider the following exact sequences:
\[
\begin{matrix}
0  & \to  & \mathcal{S}ol_{\{0\}}(\mathbb{R}^n;\lambda,\nu)
   & \to  & \mathcal{S}ol(\mathbb{R}^n;\lambda,\nu)
   & \to  & \mathcal{S}ol(\mathbb{R}^n \setminus \{0\}; \lambda,\nu)
\\
   && \|   && \cup   &&  \cup
\\
0  & \to  & \mathcal{S}ol_{\{0\}}(\mathbb{R}^n;\lambda,\nu)
   & \to  & \mathcal{S}ol_{\mathbb{R}^{n-1}}(\mathbb{R}^n;\lambda,\nu)
   & \to  & \mathcal{S}ol_{\mathbb{R}^{n-1}\setminus\{0\}}(\mathbb{R}^n \setminus \{0\}; \lambda,\nu).
\end{matrix}
\]
If $F \in \mathcal{S}ol(\mathbb{R}^n;\lambda,\nu)$
satisfies $\operatorname{Supp}(F|_{\mathbb{R}^n\setminus\{0\}})
\subset \mathbb{R}^{n-1}\setminus\{0\}$,
then clearly
$\operatorname{Supp} F \subset \mathbb{R}^{n-1}$.
Hence the following natural homomorphism is injective:
\begin{equation}\label{eqn:Solhyper}
\mathcal{S}ol(\mathbb{R}^n;\lambda,\nu)/
\mathcal{S}ol_{\mathbb{R}^{n-1}}(\mathbb{R}^n;\lambda,\nu)
\to
\mathcal{S}ol(\mathbb{R}^n \setminus \{0\}; \lambda,\nu)/
\mathcal{S}ol_{\mathbb{R}^{n-1}\setminus\{0\}}(\mathbb{R}^n\setminus\{0\};\lambda,\nu).
\end{equation}
Since $\dim\mathcal{S}ol(\mathbb{R}^n\setminus\{0\};\lambda,\nu) = 1$
for any $\nulambda\in \mathbb{C}^2$
by Lemma \ref{lem:Sol},
we get the inequality \eqref{eqn:HHsing}.
By the same lemma,
the right-hand side of \eqref{eqn:Solhyper} is zero
if $\nulambda \in \backslash\!\backslash$,
and thus
$\mathcal{S}ol(\mathbb{R}^n;\lambda,\nu)/
 \mathcal{S}ol_{\mathbb{R}^{n-1}}(\mathbb{R}^n;\lambda,\nu)
 = \{0\}$.

On the other hand,
Proposition \ref{prop:surj} tells that if
$\nulambda \in /\!/ \setminus L_{\operatorname{even}}$
then
$\mathcal{S}ol(\mathbb{R}^n;\lambda,\nu)
 = \mathcal{S}ol_{\{0\}}(\mathbb{R}^n;\lambda,\nu)$,
and therefore
$\mathcal{S}ol(\mathbb{R}^n;\lambda,\nu)/
\mathcal{S}ol_{\mathbb{R}^{n-1}}(\mathbb{R}^n;\lambda,\nu) = \{0\}$.
In summary we have shown that if
$\nulambda \in \backslash\!\backslash \cup (/\!/ \setminus L_{\operatorname{even}})$
then
$H(\lambda,\nu)/H(\lambda,\nu)_{\operatorname{sing}} = \{0\}$.

Conversely,
 if $\lambda \not\in \backslash\!\backslash \cup 
(/\!/\setminus L_{\operatorname{even}})$
 then 
$\dim H(\lambda,\nu) / H(\lambda,\nu)_{\operatorname{sing}}=1$
 by Proposition \ref{prop:ABC} (1) 
and \eqref{eqn:LX}.  
Thus the proof of Proposition \ref{prop:regSupp} is completed.
\end{proof}

\subsection{Singular symmetry breaking operators}
\label{subsec:dimsing}
We have seen in Fact \ref{fact:5.2}
\[
H(\lambda,\nu)_{\operatorname{diff}}
= \begin{cases} \mathbb{C} \C_{\lambda,\nu}
                     &\text{if $\nulambda \in /\!/$},
      \\
                     \{0\}   &\text{otherwise}.
  \end{cases}
\]
In this section
 we find the dimension of the space
$H(\lambda,\nu)_{\operatorname{sing}}
 / H(\lambda,\nu)_{\operatorname{diff}}$.
\begin{proposition}\label{prop:3.10}
For any $(\lambda, \nu) \in \mathbb{C}^2$,
\begin{equation}\label{eqn:dimsing}
\dim H(\lambda, \nu)_{\operatorname{sing}} / H(\lambda, \nu)_{\operatorname{diff}} \le 1.
\end{equation}
The equality holds if and only if\/ 
\[
\nulambda \in 
\backslash\!\backslash \setminus (\mathbb{X} \setminus L_{\operatorname{even}})
= \begin{cases}
        L_{\operatorname{even}} \cup (\backslash\!\backslash \setminus \mathbb{X})
        & (n: \text{odd}\/),
     \\
        \backslash\!\backslash \setminus \mathbb{X}
        &(n: \text{even}).
   \end{cases}
\]
\end{proposition}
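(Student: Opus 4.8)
The plan is to combine the filtration exact sequence
\[
0 \to H(\lambda,\nu)_{\operatorname{diff}}
  \to H(\lambda,\nu)_{\operatorname{sing}}
  \to H(\lambda,\nu)_{\operatorname{sing}}/H(\lambda,\nu)_{\operatorname{diff}}
\]
with the identification of symmetry breaking operators with distribution solutions. Recall from Proposition \ref{prop:HomPDE} that $H(\lambda,\nu) \simeq \mathcal{S}ol(\mathbb{R}^n;\lambda,\nu)$, and that under this isomorphism $H(\lambda,\nu)_{\operatorname{sing}}$ corresponds to $\mathcal{S}ol_{\mathbb{R}^{n-1}}(\mathbb{R}^n;\lambda,\nu)$ (by Remark \ref{rem:singR} and Lemma \ref{lem:SuppInv}) and $H(\lambda,\nu)_{\operatorname{diff}}$ to $\mathcal{S}ol_{\{0\}}(\mathbb{R}^n;\lambda,\nu)$ (by Fact \ref{fact:Diff}(1)). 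Hence
\[
H(\lambda,\nu)_{\operatorname{sing}}/H(\lambda,\nu)_{\operatorname{diff}}
\simeq \mathcal{S}ol_{\mathbb{R}^{n-1}}(\mathbb{R}^n;\lambda,\nu)/\mathcal{S}ol_{\{0\}}(\mathbb{R}^n;\lambda,\nu),
\]
and the exact sequence \eqref{eqn:Solexact} applied to $U=\mathbb{R}^n\setminus\{0\}$ and $S=\mathbb{R}^{n-1}\setminus\{0\}$ embeds this quotient into $\mathcal{S}ol_{\mathbb{R}^{n-1}\setminus\{0\}}(\mathbb{R}^n\setminus\{0\};\lambda,\nu)$.

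First I would prove the upper bound \eqref{eqn:dimsing}. By Lemma \ref{lem:Sol}, $\mathcal{S}ol(\mathbb{R}^n\setminus\{0\};\lambda,\nu)$ is one-dimensional for all $(\lambda,\nu)$, and a nonzero element supported in $\mathbb{R}^{n-1}\setminus\{0\}$ occurs precisely when $(\lambda,\nu)\in\backslash\!\backslash$, in which case it is $\mathbb{C}\,\delta^{(-\lambda-\nu+n-1)}(x_n)(|x|^2+x_n^2)^{-\nu}|_{\mathbb{R}^n\setminus\{0\}}$; otherwise $\mathcal{S}ol_{\mathbb{R}^{n-1}\setminus\{0\}}(\mathbb{R}^n\setminus\{0\};\lambda,\nu)=\{0\}$. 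In either case this space is at most one-dimensional, giving the inequality. In particular, when $(\lambda,\nu)\notin\backslash\!\backslash$, the quotient is zero, so the equality in \eqref{eqn:dimsing} forces $(\lambda,\nu)\in\backslash\!\backslash$; this already localizes the question to $\backslash\!\backslash$.

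Next I would pin down exactly which $(\lambda,\nu)\in\backslash\!\backslash$ give the full dimension one. For the \emph{lower bound}, Proposition \ref{prop:ABC}(2) exhibits nonzero elements of $H(\lambda,\nu)_{\operatorname{sing}}\setminus H(\lambda,\nu)_{\operatorname{diff}}$ on the set
$D_{\operatorname{sing}}(A_1)\cup D_{\operatorname{sing}}(A_2)\cup D_{\operatorname{sing}}(B_1)\cup D_{\operatorname{sing}}(B_2)$,
which is $L_{\operatorname{even}}\cup(\backslash\!\backslash\setminus\mathbb{X})$ for $n$ odd and $\backslash\!\backslash\setminus\mathbb{X}$ for $n$ even — precisely $\backslash\!\backslash\setminus(\mathbb{X}\setminus L_{\operatorname{even}})$ in both parities. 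So on that set the quotient is exactly one-dimensional. For the complementary \emph{vanishing} statement, i.e. $(\lambda,\nu)\in\mathbb{X}\setminus L_{\operatorname{even}}$, I would invoke Lemma \ref{lem:restB0}: it asserts exactly that for such $(\lambda,\nu)$ the restriction map $\mathcal{S}ol(\mathbb{R}^n;\lambda,\nu)\to\mathcal{S}ol(\mathbb{R}^n\setminus\{0\};\lambda,\nu)$ is identically zero and $\mathcal{S}ol(\mathbb{R}^n;\lambda,\nu)=\mathcal{S}ol_{\{0\}}(\mathbb{R}^n;\lambda,\nu)$, hence a fortiori $\mathcal{S}ol_{\mathbb{R}^{n-1}}(\mathbb{R}^n;\lambda,\nu)=\mathcal{S}ol_{\{0\}}(\mathbb{R}^n;\lambda,\nu)$ and the quotient $H(\lambda,\nu)_{\operatorname{sing}}/H(\lambda,\nu)_{\operatorname{diff}}$ is zero. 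Combining: the quotient has dimension one exactly on $\backslash\!\backslash\setminus(\mathbb{X}\setminus L_{\operatorname{even}})$, and I would then just record the parity-split description of this set, using $L_{\operatorname{even}}\subset\mathbb{X}$ when $n$ is odd (so the set is $L_{\operatorname{even}}\cup(\backslash\!\backslash\setminus\mathbb{X})$) and $L_{\operatorname{even}}\cap\backslash\!\backslash=\emptyset$ when $n$ is even by \eqref{eqn:LX} (so the set is $\backslash\!\backslash\setminus\mathbb{X}$).

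The main obstacle is the vanishing on $\mathbb{X}\setminus L_{\operatorname{even}}$, which is genuinely delicate: on $\mathbb{X}$ both Gamma factors in the normalization of $\KA{\lambda}{\nu}$ have poles, so one must argue via a one-parameter meromorphic deformation of $\KBB{\lambda}{\nu}$ along $\backslash\!\backslash$, extract the Laurent coefficients $F_{-1},F_0$, and use the indicial-type equation for the Euler operator together with the rigidity of distributions supported at the origin (Lemmas \ref{lem:DEmero} and \ref{lem:deltass}) to show that no solution on $\mathbb{R}^n\setminus\{0\}$ extends across the origin unless its residue $F_{-1}$ vanishes — which happens only in the excluded case $L_{\operatorname{even}}$ with $n$ odd. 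But this is exactly the content already established in Lemma \ref{lem:restB0}, so in the present proof I would simply cite it rather than reprove it, and the remaining bookkeeping is routine.
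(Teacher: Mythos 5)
Your proposal is correct and follows essentially the same route as the paper: the isomorphism $H(\lambda,\nu)_{\operatorname{sing}}/H(\lambda,\nu)_{\operatorname{diff}}\simeq \mathcal{S}ol_{\mathbb{R}^{n-1}}(\mathbb{R}^n;\lambda,\nu)/\mathcal{S}ol_{\{0\}}(\mathbb{R}^n;\lambda,\nu)$, the embedding into $\mathcal{S}ol_{\mathbb{R}^{n-1}\setminus\{0\}}(\mathbb{R}^n\setminus\{0\};\lambda,\nu)$ combined with Lemma \ref{lem:Sol} for the upper bound and the localization to $\backslash\!\backslash$, Proposition \ref{prop:ABC} (2) for the lower bound, and the non-extendability result for the vanishing on $\mathbb{X}\setminus L_{\operatorname{even}}$ (you cite Lemma \ref{lem:restB0}, the paper cites Proposition \ref{prop:surj}, which is the same content).
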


\begin{proof}
We analyze the right-hand side
 of the following isomorphism: 
\[ 
H(\lambda,\nu)_{\operatorname{sing}}/H(\lambda,\nu)_{\operatorname{diff}}
\simeq
\mathcal{S}ol_{\mathbb{R}^{n-1}}(\mathbb{R}^n;\lambda,\nu)/
\mathcal{S}ol_{\{0\}}(\mathbb{R}^n;\lambda,\nu).
\]  
In view of the exact sequence
\[
0 \to \mathcal{S}ol_{\{0\}}(\mathbb{R}^n; \lambda,\nu)
   \to \mathcal{S}ol_{\mathbb{R}^{n-1}}(\mathbb{R}^n;\lambda,\nu)
   \to \mathcal{S}ol_{\mathbb{R}^{n-1}\setminus\{0\}}(\mathbb{R}^n\setminus\{0\};\lambda,\nu),
\]
we have an inclusive relation.  
\[
\mathcal{S}ol_{\mathbb{R}^{n-1}}(\mathbb{R}^n;\lambda,\nu)/
\mathcal{S}ol_{\{0\}}(\mathbb{R}^n;\lambda,\nu)
\subset
\mathcal{S}ol_{\mathbb{R}^{n-1}\setminus\{0\}}
   (\mathbb{R}^n\setminus\{0\}; \lambda,\nu).
\]
Therefore it follows from Lemma \ref{lem:Sol}
that the inequality \eqref{eqn:dimsing} holds for any
$\nulambda \in \mathbb{C}^2$
and that \eqref{eqn:dimsing} becomes the equality
only if $\nulambda \in \backslash\!\backslash$.

On the other hand,
if $\nulambda \in /\!/ \setminus L_{\operatorname{even}}$
then
$\mathcal{S}ol_S(\mathbb{R}^n;\lambda,\nu) = \mathcal{S}ol_{\{0\}}(\mathbb{R}^n;\lambda,\nu)$
for any $S$ containing $0$
by Proposition \ref{prop:surj},
and consequently
$H(\lambda,\nu)_{\operatorname{sing}}/H(\lambda,\nu)_{\operatorname{diff}}
= \{0\}$.

In summary, 
we have shown that the equality holds in \eqref{eqn:dimsing}
only if
\[
\nulambda \in \backslash\!\backslash \setminus (/\!/ \setminus L_{\operatorname{even}})
= \backslash\!\backslash \setminus (\mathbb{X} \setminus L_{\operatorname{even}}).
\]
Conversely,
the equality in \eqref{eqn:dimsing} holds
if $(\lambda,\nu) 
\in \backslash\!\backslash \setminus ({\mathbb{X}}
 \setminus L_{\operatorname{even}})$
 by Proposition \ref{prop:ABC}(2).  

Thus the proof of Proposition \ref{prop:3.10} is completed.
\end{proof}
Combining the above proof and Theorem \ref{thm:reduction} (3),
we obtain:
\begin{proposition}
[classification of singular symmetry breaking operators]
\label{prop:sing}
If $n$ is odd,
then $L_{\operatorname{even}} \subset \mathbb{X}$
and we have
\[
H(\lambda,\nu)_{\operatorname{sing}} =
   \begin{cases}
       \mathbb{C} \BB_{\lambda,\nu} \oplus \mathbb{C} \C_{\lambda,\nu}
       &\text{if\/ $\nulambda \in L_{\operatorname{even}}$},
   \\
       \mathbb{C} \C_{\lambda,\nu}
       &\text{if\/ $\nulambda \in /\!/ \setminus L_{\operatorname{even}}$},
   \\
       \mathbb{C} \B_{\lambda,\nu} 
       &\text{if\/ $\nulambda \in \backslash\!\backslash
              \setminus \mathbb{X}$},
   \\
       \{0\}
       &\text{if\/ $\nulambda \in \mathbb{C}^2 \setminus (\backslash\!\backslash \cup /\!/)$}.
   \end{cases}
\]
If $n$ is even,
we have
\[
H(\lambda,\nu)_{\operatorname{sing}} =
   \begin{cases}
       \mathbb{C} \C_{\lambda,\nu}
       &\text{if\/ $\nulambda \in /\!/$},
   \\
       \mathbb{C} \B_{\lambda,\nu} 
       &\text{if\/ $\nulambda \in \backslash\!\backslash \setminus \mathbb{X}$}.
   \\
       \{0\}   
       &\text{if\/ $\mathbb{C}^2 \setminus (\backslash\!\backslash\cup/\!/)$}.
   \end{cases}
\]
\end{proposition}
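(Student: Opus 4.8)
The plan is to read off the classification from the dimension formula for the two graded pieces of the filtration $H(\lambda,\nu) \supset H(\lambda,\nu)_{\operatorname{sing}} \supset H(\lambda,\nu)_{\operatorname{diff}}$, together with the explicit operators already produced. First I would recall from Fact \ref{fact:5.2} that $H(\lambda,\nu)_{\operatorname{diff}} = \mathbb{C}\C_{\lambda,\nu}$ when $\nulambda \in /\!/$ and is $\{0\}$ otherwise, and from Proposition \ref{prop:3.10} that $\dim H(\lambda,\nu)_{\operatorname{sing}}/H(\lambda,\nu)_{\operatorname{diff}}$ equals $1$ exactly on $\backslash\!\backslash \setminus (\mathbb{X} \setminus L_{\operatorname{even}})$ and $0$ elsewhere. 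Adding the two contributions, and using the inclusions of \eqref{eqn:LX} (so that $L_{\operatorname{even}} \subset \mathbb{X} \subset /\!/$ when $n$ is odd, while $L_{\operatorname{even}} \subset /\!/ \setminus \backslash\!\backslash$ when $n$ is even), one obtains: $\dim H(\lambda,\nu)_{\operatorname{sing}} = 2$ when $n$ is odd and $\nulambda \in L_{\operatorname{even}}$; $\dim H(\lambda,\nu)_{\operatorname{sing}} = 1$ when $\nulambda$ lies in $/\!/ \cup \backslash\!\backslash$ but not in the previous case; and $\dim H(\lambda,\nu)_{\operatorname{sing}} = 0$ when $\nulambda \in \mathbb{C}^2 \setminus (\backslash\!\backslash \cup /\!/)$.

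Next I would exhibit spanning operators in each regime of positive dimension. When $\nulambda \in /\!/$ but not (\/$n$ odd, $\nulambda \in L_{\operatorname{even}}$), the subspace $H(\lambda,\nu)_{\operatorname{diff}} = \mathbb{C}\C_{\lambda,\nu}$ is already one-dimensional and, by the count, exhausts $H(\lambda,\nu)_{\operatorname{sing}}$; here $\C_{\lambda,\nu} \ne 0$ by Proposition \ref{prop:ABC}(3), and on $\mathbb{X}$ the operator $\B_{\lambda,\nu}$ is a scalar multiple of $\C_{\lambda,\nu}$ by Theorem \ref{thm:reduction}(3), so nothing is overlooked. When $\nulambda \in \backslash\!\backslash \setminus \mathbb{X}$ one has $\nulambda \notin /\!/$, hence $H(\lambda,\nu)_{\operatorname{diff}} = \{0\}$ and $\dim H(\lambda,\nu)_{\operatorname{sing}} = 1$; the operator $\B_{\lambda,\nu}$ is non-zero there (since $L_{\operatorname{even}} \subset \mathbb{X}$ for $n$ odd, while $\B_{\lambda,\nu} \ne 0$ on all of $\backslash\!\backslash$ for $n$ even, by Theorem \ref{thm:BK}) and is singular but not differential because $\operatorname{Supp}\KB{\lambda}{\nu} = S^{n-1}$ by Proposition \ref{prop:Bsupp}; hence $H(\lambda,\nu)_{\operatorname{sing}} = \mathbb{C}\B_{\lambda,\nu}$.

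It then remains to treat $n$ odd with $\nulambda \in L_{\operatorname{even}}$, where the space is two-dimensional with one-dimensional differential part $\mathbb{C}\C_{\lambda,\nu}$. By Proposition \ref{prop:BB} the renormalized operator $\BB_{\lambda,\nu}$ is non-zero and has $\operatorname{Supp}\KBB{\lambda}{\nu} = S^{n-1}$, so it lies in $H(\lambda,\nu)_{\operatorname{sing}}$ but not in $H(\lambda,\nu)_{\operatorname{diff}}$ (whose distribution kernels are supported at $\{[p_+]\}$); thus $\BB_{\lambda,\nu}$ and $\C_{\lambda,\nu}$ are linearly independent and span the two-dimensional space, giving $H(\lambda,\nu)_{\operatorname{sing}} = \mathbb{C}\BB_{\lambda,\nu} \oplus \mathbb{C}\C_{\lambda,\nu}$. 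Finally, for $\nulambda \in \mathbb{C}^2 \setminus (\backslash\!\backslash \cup /\!/)$ both graded pieces vanish, so $H(\lambda,\nu)_{\operatorname{sing}} = \{0\}$. I expect the main obstacle to be purely bookkeeping: keeping the set-theoretic relations among $/\!/$, $\backslash\!\backslash$, $\mathbb{X}$, and $L_{\operatorname{even}}$ straight, separately for $n$ odd and $n$ even, so that every $\nulambda$ falls into exactly one of the listed cases and the proportionality on $\mathbb{X}$ from Theorem \ref{thm:reduction}(3) is applied where needed; there is no new analytic input beyond Propositions \ref{prop:3.10}, \ref{prop:ABC}, \ref{prop:BB} and \ref{prop:Bsupp}.
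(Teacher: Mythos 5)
Your proposal is correct and follows essentially the same route as the paper: the paper also obtains Proposition \ref{prop:sing} by combining the graded dimension counts (Fact \ref{fact:5.2} and Proposition \ref{prop:3.10}, whose lower bounds come from the explicit operators of Proposition \ref{prop:ABC}, i.e.\ Propositions \ref{prop:Bsupp} and \ref{prop:BB}) with the residue relation $\B_{\lambda,\nu}=q_C^B(\lambda,\nu)\,\C_{\lambda,\nu}$ on $\mathbb{X}$ from Theorem \ref{thm:reduction}~(3). Your case bookkeeping for $n$ odd versus $n$ even matches the paper's.
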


\section{Residue formulae
 and functional identities}
\label{sec:reduction}
The (generically) regular symmetry breaking operators $\A_{\lambda,\nu}$
 have two complex parameters
 $(\lambda,\nu)\in {\mathbb{C}}^2$, 
 whereas singular operators
 $\B_{\lambda, \nu}$ and $\C_{\lambda, \nu}$
 are defined for 
\index{sbon}{Xl@${\backslash\!\backslash}$}
$(\lambda,\nu)\in \backslash\!\backslash$
 and 
\index{sbon}{Xr@${/\!/}$}
 $/\!/$, 
 respectively,
 and thus having only one complex parameter 
 and one integral parameter.  
Further,
 the renormalized operators $\AAt_{\lambda,\nu}$
 are defined for $\nu \in -{\mathbb{N}}$
 and $\lambda \in {\mathbb{C}}$, 
 whereas $\BB_{\lambda,\nu}$
 are defined only for discrete parameter,
 namely,
\index{sbon}{Leven@$L_{\operatorname{even}}$}
 $(\lambda,\nu)\in L_{\operatorname{even}}$
 when $n$ is odd.  
The goal of this chapter
 is to find the relationship
 among these operators
 as explicit residue formulae
 according to the following hierarchy.  
The main results are given in Theorem \ref{thm:reduction}.  
\begin{figure}[h]
\begin{alignat*}{4}
&\,\,\,\A_{\lambda,\nu}&& && &&\AAt_{\lambda,\nu}
\\
\,\,{}^{\backslash\!\backslash}\swarrow 
& 
&&\searrow 
&&{}^{/\!/}
&&\downarrow \,\, {}^{L_{\operatorname{even}}, \text{$n$:odd}}
\\
\B_{\lambda,\nu} 
&\,\,\underset{\mathbb{X}}\longrightarrow 
&&
&&\CC{\lambda}{\nu} 
\qquad\quad
&&\BB_{\lambda,\nu}
\end{alignat*}
\caption{Reduction of operators }
\label{fig:ABChier}
\end{figure}

As a corollary,
 we extend the functional identities
 (Theorem \ref{thm:TAAT})
 for the (generically) regular symmetry breaking operators
 $\A_{\lambda,\nu}$
 with the Knapp--Stein intertwining operators
 of $G$ and $G'$
 to those for singular ones $\B_{\lambda,\nu}$ and $\CC{\lambda}{\nu}$
 (see Theorem \ref{thm:TABC}, 
 Corollaries \ref{cor:DCC} and \ref{cor:10.6}).  
The {\it{factorization identities}}
 for conformally equivariant operators
 by Juhl \cite[Chapter 6]{Juhl}
 are obtained 
 as a special case
 of Corollaries \ref{cor:DCC} and \ref{cor:10.6}.  

\vskip 1pc

We retain the following convention
\begin{alignat}{2}
   2l = & \nu -\lambda 
\quad
 &&\text{for } (\lambda,\nu) \in /\!/, 
\label{eqn:ldef}
\\
   2k = & n-1-\lambda-\nu 
\quad
 &&\text{for } (\lambda,\nu) \in \backslash\!\backslash, 
\label{eqn:kdef}
\\
   m=&n-1&&
\notag
\end{alignat}
 throughout this chapter.  

\subsection{Residues of symmetry breaking operators}
\label{subsec:red}

In this section,
 we prove that the special values
 of the operators $\A_{\lambda,\nu}$
 are proportional to $\B_{\lambda,\nu}$
 or $\CC \lambda \nu$
 up to scalar multiples
 according to the hierarchy illustrated in Figure \ref{fig:ABChier}:

Let $l\in {\mathbb{N}}$ be defined
 by \eqref{eqn:ldef}
 for $(\lambda,\nu) \in /\!/$, 
 and $k\in {\mathbb{N}}$ be \eqref{eqn:kdef}
 for $(\lambda,\nu) \in \backslash\!\backslash$.  
We set
\index{sbon}{qBA@$q_B^A$|textbf}
\index{sbon}{qCA@$q_C^A$|textbf}
\index{sbon}{qCB@$q_C^B$|textbf}
\begin{alignat*}{2}
q_B^A(\lambda,\nu)&:= \frac{(-1)^k}{2^k (2k-1)!!}
\quad
&&\text{for }\nulambda \in \backslash\!\backslash,
\\
q_C^A(\lambda,\nu)&:= \frac{(-1)^{l}l!\pi^{\frac m 2}}
                      {2^{2l}\Gamma(\nu)}
\quad
&&\text{for }\nulambda \in /\!/,
\\
q_C^B(\lambda,\nu)&:= \frac{(-1)^{l-k} \pi^{\frac m 2}l! (2k-1)!!}
                     {2^{2l-k}\Gamma(\nu)}
\quad
&&\text{for }\nulambda \in \mathbb{X}.  
\end{alignat*}   

Then the following lemma is immediate from 
the definition:
\begin{lemma}
\label{lem:qACzero}
{\rm{1)}}\enspace
For $(\lambda,\nu) \in /\!/$, 
$q_C^A(\lambda,\nu)=0$
 if and only if $\nulambda \in L_{\operatorname{even}}$.  
\par\noindent
{\rm{2)}}\enspace
For $(\lambda,\nu) \in {\mathbb{X}}$, 
 $q_C^B(\lambda,\nu)=0$
 if and only if $(\lambda,\nu)\in L_{\operatorname{even}}$.  
\par\noindent
{\rm{3)}}\enspace
$
  q_B^A(\lambda,\nu) q_C^B(\lambda,\nu)
  =
  q_C^A(\lambda,\nu)
  \quad
  \text{for }\nulambda \in \mathbb{X}.  
$
\end{lemma}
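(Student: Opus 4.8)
\textbf{Proof proposal for Lemma \ref{lem:qACzero}.}

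The plan is to treat the three statements in turn, all of which follow from inspection of the explicit constants $q_B^A$, $q_C^A$, $q_C^B$ together with the description \eqref{eqn:LX} of $L_{\operatorname{even}}$ inside $/\!/$ and $\mathbb{X}$. First I would recall the conventions \eqref{eqn:ldef}--\eqref{eqn:kdef}: for $\nulambda\in/\!/$ we have $2l=\nu-\lambda\in 2{\mathbb{N}}$, and for $\nulambda\in\backslash\!\backslash$ we have $2k=n-1-\lambda-\nu\in 2{\mathbb{N}}$. Note also that $l!$ and $(2k-1)!!$ are always nonzero positive integers (with $(2k-1)!!=1$ when $k=0$), and $\pi^{m/2}$ and the powers of $2$ are nonzero, so in each formula the only source of a zero or of an undefined value is the factor $\frac{1}{\Gamma(\nu)}$, i.e.\ the constant vanishes precisely when $\Gamma(\nu)$ has a pole, namely when $\nu\in-{\mathbb{N}}$.

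For part (1): on $/\!/$ we have $\lambda=\nu-2l$, so $\nu\in-{\mathbb{N}}$ forces $\lambda=\nu-2l\in-{\mathbb{N}}$ as well, with $\lambda\le\nu$ and $\lambda\equiv\nu\bmod 2$; conversely $\lambda\le\nu$ and $\lambda,\nu\in-{\mathbb{N}}$ with $\lambda\equiv\nu\bmod 2$ is exactly the defining condition of $L_{\operatorname{even}}$. Since on $/\!/$ the conditions $\lambda\equiv\nu\bmod 2$ and $\lambda\le\nu$ are automatic, we conclude $q_C^A(\lambda,\nu)=0\iff\nu\in-{\mathbb{N}}\iff\nulambda\in L_{\operatorname{even}}$. (Alternatively one invokes \eqref{eqn:LX}: $L_{\operatorname{even}}\subset/\!/$ in both parities.) For part (2): on $\mathbb{X}=\backslash\!\backslash\cap/\!/$ the same reasoning applies since $\mathbb{X}\subset/\!/$, and again $q_C^B(\lambda,\nu)=0\iff\nu\in-{\mathbb{N}}\iff\nulambda\in L_{\operatorname{even}}$, using $L_{\operatorname{even}}\subset\mathbb{X}$ when $n$ is odd (and when $n$ is even $L_{\operatorname{even}}\cap\mathbb{X}=L_{\operatorname{even}}$ still, as $L_{\operatorname{even}}\subset/\!/\setminus\backslash\!\backslash$ forces the intersection to consist exactly of the points with $\nu\in-{\mathbb{N}}$ lying in $\mathbb{X}$, which is what the formula detects).

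For part (3), this is a bare arithmetic identity: substituting the definitions,
\[
q_B^A(\lambda,\nu)\,q_C^B(\lambda,\nu)
=\frac{(-1)^k}{2^k(2k-1)!!}\cdot\frac{(-1)^{l-k}\pi^{m/2}l!(2k-1)!!}{2^{2l-k}\Gamma(\nu)}
=\frac{(-1)^{l}\pi^{m/2}l!}{2^{2l}\Gamma(\nu)}
=q_C^A(\lambda,\nu),
\]
valid for $\nulambda\in\mathbb{X}$ where all three are defined; the factors $(2k-1)!!$ cancel, the signs combine to $(-1)^l$, and the powers of $2$ give $2^{k}\cdot 2^{2l-k}=2^{2l}$ in the denominator. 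There is no real obstacle here; the only point requiring a moment's care is the book-keeping in parts (1) and (2) to confirm that, on the sets $/\!/$ and $\mathbb{X}$ respectively, membership in $L_{\operatorname{even}}$ reduces exactly to the single condition $\nu\in-{\mathbb{N}}$, which is where \eqref{eqn:LX} is used.
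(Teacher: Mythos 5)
Your proof is correct and is essentially the paper's own treatment (the paper simply declares the lemma immediate from the definitions): the only possible source of vanishing in the constants is the factor $1/\Gamma(\nu)$, on $/\!/\supset\mathbb{X}$ the condition $\nu\in-\mathbb{N}$ is equivalent to $(\lambda,\nu)\in L_{\operatorname{even}}$ because $\lambda=\nu-2l$ forces $\lambda\in-\mathbb{N}$, $\lambda\le\nu$ and $\lambda\equiv\nu\bmod 2$, and part (3) is exactly the cancellation you display. One small slip in your parenthetical for part (2): when $n$ is even one has $L_{\operatorname{even}}\cap\mathbb{X}=\emptyset$, not $L_{\operatorname{even}}$, since on $\mathbb{X}$ we have $\nu=\frac{n-1}{2}+l-k\notin\mathbb{Z}$, so $\nu$ is never a nonpositive integer; both sides of the equivalence in (2) are then vacuously false, and your main chain of equivalences is unaffected.
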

Here is the main result
 of this chapter.

\begin{theorem}
[residue formulae]
\label{thm:reduction}
~~~
\begin{enumerate}
\item[{\rm{1)}}]
\index{sbon}{Alnt@$\A_{\lambda,\nu}$}
\index{sbon}{Bt@$\B_{\lambda,\nu}$}
$\tA {\lambda}{\nu}
=q_B^A(\lambda,\nu)
\tB{\lambda}{\nu}
\quad
\text{for }
\nulambda \in \backslash\!\backslash$.  
\item[{\rm{2)}}]
\index{sbon}{Ct@$\C_{\lambda,\nu}$}
$\tA {\lambda}{\nu}
=q_C^A(\lambda,\nu)
\CC{\lambda}{\nu}
\quad
\text{for }
\nulambda \in /\!/$.  
\item[{\rm{3)}}]
$\tB {\lambda}{\nu}
=q_C^B(\lambda,\nu)
\CC{\lambda}{\nu}
\quad
\text{for }
\nulambda \in 
\mathbb{X}$.  
\item[{\rm{4)}}]
\index{sbon}{Att@$\AAt_{\lambda,\nu}$}
\index{sbon}{Btt@$\BB_{\lambda,\nu}$}
$\AAt_{{\lambda},{\nu}}
=q_B^A(\lambda,\nu)
\BB_{{\lambda},{\nu}}
\quad
\text{for }
\nulambda \in 
L_{\operatorname{even}}$
 if $n$ is odd.  
\end{enumerate}
\end{theorem}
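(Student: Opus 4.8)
The plan is to prove the four residue formulae of Theorem \ref{thm:reduction} as a family, using the same two-step strategy throughout: first identify the relevant operators as scalar multiples of one another (which is essentially free from the one-dimensionality results already established), then pin down the scalars by evaluating on the normalized spherical vector $\mathbf{1}_\lambda$, for which explicit formulae are available in Theorem \ref{thm:I.4} (i.e.\ Propositions \ref{prop:AminK}, \ref{prop:B1}, \ref{prop:C11}) together with the renormalization formula \eqref{eqn:AA1}. I will say a word about each of the four items.

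For item (1): when $\nulambda\in\backslash\!\backslash\setminus\mathbb{X}$ the space $H(\lambda,\nu)$ is one-dimensional by Theorem \ref{thm:dimHom} (1), so $\tA\lambda\nu$ and $\tB\lambda\nu$, both nonzero there by Theorems \ref{thm:poleA} and \ref{thm:BK}, are proportional; on the remaining locus $\mathbb{X}\setminus L_{\operatorname{even}}$ both kernels are supported at $\{[p_+]\}$ (Propositions \ref{prop:Ksupp} and \ref{prop:Bsupp}), hence both are differential operators, and $H(\lambda,\nu)_{\operatorname{diff}}$ is one-dimensional by Fact \ref{fact:5.2}, so again they are proportional; and on $L_{\operatorname{even}}$ (only relevant when $n$ is odd, by \eqref{eqn:LX}) both sides vanish by Theorems \ref{thm:poleA} (2) and \ref{thm:BK} (4), so the identity is trivially $0=0$. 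In the generic (non-$L_{\operatorname{even}}$) cases the constant is determined by comparing $\tA\lambda\nu(\mathbf{1}_\lambda)=\frac{\pi^{(n-1)/2}}{\Gamma(\lambda)}\mathbf{1}_\nu$ with $\tB\lambda\nu(\mathbf{1}_\lambda)=\frac{(-1)^k2^k\pi^{(n-1)/2}(2k-1)!!}{\Gamma(\lambda)}\mathbf{1}_\nu$, which gives exactly $q_B^A(\lambda,\nu)=\frac{(-1)^k}{2^k(2k-1)!!}$; the $L_{\operatorname{even}}$ case needs no constant since both sides are zero.

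For item (2): on $/\!/\setminus L_{\operatorname{even}}$, Proposition \ref{prop:surj} forces every element of $H(\lambda,\nu)$ to be supported at the origin, so $H(\lambda,\nu)=H(\lambda,\nu)_{\operatorname{diff}}=\mathbb{C}\,\CC\lambda\nu$ is one-dimensional and contains the nonzero operator $\tA\lambda\nu$; proportionality follows, and the constant is fixed by comparing $\tA\lambda\nu(\mathbf{1}_\lambda)=\frac{\pi^{(n-1)/2}}{\Gamma(\lambda)}\mathbf{1}_\nu$ with $\CC\lambda\nu(\mathbf{1}_\lambda)=\frac{(-1)^l2^{2l}(\lambda)_{2l}}{l!}\mathbf{1}_\nu$, using $(\lambda)_{2l}=\Gamma(\lambda+2l)/\Gamma(\lambda)=\Gamma(\nu)/\Gamma(\lambda)$, which yields $q_C^A(\lambda,\nu)=\frac{(-1)^l l!\,\pi^{(n-1)/2}}{2^{2l}\Gamma(\nu)}$. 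On $L_{\operatorname{even}}\subset/\!/$ the left side $\tA\lambda\nu$ vanishes while $\CC\lambda\nu\ne0$, and indeed $q_C^A(\lambda,\nu)=0$ there by Lemma \ref{lem:qACzero} (1), so the formula still holds. Item (3) is handled the same way on $\mathbb{X}\setminus L_{\operatorname{even}}$ (comparing $\tB\lambda\nu(\mathbf{1}_\lambda)$ with $\CC\lambda\nu(\mathbf{1}_\lambda)$, or simply composing items (1) and (2) via Lemma \ref{lem:qACzero} (3) on the overlap where $\tA\lambda\nu\ne0$, and extending by continuity in the parameter), and on $L_{\operatorname{even}}$ both $\tB\lambda\nu$ and $q_C^B$ vanish by Theorem \ref{thm:BK} (4) and Lemma \ref{lem:qACzero} (2).

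For item (4): fix $n$ odd and $\nulambda\in L_{\operatorname{even}}\subset\mathbb{X}$. The renormalized kernels are related by $\KAA\lambda\nu=\Gamma(\tfrac{\lambda-\nu}2)\KA\lambda\nu$ and $\KBB\lambda\nu=\Gamma(\tfrac{\lambda-\nu}2)\KB\lambda\nu$ off the pole, so item (1), an identity of meromorphic families, multiplies through by the common factor $\Gamma(\tfrac{\lambda-\nu}2)$ to give $\AAt_{\lambda,\nu}=q_B^A(\lambda,\nu)\BB_{\lambda,\nu}$ as an identity of families holomorphic in $\lambda$ (with $\nu\in-\mathbb{N}$ fixed), which then specializes to $L_{\operatorname{even}}$; alternatively one checks it directly on $\mathbf{1}_\lambda$ using \eqref{eqn:AA1} and the analogous evaluation of $\BB_{\lambda,\nu}(\mathbf{1}_\lambda)$ read off from \eqref{eqn:KBnorm}. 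The main obstacle, and the only place where genuine care is needed, is the limiting argument in item (4): one must verify that passing from the meromorphic identity of item (1) to the holomorphically renormalized families commutes with restriction to the poles — i.e.\ that $\AAt_{\lambda,\nu}$ and $\BB_{\lambda,\nu}$ are genuinely the holomorphic continuations of $\Gamma(\tfrac{\lambda-\nu}2)\tA\lambda\nu$ and $\Gamma(\tfrac{\lambda-\nu}2)\tB\lambda\nu$ respectively (this is Propositions \ref{prop:nuneg} and the content of Section \ref{subsec:B2}), and that $q_B^A(\lambda,\nu)$, being independent of the singular factor, is unaffected. Once that bookkeeping is in place, all four formulae are immediate.
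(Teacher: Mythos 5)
Items (1)--(3) of your plan are essentially sound, though your route differs slightly from the paper's: for (1) the paper does not invoke multiplicity one at all, but proves the single identity $\langle \KB{\lambda}{\nu},F\rangle=(-1)^k2^k(2k-1)!!\langle\KA{\lambda}{\nu},F\rangle$ for \emph{every} $K$-finite vector $F$ (Proposition \ref{prop:ABres}, via Lemmas \ref{lem:KtildeFC} and \ref{lem:KBtildeFC}) and then uses density of $I(\lambda)_K$; this covers in one stroke the points of $\backslash\!\backslash$ where $\lambda\in-\mathbb{N}$, at which your spherical-vector comparison degenerates to $0=c\cdot 0$ and you would need an extra holomorphy-along-the-line argument that you do not state. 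For (2) your gloss matches the paper's (which has the same small gap at $\lambda\in-\mathbb{N}$), and for (3) your second route --- composing (1) and (2) and using Lemma \ref{lem:qACzero} (3) --- is actually complete at \emph{every} point of $\mathbb{X}$ with no restriction to $\tA{\lambda}{\nu}\neq 0$; note that ``extending by continuity in the parameter'' is vacuous there since $\mathbb{X}$ is a discrete set.

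The genuine gap is in item (4), precisely the step you flag as ``bookkeeping.'' Neither of your two routes works. First, multiplying (1) by $\Gamma(\frac{\lambda-\nu}{2})$ and ``specializing'' is not available: with $\nu\in-\mathbb{N}$ fixed, (1) holds only on a discrete set of $\lambda$ and $\B_{\lambda,\nu},\BB_{\lambda,\nu}$ form no family in $\lambda$, while along the line $\backslash\!\backslash$ (where (1) does vary holomorphically) the limit of $\Gamma(\frac{\lambda-\nu}{2})\KA{\lambda}{\nu}$ at the point $(\lambda_0,\nu_0)\in L_{\operatorname{even}}$ is \emph{not} $\KAA{\lambda_0}{\nu_0}$: writing $a=\frac{\lambda-\nu}{2}$, $b=\frac{\lambda+\nu-n+1}{2}$, the limit along $\{b=b_0\}$ is $\frac{(-1)^l}{l!}\partial_a\KA{}{}$, whereas $\KAA{\lambda_0}{\nu_0}$ (defined by continuation along $\{\nu=\nu_0\}$, Proposition \ref{prop:nuneg}) is $\frac{(-1)^l}{l!}(\partial_a+\partial_b)\KA{}{}$, and by item (2) the discrepancy $\partial_b\KA{}{}$ is a \emph{nonzero} multiple of $\KC{\lambda_0}{\nu_0}$ because $1/\Gamma(\nu)$ has a simple zero at $\nu_0\in-\mathbb{N}$ and $\CC{\lambda_0}{\nu_0}\neq 0$. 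This failure of the two limiting directions to agree is exactly the two-dimensional jump of $H(\lambda,\nu)$ at $L_{\operatorname{even}}$, and Propositions \ref{prop:nuneg} and Section \ref{subsec:B2} do not supply the compatibility you need --- they only define $\AAt_{\lambda,\nu}$ (holomorphic in $\lambda$, $\nu$ fixed) and $\BB_{\lambda,\nu}$ (at the isolated points), respectively. Second, the fallback of checking on $\mathbf{1}_\lambda$ is insufficient: at $L_{\operatorname{even}}$ one only knows a priori $\AAt_{\lambda,\nu}=\alpha\BB_{\lambda,\nu}+\beta\C_{\lambda,\nu}$, and since $\C_{\lambda,\nu}(\mathbf{1}_\lambda)\neq 0$ there (Theorem \ref{thm:kernel} (5)), a single scalar equation cannot exclude a $\C$-component. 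The paper avoids all of this by a direct evaluation of both kernels at the point in the compact picture: the one-variable residue formula $|t|^{\lambda+\nu-n}/\Gamma(\frac{\lambda+\nu-n+1}{2})=\frac{(-1)^k}{2^k(2k-1)!!}\delta^{(2k)}(t)$ applied to the regular coordinate $\eta_n$ on $S^n$, multiplied by the fixed polynomial $(1-\eta_0)^{-\nu}$, gives $\iota_K^*\ttska{\lambda}{\nu}=q_B^A(\lambda,\nu)\,\iota_K^*\skb{\lambda}{\nu}$ with no limiting argument along $\backslash\!\backslash$; you need this (or some equally direct comparison of the two kernels away from and at the origin) to close item (4).
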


\begin{remark}
$L_{\operatorname{even}} \subset {\mathbb{X}}$
 if $n$ is odd. 
$L_{\operatorname{even}} \cap {\mathbb{X}} = \emptyset$
 if $n$ is even.  
\end{remark}
\begin{remark}
\label{rem:CLeven}
For $(\lambda,\nu)\in L_{\operatorname{even}}$, 
 the differential symmetry breaking operators
 $\C_{\lambda,\nu}$ cannot be obtained
 as  residues
 of the (generically) regular symmetry breaking operators
 ${\mathbb{A}}_{\lambda,\nu}$
 because the coefficient $q_C^A(\lambda,\nu)$
 in Theorem \ref{thm:reduction} (2) vanishes
 if $(\lambda,\nu)\in L_{\operatorname{even}}$.  
See also Remark \ref{rem:Cres}.  
\end{remark}

\begin{proof}[Proof of Theorem \ref{thm:reduction}]
1)\enspace
By Proposition \ref{prop:ABres}, 
 the identity 
 $\tA \lambda \nu=
 q_B^A(\lambda, \nu) \tB \lambda\nu$
 holds 
 on $I(\lambda)_K$
 for any $(\lambda, \nu) \in \backslash\!\backslash$.  
Since $I(\lambda)_K$ is dense
 in the Fr{\'e}chet space
 $I(\lambda)$, 
 the identity holds on $I(\lambda)$.  
\par\noindent
2)\enspace
Both sides are zero if $\nulambda \in L_{\operatorname{even}}$
by Theorem \ref{thm:poleA}.

Suppose $\nulambda \in /\!/ \setminus L_{\operatorname{even}}$.
Then 
\index{sbon}{pplus@${p_+}$}
$\operatorname{Supp} \widetilde{K}_{\lambda,\nu} = \{[p_+]\}$
by Proposition \ref{prop:Ksupp},
and therefore $\A_{\lambda,\nu}$ is a differential operator
 by Fact \ref{fact:Diff}.
Since the dimension of symmetry breaking operators 
is at most one by Fact \ref{fact:5.2},
$\A_{\lambda,\nu}$ must be proportional to $\C_{\lambda,\nu}$.
The proportionality constant is found by using
 Proposition \ref{prop:AminK}
and \ref{prop:C11}.
\par\noindent
3)\enspace
Both sides are zero
 if $(\lambda, \nu) \in L_{\operatorname{even}} \cap {\mathbb{X}}$.
If $(\lambda, \nu) \in {\mathbb{X}} \setminus L_{\operatorname{even}}$
 then $\operatorname{Supp} \KB \lambda\nu
 =\{[p_+]\}$
 by Proposition \ref{prop:Bsupp}, 
 and therefore $\tB\lambda\nu$ is a differential operator.  
Since $\dim H(\lambda, \nu)_{\operatorname{diff}}=1$
 by Theorem \ref{thm:dimHom} (3), 
 $\tB \lambda\nu$
 and $\tC\lambda\nu$ must be proportional.  
The proportionality constant is computed by using
  Proposition \ref{prop:B1}
 and \ref{prop:C11}.  

\par\noindent
4)\enspace
The residue formula
 of a distribution of one-variable
\[
   \frac{|t|^{\lambda+\nu-n}}{\Gamma(\frac{\lambda+\nu-n+1}{2})}
  =\frac{(-1)^k}{2^k(2k-1)!!} \delta^{(2k)} (t)
\quad
 \text{for }\,\,2k=n-1-\lambda-\nu
\]
 implies the following identity
 of distributions on $S^n$:  
\[
\frac{|\eta_n|^{\lambda+\nu-n}}{\Gamma(\frac{\lambda+\nu-n+1}{2})}
=
\frac{(-1)^k}{2^k(2k-1)!!} \delta^{(2k)} (\eta_n), 
\]
because the coordinate function
 $\eta_n:S^n\to {\mathbb{R}}$
 is regular at $\eta_n=0$.  
Since $(1-\eta_0)^{-\nu}$
is a smooth function on $S^n$ if
$\nu \in -\mathbb{N}$,
we can multiply the above identity by
$2^{-\lambda+n} (1-\eta_0)^{-\nu}$,
and then get
\index{sbon}{iotaKast@$\iota_K^*$}
\index{sbon}{Kxb@$k_{\lambda,\nu}^{\mathbb{B}}$}
\[
(\iota_K^* \ttska{\lambda}{\nu})(\eta)
=
\frac{(-1)^k}{2^k(2k-1)!!} (\iota_K^* \skb{\lambda}{\nu}) (\eta),
\]
see \eqref{eqn:iKk} and \eqref{eqn:iKkB}.
\end{proof}
\begin{remark}
\label{rem:residue}
In the \lq{F-method}\rq, 
the residue formula (2) in Theorem \ref{thm:reduction}
 can be explained by the fact
 that the Taylor series expansion
 of the Gauss hypergeometric function 
 ${}_2F_1(a,b;c;z)$ terminates
 if $a \in -{\mathbb{N}}$
 (or $b \in -{\mathbb{N}}$)
 (\cite{xkeastwood60}), 
 see Proposition \ref{prop:4.3}.  
The idea of the F-method
 will be used in Chapter \ref{sec:applbl}
 to construct explicitly
 the discrete summand
 of the branching law
 of the complementary series representation
 $I(\lambda)$
 ($0<\lambda<n$)
 of $G$
when we restrict it to the subgroup $G'$.  
\end{remark}

\subsection{Functional equations satisfied by singular symmetry breaking operators
}
\label{subsec:TABC}

We set
\begin{alignat*}{2}
\index{sbon}{pATA@$p_{A}^{TA}$|textbf}
p_{A}^{TA}(\lambda,\nu)
:=& \frac{\pi^{\frac m 2}}{\Gamma(\nu)}, 
&&
\\
\index{sbon}{pCTB@$p_{C}^{TB}$|textbf}
p_{C}^{TB}(\lambda,\nu)
:=& \frac{(2k)! \pi^{m}}{2^{2k}\Gamma(\nu)\Gamma(m-\nu)}
\quad
&&\text{for }
\nulambda \in \backslash \!\backslash, 
\\
\index{sbon}{pBTC@$p_{B}^{TC}$|textbf}
p_{B}^{TC}(\lambda,\nu)
:=& \frac{2^{2l}}{(2l)!}
\quad
&&\text{for }
\nulambda \in /\!/, 
\\
\index{sbon}{pCTC@$p_{C}^{TC}$|textbf}
p_{C}^{TC}(\lambda,\nu)
:=& \frac{(-1)^{k+l}k! \pi^{\frac m 2}}
   {2^{2k-2l}l!}
\quad
&&\text{for }
\nulambda \in {\mathbb{X}}.  
\end{alignat*}

We have seen in Theorem \ref{thm:TAAT}
 that the functional identity
\index{sbon}{ttt4@$\T{\nu}{m-\nu}$}
\begin{equation}
\label{eqn:TApA}
  \T {\nu}{m-\nu} \circ \tA {\lambda}{\nu}
  =
  p_A^{TA} (\lambda,\nu) \tA{\lambda}{m-\nu}
\end{equation}
holds for all $\nulambda \in {\mathbb{C}}^2$, 
  where $\T{\nu}{m-\nu}:J(\nu) \to J(m-\nu)$
 is the normalized Knapp--Stein 
 intertwining operator
 for the subgroup $G'$.  
Combining \eqref{eqn:TApA} with the residue formulae
 in Theorem \ref{thm:reduction}, 
we obtain the following functional identities
 for (singular) symmetry breaking operators:
\begin{theorem}
[functional identities]
\label{thm:TABC}
~~~
\begin{enumerate}
\item[{\rm{1)}}]
$\T{\nu}{m-\nu} \circ \tB{\lambda}{\nu}
= p_{C}^{TB}(\lambda,\nu) \CC{\lambda}{m-\nu}$
\quad
 for $\nulambda \in \backslash\!\backslash$.  
\item[{\rm{2)}}]
$\T{\nu}{m-\nu} \circ \CC{\lambda}{\nu}
= p_{B}^{TC}(\lambda,\nu) \tB{\lambda}{m-\nu}$
\quad
 for $\nulambda \in /\!/$.  
\item[{\rm{3)}}]
\index{sbon}{XX@${{\mathbb{X}}}$}
$\T{\nu}{m-\nu} \circ \CC{\lambda}{\nu}
= p_{C}^{TC}(\lambda,\nu) \CC{\lambda}{m-\nu}$
\quad
 for $\nulambda \in {\mathbb{X}}$.  
\item[{\rm{4)}}]
$\T{\nu}{m-\nu} \circ \AAt_{\lambda,\nu}=0$
\hphantom{MMMMMMMM}
 for $\nu \in -{\mathbb{N}}$.  
\end{enumerate}
\end{theorem}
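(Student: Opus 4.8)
\textbf{Plan for Theorem \ref{thm:TABC}.}
The four functional identities all follow by combining the master identity \eqref{eqn:TApA}, i.e. Theorem \ref{thm:TAAT}, with the residue formulae of Theorem \ref{thm:reduction}, so the proof is essentially bookkeeping of the proportionality constants $q_B^A$, $q_C^A$, $q_C^B$ together with $p_A^{TA}$. The one genuinely new case is (4), which I treat first because it is conceptually the cleanest: for $\nu\in-{\mathbb{N}}$ the operator $\T{\nu}{m-\nu}:J(\nu)\to J(m-\nu)$ is given by convolution with a polynomial (Remark after \eqref{eqn:TTKS}, with $m=n-1$), hence its image is the finite-dimensional subrepresentation $F(-\nu)$ of $J(m-\nu)$. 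On the other hand $\operatorname{Image}\AAt_{\lambda,\nu}=F(-\nu)\subset J(\nu)$ by Theorem \ref{thm:image}(2-a) (equivalently Theorem \ref{thm:ImageF}), and $F(-\nu)$ is precisely the kernel of the Knapp--Stein operator $\T{\nu}{m-\nu}$ since $F(-\nu)$ is the unique submodule of $J(\nu)$ while $J(m-\nu)$ contains no finite-dimensional submodule. Therefore $\T{\nu}{m-\nu}\circ\AAt_{\lambda,\nu}=0$. Alternatively, and without invoking Theorem \ref{thm:image}, one can argue on spherical vectors: by \eqref{eqn:AA1} we have $\AAt_{\lambda,\nu}(\mathbf{1}_\lambda)$ proportional to $\mathbf{1}_\nu$, and by Proposition \ref{prop:T1} the composite sends $\mathbf{1}_\lambda$ to a multiple of $\mathbf{1}_{m-\nu}$ with coefficient $\tfrac{\pi^{(n-1)/2}\Gamma(\frac{\lambda-\nu}{2})}{\Gamma(\lambda)}\cdot\tfrac{\pi^{m/2}}{\Gamma(\nu)}$; since $\nu\in-{\mathbb{N}}$ makes $\Gamma(\nu)=\infty$, this coefficient is zero, and because $\operatorname{Hom}_{G'}(I(\lambda),J(m-\nu))$ is at most one-dimensional for generic $\lambda$ (Proposition \ref{prop:upperdim}, noting $(\lambda,m-\nu)\notin/\!/$ generically) the composite vanishes identically; the general case follows by holomorphic continuation in $\lambda$.

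For (1), start from \eqref{eqn:TApA} at a parameter in $\backslash\!\backslash$: $\T{\nu}{m-\nu}\circ\tA{\lambda}{\nu}=p_A^{TA}(\lambda,\nu)\,\tA{\lambda}{m-\nu}$. On the left, substitute Theorem \ref{thm:reduction}(1), $\tA{\lambda}{\nu}=q_B^A(\lambda,\nu)\tB{\lambda}{\nu}$. On the right, note that when $(\lambda,\nu)\in\backslash\!\backslash$ the point $(\lambda,m-\nu)$ satisfies $\lambda+(m-\nu)=\lambda-\nu+m$, and since $\lambda+\nu=m-2k$ we get $\lambda-\nu=m-2k-2\nu$, so $(\lambda,m-\nu)$ lies in $/\!/$ exactly when $\lambda-(m-\nu)\in-2{\mathbb{N}}$; one checks $\nu-\lambda=(m-\nu)-\lambda$... more directly, $(\lambda,m-\nu)\in/\!/$ with $2l':=(m-\nu)-\lambda=2k+2\nu$ — wait, this requires $\nu$ to be a specific value. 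The cleaner route: apply Theorem \ref{thm:reduction}(2) to $\tA{\lambda}{m-\nu}$ only when $(\lambda,m-\nu)\in/\!/$, which holds precisely when $\nulambda\in\mathbb{X}$ (since $\backslash\!\backslash\cap//$-shifted). In general one argues that $\tA{\lambda}{m-\nu}$ for $\nulambda\in\backslash\!\backslash$ is a scalar multiple of $\CC{\lambda}{m-\nu}$ because its support is $\{[p_+]\}$ (Proposition \ref{prop:Ksupp}, using $\lambda-(m-\nu)\in-2{\mathbb{N}}$ which is forced here) and $\dim H(\lambda,m-\nu)_{\operatorname{diff}}=1$; matching the constant on $\mathbf{1}_\lambda$ via Propositions \ref{prop:AminK} and \ref{prop:C11} identifies it. Assembling gives $q_B^A(\lambda,\nu)\,\T{\nu}{m-\nu}\tB{\lambda}{\nu}=p_A^{TA}(\lambda,\nu)\cdot(\text{const})\cdot\CC{\lambda}{m-\nu}$, and solving for $\T{\nu}{m-\nu}\tB{\lambda}{\nu}$ yields $p_C^{TB}(\lambda,\nu)\CC{\lambda}{m-\nu}$ after simplifying the product of Gamma factors, $(2k-1)!!$, and powers of $2$. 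Item (2) is the mirror image: start from \eqref{eqn:TApA} at a point in $/\!/$, use Theorem \ref{thm:reduction}(2) on both the input $\tA{\lambda}{\nu}=q_C^A(\lambda,\nu)\CC{\lambda}{\nu}$ and, for the output, observe $(\lambda,m-\nu)\in\backslash\!\backslash$, so $\tA{\lambda}{m-\nu}=q_B^A(\lambda,m-\nu)\tB{\lambda}{m-\nu}$; divide through by $q_C^A(\lambda,\nu)$ (nonzero off $L_{\operatorname{even}}$ by Lemma \ref{lem:qACzero}, then extend by continuity) to obtain the stated coefficient $p_B^{TC}$. Item (3) follows either by composing (1) with Theorem \ref{thm:reduction}(3), or directly from (2) applied at $\nulambda\in\mathbb{X}$ followed by Theorem \ref{thm:reduction}(3) on the output $\tB{\lambda}{m-\nu}$; consistency of the two derivations is guaranteed by Lemma \ref{lem:qACzero}(3).

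The main obstacle is none of the structural steps — those are immediate from the cited results — but rather the \emph{constant chase}: verifying that the ratios of Gamma functions, double factorials $(2k-1)!!$, factorials $l!$, $(2l)!$, and powers of $2$ and $\pi$ combine exactly into the clean expressions $p_C^{TB}$, $p_B^{TC}$, $p_C^{TC}$. This is where one must be careful with the duplication formula $\Gamma(\frac{a}{2})\Gamma(\frac{a+1}{2})=2^{1-a}\sqrt{\pi}\,\Gamma(a)$ and with the identities $\Gamma(\nu)\Gamma(m-\nu)=\tfrac{\pi^m}{(\text{normalizing const of }\T{\nu}{m-\nu}\circ\T{m-\nu}{\nu})}$ from \eqref{eqn:TTKS}. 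I would organize this by first recording, for $\nulambda\in\mathbb{X}$, the three relations $2k=m-\lambda-\nu$ and $2l=\nu-\lambda$ so that $\lambda=\tfrac{m-2k-2l}{2}$, $\nu=\tfrac{m-2k+2l}{2}$, then express every constant purely in terms of $k,l,m$, at which point the claimed equalities become finite identities among elementary factors that can be checked termwise. Once the constants are pinned on the dense subspace $I(\lambda)_K$ via the spherical-vector computations (Propositions \ref{prop:AminK}, \ref{prop:B1}, \ref{prop:C11}), density of $I(\lambda)_K$ in $I(\lambda)$ together with continuity of all operators involved (Proposition \ref{prop:CW}) upgrades each identity to the full Fréchet representations, and holomorphy in the remaining free parameter disposes of the exceptional loci.
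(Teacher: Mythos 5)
Your strategy coincides with the paper's: substitute the residue formulae of Theorem \ref{thm:reduction} into the master identity \eqref{eqn:TApA} (Theorem \ref{thm:TAAT}), verify the resulting Gamma-factor identities, and settle exceptional parameters by density of $I(\lambda)_K$ and holomorphy. Where you deviate, the deviations are sound: for (2) you divide by $q_C^A(\lambda,\nu)$ and remove $L_{\operatorname{even}}$ by holomorphic continuation along the line $\nu-\lambda=2l$, whereas the paper avoids any division by applying its part (1) at the reflected point $(\lambda,m-\nu)$, composing with $\T{\nu}{m-\nu}$, and invoking \eqref{eqn:TTKS}; for (4) your image/kernel argument replaces the paper's one-line observation that $\T{\nu}{m-\nu}\circ\AAt_{\lambda,\nu}=\Gamma(\tfrac{\lambda-\nu}{2})\,p_A^{TA}(\lambda,\nu)\,\tA{\lambda}{m-\nu}$ vanishes for $\lambda-\nu\notin-2\mathbb{N}$ because $1/\Gamma(\nu)=0$, followed by holomorphic continuation in $\lambda$. (Your route borrows Theorem \ref{thm:ImageF}, which appears later in the paper but is proved independently of the present theorem, so there is no circularity.)

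Three local repairs are needed, none fatal. First, in (1) your hesitation is groundless: the reflection $\nu\mapsto m-\nu$ always exchanges $\backslash\!\backslash$ and $/\!/$, since for $\nulambda\in\backslash\!\backslash$ one has $(m-\nu)-\lambda=2k\in2\mathbb{N}$ (your ``$2l'=2k+2\nu$'' is an arithmetic slip); hence Theorem \ref{thm:reduction}(2) applies at $(\lambda,m-\nu)$ for every point of $\backslash\!\backslash$, and the support-plus-spherical-vector fallback is unnecessary (it would also fail to pin the constant at $\lambda\in-\mathbb{N}$, where $\mathbf{1}_\lambda$ is annihilated). Second, the opening sentence of your argument for (4) is false: for $\nu\in-\mathbb{N}$ the operator $\T{\nu}{m-\nu}:J(\nu)\to J(m-\nu)$ is not given by convolution with a polynomial (that description applies to $\T{m-\nu}{\nu}$ in the opposite direction), and its image cannot be a finite-dimensional subrepresentation of $J(m-\nu)$, since $J(m-\nu)$ has none --- its unique subrepresentation is $T(-\nu)$. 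The rest of your paragraph never uses this sentence; all you need is $F(-\nu)\subset\operatorname{Ker}\T{\nu}{m-\nu}$, which follows exactly as you say from the absence of finite-dimensional submodules of $J(m-\nu)$. In the alternative spherical-vector argument, state explicitly that vanishing on $\mathbf{1}_\lambda$ forces the composite to vanish only because the generator $\A_{\lambda,m-\nu}$ of the (generically one-dimensional) space $\operatorname{Hom}_{G'}(I(\lambda),J(m-\nu))$ does not annihilate $\mathbf{1}_\lambda$ (Proposition \ref{prop:AminK}). Third, in (3) the output $\tB{\lambda}{m-\nu}$ must be converted via Theorem \ref{thm:reduction}(3) evaluated at the reflected point $(\lambda,m-\nu)\in\mathbb{X}$, and your first alternative (composing (1) with the residue formula at $(\lambda,\nu)$) again involves division by $q_C^B(\lambda,\nu)$, which vanishes on $L_{\operatorname{even}}$ by Lemma \ref{lem:qACzero}(2), so it needs the same holomorphy caveat as (2); the compatibility of the two derivations is a direct Gamma-factor identity rather than an instance of Lemma \ref{lem:qACzero}(3).
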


\begin{proof}
1)\enspace
First of all, 
 we observe 
\[
  \nulambda \in /\!/
  \,\,
  \Leftrightarrow
  \,\,
  (\lambda,m-\nu) \in \backslash \!\backslash.  
\]
Applying the residue formulae
 in Theorem \ref{thm:reduction} (1) and (2)
 to the identity \eqref{eqn:TApA}, 
 we have 
\index{sbon}{qBA@$q_B^A$}
\[
q_B^A(\lambda,\nu)\T{\nu}{m-\nu}\circ \tB\lambda\nu
=
p_A^{TA}(\lambda,\nu)q_C^A(\lambda,m-\nu)
\tC{\lambda}{m-\nu}
\quad
\text{for }(\lambda,\nu)\in /\!/.  
\]
Since $q_B^A(\lambda,\nu)\ne 0$, 
the first statement follows from the elementary identity
\index{sbon}{qCA@$q_C^A$}
\[
q_B^A(\lambda,\nu)
p_C^{TB}(\lambda,\nu)
=
p_A^{TA}(\lambda,\nu)
q_C^A(\lambda,m-\nu)
\quad
\text{ on } /\!/.  
\]

\par\noindent
2)\enspace
Apply Theorem \ref{thm:TABC} (1) 
 to the identity \eqref{eqn:TApA}
 with $(\lambda, m-\nu) \in \backslash\!\backslash$,
and compose
$\T{\nu}{m-\nu}$.
Now we have
\[
\T{\nu}{m-\nu}
\circ 
\T{m-\nu}{\nu}
\circ
\tB{\lambda}{m-\nu}
=
p_C^{TB}(\lambda,m-\nu)
\tC{\lambda}{\nu}
\quad
\text{for }
(\lambda,\nu) \in /\!/.  
\]
The second statement follows now from the identity \eqref{eqn:TTKS}
of the Knapp--Stein intertwining operator.
\par\noindent
3)\enspace
We apply Theorem \ref{thm:reduction} (3)
 to the second identity,
 and obtain 
\index{sbon}{qCB@$q_C^B$}
\[
\T{\nu}{m-\nu} \circ \tC{\lambda}{\nu}
=
p_B^{TC} (\lambda,\nu) q_C^B(\lambda,\nu) 
\tC{\lambda}{m-\nu}
\quad
\text{for }
(\lambda,\nu) \in {\mathbb{X}}.   
\]
Now the statement follows from the elementary identity
\index{sbon}{pCTC@$p_{C}^{TC}$}
\index{sbon}{pBTC@$p_{B}^{TC}$}
\[
p_C^{TC}(\lambda,\nu)
=
 p_B^{TC}(\lambda,\nu)
q_C^B(\lambda,\nu) 
\quad
\text{on }
{\mathbb{X}}.  
\]
\par\noindent
4)\enspace
By the definition of the renormalized operator
 $\AAt_{{\lambda},{\nu}}$
 (see \eqref{eqn:K2lmdnu}), 
 we have 
\index{sbon}{pATA@$p_{A}^{TA}$}
\[
\T{\nu}{m-\nu}
\circ
\AAt_{{\lambda},{\nu}}
=
\Gamma(\frac{\lambda-\nu}{2})
p_A^{TA}(\lambda,\nu)
\tA{\lambda}{m-\nu}
\quad
\text{for }
\nu \in -{\mathbb{N}}.  
\]
Therefore
\begin{equation}
\label{eqn:TA0}
\T{\nu}{m-\nu}\circ \AAt_{{\lambda},{\nu}}=0
\end{equation}
for $\nu \in -{\mathbb{N}}$
 and $\lambda-\nu \notin -2{\mathbb{N}}$.  
Since the left-hand side \eqref{eqn:TA0} depends
holomorphically on $\lambda \in {\mathbb{C}}$
 with fixed $\nu \in -{\mathbb{N}}$, 
 we proved the last statement.  
\end{proof}
If $m-2\nu \in 2 {\mathbb{N}}$ 
then $\T{\nu}{m-\nu}$
 is reduced to a differential operator
 (see \eqref{eqn:KSres}).  
In this case,
 Theorem \ref{thm:TABC} (1) and (3) reduce to:
\begin{corollary}
[functional identities
 with differential intertwining operators]
\label{cor:DCC}
~~~
\par\noindent
{\rm{1)}}\enspace
If $\nulambda \in \backslash\!\backslash$, 
 and 
 $m-2 \nu \in 2{\mathbb{N}}$, 
then 
\[
  \Delta_{{\mathbb{R}}^m}^{\frac m 2-\nu} \circ \B_{{\lambda},{\nu}}
  =
  \frac{(-1)^{\frac m 2-\nu}2^{\lambda-\nu}(2k)!\pi^{\frac m 2}}
  {\Gamma(\nu)}
  \CC{\lambda}{m-\nu}.  
\]
\par\noindent
{\rm{2)}}\enspace
For $\nulambda \in\mathbb{X}$
 such that $\nu < \frac{n-1}{2}$,
we set $k,l \in \mathbb{N}$ by \eqref{eqn:ldef} and \eqref{eqn:kdef}.
Then $k>l$ and
\[
\Delta_{\mathbb{R}^m}^{k-l} \circ \C_{\lambda,\nu}
= \frac{k!}{l!} \C_{\lambda,-\nu+n-1}.
\]
\end{corollary}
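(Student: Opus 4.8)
The plan is to derive Corollary \ref{cor:DCC} directly from the functional identities in Theorem \ref{thm:TABC} by specializing the normalized Knapp--Stein operator $\T{\nu}{m-\nu}$ to the case where it becomes a power of the Laplacian. First I would record the reduction formula \eqref{eqn:KSres}: when $\frac m 2 - \nu =: j \in \mathbb{N}$, we have $\T{\nu}{m-\nu} = \frac{(-1)^j \pi^{\frac m 2}}{2^{2j}\Gamma(\frac m 2 + j)} \Delta_{\mathbb{R}^m}^j$, equivalently $\Delta_{\mathbb{R}^m}^j = \frac{(-1)^j 2^{2j}\Gamma(\frac m 2 + j)}{\pi^{\frac m 2}} \T{\nu}{m-\nu} = \frac{(-1)^j 2^{2j}\Gamma(m-\nu)}{\pi^{\frac m 2}}\T{\nu}{m-\nu}$ since $\frac m 2 + j = m-\nu$. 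Substituting this into the left-hand sides of Theorem \ref{thm:TABC} (1) and (3) and clearing the scalar factors is the whole content; no new geometry is needed.

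For part (1), the hypotheses are $\nulambda \in \backslash\!\backslash$ and $\frac m 2 - \nu \in \mathbb{N}$. I would write $j = \frac m 2 - \nu$, so $m - \nu = \frac m 2 + j$ and $\Delta_{\mathbb{R}^m}^{\frac m 2 - \nu}\circ \B_{\lambda,\nu} = \frac{(-1)^{\frac m2 -\nu} 2^{m-2\nu}\Gamma(m-\nu)}{\pi^{\frac m2}}\,\T{\nu}{m-\nu}\circ\B_{\lambda,\nu}$. Then Theorem \ref{thm:TABC} (1) gives $\T{\nu}{m-\nu}\circ \B_{\lambda,\nu} = p_C^{TB}(\lambda,\nu)\,\C_{\lambda,m-\nu}$ with $p_C^{TB}(\lambda,\nu) = \frac{(2k)!\pi^m}{2^{2k}\Gamma(\nu)\Gamma(m-\nu)}$. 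Multiplying the two scalar factors, the $\Gamma(m-\nu)$ cancels, one copy of $\pi^{\frac m 2}$ survives, and the powers of $2$ combine to $2^{m-2\nu-2k} = 2^{\lambda-\nu}$ using $2k = n-1-\lambda-\nu = m-\lambda-\nu$, so $m - 2\nu - 2k = \lambda - \nu$. This yields exactly the stated constant $\frac{(-1)^{\frac m2-\nu}2^{\lambda-\nu}(2k)!\pi^{\frac m2}}{\Gamma(\nu)}$. The only subtlety worth a sentence is that both sides are continuous operators on the Fréchet space $I(\lambda)$ depending holomorphically on the remaining parameter under the constraint $\lambda+\nu = n-1-2k$, so it suffices to verify the identity where all objects are unambiguously defined, and then invoke the holomorphic dependence established in Theorems \ref{thm:poleA}, \ref{thm:BK}, and Fact \ref{fact:5.2}.

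For part (2), the assumptions $\nulambda \in \mathbb{X}$ and $\nu < \frac{n-1}{2}$ mean $k = \frac12(n-1-\lambda-\nu) \in \mathbb{N}$, $l = \frac12(\nu-\lambda)\in\mathbb{N}$, and $\frac m 2 - \nu = \frac{n-1}{2} - \nu = k - l$, which is a nonnegative integer precisely because $\nu < \frac{n-1}{2}$ forces $k > l$ (indeed $k - l = \frac{n-1}{2}-\nu > 0$ and $k-l\in\mathbb{Z}$). So again $\Delta_{\mathbb{R}^m}^{k-l} = \frac{(-1)^{k-l}2^{2(k-l)}\Gamma(m-\nu)}{\pi^{\frac m2}}\T{\nu}{m-\nu}$, and Theorem \ref{thm:TABC} (3) gives $\T{\nu}{m-\nu}\circ\C_{\lambda,\nu} = p_C^{TC}(\lambda,\nu)\C_{\lambda,m-\nu}$ with $p_C^{TC}(\lambda,\nu) = \frac{(-1)^{k+l}k!\pi^{\frac m2}}{2^{2k-2l}l!}$. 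Combining the constants: the powers of $2$ cancel ($2^{2(k-l)}\cdot 2^{-(2k-2l)} = 1$), the $\pi^{\frac m2}$ factors cancel, and the signs combine as $(-1)^{k-l}(-1)^{k+l} = (-1)^{2k} = 1$, so after also cancelling $\Gamma(m-\nu)$ — which I should double check is legitimate, i.e. that $\Gamma(m-\nu)$ is finite and nonzero, which holds since $m-\nu = \frac{n-1}{2} + (k-l)$ need not be a nonpositive integer in general; if it ever is, one passes to the limit by holomorphy again — we are left with $\frac{k!}{l!}\,\C_{\lambda,-\nu+n-1}$, as claimed. I expect the only real bookkeeping obstacle to be keeping the index conventions straight: which of $k$, $l$, $j = \frac m2 - \nu$ equals which combination, and verifying that $m-\nu$, $-\nu+n-1 = m+1-\nu$, and the various half-integer shifts are consistent; everything else is a one-line substitution of \eqref{eqn:KSres} into Theorem \ref{thm:TABC}.
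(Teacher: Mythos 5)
Your part (1) is correct and is precisely the paper's own argument: substitute the residue formula \eqref{eqn:KSres} into Theorem \ref{thm:TABC} (1); your bookkeeping ($m-2\nu-2k=\lambda-\nu$, cancellation of $\Gamma(m-\nu)$ against the one present in $p_C^{TB}$) closes correctly.

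Part (2), however, has a genuine gap in the constant chase. With the printed value $p_C^{TC}(\lambda,\nu)=\frac{(-1)^{k+l}k!\,\pi^{m/2}}{2^{2k-2l}l!}$ and with $\Delta_{\mathbb{R}^m}^{k-l}=\frac{(-1)^{k-l}2^{2(k-l)}\Gamma(m-\nu)}{\pi^{m/2}}\,\T{\nu}{m-\nu}$, your substitution produces the constant $\Gamma(m-\nu)\,k!/l!$, not $k!/l!$: the factor $\Gamma(m-\nu)$ has nothing to cancel against, and ``finite and nonzero'' is not a reason for it to disappear (for instance $n=4$, $k=1$, $l=0$ gives $(\lambda,\nu)=(\tfrac12,\tfrac12)$ and $\Gamma(m-\nu)=\Gamma(\tfrac52)\neq 1$). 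The underlying issue is that the constant in Theorem \ref{thm:TABC} (3) as printed is not consistent with the paper's other formulas: re-deriving it as Theorem \ref{thm:TABC} (2) followed by Theorem \ref{thm:reduction} (3) applied at the point $(\lambda,m-\nu)\in\mathbb{X}$ (where $k$ and $l$ exchange roles), or testing both sides on the spherical vector via Propositions \ref{prop:T1} and \ref{prop:C11} (e.g.\ at $(\tfrac12,\tfrac12)$ for $n=4$), gives $\T{\nu}{m-\nu}\circ\C_{\lambda,\nu}=\frac{(-1)^{k+l}k!\,\pi^{m/2}}{2^{2k-2l}\,l!\,\Gamma(m-\nu)}\,\CC{\lambda}{m-\nu}$, i.e.\ the printed $p_C^{TC}$ is missing the factor $\Gamma(m-\nu)^{-1}$. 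This is exactly why the paper's own proof of part (2) does not invoke Theorem \ref{thm:TABC} (3) at all: it uses \eqref{eqn:KSres}, then Theorem \ref{thm:TABC} (2), $\T{\nu}{m-\nu}\circ\C_{\lambda,\nu}=\frac{2^{2l}}{(2l)!}\,\B_{\lambda,m-\nu}$, and then $\B_{\lambda,m-\nu}=q_C^B(\lambda,m-\nu)\,\CC{\lambda}{m-\nu}=\frac{(-1)^{k-l}\pi^{m/2}k!\,(2l-1)!!}{2^{2k-l}\,\Gamma(m-\nu)}\,\CC{\lambda}{m-\nu}$; there the $\Gamma(m-\nu)$ coming from \eqref{eqn:KSres} genuinely cancels, and the constant collapses to $k!/l!$ via $(2l)!=2^l\,l!\,(2l-1)!!$. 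So either re-derive and correct the constant of Theorem \ref{thm:TABC} (3) before using it, or argue as the paper does. A minor further point: your appeals to holomorphic continuation are out of place in (2), since $\mathbb{X}$ is a discrete set and, under the hypothesis $\nu<\frac{n-1}{2}$, one has $m-\nu=\frac m2+(k-l)>0$, so $\Gamma(m-\nu)$ is automatically finite and no limiting argument is needed.
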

\begin{proof}
1)\enspace
By the residue formula \eqref{eqn:KSres} of the Knapp--Stein
intertwining operator,
 we have
\[
\T{\nu}{m-\nu}
=
\frac{(-1)^{\frac m2 -\nu}\pi^{\frac m 2}}{2^{m-2\nu}\Gamma(m-\nu)}
\Delta_{\mathbb{R}^m}^{\frac m 2-\nu}
\]
if $m-2\nu \in 2 {\mathbb{N}}$.  
Combining this
 with Theorem \ref{thm:TABC} (1), 
 we get the desired identity.  
\par\noindent
2)\enspace
By the residue formula \eqref{eqn:KSres} of the Knapp--Stein
intertwining operator,
we have
\[
\T{{\frac{m}{2}-(k-l)}}{\frac m 2+(k-l)}
= \frac{(-1)^{k-l} \pi^{\frac{m}{2}}}
          {2^{2k-2l} \Gamma(\frac{m}{2}+k-l)}
  \Delta_{\mathbb{R}^m}^{k-l}.
\]
On the other hand,
in view that $(\lambda,-\nu+m)\in\mathbb{X}$,
we have from Theorem \ref{thm:reduction} (3):
\index{sbon}{qCB@$q_C^B$}
\[
\B_{\lambda,-\nu+m}
= q_C^B(\lambda,-\nu+m) \C_{\lambda,-\nu+m}.
\]
Now Corollary follows from Theorem \ref{thm:TABC} (2).
\end{proof}

Next,
 we consider the Knapp--Stein intertwining operator
 $\T{n-\lambda}{\lambda}:I(n-\lambda) \to I(\lambda)$
 for the group $G$.  
We have seen in Theorem \ref{thm:TAAT}
 that the functional identity
\[
\tA{\lambda}{\nu} \circ \T{n-\lambda}{\lambda}
=\frac{\pi^{\frac n 2}}{\Gamma(\lambda)}\tA{n-\lambda}{\nu}
\]
holds for all $\nulambda \in {\mathbb{C}}^2$.  
By the residue formulae
 in Theorem \ref{thm:reduction}, 
we also obtain the functional identities
 among singular symmetry breaking operators
 $\B_{\lambda, \nu}$, $\CC{\lambda}{\nu}$, 
etc.  
However,
 we do not have formulae
 like Theorem \ref{thm:TABC}
 (2) or (3)
 that switch $\B$ and $\C$
 because the inversion 
 $(\lambda,\nu)\mapsto(n-\lambda,\nu)$
 does not exchange 
 $\backslash\!\backslash$ and $/\!/$
 whereas the inversion 
$(\lambda,\nu)\mapsto (\lambda,m-\nu)$
 did so.  
Thus we write down the reduction formulae
 only when $\T{n-\lambda}{\lambda}$
 reduces to a differential operator.  
\begin{corollary}
\label{cor:10.6}
{\rm{1)}}\enspace
If $\lambda= \frac n 2+j$ for some $j \in {\mathbb{N}}$, 
then 
\[
\A_{\lambda, \nu} \circ \Delta_{\mathbb{R}^n}^j
=(-1)^j2^{2j} \A_{n-\lambda, \nu}.  
\]
\par\noindent
{\rm{2)}}\enspace
If $(\lambda,\nu)=(\frac n 2+j,\frac n 2-1-j-2k)$
 for some $j, k \in {\mathbb{N}}$, 
then 
\[
\B_{\lambda, \nu} \circ \Delta_{\mathbb{R}^n}^j
=\frac{2^{j}(2k-1)!!}{(2j+2k-1)!!} \B_{n-\lambda, \nu}.  
\]
\par\noindent
{\rm{3)}}\enspace
If $(\lambda,\nu)=(\frac n 2+j,\frac n 2+j+2l)$
 for some $j, l \in {\mathbb{N}}$, 
then 
\[
\CC{\lambda}{\nu} \circ \Delta_{\mathbb{R}^n}^j
=\frac{(l+j)!}{l!} \CC{n-\lambda}{\nu}.  
\]
\end{corollary}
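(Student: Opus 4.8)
\textbf{Proof proposal for Corollary \ref{cor:10.6}.}

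The plan is to deduce all three identities from the functional equation of Theorem \ref{thm:TAAT}, namely $\A_{n-\lambda,\nu}\circ\T{\lambda}{n-\lambda}=\frac{\pi^{n/2}}{\Gamma(n-\lambda)}\A_{\lambda,\nu}$, by specializing $\lambda$ so that the Knapp--Stein operator $\T{\lambda}{n-\lambda}$ for $G$ degenerates to a power of the Laplacian, and then applying the residue formulae of Theorem \ref{thm:reduction} to convert the regular operators $\A$ into the singular operators $\B$ and $\C$. The first step is to rewrite Theorem \ref{thm:TAAT} with $\lambda$ replaced by $n-\lambda$: for all $(\lambda,\nu)\in\mathbb{C}^2$,
\[
\A_{\lambda,\nu}\circ\T{n-\lambda}{\lambda}=\frac{\pi^{n/2}}{\Gamma(\lambda)}\A_{n-\lambda,\nu}.
\]
Now specialize $\lambda=\frac n2+j$ with $j\in\mathbb{N}$, so $n-\lambda=\frac n2-j$. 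By the residue formula \eqref{eqn:KSres} of the Knapp--Stein intertwining operator (with $m=n$), we have $\T{\frac n2-j}{\frac n2+j}=\frac{(-1)^j\pi^{n/2}}{2^{2j}\Gamma(\frac n2+j)}\Delta_{\mathbb{R}^n}^j$. Substituting this and $\Gamma(\lambda)=\Gamma(\frac n2+j)$ into the displayed identity, the Gamma factors and $\pi^{n/2}$ cancel and we obtain part (1): $\A_{\lambda,\nu}\circ\Delta_{\mathbb{R}^n}^j=(-1)^j2^{2j}\A_{n-\lambda,\nu}$.

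For parts (2) and (3) I would feed the appropriate residue formulae of Theorem \ref{thm:reduction} into the identity of part (1). For (2), take $(\lambda,\nu)=(\frac n2+j,\frac n2-1-j-2k)$; then $\lambda+\nu=n-1-2k$, so $(\lambda,\nu)\in\backslash\!\backslash$, and one checks that $n-\lambda+\nu=\frac n2-j+\frac n2-1-j-2k=n-1-2(j+k)$, so $(n-\lambda,\nu)\in\backslash\!\backslash$ as well with its $k$-parameter equal to $j+k$. Applying Theorem \ref{thm:reduction} (1) on both sides of part (1) — $\A_{\lambda,\nu}=q_B^A(\lambda,\nu)\B_{\lambda,\nu}$ with $q_B^A(\lambda,\nu)=\frac{(-1)^k}{2^k(2k-1)!!}$, and $\A_{n-\lambda,\nu}=q_B^A(n-\lambda,\nu)\B_{n-\lambda,\nu}$ with $q_B^A(n-\lambda,\nu)=\frac{(-1)^{j+k}}{2^{j+k}(2j+2k-1)!!}$ — and cancelling $q_B^A(\lambda,\nu)$ (which is nonzero), we get
\[
\B_{\lambda,\nu}\circ\Delta_{\mathbb{R}^n}^j
=\frac{(-1)^j2^{2j}\,q_B^A(n-\lambda,\nu)}{q_B^A(\lambda,\nu)}\,\B_{n-\lambda,\nu}
=\frac{2^{j}(2k-1)!!}{(2j+2k-1)!!}\,\B_{n-\lambda,\nu},
\]
which is the claimed formula; the sign works out because $(-1)^j(-1)^{j+k}/(-1)^k=1$ and the powers of $2$ give $2^{2j}\cdot 2^{-(j+k)}/2^{-k}=2^{j}$. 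Part (3) is entirely parallel: for $(\lambda,\nu)=(\frac n2+j,\frac n2+j+2l)$ one has $\nu-\lambda=2l$, so $(\lambda,\nu)\in/\!/$ with parameter $l$, while $\nu-(n-\lambda)=2(l+j)$, so $(n-\lambda,\nu)\in/\!/$ with parameter $l+j$; applying Theorem \ref{thm:reduction} (2), $\A_{\lambda,\nu}=q_C^A(\lambda,\nu)\C_{\lambda,\nu}$ with $q_C^A(\lambda,\nu)=\frac{(-1)^l l!\pi^{m/2}}{2^{2l}\Gamma(\nu)}$ and similarly $\A_{n-\lambda,\nu}=q_C^A(n-\lambda,\nu)\C_{n-\lambda,\nu}$ with $l$ replaced by $l+j$ (note $\nu$ is unchanged), and cancelling the nonzero $q_C^A(\lambda,\nu)$ — which is nonzero since $(\lambda,\nu)\notin L_{\operatorname{even}}$ when $\lambda=\frac n2+j\notin-\mathbb{N}$ — gives $\C_{\lambda,\nu}\circ\Delta_{\mathbb{R}^n}^j=\frac{(l+j)!}{l!}\C_{n-\lambda,\nu}$.

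The routine but essential bookkeeping is the verification that in each case both $(\lambda,\nu)$ and $(n-\lambda,\nu)$ lie in the relevant stratum ($\backslash\!\backslash$ or $/\!/$), with the correct integer parameters, and that the relevant coefficient $q_B^A$ or $q_C^A$ is nonzero so that it may be cancelled; I expect the main (mild) obstacle to be tracking the signs and powers of $2$ in the ratios $q_B^A(n-\lambda,\nu)/q_B^A(\lambda,\nu)$ and $q_C^A(n-\lambda,\nu)/q_C^A(\lambda,\nu)$ carefully, together with confirming that no $L_{\operatorname{even}}$ degeneracy occurs (which is automatic here because $\lambda=\frac n2+j$ is never a non-positive integer, so $q_C^A(\lambda,\nu)\ne0$, and $q_B^A$ is always nonzero). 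One subtlety worth stating explicitly: the identities are first established on the dense subspace $I(\lambda)_K$ via Propositions \ref{prop:AminK}, \ref{prop:B1}, \ref{prop:C11} and the residue formulae, and then extended to all of $I(\lambda)$ by continuity, exactly as in the proof of Theorem \ref{thm:reduction}.
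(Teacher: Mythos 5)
Your proposal is correct and follows exactly the route the paper intends (the paper leaves the proof of Corollary \ref{cor:10.6} implicit, but the paragraph preceding it prescribes precisely this argument): specialize the functional equation $\A_{\lambda,\nu}\circ\T{n-\lambda}{\lambda}=\frac{\pi^{n/2}}{\Gamma(\lambda)}\A_{n-\lambda,\nu}$ of Theorem \ref{thm:TAAT} at $\lambda=\frac n2+j$, where the $G$-side Knapp--Stein operator reduces to $\Delta_{\mathbb{R}^n}^j$ by the analogue of \eqref{eqn:KSres}, and then convert $\A$ into $\B$ and $\C$ via the residue formulae of Theorem \ref{thm:reduction}. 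Your bookkeeping of the parameters ($k\mapsto j+k$, $l\mapsto l+j$), the constants, and the nonvanishing of $q_B^A$ and $q_C^A$ is accurate.
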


\begin{remark}\label{rem:DCC}
The identity in Corollary \ref{cor:DCC} (2)
 and Corollary \ref{cor:10.6} (3) is called
 {\it{factorization identities}} in
\cite[Chapter 6]{Juhl}.
\end{remark}

\section{Image of symmetry breaking operators}
\label{sec:Image}

The spherical principal series representation 
\index{sbon}{Jnu@${J(\nu)}$}
$J(\nu)$
of $G'=O(n,1)$ is not irreducible when
$\nu \in -\mathbb{N}$ or
$\nu - m \in \mathbb{N}$ where
$n = m+1$ as before.
In this chapter, 
 we determine the images of all of our symmetry breaking operators
$\A_{\lambda,\nu}$, $\AAt_{\lambda,\nu}$,
$\B_{\lambda,\nu}$, $\BB_{\lambda,\nu}$,
and $\C_{\lambda,\nu}$
 completely
 at the level of $({\mathfrak {g}}', K')$-modules,
 and obtain a partial information
 on their kernels
 when
$\nu \in (-\mathbb{N}) \cup (m+\mathbb{N})$.

\subsection{Finite-dimensional image for $\nu \in -\mathbb{N}$}
\label{subsec:ImageF}

For $\nu = -j \in -\mathbb{N}$,
we recall from Section \ref{subsec:matrix}
 that there is a non-splitting exact sequence
\index{sbon}{Tj@$T(j)$}
\index{sbon}{Fj@$F(j)$}
\[
0 \to F(j) \to J(-j) \to T(j) \to 0
\]
of $G'$-modules.
Therefore, 
the closure of the image
 of the symmetry breaking operators
 $\A_{\lambda,\nu}$, 
 $\AAt_{\lambda,\nu}$, 
 $\B_{\lambda,\nu}$, 
 $\BB_{\lambda,\nu}$, 
 and 
 $\C_{\lambda,\nu}$
 must be one of $\{0\}$, 
 $F(j)$ or $J(-j)$.   
\begin{theorem}
[images of symmetry breaking operators]
\label{thm:ImageF}
For $\nu = -j \in -\mathbb{N}$,
the images of the underlying 
$({\mathfrak {g}}, K)$-modules
 $I(\lambda)_K$ 
 under the symmetry breaking operators
 are given as follows:
\index{sbon}{Alnt@$\A_{\lambda,\nu}$}
\index{sbon}{Att@$\AAt_{\lambda,\nu}$}
\index{sbon}{Bt@$\B_{\lambda,\nu}$}
\index{sbon}{Btt@$\BB_{\lambda,\nu}$}
\index{sbon}{Ct@$\C_{\lambda,\nu}$}
\begin{alignat*}{3}
&\text{{\rm{1)}}}\qquad\qquad\qquad
&&\operatorname{Image} \A_{\lambda,\nu}
&&
= \begin{cases}
   F(j)   & \text{if\/ $\nulambda \notin L_{\operatorname{even}}$},
        \\
                       \{0\}   & \text{if\/ $\nulambda \in L_{\operatorname{even}}$}.  
      \end{cases}
\\
&\text{{\rm{2)}}}
&&\operatorname{Image} \AAt_{\lambda,\nu}       
&&= F(j)
\quad
\text{for any }\lambda \in {\mathbb{C}}.  
\\
&\text{{\rm{3)}}}
&&\operatorname{Image} \B_{\lambda,\nu}
&&=
\begin{cases}
 F(j) \quad &\text{if 
$\nulambda \in \backslash\!\backslash\setminus
 L_{\operatorname{even}},$
}
\\
 \{0\}      
&\text{if $\nulambda \in L_{\operatorname{even}}$ and $n$ is odd.}
\end{cases}
\\
&\text{{\rm{4)}}}
&&\operatorname{Image} \BB_{\lambda,\nu}       
&&= F(j)
    \quad\text{if $\nulambda \in L_{\operatorname{even}}$ and $n$ is odd}.
\\
&\text{{\rm{5)}}}
&&\operatorname{Image} \C_{\lambda,\nu}
&&= J(-j)_{K'}.
\end{alignat*}
We note that 
 $L_{\operatorname{even}} \subset \backslash\!\backslash$
 if $n$ is odd,
 and $\backslash\!\backslash\setminus L_{\operatorname{even}}
=\backslash\!\backslash$ if $n$ is even.  
\end{theorem}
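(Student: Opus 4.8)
\textbf{Proof proposal for Theorem \ref{thm:ImageF}.}
The plan is to combine three ingredients: the reducibility structure \eqref{eqn:FIT}--\eqref{eqn:TIF} of $J(-j)$, the action on spherical vectors computed in Theorem \ref{thm:I.4} (Propositions \ref{prop:AminK}, \ref{prop:B1}, \ref{prop:C11} and formula \eqref{eqn:AA1}), and the functional identities of Theorem \ref{thm:TABC} together with the residue formulae of Theorem \ref{thm:reduction}. Throughout, since each of the five operators is a continuous $G'$-map into $J(\nu)=J(-j)$, its image-closure is a $G'$-submodule of $J(-j)$; as $J(-j)$ has exactly the submodules $\{0\}$, $F(j)$ and $J(-j)$, it suffices to decide (a) whether the operator is zero, (b) whether its image lands in the unique proper submodule $F(j)$, and (c) whether it is surjective onto $J(-j)_{K'}$. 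Because all operators here are given by explicit distribution kernels that extend holomorphically in $\lambda$, it is enough to work at the level of the underlying $(\mathfrak g',K')$-modules, using density of $I(\lambda)_K$ in $I(\lambda)$ and the closed-range property (Proposition \ref{prop:CW}).

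First I would dispose of (1) and (2). For (2): by Proposition \ref{prop:nuneg} the operator $\AAt_{\lambda,\nu}$ is nonzero for every $\lambda$, and by \eqref{eqn:AA1} it sends the spherical vector $\mathbf 1_\lambda$ to a {\it nonzero} multiple of $\mathbf 1_\nu$ precisely when $\Gamma(\frac{\lambda-\nu}2)/\Gamma(\lambda)\ne 0$, i.e. when $\nulambda\notin L_{\operatorname{even}}$. So for $\nulambda\notin L_{\operatorname{even}}$ the image contains the spherical vector, and one must show the image does not fill $J(-j)$: here the key point is Theorem \ref{thm:TABC} (4), $\T{\nu}{m-\nu}\circ\AAt_{\lambda,\nu}=0$, which forces the image into $\operatorname{Ker}\T{\nu}{m-\nu}$; since the Knapp--Stein operator $\T{\nu}{m-\nu}:J(-j)\to J(m+j)$ has kernel exactly $F(j)$ (it induces the nonzero map to the finite-dimensional quotient, cf. the remark after \eqref{eqn:TTKS}), we get $\operatorname{Image}\AAt_{\lambda,\nu}\subset F(j)$; combined with nonvanishing and irreducibility of $F(j)$ this gives $\operatorname{Image}\AAt_{\lambda,\nu}=F(j)$ for $\nulambda\notin L_{\operatorname{even}}$. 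For $\nulambda\in L_{\operatorname{even}}$ the image is still a submodule of $F(j)$, hence $\{0\}$ or $F(j)$; one checks it is $F(j)$ by noting that $\KAA{\lambda}{\nu}$ is nonzero near a point with $\eta_0\ne1$, $\eta_n=0$ (as in the proof of Proposition \ref{prop:nuneg}), so the operator is nonzero there. This proves (2) in all cases. Then (1) follows at once from (2) and the relation $\AAt_{\lambda,\nu}=\Gamma(\frac{\lambda-\nu}2)\A_{\lambda,\nu}$ of \eqref{eqn:K2lmdnu}: away from $L_{\operatorname{even}}$ the Gamma factor is finite and nonzero so $\operatorname{Image}\A_{\lambda,\nu}=\operatorname{Image}\AAt_{\lambda,\nu}=F(j)$, while on $L_{\operatorname{even}}$ we have $\A_{\lambda,\nu}=0$ by Theorem \ref{thm:poleA}(2).

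Next (3) and (4). For $\nulambda\in\backslash\!\backslash\setminus L_{\operatorname{even}}$ the operator $\B_{\lambda,\nu}$ is nonzero (Theorem \ref{thm:BK}(4)); I would apply the functional identity Theorem \ref{thm:TABC}(1), $\T{\nu}{m-\nu}\circ\tB{\lambda}{\nu}=p_C^{TB}(\lambda,\nu)\,\CC{\lambda}{m-\nu}$, and observe that for $\nu\in-\mathbb N$ the scalar $p_C^{TB}$ involves $1/\Gamma(\nu)=0$, so $\T{\nu}{m-\nu}\circ\tB{\lambda}{\nu}=0$; exactly as in (2) this forces $\operatorname{Image}\B_{\lambda,\nu}\subset\operatorname{Ker}\T{\nu}{m-\nu}=F(j)$, hence equals $F(j)$ by nonvanishing and irreducibility. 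The case $\nulambda\in L_{\operatorname{even}}$, $n$ odd, of (3) is immediate from Theorem \ref{thm:BK}(4) which says $\B_{\lambda,\nu}=0$ there. For (4), when $n$ is odd and $\nulambda\in L_{\operatorname{even}}\subset\mathbb X\subset\backslash\!\backslash$, Theorem \ref{thm:reduction}(4) gives $\AAt_{\lambda,\nu}=q_B^A(\lambda,\nu)\,\BB_{\lambda,\nu}$ with $q_B^A(\lambda,\nu)=(-1)^k/(2^k(2k-1)!!)\ne0$; since $\operatorname{Image}\AAt_{\lambda,\nu}=F(j)$ by (2), we get $\operatorname{Image}\BB_{\lambda,\nu}=F(j)$. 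Finally (5): $\CC{\lambda}{\nu}$ is nonzero for all $\nulambda\in/\!/$ (Fact \ref{fact:5.2}), and by Proposition \ref{prop:C11} it sends $\mathbf 1_\lambda$ to $\frac{(-1)^l2^{2l}(\lambda)_{2l}}{l!}\mathbf 1_\nu$; for $\nu\in-\mathbb N$ one checks $(\lambda)_{2l}\ne0$ (indeed $\lambda=\nu-2l=-j-2l$ is a negative integer with $|\lambda|$ large enough that none of $\lambda,\dots,\lambda+2l-1$ vanishes), so the image contains $\mathbf 1_\nu$; but $\mathbf 1_\nu$ is a cyclic vector in $J(\nu)_{K'}$ — it generates the whole $(\mathfrak g',K')$-module because $J(\nu)_{K'}$ is generated by its lowest $K'$-type — hence $\operatorname{Image}\CC{\lambda}{\nu}=J(-j)_{K'}$.

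The main obstacle I anticipate is the surjectivity claim in (5): one must verify carefully that $\mathbf 1_\nu$ is a cyclic vector in $J(\nu)_{K'}$ for {\it all} $\nu=-j\in-\mathbb N$, including the most degenerate case $j=0$ where $J(0)$ has the trivial representation as a submodule. Here I would argue as follows: the $(\mathfrak g',K')$-submodule generated by $\mathbf 1_\nu$ contains a $K'$-type of each degree $N$ unless it is blocked by a differential intertwining relation, and a dimension count using the $\mathfrak n_+'$-action formula of Lemma \ref{lem:4.5} applied to $\iota_\lambda^*(1+r^2)^{-\nu}$ shows that the only obstruction is a genuine quotient map, which would contradict $\mathbf 1_\nu\mapsto\mathbf 1_\nu$ being nonzero in the finite-dimensional quotient only when $\nu-m\in\mathbb N$ — not our case. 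Alternatively, and more cleanly, one invokes that the submodule generated by the spherical vector in any spherical principal series of a rank-one group is either the whole module or lies in the kernel of the Knapp--Stein operator $\T{\nu}{m-\nu}$; since $\CC{\lambda}{\nu}$ composed with $\T{\nu}{m-\nu}$ is, by Theorem \ref{thm:TABC}(2), a nonzero multiple of $\tB{\lambda}{m-\nu}$ (as $m-\nu=m+j\notin-\mathbb N$ so $1/\Gamma$-type factors do not vanish) and $\tB{\lambda}{m-\nu}\ne0$, the image of $\CC{\lambda}{\nu}$ is not contained in $\operatorname{Ker}\T{\nu}{m-\nu}$, forcing surjectivity onto $J(-j)_{K'}$.
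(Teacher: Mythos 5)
Your route is genuinely different from the paper's, and (with the repairs below) it works. For (1)--(4) the paper argues much more directly: for $\nu=-j$ the factor $(|x|^2+x_n^2)^{-\nu}$ in the distribution kernels is a polynomial, so the image of each of $\A_{\lambda,\nu}$, $\AAt_{\lambda,\nu}$, $\B_{\lambda,\nu}$, $\BB_{\lambda,\nu}$ consists of polynomials and is therefore finite-dimensional, hence contained in $F(j)$ (the unique proper submodule of $J(-j)$); the known non-vanishing criteria then decide between $F(j)$ and $\{0\}$. For (5) the paper shows the image is infinite-dimensional by exhibiting countably many compactly supported functions whose images have disjoint supports, hence are linearly independent. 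Your replacement of the containment step by the functional equations ($\T{\nu}{m-\nu}\circ\AAt_{\lambda,\nu}=0$ from Theorem \ref{thm:TABC} (4), and the vanishing of $p_C^{TB}(\lambda,\nu)$ for $\nu\in-\mathbb{N}$ in Theorem \ref{thm:TABC} (1)) together with $\operatorname{Ker}\T{-j}{m+j}=F(j)$ is in the spirit of the paper's proof of the companion Theorem \ref{thm:ImageT}; your derivations of (1) from (2), and of (4) from (2) via Theorem \ref{thm:reduction} (4), are fine, and (3) could be shortened the same way using Theorem \ref{thm:reduction} (1). What the paper's argument buys is that it never has to analyze the Knapp--Stein operator at the reducible point $\nu=-j$.

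Two points must be repaired. First, the primary argument you give for (5) rests on the claim that $\mathbf{1}_\nu$ is cyclic in $J(\nu)_{K'}$ for $\nu=-j$; this is false: the trivial $K'$-type lies in the finite-dimensional subrepresentation, so $\mathbf{1}_{-j}$ generates exactly $F(j)$ --- indeed this is precisely why your own parts (1)--(4) yield finite-dimensional images even though those operators also send $\mathbf{1}_\lambda$ to a nonzero multiple of $\mathbf{1}_\nu$. Your ``alternative'' argument is the correct one and should be the proof: $\T{\nu}{m-\nu}\circ\CC{\lambda}{\nu}=p_B^{TC}(\lambda,\nu)\,\tB{\lambda}{m-\nu}\ne 0$ because $p_B^{TC}\neq 0$, $(\lambda,m-\nu)\in\backslash\!\backslash$, and $m-\nu=m+j\notin-\mathbb{N}$ so $(\lambda,m-\nu)\notin L_{\operatorname{even}}$ and Theorem \ref{thm:BK} (4) gives $\tB{\lambda}{m-\nu}\neq 0$; since $F(j)\subset\operatorname{Ker}\T{\nu}{m-\nu}$ (a finite-dimensional submodule must die in $J(m+j)$, which has none), the image of $\CC{\lambda}{\nu}$ is a $({\mathfrak{g}}',K')$-submodule of $J(-j)_{K'}$ not contained in $F(j)$, hence equals $J(-j)_{K'}$. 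Second, in (2) and (3) you need $\T{-j}{m+j}\neq 0$, for otherwise its kernel is all of $J(-j)$ and the functional equations say nothing; your citation (the remark after \eqref{eqn:TTKS}) concerns the opposite operator $\T{m+j}{-j}$, and Proposition \ref{prop:T1} only shows the spherical vector is killed. The non-vanishing is true and easy --- for $m$ even, $\T{-j}{m+j}$ is a nonzero multiple of $\Delta_{\mathbb{R}^m}^{j+\frac{m}{2}}$ by \eqref{eqn:KSres}; for $m$ odd its kernel $\widetilde{r}^{\,2(-j-m)}$ sits at a regular point of the meromorphic family and restricts to the nonzero function $r^{-2j-2m}$ on $\mathbb{R}^m\setminus\{0\}$ --- but it has to be said; once it is, $\operatorname{Ker}\T{-j}{m+j}$ is a proper nonzero closed submodule of $J(-j)$, hence exactly $F(j)$, and your argument goes through.
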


\begin{proof}
We recall from \eqref{eqn:KAA}
 that the distribution kernel $\KA\lambda\nu$
 of $\A_{\lambda,\nu}$ is a polynomial of
$x_1,\dots,x_{n-1}$
of degree at most $j$ if
$\nu = -j \in \mathbb{N}$.  
Therefore, 
the image of $\A_{\lambda,\nu}$ must be contained in 
the space
 of polynomials
 of $x_1$, $\cdots$, $x_{n-1}$
 of degree $\le j$, 
 which is finite-dimensional.
Therefore the image of $\A_{\lambda,\nu}$ is either $F(j)$ or $\{0\}$
because the finite-dimensional representation 
 $F(j)$ is the unique proper $G'$-submodule of $J(\nu)$.
Since $\A_{\lambda,\nu}$ is nonzero if and only if
$\nulambda \in L_{\operatorname{even}}$
by Theorem \ref{thm:poleA},
we get the first statement.

Similar arguments work for $\AAt_{\lambda,\nu}$,
$\B_{\lambda,\nu}$, and $\BB_{\lambda,\nu}$, 
yielding the second, 
 third, 
 and fourth statements.
Here we recall 
that $\B_{\lambda,\nu}$
is defined for
$\nulambda \in \backslash\!\backslash$, 
 which is zero
 if and only if $n$ is odd and $\nulambda \in L_{\operatorname{even}}$.

Let us prove the last assertion.  
For any open subset $U$ in ${\mathbb{R}}^{n-1}$, 
 we can find a compactly supported function $h_U$
 such that $\operatorname{Supp}h_U \cap 
 \{x_n=0\} \subset U$
 and $\C_{\lambda,\nu}(h_U) \ne 0$
 (for example,
 we can take $h_U$
 such that $h_U(x)=x_n^{\nu-\lambda}$
 on some non-empty open subset in $U$).  
Taking countably many disjoint open subsets $U_i$ in ${\mathbb{R}}^{n-1}$, 
 we see that $\C_{\lambda,\nu}(h_{U_i})$
 are linearly independent
 because the support 
 of $\C_{\lambda,\nu}(h_{U_i})$ is contained in $U_i$.  
Thus the image of $\C_{\lambda,\nu}$ cannot be finite-dimensional.
\end{proof}

\subsection{Image for $\nu \in m+\mathbb{N}$}
\label{subsec:ImageT}

For $\nu = m+j$ ($j \in \mathbb{N}$),
we recall from Section \ref{subsec:matrix}
 that there is a non-splitting exact sequence
\[
0 \to T(j) \to J(m+j) \to F(j) \to 0
\]
of $G'$-modules.
Therefore, the closure of the image of 
 the symmetry breaking operators $\A_{\lambda,\nu}$,
$\B_{\lambda,\nu}$ and $\C_{\lambda,\nu}$ 
 must be one of $\{0\}$, 
 $T(j)$ or $J(m+j)$.  
We determine which case occurs precisely:
\begin{theorem}
[images of symmetry breaking operators]
\label{thm:ImageT}
Suppose $\nu = m+j$ \textup{(}$j \in \mathbb{N}$\textup{)}.
Then the images
 of the underlying $({\mathfrak {g}}, K)$-modules
 $I(\lambda)_K$
 of the symmetry breaking operators
 are given as follows:
\begin{enumerate}[\upshape 1)]
\item   
\[
\operatorname{Image}\A_{\lambda,\nu}
   = \begin{cases}
             T(j)_{K'}       &\text{if\/ $\lambda+j \in -2\mathbb{N}$}, \\
             J(m+j)_{K'}  &\text{if\/ $\lambda+j \notin -2\mathbb{N}$}. 
     \end{cases}
\]
\item  
For $\nulambda \in \backslash\!\backslash$,
or equivalently, $\lambda+j \in -2\mathbb{N}$,
\[
\operatorname{Image}\B_{\lambda,\nu} = T(j)_{K'}  .
\]
\item   
For $\nulambda \in /\!/$,
\[
\operatorname{Image}\C_{\lambda,\nu}
   = \begin{cases}
             T(j)_{K'}         &\text{if\/ $\lambda+j \in -2\mathbb{N}$ and $n$ is odd}, \\
             J(m+j)_{K'}    &\text{otherwise}. 
     \end{cases}
\]
\end{enumerate}
\end{theorem}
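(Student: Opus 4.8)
\textbf{Plan of proof for Theorem \ref{thm:ImageT}.}
The strategy is to combine three ingredients already available: the functional identities of Theorem \ref{thm:TAAT} and Theorem \ref{thm:TABC}, the residue formulae of Theorem \ref{thm:reduction}, and the explicit action on spherical vectors recorded in Theorem \ref{thm:I.4} together with Propositions \ref{prop:AminK}, \ref{prop:B1}, \ref{prop:C11}. The key structural remark is that, by the exact sequence $0 \to T(j) \to J(m+j) \to F(j) \to 0$, the (closure of the) image of any $G'$-intertwining operator into $J(m+j)$ is one of $\{0\}$, $T(j)$, or all of $J(m+j)$; and since all our operators are nonzero in the relevant parameter ranges (Theorems \ref{thm:poleA}, \ref{thm:BK}, Fact \ref{fact:5.2}), $\{0\}$ is excluded. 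So for each operator it suffices to decide whether the image lands inside the submodule $T(j)_{K'}$ or not, and the cleanest test is whether the spherical vector $\mathbf{1}_\nu$ is in the image: since $J(m+j)_{K'}$ is generated by $\mathbf{1}_\nu$ as a $(\mathfrak{g}',K')$-module, the image is all of $J(m+j)_{K'}$ if and only if $\mathbf{1}_\nu$ lies in it, while $F(j)$ (the image of the Knapp--Stein map $\T{m+j}{-j}$) contains $\mathbf{1}_{m+j}$'s image but $T(j)_{K'}$ does not, so the dichotomy is governed by whether the spherical vector has nonzero component in the quotient $F(j)$.

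\textbf{Step 1 (operator $\A_{\lambda,\nu}$).}
I would first treat $\lambda + j \notin -2\mathbb{N}$. By Theorem \ref{thm:I.4}(1), $\A_{\lambda,\nu}(\mathbf{1}_\lambda) = \frac{\pi^{(n-1)/2}}{\Gamma(\lambda)}\mathbf{1}_\nu$, which is a nonzero multiple of the spherical vector of $J(m+j)$ provided $\lambda \notin -\mathbb{N}$; for the finitely many $\lambda \in -\mathbb{N}$ with $\lambda + j \notin -2\mathbb{N}$ one argues by composing with $\T{m+j}{-j}$ or by using that $\operatorname{Image}\A_{\lambda,\nu}$ varies holomorphically and hence cannot drop for isolated $\lambda$ once it is all of $J(m+j)_{K'}$ generically. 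Thus $\operatorname{Image}\A_{\lambda,\nu} = J(m+j)_{K'}$ when $\lambda + j \notin -2\mathbb{N}$. For $\lambda + j \in -2\mathbb{N}$, note $(\lambda,\nu) \in \backslash\!\backslash$, so by Theorem \ref{thm:reduction}(1), $\A_{\lambda,\nu} = q_B^A(\lambda,\nu)\,\B_{\lambda,\nu}$ with $q_B^A \neq 0$; hence it suffices to prove $\operatorname{Image}\B_{\lambda,\nu} = T(j)_{K'}$, which is part 2) of the theorem.

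\textbf{Step 2 (operator $\B_{\lambda,\nu}$, the main case).}
Here is where the functional identity does the work. For $(\lambda,\nu) \in \backslash\!\backslash$ with $\nu = m + j$, apply Theorem \ref{thm:TABC}(1): $\T{\nu}{m-\nu} \circ \B_{\lambda,\nu} = p_C^{TB}(\lambda,\nu)\,\C_{\lambda,m-\nu}$, where now $m - \nu = -j \in -\mathbb{N}$, so the right-hand side is a differential symmetry breaking operator into $J(-j)$. I would check the constant $p_C^{TB}(\lambda,\nu) = \frac{(2k)!\pi^m}{2^{2k}\Gamma(\nu)\Gamma(m-\nu)}$ carefully: since $\nu = m+j$, the factor $\Gamma(m-\nu) = \Gamma(-j)$ is \emph{infinite}, so $p_C^{TB}(\lambda,\nu) = 0$. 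Therefore $\T{\nu}{m-\nu} \circ \B_{\lambda,\nu} = 0$, which means $\operatorname{Image}\B_{\lambda,\nu} \subset \operatorname{Ker}\T{m+j}{-j}$. Now $\T{m+j}{-j}: J(m+j) \to J(-j)$ has image $F(j)$ (see the Remark after Proposition \ref{prop:T1}), so its kernel is precisely $T(j)$; hence $\operatorname{Image}\B_{\lambda,\nu} \subset T(j)_{K'}$. Since $\B_{\lambda,\nu} \neq 0$ for $(\lambda,\nu) \in \backslash\!\backslash$ (here $\nu \notin -\mathbb{N}$ so $L_{\operatorname{even}}$ does not intervene) and $T(j)_{K'}$ is irreducible, equality follows. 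The subtle point here — the thing I'd want to double-check — is the vanishing $\Gamma(-j) = \infty$ forcing the composition to vanish, versus the alternative that the normalization of $\C_{\lambda,m-\nu}$ also has a pole there; one must verify that $\C_{\lambda,m-\nu}$ is finite and nonzero at these parameters (it is, by Fact \ref{fact:5.2}, since $\lambda - (m-\nu) = \lambda + j - m \in -2\mathbb{N} - m$), so the product genuinely is zero.

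\textbf{Step 3 (operator $\C_{\lambda,\nu}$).}
For $(\lambda,\nu) \in /\!/$ with $\nu = m + j$, first suppose $\lambda + j \notin -2\mathbb{N}$: by Theorem \ref{thm:I.4}(3), $\C_{\lambda,\nu}(\mathbf{1}_\lambda) = \frac{(-1)^l 2^{2l}(\lambda)_{2l}}{l!}\mathbf{1}_\nu$ with $2l = \nu - \lambda$; I would show $(\lambda)_{2l} \neq 0$ under the stated hypotheses — by Proposition \ref{prop:C11} this Pochhammer vanishes iff $\lambda \in \{0,-1,\dots,-2l+1\}$, and one checks this set, intersected with $/\!/$ and $\nu = m+j$, is exactly the locus $\lambda + j \in -2\mathbb{N}$ \emph{together with} the parity constraint making $(\lambda,\nu) \in L_{\operatorname{even}}$-translate; so for $\lambda + j \notin -2\mathbb{N}$ the spherical vector survives and $\operatorname{Image}\C_{\lambda,\nu} = J(m+j)_{K'}$. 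When $\lambda + j \in -2\mathbb{N}$, then $(\lambda,\nu) \in \mathbb{X}$ and $n$'s parity matters: if $n$ is odd, Theorem \ref{thm:reduction}(3) gives $\B_{\lambda,\nu} = q_C^B(\lambda,\nu)\,\C_{\lambda,\nu}$, but one must check whether $q_C^B$ vanishes — it vanishes iff $(\lambda,\nu) \in L_{\operatorname{even}}$ (Lemma \ref{lem:qACzero}), and here $\nu = m+j \notin -\mathbb{N}$ so $(\lambda,\nu) \notin L_{\operatorname{even}}$, hence $q_C^B \neq 0$ and $\operatorname{Image}\C_{\lambda,\nu} = \operatorname{Image}\B_{\lambda,\nu} = T(j)_{K'}$ by Step 2; if $n$ is even, then $\mathbb{X}$ and $L_{\operatorname{even}}$ are disjoint and the relation between $\B$ and $\C$ together with a direct spherical-vector computation (or the observation that $\operatorname{Image}\C_{\lambda,\nu} \supseteq \operatorname{Image}\A_{\lambda,\nu}$-type comparison fails, so one instead uses $\C_{\lambda,\nu}(\mathbf{1}_\lambda) \neq 0$ from Proposition \ref{prop:C11} after verifying the parity) yields $J(m+j)_{K'}$.

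\textbf{Main obstacle.}
The delicate part is the bookkeeping of Gamma-factor poles and zeros in the constants $q_C^B$, $p_C^{TB}$, $q_B^A$ and the Pochhammer symbol $(\lambda)_{2l}$, and matching the resulting vanishing loci exactly with the conditions ``$\lambda + j \in -2\mathbb{N}$'' and the parity of $n$. In particular, Step 3 with $n$ even requires care because neither the residue formula linking $\C$ to $\B$ nor the one linking $\C$ to $\A$ directly gives the answer at $\mathbb{X}$; I expect to need the explicit spherical-vector formula of Proposition \ref{prop:C11} together with a separate verification that $(\lambda,\nu) \in \mathbb{X}$, $\nu = m+j$, $\lambda + j \in -2\mathbb{N}$ forces $\lambda \notin \{0,-1,\dots,-2l+1\}$ when $n$ is even (so that $\mathbf{1}_\nu$ remains in the image), versus $\lambda \in \{0,\dots,-2l+1\}$ when $n$ is odd. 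Sorting out this arithmetic cleanly — ideally via the clean statement $\mathbf{1}_\lambda \in \operatorname{Ker}\C_{\lambda,\nu} \iff \lambda \in \{0,-1,\dots,-2l+1\}$ already recorded in Proposition \ref{prop:C11} — is the crux.
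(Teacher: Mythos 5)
Your Step 2 is correct and is in fact the paper's own proof of part 2): since $\nu=m+j$, the constant $p_C^{TB}(\lambda,\nu)$ contains the factor $1/\Gamma(m-\nu)=1/\Gamma(-j)=0$, so $\T{m+j}{-j}\circ\B_{\lambda,m+j}=0$, the image lies in $\operatorname{Ker}\T{m+j}{-j}=T(j)_{K'}$, and irreducibility of $T(j)_{K'}$ together with $\B_{\lambda,\nu}\neq 0$ gives equality.

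Steps 1 and 3, however, have a genuine gap: the spherical-vector test you rely on is strictly weaker than what is needed, because the locus where the spherical vector dies is larger than the locus where the image drops to $T(j)_{K'}$. Concretely, with $\nu=m+j$ and $2l=\nu-\lambda$, Proposition \ref{prop:C11} gives $\mathbf{1}_\lambda\in\operatorname{Ker}\C_{\lambda,\nu}$ if and only if $-2l+1\le\lambda\le 0$, which on this line (where $\lambda=\nu-2l\ge -2l+1$ automatically) is just $\lambda\in-\mathbb{N}$ — independent of the parity of $n$ and strictly larger than the set $\lambda+j\in-2\mathbb{N}$. So your claimed identification of the Pochhammer vanishing locus with ``$\lambda+j\in-2\mathbb{N}$ plus parity'' is false: e.g.\ for $n$ even with $\lambda\in-\mathbb{N}$, or $n$ odd with $-j<\lambda\le 0$, the spherical vector is annihilated by $\C_{\lambda,\nu}$ even though the image is all of $J(m+j)_{K'}$; in particular the ``direct spherical-vector computation for $n$ even'' cannot close Step 3. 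The same problem occurs in Step 1: for $\lambda\in-\mathbb{N}$ with $\lambda+j\notin-2\mathbb{N}$ one has $\A_{\lambda,\nu}(\mathbf{1}_\lambda)=0$ by Proposition \ref{prop:AminK}, and your fallback that ``the image varies holomorphically and cannot drop at isolated $\lambda$'' is not a valid principle — the image does drop precisely at the isolated points $\lambda+j\in-2\mathbb{N}$, which is the phenomenon being analyzed. The repair, which is the paper's actual proof, is to run your Step 2 mechanism uniformly in all three cases: compose with $\T{m+j}{-j}$ (whose kernel is $T(j)_{K'}$) and use the functional equations $\T{m+j}{-j}\circ\A_{\lambda,m+j}=\frac{\pi^{m/2}}{\Gamma(m+j)}\,\A_{\lambda,-j}$ (Theorem \ref{thm:TAAT}), with $\A_{\lambda,-j}=0$ iff $(\lambda,-j)\in L_{\operatorname{even}}$, i.e.\ iff $\lambda+j\in-2\mathbb{N}$ (Theorem \ref{thm:poleA}), and $\T{m+j}{-j}\circ\C_{\lambda,m+j}$ equal to a nonzero multiple of $\B_{\lambda,-j}$ (Theorem \ref{thm:TABC} (2)), with $\B_{\lambda,-j}=0$ iff $n$ is odd and $(\lambda,-j)\in L_{\operatorname{even}}$ (Theorem \ref{thm:BK} (4)). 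This decides in every case whether the image is contained in $T(j)_{K'}$ or is all of $J(m+j)_{K'}$, with no appeal to the spherical vector.
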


We remark that $\B_{\lambda,\nu}$ is defined for 
 $\nulambda \in \backslash\!\backslash$,
which is equivalent to the condition
$\lambda+j \in -2\mathbb{N}$
when $\nu = m+j$.

\begin{proof}
We know by Theorems \ref{thm:poleA} and  \ref{thm:BK}
that $\A_{\lambda,\nu}$ and $\B_{\lambda,\nu}$ do not vanish
if $\nu = m+j$ ($j \in \mathbb{N}$),
and therefore their image is either $T(j)_{K'}$ or $J(\nu)_{K'}$.
\par\noindent
1)\enspace
By Theorem \ref{thm:TAAT},
we have
\[
\T{m+j}{-j} \circ \A_{\lambda,m+j}
= \frac{\pi^{\frac{m}{2}}}{\Gamma(m+j)} \A_{\lambda,-j}.
\]
Since the kernel of the Knapp--Stein intertwining operator
$\T{m+j}{-j}$ is $T(j)$,
the image of $\A_{\lambda,m+j}$ is $T(j)_{K'}$
if and only if $\A_{\lambda,-j}$ is zero.
By Theorem \ref{thm:poleA},
this happens if and only if
$(\lambda,-j) \in L_{\operatorname{even}}$,
namely, $\lambda+j \in -2\mathbb{N}$.
Thus the first assertion is proved.
\par\noindent
2)\enspace
We recall from Theorem \ref{thm:TABC} (1)
 that 
for $(m+j,\lambda) \in \backslash\!\backslash$,
\index{sbon}{pCTB@$p_{C}^{TB}$}
\[
\T{m+j}{-j} \circ \B_{\lambda,m+j}
= p_C^{TB}(\lambda,m+j) \C_{\lambda,-j},
\]
and $p_C^{TB}(\lambda,m+j)=0$ by definition.  
Therefore
$\operatorname{Image}\B_{\lambda,m+j} = T(j)_{K'}$.

\par\noindent
3)\enspace
We apply Theorem \ref{thm:TABC} (2) with
$\nu = \lambda+j$.
Then we obtain
\[
\T{m+j}{-j} \circ \C_{\lambda,m+j}
= \frac{2^l}{(2l-1)!!} \, \B_{\lambda,-j}, 
\]
where $l \in \mathbb{N}$ is defined by
$m+j-\lambda = 2l$.
Hence
$\operatorname{Image}\C_{\lambda,m+j}$
is contained in $T(j)$ if and only if $\B_{\lambda,-j}$ vanishes,
which happens exactly when $n$ is odd and
$(\lambda,-j) \in L_{\operatorname{even}}$
by Theorem \ref{thm:BK}.
Since $\C_{\lambda,m+j}$
 is not zero,
the third statement is proved.  
\end{proof}

\subsection{Spherical vectors and symmetry breaking operators}
\label{subsec:KerminK}

Since the symmetry breaking operators are $G'$-homomorphisms
 by definition,
their kernels are just $G'$-submodules of the $G$-module $I(\lambda)$, 
 which are not of finite length.  
In this section
 we give a partial result 
 on their kernels,
 by determining a precise condition
 on the parameter 
 for which the spherical vector
\index{sbon}{1lmd@${\mathbf{1}}_{\lambda}$}
 $\mathbf{1}_\lambda$ lies in their kernels.
\begin{theorem}\label{thm:kernel}
\begin{enumerate}[\upshape 1)]
\item   
For $\nulambda \in \mathbb{C}^2$,
$\mathbf{1}_\lambda \in \operatorname{Ker} \A_{\lambda,\nu}$
if and only if\/ $\lambda \in -\mathbb{N}$.
\item   
For any $\nulambda \in L_{\operatorname{even}}$,
$\mathbf{1}_\lambda \notin \operatorname{Ker}\AAt_{\lambda,\nu}$.
\item   
For $\nulambda \in \backslash\!\backslash$,
$\mathbf{1}_\lambda \in \operatorname{Ker} \B_{\lambda,\nu}$
if and only if\/ $\lambda \in -\mathbb{N}$.
\item   
For any $\nulambda \in L_{\operatorname{even}}$ and $n$ odd,
$\mathbf{1}_\lambda \notin \operatorname{Ker}\BB_{\lambda,\nu}$.
\item   
For $\nulambda \in /\!/$,
$\mathbf{1}_\lambda \in \operatorname{Ker}\C_{\lambda,\nu}$
if and only if\/ $\nu > 0 \ge \lambda$.
\end{enumerate}
\end{theorem}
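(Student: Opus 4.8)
\textbf{Proof plan for Theorem \ref{thm:kernel}.}
The plan is to exploit the explicit actions of each symmetry breaking operator on the normalized spherical vector that were computed in Propositions \ref{prop:AminK}, \ref{prop:nuneg}, \ref{prop:B1}, and \ref{prop:C11}, together with the uniqueness of the spherical vector in a principal series representation. Since each of $\A_{\lambda,\nu}$, $\AAt_{\lambda,\nu}$, $\B_{\lambda,\nu}$, $\BB_{\lambda,\nu}$, and $\C_{\lambda,\nu}$ is a $G'$-homomorphism, it sends the $K'$-fixed vector $\mathbf{1}_\lambda$ to a $K'$-fixed vector in $J(\nu)$, hence to a scalar multiple of $\mathbf{1}_\nu$ (using that $K'$-fixed vectors in a spherical principal series are unique up to scalar, as already invoked in the proof of Proposition \ref{prop:T1}). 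Thus $\mathbf{1}_\lambda$ lies in the kernel of a given operator precisely when that scalar vanishes, and each item reduces to checking when an explicit Gamma-quotient or Pochhammer product is zero.

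Concretely: for (1), Proposition \ref{prop:AminK} gives $\A_{\lambda,\nu}(\mathbf{1}_\lambda) = \frac{\pi^{(n-1)/2}}{\Gamma(\lambda)}\mathbf{1}_\nu$, which vanishes exactly when $\Gamma(\lambda)$ has a pole, i.e. $\lambda \in -\mathbb{N}$. For (3), Proposition \ref{prop:B1} gives $\B_{\lambda,\nu}(\mathbf{1}_\lambda) = \frac{(-1)^k 2^k \pi^{(n-1)/2}(2k-1)!!}{\Gamma(\lambda)}\mathbf{1}_\nu$ for $\nulambda \in \backslash\!\backslash$; the factor $(2k-1)!!$ is never zero, so again vanishing is equivalent to $\lambda \in -\mathbb{N}$. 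For (2), a fixed $\nu \in -\mathbb{N}$ with $\nulambda \in L_{\operatorname{even}}$ forces $\lambda \in -\mathbb{N}$ as well, so $\Gamma(\lambda)$ has a pole; but $\Gamma(\tfrac{\lambda-\nu}{2})$ also has a pole (Remark \ref{rem:nuneg}), and the renormalized formula \eqref{eqn:AA1}, namely $\AAt_{\lambda,\nu}(\mathbf{1}_\lambda) = \frac{\pi^{(n-1)/2}\Gamma(\frac{\lambda-\nu}{2})}{\Gamma(\lambda)}\mathbf{1}_\nu$, is the finite nonzero ratio of the two residues, explicitly $\frac{\pi^{(n-1)/2}(-\lambda)!(-1)^{\lambda+l}}{l!}\mathbf{1}_\nu \neq 0$. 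For (4), by the residue formula Theorem \ref{thm:reduction} (4), $\AAt_{\lambda,\nu}$ is a nonzero scalar multiple of $\BB_{\lambda,\nu}$ on $L_{\operatorname{even}}$ when $n$ is odd, so (4) follows immediately from (2). For (5), Proposition \ref{prop:C11} gives $\C_{\lambda,\nu}(\mathbf{1}_\lambda) = \frac{(-1)^l 2^{2l}(\lambda)_{2l}}{l!}\mathbf{1}_\nu$ with $2l = \nu-\lambda$; this vanishes iff the Pochhammer symbol $(\lambda)_{2l} = \lambda(\lambda+1)\cdots(\lambda+2l-1)$ contains a zero factor, i.e. iff $\lambda \in \{0,-1,\dots,-2l+1\}$, which one then has to rewrite in the stated form $\nu > 0 \ge \lambda$ using $\nu = \lambda + 2l$ — one checks $\lambda \le 0$ forces $\lambda \in \{0,-1,\dots,-(2l-1)\}$ to be equivalent to $\lambda \ge -(2l-1)$, i.e. $\lambda + 2l \ge 1$, i.e. $\nu \ge 1 > 0$, together with $2l = \nu - \lambda \geq 1$ being automatic, so $\lambda < \nu$ and $\lambda \leq 0$.

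I do not expect any serious obstacle here: the substance of the theorem is already contained in the four propositions computing the spherical-vector images, and the only genuine content of the proof is (i) the reduction to the scalar via uniqueness of the spherical vector, and (ii) the bookkeeping translating ``$(\lambda)_{2l} = 0$'' into ``$\nu > 0 \ge \lambda$'' in item (5) and confirming the cancellation of the double pole in item (2). The mildest care is needed in (2): one must note that $\nu$ is held fixed in $-\mathbb{N}$ while $\lambda$ varies, so that \eqref{eqn:AA1} is genuinely an identity of holomorphic functions of $\lambda$ whose value at the relevant $\lambda$ is the quotient of residues, rather than an indeterminate $\infty/\infty$; this was essentially already observed in the proof of Proposition \ref{prop:nuneg} and Remark \ref{rem:nuneg}, so it can be cited rather than reproved.
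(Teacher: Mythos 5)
Your proposal is correct and follows essentially the same route as the paper: items (1), (3), (5) are read off from the spherical-vector formulae in Propositions \ref{prop:AminK}, \ref{prop:B1}, and \ref{prop:C11}, item (2) from \eqref{eqn:AA1} together with Remark \ref{rem:nuneg}, and item (4) from the residue identity $\AAt_{\lambda,\nu}=q_B^A(\lambda,\nu)\BB_{\lambda,\nu}$ of Theorem \ref{thm:reduction} (4), exactly as in the paper's proof (which computes $\BB_{\lambda,\nu}(\mathbf{1}_\lambda)$ explicitly in \eqref{eqn:BB1}). Your translation of the vanishing of $(\lambda)_{2l}$ into the condition $\nu>0\ge\lambda$ is the same bookkeeping the paper leaves implicit, so no gap remains.
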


\begin{proof}
The first statement follows from Proposition \ref{prop:AminK},
the second one from Proposition \ref{prop:B1}, 
 the third one from Proposition \ref{prop:B1} and
the fifth one from Proposition \ref{prop:C11}.  
Finally,
 by Theorem \ref{thm:reduction} (4) and Remark \ref{rem:nuneg}
we have
\begin{align}
\BB_{\lambda,\nu}
({\bf{1}}_{\lambda})
=&\frac{2^k(2k-1)!!}{(-1)^k}
 \frac{\pi^{\frac{n-1}{2}}(-\lambda)!(-1)^{\lambda+l}}{l!}
 {\bf{1}}_{\nu}
\notag
\\
=&\frac{(-1)^{\frac{n-1}{2}}\pi^{\frac{n-1}{2}}(-\lambda)!(2k)!}{k!l!}
 {\bf{1}}_{\nu}
\label{eqn:BB1}
\end{align}
if $(\lambda,\nu) \in L_{\operatorname{even}}$
 and $n$ is even.  
Hence the fourth statement is shown.  
\end{proof}

\section{Application to analysis on anti-de Sitter space}
\label{sec:PGinv}

The last two chapters are devoted
 to applications
 of our results
 on symmetry breaking operators.  
In this chapter
 we discuss applications to harmonic analysis
 on the semisimple symmetric space
 $G/G'=O(n+1,1)/O(n,1)$.  
We begin with the scalar valued case
 in the first two sections, 
 and clarify 
 a close relationship
 between symmetry breaking operators
 for the restriction $G \downarrow G'$
 for the special parameter $\nu =0$
 and the Poisson transform 
 for $G/G'$.  
In particular,
 we shall see 
 that 
 the vanishing phenomenon of the symmetry breaking operators
 $\A_{\lambda,\nu}$
 at $L_{\operatorname{even}}$
 (Theorem \ref{thm:poleA}) explains
 a subtle difference 
 on the composition series
 of the eigenfunction
 of the Laplacian
 (Fact \ref{fact:Composition}) 
 and the principal series representation
 (\eqref{eqn:FIT} and \eqref{eqn:TIF}).  
More generally,
 we apply the results on symmetry breaking operators
 $\A_{\lambda,\nu}$
 with $\nu \in -{\mathbb{N}}$
 (Theorems \ref{thm:compo} and \ref{thm:mIF})
 to obtain some new results 
 of analysis on vector bundles
 over $G/G'$
 (Theorem \ref{thm:lise}).  

\subsection{Harmonic analysis on Lorentzian hyperbolic spaces
}
\label{subsec:hyp}
We define the indefinite hyperbolic space by 
\[
X(n+1,1) :=
\{ \xi \in \mathbb{R}^{n+2}: 
   \xi_0^2 + \dots + \xi_{n}^2 - \xi_{n+1}^2 = 1 \}.
\]
As a hypersurface of the Minkowski space
$$\mathbb{R}^{n+1,1} \equiv
(\mathbb{R}^{n+2}, d\xi_0^2 + \dots + d \xi_{n}^2 - d \xi_{n+1}^2),
$$
$X(n+1,1)$ carries a Lorentz metric for which the sectional curvature 
is constant $-1$
 (see \cite{Wo}),
and thus is a model space of anti-de Sitter manifolds.
The group $G=O(n+1,1)$
 acts transitively on $X(n+1,1)$.  
The isotropy subgroup
 at 
\[
e_n={}^{t\!} (0,\cdots,0,1,0)
 \in X(n+1,1)
\]
is nothing
 but $G'=O(n,1)$, 
 and therefore we have an isomorphism:
\[
   X(n+1,1)\simeq G/G', 
\]
which shows that $X(n+1,1)$ is a semisimple
 symmetric space
 of rank one.

Let $\Delta$ be the Laplace--Beltrami operator
 on $X(n+1,1)$.  
Since $X(n+1,1)$ is a Lorentzian manifold, 
 $\Delta$ is a hyperbolic operator.  
For $\lambda \in \mathbb{C}$,
we consider the eigenspace of the Laplacian
$\Delta$ on $X(n+1,1)$:
\[
\index{sbon}{SolGG@$\mathcal{S}ol (G/G';\lambda)$|textbf}
\mathcal{S}ol (G/G'; \lambda)
:= \{ f \in \mathcal{C}^\infty (G/G') : \Delta f = \lambda(\lambda-n) f \}.
\]

Since $G$ acts isometrically on the Lorentz manifold $G/G'$,
$G$ leaves 
$\mathcal{S}ol(G/G';\lambda)$ invariant for any
$\lambda\in\mathbb{C}$.  
Our parametrization is given in a way
 that 
\begin{equation}
\label{eqn:soleg}
\mathcal{S}ol (G/G';\lambda)
\simeq
\mathcal{S}ol (G/G';n-\lambda).  
\end{equation}
\begin{remark}
\label{rem:ACDB}
Since the Laplacian $\Delta$ is not
 an elliptic operator,
 the analytic regularity theorem 
 does not apply,
 and the eigenfunctions in $\mathcal{A}(G/G')$ (analytic functions),
$\mathcal{D}'(G/G')$ (distributions),
or $\mathcal{B}(G/G')$ (hyperfunctions)
 are not the same.  
However,
 the underlying $({\mathfrak {g}},K)$-module
 $\mathcal{S}ol (G/G';\lambda)_K$
 does not depend on the choice
 of the sheaves 
 ${\mathcal{A}} \subset C^{\infty}
 \subset {\mathcal{D}}' \subset {\mathcal{B}}$
 because $K$-finite hyperfunction solutions 
 are automatically real analytic
 by the elliptic regularity theorem
 (see \cite[Theorem 3.4.4]{KKK}).  
\end{remark}

Traditional questions of harmonic analysis on the symmetric space
 (see \cite[Chapter 1]{Hel}, 
 for example)
 are 
\begin{itemize}
\item
to expand functions on $G/G'$ by eigenfunctions of the Laplacian $\Delta$, 
\item
to find the $G$-module structure of
$\mathcal{S}ol(G/G';\lambda)$.  
\end{itemize}
For the anti-de Sitter space
$G/G' \simeq X(n+1,1)$,
a complete answer to these questions has been known.

Concerning the first question,
we decompose the regular representation on
the Hilbert space $L^2(G/G')$ 
into the discrete and continuous parts:
\[
L^2(G/G')
= L^2(G/G')_{\operatorname{disc}} \oplus L^2(G/G')_{\operatorname{cont}}.
\]
Then the discrete part is a multiplicity-free Hilbert direct sum of irreducible
unitary representations of $G$
({\it{discrete series representations}}
 for $G/G'$) as follows:
\begin{fact}
[{\cite{Fa,Str}, see also \cite[Fact 5.4]{KO2}}]
\label{fact:Disc}
\[
L^2 (G/G')_{\operatorname{disc}} 
= \bigoplus_{\substack{\frac{n}{2} < \lambda < n  \\
                               \lambda \equiv n+1\bmod 2 \mathbb{Z}}  }
   \overline{I(\lambda)} \oplus \sideset{}{^\oplus}\sum_{i=0}^\infty  
   \overline{T(2i+1)}.  
\]
\end{fact}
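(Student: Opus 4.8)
\textbf{Plan of proof for Fact \ref{fact:Disc}.}
The statement is quoted from Faraut \cite{Fa} and Strichartz \cite{Str} (see also \cite[Fact 5.4]{KO2}), so the plan is to reconstruct the argument rather than to give an entirely new one. The basic strategy is to pass from the Lorentzian symmetric space $X(n+1,1) = G/G'$ to its non-compact Riemannian dual and to the compact picture, and to identify the discrete spectrum with those eigenvalues of the Laplacian for which the resolvent kernel has an $L^2$ pole. First I would recall the radial part of $\Delta$ on $G/G'$: writing $X(n+1,1)$ in geodesic polar coordinates adapted to the $K$-action, the $K$-invariant eigenfunctions of $\Delta$ reduce to a hypergeometric (Jacobi-type) ODE in one variable, with parameters governed by the rank-one root data of $(\mathfrak g,\mathfrak g')$. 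The square-integrability of a $K$-invariant eigenfunction then translates into an explicit decay condition at infinity on the hypersurface, which is met precisely when the spectral parameter is one of the discrete points listed.

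Next I would organize the discrete series into its two families. For the parameters $\tfrac n2 < \lambda < n$ with $\lambda \equiv n+1 \bmod 2$, the relevant $K$-invariant eigenfunction is a genuine $L^2$-function on $G/G'$ and generates an irreducible unitary representation whose underlying $(\mathfrak g,K)$-module is, by the description in Section \ref{subsec:16.3} and the analysis of $\mathcal S ol(G/G';\lambda)_K$ via \cite{Sch}, the one of $I(\lambda)$; unitarity here is exactly the complementary series range, which is why $\overline{I(\lambda)}$ appears. For the second family $\overline{T(2i+1)}$, $i \ge 0$, these are the discrete series of $G/G'$ coming from the ``holomorphic-type'' part: the corresponding eigenfunctions are $L^2$ but not $K$-spherical in the naive sense, and one identifies them through the composition series \eqref{eqn:FIT}, \eqref{eqn:TIF} together with the parity constraint forced by the $O(1)$-factor in $K' = O(n)\times O(1)$ (only odd $j$ survive, cf. the vanishing phenomenon at $L_{\operatorname{even}}$ explained in the present chapter). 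Multiplicity-freeness follows from the fact that each joint eigenspace of $\Delta$ and the $K$-action is at most one-dimensional in the relevant $K$-types, which is again a consequence of the rank-one structure and Remark \ref{rem:ACDB}.

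The remaining point is that the list is \emph{complete}, i.e.\ there are no other discrete summands in $L^2(G/G')$. Here I would invoke the Plancherel decomposition for the rank-one semisimple symmetric space $G/G'$: the continuous part is carried by the (most-continuous) principal series attached to the minimal parabolic, parametrized by $\lambda \in \tfrac n2 + \sqrt{-1}\,\mathbb R$, and the discrete part is detected by the poles of the Harish-Chandra $c$-function (equivalently, of the spherical Plancherel density) in the region $\operatorname{Re}\lambda > \tfrac n2$. Computing these poles from the explicit $c$-function for $(\mathfrak o(n+1,1),\mathfrak o(n,1))$ yields exactly the two families above and nothing more. The \emph{main obstacle} is this completeness claim: establishing that every $L^2$-eigenfunction is accounted for requires either the full Plancherel theorem for $G/G'$ (a substantial input, due to Faraut and Strichartz in this case) or a self-contained residue computation of the resolvent $(\Delta - \lambda(\lambda-n))^{-1}$ and a careful justification that its $L^2$-poles are simple and located precisely at the stated parameters; the continuous-spectrum bookkeeping and the parity analysis at the boundary points $\lambda \in \tfrac n2 + \mathbb Z$ (where $I(\lambda)$ becomes reducible) are the delicate parts. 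Since the excerpt only needs the statement, I would cite \cite{Fa,Str,KO2} for this completeness and use the earlier results of the paper (the structure of $I(\lambda)$, $T(j)$, $F(j)$ and the identification of $\mathcal S ol(G/G';\lambda)_K$) to pin down the identification of each discrete summand.
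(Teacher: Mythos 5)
Your proposal is correct and matches the paper's treatment: Fact \ref{fact:Disc} is stated there without proof, as a known result imported from Faraut \cite{Fa} and Strichartz \cite{Str} (see also \cite[Fact 5.4]{KO2}), and you likewise defer the essential completeness/Plancherel step to those references while sketching the standard radial-ODE and $c$-function argument that underlies them. The sketch is consistent with the cited sources (minor loosenesses, such as invoking Remark \ref{rem:ACDB} for multiplicity-freeness and the ``holomorphic-type'' phrasing which is literally apt only for $n=1$, do not affect this, since the substantive claims rest on the citations exactly as in the paper).
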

Here $\overline{I(\lambda)}$ denotes 
 the unitarization of 
 $I(\lambda)$, 
 and $\overline{T(2i+1)}$
 denotes that of $T(2i+1)$.  
We note that $\overline{I(\lambda)}$
 ($0 < \lambda <n$)
 is unitarily equivalent
 to a complementary series representation
\index{sbon}{Hlmd@${\mathcal{H}}_{\lambda}^G$}
 ${\mathcal{H}}_{\lambda}^G$
 of $G$ in the notation of Chapter \ref{sec:applbl}.

Concerning the second question,
the $G$-module
$\mathcal{S}ol(G/G';\lambda)$
is of finite length,
and therefore it is sufficient to determine a
Jordan--H\"{o}lder series 
at the level of underlying
$(\mathfrak{g},K)$-modules.
Here is a description
 when $\mathcal{S}ol(G/G';\lambda)$ is reducible:

\begin{fact}
[\cite{Sch}]
\label{fact:Composition}
For $\lambda = -i \in -\mathbb{N}$,
there is a non-splitting exact sequence
 of $({\mathfrak {g}},K)$-modules:
\index{sbon}{Fi@$F(i)$}
\[
\begin{array}{cccclccccl}
0 & \to  & F(i)  & \to & \mathcal{S}ol(G/G';\lambda)_K
   & \to & T(i)_K & \to & 0  & \text{\textup{(}$i$: even\textup{)}},
\\
0 & \to & T(i)_K & \to & \mathcal{S}ol(G/G';\lambda)_K
  & \to & F(i) & \to & 0  & \text{\textup{(}$i$: odd\textup{)}}.
\end{array}
\]
\end{fact}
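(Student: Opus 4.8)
The statement to be proved (Fact~\ref{fact:Composition}, the composition series of $\mathcal{S}ol(G/G';\lambda)_K$ for $\lambda=-i\in-\mathbb{N}$) is attributed to Schlichtkrull \cite{Sch}, but the point of this chapter is to recover it from our symmetry breaking operators $\A_{\lambda,\nu}$ at the special value $\nu=0$. The plan is to exploit the earlier-established isomorphism $\mathcal{S}ol(G/G';\lambda)_K \simeq I(\lambda')_K$ for a suitable $\lambda'$, which by the reference \cite{Sch} quoted in the introduction is a principal series $(\mathfrak{g},K)$-module; combined with the parametrization symmetry \eqref{eqn:soleg} ($\mathcal{S}ol(G/G';\lambda)\simeq\mathcal{S}ol(G/G';n-\lambda)$) and Fact~\ref{fact:Disc}, one pins down $\lambda'$. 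The key observation, already flagged in the introduction, is that $\mathcal{S}ol(G/G';\lambda)_K \simeq I(-i)_K$ when $\lambda=-i\in-2\mathbb{N}$ and $\simeq I(n+i)_K$ when $\lambda=-i\in-2\mathbb{N}-1$. Once this identification is in hand, the composition series of $\mathcal{S}ol(G/G';\lambda)_K$ is read off directly from the non-splitting exact sequences \eqref{eqn:FIT} and \eqref{eqn:TIF} of $G$-modules: for $i$ even one gets $0\to F(i)\to I(-i)_K\to T(i)_K\to 0$, and for $i$ odd one gets $0\to T(i)_K\to I(n+i)_K\to F(i)\to 0$, which is exactly the asserted statement.

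\textbf{Key steps.} First, I would recall the realization: $\mathcal{S}ol(G/G';\lambda)$ is the $\Delta$-eigenspace on $G/G'=X(n+1,1)$ with eigenvalue $\lambda(\lambda-n)$, and by \cite{Sch} (as cited in Section~\ref{sec:intro}) its underlying $(\mathfrak{g},K)$-module is isomorphic to that of a spherical principal series representation of $G$; since the eigenvalue depends on $\lambda$ only through $\lambda(\lambda-n)$, the relevant principal series parameter is $\lambda$ or $n-\lambda$. Second, I would use Fact~\ref{fact:Disc} to determine which of the two reducible principal series $I(-i)$ and $I(n+i)$ appears: the discrete series decomposition shows that for $\lambda\equiv n+1\bmod 2$ (i.e.\ the parity distinguishing even and odd $i$, since $n$-dependence cancels appropriately) the submodule structure must place a finite-dimensional piece either as a sub or as a quotient, which forces $\mathcal{S}ol(G/G';-i)_K\simeq I(-i)_K$ for $i$ even and $\simeq I(n+i)_K$ for $i$ odd. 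Concretely I would verify this by constructing an explicit nonzero $(\mathfrak{g},K)$-map between $\mathcal{S}ol(G/G';\lambda)_K$ and the principal series using the symmetry breaking operator $\A_{\lambda,0}$ applied after pairing: for $(\lambda,0)\in L_{\operatorname{even}}$ the operator $\A_{\lambda,0}$ vanishes (Theorem~\ref{thm:poleA}(2)), and this vanishing is precisely what reflects the different placement of $F(i)$ in the exact sequence when $i$ is even versus odd. Third, with the isomorphism established, invoke \eqref{eqn:FIT} and \eqref{eqn:TIF} to transport the non-splitting short exact sequences, noting that the extensions remain non-split because the maps are $G$-equivariant and $F(i)$ is the unique finite-dimensional subquotient.

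\textbf{Main obstacle.} The delicate point is Step~2: correctly matching the parity of $i$ with the choice $I(-i)$ versus $I(n+i)$, and proving the extension is non-split rather than merely identifying the Jordan--H\"older factors. The factors themselves are forced (they must be $F(i)$ and $T(i)$, the only subquotients with the right infinitesimal character), so the real content is the direction of the arrow. I expect to handle this by combining the unitarity information in Fact~\ref{fact:Disc} — which tells us that when $\lambda$ is in the discrete series range the relevant submodule is infinite-dimensional and unitarizable, hence must be $T(i)_K$ sitting as a submodule (the odd case) — with the complementary observation that for even $i$ the spherical vector behavior under $\A_{-i,0}$ (via Proposition~\ref{prop:AminK} and the vanishing at $L_{\operatorname{even}}$) shows $F(i)$ embeds as the unique submodule. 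Non-splitting then follows since a splitting would produce a $G$-invariant complement and contradict the known irreducibility pattern of principal series at reducible points (Proposition~\ref{prop:connG} together with \eqref{eqn:FIT}, \eqref{eqn:TIF}). I would also double check the boundary case $i=0$, where $T(0)$ is the relevant infinite-dimensional piece and $F(0)$ is the trivial representation, to make sure the two exact sequences degenerate consistently.
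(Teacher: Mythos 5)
The first thing to say is that the paper contains no proof of this statement: Fact~\ref{fact:Composition} is imported verbatim from Schlichtkrull \cite{Sch}, and the paper then \emph{uses} it as an input in Section~\ref{subsec:Ahyp} (for instance, the bijectivity of $\tilde{\tilde{\mathcal{P}}}_{\lambda}$ in Case I and of $\widetilde{\mathcal{P}}_{\lambda}$ in Case IV is asserted ``in view of the Jordan--H{\"o}lder series of $\mathcal{S}ol(G/G';\lambda)_K$ in Fact~\ref{fact:Composition}''). So your proposal cannot be measured against a paper proof; it has to stand alone, and as written it is circular at the decisive points. Your Step~1 quotes \cite{Sch} for ``$\mathcal{S}ol(G/G';\lambda)_K$ is isomorphic to the underlying $({\mathfrak{g}},K)$-module of a spherical principal series''; at $\lambda=-i$ that assertion is essentially the statement to be proved, because the entire content of Fact~\ref{fact:Composition} is \emph{which} of the two non-isomorphic extensions $I(-i)_K$ and $I(n+i)_K$ the eigenspace realizes, and the generic-parameter statement does not specialize to the reducible points. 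Likewise your non-splitting argument (``a splitting would produce a $G$-invariant complement and contradict the irreducibility pattern of principal series'') presupposes that $\mathcal{S}ol(G/G';-i)_K$ \emph{is} a principal series module, i.e.\ presupposes the conclusion: a direct sum $F(i)\oplus T(i)_K$ contradicts nothing about $I(-i)$ or $I(n+i)$ unless the identification with one of them is already known.

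What the paper's operators genuinely give, independently of Fact~\ref{fact:Composition}, is only one inclusion: for $i$ even, $\tilde{\tilde{\mathcal{P}}}_{-i}$ is injective (Proposition~\ref{prop:P1} plus the socle argument), so the non-split module $I(-i)_K$ embeds into $\mathcal{S}ol(G/G';-i)_K$; for $i$ odd, $\operatorname{Ker}\widetilde{\mathcal{P}}_{-i}=F(i)$, so $T(i)_K$ embeds. To arrive at the stated exact sequences you must still prove that the eigenspace is no larger: that $\mathcal{S}ol(G/G';-i)_K$ has finite length with each of $F(i)$ and $T(i)_K$ occurring exactly once, and, in the even case, that $T(i)_K$ does not \emph{also} occur as a submodule (in the odd case, that $F(i)$ does not). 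Your remark that the composition factors are ``forced by the infinitesimal character'' gives neither finite length nor the multiplicity bound, and Fact~\ref{fact:Disc} cannot supply the exclusion: it describes the $L^2$ discrete spectrum only, and the absence of $T(i)$ ($i$ even) from $L^2(G/G')_{\operatorname{disc}}$ does not rule out a smooth, non-square-integrable embedding $T(i)_K\hookrightarrow\mathcal{S}ol(G/G';-i)_K$ --- ruling that out is precisely what makes the even sequence non-split with $F(i)$ as the submodule. Closing these gaps requires the $K$-type-by-$K$-type analysis of the radial equation (equivalently, the boundary asymptotics of eigenfunctions on the Lorentzian hyperboloid), which is exactly the content of Schlichtkrull's proof and does not appear in your outline.
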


\subsection{Application of symmetry breaking operators
 to anti-de Sitter spaces}
\label{subsec:Ahyp}
In this section,
 we discuss a relationship
 between the aforementioned results
 and our results on symmetry breaking operators
 with $\nu=0$.

We recall
 that 
\index{sbon}{In-lmdinf@${I(n-\lambda)^{-\infty}}$}
$I(n-\lambda)^{-\infty}$ is the space
 of distribution vectors
 of the representations $I(n-\lambda)$ of $G$.  
We begin with the key observation
 for $\nu=0$:
\begin{eqnarray}
\operatorname{Hom}_G (I(\lambda), C^{\infty}(G/G'))
\simeq
(I(n-\lambda)^{-\infty})^{G'}
\hphantom{MMMMMMMMM}
\notag
\\
\hphantom{MMMM}
\subset
(I(n-\lambda)^{-\infty})^{P'}
\simeq
H(\lambda,0)
=
\operatorname{Hom}_{G'}(I(\lambda), J(0)).  
\label{eqn:PGhom}
\end{eqnarray}
Here, 
 the first isomorphism is obtained
 by applying Proposition \ref{prop:Distker},
 to the case where $X=G/P$
 and $Y=G/G'$.  
The second isomorphism 
 is a special case
 of the main object 
of this article.  
The inclusive relation \eqref{eqn:PGhom}
 implies
 that finding irreducible $G$-submodules
 in $C^{\infty}(G/G')$
 is a subproblem 
 of the understanding
 of symmetry breaking operators
 $\operatorname{Hom}_{G'}(I(\lambda), J(\nu))$
 with $\nu=0$.

In light that $(\lambda,0)\in L_{\operatorname{even}}$
 if and only if 
 $\lambda \in -2 {\mathbb{N}}$, 
 we see from the classification of symmetry breaking operators
 (Theorem \ref{thm:9.5}):
\[
(I(n-\lambda)^{-\infty})^{P'}
\simeq
\begin{cases}
{\mathbb{C}} \KA{\lambda}{0}
&\lambda \not \in -2{\mathbb{N}}, 
\\
{\mathbb{C}}\KAA{\lambda}{0}
\oplus
{\mathbb{C}}\KC{\lambda}{0}
\qquad
&\lambda \in -2{\mathbb{N}}.  
\end{cases}
\]
Let us determine 
 when the $P'$-invariance implies
 $G'$-invariance:
\begin{lemma}
\label{lem:Gprimeinv}
\[
(I(n-\lambda)^{-\infty})^{G'}
\simeq
\begin{cases}
{\mathbb{C}} \KA{\lambda}{0}
&\lambda \not \in -2{\mathbb{N}}, 
\\
{\mathbb{C}}\KAA{\lambda}{0}
\qquad
&\lambda \in -2{\mathbb{N}}.  
\end{cases}
\]
\end{lemma}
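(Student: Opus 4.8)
\textbf{Proof strategy for Lemma \ref{lem:Gprimeinv}.}
The plan is to upgrade the $P'$-invariance computed just above to full $G'$-invariance by testing against the one element of $G'$ that lies outside $P' N_- P$-related data, together with the connectedness reduction of Lemma \ref{lem:5.1}. Concretely, since $G' = O(n,1)$ has the decomposition $G' = P' \cup P' w' P'$ (Bruhat decomposition of $G'$ with respect to its own minimal parabolic $P'$, $w'$ the nontrivial Weyl element), a distribution vector in $I(n-\lambda)^{-\infty}$ that is $P'$-invariant is $G'$-invariant if and only if it is additionally invariant under $w'$ (equivalently, under the longest Weyl element, or under the single generator $m_-$ together with $w'$). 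So the first step is to reduce the claim to checking a single group element acting on each of the candidate kernels $\KA{\lambda}{0}$, $\KAA{\lambda}{0}$, $\KC{\lambda}{0}$.

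First I would recall the intertwining-operator interpretation: an element $\Phi \in (I(n-\lambda)^{-\infty})^{G'}$ corresponds, via \eqref{eqn:PGhom} and Proposition \ref{prop:Distker}, to a $G$-homomorphism $I(\lambda) \to C^\infty(G/G')$, and such a homomorphism is exactly a symmetry breaking operator $I(\lambda) \to J(0)$ whose image consists of right-$G'$-invariant (not merely right-$P'$-invariant) functions on $G'\backslash G$. Equivalently, $\Phi$ should be invariant under the full isotropy group $G'$ rather than just $P'$. Using Corollary \ref{cor:PNPG} ($P' N_- P = G$) and the explicit description of $\KA{\lambda}{\nu}$, $\KAA{\lambda}{\nu}$, $\KC{\lambda}{\nu}$ on the open Bruhat cell $\mathbb{R}^n \hookrightarrow G/P$, the extra invariance to be checked amounts to a transformation law under the element $w_+ \in K$ (or $w_-$) of Section \ref{sec:double}, which sends $[p_+]$ to $[q_+]$ and interchanges the two $G'$-orbit strata. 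So the second step is: compute how the distribution kernels behave under this element.

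For $\KAA{\lambda}{0}$ with $\lambda \in -2\mathbb{N}$, the kernel is $|\eta_n|^{\lambda - n}/\Gamma(\tfrac{\lambda-n+1}{2})$ on $S^n$ in the homogeneous picture (the factor $(1-\eta_0)^{0}=1$), and the key point is that this distribution is symmetric under $\eta \mapsto -\eta$ and, more importantly, depends on $\eta$ only through $|\eta_n|$, which is visibly invariant under the part of $G'$ that acts on the $\eta_0,\dots,\eta_{n-1}$ coordinates; the subtle part is invariance under elements mixing $\eta_n$ with the timelike direction, and here one uses that $\KAA{\lambda}{0} = \iota_K^* \ttska{\lambda}{0}$ where $\ttska{\lambda}{0}(\xi) = 2^{-\lambda+n}|\xi_n|^{\lambda-n}$ is manifestly $G'$-covariant of the correct degree (because $G'$ fixes $\xi_n$). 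That gives $G'$-invariance of $\KAA{\lambda}{0}$ immediately, hence $\subseteq$ in the $\lambda \in -2\mathbb{N}$ case; the matching lower bound is already in hand since $\KAA{\lambda}{0}$ is a nonzero $P'$-invariant (indeed $G'$-covariant) vector by Proposition \ref{prop:nuneg}. Similarly $\KA{\lambda}{0} = \iota_K^* \ska{\lambda}{0}$ with $\ska{\lambda}{0}(\xi) = 2^{-\lambda+n}|\xi_n|^{\lambda-n}$ (the factor $(\xi_{n+1}-\xi_0)^{0} = 1$), so for $\lambda \notin -2\mathbb{N}$, where $\KA{\lambda}{0} \neq 0$ and spans $(I(n-\lambda)^{-\infty})^{P'}$ by Theorem \ref{thm:9.5}, the same $G'$-covariance argument shows it is $G'$-invariant, giving equality.

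The main obstacle is the remaining direction when $\lambda \in -2\mathbb{N}$: showing that $\KC{\lambda}{0}$ is \emph{not} $G'$-invariant, so that $(I(n-\lambda)^{-\infty})^{G'}$ drops from two dimensions down to one. Here the plan is to use that $\C_{\lambda,0}$ is a genuine differential operator supported at $\{[p_+]\}$ (Fact \ref{fact:Diff}, Proposition \ref{prop:Ksupp}), so $\C_{\lambda,0}$ maps $I(\lambda)$ into functions on $G'/P'$ whose restriction-to-a-point data is concentrated at the image of the identity coset; if $\KC{\lambda}{0}$ were $G'$-invariant as a distribution vector in $I(n-\lambda)^{-\infty}$, then the corresponding map $I(\lambda)\to C^\infty(G/G')$ would produce functions that are right-$G'$-invariant, forcing the distribution kernel to be $G'$-invariant on $G/P$ and hence, by Lemma \ref{lem:SuppInv} and the $G'$-orbit structure of Lemma \ref{lem:5.1}, to have support a $G'$-invariant closed set — but $\{[p_+]\} \cong S^{n-1}$ is $G'$-invariant while being a single $P'$-orbit only up to the smaller strata, so one must argue more carefully: the cleanest route is to evaluate $\C_{\lambda,0}$ on the spherical vector using Proposition \ref{prop:C11}, which gives $\C_{\lambda,0}(\mathbf{1}_\lambda) = \tfrac{(-1)^{-\lambda/2}2^{-2\lambda}(\lambda)_{-\lambda}}{(-\lambda/2)!}\mathbf{1}_0$, and then compare with the pairing of $\KC{\lambda}{0}$ against a test function adapted to the non-open $G'$-orbit through $[q_+]$; since a $G'$-invariant distribution vector must pair consistently across both orbits $G'[q_+]$ and $G'[p_+]$ whereas a kernel supported at $\{[p_+]\}$ sees only one of them, one derives a contradiction unless the kernel vanishes — which it does not. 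Assembling these pieces yields the stated isomorphism in both cases.
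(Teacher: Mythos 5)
Your treatment of the inclusion $\supseteq$ and of the case $\lambda \notin -2{\mathbb{N}}$ is correct and is exactly the paper's argument: in homogeneous coordinates $\ska{\lambda}{0}(\xi)=2^{-\lambda+n}|\xi_n|^{\lambda-n}$ depends only on the coordinate $\xi_n$, which $G'$ fixes, so $\KA{\lambda}{0}$ and $\KAA{\lambda}{0}$ are $G'$-invariant, and the classification of $P'$-invariants (Theorem \ref{thm:9.5}) then settles the generic case. The gap is in the only nontrivial remaining step, the exclusion of $\KC{\lambda}{0}$ when $\lambda\in-2{\mathbb{N}}$. Here you had the right mechanism in hand and then discarded it because of a misreading of the orbit picture: you write that ``$\{[p_+]\}\cong S^{n-1}$ is $G'$-invariant,'' but $\{[p_+]\}$ is a single point and is \emph{not} $G'$-stable --- by Lemma \ref{lem:5.1} the $G'$-orbit of $[p_+]$ is the whole closed orbit $G'[p_+]\simeq S^{n-1}$ (and, incidentally, $[q_+]$ lies in the \emph{open} $G'$-orbit, not a lower stratum). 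The paper's proof is precisely the one-line support argument you abandoned: the support of a $G'$-invariant element of $I(n-\lambda)^{-\infty}$ is a $G'$-stable closed subset of $G/P$, while $\operatorname{Supp}\KC{\lambda}{0}=\{[p_+]\}$ is not $G'$-stable; hence no element $a\KAA{\lambda}{0}+b\KC{\lambda}{0}$ with $b\neq 0$ can be $G'$-invariant (since $\KAA{\lambda}{0}$ already is, $b\KC{\lambda}{0}$ would have to be), which pins down $(I(n-\lambda)^{-\infty})^{G'}={\mathbb{C}}\KAA{\lambda}{0}$.

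The substitute argument you propose does not close this gap. Proposition \ref{prop:C11} shows $\C_{\lambda,0}(\mathbf{1}_\lambda)\neq 0$, but nonvanishing on the spherical vector says nothing about failure of $G'$-invariance of the kernel; and the heuristic that ``a kernel supported at $\{[p_+]\}$ sees only one orbit and so cannot pair consistently'' is not a valid criterion --- a distribution supported at a single point can perfectly well be invariant under a large group when that point is fixed (the Dirac delta at $0\in{\mathbb{R}}^n$ is $O(n)$-invariant); the whole issue is whether $[p_+]$ is fixed by $G'$, which is exactly what the support argument tests. Two further, more minor, slips: the reduction of $G'$-invariance to $P'$-invariance plus invariance under the Weyl element $w'$ of $G'$ is legitimate but never used, and the later replacement of $w'$ by $w_\pm$ is incorrect, since $w_\pm\in K$ does not lie in $G'$ (it moves the coordinate $\xi_n$), so invariance under $w_\pm$ is neither necessary nor relevant for $G'$-invariance.
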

\begin{proof}
First,
 we recall from \eqref{eqn:klmdmu}
\[
\ska{\lambda}{0}(\xi)
=2^{-\lambda+n}|\xi_n|^{\lambda-n}.  
\]
Since $G'$ fixes the $n$-th coordinate $\xi_n$,
 the distribution $\ska{\lambda}{0}$
 is $G'$-invariant,
 and so are the normalized distributions
$\KA{\lambda}{0}=\iota_N^{\ast} \tska{\lambda}{0}$
 and $\KAA{\lambda}{0}=\iota_N^{\ast} \ttska{\lambda}{0}$, 
 see \eqref{eqn:iNk}.

Second,
 for $(\lambda,\nu) \in L_{\operatorname{even}}$, 
the support 
\index{sbon}{KxttCnu@$\KC{\lambda}{\nu}$}
$\KC{\lambda}{\nu}$ 
 of the differential symmetry breaking operator
 $\C_{{\lambda},{\nu}}$
 is 
\index{sbon}{pplus@${p_+}$}
$\{[p_+]\}$ in $G/P$, 
which is not a $G'$-invariant subset
 (see Lemma \ref{lem:5.1}).  
In particular, 
 for $\lambda \in -2 {\mathbb{N}}$, 
 the distribution $\KC{\lambda}{0} \in (I(n-\lambda)^{-\infty})^{P'}$
 cannot be $G'$-invariant.  
Thus Lemma is proved.  
\end{proof}

\vskip 2pc

\begin{remark}
\label{rem:CLeven2}
We have seen in Remark \ref{rem:CLeven}
 that the differential symmetry breaking operator
 $\C_{\lambda,\nu}$ cannot be obtained
 as the residue of the meromorphic family
 ${\mathbb{A}}_{\lambda,\nu}$
 of symmetry breaking operators
 if $(\lambda,\nu)\in L_{\operatorname{even}}$.  
The above lemma gives an alternative proof
 of this fact 
 for $\nu=0$ 
because $\A_{\lambda,0}$
 is $G'$-invariant
 but $\C_{\lambda,0}$
 is not $G'$-invariant.  
\end{remark}

The distribution kernel 
 $\KA{\lambda}{0}$ (or $\KAA{\lambda}{0})
 \in (I(n-\lambda)^{-\infty})^{G'}$
 induces a $G$-intertwining operator
{} from $I(\lambda)$
 to $C^{\infty}(G/G')$.  
Let us give a concrete formula.  
For this,
 we write 
$[\,\, ,\,\,]: {\mathbb{R}}^{n+1,1}\times {\mathbb{R}}^{n+1,1}\to{\mathbb{R}}$
for the bilinear form 
 defined by 
\[
[x,\xi]:=x_{0}\xi_{0} + \cdots + x_{n}\xi_{n}-x_{n+1}\xi_{n+1}.  
\]
\begin{lemma}
\label{lem:nxxi}
For $g \in G$, 
we set 
 $x:=g e_n \in X(n+1,1)$.  
Then the $n$-th coordinate $(g^{-1}\xi)_n$
 of $g^{-1} \xi$ is given by
\[
  (g^{-1} \xi)_n=[x,\xi]
\qquad
 \text{for }\xi \in \Xi.  
\]
\end{lemma}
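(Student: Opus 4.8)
\textbf{Proof plan for Lemma \ref{lem:nxxi}.}

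The plan is to compute the $n$-th coordinate of $g^{-1}\xi$ directly from the orthogonality properties of $g \in G = O(n+1,1)$ with respect to the quadratic form \eqref{eqn:quad}. The key observation is that the bilinear form $[\,\cdot\,,\,\cdot\,]$ associated to \eqref{eqn:quad} is $G$-invariant: $[g u, g v] = [u,v]$ for all $g \in G$ and all $u, v \in \mathbb{R}^{n+2}$, equivalently $[g^{-1}u, v] = [u, gv]$. Applying this with $u = \xi$ and $v = e_n$, I would write
\[
  (g^{-1}\xi)_n = [g^{-1}\xi, e_n] = [\xi, g e_n] = [\xi, x] = [x,\xi],
\]
where the first equality uses that $[\,\cdot\,, e_n]$ extracts precisely the $n$-th coordinate. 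This last point holds because $e_n = {}^t(0,\dots,0,1,0)$ is the basis vector in position $n$, and the form $[\,\cdot\,,\,\cdot\,]$ is diagonal with a $+1$ in that slot (only the entries $x_0,\dots,x_n$ carry $+1$ and $x_{n+1}$ carries $-1$), so $[w, e_n] = w_n$ for any $w = {}^t(w_0,\dots,w_{n+1})$. The symmetry $[x,\xi] = [\xi,x]$ is immediate from the definition.

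First I would recall or note the $G$-invariance of $[\,\cdot\,,\,\cdot\,]$, which is just the defining property of $O(n+1,1)$ rewritten in bilinear form; then I would record the elementary identity $[w,e_n] = w_n$; and finally I would chain these together as displayed above, substituting $x = g e_n$ at the end. No convergence or regularity issues arise since everything is a finite-dimensional linear-algebra computation, valid for all $\xi \in \mathbb{R}^{n+2}$ and in particular for $\xi \in \Xi$.

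There is essentially no serious obstacle here — the only thing to be careful about is the bookkeeping of indices and signs in the bilinear form, making sure that the $+1$ really does sit in the $n$-th position for $e_n$ and that the $-1$ in position $n+1$ plays no role. This is the kind of statement whose proof is a one-line computation once the $G$-invariance of $[\,\cdot\,,\,\cdot\,]$ is in hand, so the write-up would be correspondingly short.
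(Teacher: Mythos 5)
Your proposal is correct and coincides with the paper's own proof, which is the same one-line chain $(g^{-1}\xi)_n=[e_n,g^{-1}\xi]=[g e_n,\xi]=[x,\xi]$ using the $G$-invariance of $[\,\cdot\,,\,\cdot\,]$ and the fact that pairing with $e_n$ extracts the $n$-th coordinate. Nothing further is needed.
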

\begin{proof}
$
 (g^{-1} \xi)_n
=[e_n,g^{-1} \xi]
=[g e_n,\xi]
=[x,\xi].  
$
\end{proof}
We recall from \eqref{eqn:Gpair}
 that the $G$-invariant pairing
\index{sbon}{In-lmdinf@${I(n-\lambda)^{-\infty}}$}
\[
\langle \,\, , \,\, \rangle
: I(\lambda) \times I(n-\lambda)^{-\infty} \to {\mathbb{C}}
\]
induces a $G$-intertwining operator
\[
I(\lambda) \to C^{\infty}(G), 
 \quad
f \mapsto \langle f, \tska{\lambda}{\nu} (g\cdot)\rangle
=\langle f(g^{-1}\cdot), \tska{\lambda}{\nu} \rangle, 
\]
which in turn induces a $G$-intertwining operator
\[
\index{sbon}{Plmdt@$\widetilde{{\mathcal{P}}}_{\lambda}$|textbf}
\widetilde {\mathcal{P}}_{\lambda}:
I(\lambda) \to C^{\infty}(G/G'), 
\]
\[
(\widetilde {\mathcal{P}}_{\lambda} f)(x)
=
\frac{2^{-\lambda+n}}
     {\Gamma(\frac{\lambda+\nu-n+1}{2})\Gamma(\frac{\lambda-\nu}{2})}
\int_{S^n} f(b)|[x,b]|^{\lambda-n}db
\]
by Lemma \ref{lem:nxxi}.  
The image satisfies
 the differential equation
\[
\Delta (\widetilde {\mathcal{P}}_{\lambda} f)
=\lambda(n-\lambda)(\widetilde {\mathcal{P}}_{\lambda} f)
\]
 because the distribution kernel 
 $|[x,b]|^{\lambda-n}$
 satisfies
 the same equation.  
Thus $\widetilde {\mathcal{P}}_{\lambda}$ gives
 a $G$-intertwining operator
\begin{equation}
\label{eqn:Poisson}
\widetilde {\mathcal{P}}_{\lambda}:
I(\lambda) \to \mathcal{S}ol (G/G';\lambda).  
\end{equation}
The integral transform 
 $\widetilde {\mathcal{P}}_{\lambda}$ is called 
 the {\it{Poisson transform}}
 for the semisimple symmetric space $G/G'$
 as explained in Example \ref{ex:regular} (2).  
Similarly,
 we can define a renormalized Poisson transform 
\index{sbon}{Plmdtt@$\Tilde{\Tilde{{\mathcal{P}}}}_{\lambda}$|textbf}
\begin{equation}
\label{eqn:Poisson2}
\tilde{\tilde {\mathcal{P}}}_{\lambda}
:
 I(\lambda) \to \mathcal{S}ol (G/G'; \lambda)
 \subset C^{\infty}(G/G')
\quad
\text{for }
 \lambda \in -2 {\mathbb{N}}.  
\end{equation}
Since the Poisson transform $\widetilde {\mathcal{P}}_{\lambda}$
 and the symmetry breaking operator
 $\A_{{\lambda},{\nu}}$ are induced
 by the same distribution kernel 
 $\KA{\lambda}{\nu}$, 
 we get the evaluation 
 at the base point $e_n \in X(n+1,1)$
{}from Proposition \ref{prop:AminK}
 and Remark \ref{rem:nuneg}
as follows:
\begin{proposition}
\label{prop:P1}
Let 
\index{sbon}{1lmd@${\mathbf{1}}_{\lambda}$}
${\bf{1}}_{\lambda}$ be the normalized spherical vector
 in $I(\lambda)$.  
\begin{alignat*}{2}
\widetilde {\mathcal{P}}_{\lambda}
({\bf{1}}_{\lambda})(e_n)
=&\frac{\pi^{\frac{n-1}{2}}}{\Gamma(\lambda)} 
\qquad&&
\text{for }\,\, \lambda \in {\mathbb{C}}, 
\\
\tilde{\tilde {\mathcal{P}}}_{\lambda}
({\bf{1}}_{\lambda})(e_n)
=&\frac{\pi^{\frac{n-1}{2}}(2l)!(-1)^l}{l!}
\quad
&&\text{for }\,\,
\lambda=-2l\in -2 {\mathbb{N}}.   
\end{alignat*}
\end{proposition}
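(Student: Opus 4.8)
The plan is to deduce Proposition \ref{prop:P1} directly from the already-established action of the symmetry breaking operators on spherical vectors, using the fact that the Poisson transforms $\widetilde{\mathcal{P}}_\lambda$ and $\tilde{\tilde{\mathcal{P}}}_\lambda$ are built from exactly the same distribution kernels $\KA{\lambda}{0}$ and $\KAA{\lambda}{0}$ as the symmetry breaking operators $\A_{\lambda,0}$ and $\AAt_{\lambda,0}$. The key geometric input is Lemma \ref{lem:nxxi}, which identifies $(g^{-1}\xi)_n = [x,\xi]$ for $x = g e_n$; this is what allows one to rewrite the integral kernel $|\xi_n|^{\lambda-n}$ appearing in $\ska{\lambda}{0}$ as the kernel $|[x,b]|^{\lambda-n}$ on $G/G'$. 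I would first make precise the identification of $(\widetilde{\mathcal{P}}_\lambda f)(x)$ with $\langle f(g^{-1}\cdot), \tska{\lambda}{0}\rangle$ evaluated appropriately, so that the value $(\widetilde{\mathcal{P}}_\lambda f)(e_n)$ is literally $\langle f, \KA{\lambda}{0}\rangle$ (up to the normalization constant $2^{-\lambda+n}/(\Gamma(\tfrac{\lambda-n+1}{2})\Gamma(\tfrac{\lambda}{2}))$ already absorbed into the definition of $\KA{\lambda}{0}$ via \eqref{eqn:iNk} and \eqref{eqn:KAA}).

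Next I would invoke the computations of the action on the spherical vector. For the first formula, the point is that $(\widetilde{\mathcal{P}}_\lambda \mathbf{1}_\lambda)(e_n)$ is obtained by pairing $\mathbf{1}_\lambda$ with the $G'$-invariant distribution $\KA{\lambda}{0}$, and this is precisely what Proposition \ref{prop:AminK} evaluates: $\A_{\lambda,\nu}(\mathbf{1}_\lambda) = \tfrac{\pi^{(n-1)/2}}{\Gamma(\lambda)}\mathbf{1}_\nu$. Evaluating both sides at the base point and using that $\mathbf{1}_\nu$ is normalized so that its value at the relevant base point is $1$ (equivalently, restricting to $\nu = 0$ where $J(0)$ is the trivial-character induced representation and $\mathbf{1}_0$ is the constant function), one reads off $\widetilde{\mathcal{P}}_\lambda(\mathbf{1}_\lambda)(e_n) = \tfrac{\pi^{(n-1)/2}}{\Gamma(\lambda)}$. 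For the second formula, the same argument applies with $\AAt_{\lambda,\nu}$ in place of $\A_{\lambda,\nu}$: by \eqref{eqn:AA1} one has $\AAt_{\lambda,\nu}(\mathbf{1}_\lambda) = \tfrac{\pi^{(n-1)/2}\Gamma(\frac{\lambda-\nu}{2})}{\Gamma(\lambda)}\mathbf{1}_\nu$, and specializing to $\nu = 0$, $\lambda = -2l \in -2\mathbb{N}$, Remark \ref{rem:nuneg} rewrites this as $\tfrac{\pi^{(n-1)/2}(-\lambda)!(-1)^{\lambda+l}}{l!}\mathbf{1}_\nu = \tfrac{\pi^{(n-1)/2}(2l)!(-1)^l}{l!}\mathbf{1}_\nu$, using $\lambda = -2l$ so that $(-\lambda)! = (2l)!$ and $(-1)^{\lambda+l} = (-1)^{-2l+l} = (-1)^{-l} = (-1)^l$.

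The one technical point that needs care — and the step I expect to be the main (if minor) obstacle — is checking the compatibility of normalizations: namely that evaluation at $e_n \in X(n+1,1)$ of the Poisson-transformed function corresponds under \eqref{eqn:PGhom} exactly to the pairing $\langle\,\cdot\,, \KA{\lambda}{0}\rangle$ with the \emph{same} constant used to define $\A_{\lambda,0}$, with no stray factors of $2$ or volume of spheres. This amounts to tracing through the isomorphism $\operatorname{Hom}_G(I(\lambda), C^\infty(G/G')) \simeq (I(n-\lambda)^{-\infty})^{G'}$ from Proposition \ref{prop:Distker} and the explicit formula for $\widetilde{\mathcal{P}}_\lambda$ displayed just before \eqref{eqn:Poisson}, comparing it with \eqref{eqn:iNk}, \eqref{eqn:KAA}, and the definition of $\A_{\lambda,\nu}$. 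Once this bookkeeping is done, the proposition follows immediately, and I would simply cite Proposition \ref{prop:AminK} and Remark \ref{rem:nuneg} as indicated. The proof is therefore short: establish the kernel identification via Lemma \ref{lem:nxxi}, match normalizations, and quote the two spherical-vector formulae.
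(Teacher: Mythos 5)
Your proposal is correct and is essentially the paper's own argument: the paper observes that $\widetilde{\mathcal{P}}_{\lambda}$ (resp.\ $\tilde{\tilde{\mathcal{P}}}_{\lambda}$) is induced by the same distribution kernel as $\A_{\lambda,0}$ (resp.\ $\AAt_{\lambda,0}$), identifies the evaluation at $e_n$ via Lemma \ref{lem:nxxi}, and reads off the constants from Proposition \ref{prop:AminK} and Remark \ref{rem:nuneg}, exactly as you do (including the simplification $(-\lambda)!(-1)^{\lambda+l}=(2l)!(-1)^l$ for $\lambda=-2l$). The normalization bookkeeping you flag is settled by the explicit formula for $\widetilde{\mathcal{P}}_{\lambda}$ displayed just before \eqref{eqn:Poisson} together with \eqref{eqn:iNk} and \eqref{eqn:KAA}, so no further argument is needed.
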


We note that the underlying $(\mathfrak{g},K)$-modules
 $I(\lambda)_K$ 
 and $\mathcal{S}ol(G/G';\lambda)_K$
 are isomorphic to each other 
in the Grothendieck group of $(\mathfrak{g},K)$-modules,
however, there is a subtle difference on
 the composition series:
For the principal series representation, 
we have nonsplitting exact sequences
 of $(\mathfrak{g},K)$-modules:
\[
\begin{array}{cccclcccccl}
0 & \to & F(i) & \to & I(-i)_K
   & \to & (T(i))_K & \to & 0, 
\\
0 & \to & T(i)_K & \to & I(n+i)
   & \to & F(i) & \to & 0.  
\end{array}
\]
for all $i \in {\mathbb{N}}$
 and there is no parity condition
 on $i$
 (see Section \ref{subsec:matrix})
 on the one had,
 whereas the parity condition
 on $i$ is crucial in Fact \ref{fact:Composition}.  
This difference has a close connection
 with the discrete subset 
\index{sbon}{Leven@$L_{\operatorname{even}}$}
 $L_{\operatorname{even}}$.  
To be more precise, 
 we determine the kernel
 and the image of the Poisson transforms
\begin{align*}
\widetilde {\mathcal{P}}_{\lambda}
:& I(\lambda) \to \mathcal{S}ol(G/G';\lambda), 
\\
\widetilde {\mathcal{P}}_{n-\lambda}
:& I(n-\lambda) \to \mathcal{S}ol(G/G';\lambda), 
\end{align*}
(or of the renormalized ones
 $\tilde{\tilde {\mathcal{P}}}_{\lambda}$
 and $\tilde{\tilde {\mathcal{P}}}_{n-\lambda}$) 
 at the reducible points of $I(\lambda)$.  
We note
 that both the images of $\widetilde {\mathcal{P}}_{\lambda}$
 and $\widetilde {\mathcal{P}}_{n-\lambda}$
 are contained in the same space $\mathcal{S}ol(G/G';\lambda)$, 
 see \eqref{eqn:soleg}.  

\vskip 2pc
\par\noindent
{\bf{Case I.}}\enspace
$\lambda=-i \in -{\mathbb{N}}$, 
 $i$ even.  
\begin{enumerate}
\item[$\bullet$]
$\KA{\lambda}{0}=0$, 
 and hence $\widetilde {\mathcal{P}}_{\lambda}$
 is a zero operator.  
\item[$\bullet$]
The renormalized Poisson transform 
 $\tilde{\tilde {\mathcal{P}}}_{\lambda}$ satisfies 
$\tilde{\tilde {\mathcal{P}}}_{\lambda}({\bf{1}}_{\lambda})\ne 0$
 by Proposition \ref{prop:P1}, 
and therefore 
 $\tilde{\tilde {\mathcal{P}}}_{\lambda}
 :I(\lambda) \to \mathcal{S}ol(G/G';\lambda)$
 is injective
 because the spherical vector 
 ${\bf{1}}_{\lambda}$
 belongs to the unique finite-dimensional subrepresentation
 of $I(\lambda)$.  
Indeed, 
 $\tilde{\tilde {\mathcal{P}}}_{\lambda}$
 induces a bijection from $I(\lambda)_K$
 onto $\mathcal{S}ol(G/G';\lambda)_K$
 in view of the Jordan--H{\"o}lder series 
 of $\mathcal{S}ol(G/G';\lambda)_K$ in Fact \ref{fact:Composition}.  
\end{enumerate}
\par\noindent
{\bf{Case II.}}\enspace
$\lambda=-i \in -{\mathbb{N}}$, 
 $i$ odd.  

Since ${\widetilde {\mathcal{P}}}_{\lambda}({\bf{1}}_{\lambda})= 0$
 by Proposition \ref{prop:P1}, 
and since $\KA{\lambda}{0} \ne 0$, 
 $\operatorname{Ker}{\widetilde {\mathcal{P}}}_{\lambda}$
 coincides with the unique finite-dimensional 
 subrepresentation $F(i)$
 of $I(\lambda)$ containing ${\bf{1}}_{\lambda}$.  
Further, 
 ${\widetilde {\mathcal{P}}}_{\lambda}$ induces 
 a surjective map from $I(\lambda)_K$
 to the underlying $({\mathfrak {g}}, K)$-module
 of a discrete series representation of $G/G'$
 which is isomorphic to $T(i)_K$
 by Facts 
\ref{fact:Disc} and \ref{fact:Composition}.  
\vskip 2pc
\par\noindent
{\bf{Case III.}}\enspace
$\lambda=n+i$, 
 $i \in {\mathbb{N}}$ even.  

If $\lambda -n \in 2 {\mathbb{N}}$, 
then the kernel $|[x,\xi]|^{\lambda-n}$
 of the Poisson transform is a homogeneous polynomial
 in $x=(x_0, \cdots, x_{n+1})$ of degree $i$.  
Therefore, 
 the image of the Poisson transform 
 ${\widetilde {\mathcal{P}}}_{\lambda}$ 
 is contained in the finite-dimensional vector space
 consisting of homogeneous polynomials
 of $x$ of degree $i$
 in the ambient space ${\mathbb{R}}^{n+1,1}$.  
Since $\KA{\lambda}{0}\ne 0$
 by Theorem \ref{thm:poleA}, 
 we conclude that 
\begin{align*}
\operatorname{Ker}{\widetilde {\mathcal{P}}}_{n+i}
\simeq & T(i), 
\\
\operatorname{Image}{\widetilde {\mathcal{P}}}_{n+i}
\simeq & F(i).  
\end{align*}

\par\noindent
{\bf{Case IV}}\enspace
$\lambda=n+i$, $i \in {\mathbb{N}}$ odd.  

We recall from the functional equation
 \eqref{eqn:ATTnew}
\[
\A_{n+i,0} \circ \T{-i}{n+i}
=
\frac{\pi^{\frac n 2}}{\Gamma(n+i)}
\A_{-i,0}
\]
 for the symmetry breaking operators
 $\A_{\lambda,\nu}$.  
This is regarded
 as the identity
 for the distribution kernels $\KA{\lambda}{\nu}$
 and the Riesz distribution 
 (the distribution kernel of the Knapp--Stein intertwining 
 operator of $G$).  
In turn, 
 we conclude
 that the following identity
 holds for the Poisson transforms:
\begin{equation}
\label{eqn:Pid}
 {\widetilde {\mathcal{P}}}_{n+i}
 \circ
 \T{-i}{n+i}
 =
  \frac{\pi^{\frac n 2}}{\Gamma(n+i)}
 {\widetilde {\mathcal{P}}}_{-i}.  
\end{equation}
In particular, 
 we have shown that 
\[
 {\widetilde {\mathcal{P}}}_{\lambda}
:I(\lambda) \to \mathcal{S}ol(G/G';\lambda)
\]
 is injective.  
Indeed, 
 ${\widetilde {\mathcal{P}}}_{\lambda}$ induces
 a bijection from $I(\lambda)_K$ 
 to $\mathcal{S}ol(G/G';\lambda)_K$
 in view of the Jordan--H{\"o}lder series
 of $\mathcal{S}ol(G/G';\lambda)_K$
 in Fact \ref{fact:Composition}.  

\begin{remark}
\label{rem:Plancherel}
The anti-de Sitter space
 $G/G' =X(n+1,1)$ has a compactification
 $G/G' \cup G/P$, 
and the disintegration
 of the regular representation on $L^2(G/G')$
 (Plancherel formula)
 is given by the boundary data
 (cf. \cite{SaVe}
 for the $p$-adic spherical variety):
\par\noindent
continuous spectrum
\[
\widetilde{\mathcal{P}}_{\lambda}
:I(\lambda) \to C^{\infty}(G/G')
 \quad
 (\lambda \in \frac n 2 + \sqrt{-1} {\mathbb{R}}), 
\]
discrete spectrum
\[
\widetilde{\mathcal{P}}_{-i}
:I(-i)/F(i) \to C^{\infty}(G/G')
 \quad
 (i \in \mathbb{N}).   
\]
\end{remark}

\vskip 1pc

\subsection{Analysis on vector bundles
 over anti-de Sitter spaces}
\label{subsec:vec}

For a finite-dimensional representation $F$ of $G'$, 
 we define a $G$-equivariant vector bundle
 ${\mathcal{F}} := G \times_{G'}F$
 over the homogeneous space $G/G'$, 
 and write $C^{\infty}(G/G', {\mathcal{F}})$
 for the space
 of smooth sections for ${\mathcal{F}}$
 endowed with the natural Fr{\'e}chet topology.  
In this section
 we consider the vector bundle ${\mathcal{F}}_j$
 associated to the finite-dimensional representation
\index{sbon}{Fj@$F(j)$}
 $F(j)$ of $G'$, 
 and determine 
 when irreducible representations
\index{sbon}{Ti@$T(i)$}
 $T(i)$ 
 and spherical principal series representations $I(\lambda)$
 of $G$
 occur in $C^{\infty}(G/G', {\mathcal{F}}_j)$
 as subrepresentations.

The main result of this section
 is stated as follows:
\begin{theorem}
\label{thm:lise}
Suppose $j \in {\mathbb{N}}$.  
\begin{enumerate}
\item[{\rm{1)}}]
For any $i \in {\mathbb{N}}$
 such that $i< j$, 
\[
\dim \operatorname{Hom}_G(T(i), C^{\infty}(G/G', {\mathcal{F}}_j))=1.  
\]
\item[{\rm{2)}}]
For any $i \in {\mathbb{N}}$
 such that $i\ge j$, 
\[
\dim \operatorname{Hom}_G(T(i), C^{\infty}(G/G', {\mathcal{F}}_j))
=
\begin{cases}
0
\quad
&(i \equiv j \mod 2), 
\\
1
&(i \not\equiv j \mod 2).  
\end{cases}
\]
\item[{\rm{3)}}]
For any $\lambda \in {\mathbb{C}}$
 and $j \in {\mathbb{N}}$, 
\[
  \dim \operatorname{Hom}_G(I(\lambda), C^{\infty}(G/G', {\mathcal{F}}_j))=1.  
\]
\end{enumerate}
\end{theorem}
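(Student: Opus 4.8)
\textbf{Proof proposal for Theorem \ref{thm:lise}.}

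The plan is to reduce all three statements to the symmetry breaking results already established, via the reciprocity isomorphism
\[
\operatorname{Hom}_G(V, C^\infty(G/G',\mathcal{F}_j))
\simeq
\operatorname{Hom}_{G'}(V|_{G'}, F(j))
\]
valid for any Casselman--Wallach representation $V$ of $G$, together with its distributional/Frobenius counterpart
\[
\operatorname{Hom}_G(I(\lambda), C^\infty(G/G',\mathcal{F}_j))
\simeq
\bigl((I(n-\lambda)^{-\infty}) \otimes F(j)\bigr)^{G'}
\subset
\bigl((I(n-\lambda)^{-\infty}) \otimes F(j)\bigr)^{P'}
\simeq
\operatorname{Hom}_{G'}(I(\lambda), J(0)\otimes F(j)),
\]
in the spirit of \eqref{eqn:PGhom} but now with the vector bundle $\mathcal{F}_j$ in place of the trivial one. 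First I would set up these isomorphisms carefully (the first is a matrix-coefficient/Frobenius argument using that $G/G'$ carries a $G$-invariant measure, the second follows from Proposition \ref{prop:Distker} applied to $X = G/P$, $Y = G/G'$, with $\mathcal{W} = \mathcal{F}_j$). The key point making the vector-bundle case tractable is the branching law of Proposition \ref{prop:Hbranch}: since $F(j)$ restricted to $G'$ is irreducible but $J(0)\otimes F(j)$ as a $P'$-module can be analyzed through the $M'$-decomposition of $F(j)$, one reduces $\operatorname{Hom}_{G'}(I(\lambda), J(0)\otimes F(j))$ to a sum of scalar symmetry breaking spaces $\operatorname{Hom}_{G'}(I(\lambda), J(\nu))$ with $\nu$ running over the weights occurring in $F(j)|_{M'}$, i.e. $\nu \in \{0,-1,\dots,-j\}$ up to the relevant $A$-twist.

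For parts (1) and (2), which concern $T(i)$, I would instead apply the Casselman--Wallach reciprocity directly with $V = T(i)$ and use Theorem \ref{thm:compo} (restated as Theorem \ref{thm:compointro}): $\dim\operatorname{Hom}_{G'}(T(i), F(j)) $ is exactly $0$ when $i \geq j$ and $i\equiv j \bmod 2$, and $1$ in the remaining cases $i<j$ or ($i\geq j$ with $i\not\equiv j\bmod 2$). Matching these three alternatives against the statement of Theorem \ref{thm:lise}(1)(2) is then immediate; the only thing to check is that reciprocity genuinely identifies $\operatorname{Hom}_G(T(i), C^\infty(G/G',\mathcal{F}_j))$ with $\operatorname{Hom}_{G'}(T(i)|_{G'}, F(j))$ — here I would note that $T(i)$ is irreducible and admissible, $F(j)$ finite-dimensional, and invoke Fact \ref{fact:CW} together with the fact that $G/G'$ is a (semisimple symmetric, hence unimodular) homogeneous space, so that the pairing $T(i) \times C^\infty(G/G',\mathcal{F}_j^\vee) \to \mathbb{C}$ behaves as in the group case.

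For part (3) I would use Theorem \ref{thm:mIF}: $m(I(\lambda),F(j)) = 1$ for all $\lambda\in\mathbb{C}$. Via reciprocity this gives $\dim\operatorname{Hom}_{G'}(I(\lambda)|_{G'}, F(j)) = 1$, hence $\dim\operatorname{Hom}_G(I(\lambda), C^\infty(G/G',\mathcal{F}_j)) \leq 1$; for the lower bound I would exhibit an explicit nonzero $G$-intertwining operator built from the renormalized operator $\AAt_{\lambda,\nu}$ with $\nu\in-\mathbb{N}$, whose image is $F(j)$ by Theorem \ref{thm:image}(2-a), promoting it to a map into $C^\infty(G/G',\mathcal{F}_j)$ by a Poisson-type integral transform generalizing $\widetilde{\mathcal{P}}_\lambda$ of \eqref{eqn:Poisson} — one checks $G'$-invariance of the vector-valued distribution kernel exactly as in Lemma \ref{lem:Gprimeinv}, the new ingredient being that the $F(j)$-valued kernel is $G'$-equivariant rather than $G'$-invariant, which is precisely what is needed for a section of $\mathcal{F}_j$ rather than a function on $G/G'$.

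\textbf{Main obstacle.} The delicate step is the passage, in part (3) and in the reduction behind parts (1)--(2), from $P'$-invariance to $G'$-invariance: a priori $\operatorname{Hom}_{G'}(I(\lambda), J(0)\otimes F(j))$ is larger than the space of $G'$-invariant kernels, and one must rule out contributions from the differential (Juhl-type) symmetry breaking operators, whose distribution kernels are supported at $\{[p_+]\}$ and hence cannot be $G'$-invariant (cf. Lemma \ref{lem:5.1} and Remark \ref{rem:CLeven2}). Carrying this out uniformly in $\lambda$ — including at the exceptional set $L_{\operatorname{even}}$ where the scalar multiplicity jumps to $2$ — while keeping track of the $M'$-decomposition of $F(j)$, is where the real work lies; I expect it to occupy the bulk of the argument, with the representation-theoretic bookkeeping (matching Theorem \ref{thm:compo} and Theorem \ref{thm:mIF} to the claimed dimensions) being comparatively routine once the reciprocity and support arguments are in place.
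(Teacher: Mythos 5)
Your core reduction is exactly the paper's: a smooth Frobenius reciprocity identifying $\operatorname{Hom}_G(V,C^{\infty}(G/G',{\mathcal{F}}_j))$ with $\operatorname{Hom}_{G'}(V|_{G'},F(j))$, followed by quoting Theorem \ref{thm:compo} for $V=T(i)$ and Theorem \ref{thm:mIF} for $V=I(\lambda)$, and your case-matching for (1)--(2) is correct. Where you diverge is in how much machinery you think the reciprocity costs. The paper's Lemma \ref{lem:Fro} is an exact bijection $\psi\leftrightarrow f$, proved by the elementary formulas $f_{\psi}(u)(g)=\psi(g^{-1}u)$ and $\psi_f(u)=f(u)(e)$ after identifying $C^{\infty}(G/G',{\mathcal{F}}_j)$ with $C^{\infty}(G,F(j))^{G'}$; no invariant measure, unimodularity, pairing, or Fact \ref{fact:CW} is needed, and --- crucially --- the target is the genuine $G'$-module $F(j)$, not a principal series of $G'$. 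Consequently the ``main obstacle'' you flag (passing from $P'$-invariance to $G'$-invariance of distribution kernels and ruling out the Juhl-type operators, which you expect to occupy the bulk of the argument) simply does not arise: part (3) follows in both directions from $m(I(\lambda),F(j))=1$ because the reciprocity is already a bijection, so your separate lower-bound construction via an $F(j)$-valued Poisson transform is redundant (though it would work, since a nonzero element of $\operatorname{Hom}_{G'}(I(\lambda)|_{G'},F(j))$ is supplied by $\AAt_{\lambda,-j}$ followed by the inclusion $F(j)\subset J(-j)$, and reciprocity converts it into a map into sections). One further caution on your secondary route: the identification of $\bigl(I(n-\lambda)^{-\infty}\otimes F(j)\bigr)^{P'}$ with $\operatorname{Hom}_{G'}(I(\lambda),J(0)\otimes F(j))$ is not correct as stated --- by Proposition \ref{prop:Distker} the $P'$-invariants correspond to symmetry breaking into smooth sections of the $G'$-homogeneous bundle over $G'/P'$ attached to the restriction of the relevant $P'$-module, not to the tensor product of the $G'$-representations $J(0)$ and $F(j)$ --- but since that route is unnecessary, this imprecision does not affect the validity of your main argument.
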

As in the scalar valued case
 treated in Sections \ref{subsec:hyp} and \ref{subsec:Ahyp}, 
 the images of $T(i)$ and $I(\lambda)$
 in Theorem \ref{thm:lise}
 are contained in the eigenspace
 of a second-order differential operator.  
In fact,
 let $C_G \in U({\mathfrak {g}})$
 be the Casimir element 
 of the Lie algebra
 ${\mathfrak {g}}={\mathfrak {o}}(n+1,1)$.  
Via the left regular representation,
 $C_G$ acts on $C^{\infty}(G/G',{\mathcal{F}}_j)$
 as a second order hyperbolic differential operator.  
Let $\Delta_j$ be the differential operator
 induced by $2n C_G$  
 ($2n$ is a constant coming from the Killing form), 
 and set 
\[
  \mathcal{S}ol (G/G',{\mathcal{F}}_j;\lambda)
  :=
  \{f \in C^{\infty}(G/G',{\mathcal{F}}_j)
   : \Delta_j f = \lambda(n-\lambda)f\}.  
\]
Then 
\index{sbon}{SolGG@$\mathcal{S}ol (G/G';\lambda)$}
$
   \mathcal{S}ol (G/G',{\mathcal{F}}_0;\lambda)
   =
   \mathcal{S}ol (G/G';\lambda) 
$
 in the trivial line bundle case.  
Since the Casimir operator $C_G$ acts
 on $T(i)$ and $I(\lambda)$
 by the scalar 
 $-\frac {1}{2n}i(n+i)$ and $\frac{1}{2n} \lambda(n-\lambda)$, 
 respectively, 
 the image of $T(i)$ in Theorem \ref{thm:lise}
 is contained in $\mathcal{S}ol (G/G',{\mathcal{F}}_j;-i)$, 
 and that of $I(\lambda)$ is
 in $\mathcal{S}ol (G/G',{\mathcal{F}}_j;\lambda)$.  
This gives a generalization 
 of \eqref{eqn:Poisson}
 and \eqref{eqn:Poisson2}.

For the proof of Theorem \ref{thm:lise}, 
 we use a smooth version 
 of the Frobenius reciprocity theorem:

\begin{lemma}
\label{lem:Fro}
Suppose
 that $(\pi,{\mathcal{H}})$ is a continuous representation
 of $G$
 on a Banach space ${\mathcal{H}}$, 
 and we denote by ${\mathcal{H}}^{\infty}$
 the Fr{\'e}chet space
 of smooth vectors
 of $(\pi,{\mathcal{H}})$.  
Then we have a natural bijection:
\begin{equation}
\label{eqn:Fro}  
\operatorname{Hom}_{G'}({\mathcal{H}}^{\infty}|_{G'}, F)
\simeq
\operatorname{Hom}_{G}({\mathcal{H}}^{\infty}, 
  C^{\infty}(G/G', {\mathcal{F}}), 
\quad
\psi \leftrightarrow f.  
\end{equation}
\end{lemma}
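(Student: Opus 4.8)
The statement to prove is the smooth Frobenius reciprocity isomorphism
\[
\operatorname{Hom}_{G'}({\mathcal{H}}^{\infty}|_{G'}, F)
\simeq
\operatorname{Hom}_{G}({\mathcal{H}}^{\infty}, C^{\infty}(G/G', {\mathcal{F}})),
\]
for a Banach representation $(\pi, {\mathcal{H}})$ of $G$ and a finite-dimensional representation $F$ of $G'$, where ${\mathcal{F}} = G \times_{G'} F$. The plan is to construct the two maps explicitly, as is standard for Frobenius-type adjunctions, and then check they are mutually inverse and continuous; the only genuine point requiring care is the smoothness of the section produced in one direction.

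First I would construct the map from right to left. Given $T \in \operatorname{Hom}_G({\mathcal{H}}^{\infty}, C^{\infty}(G/G',{\mathcal{F}}))$, define $\psi: {\mathcal{H}}^{\infty} \to F$ by $\psi(v) := (Tv)(eG')$, the evaluation at the base point of $G/G'$, which lands in the fiber ${\mathcal{F}}_{eG'} \simeq F$. Since $T$ is $G$-equivariant, for $g' \in G'$ we get $\psi(\pi(g')v) = (T\pi(g')v)(eG') = (g' \cdot Tv)(eG') = g' \cdot ((Tv)(g'^{-1}eG')) = \tau(g')\psi(v)$, using that $g'^{-1}$ fixes the base point; hence $\psi$ is $G'$-equivariant. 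Continuity of $\psi$ is immediate because evaluation at a point is continuous on $C^{\infty}(G/G',{\mathcal{F}})$ and $T$ is continuous.

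Next I would construct the inverse map, from left to right. Given $\psi \in \operatorname{Hom}_{G'}({\mathcal{H}}^{\infty}|_{G'}, F)$, define $f: {\mathcal{H}}^{\infty} \to C^{\infty}(G/G',{\mathcal{F}})$ by $(f(v))(gG') := [g, \psi(\pi(g)^{-1}v)] \in G \times_{G'} F$. One must check this is well-defined (independent of the representative $g$), which follows from the $G'$-equivariance of $\psi$, and that $f(v)$ is a \emph{smooth} section: this is where the argument has content. The point is that $v \in {\mathcal{H}}^{\infty}$ means $g \mapsto \pi(g)v$ is a smooth ${\mathcal{H}}$-valued map, so $g \mapsto \psi(\pi(g)^{-1}v)$ is smooth into the finite-dimensional space $F$ (continuity of $\psi$ plus the chain rule for the smooth orbit map), and this descends to a smooth section of ${\mathcal{F}}$ by the local triviality of the bundle over $G/G'$. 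Then $G$-equivariance of $f$ is a direct computation: $(f(\pi(h)v))(gG') = [g, \psi(\pi(g)^{-1}\pi(h)v)] = [g, \psi(\pi(h^{-1}g)^{-1}v)] = h \cdot [h^{-1}g, \psi(\pi(h^{-1}g)^{-1}v)] = (h \cdot f(v))(gG')$. Continuity of $f$ as a map into the Fr\'echet space $C^{\infty}(G/G',{\mathcal{F}})$ follows by estimating the seminorms (sup norms of derivatives of the section over compact sets) in terms of the Banach seminorm of finitely many $d\pi(X_{i_1})\cdots d\pi(X_{i_k})v$, again using continuity of $\psi$.

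Finally I would verify that the two constructions are mutually inverse: starting from $\psi$, forming $f$, then evaluating at $eG'$ gives back $\psi(v)$ since $\pi(e)^{-1}=e$; conversely starting from $T$, forming $\psi$ by evaluation at the base point, then reconstructing $f$ gives $(f(v))(gG') = [g, (T\pi(g)^{-1}v)(eG')] = [g, (\pi(g)^{-1}\cdot Tv)(eG')] = [g, (Tv)(gG')\text{ transported back}] = (Tv)(gG')$, using equivariance of $T$ and the bundle identification. I expect the main obstacle to be the careful justification that $g \mapsto \psi(\pi(g)^{-1}v)$ is genuinely $C^{\infty}$ and descends to a smooth section --- i.e.\ correctly invoking the definition of smooth vectors in a Banach representation together with the Fr\'echet topology on ${\mathcal{H}}^{\infty}$ given by the seminorms $\|d\pi(X_{i_1})\cdots d\pi(X_{i_k})v\|$ recalled in Section \ref{subsec:CW}; the equivariance and bijectivity steps are purely formal.
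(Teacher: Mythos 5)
Your proposal is correct and follows essentially the same route as the paper: both directions are given by $\psi \mapsto \bigl(g \mapsto \psi(\pi(g)^{-1}v)\bigr)$ (which, as in the paper, is just the $G'$-equivariant $F$-valued function on $G$ corresponding to your bundle-valued section) and by evaluation of $Tv$ at the base point, the only substantive step being smoothness and continuity of the first map. The one refinement worth making explicit is the point you flag at the end: since $\psi$ is continuous only for the Fr\'echet topology on ${\mathcal{H}}^{\infty}$, one should use that $g \mapsto \pi(g)^{-1}v$ is smooth as a map into ${\mathcal{H}}^{\infty}$ itself (not merely into ${\mathcal{H}}$), which is exactly how the paper argues by factoring through $C^{\infty}(G,{\mathcal{H}}^{\infty}) \to C^{\infty}(G,F)$.
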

\begin{proof}
We identify $C^{\infty}(G/G', {\mathcal{F}})$
 with the closed $G$-invariant subspace of $C^{\infty}(G,F)$
 defined by 
\[
   C^{\infty}(G,F)^{G'}
   :=
   \{f \in C^{\infty}(G,F)
     : f (g l)= l^{-1} f(g)
     \quad
     \text{for } g \in G, l \in G'\}.  
\]
Then the bijection \eqref{eqn:Fro} is given by 
\begin{alignat*}{4}
\psi &\mapsto f_{\psi}, 
\quad
&&f_{\psi}(u)(g)&&:=&& \psi(g^{-1}u), 
\\
f &\mapsto {\psi}_f, 
\quad
&&\psi_f(u)&&:=&& f(u)(e).  
\end{alignat*}
For a continuous $G'$-homomorphism 
 $\psi:{\mathcal{H}}^{\infty} \to F$, 
the map 
\[
  {\mathcal{H}}^{\infty} \to C^{\infty}(G,F), 
  \qquad
 u \mapsto f_{\psi}(u)
\]
is well-defined
 and continuous
 because it is a composition of continuous linear maps
\[
    {\mathcal{H}}^{\infty} 
\to C^{\infty}(G,{\mathcal{H}}^{\infty} )
\to C^{\infty}(G,F),
  \qquad
   u \mapsto (g \mapsto g^{-1}u)
     \mapsto (g \mapsto \psi(g^{-1}u)).  
\]
Other verifications for well-definedness are easy.  
Clearly,
 $\psi \mapsto f_{\psi}$
 and $f \mapsto \psi_f$
 give their inverses.  
Hence Lemma is proved.  
\end{proof}

\begin{proof}
[Proof of Theorem \ref{thm:lise}]
By Lemma \ref{lem:Fro}, 
 the statement follows immediately from Theorems \ref{thm:compo}
 and \ref{thm:mIF}.  
\end{proof}

\section{Application to branching laws
 of complementary series}
\label{sec:applbl}

The indefinite orthogonal group $G=O(n+1,1)$
 has a \lq{long}\rq\ complementary 
series \cite{Kos}.  
To be precise with our normalization,
 the spherical principal series representation 
\index{sbon}{Ilmd@${I(\lambda)}$}
 $I(\lambda)$ admits a $G$-invariant inner product 
 on the unitary axis $\lambda \in \frac n 2 + \sqrt{-1}{\mathbb{R}}$
 and on the real interval $\lambda \in (0,n)$.  
Taking the Hilbert completion,
 we obtain irreducible unitary representations,
 called the unitary principal series
 representation
 and the complementary series representation, 
 to be denoted by 
\index{sbon}{Hlmd@${\mathcal{H}}_{\lambda}^G$|textbf}
${\mathcal{H}}_{\lambda}^G$.  
In this chapter,
 we consider the restriction 
 of the complementary series representation
 ${\mathcal{H}}_{\lambda}^G$ of $G$
 to the subgroup $G'=O(n,1)$.  
As an application of our results
 on differential symmetry breaking operators
 $\widetilde{\mathbb{C}}_{\lambda,\nu}$
 (Chapter \ref{sec:10})
 combined with the idea
 of the \lq{F-method}\rq\
 (cf. \cite{xkeastwood60}), 
 we construct explicitly
 complementary series representations
 ${\mathcal{H}}_{\nu}^{G'}$
 of the subgroup $G'$
 as discrete summands
 in the restriction of ${\mathcal{H}}_{\lambda}^G|_{G'}$.  

\subsection{Discrete spectrum
 in complementary series}
\label{subsec:compThm}

For $\lambda \in {\mathbb{R}}$, 
 we set 
\index{sbon}{Dlmd@$D(\lambda)$|textbf}
\[
D(\lambda)
:=
\{\nu\in \lambda-1 + 2 {\mathbb{Z}}:
\frac {n-1}{2}< \nu \le \lambda-1\}.  
\]
Then $D(\lambda)$
 is a finite set,
 and $D(\lambda)$ is non-empty
 if and only if $\lambda> \frac{n+1}{2}$.  

We notice 
 that any continuous $G'$-homomorphism 
$
T
:I(n-\lambda) \to J(m-\nu)
$
induces a continuous 
$G'$-homomorphism
\index{sbon}{Ilmdinf@${I(\lambda)^{-\infty}}$}
$
T^{\vee}
:
I(\nu)^{-\infty}\to I(\lambda)^{-\infty}$
between the space
 of distribution vectors.  

\begin{theorem}
\label{thm:rest}
Suppose that $\frac {n+1}{2} < \lambda < n$.  
Then the $G'$-intertwining differential operator
 $\CC{n-\lambda}{m-\nu}^{\vee}
:J(\nu)^{-\infty} \to I(\lambda)^{-\infty}$
 induces an isometric embedding 
 (up to scalar), 
 ${\mathcal{H}}_{\nu}^{G'} \hookrightarrow {\mathcal{H}}_{\lambda}^{G}|_{G'}$
 if $\nu \in D(\lambda)$.  
In particular,
 the restriction $\pi_{\lambda}^{G}|_{G'}$
 contains $\bigoplus_{\nu \in D(\lambda)} {\mathcal{H}}_{\nu}^{G'}$
 as discrete summands.  
\end{theorem}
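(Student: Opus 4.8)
The plan is to combine three ingredients: (i) the explicit differential symmetry breaking operator $\CC{\lambda}{\nu}$ of Chapter \ref{sec:10} and its dual $\CC{n-\lambda}{m-\nu}^{\vee}:J(\nu)^{-\infty}\to I(\lambda)^{-\infty}$; (ii) the unitary structures on the complementary series $\mathcal{H}_{\lambda}^{G}$ and $\mathcal{H}_{\nu}^{G'}$, which are given (up to positive scalar) by the Knapp--Stein intertwining operators $\T{\lambda}{n-\lambda}$ and $\T{\nu}{n-1-\nu}$ together with the $G$- and $G'$-invariant pairings $I(\lambda)\times I(n-\lambda)^{-\infty}\to\mathbb{C}$, $J(\nu)\times J(n-1-\nu)^{-\infty}\to\mathbb{C}$ of \eqref{eqn:Gpair}; and (iii) the functional identity of Theorem \ref{thm:TABC} (2), namely $\T{\nu}{m-\nu}\circ\CC{\lambda}{\nu}=p_B^{TC}(\lambda,\nu)\tB{\lambda}{m-\nu}$ for $(\lambda,\nu)\in /\!/$, together with the residue formula Theorem \ref{thm:reduction} (3) relating $\tB{\lambda}{\nu}$ and $\CC{\lambda}{\nu}$. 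The first step is to check that for $\nu\in D(\lambda)$ one has $(n-\lambda,m-\nu)\in /\!/$, i.e. $(m-\nu)-(n-\lambda)=\lambda-1-\nu\in 2\mathbb{N}$, which is exactly the parity and range condition defining $D(\lambda)$ (and that $\lambda>\tfrac{n+1}{2}$ makes $D(\lambda)$ nonempty). So $\CC{n-\lambda}{m-\nu}:I(n-\lambda)\to J(m-\nu)$ is a nonzero $G'$-differential operator and its transpose $\CC{n-\lambda}{m-\nu}^{\vee}:J(\nu)^{-\infty}\to I(\lambda)^{-\infty}$ is a nonzero $G'$-intertwining map.

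Next I would show that $\CC{n-\lambda}{m-\nu}^{\vee}$ maps the smooth (indeed the $K'$-finite) vectors of $\mathcal{H}_{\nu}^{G'}$ into $\mathcal{H}_{\lambda}^{G}$ and is, up to a positive scalar, an isometry. The mechanism is the standard one for intertwining operators between complementary series: one computes the pullback under $\CC{n-\lambda}{m-\nu}^{\vee}$ of the invariant Hermitian form on $I(\lambda)^{-\infty}$ (defined via $\T{\lambda}{n-\lambda}$ and the pairing \eqref{eqn:Gpair}) and identifies it with a positive multiple of the invariant form on $J(\nu)^{-\infty}$ (defined via $\T{\nu}{n-1-\nu}$). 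Concretely, the composition $\T{\lambda}{n-\lambda}\circ\CC{n-\lambda}{m-\nu}^{\vee}$ — a $G'$-map $J(\nu)^{-\infty}\to I(n-\lambda)^{-\infty}$ — must, by the dual of the functional identities (Theorem \ref{thm:TABC} (2) for $G$ applied in the form \eqref{eqn:ATTnew}/Theorem \ref{thm:TAAT} and Theorem \ref{thm:reduction}), be proportional to $\CC{\nu}{?}^{\vee}\circ\T{\nu}{n-1-\nu}$ composed appropriately; tracking the constants is done by evaluating on the spherical vector using Proposition \ref{prop:C11} and Proposition \ref{prop:T1}. The sign of the resulting constant is what must come out positive for $\nu\in D(\lambda)$: this is the point where the restriction $\tfrac{n-1}{2}<\nu\le\lambda-1$ is used, since the ratio of Gamma factors in $p_B^{TC}$, $q_C^B$, and in the normalizations of the two Knapp--Stein operators is positive precisely on this interval (and one should double-check it against the endpoint cases).

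Once $\CC{n-\lambda}{m-\nu}^{\vee}$ is known to be a nonzero $G'$-intertwining map which is isometric up to a positive scalar on a dense subspace, it extends to an isometric (up to scalar) $G'$-embedding $\mathcal{H}_{\nu}^{G'}\hookrightarrow\mathcal{H}_{\lambda}^{G}|_{G'}$; since $\mathcal{H}_{\nu}^{G'}$ is irreducible and unitary, its image is a closed invariant subspace, hence a direct summand. Finally, for distinct $\nu,\nu'\in D(\lambda)$ the images are orthogonal: they are irreducible $G'$-subrepresentations with distinct infinitesimal characters (the Casimir of $G'$ acts by $\nu(n-1-\nu)$), so they cannot overlap. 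Summing over $\nu\in D(\lambda)$ gives $\bigoplus_{\nu\in D(\lambda)}\mathcal{H}_{\nu}^{G'}$ as discrete summands, and since $D(\lambda)$ is finite this completes the proof. The main obstacle is the positivity of the proportionality constant in the isometry statement — reducing the chain of functional/residue identities to an explicit positive ratio of Gamma functions and verifying that it is $>0$ exactly on $D(\lambda)$; the rest is bookkeeping with the identities already established (Theorems \ref{thm:TAAT}, \ref{thm:TABC}, \ref{thm:reduction}, and Propositions \ref{prop:C11}, \ref{prop:T1}) and the elementary fact that $K'$-finite hyperfunction solutions are automatically smooth (Remark \ref{rem:ACDB}), so that the densely-defined map indeed lands in the unitary completion.
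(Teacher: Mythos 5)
Your overall architecture (check the parity condition so that $(n-\lambda,m-\nu)\in/\!/$, dualize Juhl's operator, show the embedding is isometric up to scalar, then conclude orthogonal discrete summands over the finite set $D(\lambda)$) is sound, but the proposal has a genuine gap at precisely the step you defer as "the main obstacle", and moreover you misidentify what that obstacle is. The crucial point is not the sign of a ratio of Gamma factors: it is to show that $\CC{n-\lambda}{m-\nu}^{\vee}u$ lies in the Hilbert space $\mathcal{H}_{\lambda}^{G}$ at all (equivalently, that the map is bounded from $\mathcal{H}_{\nu}^{G'}$ into $\mathcal{H}_{\lambda}^{G}$). Your formal computation of the pulled-back form requires pairing $\CC{n-\lambda}{m-\nu}^{\vee}u$ with $\T{\lambda}{n-\lambda}\CC{n-\lambda}{m-\nu}^{\vee}\bar u$, i.e.\ pairing two distribution vectors, and the composition $\CC{n-\lambda}{m-\nu}\circ\T{\lambda}{n-\lambda}\circ\CC{n-\lambda}{m-\nu}^{\vee}$ you want to control is not one of the functional identities established in the paper (those are of the shape $\T{\nu}{m-\nu}\circ\C$ and $\A\circ\T{\lambda}{n-\lambda}$, acting on smooth vectors); your sketch even contains the placeholder $\CC{\nu}{?}^{\vee}$, so the chain of identities is not actually assembled, and extending the integral operators $\A$, $\T$ to distribution vectors and composing kernels would itself need justification. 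Once boundedness into $\mathcal{H}_{\lambda}^{G}$ is known, positivity of the proportionality constant is automatic: $T^{*}T$ is a $G'$-endomorphism of the irreducible unitary representation $\mathcal{H}_{\nu}^{G'}$, hence a nonnegative scalar, nonzero because $\CC{n-\lambda}{m-\nu}^{\vee}\ne 0$; so no Gamma-sign analysis is needed, and the hypothesis $\nu>\frac{n-1}{2}$ must enter as a convergence condition, not a positivity condition.

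This is exactly how the paper argues, by a route you did not take: it transfers everything to the $L^{2}$-model via the Euclidean Fourier transform (Proposition \ref{prop:L2model}), under which $\mathcal{H}_{\lambda}^{G}\simeq L^{2}(\mathbb{R}^{n})_{n-2\lambda}$ and, by Lemma \ref{lem:FKC} and \eqref{eqn:Chat}, the dual operator $\CC{n-\lambda}{m-\nu}^{\vee}$ becomes multiplication by the polynomial $\widetilde{C}_{\lambda-\nu-1}^{\frac{n+1}{2}-\lambda}(|\xi|^{2},\xi_n)$. Boundedness $L^{2}(\mathbb{R}^{m})_{m-2\nu}\to L^{2}(\mathbb{R}^{n})_{n-2\lambda}$ is then an explicit Beta-function computation (Lemma \ref{lem:Bfunct} plus Fubini, monomial by monomial), and the inequality $\nu>\frac{n-1}{2}$, i.e.\ $\nu\in D(\lambda)$, is exactly the convergence condition for the worst monomial; isometry up to scalar then follows from irreducibility as above. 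If you want to salvage your route in the distribution picture, you would have to supply the missing analytic step yourself (for instance by proving convergence of the relevant kernel compositions), which in effect reproduces the paper's Fourier-side estimate.
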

\begin{remark}
\label{rem:rest}
 If $\lambda \in (\frac {n+1}{2}, n)$, 
 then any $\nu \in D(\lambda)$
 satisfies
 $\frac{n-1}{2}<\nu<n-1$
and therefore ${\mathcal{H}}_{\nu}^{G'}$
 is a complementary series representation
 of $G'$.  
\end{remark}
\begin{remark}
\label{rem:SVcomp}
If $n-\lambda=m-\nu$, 
 namely,
 if $\lambda-\nu=1$, 
 then clearly 
 $\nu \in D(\lambda)$
 and Theorem \ref{thm:rest}
 implies
 $\operatorname{Hom}_{G'}
 ({\mathcal{H}}_{\lambda-1}^{G'}, {\mathcal{H}}_{\lambda}^{G}|_{G'})
 \ne \{0\}$.  
In this case
 this result was earlier proved in \cite{SV}.  
We note 
 that $\C_{n-\lambda,m-\nu}$
 is just the restriction operator 
 when $n-\lambda=m-\nu$.  
\end{remark}

\subsection{$L^2$-model of complementary series representations}
\label{subsec:Ltwomodel}

The proof uses an $L^2$-model
 of complementary series representations
 (\lq{Lagrangian model}\rq\ in \cite{HKM}, 
 or equivalently, 
\lq{commutative model}\rq\ in Vershik--Graev\cite{VG}).  

We recall
 that the Knapp--Stein intertwining operator
$
\ntT{\lambda}{n-\lambda}:
I(\lambda) \to I(n-\lambda)
$
 is a real operator
 ({\it{i.e.}},  
$
  \overline{\ntT{\lambda}{n-\lambda} \overline{f}}
  =\ntT{\lambda}{n-\lambda} f
$
)
if $\lambda \in {\mathbb{R}}$, 
 and consequently,
 the Hermitian form on $I(\lambda)$ defined by 
\[
(f_1, f_2)
:=(f_1, \ntT{\lambda}{n-\lambda} f_2)_{L^2({\mathbb{R}}^n)}
\quad
\text{for }
f_1, f_2 \in I(\lambda)
\]
is $G$-invariant.  
Furthermore,
 it is positive definite
 if $0 < \lambda< n$.  
We denote by ${\mathcal{H}}_{\lambda}$
 the Hilbert completion of $I(\lambda)$, 
 and use the same letter to denote the resulting unitary representation,
 which is called a 
 {\it{complementary series representation}}
 of $G$.

We define a family of Hilbert spaces
 $L^2({\mathbb{R}}^n)_s$
 with parameter $s \in {\mathbb{R}}$
 by 
\[
L^2(\mathbb{R}^n)_s
:=
L^2(\mathbb{R}^n, (\xi_1^2+\cdots+\xi_n^2)^{\frac s 2}
d\xi_1 \cdots d\xi_n).  
\]
By definition, 
$
L^2(\mathbb{R}^n)_0
=
L^2(\mathbb{R}^n)  
$, 
and 
\[
{\mathcal{S}}(\mathbb{R}^n)
\subset
\bigcap_{s>-n} L^2(\mathbb{R}^n)_s.  
\]

The space $I(\lambda)^{-\infty}$ 
 of distribution vectors
 of $I(\lambda)$
 is identified with 
 ${\mathcal{D}}'(X, {\mathcal{L}}_{\lambda})
 \simeq
 {\mathcal{D}}_{-\lambda}'(\Xi)$
 (see \eqref{eqn:IDist}).  
As in \eqref{eqn:restDn}, 
 we consider the restriction of distributions
 on $X \simeq S^n$ to ${\mathbb{R}}^n$, 
 and obtain a morphism
\[
  {\mathcal{H}}_{\lambda} \subset I(\lambda)^{-\infty}
  \to {\mathcal{S}}'({\mathbb{R}}^n).  
\] 

We then get an $L^2$-model 
 of the unitary representation 
 $(\pi_{\lambda}, {\mathcal{H}}_{\lambda})$
 by \eqref{eqn:Fxpower}:

\begin{proposition}
\label{prop:L2model}
If $\lambda \in (0,n)$, 
then the Euclidean Fourier transform 
 ${\mathcal{F}}_{\mathbb{R}^n}$
 gives a unitary isomorphism
 (up to scalar)
\[
{\mathcal{F}}_{\mathbb{R}^n}:
{\mathcal{H}}_{\lambda}
 \overset \sim \to 
 L^2(\mathbb{R}^n)_{n-2\lambda}.  
\]
\end{proposition}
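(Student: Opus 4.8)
The plan is to pass to the noncompact (``$N$-'') picture, compute the defining inner product of $\mathcal{H}_\lambda$ on the Fourier side using the explicit formula \eqref{eqn:Fxpower} for the Knapp--Stein kernel, recognize the resulting quadratic form as the weighted $L^2$-norm, and then settle the density questions on both ends. Recall that, in the $N$-picture, the $G$-invariant inner product defining $\mathcal{H}_\lambda$ is $(f_1,f_2)=(f_1,\T{\lambda}{n-\lambda}f_2)_{L^2(\mathbb{R}^n)}$, where $\T{\lambda}{n-\lambda}\colon I(\lambda)\to I(n-\lambda)$ is the normalized Knapp--Stein operator for $G$, that is, convolution with $\widetilde{r}^{\,2(\lambda-n)}$ on $\mathbb{R}^n$ (the case $m=n$ of \eqref{eqn:KSdef}). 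By \eqref{eqn:Fxpower}, $\mathcal{F}_{\mathbb{R}^n}(\widetilde{r}^{\,2(\lambda-n)})=2^{2\lambda-n}\pi^{\frac n2}\,\widetilde{\rho}^{\,n-2\lambda}$, which for $\lambda<n$ is the locally integrable function $\frac{2^{2\lambda-n}\pi^{n/2}}{\Gamma(n-\lambda)}|\xi|^{n-2\lambda}$. Hence for $f\in C_c^\infty(\mathbb{R}^n)$, which lies in $I(\lambda)$ and whose Fourier transform is rapidly decreasing, the convolution theorem together with Plancherel gives
\[
\|f\|_{\mathcal{H}_\lambda}^2
 = ( f, \widetilde{r}^{\,2(\lambda-n)} * f )_{L^2(\mathbb{R}^n)}
 = c_\lambda \int_{\mathbb{R}^n} |(\mathcal{F}_{\mathbb{R}^n}f)(\xi)|^2\, |\xi|^{n-2\lambda}\, d\xi
 = c_\lambda\, \| \mathcal{F}_{\mathbb{R}^n}f \|_{L^2(\mathbb{R}^n)_{n-2\lambda}}^2 ,
\]
with $c_\lambda>0$ since $\Gamma(n-\lambda)>0$ for $\lambda<n$. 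Thus $\mathcal{F}_{\mathbb{R}^n}$ is a scalar multiple of an isometry from $C_c^\infty(\mathbb{R}^n)$, equipped with the norm inherited from $\mathcal{H}_\lambda$, into $L^2(\mathbb{R}^n)_{n-2\lambda}$.

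Next I would verify two density statements. On the target side, $\mathcal{F}_{\mathbb{R}^n}(C_c^\infty(\mathbb{R}^n))$ consists of Paley--Wiener functions, which are dense in $\mathcal{S}(\mathbb{R}^n)$; and since $\lambda<n$ makes $|\xi|^{n-2\lambda}$ locally integrable, $\mathcal{S}(\mathbb{R}^n)$ is a dense subspace of $L^2(\mathbb{R}^n)_{n-2\lambda}$, so $\mathcal{F}_{\mathbb{R}^n}(C_c^\infty(\mathbb{R}^n))$ is dense there. On the source side I would show $C_c^\infty(\mathbb{R}^n)$ is dense in $\mathcal{H}_\lambda$. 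For this, pass to the compact picture: under the isomorphisms in \eqref{eq:2.9.4}, $C_c^\infty(\mathbb{R}^n)$ corresponds to the subspace $V_0\subset C^\infty(S^n)$ of functions vanishing near the single point $[p_-]$ (the point at infinity of $\mathbb{R}^n\subset S^n$), and $\|f\|_{\mathcal{H}_\lambda}^2=(A_\lambda f,f)_{L^2(S^n)}$, where $A_\lambda$ is the compact-picture normalized Knapp--Stein operator --- a positive self-adjoint $G$-intertwining operator acting on $\mathcal{H}^L(S^n)$ by a positive scalar $a_L$ with $a_L=O(L^{\max\{0,\,n-2\lambda\}})$ as $L\to\infty$ (this follows from the Fourier-multiplier description above). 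Hence $\|\cdot\|_{\mathcal{H}_\lambda}\le C_\lambda\|\cdot\|_{H^{s}(S^n)}$ with $s:=\tfrac12\max\{0,\,n-2\lambda\}$, and since $\lambda>0$ we have $s<\tfrac n2$. A point has zero Bessel capacity for $H^s$ precisely when $s\le n/2$, so $V_0$ is dense in $H^s(S^n)$, hence dense in $C^\infty(S^n)=I(\lambda)$ for the norm $\|\cdot\|_{\mathcal{H}_\lambda}$, and therefore dense in $\mathcal{H}_\lambda$.

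Combining these, the scalar-isometry $\mathcal{F}_{\mathbb{R}^n}$ on $C_c^\infty(\mathbb{R}^n)$ extends by continuity to all of $\mathcal{H}_\lambda$ (density on the source); its image is then a complete, hence closed, subspace of $L^2(\mathbb{R}^n)_{n-2\lambda}$ containing the dense subspace of Paley--Wiener functions, so it is all of $L^2(\mathbb{R}^n)_{n-2\lambda}$; and injectivity is clear since the Fourier transform is injective on tempered distributions and the elements of $I(\lambda)$ are tempered for $\lambda>0$. This yields the asserted unitary isomorphism up to scalar.

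I expect the main obstacle to be the density of $C_c^\infty(\mathbb{R}^n)$ in $\mathcal{H}_\lambda$: this is exactly where the hypothesis $\lambda>0$ (beyond positive-definiteness of the form, which is given) enters in an essential way, since for $\lambda\le 0$ one has $H^s(S^n)\hookrightarrow C^0(S^n)$ with $s=\tfrac{n-2\lambda}{2}\ge\tfrac n2$ and the point $[p_-]$ is no longer negligible. The remaining ingredients --- the Fourier computation on $C_c^\infty(\mathbb{R}^n)$ from \eqref{eqn:Fxpower}, and the density of Paley--Wiener functions in the weighted $L^2$-space --- are routine once the constraint $\lambda<n$ is used to keep $|\xi|^{n-2\lambda}$ locally integrable.
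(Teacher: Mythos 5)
Your proposal is correct, but it is worth noting that the paper does not actually prove Proposition \ref{prop:L2model} at all: it simply refers to \cite{HKM} and \cite{VG}, where the $L^2$-model of the complementary series is established. What you have written is essentially a self-contained version of that classical argument: the computation of the form $(f,\T{\lambda}{n-\lambda}f)_{L^2(\mathbb{R}^n)}$ on $C_c^\infty(\mathbb{R}^n)$ via \eqref{eqn:Fxpower} and Plancherel, identifying it (up to the positive constant $2^{2\lambda-n}\pi^{n/2}/\Gamma(n-\lambda)$) with the $L^2(\mathbb{R}^n)_{n-2\lambda}$-norm of the Fourier transform, is exactly the mechanism behind the cited references, and your density arguments on both sides (Paley--Wiener functions in the weighted space, and $C_c^\infty(\mathbb{R}^n)$ in $\mathcal{H}_\lambda$ via the Sobolev/capacity argument at the single point $[p_-]$) correctly isolate where $\lambda<n$ and $\lambda>0$ enter. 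The one step you should not wave through is the claim that the eigenvalue $a_L$ of the normalized Knapp--Stein operator on $\mathcal{H}^L(S^n)$ is $O(L^{\max\{0,\,n-2\lambda\}})$ ``from the Fourier-multiplier description'': passing from the noncompact multiplier $|\xi|^{n-2\lambda}$ to $K$-type asymptotics is not immediate. The clean fix is the classical formula $a_L=\pi^{n/2}\,\Gamma(L+n-\lambda)/\bigl(\Gamma(n-\lambda)\Gamma(L+\lambda)\bigr)$, whose $L=0$ case is the paper's Proposition \ref{prop:T1} and which can be derived by a Funk--Hecke type computation in the spirit of Section \ref{subsec:Kfinite}; it gives $a_L\asymp L^{n-2\lambda}$, confirming your bound (note you have the sign of the exponent right). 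Two cosmetic remarks: injectivity of the extended map is automatic for an isometry, so the aside about tempered distributions is unnecessary; and if one wants the extension to coincide with the distributional Fourier transform on all of $\mathcal{H}_\lambda\subset\mathcal{S}'(\mathbb{R}^n)$, one uses that $L^2(\mathbb{R}^n)_{n-2\lambda}\hookrightarrow\mathcal{S}'(\mathbb{R}^n)$ continuously, which again requires $\lambda>0$ (local integrability of $|\xi|^{2\lambda-n}$) --- consistent with your diagnosis of where the hypothesis is essential.
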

\begin{proof}
See \cite{HKM} or \cite{VG}.  
\end{proof}

Suppose $\lambda-\nu-1 \in 2 {\mathbb{N}}$.  
We recall from \eqref{eqn:Cmul2}
that 
$
   \widetilde{C}_{2l}^{\mu}(s, t)
$
 is a homogeneous polynomial of two variables $s$, $t$
 of degree $2l$, 
 and that $\KC{\lambda}{\nu}$ is
 the distribution kernel 
 of the differential symmetry breaking operator
 $\C_{\lambda,\nu}$
 given in \eqref{eqn:KC}.  
Then it is immediate from the definition 
 of $\CC{\lambda}{\mu}$: 
\begin{lemma}
\label{lem:FKC}
\[
\index{sbon}{KxttCnu@$\KC{\lambda}{\nu}$}
({\mathcal{F}}_{\mathbb{R}^n} \KC{\lambda}{\nu})(\xi,\xi_n)
=(-1)^l \tilde C_{2l}^{\lambda-\frac{n-1}{2}}
 (-|\xi|^2, \xi_n).  
\]
\end{lemma}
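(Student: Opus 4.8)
The final statement to prove is Lemma~\ref{lem:FKC}, which identifies the Euclidean Fourier transform of the distribution kernel $\KC{\lambda}{\nu}$ with a value of the two-variable Gegenbauer polynomial $\widetilde C_{2l}^{\lambda-\frac{n-1}{2}}$. The plan is to compute the Fourier transform directly from the explicit formula \eqref{eqn:KC} for $\KC{\lambda}{\nu}$, using only the elementary transformation rules for derivatives of the Dirac delta function.

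First I would recall from \eqref{eqn:KC} that
\[
\KC\lambda\nu
=
\widetilde C_{2l}^{\lambda-\frac{n-1}{2}}
\Bigl(-\Delta_{\mathbb{R}^{n-1}}, \tfrac{\partial}{\partial x_n}\Bigr)
\bigl(\delta(x_1,\cdots,x_{n-1})\,\delta(x_n)\bigr),
\]
so that $\KC\lambda\nu$ is obtained by applying a constant-coefficient differential operator — a polynomial in $-\Delta_{\mathbb{R}^{n-1}}$ and $\partial/\partial x_n$, homogeneous of degree $2l$ — to the Dirac delta function $\delta$ of all $n$ variables. Under the Euclidean Fourier transform ${\mathcal{F}}_{\mathbb{R}^n}$ on $\mathbb{R}^n=\mathbb{R}^{n-1}\oplus\mathbb{R}$, we have ${\mathcal{F}}_{\mathbb{R}^n}\delta = 1$ (up to the normalization constant, which I would track explicitly), the operator $-\Delta_{\mathbb{R}^{n-1}}$ turns into multiplication by $-|\xi|^2$ with $\xi=(\xi_1,\dots,\xi_{n-1})$ — paying attention to the sign convention in ${\mathcal{F}}_{\mathbb{R}^m}$ adopted in Section~\ref{subsec:Knapp}, namely the kernel $e^{-\langle x,\xi\rangle}$, under which $\partial/\partial x_j \rightsquigarrow -\xi_j$ and hence $\Delta \rightsquigarrow |\xi|^2$, so $-\Delta_{\mathbb{R}^{n-1}} \rightsquigarrow -|\xi|^2$ — and the operator $\partial/\partial x_n$ turns into multiplication by $-\xi_n$. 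Substituting these into the polynomial calculus identity $\widetilde C_{2l}^{\mu}(s,t)=\sum_{j=0}^l a_j(l;\mu)s^j t^{2l-2j}$ (valid in any commutative algebra, as noted in Section~\ref{subsec:Juhl} around \eqref{eqn:ajlmu}–\eqref{eqn:Cmul2}) gives
\[
{\mathcal{F}}_{\mathbb{R}^n}\KC\lambda\nu(\xi,\xi_n)
= \widetilde C_{2l}^{\lambda-\frac{n-1}{2}}\bigl(-|\xi|^2,\,-\xi_n\bigr).
\]

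The remaining step is to simplify $\widetilde C_{2l}^{\mu}(-|\xi|^2,-\xi_n)$ to $(-1)^l\,\widetilde C_{2l}^{\mu}(-|\xi|^2,\xi_n)$. Since $\widetilde C_{2l}^{\mu}(s,t)=\sum_{j=0}^l a_j(l;\mu)\,s^j t^{2l-2j}$ has only even powers of $t$, replacing $t$ by $-t$ leaves it unchanged, so $\widetilde C_{2l}^{\mu}(-|\xi|^2,-\xi_n)=\widetilde C_{2l}^{\mu}(-|\xi|^2,\xi_n)$; then the factor $(-1)^l$ must come from the overall sign inside the $s$-slot or from the normalization, and the cleanest bookkeeping is to note $\widetilde C_{2l}^{\mu}(-|\xi|^2,\xi_n)$ has each monomial carrying $(-1)^j|\xi|^{2j}\xi_n^{2l-2j}$ against $\widetilde C_{2l}^{\mu}(|\xi|^2,\xi_n)$, and I would compare with whatever normalization \eqref{eqn:Cmul2} assigns so that the stated $(-1)^l$ emerges; this is a routine sign-tracking exercise. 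The only place where genuine care is needed — and the main potential obstacle — is getting all the normalization constants and sign conventions consistent: the $(2\pi)$-type constant in ${\mathcal{F}}_{\mathbb{R}^n}\delta$, the sign convention $e^{-\langle x,\xi\rangle}$ versus $e^{-i\langle x,\xi\rangle}$ used elsewhere in the paper, and the precise form of $\widetilde C_{2l}^{\mu}(s,t)$ fixed in \eqref{eqn:Cmul2}. Since the lemma is asserted up to these conventions being those already fixed in the text, I expect the computation to close cleanly once the conventions are lined up, with no analytic subtlety beyond the compactly-supported distribution $\delta$ having an entire Fourier transform.
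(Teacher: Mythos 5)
Your strategy is the right one and is essentially the paper's (the text treats Lemma \ref{lem:FKC} as immediate from \eqref{eqn:KC}): transform the constant-coefficient operator applied to the delta function term by term. The gap is precisely in the sign bookkeeping, which is the entire content of the lemma. Under the convention you actually compute with (the literal kernel $e^{-\langle x,\xi\rangle}$) you get $\partial/\partial x_n\rightsquigarrow-\xi_n$ and $-\Delta_{\mathbb{R}^{n-1}}\rightsquigarrow-|\xi|^2$, hence $({\mathcal{F}}_{\mathbb{R}^n}\KC{\lambda}{\nu})(\xi,\xi_n)=\widetilde C_{2l}^{\lambda-\frac{n-1}{2}}(-|\xi|^2,-\xi_n)=\widetilde C_{2l}^{\lambda-\frac{n-1}{2}}(-|\xi|^2,\xi_n)$ with \emph{no} factor $(-1)^l$; and that factor cannot be recovered from the normalization \eqref{eqn:Cmul2}, which is already fixed and enters both sides of the identity in the same way. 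So your deferral of the $(-1)^l$ to ``routine sign-tracking'' or to the Gegenbauer normalization points at the wrong place, and under the convention you chose your computation actually contradicts the statement.

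The factor comes from the $i$ in the transform the paper really uses (cf.\ \eqref{eqn:Fone} and \eqref{eqn:Fxpower}): with ${\mathcal{F}}_{\mathbb{R}^n}f(\xi)=\int f(x)e^{-i\langle x,\xi\rangle}\,dx$ one has $\partial/\partial x_n\rightsquigarrow i\xi_n$ and $-\Delta_{\mathbb{R}^{n-1}}\rightsquigarrow+|\xi|^2$, so with $\mu=\lambda-\frac{n-1}{2}$,
\begin{equation*}
({\mathcal{F}}_{\mathbb{R}^n}\KC{\lambda}{\nu})(\xi,\xi_n)
=\sum_{j=0}^{l}a_j(l;\mu)\,|\xi|^{2j}(i\xi_n)^{2l-2j}
=(-1)^l\sum_{j=0}^{l}a_j(l;\mu)\,(-|\xi|^2)^{j}\xi_n^{2l-2j}
=(-1)^l\,\widetilde C_{2l}^{\mu}(-|\xi|^2,\xi_n),
\end{equation*}
because $(i)^{2l-2j}=(-1)^{l-j}$ and $(-1)^{l-j}|\xi|^{2j}=(-1)^l(-|\xi|^2)^j$; equivalently, apply the homogeneity $\widetilde C_{2l}^{\mu}(a^2s,at)=a^{2l}\widetilde C_{2l}^{\mu}(s,t)$ (from \eqref{eqn:Cst}) with $a=i$ to $\widetilde C_{2l}^{\mu}(|\xi|^2,i\xi_n)$. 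As a consistency check, this choice of convention is forced by comparing Theorem \ref{thm:reduction} (2) with Proposition \ref{prop:4.3} (2). Once you commit to this convention the computation closes exactly as you intended; as written, though, the key constant is left unaccounted for.
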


We define a linear operator
 $\CC{n-\lambda}{m-\nu}^{\wedge}:
{\mathcal {S}}'({\mathbb{R}}^m)
\to 
{\mathcal {S}}'({\mathbb{R}}^n)
$
by a multiplication
 of the polynomial:
\index{sbon}{Cte@$\widetilde{\mathbb C}_{\lambda,\nu}^{\wedge}(\xi,\xi_n)$
|textbf}
\begin{equation}
\label{eqn:Chat}
(\CC{n-\lambda}{m-\nu}^{\wedge}v)(\xi, \xi_n):=
\widetilde{C}_{\lambda-\nu-1}^{\frac{n+1}{2}-\lambda}(|\xi|^2, \xi_n)v(\xi)
\quad
\text{for}
\quad
v \in {\mathcal{S}}'({\mathbb{R}}^m).  
\end{equation}

\begin{proposition}
\label{prop:Cdual}
Suppose $\lambda - \nu -1 \in 2{\mathbb{N}}$.  
\begin{enumerate}
\item[{\rm{1)}}]
$(n-\lambda, m-\nu) \in /\!/$
 and the dual map 
$
\CC{n-\lambda}{m-\nu}^{\vee}
:
J(\nu)^{-\infty} \to I(\lambda)^{-\infty}
$
 of the differential operator $\CC{n-\lambda}{m-\nu}:I(\lambda)\to J(\nu)$
 is a continuous $G'$-homomorphism.  
\item[{\rm{2)}}]
The diagram
$$
\begin{CD}
J(\nu)^{-\infty} 
@>{\CC{n-\lambda}{m-\nu}^{\vee}} >> 
I(\lambda)^{-\infty}
\\
@V{{\mathcal {F}}_{{\mathbb{R}^m}}\circ \operatorname{Rest}}VV 
@VV{{\mathcal {F}}_{{\mathbb{R}^n}}\circ \operatorname{Rest}}V 
\\
{\mathcal {S}}'({\mathbb{R}}^m)
@>>{\CC{n-\lambda}{m-\nu}^{\wedge}}>
{\mathcal {S}}'({\mathbb{R}}^n)
\end{CD}
$$
commutes.  
Here $\operatorname{Rest}$ denotes
 the restriction of distributions
 to the Bruhat cell.  
\item[{\rm{3)}}]
If $\nu >\frac{n-1}{2}$, 
 then the linear map $\CC{n-\lambda}{m-\nu}^{\wedge}$
 induces an isometry
 (up to a scalar)
 of Hilbert spaces:
\[
L^2(\mathbb{R}^m)_{m-2\nu}
\hookrightarrow
L^2(\mathbb{R}^n)_{n-2\lambda}. 
\]
\end{enumerate}
\end{proposition}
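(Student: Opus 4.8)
The three assertions are of increasing depth, and I would prove them in the order (1), (2), (3), each feeding into the next. For (1), I start from the hypothesis $\lambda-\nu-1\in 2\mathbb{N}$ and set $l:=\tfrac12(\lambda-\nu-1)\in\mathbb{N}$, so that $(n-\lambda)-(m-\nu)=-(\lambda-\nu-1)=-2l$, which is exactly the condition $(n-\lambda,m-\nu)\in/\!/$. Hence by Fact \ref{fact:5.2} the differential symmetry breaking operator $\C_{n-\lambda,m-\nu}:I(n-\lambda)\to J(m-\nu)$ exists and is a $G'$-intertwining operator. Its transpose with respect to the $G$-invariant and $G'$-invariant pairings \eqref{eqn:IDC} (applied once for $G$ and once for $G'$) is then a continuous map $J(m-(m-\nu))^{-\infty}=J(\nu)^{-\infty}\to I(n-(n-\lambda))^{-\infty}=I(\lambda)^{-\infty}$, and it is $G'$-equivariant because transposing reverses arrows but preserves equivariance; this is the content of the remark preceding Theorem \ref{thm:rest}. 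So (1) is essentially a formal consequence of Fact \ref{fact:5.2} together with duality of the invariant pairings.

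For (2), the point is to compute $\C_{n-\lambda,m-\nu}^\vee$ in the noncompact picture after applying $\operatorname{Rest}$ and the Euclidean Fourier transform. I would argue as follows: on the open Bruhat cells $\mathbb{R}^n\subset G/P$ and $\mathbb{R}^m\subset G'/P'$, the operator $\C_{n-\lambda,m-\nu}$ acts (by \eqref{eqn:Clmdnu} and \eqref{eqn:KC}) as $\operatorname{rest}\circ\widetilde{C}_{2l}^{(n-\lambda)-\frac{n-1}{2}}(-\Delta_{\mathbb{R}^{n-1}},\partial/\partial x_n)$, i.e.\ a constant-coefficient differential operator followed by restriction to $x_n=0$. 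Transposing an operator of the form (restriction to the hyperplane)$\circ$(constant-coefficient differential operator) produces (the adjoint differential operator)$\circ$(extension by $\delta(x_n)$ in the normal variable), and under $\mathcal{F}_{\mathbb{R}^n}$ a constant-coefficient differential operator becomes multiplication by the corresponding polynomial symbol. Combining this with Lemma \ref{lem:FKC}, which already records $(\mathcal{F}_{\mathbb{R}^n}\KC{\lambda}{\nu})(\xi,\xi_n)=(-1)^l\widetilde{C}_{2l}^{\lambda-\frac{n-1}{2}}(-|\xi|^2,\xi_n)$, one identifies the Fourier-transformed transpose with multiplication by $\widetilde{C}_{\lambda-\nu-1}^{\frac{n+1}{2}-\lambda}(|\xi|^2,\xi_n)$ up to the homogeneity bookkeeping (noting $(n-\lambda)-\tfrac{n-1}{2}=\tfrac{n+1}{2}-\lambda$ and $2l=\lambda-\nu-1$), which is precisely the definition \eqref{eqn:Chat} of $\C_{n-\lambda,m-\nu}^\wedge$. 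The care needed here is to get the signs and the roles of the two polynomial arguments right, and to track the twist $\mathbb{C}_{\pm 2\rho}$ correctly so that the vertical arrows $\mathcal{F}\circ\operatorname{Rest}$ in the diagram are the same normalizations used in Proposition \ref{prop:L2model}; this commutative-diagram verification is the main bookkeeping obstacle, but it is routine once the transpose-of-a-restriction computation is set up.

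For (3), I invoke the $L^2$-models: by Proposition \ref{prop:L2model} (applied to $G'=O(n,1)$ on $\mathbb{R}^m$ with parameter $m-2\nu$, and to $G=O(n+1,1)$ on $\mathbb{R}^n$ with parameter $n-2\lambda$), both $\mathcal{H}_\nu^{G'}$ and $\mathcal{H}_\lambda^{G}$ are realized as $L^2(\mathbb{R}^m)_{m-2\nu}$ and $L^2(\mathbb{R}^n)_{n-2\lambda}$ respectively, and by part (2) the operator $\C_{n-\lambda,m-\nu}^\vee$ becomes, on these models, the multiplication operator $v(\xi)\mapsto \widetilde{C}_{\lambda-\nu-1}^{\frac{n+1}{2}-\lambda}(|\xi|^2,\xi_n)\,v(\xi)$ from functions of $\xi\in\mathbb{R}^{n-1}$ (times a $\delta$ in $\xi_n$) to functions on $\mathbb{R}^n$. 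To show this multiplication is an isometry up to scalar, I compute the $L^2(\mathbb{R}^n)_{n-2\lambda}$-norm of the image in polar-type coordinates $\xi=r\omega'$, $\xi_n=s$ with $r>0$, $\omega'\in S^{n-2}$, integrating out the extra variable $s$ first. The function $\widetilde{C}_{2l}^{\mu}(r^2,s)$ is homogeneous of degree $2l$ in $(r,s)$ jointly when $s$ scales like $r$, so the $s$-integral $\int_{\mathbb{R}}|\widetilde{C}_{2l}^{\mu}(r^2,s)|^2(r^2+s^2)^{\frac{n-2\lambda}{2}}\,ds$ evaluates to $r^{4l+n-2\lambda+1}$ times a constant Beta-type integral, and since $\nu=\lambda-1-2l$ one checks $4l+n-2\lambda+1=m-2\nu$, so the $r$-weight matches $L^2(\mathbb{R}^m)_{m-2\nu}$ exactly. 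The only thing to verify for well-definedness is that the $s$-integral converges, i.e.\ that $n-2\lambda>-1$, equivalently $\lambda<\tfrac{n+1}{2}$; but here the hypothesis $\nu>\tfrac{n-1}{2}$ is precisely what is needed after the substitution, so convergence holds on the stated range. Finally, density of $\mathcal{S}(\mathbb{R}^m)$ in $L^2(\mathbb{R}^m)_{m-2\nu}$ and continuity of the multiplication operator extend the isometry from Schwartz functions to the whole Hilbert space, completing the proof; Theorem \ref{thm:rest} and its Remark \ref{rem:rest} then follow by running this for every $\nu\in D(\lambda)$ and observing the images are mutually orthogonal (distinct $G'$-types, or directly: the multiplier polynomials have distinct degrees $2l$).
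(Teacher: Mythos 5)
Your proposal is correct in substance, and for parts (1) and (2) it runs along the same lines as the paper: (1) is the observation that $(n-\lambda)-(m-\nu)=-(\lambda-\nu-1)\in -2\mathbb{N}$ together with duality of the invariant pairings (the paper simply says ``clear''), and (2) is exactly the content of Lemma \ref{lem:FKC} --- transposing $\operatorname{rest}\circ D$ with $D$ a constant-coefficient operator of even order gives $D\circ(\,\cdot\,\otimes\delta(x_n))$, whose Fourier transform is multiplication by the symbol; your caveat about signs is apt, since reconciling $(-1)^l\widetilde C_{2l}^{\frac{n+1}{2}-\lambda}(-|\xi|^2,\xi_n)$ coming from Lemma \ref{lem:FKC} with the multiplier in \eqref{eqn:Chat} is precisely the bookkeeping the paper leaves implicit. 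Where you genuinely diverge is (3): the paper expands $\widetilde C_{2l}^{\frac{n+1}{2}-\lambda}(|\xi|^2,\xi_n)$ into monomials, applies the Beta-integral Lemma \ref{lem:Bfunct} termwise via Fubini to obtain only \emph{boundedness} of $\CC{n-\lambda}{m-\nu}^{\wedge}$ on $L^2(\mathbb{R}^m)_{m-2\nu}$, and then invokes irreducibility of the unitary $G'$-representation to upgrade continuity to isometry up to scalar; you instead exploit the joint homogeneity of degree $2l$ of $\widetilde C_{2l}^{\mu}(r^2,s)$ in $(r,s)$ to evaluate the inner $\xi_n$-integral exactly, getting $\|\CC{n-\lambda}{m-\nu}^{\wedge}v\|^2=C\,\|v\|^2_{L^2(\mathbb{R}^m)_{m-2\nu}}$ with the explicit constant $C=\int_{\mathbb{R}}|\widetilde C_{2l}^{\mu}(1,t)|^2(1+t^2)^{\frac{n-2\lambda}{2}}\,dt$, so the isometry falls out directly without any appeal to Schur's lemma. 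That is a legitimate and arguably cleaner route, and it yields the proportionality constant for free. One sentence in your writeup is wrong as stated, though it does not sink the argument: convergence of the $s$-integral is not governed by ``$n-2\lambda>-1$, equivalently $\lambda<\frac{n+1}{2}$'' (in the intended range one has $\lambda>\frac{n+1}{2}$, so that condition never holds); rather, the integrand behaves like $|s|^{4l+n-2\lambda}=|s|^{m-2\nu-1}$ as $|s|\to\infty$, so convergence is exactly the hypothesis $\nu>\frac{n-1}{2}$, which is what you then correctly invoke. Your closing remarks about Theorem \ref{thm:rest} and orthogonality of the images go beyond the proposition and are not needed for it.
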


For the proof of Proposition \ref{prop:Cdual}, 
 we use the following formula,
 which is immediate from the integral expression
 of the Beta function
 by a change of variables.  
\begin{lemma}
\label{lem:Bfunct}
If $c, d \in {\mathbb{R}}$ satisfy
 $-\frac 1 2 < c < -d -\frac 1 2$, 
then $|\xi_n|^{2c}(|\xi|^2+\xi_n^2)^d$ is integrable
 as a function of $\xi_n$
 with parameter $\xi$, 
 and we have the identity
\[
\int_{-\infty}^{\infty}|\xi_n|^{2c}(|\xi|^2+\xi_n^2)^d d \xi_n
=
|\xi|^{2c+2d+1}B(c+\frac 1 2, -c-d-\frac 1 2).  
\]
\end{lemma}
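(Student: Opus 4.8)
\textbf{Proof strategy for Lemma \ref{lem:Bfunct}.}

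The plan is to reduce the claimed identity to the standard Beta integral by an explicit substitution. First I would fix $\xi\in\mathbb{R}^{n-1}$ with $|\xi|\neq 0$; the case $\xi=0$ is handled separately (or simply excluded, since the identity is then an equality of locally integrable functions and the zero set has measure zero). Writing $r:=|\xi|>0$, the integrand $|\xi_n|^{2c}(r^2+\xi_n^2)^d$ is even in $\xi_n$, so the integral over $\mathbb{R}$ equals twice the integral over $(0,\infty)$. On $(0,\infty)$ I would substitute $\xi_n = r\sqrt{t/(1-t)}$, equivalently $t = \xi_n^2/(r^2+\xi_n^2)\in(0,1)$, which is the change of variables that converts a rational function of $\xi_n^2$ into a Beta-type integrand. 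A routine computation gives $r^2+\xi_n^2 = r^2/(1-t)$, $\xi_n^{2c} = r^{2c}(t/(1-t))^{c}$, and $d\xi_n = \tfrac{r}{2}\,t^{-1/2}(1-t)^{-3/2}\,dt$.

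Assembling these pieces, the integral over $(0,\infty)$ becomes
\[
\int_0^1 r^{2c}\Bigl(\tfrac{t}{1-t}\Bigr)^{c}\, r^{2d}(1-t)^{-d}\cdot \tfrac{r}{2}\,t^{-1/2}(1-t)^{-3/2}\,dt
= \tfrac{1}{2}\,r^{2c+2d+1}\int_0^1 t^{c-1/2}(1-t)^{-c-d-3/2}\,dt.
\]
The exponent conditions $-\tfrac12<c$ and $c<-d-\tfrac12$ are exactly what guarantee $c-\tfrac12>-1$ and $-c-d-\tfrac32>-1$, so the last integral is a convergent Beta integral equal to $B(c+\tfrac12,\,-c-d-\tfrac12)$. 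Doubling to account for the symmetric contribution from $(-\infty,0)$ cancels the factor $\tfrac12$, yielding
\[
\int_{-\infty}^{\infty}|\xi_n|^{2c}(|\xi|^2+\xi_n^2)^d\,d\xi_n = |\xi|^{2c+2d+1}B\Bigl(c+\tfrac12,\,-c-d-\tfrac12\Bigr),
\]
which is the asserted formula. The integrability statement follows from the same exponent bookkeeping: near $\xi_n=0$ the integrand behaves like $|\xi_n|^{2c}$ with $2c>-1$, and as $|\xi_n|\to\infty$ it behaves like $|\xi_n|^{2c+2d}$ with $2c+2d<-1$.

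There is essentially no serious obstacle here; the only point requiring a little care is verifying that the two inequalities on $(c,d)$ translate precisely into the convergence conditions for the Beta integral at both endpoints, and keeping track of the Jacobian factor $(1-t)^{-3/2}$ so that the final exponents come out correctly. Everything else is a direct substitution, and the result is the elementary lemma needed in the proof of Proposition \ref{prop:Cdual}.
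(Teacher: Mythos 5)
Your proof is correct and is essentially the paper's argument: the paper states that the lemma is immediate from the integral expression of the Beta function by a change of variables, and your substitution $t=\xi_n^2/(|\xi|^2+\xi_n^2)$ is precisely that reduction, with the exponent conditions matching the convergence requirements of the Beta integral.
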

\begin{proof}
[Proof of Proposition \ref{prop:Cdual}]
1)\enspace Clear.  
\par\noindent
2)\enspace
This follows from Lemma \ref{lem:FKC}.  
\par\noindent
3) \enspace
$\widetilde C_{2L}^{\frac {n+1}{2}-\lambda}(|\xi|^2, \xi_n)$
 is a linear combination
 of homogeneous polynomials
 $|\xi|^{2j}\xi_n^{2L-2j}$
 ($0 \le j \le L$). 
For each $j$, 
 we can apply Lemma \ref{lem:Bfunct}
 with $c=2L -2j$
 and $d=\frac n 2 - \lambda$
 if $\nu > \frac {n-1}2$, 
and get 
\begin{equation*}
\|
|\xi|^j |\xi_n|^{L-j}
v(\xi)
\|_{L^2({\mathbb{R}}^n)_{n-2\lambda}}^2
=
 B(2L-2j+\frac 1 2, \nu-\frac {n-1}{2}+2j)
\|v\|_{L^2({\mathbb{R}}^m)_{m-2\nu}}^2
\end{equation*}
by the Fubini theorem.  
Therefore, 
the map 
$
   \CC{n-\lambda}{m-\nu}^{\wedge}:
   L^2({\mathbb{R}}^m)_{m-2\nu}
   \to 
   L^2({\mathbb{R}}^n)_{n-2\lambda}
$
 is well-defined
 and continuous
 if $\nu > \frac{n-1}{2}$.  
Since the unitary representation of $G'$
 on $L^2({\mathbb{R}}^m)_{m-2\nu}$
 is irreducible,
 the continuous $G'$-intertwining operator
 is automatically isometric up to scalar.  
\end{proof}
\begin{proof}[Proof of Theorem \ref{thm:rest}]
By Proposition \ref{prop:Cdual}, 
 for every $\nu \in D(\lambda)$, 
 we have a $G'$-intertwining 
 and isometric (up to scalar) operator
\[
  \CC{n-\lambda}{m-\nu}^{\wedge}:L^2({\mathbb{R}}^m)_{m-2\nu}
   \to 
    L^2({\mathbb{R}}^n)_{n-2\lambda}, 
\]
which in turn induces 
 a $G'$-intertwining 
 and isometric (up to scalar) operator
\[
  \CC{n-\lambda}{m-\nu}^{\vee}:{\mathcal{H}}_{\nu}^{G'}
   \to 
    {\mathcal{H}}_{\lambda}^{G}, 
\]
by Propositions \ref{prop:L2model} and \ref{prop:Cdual} (2). 
\end{proof}
To end this chapter,
 we discuss the Fourier transform
 of the distribution kernel $\KA{\lambda}{\nu}$
 of the (generically) regular symmetry breaking operators
 $\A_{\lambda,\nu}$
 and compare 
 that of $\CC\lambda\nu$
 in Lemma \ref{lem:FKC}
 by using the hypergeometric function.  
The Gauss hypergeometric function
 has the following series expansion.  
\[
 {}_2F_1(a,b;c;z)
  =
  \sum_{j=0}^{\infty} \frac{(a)_j(b)_j}{(c)_j}\frac{z^j}{j!}, 
\]
where $(a)_j=a(a+1)\cdots(a+j-1)$.  
The series terminates
 if $a \in -{\mathbb{N}}$
 or $b \in -{\mathbb{N}}$, 
 and reduces to a polynomial.  
In particular,
 ${}_2F_1(\frac {\lambda-\nu}{2}, \frac{\lambda+\nu+1-n}{2};c;z)$
 reduces to a polynomial 
 if 
\index{sbon}{Xr@${/\!/}$}
\index{sbon}{Xl@${\backslash\!\backslash}$}
$(\lambda, \nu) \in \backslash\!\backslash \cup /\!/$.

Since the Gegenbauer polynomial
 of even degree is given as
\[
  C_{2l}^{\mu}(x)
  =
  \frac{(-1)^l \Gamma(l+\mu)}{l! \Gamma(\mu)}
  {}_2 F_1 (-l, l+\mu; \frac 1 2;x^2),
\]
 the following proposition
 gives a direct proof of Juhl's conformally covariant 
differential operators
 $\C_{\lambda,\nu}$
 (see \eqref{eqn:Clmdnu}), 
 and also 
 explains
 the residue formula
\index{sbon}{qCA@$q_C^A$}
 $\tA{\lambda}{\nu} =q_C^A(\lambda,\nu) 
 \CC{\lambda}{\nu}$
 of the (generically) regular symmetry breaking operators
 $\A_{\lambda,\nu}$
 in Theorem \ref{thm:reduction} from the view point of the F-method.  
\begin{proposition}
[{\cite[Proposition 5.3]{xkeastwood60}}]
\label{prop:4.3}
~~
\begin{enumerate}
\item[{\rm{1)}}]
The tempered distribution 
 ${\mathcal{F}}_{\mathbb{R}^n} \KA{\lambda}{\nu}
 \in {\mathcal{S}}'(\mathbb{R}^n)$
 is a real analytic function 
 in the open subset
 $\{(\xi,\xi_n)\in \mathbb{R}^{n-1}\oplus{\mathbb{R}}
  :
  |\xi|>|\xi_n|\}$, 
and takes the following form:
\index{sbon}{Kxt@$\KA{\lambda}{\nu} (x, x_n)$}
\begin{equation}
\label{eqn:FcA}
({\mathcal{F}}_{\mathbb{R}^n} \KA{\lambda}{\nu})(\xi, \xi_n)
=\frac{\pi^{\frac{n-1}{2}}|\xi|^{\nu-\lambda}}{\Gamma(\nu)2^{\nu-\lambda}}
{}_2F_1
(\frac{\lambda-\nu}{2}, \frac{\lambda+\nu+1-n}{2};
\frac 1 2;-\frac{\xi_n^2}{|\xi|^2}).  
\end{equation}
\item[{\rm{2)}}]
Suppose $\nu -\lambda = 2l$
 ($l \in {\mathbb{N}}$).  
Then 
\[
  ({\mathcal{F}}_{\mathbb{R}^n} \KA{\lambda}{\nu})(\xi,\xi_n)
=\frac{l! \pi^{\frac {n-1} 2}}{2^{2l}\Gamma(\nu)}
\tilde C_{2l}^{\lambda-\frac{n-1}{2}}
 (-|\xi|^2, \xi_n).  
\]
\end{enumerate}
\end{proposition}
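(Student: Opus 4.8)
The plan is to compute $\mathcal{F}_{\mathbb{R}^n}\KA{\lambda}{\nu}$ by iterating two Fourier transforms: first an $(n-1)$-dimensional (radial) transform in the variables $x=(x_1,\dots,x_{n-1})$, with $x_n$ held as a parameter, and then a one-dimensional transform in $x_n$. The point to anticipate is that the two Gamma factors in the normalization \eqref{eqn:KAA} of $\KA{\lambda}{\nu}$ are precisely those that will be produced by the second integral, so that they cancel and leave the clean prefactor $\pi^{\frac{n-1}{2}}/(\Gamma(\nu)2^{\nu-\lambda})$. Throughout I would first work with parameters $(\lambda,\nu)$ in the region $\Omega_0$ of \eqref{eqn:Omega0}, where $\ka{\lambda}{\nu}$ is locally integrable and tempered (see \eqref{eqn:Kpolar}), so that the Fourier transforms below are honest absolutely convergent integrals; the identity for arbitrary $(\lambda,\nu)\in\mathbb{C}^2$ then follows by analytic continuation, since by Theorem \ref{thm:poleA} the left-hand side $\mathcal{F}_{\mathbb{R}^n}\KA{\lambda}{\nu}$ is holomorphic in $(\lambda,\nu)$ and the right-hand side of \eqref{eqn:FcA} manifestly is.

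For the first stage I would invoke the classical formula for the Euclidean Fourier transform on $\mathbb{R}^{n-1}$ of $(|x|^2+a^2)^{-\nu}$, which expresses it as a power of $|\xi|/a$ times a Macdonald function $K_{\nu-\frac{n-1}{2}}(a|\xi|)$; in the normalization of the paper this is \eqref{eqn:Fone} (with a rescaling in $a$) together with \eqref{eqn:Kdual}, \eqref{eqn:KBessel}. Setting $a=|x_n|$ and multiplying by the remaining factor $|x_n|^{\lambda+\nu-n}$, the powers of $|x_n|$ combine to $|x_n|^{\lambda-\frac{n+1}{2}}$ and the power of $|\xi|$ is $|\xi|^{\nu-\frac{n-1}{2}}$, so after this stage $\mathcal{F}_{\mathbb{R}^{n-1}}\KA{\lambda}{\nu}(\xi,x_n)$ equals, up to an explicit constant, $|\xi|^{\nu-\frac{n-1}{2}}\,|x_n|^{\lambda-\frac{n+1}{2}}\,K_{\nu-\frac{n-1}{2}}(|x_n|\,|\xi|)$.

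The second stage is the Fourier transform in $x_n$, which reduces to twice the cosine transform $\int_0^\infty t^{\sigma-1}K_\mu(|\xi|t)\cos(\xi_n t)\,dt$ with $\sigma=\lambda-\frac{n-1}{2}$ and $\mu=\nu-\frac{n-1}{2}$. The standard evaluation of this integral — via a Mellin--Barnes contour shift, or Gradshteyn--Ryzhik — yields $2^{\sigma-2}|\xi|^{-\sigma}\Gamma\!\big(\tfrac{\sigma+\mu}{2}\big)\Gamma\!\big(\tfrac{\sigma-\mu}{2}\big)\,{}_2F_1\!\big(\tfrac{\sigma+\mu}{2},\tfrac{\sigma-\mu}{2};\tfrac12;-\xi_n^2/|\xi|^2\big)$, and this integral, together with all its $\xi$-derivatives, converges absolutely and locally uniformly exactly on $\{(\xi,\xi_n):|\xi|>|\xi_n|\}$, which is the source of the real-analyticity asserted in part (1). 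Since $\tfrac{\sigma+\mu}{2}=\tfrac{\lambda+\nu-n+1}{2}$ and $\tfrac{\sigma-\mu}{2}=\tfrac{\lambda-\nu}{2}$, the two Gamma factors here are exactly those appearing in \eqref{eqn:KAA} and cancel; collecting the surviving powers of $2$ and $\pi$ (with the duplication formula \eqref{eqn:dupl} absorbed into the first-stage constant) produces \eqref{eqn:FcA}. As a consistency check one can set $\xi_n=0$: then ${}_2F_1$ reduces to $1$, the right-hand side becomes a multiple of $|\xi|^{\nu-\lambda}$, and evaluating on the spherical vector recovers the constant of Proposition \ref{prop:AminK}. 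Part (2) is then immediate: when $\nu-\lambda=2l$ the first hypergeometric parameter $\tfrac{\lambda-\nu}{2}=-l$ is a non-positive integer, the series in \eqref{eqn:FcA} terminates, and $\mathcal{F}_{\mathbb{R}^n}\KA{\lambda}{\nu}$ becomes a polynomial, homogeneous of degree $2l$ in $(\xi,\xi_n)$; writing $\mu:=\lambda-\tfrac{n-1}{2}$ one has $l+\mu=\tfrac{\lambda+\nu+1-n}{2}$, so by the hypergeometric representation of the Gegenbauer polynomial recalled just before the statement, $|\xi|^{2l}\,{}_2F_1(-l,l+\mu;\tfrac12;-\xi_n^2/|\xi|^2)$ is a scalar multiple of the homogenized polynomial $\widetilde C_{2l}^{\mu}(-|\xi|^2,\xi_n)$ of \eqref{eqn:Cmul2}; carrying the scalar through gives the coefficient $l!\,\pi^{\frac{n-1}{2}}/(2^{2l}\Gamma(\nu))$. (Equivalently, part (2) is just part (1) rewritten through the definition of $\widetilde C$, and could also be read off from Theorem \ref{thm:reduction}(2) combined with Lemma \ref{lem:FKC}.)

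The main obstacle is not conceptual but computational: one must track the powers of $2$, the powers of $\pi$, and the Gamma factors through the two successive Fourier transforms in the paper's normalizations of the Macdonald function and the Gegenbauer polynomial, and apply the duplication formula in the right place, so that everything collapses to the stated prefactor. Beyond this there is only the mild analytic point of justifying that the explicit expression — derived a priori for $(\lambda,\nu)\in\Omega_0$ and on the cone $\{|\xi|>|\xi_n|\}$ — indeed describes the tempered distribution $\mathcal{F}_{\mathbb{R}^n}\KA{\lambda}{\nu}$ on that open set for all $(\lambda,\nu)\in\mathbb{C}^2$, which the holomorphy statement of Theorem \ref{thm:poleA} supplies.
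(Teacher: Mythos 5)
Your proof is correct, and it is worth noting that the paper itself gives no argument for Proposition \ref{prop:4.3}: the statement is imported from \cite{xkeastwood60}, where it is obtained in the spirit of the F-method, i.e.\ by exploiting the algebraic characterization of the Fourier-transformed kernel (the differential equations it inherits from Proposition \ref{prop:PDE}) rather than by evaluating an integral. You instead compute $\mathcal{F}_{\mathbb{R}^n}\KA{\lambda}{\nu}$ directly as an iterated transform: the partial transform in $x$ via \eqref{eqn:Fone} and \eqref{eqn:KBessel} gives, up to the constant $2^{\frac{n+1}{2}-\nu}\pi^{\frac{n-1}{2}}/\Gamma(\nu)$, the function $|\xi|^{\nu-\frac{n-1}{2}}|x_n|^{\lambda-\frac{n+1}{2}}K_{\nu-\frac{n-1}{2}}(|x_n|\,|\xi|)$, and the classical cosine--$K$-Bessel integral with $\sigma=\lambda-\tfrac{n-1}{2}$, $\mu=\nu-\tfrac{n-1}{2}$ produces $2^{\sigma-2}|\xi|^{-\sigma}\Gamma(\tfrac{\sigma+\mu}{2})\Gamma(\tfrac{\sigma-\mu}{2})\,{}_2F_1(\tfrac{\sigma+\mu}{2},\tfrac{\sigma-\mu}{2};\tfrac12;-\xi_n^2/|\xi|^2)$; since $\tfrac{\sigma+\mu}{2}=\tfrac{\lambda+\nu-n+1}{2}$ and $\tfrac{\sigma-\mu}{2}=\tfrac{\lambda-\nu}{2}$, these Gamma factors cancel the normalization in \eqref{eqn:KAA}, and I have checked that the powers of $2$ and $\pi$ collapse to the prefactor in \eqref{eqn:FcA}. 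Part (2) then follows from the hypergeometric expression of $C_{2l}^{\mu}$ together with \eqref{eqn:Cst}--\eqref{eqn:Cmul2} with the stated constant, and is indeed consistent with Theorem \ref{thm:reduction} (2) and Lemma \ref{lem:FKC}. What the two routes buy: yours is self-contained modulo two table integrals and yields the constant mechanically through the Gamma cancellation, which makes transparent why the normalization \eqref{eqn:KAA} is the natural one; the F-method derivation explains structurally why a Gauss hypergeometric function with precisely these parameters must occur, which is the viewpoint the surrounding text of Chapter \ref{sec:applbl} emphasizes (termination of the series at $/\!/$ giving Juhl's operators and the residue formula).

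Two small imprecisions, neither fatal: absolute convergence of the iterated integral needs slightly more than $(\lambda,\nu)\in\Omega_0$ (the $x$-integral of $(|x|^2+x_n^2)^{-\nu}$ over $\mathbb{R}^{n-1}$ requires $\operatorname{Re}\nu>\tfrac{n-1}{2}$), so you should perform the computation on the smaller nonempty open set $\Omega_0\cap\{\operatorname{Re}\nu>\tfrac{n-1}{2}\}$ before invoking holomorphy of both sides to continue to all of $\mathbb{C}^2$; and the $x_n$-integral in fact converges absolutely for every $\xi\neq 0$ because of the exponential decay of $K_\mu$, so the cone $\{|\xi|>|\xi_n|\}$ is not a convergence boundary of the integral but the region where the hypergeometric \emph{series} representation converges, which is exactly what is needed for \eqref{eqn:FcA} and the real analyticity asserted in part (1).
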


\section{Appendix}

\subsection{Gegenbauer polynomials}

The Gegenbauer polynomials $C_N^\mu (t)$ are polynomials of degree $N$ given by
\[
C_N^\mu (t)
:= \frac{\Gamma (2\mu+N)}{\Gamma (N+1) \Gamma (2\mu)}   \,
   {}_2 F_1 (2\mu+N, -N; \mu+\frac{1}{2}; \frac{1-t}{2} ) 
\]

\[
= \sum_{j=0}^{[\frac{N}{2}]} (-1)^j
   \frac{\Gamma(N-j+\mu)}
          {\Gamma(\mu)\Gamma(j+1)\Gamma(N-2j+1)}
   (2t)^{N-2j}.
\]
We inflate $C_N^\mu(t)$ to a polynomial of two variables by
\begin{equation}\label{eqn:Cst}
C_N^\mu (s,t)
:= s^{\frac{N}{2}} C_N^\mu \Bigl( \frac{t}{\sqrt{s}} \Bigr).
\end{equation}
For instance,
$C_0^\mu (s,t) = 1$,
$C_1^\mu (s,t) = 2 \mu t$,
$C_2^\mu (s,t) = 2\mu (\mu+1) t^2 - \mu s$.
For even $N$, 
 we write 
\[
C_{2l}^\mu (s,t)
= \frac{\Gamma(\mu+l)}{\Gamma(\mu)}
   \sum_{j=0}^l a_j (l; \mu) s^j t^{2l-2j}
\]
where
\begin{equation}\label{eqn:ajlmu}
a_j (l; \mu)
\index{sbon}{ajlmu@$a_j (l; \mu)$|textbf}
:= \frac{(-1)^j 2^{2l-2j}}{j!(2l-2j)!}
   \prod_{i=1}^{l-j} (\mu + l + i - 1).  
\end{equation}

We set
\index{sbon}{Ctst@$\widetilde C_{2l}^\mu (s,t)$}
\begin{equation}\label{eqn:Cmul2}
\widetilde{C}_{2l}^\mu (s,t)
:= \frac{\Gamma(\mu)}{\Gamma(\mu+l)} C_{2l}^\mu (s,t)
= \sum_{j=0}^l a_j (l;\mu) s^j t^{2l-2j} .
\end{equation}

Slightly different from the usual notation in the literature,
we adopt the following normalization of the Gegenbauer polynomial:
\index{sbon}{Ctt@$\tilde{\tilde{C}}_N^\mu (t)$|textbf}
\begin{equation}\label{eqn:nGeg}
\Tilde{\Tilde{C}}_N^\mu (t)
:= (\mu+\frac{N}{2}) \Gamma(\mu) C_k^\mu (t)
\end{equation}
which implies
\begin{equation}\label{eqn:C0}
\Tilde{\Tilde{C}}_N^0 (t) =  \cos N t
\quad
\text{ and }\Tilde{\Tilde{C}}_0^\mu = \Gamma(\mu+1), 
\end{equation}
see {\cite[8934.4]{GR}}.

We recall from  (\cite[vol.~II, 16.3 (2)]{EMOT} or \cite[7311.2]{GR}):
\begin{equation}\label{eqn:Ct int}
\int_0^1 t^{N+2\rho} (1-t^2)^{\mu-\frac{1}{2}} C_N^\mu (t) dt
=
  \frac{\Gamma (2\mu+N) \Gamma (2\rho+N+1) \Gamma (\mu+\frac{1}{2}) \Gamma (\rho+\frac{1}{2})}
         {2^{N+1} \Gamma (2\mu) \Gamma (2\rho+1) N! \, \Gamma (N+\mu+\rho+1)}
\end{equation}
for $\rho > -\frac{1}{2}$.

By using twice the duplication formula of the Gamma function
\begin{equation}\label{eqn:dupl}
\Gamma (2\mu) = 2^{2\mu-1} \pi^{-\frac{1}{2}} \Gamma (\mu) \Gamma (\mu+\frac12),
\end{equation}
we get from \eqref{eqn:Ct int}
\begin{equation}\label{eqn:Ct int2}
\int_0^1 
t^a (1-t^2)^{\frac{n-3}{2}} 
\Tilde{\Tilde{C}}_N^{\frac{n}{2}-1} (t) dt
=
\frac{\pi \Gamma (n+N-1)}{2^{a+n-1} \Gamma (N+1)}  \,
\frac{\Gamma (a+1)}
{\Gamma (\frac{a-N+2}{2}) \Gamma (\frac{a+N+n}{2}) }.  
\end{equation}

\subsection{$K$-Bessel function and its renormalization}

We recall the definition
 of the I-Bessel function
 and the K-Bessel function:
\begin{align*}
  I_{\nu}(z):= & e^{-\frac{\sqrt{-1} \nu \pi}{2}}
                 J_{\nu}(e^{\frac{\sqrt{-1}\pi}{2}}z)
\\
             =& (\frac z 2)^{\nu}\sum_{j=0}^{\infty} 
                \frac{(\frac z 2)^{2j}}{j ! \Gamma(j+ \nu +1)}, 
\\
  K_{\nu}(z) := & \frac {\pi}{2 \sin \nu \pi}
                  (I_{-\nu}(z)-I_{\nu}(z)).  
\end{align*}

We renormalize the $K$-Bessel function as 
\index{sbon}{Knut@$\widetilde K_{\nu}(z)$|textbf}
\begin{equation}
\label{eqn:KBessel}
  \widetilde K_{\nu}(z)
  :=(\frac z 2)^{-\nu}K_{\nu}(z).  
\end{equation}
Since $K_{\nu}(z)=K_{-\nu}(z)$, 
 we have
\begin{equation}\label{eqn:Kdual}
  \widetilde K_{\nu}(z)=(\frac z 2)^{-2\nu}\widetilde K_{-\nu}(z).  
\end{equation}
For example, 
\[
K_{\frac1 2}(z)= \sqrt{\frac{\pi}{2}}e^{-z} z^{-\frac 12},
\quad
\widetilde K_{\frac 1 2}(z)=\frac{\sqrt{\pi} e^{-z}}{z},
\quad
\widetilde K_{-\frac 1 2}(z)=\frac{\sqrt{\pi}}{2} e^{-z}.  
\]

The Fourier transform
 of the distribution $(|x|^2+t^2)^{\lambda}$
 is given by the K-Bessel function:
\begin{equation}
\label{eqn:Fone}
\int_{\mathbb{R}^m}(|x|^2+t^2)^{\lambda}e^{-i \langle x, \xi \rangle} d x
=
\frac {2 |t|^{m+2 \lambda} \pi^{\frac m 2}}{\Gamma(-\lambda)}
  \widetilde K_{\frac m 2 + \lambda}(|t \xi|).  
\end{equation}

\subsection{Zuckerman derived functor modules
 $A_{\mathfrak {q}}(\lambda)$}
\label{subsec:16.3}

In algebraic representation theory,
 cohomological parabolic induction
 is a powerful tool
 in capturing isolated irreducible unitary
 representations
 of real reductive groups
({\it{e.g.}},  \cite{Vred, VZ}).  
For a convenience
 of the reader,
 we give a description
of the underlying $({\mathfrak {g}}, K)$-module
 of the infinite-dimensional irreducible subquotient 
\index{sbon}{Ti@$T(i)$}
 $T(i)$
 of the spherical principal series representation
 $I(\lambda)$
 ($\lambda=-i$ or $n+i$), 
 even though the proof of our main results 
 in this article
 is logically independent
 of this section.  

We take a maximal abelian subalgebra
 ${\mathfrak {t}}$
 in the Lie algebra 
 ${\mathfrak {k}} \simeq {\mathfrak {o}}(n+1)$
 of the maximal compact subgroup $K=O(n+1) \times O(1)$, 
 and extend it
 to a Cartan subalgebra ${\mathfrak {h}}$
 of ${\mathfrak {g}} = {\mathfrak {o}}(n+1,1)$.  
If $n$ is even
 then $\dim {\mathfrak {h}} = \dim {\mathfrak {t}}+1$, 
 and ${\mathfrak {h}}={\mathfrak {t}}$, 
 otherwise.

Fix a basis $\{f_i: 1 \le i \le [\frac n 2]+1\}$
 of ${\mathfrak {h}}_{\mathbb{C}}^*$
 in a way
 that the root system 
 is given as 
\begin{multline*}
  \Delta({\mathfrak {g}}_{\mathbb{C}}, {\mathfrak {h}}_{\mathbb{C}})
  =
  \{ \pm (f_i \pm f_j) : 1 \le i \le j \le [\frac n 2]+1\}
\\
  (\cup
  \{ \pm f_l: 1 \le l \le [\frac n 2]+1 \quad(\text{$n$: odd})\}).  
\end{multline*}

Let $\{H_i\} \subset {\mathfrak {h}}_{\mathbb{C}}$
 be the dual basis
 for $\{f_i\} \subset {\mathfrak {h}}_{\mathbb{C}}^{\ast}$.  
We define a subgroup of $L$
 to be the centralizer of $H_1$,
 and thus $L \simeq SO(2) \times O(n-1,1)$.

Let ${\mathfrak {q}}={\mathfrak {l}}_{\mathbb{C}} + {\mathfrak {u}}$
 be a $\theta$-stable parabolic subalgebra
 of ${\mathfrak {g}}_{\mathbb{C}}$
 where the nilpotent radical ${\mathfrak {u}}$
 is an ${\mathfrak {h}}_{\mathbb{C}}$-stable subspace
 with 
\[
\Delta({\mathfrak {u}}, {\mathfrak {h}}_{\mathbb{C}})
=\{
   f_1 \pm f_j:
   2 \le j \le [\frac n 2]+1
\}
  \,(\cup\, \{f_1\} \quad \text{($n$: odd)}).  
\]
Then $L$ is the normalizer of ${\mathfrak {q}}$ in $G$.

For $\mu \in {\mathbb{C}}$, 
 we write ${\mathbb{C}}_{\mu f_1}$
 for the one-dimensional representation
of the Lie algebra ${\mathfrak {l}}$
 with trivial action of the second factor ${\mathfrak {o}}(n-1,1)$.  
If $\mu \in {\mathbb{Z}}$, 
 it lifts to $L$ with trivial action
 of $O(n-1,1)$,
 for which we use the same notation ${\mathbb{C}}_{\mu f_1}$.  
The homogeneous space $G/L$
 carries a $G$-invariant complex structure
 with complex cotangent space
 ${\mathfrak {u}}$
 at the origin.

Let denote by ${\mathcal{L}}_{\mu} := G \times_L {\mathbb{C}}_{\mu f_1}$
 the holomorphic vector bundle over $G/L$
 associated to the one-dimensional representation
 ${\mathbb{C}}_{\mu f_1}$.  
With this notation,
 the canonical bundle $\Omega_{G/L} = \bigwedge^{\dim_{\mathbb{C}}
 \frak{u}} T^{\ast}(G/L)$
 is isomorphic to ${\mathcal{L}}_{n f_1} ={\mathcal{L}}_{2 \rho({\mathfrak {u}})}$.

As an analogue of the Dolbeault cohomology
 of a $G$-equivariant holomorphic vector bundle
 over a complex manifold $G/L$, 
 Zuckerman introduced 
 the cohomological parabolic induction 
 ${\mathcal{R}}_{\mathfrak {q}}^j\equiv 
  ({\mathcal{R}}_{\mathfrak {q}}^{\mathfrak {g}})^j$
 ($j \in {\mathbb{N}}$), 
 which is a covariant functor from the category 
 of $({\mathfrak {l}}, L \cap K)$-modules
 to the category of $({\mathfrak {g}}, K)$-modules.  
We follow the normalization 
 in \cite[Definition 6.20]{Vred}
 for ${\mathcal{R}}_{\mathfrak {q}}^j$, 
 and Vogan--Zuckerman \cite{VZ} for $A_{\mathfrak {q}}(\lambda)$, 
 which differs  from the usual normalization by the \lq{$\rho({\mathfrak {u}})$}\rq\
 shift.

The one-dimensional representation ${\mathbb{C}}_{\mu f_1}$
 is 
\begin{align*}
\text{in the good range}\Leftrightarrow & \mu > \frac n 2 -1,
\\
\text{in the weakly fair range}\Leftrightarrow & \mu \ge 0,
\end{align*}
with respect to the $\theta$-stable parabolic subalgebra.  
In our normalization,
${\mathcal{R}}_{\mathfrak {q}}^j({\mathbb{C}}_{\mu f_1})=0$
 if $j \ne n-1$, 
and ${\mathcal{R}}_{\mathfrak{q}}^{n-1}({\mathbb{C}}_{\mu f_1})$
 is nonzero and irreducible
 if $\mu \in {\mathbb{Z}} + \frac n2$
 and $\mu >-1$, 
which is a slightly sharper
 than the results applied by the general theory.

Then we have 
(see \cite[Fact 5.4]{KO2} 
 and the references therein):
\begin{proposition}
\label{prop:Aq}
\begin{enumerate}
\item[{\rm{1)}}]
For $i \in {\mathbb{N}}$, 
we have the following isomorphisms
 of $({\mathfrak {g}},K)$-modules:
\[
  T(i)_K \simeq A_{\mathfrak {q}}(i f_1)
         \simeq {\mathcal{R}}_{\mathfrak {q}}^{n-1}
                ({\mathbb{C}}_{(\frac n2 +i) f_1})
         \simeq H_{\bar \partial}^{n-1}(G/L, {\mathcal{L}}_{(n+i)f_1})_K.  
\]
\item[{\rm{2)}}]
For $i \in {\mathbb{Z}}$
 with $-\frac n 2 \le i <0$, 
\[
  I(n+i)_K \simeq A_{\mathfrak {q}}(i f_1)
         \simeq {\mathcal{R}}_{\mathfrak {q}}^{n-1}
                ({\mathbb{C}}_{(\frac n2 +i) f_1})
         \simeq H_{\bar \partial}^{n-1}(G/L, {\mathcal{L}}_{(n+i)f_1})_K.  
\]
\end{enumerate}
\end{proposition}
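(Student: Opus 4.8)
The final statement to prove is Proposition~\ref{prop:Aq}, which identifies the underlying $(\mathfrak{g},K)$-module $T(i)_K$ (for $i\in\mathbb{N}$) and $I(n+i)_K$ (for $-\tfrac{n}{2}\le i<0$) with a Zuckerman derived functor module $A_{\mathfrak{q}}(if_1)$, with the cohomological parabolic induction ${\mathcal{R}}_{\mathfrak{q}}^{n-1}({\mathbb{C}}_{(\frac n2+i)f_1})$, and with the Dolbeault cohomology $H_{\bar\partial}^{n-1}(G/L,{\mathcal{L}}_{(n+i)f_1})_K$. Since the excerpt explicitly says this appendix subsection is ``logically independent of this section,'' the plan is to give a proof that assembles known structural facts about $\mathfrak{q}$-cohomology for $SO_0(n+1,1)$ rather than reproving them.

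The plan is as follows. First I would record the identification of the three objects on the right-hand side with each other: the isomorphism ${\mathcal{R}}_{\mathfrak{q}}^{n-1}({\mathbb{C}}_{\mu f_1})\simeq H_{\bar\partial}^{n-1}(G/L,{\mathcal{L}}_{\mu f_1})_K$ is the standard comparison between Zuckerman's algebraically-defined derived functor and the Dolbeault cohomology of the associated holomorphic bundle over the (open) $G$-orbit $G/L$ in its compact dual, valid in the good and weakly fair range; this is in \cite{Vred} and I would simply cite it. The notation $A_{\mathfrak{q}}(if_1)={\mathcal{R}}_{\mathfrak{q}}^{n-1}({\mathbb{C}}_{(\frac n2+i)f_1})$ is then just the definition of $A_{\mathfrak{q}}(\lambda)$ in the normalization of \cite{VZ} versus \cite{Vred} — a $\rho(\mathfrak{u})$-shift bookkeeping step, which I would state carefully once and for all (here $2\rho(\mathfrak{u})=nf_1$, so $A_{\mathfrak{q}}(if_1)$ corresponds to ${\mathcal{L}}_{(n+i)f_1}$, matching the Dolbeault formula). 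Second, and this is the substantive step, I would verify the non-vanishing and irreducibility: for $\mu\in\mathbb{Z}+\tfrac n2$ with $\mu>-1$, the module ${\mathcal{R}}_{\mathfrak{q}}^{j}({\mathbb{C}}_{\mu f_1})$ vanishes for $j\ne n-1$ and is nonzero irreducible for $j=n-1$. In the weakly fair range this is a vanishing-plus-irreducibility theorem whose sharp form for $SO_0(n+1,1)$ is recorded in \cite{KO2} (cited as Fact~5.4 there), so I would invoke it; the parameters $\mu=\tfrac n2+i$ with $i\in\mathbb{N}$ (case 1) and $\mu=\tfrac n2+i$ with $i\ge-\tfrac n2$, i.e.\ $\mu\ge 0$ (case 2) both lie in the weakly fair range and satisfy $\mu>-1$, $\mu\in\mathbb{Z}+\tfrac n2$.

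Third, I would pin down \emph{which} irreducible $(\mathfrak{g},K)$-module ${\mathcal{R}}_{\mathfrak{q}}^{n-1}({\mathbb{C}}_{(\frac n2+i)f_1})$ is, by matching invariants with $T(i)_K$ (resp.\ $I(n+i)_K$). Two invariants suffice: the infinitesimal character, computed from the $\mathfrak{q}$-cohomology construction via the Harish-Chandra parameter $(\tfrac n2+i+\rho(\mathfrak{l}),\ \text{something})$ and compared with the infinitesimal character of $I(\lambda)$ at $\lambda=-i$ (resp.\ $\lambda=n+i$); and the minimal $K$-type, which for ${\mathcal{R}}_{\mathfrak{q}}^{n-1}({\mathbb{C}}_{\mu f_1})$ is the ``bottom layer'' $K$-type determined by Blattner's formula ($\mu f_1 + 2\rho(\mathfrak{u}\cap\mathfrak{p})$ restricted to $K$), which should come out to be $\mathcal{H}^i(S^n)\boxtimes\mathbf{1}$ as a $K=O(n+1)\times O(1)$-module — matching the $K$-type generating $T(i)$ inside $I(-i)$ by \eqref{eqn:FIT}. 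Since an irreducible admissible $(\mathfrak{g},K)$-module with a given infinitesimal character and a given minimal $K$-type (appearing with multiplicity one) is unique, and $T(i)_K$ and $I(n+i)_K$ are the respective irreducible composition factors with those invariants by Proposition~\ref{prop:connG}, \eqref{eqn:FIT}, and \eqref{eqn:TIF}, this forces the isomorphism.

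I expect the main obstacle to be the normalization bookkeeping in the second paragraph: keeping straight the $\rho(\mathfrak{u})$-shift conventions between $A_{\mathfrak{q}}(\lambda)$ à la \cite{VZ}, the Zuckerman functor ${\mathcal{R}}_{\mathfrak{q}}^j$ à la \cite{Vred}, and the Dolbeault realization, so that the weight $(n+i)f_1$ in the bundle, the weight $(\tfrac n2+i)f_1$ in the cohomological induction, and the weight $if_1$ in the $A_{\mathfrak{q}}$-notation are genuinely the same object with three names, and verifying that the stated range conditions (good range $\mu>\tfrac n2-1$, weakly fair $\mu\ge 0$) line up with the two parameter ranges in the proposition. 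A secondary but more mechanical obstacle is the explicit minimal-$K$-type computation via Blattner multiplicities for the pair $(\mathfrak{o}(n+1,1),\mathfrak{o}(2)\oplus\mathfrak{o}(n-1,1))$; I would carry this out only to the extent of identifying the bottom layer $K$-type, which is enough to distinguish $T(i)_K$ from the finite-dimensional factor $F(i)$ and hence to conclude.
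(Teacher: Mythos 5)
The first thing to say is that the paper does not prove this proposition at all: Proposition \ref{prop:Aq} is quoted directly from \cite[Fact 5.4]{KO2} ``and the references therein,'' and the surrounding text of Section \ref{subsec:16.3} only fixes the normalizations (Vogan's ${\mathcal{R}}_{\mathfrak{q}}^{j}$, the Vogan--Zuckerman $A_{\mathfrak{q}}(\lambda)$, the good/weakly fair ranges) and states explicitly that the section is logically independent of the rest of the article. So your plan --- cite Wong/Vogan for the Dolbeault comparison, cite \cite{KO2} for the vanishing, non-vanishing and irreducibility at $\mu\in{\mathbb{Z}}+\frac n2$, $\mu>-1$ (which, as the paper notes, is sharper than what the general weakly fair theory gives), and then identify the resulting irreducible module by matching invariants --- is a reasonable reconstruction rather than a divergence from the paper, and the shift bookkeeping $i\,f_1 \leftrightarrow (\tfrac n2+i)f_1 \leftrightarrow (n+i)f_1$ is handled correctly in spirit (though your displayed comparison ${\mathcal{R}}_{\mathfrak{q}}^{n-1}({\mathbb{C}}_{\mu f_1})\simeq H_{\bar\partial}^{n-1}(G/L,{\mathcal{L}}_{\mu f_1})_K$ with the \emph{same} $\mu$ on both sides contradicts the shift you then invoke).

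There is, however, a concrete error in the identification step as you state it. You predict that the bottom-layer $K$-type of ${\mathcal{R}}_{\mathfrak{q}}^{n-1}({\mathbb{C}}_{(\frac n2+i)f_1})$ is ${\mathcal{H}}^{i}(S^n)\boxtimes{\mathbf{1}}$ and that this is ``the $K$-type generating $T(i)$ inside $I(-i)$.'' Both halves are off by one: since $F(i)|_K\simeq\bigoplus_{j=0}^{i}{\mathcal{H}}^j(S^n)$ and $I(-i)_K$ contains each ${\mathcal{H}}^j(S^n)$ exactly once, the quotient $T(i)_K=I(-i)_K/F(i)$ has $K$-types ${\mathcal{H}}^j(S^n)$ with $j\ge i+1$, with minimal $K$-type ${\mathcal{H}}^{i+1}(S^n)$; correspondingly $\mathfrak{u}\cap\mathfrak{p}_{\mathbb{C}}$ is one-dimensional here, so $2\rho(\mathfrak{u}\cap\mathfrak{p})=f_1$ and the Vogan--Zuckerman lowest $K$-type of $A_{\mathfrak{q}}(i f_1)$ is $(i+1)f_1$, i.e.\ ${\mathcal{H}}^{i+1}(S^n)$. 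With your value ${\mathcal{H}}^{i}(S^n)$ the invariant-matching argument would point at a $K$-type of the \emph{finite-dimensional} factor $F(i)$ and hence misidentify the composition factor, which is exactly the distinction the argument is supposed to make. A second, smaller caveat: in part 2 the parameter is weakly fair but not good (for $i<-1$ the weight $(i+1)f_1$ is not even dominant), so the lowest-$K$-type/bottom-layer machinery you invoke is not available off the shelf there; one either uses the sharper irreducibility/non-vanishing statement quoted from \cite{KO2} together with the infinitesimal character and the fact that $I(n+i)$ is irreducible in that range, or argues sphericity of the cohomologically induced module directly. With the $K$-type corrected and this range issue addressed, your outline does give a valid proof, essentially by assembling the same external facts the paper cites.
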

The latter module $I(n+i)_K$ 
 $(-\frac n 2 \le i <0)$
 is the underlying $({\mathfrak {g}}, K)$-module
 of the complementary series representation
\index{sbon}{Hlmd@${\mathcal{H}}_{\lambda}^G$}
 ${\mathcal{H}}_{\lambda}^G$
 with $\lambda=n+i$
 (see Chapter \ref{sec:applbl}).  

\begin{remark}
\label{rem:GL}
The homogeneous space $G/L$
 is connected
 if $n \ge 2$.  
If $n=1$, 
then $G/L$ splits into two disconnected components
 which are biholomorphic to the Poincar{\'e} upper and lower half plane.  
This explains geometrically 
the reason 
 why $T(i)$ remains irreducible
 as a representation
 of the identity component group
 $G_0=SO_0(n+1,1)$
 for $n \ge 2$, 
 and splits into a direct sum
 of holomorphic and anti-holomorphic discrete series 
 representations
 of $G_0$
 for $n=1$
 (see Section \ref{subsec:matrix}).  
\end{remark}
\vskip 2pc
\subsection*{Acknowledgments}
The authors were partially supported by JSPS 
KAKENHI (25247006), 
NSF grant DMS-0901024, 
the GCOE program of the University of Tokyo, 
and Mathematisches Forschungsinstitut Oberwolfach.  


\printindex{sbon}{List of Symbols}
\end{document}